\numberwithin{equation}{section}
\newcommand\FirstN[1]{\{1,\dots,#1\}}
\def \NNsymbol {\mathcal{NN}}
\def \SNNsymbol {\mathcal{SNN}}
\def \SNNreal {\mathtt{SNN}}
\def \NNreal {\mathtt{NN}}
\def \Realization {\mathtt{R}}
\def \OutDim {{d_{\mathrm{out}}}}
\def \InDim {{d_{\mathrm{in}}}}
\def \AppSet {\Sigma}
\def \AppErr {E}
\def \PPoly {\mathtt{PPoly}}
\def \Spline {\mathtt{Spline}}
\NewDocumentCommand \ReLUApproxSpace {O{\alpha} O{q} O{} O{\mathscr{L}}}{A_{#2}^{#1} (X,\varrho_{#3},#4)}
\NewDocumentCommand \GenApproxSpace {O{\alpha} O{q}}{A_{#2}^{#1} (X,\AppSet)} 
\NewDocumentCommand \GenApproxSpaceSet {O{\Sigma} O{\alpha} O{q}}{A_{#3}^{#2} (X,#1)} 
\newcommand{\Cr}{\mathrm{Cr}}
\newcommand{\eps}{\varepsilon}
\newcommand{\Z}{\mathbb{Z}}
\newcommand{\N}{\mathbb{N}}
\newcommand{\R}{\mathbb{R}}
\newcommand{\CalT}{\mathcal{T}}
\newcommand{\CalG}{\mathcal{G}}
\newcommand{\CalD}{\mathcal{D}}
\newcommand{\vertiii}[1]{{\left\vert\kern-0.25ex\left\vert\kern-0.25ex\left\vert #1
    \right\vert\kern-0.25ex\right\vert\kern-0.25ex\right\vert}}
\NewDocumentCommand \StandardSigma {O{n}}{\AppSet_{#1} (X,\varrho,\mathscr{L})}
\NewDocumentCommand \StandardSigmaStrict {O{n}}{\AppSet_{#1} ' (X,\varrho,\mathscr{L})}
\NewDocumentCommand \StandardXSpace {O{k} O{p}} {X_{#2}^{#1}}
\newcommand{\WeightClassSymbol}{\mathtt{W}}
\newcommand{\NeuronClassSymbol}{\mathtt{N}}
\NewDocumentCommand \StandardSigmaW {O{n}}{\WeightClassSymbol_{#1} (X,\varrho,\mathscr{L})}
\NewDocumentCommand \StandardSigmaN {O{n}}{\NeuronClassSymbol_{#1} (X,\varrho,\mathscr{L})}
\NewDocumentCommand \StandardSigmaWS {O{n}}{\mathtt{SW}_{#1} (X,\varrho,\mathscr{L})}
\NewDocumentCommand \StandardSigmaNS {O{n}}{\mathtt{SN}_{#1} (X,\varrho,\mathscr{L})}
\NewDocumentCommand \ASpace {O{\StandardXSpace (\Omega)} O{\varrho} O{q} O{\alpha} O{\mathscr{L}}}{A^{#4}_{#3}(#1,#2,#5)}
\NewDocumentCommand \NSpace {O{\StandardXSpace (\Omega)} O{\varrho} O{q} O{\alpha} O{\mathscr{L}}}{AN^{#4}_{#3}(#1,#2,#5)}
\NewDocumentCommand \WSpace {O{\StandardXSpace (\Omega)} O{\varrho} O{q} O{\alpha} O{\mathscr{L}}}{AW^{#4}_{#3}(#1,#2,#5)}
\NewDocumentCommand \WASpace {O{\StandardXSpace (\Omega)} O{\varrho} O{q} O{\alpha} O{\mathscr{L}}}{W^{#4}_{#3}(#1,#2,#5)}
\NewDocumentCommand \NASpace {O{\StandardXSpace (\Omega)} O{\varrho} O{q} O{\alpha} O{\mathscr{L}}}{N^{#4}_{#3}(#1,#2,#5)}
\NewDocumentCommand \SWASpace {O{\StandardXSpace (\Omega)} O{\varrho} O{q} O{\alpha} O{\mathscr{L}}}{SW^{#4}_{#3}(#1,#2,#5)}
\NewDocumentCommand \SNASpace {O{\StandardXSpace (\Omega)} O{\varrho} O{q} O{\alpha} O{\mathscr{L}}}{SN^{#4}_{#3}(#1,#2,#5)}
\let\emptyset\varnothing
\newcommand{\identity}{\mathrm{id}}
\newcommand{\Indicator}{{\mathds{1}}}
\newcommand{\diag}{{\operatorname{diag}}}
\newcommand{\supp}{{\operatorname{supp}}}
\newcommand{\hess}{{\operatorname{Hess}}}
\newcommand{\GL}{\mathrm{GL}}
\theoremstyle{remark}
\newtheorem*{rem*}{Remark}
\theoremstyle{definition}
\newtheorem{defn}{Definition}[section]
\theoremstyle{plain}
\newtheorem{thm}[defn]{Theorem}
\theoremstyle{plain}
\newtheorem{lem}[defn]{Lemma}
\theoremstyle{remark}
\newtheorem{rem}[defn]{Remark}
\theoremstyle{plain}
\newtheorem{prop}[defn]{Proposition}
\newtheorem{cor}[defn]{Corollary}
\newtheorem{example}[defn]{Example}
\theoremstyle{remark}
\renewenvironment{rem}
  {\pushQED{\qed}\remx}
  {\popQED\endremx}
\theoremstyle{remark}
\renewenvironment{rem*}
  {\pushQED{\qed}\remxx}
  {\popQED\endremxx}
\theoremstyle{definition}
\renewenvironment{defn}
  {\pushQED{\qed}\defnx}
  {\popQED\enddefnx}
\theoremstyle{plain}
\renewenvironment{lem}
  {\pushQED{\qed}\lemx}
  {\popQED\endlemx}
\theoremstyle{plain}
\renewenvironment{cor}
  {\pushQED{\qed}\corx}
  {\popQED\endcorx}
\theoremstyle{plain}
\renewenvironment{prop}
  {\pushQED{\qed}\propx}
  {\popQED\endpropx}
\theoremstyle{definition}
\renewenvironment{thm}
  {\pushQED{\qed}\thmx}
  {\popQED\endthmx}
\@date \else {\vskip3ex \centering\footnotesize\@date\par\vskip1ex}\fi
\else \@footnotetext{\@setdate}\fi}
\newif\ifarxiv
    \newcommand{\ocite}[2][]{\cite[#1]{#2}} 
    \newcommand{\ocite}[2][]{\unskip\unpenalty} 
\title{Approximation spaces of Deep Neural Networks}
\author{R{\'e}mi Gribonval}
\address{Univ Lyon, Inria, CNRS, ENS de Lyon, UCB Lyon 1, LIP UMR 5668\protect\\ F-69342, Lyon, France}
\thanks{This work was conducted while R.G. was with Univ Rennes, Inria, CNRS, IRISA}
\email{remi.gribonval@inria.fr}
\author{Gitta Kutyniok}
\address{Institut für Mathematik, Technische Universität Berlin, Germany}
\thanks{G.K. acknowledges partial support by the Bundesministerium fur Bildung und Forschung (BMBF) through the Berliner Zentrum for Machine Learning (BZML), Project AP4, RTG DAEDALUS (RTG 2433), Projects P1 and P3, RTG BIOQIC (RTG 2260), Projects P4 and P9, and by the Berlin Mathematics Research Center MATH+, Projects EF1-1 and EF1-4.}
\email{kutyniok@math.tu-berlin.de}
\author{Morten Nielsen}
\address{Department of Mathematical Sciences, Aalborg University, Denmark }
\email{mnielsen@math.aau.dk}
\author{Felix Voigtlaender}
\address{Department of Scientific Computing,
Katholische Universität Eichstätt-Ingolstadt}
\email{felix@voigtlaender.xyz}
\thanks{G.K. and F.V. acknowledge support by the European Commission-Project DEDALE (contract
no. 665044) within the H2020 Framework.}
\begin{document}

\begin{abstract}
We study the expressivity of 
deep neural networks.
Measuring a network's complexity by its number of connections or by its number of neurons,
we consider the class of functions for which the error of best approximation
with networks of a given complexity decays at a certain rate when increasing the complexity budget.
Using results from classical approximation theory, we show that this class can be endowed
with a (quasi)-norm that makes it a linear function space, called approximation space.
We establish that allowing the networks to have certain types of ``skip connections''
does not change the resulting approximation spaces.
We also discuss the role of the network's nonlinearity
(also known as activation function) on the resulting spaces, as well as the role of depth.
For the popular ReLU nonlinearity and its powers,
we relate the newly constructed spaces to classical Besov spaces.
The established embeddings highlight that some functions of very low Besov smoothness
can nevertheless be well approximated by neural networks, if these networks are sufficiently deep.
\end{abstract}

\keywords{Deep neural networks; sparsely connected networks;
Approximation spaces; Besov spaces; direct estimates; inverse estimates;
piecewise polynomials; ReLU activation function; }

\subjclass[2010]{Primary 82C32, 41A65. Secondary 68T05, 41A46, 42C40.}

\maketitle

%
%
%
%
%
%
\vspace{-0.5cm}

\section{Introduction}

Today, we witness a worldwide triumphant march of deep neural networks, impacting not
only various application fields, but also areas in mathematics such as inverse problems.
Originally, neural networks were developed by McCulloch and Pitts \cite{McCullochPitts} in 1943
to introduce a theoretical framework for artificial intelligence.
At that time, however, the limited amount of data and the lack of sufficient
computational power only allowed the training of shallow networks, that is, networks with
only few layers of neurons, which did not lead to the anticipated results.
The current age of big data and the significantly increased computer performance now
make the application of deep learning algorithms feasible, leading to the successful training
of very deep neural networks.
For this reason, neural networks have seen an impressive comeback.
The list of important applications in public life
ranges from speech recognition systems on cell phones over self-driving cars
to automatic diagnoses in healthcare.
For applications in science, one can witness a similarly strong impact
of deep learning methods in research areas such as quantum chemistry \cite{TACMT17}
and molecular dynamics \cite{MPWN18}, often allowing to resolve problems
which were deemed unreachable before.
This phenomenon is manifested similarly in certain fields of mathematics, foremost in inverse
problems \cite{JO17,Bubb18}, but lately also, for instance, in numerical analysis of
partial differential equations \cite{BBGJJ18}.

Yet, most of the existing research related
to deep learning is empirically driven and a profound and comprehensive
mathematical foundation is still missing, in particular for the previously mentioned applications.
This poses a significant challenge not only for mathematics itself,
but in general for the ``safe'' applicability of deep neural networks 
\cite{Elad17}.

A {\em deep neural network} in mathematical terms is a tuple
\begin{equation}
  \Phi = \big( (T_1, \alpha_1), \dots,  (T_L, \alpha_L) \big)
  \label{eq:intro1}
\end{equation}
consisting of affine-linear maps $T_\ell : \R^{N_{\ell - 1}} \to \R^{N_\ell}$
(hence $T_\ell(x) = A_\ell \, x + b_\ell$ for appropriate matrices
 $A_\ell$ and vectors $b_\ell$, often with a convolutional or Toeplitz structure)
and of nonlinearities $\alpha_{\ell}: \R^{N_{\ell}} \to \R^{N_{\ell}}$
that typically encompass componentwise rectification, possibly followed by a pooling operation.

The tuple in \eqref{eq:intro1} encodes the architectural components of the neural network,
where $L$ denotes the {\em number of layers} of the network,
while $L-1$ is the number of {\em hidden layers}.
The highly structured function $\Realization(\Phi)$ implemented by such a network $\Phi$
is then defined by applying the different maps in an iterative (layer-wise) manner; precisely,
\[
  \Realization (\Phi) : \R^{N_0} \to \R^{N_L}, \quad \text{with} \quad
  \Realization(\Phi) := \alpha_L \circ T_L \circ \cdots \circ \alpha_1 \circ T_1 \, .
\]
We call this function the {\em realization} of the deep neural network $\Phi$.
It is worth pointing out that most of the literature calls this function itself the neural network;
one can however---depending on the choice of the activation functions---imagine
the same function being realized by different architectural components,
so that it would not make sense, for instance, to speak of the number of layers of $\Realization(\Phi)$;
this is only well-defined when we talk about $\Phi$ itself.
The {\em complexity} of a neural network can be captured by various numbers such as the
{\em depth} $L$, the \emph{number of hidden neurons} $N(\Phi) = \sum_{\ell=1}^{L-1} N_{\ell}$,
or the {\em number of connections} (also called the \emph{connectivity}, or the \emph{number of weights})
given by $W(\Phi) = \sum_{\ell = 1}^{L} \| A_\ell \|_{\ell^0}$, where $\| A_\ell \|_{\ell^0}$
denotes the number of non-zero entries of the matrix $A_\ell$.

From a mathematical perspective, the central task of a deep neural network is to approximate a function
$f : \R^{N_0} \to \R^{N_L}$, which for instance encodes a classification problem.
Given a training data set $\big( x_i,f(x_i) \big)_{i=1}^m$ a loss function
$\mathcal{L} : \R^{N_L} \times \R^{N_L} \to \R$,
and a regularizer $\mathcal{P}$, which
imposes, for instance, sparsity conditions on the weights of the neural network $\Phi$,
solving the optimization problem
\begin{equation}\label{eq:DefERM}
  \min_{\Phi}
    \sum_{i=1}^m
      \mathcal{L} \big( \Realization (\Phi)(x_i),f(x_i) \big)
    + \lambda \mathcal{P}(\Phi)
\end{equation}
typically through a variant of stochastic gradient descent,
yields a learned neural network $\widehat{\Phi}$.
The objective is to achieve 
$\Realization (\widehat{\Phi}) \approx f$,
which is only possible if the function $f$ can indeed be well approximated by (the realization of)
a network with the prescribed architecture.
Various theoretical results have already been published to establish the ability of
neural networks---often with specific architectural constraints---to approximate
functions from certain function classes; this is referred to as analyzing the
{\em expressivity} of neural networks.
However, the fundamental question asking which function spaces are truly natural for deep neural
networks has never been comprehensively addressed.
Such an approach may open the door to a novel viewpoint and lead to a
refined understanding of the expressive power of deep neural networks.

In this paper we introduce approximation spaces associated to neural networks.
This leads to an extensive theoretical framework for studying the expressivity
of deep neural networks, allowing us also to address questions such as the impact of the depth
and of the activation function, or of so-called (and widely used) skip connections
on the approximation power of deep neural networks.


\subsection{Expressivity of Deep Neural Networks}

The first theoretical results concerning the expressivity of neural networks date back to
the early 90s, at that time focusing on shallow networks, mainly in the context of
the 
\emph{universal approximation theorem}
\cite{PinkusUniversalApproximation,Hornik1989universalApprox,Cybenko1989,Hornik1991251}.
The breakthrough-result of the ImageNet competition in 2012 \cite{AlexNet},
and the ensuing worldwide success story of neural networks
has brought renewed interest to the study of neural networks,
now with an emphasis on \emph{deep} networks.
The surprising effectiveness of such networks in applications has motivated the
study of the effect of depth on the expressivity of these networks.
Questions related to the learning phase are of a different
nature, focusing on aspects of statistical learning and optimization,
and hence constitute a different research field.

Let us recall some of the key contributions in the area of expressivity, in order to put our results
into perspective.
The universal approximation theorems by  Hornik \cite{Hornik1991251} and Cybenko \cite{Cybenko1989}
can be counted as a first highlight, stating that neural networks with only one hidden layer
can approximate continuous functions on compact sets arbitrarily well.
Examples of further work in this early stage, hence focusing on networks with a single hidden
layer, are approximation error bounds in terms of the number of neurons for functions with bounded
first Fourier moments \cite{Barron1993,Barron1994},
the failure of those networks to provide localized
approximations \cite{ChuXM1994networksforlocApprox},
a fundamental lower bound on approximation rates \cite{DeVore1997approxfeedforward, CandesDiss},
and the approximation of smooth/analytic functions \cite{Mhaskar1996NNapprox,Mhaskar1995151}.
Some of the early contributions already study networks with multiple hidden layers,
such as \cite{Funahashi1989183} for approximating continuous functions,
and \cite{NguyenThien1999687} for approximating functions together with their derivatives.
Also \cite{ChuXM1994networksforlocApprox}, which shows in certain instances
that deep networks can perform better than single-hidden-layer networks
can be counted towards this line of research.
For a survey of those early results, we refer to \cite{ellacott1994aspects,pinkus1999approximation}.

More recent work focuses predominantly on the analysis of the effect of depth.
Some examples---again without any claim of completeness---are \cite{Eldan2016PowerofDepth},
in which a function is constructed which cannot be expressed by a small two-layer network,
but which is implemented by a three-layer network of low complexity,
or \cite{Mhaskar2016DeepVSShallow} which considers so-called compositional functions,
showing that such functions can be approximated by neural networks without suffering
from the curse of dimensionality.
A still different viewpoint is taken in \cite{cohen2016expressive, cohen2016convolutional},
which focus on a similar problem as \cite{Mhaskar2016DeepVSShallow} but attacking it
by utilizing results on tensor decompositions.
Another line of research aims to study the approximation rate when approximating
certain function classes by neural networks with growing complexity
\cite{ShaCC2015provableAppDNN,bolcskei2017optimal,PetersenVoigtlaenderReLU,yarotsky2017error,Mhaskar93}.

\subsection{The classical notion of approximation spaces}

In classical approximation theory, the notion of approximation spaces refers to
(quasi)-normed spaces that are defined by their elements satisfying a specific decay
of a certain approximation error;
see for instance \cite{ConstructiveApproximation} 
In this introduction, we will merely sketch the key construction and
properties; we refer to Section~\ref{sec:ApproximationSpaces} for more details.

Let $X$ be a 
quasi-Banach space equipped with the quasi-norm $\|\cdot\|_{X}$.
Furthermore, here, as in the rest of the paper, let us denote by $\N = \{1,2,\ldots\}$
the set of natural numbers, and write $\N_{0} = \{0\} \cup \N$, $\N_{\geq m} = \{n \in \N, n \geq m\}$.
For a prescribed family $\AppSet = (\AppSet_{n})_{n \in \N_{0}}$
of subsets $\AppSet_{n} \subset X$, one aims to classify functions $f \in X$
by the decay (as $n \to \infty$) of the error of best approximation
by elements from $\AppSet_{n}$, given by
$\AppErr (f,\AppSet_{n})_X :=\inf_{g \in \AppSet_{n}} \| f - g \|_{X}$.
The desired rate of decay of this error is prescribed by a discrete weighted $\ell^q$-norm,
where the weight depends on the parameter $\alpha > 0$.
For $q = \infty$, this leads to the class
\[
  A_\infty^\alpha(X,\Sigma)
  := \Big\{
       f \in X
       \, : \,
       \sup_{n \geq 1} \,\,
         [n^{\alpha} \cdot \AppErr (f,\AppSet_{n-1})_{X}] < \infty
     \Big\} \, .
\]
Thus, intuitively speaking, this class consists of those elements of $X$ for which the error of
best approximation by elements of $\AppSet_{n}$ decays at least as $\mathcal{O}(n^{-\alpha})$
for $n \to \infty$.
This general philosophy also holds for the more general classes $\GenApproxSpace$, $q > 0$.

If the initial family $\AppSet$ of subsets of $X$ satisfies some quite natural conditions,
more precisely $\AppSet_0 = \{0\}$, each $\Sigma_n$ is invariant to scaling,
$\Sigma_n \subset \Sigma_{n+1}$,
and the union $\bigcup_{n \in \N_0} \Sigma_n$ is dense in $X$,
as well as the slightly more involved condition that
$\AppSet_n + \AppSet_n \subset \AppSet_{cn}$ for some fixed $c \in \N$,
then an abundance of results are available for the approximation classes $\GenApproxSpace$.
In particular, $\GenApproxSpace$ turns out to be a proper {\em linear function space},
equipped with a natural (quasi)-norm.
Particular highlights of the theory are various embedding and interpolation results
between the different approximation spaces.

\subsection{Our Contribution}

We introduce a novel perspective on the study of expressivity of deep neural networks
by introducing the associated approximation spaces and investigating their properties.
This is in contrast with the usual approach of studying the approximation fidelity of
neural networks on \emph{classical} spaces.
We utilize this new viewpoint for deriving novel results on, for instance,
the impact of the choice of activation functions and the depth of the networks.

Given a so-called (non-linear) activation function $\varrho :
\R \to \R$, a classical setting is to consider nonlinearities $\alpha_\ell$
in \eqref{eq:intro1}  corresponding to a componentwise application of the activation function
for each hidden layer $1 \leq \ell < L$, and $\alpha_{L}$ being the identity.
We refer to networks of this form as {\em strict $\varrho$-networks}.
To introduce a framework of sufficient flexibility, we also consider nonlinearities
where for each component either $\varrho$ or the identity is applied.
We refer to such networks as \emph{generalized $\varrho$-networks};
the realizations of such generalized networks include
various function classes such as multilayer sparse linear transforms
\cite{lemagoarou:hal-01167948}, networks with skip-connections \cite{SkipConnections},
ResNets \cite{ResidualNetworks,WiderOrDeeper} or U-nets \cite{Ronneberger:2015gk}.



Let us now explain how we utilize this framework of approximation spaces.
Our focus will be on approximation rates in terms of growing complexity of neural networks,
which we primarily measure by their {\em connectivity},
since this connectivity is closely linked to the number of bytes needed to describe the network,
and also to the number of floating point operations needed to apply the corresponding function
to a given input.
This is in line with recent results \cite{bolcskei2017optimal,PetersenVoigtlaenderReLU,yarotsky2017error}
which explicitly construct neural networks that reach an optimal approximation rate
for very specific function classes, and in contrast to most of the existing literature
focusing on complexity measured by the number of neurons.
We also consider the approximation spaces for which the complexity of the
networks is measured by the number of neurons.

In addition to letting the number of connections or neurons tend to infinity
while keeping the depth of the networks fixed,
we also allow the depth to evolve with the number of connections or neurons.
To achieve this, we link both by a non-decreasing \emph{depth-growth function}
$\mathscr{L}: \N \to \N \cup \{ \infty \}$,
where we allow the possibility of not restricting the number of layers
when $\mathscr{L}(n) = \infty$.
We then consider the function families 
$\WeightClassSymbol_n(\Omega\to\R^{k},\varrho, \mathscr{L})$
(resp.~$\NeuronClassSymbol_n(\Omega\to\R^{k},\varrho, \mathscr{L})$)
made of all restrictions to a given 
 subset $\Omega \subseteq \R^{d}$
of functions which can be represented by (generalized)
$\varrho$-networks with input/output dimensions $d$ and $k$,
at most $n$ nonzero connection weights (resp.~at most $n$ hidden neurons),
and at most $\mathscr{L}(n)$ layers.
Finally, given a space $X$ of functions $\Omega \to \R^{k}$, we will use the sets
$\Sigma_n = \StandardSigmaW := \WeightClassSymbol_n(\Omega\to\R^{k},\varrho, \mathscr{L}) \cap X$
(resp.~$\Sigma_n = \StandardSigmaN := \NeuronClassSymbol_n (\Omega\to\R^{k},\varrho, \mathscr{L}) \cap X$)
to define the associated approximation spaces.
Typical choices for $X$ are
\begin{equation}
  \StandardXSpace(\Omega) := L_p(\Omega; \R^k) \; \mbox{for } 0<p<\infty
  \quad \mbox{or} \quad
  \StandardXSpace[k][\infty](\Omega),
  \label{eq:StandardXSpace}
\end{equation}
with $\StandardXSpace[k][\infty](\Omega)$ the space of uniformly continuous functions on $\Omega$
that vanish at infinity, equipped with the supremum norm.
For ease of notation, we will sometimes also write
$\StandardXSpace[](\Omega) := \StandardXSpace[1](\Omega)$, and
$\StandardXSpace := \StandardXSpace(\Omega)$ (resp.~$\StandardXSpace[] := \StandardXSpace[](\Omega)$).

Let us now give a coarse overview of our main results, which we are able to derive
with our choice of approximation spaces based on $\StandardSigmaW$ or $\StandardSigmaN$.

\subsubsection{Core properties of the novel approximation spaces.}

We first prove that each of these two families $\Sigma = (\Sigma_n)_{n \in \N_0}$
satisfies the necessary requirements for the associated approximation spaces
$\GenApproxSpace$---which we denote by $\WASpace[X]$ and $\NASpace[X]$, respectively---to be amenable
to various results from approximation theory.
Under certain conditions on $\varrho$ and $\mathscr{L}$,
Theorem~\ref{th:DNNApproxSpaceWellDefined} shows that these
approximation spaces are even equipped with a convenient (quasi-)Banach \emph{spaces} structure.
The spaces $\WASpace[X]$ and $\NASpace[X]$ are nested (Lemma~\ref{lem:weightsvsneurons})
and do not generally coincide (Lemma~\ref{lem:WeightAndNeuronSpacesDistinct}).

To prepare the ground for the analysis of the impact of depth, we then prove nestedness with
respect to the depth growth function. In slightly more detail, we identify a partial order
$\preceq$ and an equivalence relation $\sim$ on depth growth functions such that the following
holds (Lem.~\ref{lem:RoleGrowthFunctionRate} and Thm.~\ref{thm:RoleGrowthFunction}):
\begin{enumerate}
  \item If $\mathscr{L}_{1} \preceq \mathscr{L}_{2}$, then
        \(
          \WASpace[X][\varrho][q][\alpha][\mathscr{L}_{1}]
          \subset \WASpace[X][\varrho][q][\alpha][\mathscr{L}_{2}]
        \)
        for any $\alpha$, $q$, $X$ and $\varrho$; and
  \item if $\mathscr{L}_{1} \sim \mathscr{L}_{2}$, then
        \(
          \WASpace[X][\varrho][q][\alpha][\mathscr{L}_{1}]
          = \WASpace[X][\varrho][q][\alpha][\mathscr{L}_{2}]
        \)
        for any $\alpha$, $q$, $X$ and $\varrho$.
\end{enumerate}
The same nestedness results hold for the spaces $\NASpace[X]$.
Slightly surprising and already insightful might be that under mild conditions on the
activation function $\varrho$,  the approximation classes for strict and generalized
$\varrho$-networks are in fact identical, allowing to derive the conclusion that their
expressivities coincide (see Theorem~\ref{th:DNNApproxSpaceWellDefinedStrict}).


\subsubsection{ Approximation spaces associated with ReLU-networks.}

The rectified linear unit (ReLU) and its powers of exponent $r \in \N$---in spline theory better-known
under the name of \emph{truncated powers} \cite[Chapter 5, Equation (1.1)]{ConstructiveApproximation}---%
are defined by
\[
  \varrho_r : \R \to \R, x \mapsto (x_{+})^{r},
\]
where $x_{+} = \max \{ 0,x \} = \varrho_{1}(x)$, with the ReLU activation function being $\varrho_{1}$.
Considering these activation functions is motivated practically by the wide use
of the ReLU \cite{LeCunDeepLearningNaturePaper}, as well as theoretically
by the existence \cite[Theorem 4]{PinkusLowerBoundsForMLPApproximation} of pathological
activation functions giving rise to trivial---too rich---approximation spaces that satisfy
$\WASpace[\StandardXSpace]=\NASpace[\StandardXSpace] = \StandardXSpace$, for all $\alpha,q$.
In contrast, the classes associated to $\varrho_{r}$-networks are nontrivial for $p \in (0,\infty]$
(Theorem~\ref{thm:ApproximationSpacesNonTrivial}).
Moreover, 
strict and generalized $\varrho_{r}$-networks yield
identical approximation classes for any 
subset $\Omega \subseteq \R^d$ of nonzero measure (even unbounded), for any
$p \in (0,\infty]$ (Theorem~\ref{th:ReLUDNNApproxSpaceWellDefined}).
Furthermore, for any $r \in \N$, these approximation classes are (quasi-)Banach spaces
(Theorem~\ref{th:ReLUDNNApproxSpaceWellDefined}), as soon as
\[
  L := \sup_{n \in \N} \mathscr{L}(n)
  \geq \begin{cases}
          2, & \text{if}\ \Omega\ \text{is bounded \emph{or}}\ d=1, \\
          3, & \text{otherwise.}
        \end{cases}.
\]
The expressivity of networks with more general activation functions can be related to that of
$\varrho_{r}$-networks (see Theorem~\ref{thm:ReLUPowersApproxSpaces}) in the following sense:
If $\varrho$ is continuous and piecewise polynomial of degree at most $r$, then its
approximation spaces are contained in those of $\varrho_{r}$-networks. In particular,
if $\Omega$ is bounded or if $\mathscr{L}$ satisfies a certain growth condition,
then for $s,r \in \N$ such that $1 \leq s \leq r$
       \[
         \WASpace[X][\varrho_s]  \subset \WASpace[X][\varrho_r]
         \qquad \text{and} \qquad
         \NASpace[X][\varrho_s]  \subset \NASpace[X][\varrho_r].
       \]
Also, if $\varrho$ is a spline of degree $r$ and not a polynomial, then its approximation
spaces match those of $\varrho_{r}$ on bounded $\Omega$.
In particular, on a bounded domain $\Omega$,  the spaces associated to the leaky-ReLU \cite{Maas:tn},
the parametric ReLU \cite{He:2015:DDR:2919332.2919814},  the absolute value
(as, e.g, in scattering transforms \cite{Mallat:2016jr}) and the
soft-thresholding activation function \cite{Gregor2010} are all identical to the spaces associated to the ReLU.

Studying the relation of approximation spaces of $\varrho_{r}$-networks for different
$r$, we derive the following statement as a corollary (Corollary~\ref{cor:SaturationProp})
of Theorem~\ref{thm:ReLUPowersApproxSpaces}: Approximation spaces of $\varrho_{2}$-networks
and $\varrho_{r}$-networks are equal for $r \geq 2$ when $\mathscr{L}$ satisfies a certain
growth condition, showing a saturation from degree $2$ on.
Given this growth condition, for any $r \geq 2$, we obtain the following diagram:
\[
  \begin{array}{ccccc}
    \WASpace[X][\varrho_{1}]
      & \subset &
    \WASpace[X][\varrho_{2}]
    &=&
    \WASpace[X][\varrho_{r}],\\
    \cap & & \cap & & \\
    \NASpace[X][\varrho_{1}]
            & \subset &
    \NASpace[X][\varrho_{2}]
    &=&
    \NASpace[X][\varrho_{r}].
  \end{array}
\]


\subsubsection{Relation to classical function spaces.}

Focusing still on ReLU-networks,
we show that ReLU-networks of bounded depth approximate $C_{c}^{3}(\Omega)$ functions
at bounded rates (Theorem~\ref{thm:UniversalLowerBound}) in the sense that,
for open $\Omega \subset \R^d$ and $L := \sup_{n} \mathscr{L}(n) < \infty$, we prove
\[
  \NASpace[X][\varrho_{1}] \cap C_{c}^{3}(\Omega) = \{0\} \text{ if } \alpha > 2 \cdot (L-1),
  \qquad\!\!\!
  \text{and}
  \qquad\!\!\!
  \WASpace[X][\varrho_{1}] \cap C_{c}^{3}(\Omega) = \{0\} \text{ if } \alpha > 2 \cdot \lfloor L/2 \rfloor.
\]
As classical function spaces (e.g. Sobolev, Besov) intersect $C^{3}_{c}(\Omega)$ nontrivially,
they can only embed into $\WASpace[X][\varrho_{1}]$ or $\NASpace[X][\varrho_{1}]$
if the networks are somewhat deep ($L \geq 1 + \alpha/2$ or $\lfloor L/2 \rfloor \geq \alpha/2$,
respectively), giving some insight about the impact of depth on the expressivity of neural networks.

We then study relations to the classical Besov spaces
${B^{s}_{\sigma,\tau} (\Omega) := B^s_{\tau}(L_\sigma(\Omega;\R))}$.
We establish both \emph{direct estimates}---that is, embeddings of certain Besov spaces
into approximation spaces of $\varrho_{r}$-networks---and \emph{inverse estimates}---%
that is, embeddings of the approximation spaces into certain Besov spaces.

The main result in the regime of direct estimates is Theorem~\ref{thm:better_direct} showing that
if $\Omega \subset \R^d$ is a bounded Lipschitz domain, if
$r \geq 2$, and if $L := \sup_{n \in \N} \mathscr{L}(n)$
satisfies $L \geq 2 + 2 \lceil \log_2 d \rceil$, then
\begin{equation}
  B^{d \alpha}_{p,q} (\Omega) \hookrightarrow \WASpace[X_p(\Omega)][\varrho_r]
  \quad \forall \, p,q \in (0,\infty] \text{ and } 0 < \alpha < \frac{r + \min\{1,p^{-1}\}}{d} .
  \label{eq:IntroductionGeneralDirectEmbedding}
\end{equation}
For large input dimensions $d$, however, the condition $L \geq 2 + 2 \lceil \log_2 d \rceil$
is only satisfied for quite deep networks.
In the case of more shallow networks with $L \geq 3$, the embedding
\eqref{eq:IntroductionGeneralDirectEmbedding} still holds (for any $r \in \N$),
but is only established for
$0 < \alpha < \tfrac{\min \{ 1, p^{-1} \}}{d}$.
Finally, in case of $d = 1$, the embedding \eqref{eq:IntroductionGeneralDirectEmbedding} is
valid as soon as $L \geq 2$ and $r \geq 1$.


Regarding 
 inverse estimates, we first establish limits on possible embeddings
(Theorem~\ref{thm:LimitInverseBesov}).
Precisely, for $\Omega = (0,1)^{d}$ and any $r \in \N$, $\alpha,s \in (0,\infty)$,
and $\sigma, \tau \in (0,\infty]$ we have, with $L := \sup_{n} \mathscr{L}(n) \geq 2$:
\begin{itemize}[leftmargin=0.6cm]
%
  \item if $\alpha < \lfloor L/2\rfloor \cdot \min \{ s, 2 \}$
        then $\WASpace[L_p][\varrho_{r}]$ does \emph{not} embed into $B^{s}_{\sigma,\tau} (\Omega)$;

  \item if $\alpha < (L-1) \cdot \min \{ s, 2 \}$ then $\NASpace[L_p][\varrho_{r}]$
        does \emph{not} embed into $B^{s}_{\sigma,\tau} (\Omega)$.
\end{itemize}
A particular consequence is that for unbounded depth $L = \infty$,
none of the spaces $\WASpace[X][\varrho_{r}]$, $\NASpace[X][\varrho_{r}]$ can embed
into {\em any} Besov space of strictly positive smoothness $s>0$.

For scalar input dimension $d=1$, an embedding into a Besov space with the relation
$\alpha = \lfloor L/2 \rfloor \cdot s$ (respectively $\alpha = (L-1) \cdot s$) is indeed achieved
for $X = L_{p}( (0,1) )$, $0 < p < \infty$, $r \in \N$,
(Theorem~\ref{thm:ApproxSpaceIntoBesovForBoundedDepth}):
\begin{alignat*}{5}
  \WASpace[L_p][\varrho_{r}] \subset B^{s}_{\sigma,\sigma} (\Omega),
  & \quad \text{for each}\ 0<s < r+1,s
  && \quad \alpha := \lfloor L/2\rfloor \cdot s,
  && \quad \sigma := (s+1/p)^{-1},\\
  \NASpace[L_p][\varrho_{r}] \subset B^{s}_{\sigma,\sigma} (\Omega),
  & \quad \text{for each}\ 0< s < r+1,
  && \quad \alpha := (L-1) \cdot s,
  && \quad \sigma := (s+1/p)^{-1}.
\end{alignat*}



\subsection{Expected Impact and Future Directions}

We anticipate our results to have an impact in a number of areas
that we now describe together with possible future directions:

\begin{itemize}[leftmargin=0.6cm]
  \item {\em Theory of Expressivity.}
        We introduce a general framework to study approximation properties
        of deep neural networks from an approximation space viewpoint.
        This opens the door to transfer various results from this part of
        approximation theory to deep neural networks.
        We believe that this conceptually new approach in the theory of expressivity
        will lead to further insight.
        One interesting topic for future investigation is, for instance, to derive
        a finer characterization of the spaces $\WASpace[\StandardXSpace[]][\varrho_{r}]$,
        $\NASpace[\StandardXSpace[]][\varrho_{r}]$, for $r \in \{1,2\}$ (with some assumptions
        on $\mathscr{L}$).

        Our framework is amenable to various extensions; 
        for example the restriction to convolutional weights would allow a study of approximation spaces of convolutional neural networks.

  \item {\em Statistical Analysis of Deep Learning.}
        Approximation spaces characterize fundamental tradeoffs between the complexity of a network
        architecture and its ability to approximate (with proper choices of parameter values)
        a given function $f$.
        In statistical learning, a related question is to characterize which generalization bounds
        (also known as excess risk guarantees) can be achieved when fitting network parameters
        using $m$ independent training samples.
        Some ``oracle inequalities'' \cite{SchmidtHieber:2017vn} of this type
        have been recently established for idealized training algorithms
        minimizing the empirical risk~\eqref{eq:DefERM}.
        Our framework, in combination with existing results on the VC-dimension of neural networks
        \cite{Bartlett:2017ux} is expected to shed new light on such generalization guarantees
        through a generic approach encompassing various types of constraints
        on the considered architecture.

  \item {\em Design of Deep Neural Networks---Architectural Guidelines.}
        Our results reveal how the expressive power of a network architecture may be impacted
        by certain choices such as the presence of certain types of skip connections
        or the selected activation functions.
        Thus, our results provide indications on how a network architecture may be adapted
        without hurting its expressivity, in order to get additional degrees of freedom
        to ease the task of optimization-based learning algorithms and improve their performance.
        For instance, while we show that generalized and strict networks have
        (under mild assumptions on the activation function) the same expressivity,
        we have not yet considered so-called ResNet architectures.
        Yet, the empirical observation that a ResNet architecture makes it easier
        to train deep networks \cite{ResidualNetworks} calls for a better understanding
        of the relations between the corresponding approximations classes.
\end{itemize}


\subsection{Outline}

The paper is organized as follows.

Section~\ref{sec:NetworkCalculus} introduces our notations regarding neural networks and
provides basic lemmata concerning the ``calculus'' of neural networks. The classical notion of
approximation spaces is reviewed in Section~\ref{sec:ApproximationSpaces}, and therein also
specialized to the setting of approximation spaces of networks, with a focus on approximation
in $L_p$ spaces. This is followed by Section~\ref{sec:AppSpacesReLU}, which concentrates on
$\varrho$-networks with $\varrho$ the so-called ReLU or one of its powers.
Finally, Section~\ref{sec:directinverse} studies embeddings between $\WASpace[X][\varrho_{r}]$
(resp.  $\NASpace[X][\varrho_{r}]$) and classical Besov spaces,
with $X = \StandardXSpace[](\Omega)$.

\section{Neural networks and their elementary properties}
\label{sec:NetworkCalculus}




In this section, we formally introduce the definition of neural networks used throughout this
paper, and discuss the elementary properties of the corresponding sets of
functions.

\subsection{Neural networks and their main characteristics}\label{sec:gnndefs}

\begin{defn}[Neural network]\label{defn:NeuralNetworks}

  Let $\varrho : \R \to \R$.
  A \emph{(generalized) neural network with activation function $\varrho$}
  (in short: a \emph{$\varrho$-network}) is a tuple
  $\big( (T_1, \alpha_1), \dots, (T_L, \alpha_L) \big)$, where each
  $T_\ell : \R^{N_{\ell - 1}} \to \R^{N_\ell}$ is an affine-linear map,
  $\alpha_L = \identity_{\R^{N_L}}$, and each function
  $\alpha_\ell : \R^{N_\ell}\to \R^{N_\ell}$ for $1 \leq \ell < L$ is of
  the form $\alpha_\ell = \bigotimes_{j = 1}^{N_\ell} \varrho_j^{(\ell)}$
  for certain $\varrho_j^{(\ell)} \in \{\identity_{\R}, \varrho\}$.
  Here, we use the notation
  \[
    \bigotimes_{j=1}^n \theta_j :
    X_1 \times \cdots \times X_n \to Y_1 \times \cdots \times Y_n,
    (x_1,\dots,x_n) \mapsto \big( \theta_1 (x_1), \dots, \theta_n (x_n) \big)
    \, \text{ for } \, \theta_j : X_j \to Y_j \, .
    \qedhere
  \]
  \end{defn}
  %
  %

\begin{defn}
  A $\varrho$-network as above is called \emph{strict} if
  $\varrho_j^{(\ell)} = \varrho$ for all $1 \leq \ell < L$ and
  $1 \leq j \leq N_\ell$.
\end{defn}

\begin{defn}[Realization of a network]
    The \emph{realization} $\Realization(\Phi)$ of a network
    ${\Phi = \big( (T_1, \alpha_1), \dots, (T_L, \alpha_L) \big)}$ as above is the function
    \[
      \Realization (\Phi)
      : \R^{N_0} \to \R^{N_L} ,
      \quad \text{with} \quad
      \Realization(\Phi)
      := \alpha_L \circ T_L \circ \cdots \circ \alpha_1 \circ T_1 \, .
      \qedhere
    \]
\end{defn}

The {\em complexity} of a neural network is characterized by several features.
\begin{defn}[Depth, number of hidden neurons, number of connections]\label{def:complexity}
    Consider a neural network $\Phi = \big( (T_1, \alpha_1), \dots, (T_L, \alpha_L) \big)$
    with $T_\ell : \R^{N_{\ell - 1}} \to \R^{N_\ell}$ for $1 \leq \ell \leq L$.
    \begin{itemize}[leftmargin=0.6cm]
        \item The \emph{input-dimension} of $\Phi$ is $\InDim (\Phi) := N_0 \in \N$,
              its \emph{output-dimension} is $\OutDim (\Phi) := N_L \in \N$.

        \item The \emph{depth} of $\Phi$ is $L(\Phi) := L \in \N$, corresponding to the
              \emph{number of (affine) layers} of $\Phi$.\\
              We remark that with these notations, the number of \emph{hidden} layers is $L-1$.

        \item The \emph{number of hidden neurons} of $\Phi$ is
              $N(\Phi) := \sum_{\ell = 1}^{L-1} N_\ell \in \N_0$;

        \item The {\em number of connections} (or \emph{number of weights}) of $\Phi$ is
              $W(\Phi) := \sum_{\ell = 1}^{L} \| T_\ell \|_{\ell^0} \in \N_0$,
              with  ${\|T\|_{\ell^0} := \|A\|_{\ell^{0}}}$ for an affine map
              $T: x \mapsto A x + b$ with $A$ some matrix and $b$ some vector;
              here, $\|\cdot\|_{\ell^{0}}$ counts the number of nonzero entries
              in a vector or a matrix.
              \qedhere
    \end{itemize}
    \end{defn}
\begin{rem}
If $W(\Phi) = 0$ then $\Realization(\Phi)$ is constant (but not necessarily zero), and if
  $N(\Phi)=0$, then $\Realization (\Phi)$ is affine-linear (but not necessarily zero or constant).
\end{rem}

Unlike the notation used in \cite{bolcskei2017optimal,PetersenVoigtlaenderReLU}, which considers
$W_0 (\Phi) := \sum_{\ell = 1}^L (\|A^{(\ell)}\|_{\ell^0} + \|b^{(\ell)}\|_{\ell^0})$
where ${T_\ell \, x = A^{(\ell)} x + b^{(\ell)}}$, Definition~\ref{def:complexity} only
counts the nonzero entries \emph{of the linear part} of each
$T_\ell$, so that $W(\Phi) \leq W_{0}(\Phi)$.
Yet, as shown with the following lemma, both definitions are in fact
equivalent up to constant factors if one is only interested in the represented
functions. The proof is in Appendix~\ref{app:PfBiasWeightsDontMatter}.

\begin{lem}\label{lem:BiasWeightsDontMatter}
For any network $\Phi$ there is a ``compressed'' network $\widetilde{\Phi}$
with $\Realization(\, \widetilde{\Phi} \,) = \Realization(\Phi)$ such that
$L(\, \widetilde{\Phi} \,) \leq L(\Phi)$, $N(\, \widetilde{\Phi} \,) \leq N(\Phi)$, and
\[
  W(\widetilde{\Phi})
  \leq W_0 (\, \widetilde{\Phi} \,)
  \leq \OutDim(\Phi) + 2 \cdot W(\Phi) \, .
\]
The network $\widetilde{\Phi}$ can be chosen to be strict if $\Phi$ is strict.
\end{lem}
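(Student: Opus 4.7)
The plan is to construct $\widetilde{\Phi}$ by iteratively removing hidden neurons whose incoming weight row vanishes, and then to bound $W_0(\widetilde{\Phi})$ by a direct counting argument. The underlying observation is the following: if the $j$-th row of $A^{(\ell)}$ is zero for some hidden layer $1 \leq \ell \leq L-1$, then the $j$-th coordinate of the output of $\alpha_\ell \circ T_\ell$ is the constant $c := \varrho_j^{(\ell)}(b^{(\ell)}_j)$, independently of the input, and this constant can be absorbed into the bias of $T_{\ell+1}$. Concretely, I would form a new network by deleting the $j$-th row of $A^{(\ell)}$ and the $j$-th entry of $b^{(\ell)}$, deleting the $j$-th column of $A^{(\ell+1)}$, and updating $b^{(\ell+1)} \leftarrow b^{(\ell+1)} + c \cdot A^{(\ell+1)}_{:,j}$. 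A short calculation shows that this single-step compression preserves $\Realization(\Phi)$, preserves $L$, strictly decreases $N$, does not increase $W$ (no nonzero entries of $A^{(\ell)}$ are removed since the row was zero, and some entries of $A^{(\ell+1)}$ may disappear), and preserves strictness since the activation functions of the surviving neurons are untouched.

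Iterating this step must terminate because $N$ strictly decreases and is a non-negative integer. During the iteration, it may happen that an entire hidden layer becomes empty ($N_\ell = 0$ after some removal); in that case, every subsequent layer produces a constant, so $\Realization(\Phi)$ equals a constant vector $v \in \R^{\OutDim(\Phi)}$. Such a constant function is realized by the trivial network with $L = 1$ and $T_1(x) = 0 \cdot x + v$, which is strict, satisfies $L = 1 \leq L(\Phi)$, $N = 0 \leq N(\Phi)$, $W = 0 \leq W(\Phi)$, and $W_0 = \|v\|_{\ell^0} \leq \OutDim(\Phi)$, so the claimed bound holds in this degenerate case; I would therefore treat this case separately and thereby reduce to the situation where no hidden layer becomes empty during the iteration.

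Once the procedure terminates with a network $\widetilde{\Phi}$ in which every hidden neuron has at least one nonzero incoming weight, the bound follows by direct bookkeeping. For every hidden layer $\ell$, the hypothesis on the rows of $A^{(\ell)}$ gives $\widetilde{N}_\ell \leq \|A^{(\ell)}\|_{\ell^0}$, hence the total number of potentially nonzero bias entries across the hidden layers is bounded by $\sum_{\ell=1}^{L(\widetilde{\Phi})-1} \|A^{(\ell)}\|_{\ell^0} \leq W(\widetilde{\Phi})$. The output layer contributes at most $\OutDim(\Phi)$ further bias entries. Summing these together with the linear contribution $W(\widetilde{\Phi})$ yields
\[
  W(\widetilde{\Phi}) \leq W_0(\widetilde{\Phi}) \leq 2\,W(\widetilde{\Phi}) + \OutDim(\Phi) \leq 2\,W(\Phi) + \OutDim(\Phi),
\]
which is the desired estimate.

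The main obstacle I expect is the bookkeeping around the empty-layer edge case: the cleanest route is to handle it by a separate constant-network argument as above, rather than trying to carry the iteration through cascades of trivial layers. Everything else is elementary linear algebra, combined with the easy check that the single-step modification of $(A^{(\ell)}, b^{(\ell)}, A^{(\ell+1)}, b^{(\ell+1)})$ leaves the overall composition unchanged.
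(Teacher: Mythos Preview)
Your proof is correct and rests on the same single-neuron reduction as the paper's: remove a hidden neuron whose incoming row of $A^{(\ell)}$ vanishes and absorb its constant output into the bias of $T_{\ell+1}$, then bound $W_0$ once every surviving hidden neuron has a nonzero incoming weight. The paper frames this as a minimal-counterexample argument split into four cases (two of which prune zero rows in the \emph{output} layer), whereas your direct iteration is slightly more economical because you simply bound the output-layer bias by $\OutDim(\Phi)$ instead of pruning there; both versions handle the empty-hidden-layer edge case by collapsing to a one-layer constant network.
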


\begin{rem}
The reason for distinguishing between a neural network and its
associated realization is that for a given function $f : \R^d\to\R^k$,
there might be many different neural networks $\Phi$ with
$f = \Realization (\Phi)$, so that talking about the number of
layers, neurons, or weights of the \emph{function} $f$ is not
well-defined, whereas these notions certainly make sense for neural
networks as defined above. A possible alternative would be to define for example
\[
  L(f)
  := \min
  \big\{
      L(\Phi)
      \, : \,
      \Phi \text{ neural network with }
      \Realization(\Phi) = f
  \big\},
\]
and analogously for $N(f)$ and $W(f)$; but this has the considerable
drawback that it is not clear whether there is a neural network
$\Phi$ that \emph{simultaneously} satisfies e.g.\@ $L(\Phi) = L(f)$
and $W(\Phi) = W(f)$. Because of these issues, we prefer to properly
distinguish between a neural network and its realization.
\end{rem}

\begin{rem}\label{rem:NeuralNetworkDefinition}
  Some of the conventions in the above definitions might appear
  unnecessarily complicated at first sight, but they have been chosen after
  careful thought. In particular:

  \begin{itemize}[leftmargin=0.6cm]
    \item Many neural network architectures used in practice use the same
          activation function for all neurons in a common layer.
          If this choice of activation function even stays the same across all
          layers---except for the last one---one obtains a \emph{strict}
          neural network.

    \item In applications, network architectures very similar to our ``generalized''
          neural networks are used; examples include \emph{residual networks}
          (also called ``ResNets'', see \cite{ResidualNetworks,WiderOrDeeper}),
          and \emph{networks with skip connections} \cite{SkipConnections}.

    \item As expressed in Section~\ref{sec:gnnclosed}, the class of
          realizations of \emph{generalized} neural networks admits nice closure
          properties under linear combinations and compositions of functions.
          Similar closure properties do in general \emph{not} hold for the class
          of strict networks.

    \item The introduction of generalized networks will be justified
          in Section \ref{sub:StrictNetworks}, where we show that if one
          is only interested in approximation theoretic properties of
          the respective function class, then---at least on bounded domains
          $\Omega \subset \R^d$ for ``generic'' $\varrho$,
          but also on unbounded domains for the ReLU activation function and its
          powers---generalized networks and strict networks have identical properties.
          \qedhere
%
%
%
%
  \end{itemize}
\end{rem}

\subsection{Relations between depth, number of neurons, and number of connections}

We now investigate the relationships between the quantities describing the complexity
of a neural network $\Phi = \big( (T_1,\alpha_1), \dots, (T_L, \alpha_L) \big)$
with $T_\ell : \R^{N_{\ell - 1}} \to \R^{N_\ell}$.

Given the number of (hidden) neurons of the network, the other quantities can be bounded.
Indeed, by definition we have $N_\ell \geq 1$ for all $1 \leq \ell \leq L-1$;
therefore, the number of layers satisfies
\begin{equation}
  L(\Phi)
  = 1+\sum_{\ell = 1}^{L-1} 1
  \leq 1+ \sum_{\ell = 1}^{L-1} N_\ell
  = 1+ N(\Phi) \, .
  \label{eq:LayersBoundedByNeurons}
\end{equation}
Similarly, as $\|T_{\ell}\|_{\ell^{0}} \leq N_{\ell-1} N_{\ell}$ for each $1 \leq \ell < L$, we have
\begin{equation}
  W(\Phi)
  = \sum_{\ell = 1}^{L} \|T_{\ell}\|_{\ell^{0}}
  \leq \sum_{\ell = 1}^{L} N_{\ell-1} N_{\ell}
  \leq \sum_{\ell'=0}^{L-1}\sum_{\ell=1}^{L} N_{\ell'}N_{\ell}
  = (d_{\mathrm{in}} (\Phi) +N(\Phi)) (N(\Phi) + d_{\mathrm{out}}(\Phi))\, ,
  \label{eq:WeightsBoundedByNeurons}
\end{equation}
showing that $W(\Phi) = \mathcal{O}([N(\Phi)]^2 + d k)$ for fixed input and output dimensions $d,k$.
When $L(\Phi)=2$ we have in fact
\(
  W(\Phi)
  = \|T_{1}\|_{\ell^{0}}+\|T_{2}\|_{\ell^{0}}\leq N_{0}N_{1}+N_{1}N_{2}
  = (N_{0}+N_{2})N_{1}
  = (d_{\mathrm{in}} (\Phi) + d_{\mathrm{out}}(\Phi)) \cdot N(\Phi)
\).

In general, one \emph{cannot} bound the number of layers or of hidden neurons
by the number of nonzero weights,
as one can build arbitrarily large networks with many ``dead neurons''.
Yet, such a bound is true if one is willing to switch
to a potentially different network \emph{which has the same realization as the original network}.
To show this, we begin with the case of networks with zero connections.

\begin{lem}\label{lem:BoundingConnectionsWithLayers}
  Let $\Phi = \big( (T_1,\alpha_1), \dots, (T_L, \alpha_L) \big)$ be a neural network.
  If there exists some $\ell \in \FirstN{L}$ such that
  $\|T_{\ell}\|_{\ell^{0}} = 0$,
  then $\Realization(\Phi) \equiv c$ for some $c \in \R^{k}$ where $k = d_{\mathrm{out}}(\Phi)$.
\end{lem}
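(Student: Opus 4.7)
The plan is to unpack the definition of $\|T_\ell\|_{\ell^0}$ and observe that the vanishing of the linear part of a single affine layer collapses the entire composition into a constant function.

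First I would write $T_\ell(y) = A_\ell y + b_\ell$ for the affine map at layer $\ell$. By Definition~\ref{def:complexity}, $\|T_\ell\|_{\ell^0} = \|A_\ell\|_{\ell^0}$, so the hypothesis $\|T_\ell\|_{\ell^0} = 0$ forces $A_\ell = 0$. Consequently $T_\ell(y) = b_\ell$ for every $y \in \R^{N_{\ell-1}}$, i.e.\ $T_\ell$ is a constant function on its domain.

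Next, I would exploit the layered structure of the realization
\[
  \Realization(\Phi) = \alpha_L \circ T_L \circ \alpha_{L-1} \circ T_{L-1} \circ \cdots \circ \alpha_1 \circ T_1.
\]
Fix any input $x \in \R^{N_0}$ and set $y_{\ell-1} := (\alpha_{\ell-1} \circ T_{\ell-1} \circ \cdots \circ \alpha_1 \circ T_1)(x) \in \R^{N_{\ell-1}}$ (with $y_0 := x$ when $\ell = 1$). Since $T_\ell$ is constant, $T_\ell(y_{\ell-1}) = b_\ell$ independently of $x$. Therefore
\[
  \Realization(\Phi)(x)
  = (\alpha_L \circ T_L \circ \cdots \circ \alpha_{\ell+1} \circ T_{\ell+1} \circ \alpha_\ell)(b_\ell) =: c,
\]
and the right-hand side does not depend on $x$. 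This vector $c$ lies in $\R^{N_L} = \R^k$ with $k = \OutDim(\Phi)$, which establishes the claim.

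There is essentially no obstacle here; the statement is a direct consequence of the definitions, and the only thing worth being careful about is the corner case $\ell = L$, which is handled by the same display since the tail composition degenerates to $\alpha_L(b_L)$, still a constant in $\R^k$.
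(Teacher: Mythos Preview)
Your proof is correct and follows essentially the same approach as the paper: both observe that $\|T_\ell\|_{\ell^0}=0$ forces $T_\ell$ to be the constant map $y\mapsto b_\ell$, and then note that composing any function with a constant map yields a constant map. Your write-up is slightly more explicit about the decomposition into pre- and post-$\ell$ compositions, but the underlying argument is identical.
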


\begin{proof}
  As $\|T_{\ell}\|_{\ell^{0}} = 0$, the affine map $T_{\ell}$ is a constant map
  $\R^{N_{\ell - 1}} \ni y \mapsto b^{(\ell)} \in \R^{N_{\ell}}$.
  Therefore, $f_{\ell} = \alpha_{\ell} \circ T_{\ell}: \R^{N_{\ell-1}} \to \R^{N_{\ell}}$
  is a constant map, so that also
  $\Realization(\Phi) = f_{L} \circ \cdots \circ f_{\ell} \circ \cdots \circ f_{1}$ is constant.
\end{proof}

\begin{cor}\label{cor:BoundingConnectionsWithLayers2}
  If $W(\Phi) < L(\Phi)$ then
 $\Realization(\Phi) \equiv c$ for some $c \in \R^{k}$ where $k = d_{\mathrm{out}}(\Phi)$.
\end{cor}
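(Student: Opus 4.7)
The plan is to derive this immediately from Lemma~\ref{lem:BoundingConnectionsWithLayers} via a pigeonhole argument on the layer-wise decomposition of $W(\Phi)$.

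Concretely, I would first recall that by definition, $W(\Phi) = \sum_{\ell=1}^{L} \|T_\ell\|_{\ell^0}$, and each summand $\|T_\ell\|_{\ell^0}$ is a non-negative integer. If every one of these $L$ summands were at least $1$, we would have $W(\Phi) \geq L = L(\Phi)$, contradicting the hypothesis $W(\Phi) < L(\Phi)$. Hence there must exist some $\ell \in \{1,\dots,L\}$ with $\|T_\ell\|_{\ell^0} = 0$.

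At that point, Lemma~\ref{lem:BoundingConnectionsWithLayers} applies directly to that index $\ell$, yielding a constant $c \in \R^{k}$ with $k = d_{\mathrm{out}}(\Phi)$ such that $\Realization(\Phi) \equiv c$, which is exactly the conclusion.

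There is no real obstacle here; the only thing to be careful about is the integrality of $\|T_\ell\|_{\ell^0}$, which is what makes the strict inequality $W(\Phi) < L(\Phi)$ force at least one summand to vanish (a non-integral version of the bound would not suffice). The proof should therefore be only a couple of lines long.
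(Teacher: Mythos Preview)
Your proposal is correct and essentially identical to the paper's own proof: both argue that $W(\Phi)=\sum_{\ell=1}^{L}\|T_\ell\|_{\ell^0}<L(\Phi)=\sum_{\ell=1}^{L}1$ forces some $\|T_\ell\|_{\ell^0}=0$ by pigeonhole, and then invoke Lemma~\ref{lem:BoundingConnectionsWithLayers}.
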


\begin{proof}
  Let $\Phi = \big( (T_1,\alpha_1), \dots, (T_L,\alpha_L) \big)$ and observe that if
  $\sum_{\ell=1}^L \|T_{\ell}\|_{\ell^{0}} = W(\Phi)
  < L(\Phi) = \sum_{\ell=1}^{L} 1$
  then there must exist $\ell \in \FirstN{L}$ such that $\|T_{\ell}\|_{\ell^{0}}=0$,
  so that we can apply Lemma~\ref{lem:BoundingConnectionsWithLayers}.
\end{proof}

Indeed, constant maps play a special role as they are exactly the set of
realizations of neural networks with no (nonzero) connections.
Before formally stating this result, we introduce notations for families of neural networks
of constrained complexity, which can have a variety of shapes as illustrated on Figure~\ref{fig:NetworkVariety}.
\begin{defn}
\label{def:NNsymbol}
  Consider $L \in \N \cup \{\infty\}$, $W,N \in \N_0 \cup \{\infty\}$, and $\Omega \subseteq \R^{d}$
  a non-empty set.
  \begin{itemize}[leftmargin=0.6cm]
    \item $\NNsymbol_{W,L,N}^{\varrho,d,k}$ denotes the set of all
          generalized $\varrho$-networks $\Phi$
          with input dimension $d$, output dimension $k$,
          and with $W(\Phi) \leq W$, $L(\Phi) \leq L$, and $N(\Phi) \leq N$.

    \item $\SNNsymbol_{W,L,N}^{\varrho,d,k}$ denotes the subset of networks
          $\Phi \in \NNsymbol_{W,L,N}^{\varrho,d,k}$ which are strict.

    \item The class of all functions $f : \R^d \to \R^k$ that can be represented
          by (generalized) $\varrho$-networks with at most $W$ weights,
          $L$ layers, and $N$ neurons is
          \[
            \NNreal_{W,L,N}^{\varrho,d,k}
            := \big\{
                \Realization (\Phi) \,:\, \Phi \in \NNsymbol_{W,L,N}^{\varrho,d,k}
               \big\} .
          \]
          The set of all restrictions of such functions to $\Omega$
          is denoted $\NNreal_{W,L,N}^{\varrho,d,k}(\Omega)$.

  \item Similarly
        \[
          \SNNreal_{W,L,N}^{\varrho,d,k}
          := \big\{
                  \Realization (\Phi)
                  \,:\,
                  \Phi \in \SNNsymbol_{W,L,N}^{\varrho, d, k}
             \big\} .
        \]
        The set of all restrictions of such functions to $\Omega$
        is denoted $\SNNreal_{W,L,N}^{\varrho,d,k}(\Omega)$.
\end{itemize}
Finally, we define
$\NNreal_{W,L}^{\varrho,d,k} := \NNreal^{\varrho,d,k}_{W,L,\infty}$
and $\NNreal_{W}^{\varrho,d,k} := \NNreal^{\varrho,d,k}_{W,\infty,\infty}$,
as well as ${\NNreal^{\varrho,d,k} := \NNreal^{\varrho,d,k}_{\infty,\infty,\infty}}$.
We will use similar notations for $\SNNreal$, $\NNsymbol$, and $\SNNsymbol$.
\end{defn}

\begin{rem}
  If the dimensions $d,k$ and/or the activation function $\varrho$ are implied
  by the context, we will sometimes omit them from the notation.
\end{rem}

\begin{figure}[htbp]
\begin{center}
\includegraphics[width=0.65\textwidth]{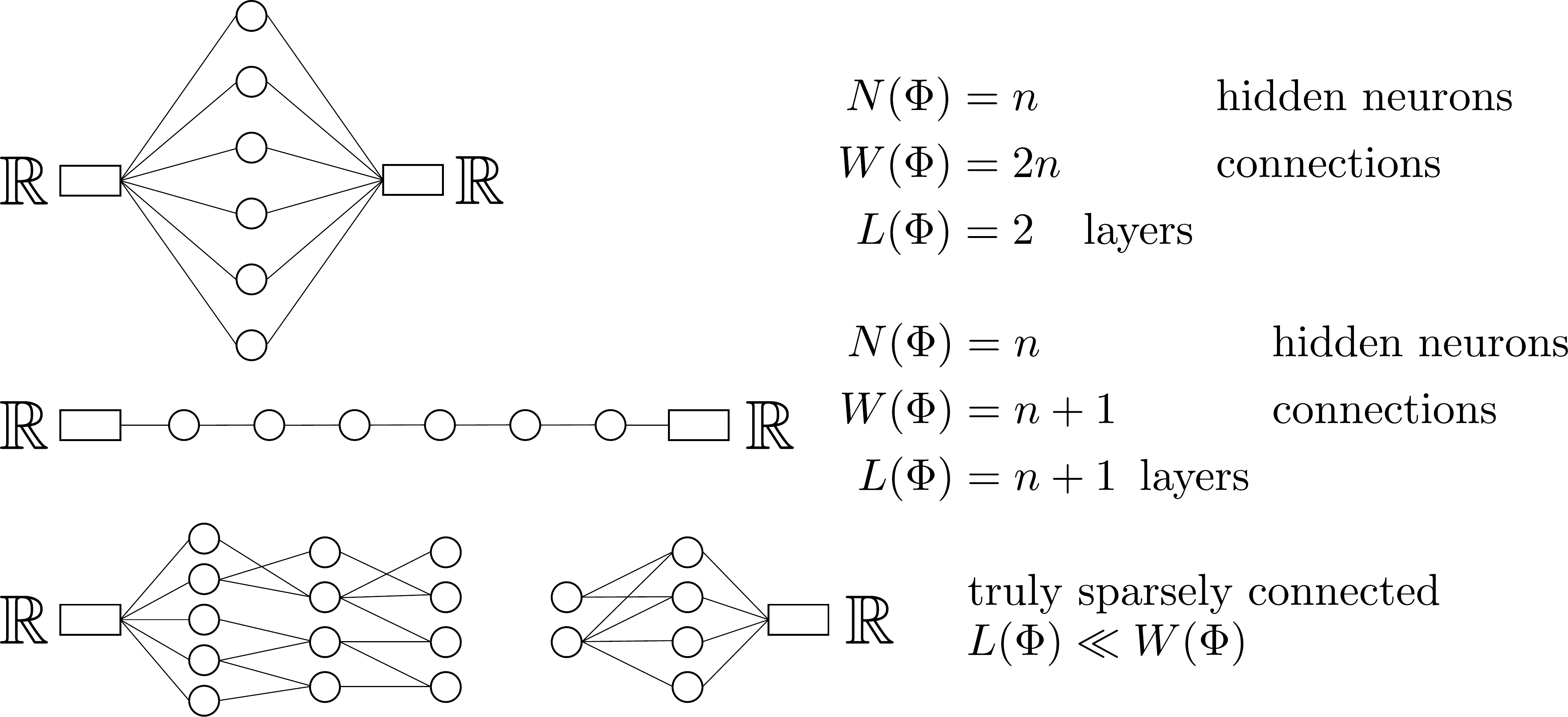}
\caption{\label{fig:NetworkVariety}
The considered network classes include a variety of networks such as:
(top) shallow networks with a single hidden layer, where the number of neurons
is of the same order as the number of possible connections;
(middle) ``narrow and deep'' networks, e.g.~with a single neuron per layer,
where the same holds;
(bottom) ``truly'' sparse networks that have much fewer nonzero weights than potential connections.}
\end{center}
\end{figure}


\begin{lem}\label{lem:ConstantMaps}
  Let  $\varrho: \R \to \R$, and let $d,k \in \N$, $N \in \N_{0} \cup \{\infty\}$,
  and $L \in \N \cup \{\infty\}$ be arbitrary. Then
  \[
    \NNreal_{0,L,N}^{\varrho,d,k}
    = \SNNreal_{0,L,N}^{\varrho,d,k}
    = \NNreal_{0,1,0}^{\varrho,d,k}
    = \SNNreal_{0,1,0}^{\varrho,d,k}
    = \{f: \R^{d} \to \R^{k} \,\mid\, \exists c \in \R^{k}: f \equiv c\} \, .
    \qedhere
  \]
\end{lem}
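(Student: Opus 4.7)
The plan is to establish the chain of inclusions
\[
  \{f \equiv c : c \in \R^k\}
  \subseteq \SNNreal_{0,1,0}^{\varrho,d,k}
  \subseteq \NNreal_{0,1,0}^{\varrho,d,k}
  \subseteq \NNreal_{0,L,N}^{\varrho,d,k}
  \subseteq \{f \equiv c : c \in \R^k\},
\]
together with the analogous chain for the strict classes. All the set-inclusions except the first and last follow immediately from the definitions and from the monotonicity of $\NNreal^{\varrho,d,k}_{W,L,N}$ and $\SNNreal^{\varrho,d,k}_{W,L,N}$ in the parameters $L \in \N \cup \{\infty\}$ and $N \in \N_0 \cup \{\infty\}$, so the real content is reduced to two easy verifications.

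For the final inclusion (every realization of a $0$-weight network is constant), I would simply apply Corollary~\ref{cor:BoundingConnectionsWithLayers2}: since $L(\Phi) \geq 1$ for every network $\Phi$, the hypothesis $W(\Phi) = 0$ gives $W(\Phi) < L(\Phi)$, so $\Realization(\Phi) \equiv c$ for some $c \in \R^k$. This covers both $\NNreal_{0,L,N}^{\varrho,d,k}$ and $\SNNreal_{0,L,N}^{\varrho,d,k}$ in one stroke.

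For the opening inclusion (every constant function is realized by a strict network with $W=0$, $L=1$, $N=0$), given $c \in \R^k$ I would exhibit the one-layer network $\Phi := \bigl((T_1, \alpha_1)\bigr)$ with $T_1 : \R^d \to \R^k, \, x \mapsto 0 \cdot x + c$ and $\alpha_1 = \identity_{\R^k}$ (forced by Definition~\ref{defn:NeuralNetworks} since $\alpha_L$ must be the identity). The linear part of $T_1$ is the zero matrix, so $\|T_1\|_{\ell^0} = 0$ and hence $W(\Phi) = 0$; there are no hidden layers, so $N(\Phi) = \sum_{\ell=1}^{0} N_\ell = 0$; the network is vacuously strict because there are no hidden nonlinearities to constrain; and manifestly $\Realization(\Phi) = \alpha_1 \circ T_1 \equiv c$. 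Thus $f \equiv c$ lies in $\SNNreal_{0,1,0}^{\varrho,d,k}$.

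Combining these two verifications with the trivial monotonicity inclusions (noting $\SNNreal_{0,L,N}^{\varrho,d,k} \subseteq \NNreal_{0,L,N}^{\varrho,d,k}$) closes both chains and yields the fourfold equality. There is no genuine obstacle: the lemma is essentially a bookkeeping statement whose substance has already been extracted in Lemma~\ref{lem:BoundingConnectionsWithLayers} and Corollary~\ref{cor:BoundingConnectionsWithLayers2}.
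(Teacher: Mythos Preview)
Your proposal is correct and essentially identical to the paper's own proof: both construct the one-layer network $\Phi = \bigl((T,\identity_{\R^k})\bigr)$ with $T(x) = 0\cdot x + c$ to show constants lie in $\SNNreal_{0,1,0}^{\varrho,d,k}$, invoke trivial monotonicity for the middle inclusions, and apply Corollary~\ref{cor:BoundingConnectionsWithLayers2} via $W(\Phi)=0<1\leq L(\Phi)$ for the reverse containment.
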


\begin{proof}
  If $f \equiv c$ where $c \in \R^{k}$ then the affine map
  $T : \R^d \to \R^{k}, x \mapsto c$ satisfies $\| T \|_{\ell^0} = 0$
  and the (strict) network $\Phi := \big( (T, \identity_{\R^{k}}) \big)$
  satisfies $\Realization (\Phi) \equiv c = f$, $W(\Phi) = 0$, $N(\Phi) = 0$ and
  $L(\Phi) = 1$.
  By Definition~\ref{def:NNsymbol}, we have $\Phi \in \SNNsymbol_{0,1,0}^{\varrho,d,k}$
  whence $f \in \SNNreal_{0,1,0}^{\varrho,d,k}$.
  The inclusions $\SNNreal_{0,1,0}^{\varrho,d,k}
  \subset \NNreal_{0,1,0}^{\varrho,d,k} \subset \NNreal_{0,L,N}^{\varrho,d,k}$
  and $\SNNreal_{0,1,0}^{\varrho,d,k} \subset \SNNreal_{0,L,N}^{\varrho,d,k}
  \subset \NNreal_{0,L,N}^{\varrho,d,k}$ are trivial by definition of these sets.
  If $f \in \NNreal_{0,L,N}^{\varrho,d,k}$ then there is
  $\Phi \in \NNsymbol_{0,L,N}^{\varrho,d,k}$ such that $f = \Realization(\Phi)$.
  As $W(\Phi) = 0 < 1 \leq L(\Phi)$,
  Corollary~\ref{cor:BoundingConnectionsWithLayers2} yields
  $f = \Realization(\Phi) \equiv c$.
\end{proof}

Our final result in this subsection shows that any realization
of a network with at most $W \geq 1$ connections can also be obtained by a network
with $W$ connections but which additionally has
at most $L \leq W$ layers and at most $N \leq W$ hidden neurons.
The proof is postponed to Appendix~\ref{app:PfBoundingLayersAndNeuronsByWeights}.
\begin{lem}\label{lem:BoundingLayersAndNeuronsByWeights}
  Let $\varrho : \R \to \R$, $d,k \in \N$, $L \in \N \cup \{\infty\}$,
  and $W \in \N$ be arbitrary. Then we have
  \[
    \NNreal_{W,L,\infty}^{\varrho,d,k}
    = \NNreal_{W, L,W}^{\varrho,d,k}
    \subset \NNreal_{W,W,W}^{\varrho,d,k} \, . 
  \]
  The inclusion is an equality for $L \geq W$. In particular,
  $\NNreal_{W}^{\varrho,d,k} = \NNreal_{W,\infty,W}^{\varrho,d,k}
  = \NNreal_{W,W,W}^{\varrho,d,k}$.
  The same claims are valid for \emph{strict} networks, replacing the
  symbol $\NNreal$ by $\SNNreal$ everywhere.
\end{lem}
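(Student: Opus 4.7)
The containment $\NNreal_{W,L,W}^{\varrho,d,k} \subset \NNreal_{W,L,\infty}^{\varrho,d,k}$ is immediate from the definitions, and the equality when $L \geq W$ is an immediate consequence of the main inclusion $\NNreal_{W,L,\infty}^{\varrho,d,k} \subset \NNreal_{W,W,W}^{\varrho,d,k}$, because then $\NNreal_{W,W,W}^{\varrho,d,k} \subset \NNreal_{W,L,W}^{\varrho,d,k}$ trivially. The special case $\NNreal_W = \NNreal_{W,\infty,W} = \NNreal_{W,W,W}$ is just the statement applied with $L = \infty$. So all the work is to show that any $f \in \NNreal_{W,L,\infty}^{\varrho,d,k}$ can be realized by a network simultaneously satisfying the three bounds $W$, $\min\{L, W\}$, $W$ on weights, depth, and hidden neurons. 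My plan is a pruning procedure that iteratively eliminates ``inactive'' hidden neurons while preserving the realization.

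Fix $\Phi = ((T_1, \alpha_1), \ldots, (T_L, \alpha_L))$ with $T_\ell x = A^{(\ell)} x + b^{(\ell)}$. I call a hidden neuron $j$ in layer $\ell \in \{1, \ldots, L-1\}$ \emph{dead} if either the column $j$ of $A^{(\ell+1)}$ vanishes (its output is never read) or the row $j$ of $A^{(\ell)}$ vanishes (its pre-activation is constant). In the first case I simply delete row $j$ of $T_\ell$ together with column $j$ of $T_{\ell+1}$. In the second case, the neuron outputs the constant $c_j := \alpha_j^{(\ell)}(b_j^{(\ell)})$, which I absorb by updating $b^{(\ell+1)} \leftarrow b^{(\ell+1)} + c_j \cdot A^{(\ell+1)}_{\bullet, j}$ and then zeroing the column, reducing to the previous situation. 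Each operation preserves $\Realization(\Phi)$, strictly reduces $N$, never increases $W$ or $L$, and preserves strictness (no activation is altered). Iterating terminates because $N(\Phi) < \infty$, yielding a network $\Phi'$ in which every surviving hidden neuron has at least one nonzero incoming and at least one nonzero outgoing weight.

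Now I case-split on $\Phi'$. If pruning has emptied some hidden layer, or if the final $T_{L(\Phi')}$ satisfies $\|T_{L(\Phi')}\|_{\ell^0} = 0$, then Lemma~\ref{lem:BoundingConnectionsWithLayers} gives $\Realization(\Phi') \equiv c$ for some constant $c \in \R^k$; I then replace $\Phi'$ by the trivial one-layer network $\widetilde{\Phi} = ((x \mapsto c, \identity_{\R^k}))$, which has $L = 1$, $N = 0$, $W = 0$ and meets all required bounds since $L \geq 1$ and $W \geq 1$ (and is vacuously strict). Otherwise, every hidden layer of $\Phi'$ is non-empty, so each hidden neuron contributes at least one nonzero entry to $A^{(\ell)}$, giving $\sum_{\ell=1}^{L(\Phi')-1} \|T_\ell\|_{\ell^0} \geq N(\Phi')$, while $\|T_{L(\Phi')}\|_{\ell^0} \geq 1$ adds at least one more. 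Hence $W(\Phi') \geq N(\Phi') + 1$, so $N(\Phi') \leq W - 1$, and invoking \eqref{eq:LayersBoundedByNeurons} yields $L(\Phi') \leq 1 + N(\Phi') \leq W$. Since pruning never introduces new layers, $L(\Phi') \leq L$ as well, so $\widetilde{\Phi} := \Phi'$ lies in both $\NNsymbol_{W,L,W}^{\varrho,d,k}$ and $\NNsymbol_{W,W,W}^{\varrho,d,k}$.

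The sharp bound $L(\widetilde{\Phi}) \leq W$ (rather than the naive $W+1$ coming from $N \leq W$ plus \eqref{eq:LayersBoundedByNeurons}) is the only genuinely delicate point: it forces the case-split that exploits the extra nonzero weight contributed by the output layer whenever the realization is non-constant. A secondary bookkeeping issue is that pruning can empty a hidden layer while our convention requires $N_\ell \geq 1$; this is resolved cleanly by observing that an empty intermediate layer already certifies a constant realization, after which the trivial one-layer replacement does the job. The strict case requires no additional work since neither pruning operation changes any surviving activation function.
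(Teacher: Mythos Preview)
Your proof is correct and follows essentially the same pruning idea as the paper's proof. The only minor difference is organizational: the paper separates the two inclusions, first proving $\NNreal_{W,L,\infty}=\NNreal_{W,L,W}$ by iteratively removing neurons with zero \emph{incoming} row whenever $N>W$, and then deducing $\NNreal_{W,L,W}\subset\NNreal_{W,W,W}$ directly from Corollary~\ref{cor:BoundingConnectionsWithLayers2}; your more aggressive pruning (both dead-in and dead-out neurons) together with the extra weight contributed by $T_{L(\Phi')}$ lets you obtain the sharper bound $N(\Phi')\le W-1$ and handle both inclusions in a single sweep.
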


To summarize, for given input and output dimensions $d,k$,
when combining~\eqref{eq:WeightsBoundedByNeurons} with the above lemma,
we obtain that for any network $\Phi$ there exists a network $\Psi$
with $\Realization(\Psi) = \Realization(\Phi)$ and $L(\Psi) \leq L(\Phi)$, and such that
\begin{equation}
  N(\Psi) \leq W(\Psi) \leq W(\Phi) \leq N^{2}(\Phi) + (d+k) N(\Phi) + dk.
  \label{eq:WeightsVsNeurons}
\end{equation}
When $L(\Phi) = 2$ we have in fact $N(\Psi) \leq W(\Psi) \leq W(\Phi) \leq (d+k) N(\Phi)$;
see the discussion after \eqref{eq:WeightsBoundedByNeurons}.

\begin{rem}{(\em Connectivity, flops and bits.)}
  A motivation for measuring a network's complexity by its connectivity is
  that the number of connections is directly related to several practical quantities
  of interest such as the number of floating point operations needed to compute the output
  given the input, or the number of bits needed to store a (quantized) description of the network
  in a computer file.
  This is not the case for complexity measured in terms of the number of neurons.
\end{rem}


\subsection{Calculus with generalized neural networks}
\label{sec:gnnclosed}

In this section, we show as a consequence of
Lemma~\ref{lem:BoundingLayersAndNeuronsByWeights} that the
class of realizations of generalized neural networks of a given \emph{complexity}---as measured
by the number of connections $W(\Phi)$---is closed under addition and composition, as long as
one is willing to increase the complexity by a constant factor.
To this end, we first show that \emph{one can increase the depth of
generalized neural networks with controlled increase of the required complexity}.

\begin{lem}
 \label{lem:DeepeningLemma}
  Given $\varrho:\R\to\R$, ${d,k \in \N}$, $c := \min \{d,k\}$, $\Phi \in \NNsymbol^{\varrho,d,k}$,
  and $L_0 \in \N_{0}$, there exists $\Psi \in \NNsymbol^{\varrho,d,k}$ such that
  $\Realization(\Psi) = \Realization(\Phi)$,
  $L(\Psi) = L(\Phi) + L_0$, $W(\Psi) = W(\Phi) + c L_0$, $N(\Psi) = N(\Phi) + c L_0$.
\end{lem}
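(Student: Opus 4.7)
\emph{Plan.} The idea is to augment $\Phi$ by inserting $L_0$ ``trivial'' identity layers at whichever end of the network has smaller width, using the fact that generalized networks allow the componentwise identity as an admissible activation. If $L_0 = 0$ the claim is vacuous (take $\Psi := \Phi$), so assume $L_0 \geq 1$, and write $\Phi = \big( (T_1,\alpha_1), \dots, (T_L,\alpha_L) \big)$ with $T_\ell : \R^{N_{\ell-1}} \to \R^{N_\ell}$, $N_0 = d$, $N_L = k$.

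\emph{Case $c = k$ (i.e.\ $k \leq d$): append at the output.} Define $\Psi$ by appending $L_0$ new layer pairs $(T_{L+i}, \alpha_{L+i})$ for $i = 1,\dots,L_0$, where each $T_{L+i} : \R^k \to \R^k$ is the identity affine map (identity matrix, zero bias) and each $\alpha_{L+i} := \bigotimes_{j=1}^{k} \identity_\R$. This is admissible as a generalized $\varrho$-network since each component activation is the identity, and in particular $\alpha_{L+L_0} = \identity_{\R^k}$ as required by Definition~\ref{defn:NeuralNetworks}. Since the identity commutes through any composition, $\Realization(\Psi) = \Realization(\Phi)$. Each appended $T_{L+i}$ has exactly $k$ nonzero entries, so $W(\Psi) = W(\Phi) + k L_0 = W(\Phi) + c L_0$. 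For the neuron count, note that the original output-layer width $N_L = k$, previously \emph{not} a hidden neuron, now is one (since the depth has grown); together with the $L_0 - 1$ additional intermediate widths $N_{L+1} = \dots = N_{L+L_0-1} = k$, this contributes exactly $k L_0$ to $\sum_{\ell=1}^{L+L_0 - 1} N_\ell$. Hence $N(\Psi) = N(\Phi) + k L_0 = N(\Phi) + c L_0$, and trivially $L(\Psi) = L(\Phi) + L_0$.

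\emph{Case $c = d < k$: prepend at the input.} Symmetrically, insert $L_0$ new layer pairs $(T'_i, \alpha'_i)$ before $(T_1,\alpha_1)$, where each $T'_i : \R^d \to \R^d$ is the identity affine map and $\alpha'_i := \bigotimes_{j=1}^{d} \identity_\R$. Again $\Realization(\Psi) = \Realization(\Phi)$ because prepending identities is a no-op, $L(\Psi) = L(\Phi) + L_0$, each prepended $T'_i$ contributes $d$ nonzero weights, and each of the $L_0$ new widths $N'_i = d$ becomes a hidden-neuron summand (they are all strictly before the original final layer, so they are all hidden). This gives $W(\Psi) = W(\Phi) + d L_0$ and $N(\Psi) = N(\Phi) + d L_0$, which is $c L_0$ in each case.

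\emph{Expected main obstacle.} There is no deep obstacle; the content is purely a construction with careful bookkeeping. The only point requiring attention is the definition of ``hidden neurons'' $N(\Phi) = \sum_{\ell=1}^{L-1} N_\ell$: in the appending case one must remember that the old output width $N_L = k$ is promoted to hidden status in $\Psi$, which is exactly what makes the added neuron count equal $k L_0$ (not $k(L_0-1)$). Choosing which end to extend based on $\min\{d,k\}$ is what delivers the $c$ (rather than a worse $\max\{d,k\}$) in the conclusion; both directions are needed precisely because the width of the trivial layers is forced to match the adjacent boundary of $\Phi$.
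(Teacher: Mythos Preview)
Your proof is correct and follows exactly the same construction as the paper: insert $L_0$ identity layers $(\identity_{\R^c}, \identity_{\R^c})$ at the output end when $k \leq d$ and at the input end when $d < k$, with the same bookkeeping for $L$, $W$, and $N$. Your explicit remark about the old output width $N_L = k$ becoming a hidden layer in the appended case is a helpful clarification that the paper leaves implicit.
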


\begin{figure}[htbp]
\begin{center}
\begin{minipage}{.55\textwidth}
\includegraphics[width=\textwidth]{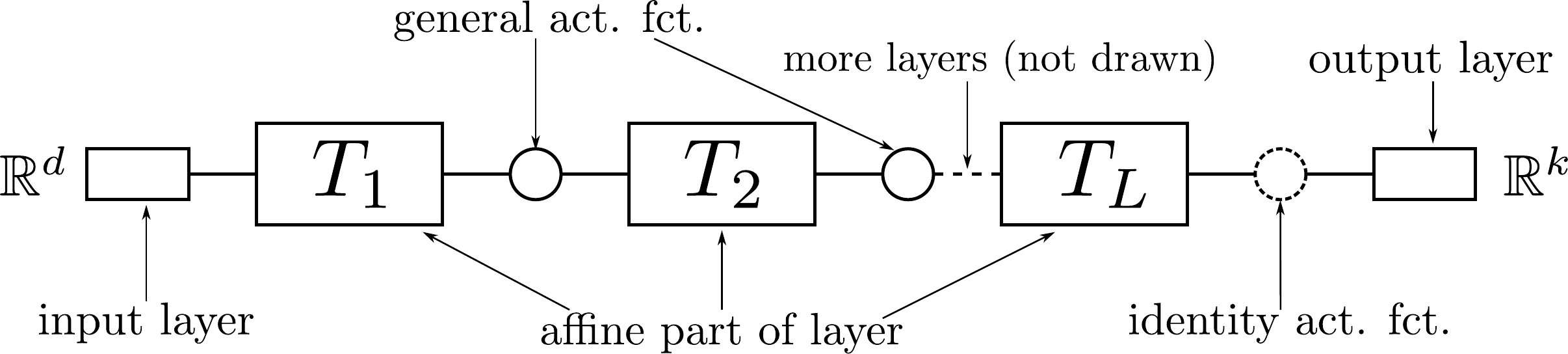}
\end{minipage}
\,\quad\,
\begin{minipage}{.4\textwidth}
\includegraphics[width=\textwidth]{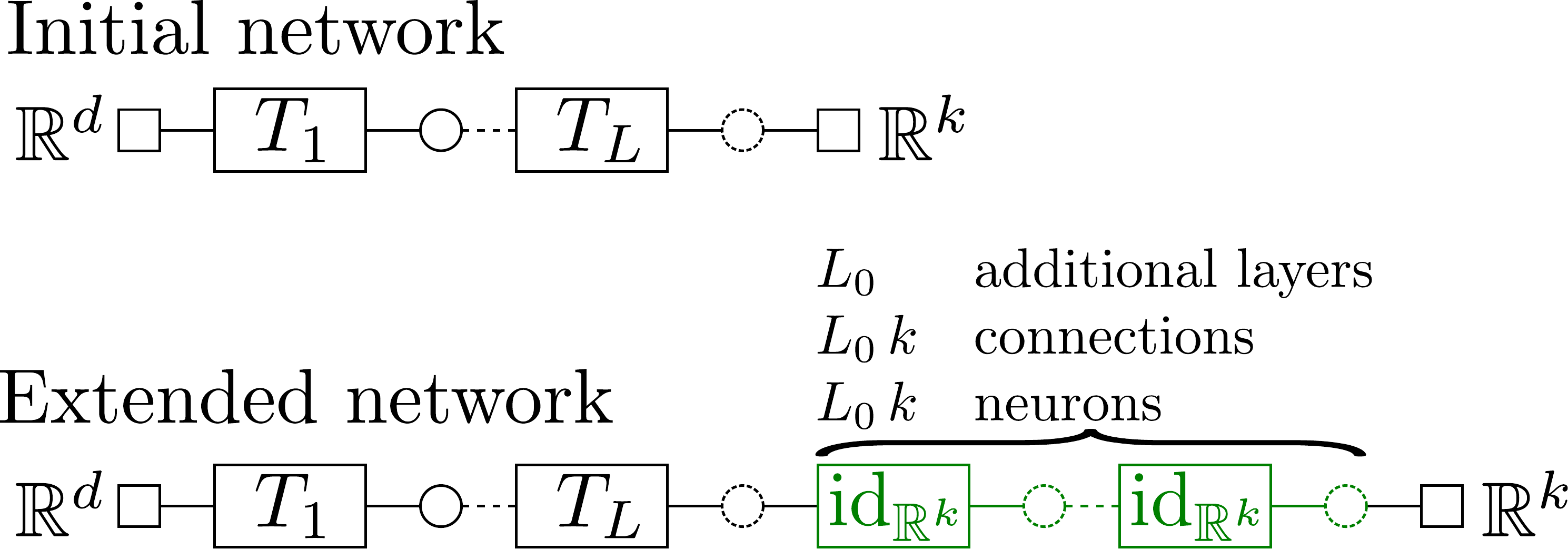}
\end{minipage}
\caption{\label{fig:sync}
(left) Graphical convention for drawing neural networks; this convention is used everywhere
except in Figure~\ref{fig:NetworkVariety}.
(right) Depth synchronization of Lemma~\ref{lem:DeepeningLemma},
identity layers are added at the output if $k < d$; in case of $d < k$ they are added at the input.}
\end{center}
\end{figure}
\ifarxiv
This fact appears without proof in \cite[Section 5.1]{SchmidtHieber:2017vn}
under the name of {\em depth synchronization} for strict networks with the ReLU activation function,
with $c = d$.
We refine it to $c = \min\{d,k\}$ and give a proof for {\em generalized networks}
with {\em arbitrary} activation function in Appendix~\ref{app:PfDeepeningLemma}.
\else
This fact appears without proof in \cite[Section 5.1]{SchmidtHieber:2017vn}
under the name of {\em depth synchronization} for strict networks with the ReLU activation function,
with $c = d$.
Since we consider generalized networks, the claim holds for
arbitrary activation functions, and for the refined constant $c = \min \{d, k\}$.
For reasons of space, we refer to \cite[Appendix A]{gribonval:hal-02117139} for the proof.
\fi
The underlying proof idea is illustrated in Figure~\ref{fig:sync}.

\begin{figure}[htbp]
\begin{center}
\begin{minipage}{\textwidth}
\includegraphics[width=\textwidth]{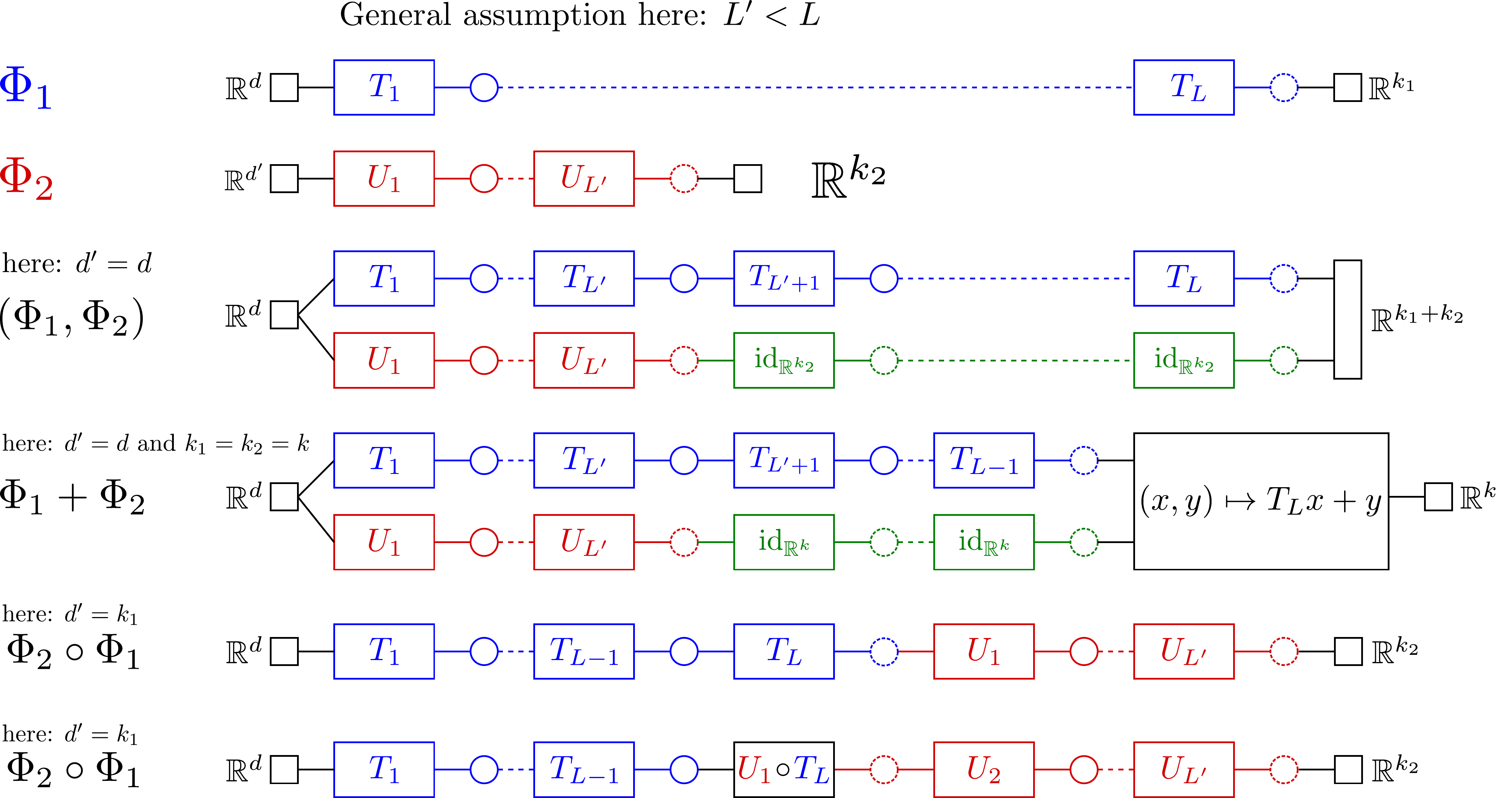}
\end{minipage}
\caption{\label{fig:calculus}
Illustration of the networks constructed in the proofs of Lemmas~\ref{lem:SummationLemma}
and \ref{lem:NetworkCalculus}.
(top) Implementation of Cartesian products;
(middle) Implementation of addition;
(bottom) Implementation of composition.}
\end{center}
\end{figure}
A consequence of the depth synchronization property is that the class of generalized networks
is closed under linear combinations and Cartesian products.
The proof idea behind the following lemma,
\ifarxiv whose proof is in Appendix~\ref{app:PfSummationLemma}
\else which we formally prove in \cite[Appendix A]{gribonval:hal-02117139},
\fi
is illustrated in Figure~\ref{fig:calculus} (top and middle).
%
%

\begin{lem} \label{lem:SummationLemma}
  Consider arbitrary $d,k,n \in \N$, $c \in \R$, $\varrho:\R \to \R$,
  and $k_i \in \N$ for $i \in \FirstN{n}$.
  \begin{enumerate}[leftmargin=0.6cm]
    \item \label{enu:ScalMult}
          If $\Phi \in\NNsymbol^{\varrho,d,k}$ then $c \cdot \Realization(\Phi) = \Realization(\Psi)$
          where $\Psi \in \NNsymbol^{\varrho,d,k}$ satisfies $W(\Psi) \leq W(\Phi)$
          (with equality if $c \neq 0$), $L(\Psi) = L(\Phi)$, $N(\Psi) = N(\Phi)$.
%
          The same holds with $\SNNsymbol$ instead of $\NNsymbol$.

    \item \label{enu:Cartesian}
          If $\Phi_i \in \NNsymbol^{\varrho, d, k_i}$ for $i \in \FirstN{n}$, then
          $(\Realization(\Phi_{1}),\ldots,\Realization(\Phi_{n})) = \Realization(\Psi)$
          with $\Psi \in\NNsymbol^{\varrho,d,K}$, where
          \begin{equation*}
            L(\Psi) = \max_{i = 1,\dots,n} L(\Phi_{i}),
            \quad
            W(\Psi) \leq \delta+\sum_{i=1}^{n} W(\Phi_{i}),
            \quad
            N(\Psi) \leq \delta+\sum_{i=1}^{n} N(\Phi_{i}),
            \quad \text{and} \quad
            K:=\sum_{i=1}^{n} k_{i},
          \end{equation*}
          with $\delta := c \cdot \big(\max_{i=1,\dots,n} L(\Phi_{i})-\min_{i} L(\Phi_{i})\big)$
          and $c := \min \{d, K-1 \}$.

   \item \label{enu:LinComb}
         If $\Phi_1,\dots,\Phi_n \in \NNsymbol^{\varrho, d, k}$, then
         $\sum_{i=1}^n \Realization(\Phi_{i}) = \Realization(\Psi)$
         with $\Psi \in\NNsymbol^{\varrho,d,k}$, where
         \begin{equation*}
           L(\Psi) = \max_{i} L(\Phi_{i}),
           \quad
           W(\Psi) \leq \delta + \sum_{i=1}^{n} W(\Phi_{i}),
           \qquad \text{and} \quad
           N(\Psi) \leq \delta + \sum_{i=1}^{n} N(\Phi_{i}),
         \end{equation*}
         with $\delta := c \left(\max_{i} L(\Phi_{i})-\min_{i} L(\Phi_{i})\right)$
         and $c := \min \{ d,k \}$.
         \qedhere
  \end{enumerate}
\end{lem}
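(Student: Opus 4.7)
My plan is to handle Part~\ref{enu:ScalMult} by a direct modification of the last affine map, and to reduce Parts~\ref{enu:Cartesian} and \ref{enu:LinComb} to an inductive block-diagonal assembly that invokes the depth-synchronization Lemma~\ref{lem:DeepeningLemma} once per step.

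For Part~\ref{enu:ScalMult} I would keep the first $L-1$ layers of $\Phi$ unchanged and replace $T_L$ by $cT_L$, retaining $\alpha_L=\identity$. Since $\alpha_L$ is the identity the new network realizes $c\cdot\Realization(\Phi)$; scaling a matrix by $c$ preserves its support when $c\neq 0$ and collapses it to $\emptyset$ when $c=0$, so $W(\Psi)\leq W(\Phi)$ with equality for $c\neq 0$, while depth, neuron count, and strictness are inherited.

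For Parts~\ref{enu:Cartesian} and \ref{enu:LinComb}, relabel so that $L^-:=L(\Phi_1)\leq\dots\leq L(\Phi_n)=:L^*$ and build $\Psi^{(i)}$ inductively: set $\Psi^{(1)}:=\Phi_1$ and, given $\Psi^{(i-1)}$ of depth $L(\Phi_{i-1})$ and input dim $d$, first extend its depth to $L(\Phi_i)$ via Lemma~\ref{lem:DeepeningLemma}, then form $\Psi^{(i)}$ by block-diagonal combination with $\Phi_i$, having the first affine layer of $\Psi^{(i)}$ read the common input $x$ into both branches. For Part~\ref{enu:Cartesian}, the last layer of $\Psi^{(i)}$ outputs the concatenation, so $\Psi^{(i-1)}$ has output dimension $K_{i-1}:=\sum_{j<i}k_j$ and the extension contributes exactly $\min\{d,K_{i-1}\}(L(\Phi_i)-L(\Phi_{i-1}))$ weights and neurons, while the combination itself adds nothing. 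For Part~\ref{enu:LinComb}, the last layer is instead defined to output the \emph{sum} of the last-layer images of $\Psi^{(i-1)}_{\mathrm{ext}}$ and $\Phi_i$; this is realized by a single matrix whose support is the disjoint union (in distinct column blocks) of the two individual last-layer supports, so no weights are added, the output dimension of $\Psi^{(i-1)}$ remains $k$ throughout, and the per-step extension overhead shrinks to $\min\{d,k\}(L(\Phi_i)-L(\Phi_{i-1}))$.

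Summing over $i=2,\dots,n$ telescopes the depth differences to $L^*-L^-$: for Part~\ref{enu:Cartesian}, $K_{i-1}\leq K-k_n\leq K-1$ (because the deepest network $\Phi_n$ is never extended and contributes $k_n\geq 1$), so the total overhead is at most $\min\{d,K-1\}(L^*-L^-)$; for Part~\ref{enu:LinComb} it is $\min\{d,k\}(L^*-L^-)$. The main subtlety is the last-layer bookkeeping in Part~\ref{enu:LinComb}: folding the summation into a single affine map, rather than appending an extra adder layer, is precisely what keeps the running output dimension at $k$ instead of ballooning to $nk$ and hence keeps $\delta$ linear in $k$ rather than $nk$.
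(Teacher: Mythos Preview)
Your approach is correct and matches the paper's almost exactly: Part~\ref{enu:ScalMult} via scaling the final affine map, Parts~\ref{enu:Cartesian} and \ref{enu:LinComb} via sorting by depth, then inductively invoking Lemma~\ref{lem:DeepeningLemma} on the running partial network and combining block-diagonally (with the last layer either concatenating or summing). The paper organizes the same idea through two auxiliary $n=2$ lemmas before the induction, but the content is identical, including the telescoping of $\sum_i (L(\Phi_i)-L(\Phi_{i-1}))$ and the observation that for Part~\ref{enu:Cartesian} the running output dimension never exceeds $K-1$ because the deepest network is never extended.

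One small omission: for Part~\ref{enu:Cartesian} the relabeling is not harmless, since the Cartesian product is ordered. After building $\Psi^{(n)}$ you obtain $(\Realization(\Phi_{\sigma(1)}),\dots,\Realization(\Phi_{\sigma(n)}))$ for a permutation $\sigma$, and must still permute the output coordinates back. This is free---compose the last affine map with the permutation matrix $P$, which merely reorders rows and hence preserves $\|\cdot\|_{\ell^0}$---but it should be said; the paper handles it explicitly by invoking Lemma~\ref{lem:NetworkCalculus}-(\ref{enu:PrePostAffine}) (proved independently) with $\|P\|_{\ell^{0,\infty}}=1$.
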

%
One can also control the complexity of certain networks resulting from compositions in an intuitive way.
To state and prove this, we introduce a convenient notation:
For a matrix $A \in \R^{n \times d}$, we denote
\begin{equation}
  \| A \|_{\ell^{0,\infty}}
  := \max_{i \in \FirstN{d}} \| A \, e_i \|_{\ell^0}
  \quad \text{and} \quad
  \| A \|_{\ell^{0,\infty}_\ast}
  := \| A^T \|_{\ell^{0,\infty}}
  =  \max_{i \in \FirstN{n}} \| e_i^T A \|_{\ell^0},
  \label{eq:DefL0MixedNorms}
\end{equation}
where $e_1,\dots, e_n$ is the standard basis of $\R^n$.
Likewise, for an affine-linear map $T = A \bullet + b$, we denote
$\| T \|_{\ell^{0,\infty}} := \| A \|_{\ell^{0,\infty}}$ and
$\| T \|_{\ell^{0,\infty}_{\ast}} := \| A \|_{\ell^{0,\infty}_{\ast}}$.
\begin{lem}\label{lem:NetworkCalculus}
  Consider arbitrary
  $d,d_1,d_2,k,k_1 \in \N$ 
  and
  $\varrho:\R \to \R$.
  \begin{enumerate}[leftmargin=0.6cm]
    \item \label{enu:PrePostAffine} If $\Phi \in \NNsymbol^{\varrho,d,k}$
          and $P: \R^{d_{1}} \to \R^{d}$, $Q:\R^{k} \to \R^{k_{1}}$
          are two affine maps then $Q \circ \Realization(\Phi) \circ P = \Realization(\Psi)$
          where $\Psi \in \NNsymbol^{\varrho,d_{1},k_{1}}$ with $L(\Psi)= L(\Phi)$,
          $N(\Psi)= N(\Phi)$ and
          \[
            W(\Psi)
            \leq \| Q \|_{\ell^{0,\infty}} \cdot W(\Phi) \cdot \| P \|_{\ell^{0,\infty}_{\ast}}\, .
          \]
          The same holds with $\SNNsymbol$ instead of $\NNsymbol$.

    \item \label{enu:Compos}
          If $\Phi_1 \in \NNsymbol^{\varrho, d, d_1}$ and $\Phi_2 \in \NNsymbol^{\varrho, d_1, d_2}$ then
          $\Realization(\Phi_2) \circ \Realization(\Phi_1) = \Realization(\Psi)$
          where $\Psi \in \NNsymbol^{\varrho, d, d_2}$ and
          \[
            W(\Psi) = W(\Phi_{1})+W(\Phi_{2}),
            \quad
            L(\Psi) = L(\Phi_1)+L(\Phi_2),
            \quad
            N(\Psi) = N(\Phi_1)+N(\Phi_2)+d_1.
          \]
%
    \item \label{enu:ComposLessDepth}
          Under the assumptions of Part~\eqref{enu:Compos},
          there is also $\Psi' \in \NNsymbol^{\varrho, d, d_2}$
          such that $\Realization(\Phi_2) \circ \Realization(\Phi_1) = \Realization(\Psi')$ and
          \[
            \quad \,\,\,\,
            W(\Psi') \leq W(\Phi_{1}) + \max \{ N(\Phi_{1}),d \} \, W(\Phi_{2}),
            \quad
            L(\Psi') = \! L(\Phi_1) \!+\! L(\Phi_2) \!-\! 1,
            \quad
            N(\Psi') = N(\Phi_1) \!+\! N(\Phi_2).
          \]
          In this case, the same holds for $\SNNsymbol$ instead of $\NNsymbol$.
          \qedhere
  \end{enumerate}
\end{lem}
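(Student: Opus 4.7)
The plan is to prove the three parts in order, using throughout that in a generalized $\varrho$-network each hidden-layer activation may be the identity (and the output activation \emph{must} be), so identity activations can freely be inserted or absorbed by fusing the adjacent affine maps. The only quantitative ingredient is the sparse matrix-product estimate
\[
\|BA\|_{\ell^0} \leq \|B\|_{\ell^{0,\infty}} \cdot \|A\|_{\ell^0} = \|A\|_{\ell^{0,\infty}_\ast} \cdot \|B\|_{\ell^0},
\]
derived from the column-wise decomposition $(BA) e_j = \sum_{i \in \supp(A e_j)} (A e_j)_i \cdot B e_i$ together with its transposed dual.

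For part~(1), write $\Phi = ((T_1,\alpha_1),\dots,(T_L,\alpha_L))$ and use $\alpha_L = \identity$ to rewrite $Q \circ \Realization(\Phi) \circ P = (Q \circ T_L) \circ \alpha_{L-1} \circ T_{L-1} \circ \cdots \circ \alpha_1 \circ (T_1 \circ P)$. This is the realization of
\[
\Psi := \big((T_1 \circ P, \alpha_1),(T_2,\alpha_2),\dots,(T_{L-1},\alpha_{L-1}),(Q \circ T_L, \identity)\big) \in \NNsymbol^{\varrho, d_1, k_1}.
\]
All activations are unchanged, so $L(\Psi) = L(\Phi)$, $N(\Psi) = N(\Phi)$, and strictness is preserved. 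The matrix estimate applied to the two modified boundary layers gives $\|T_1 \circ P\|_{\ell^0} \leq \|P\|_{\ell^{0,\infty}_\ast} \|T_1\|_{\ell^0}$ and $\|Q \circ T_L\|_{\ell^0} \leq \|Q\|_{\ell^{0,\infty}} \|T_L\|_{\ell^0}$; summing across all layers yields the claimed product bound whenever both mixed norms are $\geq 1$. If one of them vanishes, $Q \circ \Realization(\Phi) \circ P$ is constant and Lemma~\ref{lem:ConstantMaps}, padded by dead layers and dead neurons, realizes it with $W = 0$ at the prescribed depth and neuron count.

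Part~(2) is the straightforward concatenation
\[
\Psi := \big((T_1,\alpha_1),\dots,(T_{L_1},\alpha_{L_1}),(S_1,\beta_1),\dots,(S_{L_2},\beta_{L_2})\big).
\]
The former output activation $\alpha_{L_1} = \identity$ of $\Phi_1$ becomes a middle activation of $\Psi$, which is legal because $\Psi$ is a \emph{generalized} network (explaining why strictness is not claimed here), and the output dimension $d_1$ of $\Phi_1$ now counts as an additional hidden-layer width, giving $N(\Psi) = N(\Phi_1) + d_1 + N(\Phi_2)$. For part~(3), we instead absorb this identity into a single fused affine map $S_1 \circ T_{L_1}$, setting
\[
\Psi' := \big((T_1,\alpha_1),\dots,(T_{L_1-1},\alpha_{L_1-1}),(S_1 \circ T_{L_1},\beta_1),(S_2,\beta_2),\dots,(S_{L_2},\beta_{L_2})\big).
\]
This saves one layer, adds no new neurons, and leaves only $\varrho$ activations (if both $\Phi_i$ are strict), so strictness is preserved. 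The only non-trivial weight estimate is on the fused layer: with $A$ and $B$ denoting the linear parts of $T_{L_1}$ and $S_1$, the transposed matrix estimate gives $\|BA\|_{\ell^0} \leq \|A\|_{\ell^{0,\infty}_\ast} \|S_1\|_{\ell^0}$, and $\|A\|_{\ell^{0,\infty}_\ast}$ is bounded by the number of columns of $A$, which equals the input dimension of $T_{L_1}$: this is $d$ when $L_1 = 1$ and equals the hidden width $N_{L_1-1} \leq N(\Phi_1)$ otherwise. Summing over the remaining layers then yields $W(\Psi') \leq W(\Phi_1) + \max\{N(\Phi_1),d\} \cdot W(\Phi_2)$.

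The main (and really only) subtlety is the $\ell^0$ bookkeeping for products of two sparse matrices, together with the observation in part~(3) that $\|A\|_{\ell^{0,\infty}_\ast}$ is controlled by $\max\{d,N(\Phi_1)\}$. The rest is a routine concatenation and fusion of affine maps, carefully respecting the convention $\alpha_L = \identity$.
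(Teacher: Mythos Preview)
Your proof is correct and follows essentially the same route as the paper: the same concatenation for Part~(2), the same fusion of $S_1 \circ T_{L_1}$ for Part~(3) with the observation that $\|T_{L_1}\|_{\ell^{0,\infty}_\ast}$ is bounded by the domain dimension (which is $d$ when $L_1=1$ and $N_{L_1-1}\le N(\Phi_1)$ otherwise), and the same boundary-layer modification for Part~(1) together with the sparse-product estimate (the paper's Lemma~\ref{lem:MatrixSparsityForComposition}). The only place you are slightly looser than the paper is the degenerate case in Part~(1): rather than ``padding'' the one-layer constant network of Lemma~\ref{lem:ConstantMaps}, the paper simply keeps the original architecture $(N_0,\dots,N_L)$ and activations $\alpha_\ell$ of $\Phi$ but zeroes out all linear parts and places the constant as the final bias---this makes it immediate that $L(\Psi)=L(\Phi)$, $N(\Psi)=N(\Phi)$, and that strictness is preserved, whereas your padding description needs a word of care to ensure the added layers carry $\varrho$ (not $\identity$) activations and that the constant is placed at the output.
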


\noindent
The proof idea of Lemma~\ref{lem:NetworkCalculus} is illustrated in Figure~\ref{fig:calculus} (bottom).
\ifarxiv The formal proof is in Appendix~\ref{app:PfTechnicalA}.
\else  We give a formal proof of Part (1) in Appendix~\ref{app:PfTechnicalA};
  for a proof of the other parts, we refer to \cite[Appendix A]{gribonval:hal-02117139}.
\fi
A direct consequence of Lemma~\ref{lem:NetworkCalculus}-(\ref{enu:PrePostAffine}) that we will use
in several places is that ${x \mapsto a_2 \, g(a_{1}x+b_{1})+b_{2} \in \NNreal^{\varrho,d,k}_{W,L,N}}$
whenever $g \in \NNreal^{\varrho,d,k}_{W,L,N}$, $a_{1},a_{2} \in \R$, $b_{1} \in \R^{d}$, $b_{2} \in \R^{k}$.

Our next result shows that if $\sigma$ can be expressed as the realization of a $\varrho$-network
then realizations of $\sigma$-networks can be re-expanded into realizations of $\varrho$-networks
of controlled complexity.

\begin{lem}\label{lem:RecursiveNNsets}
  Consider two activation functions $\varrho,\sigma$ such that
  $\sigma = \Realization(\Psi_{\sigma})$ for some
  \(
    \Psi_{\sigma} \in \NNsymbol^{\varrho,1,1}_{w,\ell,m}
  \)
  with $L(\Psi_{\sigma}) = \ell \in \N$, $w \in \N_{0}$, $m \in \N$.
  Furthermore, assume that $\sigma \not\equiv \mathrm{const}$.

  Then the following hold:
  \begin{enumerate}[leftmargin=0.6cm]
    \item if $\ell=2$ then for any $W,N,L,d,k$ we have
          \(
            \NNreal_{W,L,N}^{\sigma,d,k} \subset \NNreal_{Wm^{2},L,Nm}^{\varrho,d,k}
          \)

    \item for any $\ell,W,N,L,d,k$ we have
          \(
            \NNreal_{W,L,N}^{\sigma,d,k}
            \subset \NNreal_{mW + w N, 1 + (L-1)\ell, N(1+m)}^{\varrho,d,k} \, .
          \)
          \qedhere
  \end{enumerate}
\end{lem}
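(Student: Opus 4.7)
The plan is to replace each activation layer $\alpha_i$ of a given $\sigma$-network $\Phi = ((T_i, \alpha_i))_{i=1}^{L}$ by a $\varrho$-sub-network $\Psi_i$ built from parallel copies of $\Psi_\sigma$. Since $\alpha_i = \bigotimes_{j=1}^{N_i} \varrho_j^{(i)}$ is a tensor of $\sigma$ and $\identity$ on each of its $N_i$ coordinates, $\Psi_i$ is the ``block-diagonal'' $\varrho$-network whose $j$-th affine $C_i^{(j)}$ places, on each of the $s_i := |\{j : \varrho_j^{(i)} = \sigma\}|$ many $\sigma$-coordinates, the $j$-th affine $A_j^\sigma$ of $\Psi_\sigma$, and on each of the $N_i - s_i$ many $\identity$-coordinates, the scalar $1 \in \R^{1 \times 1}$. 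By construction, $\Realization(\Psi_i) = \alpha_i$, its depth is $\ell$, its intermediate width at the $j$-th hidden layer is $M_i^{(j)} = s_i N_j^\sigma + (N_i - s_i)$, and $\|C_i^{(j)}\|_{\ell^0} \leq s_i \|A_j^\sigma\|_{\ell^0} + (N_i - s_i)$.

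The full network $\Psi$ is then the composition $T_L \circ \Psi_{L-1} \circ T_{L-1} \circ \cdots \circ \Psi_1 \circ T_1$ produced by Lemma~\ref{lem:NetworkCalculus}. The two parts of the claim correspond to two choices of how adjacent affines are combined (or separated by inserted $\identity$ activations): for Part~2, I merge only $T_i$ with $C_i^{(1)}$ for $i < L$ and insert an $\identity$ activation between $C_i^{(\ell)}$ and the next affine, producing exactly $1 + (L-1)\ell$ layers; for Part~1 (the special case $\ell = 2$), since there is only one intermediate activation per $\Psi_i$, one can afford a three-way merge $C_{i+1}^{(1)} \circ T_{i+1} \circ C_i^{(2)}$ at each internal boundary (and two-way merges at the ends), keeping the total depth at $L$.

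The sparsity bookkeeping hinges on one elementary observation: if $B$ is block-diagonal as above and $A$ is an arbitrary matrix, then $\|B A\|_{\ell^0} \leq \max\{\|A_j^\sigma\|_{\ell^0}, 1\} \cdot \|A\|_{\ell^0} \leq m \cdot \|A\|_{\ell^0}$, and symmetrically for $A B$. For Part~1, this yields $\|C_{i+1}^{(1)} T_{i+1} C_i^{(2)}\|_{\ell^0} \leq m^2 \|T_{i+1}\|_{\ell^0}$ at each internal boundary, totalling $\leq m^2 W$, while the widths $M_i^{(1)} \leq m N_i$ sum to $\leq m N$. For Part~2, the merged $C_i^{(1)} \circ T_i$ contributes $\leq m \|T_i\|_{\ell^0}$ and each remaining $C_i^{(j)}$ contributes $s_i \|A_j^\sigma\|_{\ell^0} + (N_i - s_i)$; summing and using the inequality $w \geq \ell$ (which follows from Corollary~\ref{cor:BoundingConnectionsWithLayers2} applied to $\Psi_\sigma$, since $\sigma$ is non-constant) gives the total weight bound $m W + w N$.

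The main obstacle is tightening the neuron count in Part~2 to the claimed $N(1+m)$. Summing the $\ell - 1$ intermediate widths of $\Psi_i$ together with the additional width $N_i$ produced by the inserted $\identity$ activation gives $s_i(m - \ell + 1) + \ell N_i$ per block, which naively carries an unwanted $(\ell - 1)$ factor. The final bound is obtained by exploiting that each of the $\ell - 1$ hidden layers of $\Psi_\sigma$ contains at least one neuron, so that $m \geq N(\Psi_\sigma) \geq \ell - 1$ and $m - \ell + 1 \geq 0$; then $s_i(m-\ell+1) + \ell N_i \leq N_i(m - \ell + 1) + \ell N_i = N_i(m + 1)$, which sums to $N(1+m)$ as required.
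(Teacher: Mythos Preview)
Your argument is correct. For Part~1 it coincides with the paper's proof: both build the block-diagonal network $\Psi_i$ realizing $\alpha_i$ (the paper isolates this as an auxiliary ``tensor activation'' lemma with the bounds $\|U_1\|_{\ell^{0,\infty}}\le m$, $\|U_\ell\|_{\ell^{0,\infty}_*}\le m$) and perform the three-way merge $C_{i+1}^{(1)}\circ T_{i+1}\circ C_i^{(2)}$, invoking Lemma~\ref{lem:MatrixSparsityForComposition} to get the factor $m^2$.

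For Part~2 the paper proceeds differently: it argues by induction on $L$, peeling off the last layer, applying the induction hypothesis to the truncated network, and composing with a single tensor-activation block. Your direct, all-at-once construction (merge $C_i^{(1)}\circ T_i$, keep the remaining $C_i^{(j)}$ separate, leave the identity activation after $C_i^{(\ell)}$) is a genuine alternative. It is more explicit and makes clear exactly where each neuron and weight sits; the price is the somewhat delicate per-block bookkeeping you carry out at the end, in particular the tightening $s_i(m-\ell+1)+\ell N_i\le N_i(m+1)$ via $m\ge N(\Psi_\sigma)\ge \ell-1$. The paper's induction hides this bookkeeping in the hypothesis, but the two arguments are ultimately equivalent and both rely on the same structural facts ($\ell\le w$ and $\ell-1\le m$, which come from Corollary~\ref{cor:BoundingConnectionsWithLayers2} and Equation~\eqref{eq:LayersBoundedByNeurons} applied to $\Psi_\sigma$).
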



The proof of Lemma~\ref{lem:RecursiveNNsets} is in Appendix~\ref{app:RecursiveNNsets}.
In the case when $\sigma$ is simply an $s$-fold composition of $\varrho$,
we have the following improvement of Lemma~\ref{lem:RecursiveNNsets}.

\begin{lem}\label{lem:NestednessBasic}
  Let $s \in \N$.
  Consider an activation function $\varrho : \R \to \R$,
  and let $\sigma := \varrho \circ \cdots \circ \varrho$,
  where the composition has $s$ ``factors''.
  We have
  \[
    \NNreal^{\sigma,d,k}_{W,L,N} \subset \NNreal^{\varrho,d,k}_{W+(s-1)N,1+s(L-1),sN}
    \quad \forall \, W, N \in \N_0 \cup \{\infty\} \text{ and } L \in \N \cup \{\infty\}.
  \]
  The same holds for strict networks, replacing $\NNreal$ by $\SNNreal$ everywhere.
\end{lem}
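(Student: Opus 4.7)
The plan is to give an explicit direct construction that replaces each $\sigma$-activation by the requisite $s$ consecutive applications of $\varrho$, keeping the other (identity) coordinates in lockstep by inserting identity affine maps between them. Appealing instead to Lemma~\ref{lem:RecursiveNNsets} with $\Psi_\sigma$ the trivial $\varrho$-network realizing the $s$-fold composition yields strictly weaker bounds, so the construction should be tailored to this specific $\sigma$.

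Take a generalized $\sigma$-network $\Phi = \big((T_1,\alpha_1),\ldots,(T_L,\alpha_L)\big)$ with widths $N_0,N_1,\ldots,N_L$. For each $\ell<L$ write $\alpha_\ell = \bigotimes_{j=1}^{N_\ell}\alpha_\ell^{(j)}$ with $\alpha_\ell^{(j)} \in \{\identity_{\R},\sigma\}$. For $i \in \{1,\ldots,s\}$, define $\beta_\ell^{(i)} := \bigotimes_{j=1}^{N_\ell}\gamma_\ell^{(i,j)}$, where $\gamma_\ell^{(i,j)} := \identity_{\R}$ if $\alpha_\ell^{(j)} = \identity_{\R}$ and $\gamma_\ell^{(i,j)} := \varrho$ otherwise. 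Letting $I_m$ denote the identity affine map on $\R^m$, the candidate network $\Psi$ is obtained by replacing every pair $(T_\ell,\alpha_\ell)$ with $\ell<L$ by the block
\[
  (T_\ell,\beta_\ell^{(1)}),\;(I_{N_\ell},\beta_\ell^{(2)}),\;(I_{N_\ell},\beta_\ell^{(3)}),\;\ldots,\;(I_{N_\ell},\beta_\ell^{(s)}),
\]
and keeping the final pair $(T_L,\identity_{\R^{N_L}})$ unchanged.

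The remaining verification is routine bookkeeping. Equality $\Realization(\Psi) = \Realization(\Phi)$ holds coordinatewise: on any coordinate with $\alpha_\ell^{(j)} = \identity_{\R}$, the $s$ composed identity-on-identity operations leave the value of $T_\ell$ unchanged, while on any coordinate with $\alpha_\ell^{(j)}=\sigma$, the $s$ applications of $\varrho$ interleaved with identity affine maps produce $\varrho\circ\cdots\circ\varrho = \sigma$ applied to that coordinate of $T_\ell$. For the complexity counts: the number of affine pairs is $(L-1)s + 1 = 1 + s(L-1)$; each original hidden layer of width $N_\ell$ now appears as $s$ hidden layers of the same width, giving $N(\Psi) = s\sum_{\ell=1}^{L-1} N_\ell = s\,N(\Phi)$; and each inserted $I_{N_\ell}$ has exactly $N_\ell$ nonzero entries, so $W(\Psi) = W(\Phi) + (s-1)\sum_{\ell=1}^{L-1} N_\ell = W(\Phi) + (s-1)\,N(\Phi)$, as required. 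Finally, if $\Phi$ is strict then every $\alpha_\ell^{(j)}=\sigma$ for $\ell<L$, hence every $\gamma_\ell^{(i,j)} = \varrho$, so $\Psi$ is strict as well, covering the statement for $\SNNreal$.

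There is no serious obstacle here — the only subtle point is that the construction exploits the \emph{generalized} network framework twice: once by allowing $\identity_\R$ as a per-coordinate activation to carry the identity-typed coordinates unchanged through the $s-1$ auxiliary layers (thereby avoiding extra hidden neurons), and once to ensure that the strictness bookkeeping is preserved. Writing everything out simply requires attention to the definitions of $W(\Phi)$, $N(\Phi)$, $L(\Phi)$ from Definition~\ref{def:complexity}.
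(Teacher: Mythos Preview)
Your proposal is correct and essentially identical to the paper's own proof: both replace each hidden $\sigma$-layer $(T_\ell,\alpha_\ell)$ by the block $(T_\ell,\beta_\ell^{(1)}),(I_{N_\ell},\beta_\ell^{(2)}),\ldots,(I_{N_\ell},\beta_\ell^{(s)})$ with coordinatewise activations set to $\varrho$ or $\identity_\R$ according to whether the original coordinate used $\sigma$ or $\identity_\R$, and then count layers, neurons, and weights exactly as you do. The only cosmetic difference is that the paper writes the depth of the given network as $K\leq L$ rather than $L$, making the bound $1+s(K-1)\leq 1+s(L-1)$ explicit, and separates out the trivial cases $s=1$ and $K=1$; your construction already handles these implicitly.
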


The proof is in Appendix~\ref{app:NestednessBasic}.
In our next result, we consider the case where $\sigma$ cannot be exactly implemented
by $\varrho$-networks, but only approximated arbitrarily well by such networks of
uniformly bounded complexity.

\begin{lem}\label{lem:Recursivity}
  Consider two activation functions $\varrho, \sigma : \R \to \R$.
  Assume that $\sigma$ is continuous and that there are $w,m \in \N_{0}$, $\ell \in \N$ and a family
  $\Psi_{h} \in \NNsymbol_{w,\ell,m}^{\varrho,1,1}$ parameterized by $h \in \R$, with $L(\Psi_{h}) = \ell$,
  such that $\sigma_{h} := \Realization(\Psi_{h}) \xrightarrow[h \to 0]{} \sigma$
  locally uniformly on $\R$.
  For any $d,k \in \N$, $W,N \in \N_{0}$, $L \in \N$ we have
  \begin{equation}
    \NNreal^{\sigma,d,k}_{W,L,N}
    \subset
    \begin{cases}
      \overline{\NNreal^{\varrho,d,k}_{Wm^{2},L,Nm}},                   & \text{if } \ell=2; \\
      \overline{\NNreal^{\varrho,d,k}_{mW+wN,1+(L-1)\ell,N(1+m)}}, & \text{for any } \ell,
    \end{cases}
  \end{equation}
  where the closure is with respect to locally uniform convergence.
\end{lem}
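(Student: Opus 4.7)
The plan is to approximate the $\sigma$-network by $\sigma_h$-networks sharing the \emph{same} architecture, and then rewrite each $\sigma_h$-network as a $\varrho$-network via Lemma~\ref{lem:RecursiveNNsets}. The only nontrivial point is establishing locally uniform convergence of the approximations.

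Concretely, fix $f = \Realization(\Phi)$ with $\Phi = ((T_1,\alpha_1),\ldots,(T_L,\alpha_L)) \in \NNsymbol^{\sigma,d,k}_{W,L,N}$. Construct $\Phi_h$ by leaving each affine map $T_\ell$ unchanged and replacing every componentwise occurrence of $\sigma$ in the $\alpha_\ell$ by $\sigma_h$ (identity components remain). Then $\Phi_h$ is a $\sigma_h$-network with the same $(W,L,N)$ bounds as $\Phi$, and since $\sigma_h = \Realization(\Psi_h)$ with $\Psi_h \in \NNsymbol^{\varrho,1,1}_{w,\ell,m}$ of depth exactly $\ell$, Lemma~\ref{lem:RecursiveNNsets} applied with activations $(\varrho, \sigma_h)$ yields
\[
  \Realization(\Phi_h) \in
  \begin{cases}
    \NNreal^{\varrho,d,k}_{Wm^2,\,L,\,Nm}, & \text{if } \ell = 2, \\[2pt]
    \NNreal^{\varrho,d,k}_{mW+wN,\,1+(L-1)\ell,\,N(1+m)}, & \text{for any } \ell.
  \end{cases}
\]
It then remains to verify that $\Realization(\Phi_h) \to f$ locally uniformly as $h \to 0$.

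For this convergence I would induct on the depth. Let $f_j$ (resp.\ $f_{j,h}$) denote the partial realization after $j$ affine-and-nonlinear layers of $\Phi$ (resp.\ $\Phi_h$), and fix any compact $K \subset \R^d$. Assuming inductively $f_{j,h} \to f_j$ uniformly on $K$, the image $f_j(K)$ is compact by continuity of $\sigma$ and the affine maps, and uniform convergence forces $f_{j,h}(K) \subset K' := \{y : \mathrm{dist}(y, f_j(K)) \le 1\}$ for all sufficiently small $h$. Decompose
\[
  f_{j+1,h} - f_{j+1}
  = (\alpha_{j+1,h} - \alpha_{j+1}) \circ T_{j+1} \circ f_{j,h}
  \,+\, \alpha_{j+1} \circ T_{j+1} \circ f_{j,h}
  \,-\, \alpha_{j+1} \circ T_{j+1} \circ f_j.
\]
The sup-norm of the first summand on $K$ is bounded by $\sup_{z \in [-M,M]} |\sigma_h(z) - \sigma(z)|$ for any $M$ large enough that $T_{j+1}(K')$ has all coordinates in $[-M,M]$, which vanishes as $h \to 0$ by the hypothesis. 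The second summand vanishes uniformly on $K$ by the inductive hypothesis combined with uniform continuity of $\alpha_{j+1} \circ T_{j+1}$ on the compact set $T_{j+1}(K' \cup f_j(K))$. The case $j+1 = L$ gives the desired conclusion.

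The main obstacle is nothing deep; it is merely careful bookkeeping to keep the propagation domains compact through the layer-by-layer composition, so that the hypothesis ``$\sigma_h \to \sigma$ locally uniformly on $\R$'' is enough to conclude. Continuity of $\sigma$ is used twice in this bookkeeping: first to make each $\alpha_\ell$ continuous (hence uniformly continuous on compacta), and second to ensure that the layerwise images $f_j(K)$ remain compact so that a single enlarged compact $K'$ captures every $f_{j,h}(K)$ for small $h$.
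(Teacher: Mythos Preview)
Your proposal is correct and follows essentially the same approach as the paper: replace $\sigma$ by $\sigma_h$ in the network, invoke Lemma~\ref{lem:RecursiveNNsets} to rewrite each $\sigma_h$-network as a $\varrho$-network, and then establish locally uniform convergence of the realizations. The only cosmetic difference is that where you spell out the layer-by-layer induction for convergence, the paper packages this step into Lemma~\ref{lem:LocallyUniformComposition} (locally uniform convergence is preserved under composition when the limit functions are continuous), which is precisely what your induction proves.
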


The proof is in Appendix~\ref{app:Recursivity}.
In the next lemma, we establish a relation between the approximation capabilities
of strict and generalized networks.
The proof is given in Appendix~\ref{app:PfGeneralizedVSStrict}.

\begin{lem}\label{lem:GeneralizedVSStrict}
  Let $\varrho : \R \to \R$ be \emph{continuous} and assume that $\varrho$ is differentiable
  at some $x_0 \in \R$ with $\varrho' (x_0) \neq 0$.
  For any $d,k \in \N$, $L \in \N \cup \{\infty\}$, $N \in \N_0 \cup \{\infty\}$,
  and $W \in \N_0$ we have
  \[
    \NNreal_{W,L,N}^{\varrho,d,k} \subset \overline{\SNNreal_{4W,L,2N}^{\varrho,d,k}},
  \]
  where the closure is with respect to locally uniform convergence.
\end{lem}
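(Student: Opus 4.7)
The plan is to realize any generalized $\varrho$-network $\Phi$ as a locally uniform limit of strict $\varrho$-networks $\Phi_h$, obtained by replacing each identity-activated neuron by a two-neuron ``gadget'' that approximates the identity with $\varrho$, folding its pre- and post-scaling into the surrounding affine maps. The workhorse is the symmetric difference quotient
\[
  \sigma_h(x) := \frac{\varrho(x_0 + hx) - \varrho(x_0 - hx)}{2 h \, \varrho'(x_0)}, \qquad h \neq 0,
\]
which is well defined since $\varrho'(x_0) \neq 0$. The first step is to check that $\sigma_h \to \identity_\R$ locally uniformly on $\R$ as $h \to 0$: for any compact $K \subset \R$, with $M := \sup_K|x|$, the differentiability of $\varrho$ at $x_0$ yields, for any $\eps > 0$ and all sufficiently small $|h|$, the bound $|\varrho(x_0 \pm hx) - \varrho(x_0) \mp \varrho'(x_0)\, hx| \leq \eps|hx|$ uniformly in $x \in K$; subtracting and dividing by $2h\,\varrho'(x_0)$ gives $\sup_{x \in K}|\sigma_h(x) - x| \leq \eps M/|\varrho'(x_0)|$. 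Note that $\sigma_h$ is itself realized by a strict $\varrho$-network with $L=2$, $N=2$ hidden neurons and $W=4$ weights.

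Next, given $\Phi = ((T_1,\alpha_1),\ldots,(T_L,\alpha_L))$ with $T_\ell x = A^{(\ell)} x + b^{(\ell)}$ and $\alpha_\ell = \bigotimes_j \varrho_j^{(\ell)}$, I would construct $\Phi_h$ by performing the following surgery at every hidden index $j$ with $\varrho_j^{(\ell)} = \identity_\R$: split the $j$-th row of $A^{(\ell)}$ into two rows $\pm h\, A^{(\ell)}_{j,:}$, with bias entries $\pm h\, b^{(\ell)}_j + x_0$; apply $\varrho$ to both (so the layer becomes strict); and split the $j$-th column of $A^{(\ell+1)}$ into two columns $\pm A^{(\ell+1)}_{:,j}/(2 h\, \varrho'(x_0))$, leaving the bias $b^{(\ell+1)}$ untouched. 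The $\varrho$-activated neurons of $\Phi$ are left untouched. By construction, the effective post-activation of each replaced neuron equals $\sigma_h$ applied to the original pre-activation, and $\Phi_h$ is a strict $\varrho$-network.

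For the complexity count, scaling a row or column of a matrix by a nonzero constant preserves $\|\cdot\|_{\ell^0}$, and bias modifications do not contribute to $W$. A nonzero weight $A^{(\ell)}_{j,i}$ of $\Phi$ can be doubled if exactly one of the two endpoints (neuron $i$ in layer $\ell-1$ or neuron $j$ in layer $\ell$) is a hidden identity neuron, and quadrupled if both are; summing over all weights yields $W(\Phi_h) \leq 4\, W(\Phi)$. Since each hidden identity neuron is replaced by two $\varrho$-neurons and each hidden $\varrho$-neuron is preserved, $N(\Phi_h) \leq 2\,N(\Phi)$, while the depth remains $L(\Phi_h) = L(\Phi)$. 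In particular $\Phi_h \in \SNNsymbol^{\varrho,d,k}_{4W, L, 2N}$.

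Finally, to obtain $\Realization(\Phi_h) \to \Realization(\Phi)$ locally uniformly on $\R^d$, I would argue by induction on the layer index. Since $\varrho$ is continuous, on any compact input set $K \subset \R^d$ the pre-activations at layer $\ell$ of $\Phi$ remain in a compact set $K_\ell \subset \R^{N_\ell}$. By the inductive hypothesis, the same holds for $\Phi_h$ uniformly in small $|h|$. Then the effective post-activation at identity neurons of $\Phi_h$ is $\sigma_h$ applied componentwise to a pre-activation lying in $K_\ell$, and by Step 1 this converges uniformly to the original pre-activation on $K_\ell$; continuity of $\varrho$ (hence uniform continuity on $K_\ell$) and of the affine maps propagates this to uniform convergence of the next layer's pre-activation. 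The main obstacle is keeping this bookkeeping of compact sets clean when $h$ varies, but this is manageable since the extra rows/columns of $\Phi_h$ involve $h$ only multiplicatively in the linear parts and additively (through $\pm x_0$) in the biases, giving uniformly bounded pre-activations for $|h| \leq 1$. This yields $\Realization(\Phi) \in \overline{\SNNreal^{\varrho,d,k}_{4W, L, 2N}}$, as required.
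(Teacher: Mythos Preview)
Your approach is essentially the same as the paper's: approximate the identity by a two-neuron strict $\varrho$-gadget and fold its encoding/decoding affine maps into the adjacent $T_\ell$'s, then invoke stability of locally uniform convergence under composition with continuous limits. The only differences are cosmetic: you use the symmetric difference quotient $\sigma_h(x)=\frac{\varrho(x_0+hx)-\varrho(x_0-hx)}{2h\varrho'(x_0)}$, whereas the paper uses a one-sided quotient (the second neuron held constant at $x_0$); and the paper packages the whole argument via an abstract ``network compatible topology family'' (its Proposition~\ref{prop:NetworkCompatibleTopologyClosure}), which lets it reuse the same machinery for Lemma~\ref{lem:IdentityRectifierForFree}, while you carry out the surgery and the layerwise induction directly. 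One small caution in your last paragraph: the extra \emph{columns} carry a factor $1/h$, so ``uniformly bounded pre-activations'' is not justified by ``$h$ only multiplicatively''---what actually keeps things bounded is that the two large column contributions from a duplicated neuron combine to $\sigma_h(\cdot)$, which you have already shown is close to the identity; once you phrase the induction in terms of these \emph{combined} outputs (equivalently, factor $\Realization(\Phi_h)$ as $T_K\circ(Q_{K-1}\beta_{K-1}P_{K-1})\circ T_{K-1}\circ\cdots$ as the paper does), the argument goes through cleanly.
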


\subsection{Networks with activation functions that can represent the identity}

The convergence in Lemma~\ref{lem:GeneralizedVSStrict} is only locally uniformly,
which is not strong enough to ensure equality of the associated approximation spaces
on \emph{unbounded} domains.
In this subsection we introduce a certain condition on the activation functions
which ensures that strict and generalized networks yield the same approximation spaces
also on unbounded domains.

\begin{defn}\label{defn:IdentityRepresentationDefinition}
  We say that a function $\varrho : \R\to\R$ \emph{can represent $f: \R \to \R$ with $n$ terms}
  (where $n \in \N$) if $f \in \SNNreal^{\varrho,1,1}_{\infty,2,n}$;
  that is, if there are $a_i, b_i, c_i \in \R$ for
  $i \in \FirstN{n}$, and some $c \in \R$ satisfying
  \[
   f(x) = c + \sum_{i=1}^n a_i \cdot \varrho (b_i \, x + c_i)
    \qquad \forall \, x \in \R \, .
  \]
  A particular case of interest is when $\varrho$
  \emph{can represent the identity $\identity: \R \to \R$ with $n$ terms}.
\end{defn}

\noindent As shown in Appendix~\ref{app:ReLUPowerRepresentsIdentity},
primary examples are the ReLU activation function and its powers.

\begin{lem}\label{lem:ReLUPowerRepresentsIdentity}
  For any $r \in \N$,
   $\varrho_r$ can represent any polynomial of degree $\leq r$ with $2r + 2$ terms.
\end{lem}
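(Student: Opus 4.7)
The plan is to exhibit every polynomial of degree at most $r$ as a linear combination of $r+1$ shifted copies of the monomial $y \mapsto y^r$, and then show that each such shifted copy is itself representable as a sum of two terms of the form $a \, \varrho_r(bx+c)$. Combining the two observations yields a $\varrho_r$-representation with $2(r+1) = 2r+2$ terms and an overall constant $c = 0$, as required by Definition~\ref{defn:IdentityRepresentationDefinition}.

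First I would establish the pointwise identity
\[
  y^r = \varrho_r(y) + (-1)^r \, \varrho_r(-y) \qquad \forall\, y \in \R,
\]
by a simple case distinction: for $y \geq 0$ one has $\varrho_r(y) = y^r$ and $\varrho_r(-y) = 0$, while for $y < 0$ one has $\varrho_r(y) = 0$ and $(-1)^r \varrho_r(-y) = (-1)^r (-y)^r = y^r$. Substituting $y = x - t$ for any fixed $t \in \R$ yields
\[
  (x-t)^r = \varrho_r(x - t) + (-1)^r \, \varrho_r(t - x),
\]
which displays $(x-t)^r$ as a sum of exactly two terms of the required form $a_i \, \varrho_r(b_i x + c_i)$, with $b_i \in \{+1,-1\}$.

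Next I would show that for \emph{any} choice of $r+1$ pairwise distinct real numbers $t_0, \ldots, t_r$, the family $\{(x-t_i)^r\}_{i=0}^r$ is a basis of the space $\mathcal{P}_r$ of polynomials of degree at most $r$. Since $\dim \mathcal{P}_r = r+1$, it is enough to check linear independence. Expanding via the binomial theorem,
\[
  \sum_{i=0}^r c_i \, (x - t_i)^r = \sum_{k=0}^r \binom{r}{k} \, x^k \, \Bigl( \sum_{i=0}^r c_i \, (-t_i)^{r-k} \Bigr),
\]
and equating all coefficients of $x^k$ to zero gives the linear system $\sum_{i=0}^r c_i \, (-t_i)^{j} = 0$ for $j = 0, 1, \ldots, r$. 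This is a Vandermonde system in $-t_0, \ldots, -t_r$, whose determinant is nonzero precisely because the $t_i$ are distinct; hence $c_i = 0$ for all $i$, proving the linear independence.

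To conclude, given any polynomial $p$ of degree at most $r$, I would fix any $r+1$ distinct reals $t_0, \ldots, t_r$ and use the previous step to find coefficients $c_0, \ldots, c_r \in \R$ with $p(x) = \sum_{i=0}^r c_i \, (x - t_i)^r$. Substituting the identity from the first step gives
\[
  p(x) = \sum_{i=0}^r c_i \, \varrho_r(x - t_i) + \sum_{i=0}^r (-1)^r c_i \, \varrho_r(t_i - x),
\]
which is exactly a representation in the sense of Definition~\ref{defn:IdentityRepresentationDefinition} with additive constant $0$ and $n = 2(r+1) = 2r+2$ terms. There is no real obstacle: the only mildly delicate point is the sign-management in the identity $y^r = \varrho_r(y) + (-1)^r \varrho_r(-y)$, which must cover both parities of $r$ and the boundary case $y = 0$; the Vandermonde argument is then entirely routine.
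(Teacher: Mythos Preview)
Your proof is correct and follows essentially the same route as the paper: the identity $y^r = \varrho_r(y) + (-1)^r \varrho_r(-y)$, the Vandermonde argument showing that $r+1$ distinct shifts of $x \mapsto x^r$ span the polynomials of degree at most $r$, and the combination giving $2(r+1)$ terms. The paper merely isolates the span statement as a separate auxiliary lemma, but the content is the same.
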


\begin{lem}\label{lem:IdentityRectifierForFree}
  Assume that $\varrho : \R \to \R$ can represent the identity with $n$ terms.
  Let $d,k \in\N$, $W, N \in \N_0$, and $L \in \N \cup \{\infty\}$ be arbitrary.
  Then
  \(
      \NNreal_{W,L,N}^{\varrho,d,k}
      \subset \SNNreal_{n^2 \cdot W, L, n \cdot N}^{\varrho,d,k}
  \).
  \qedhere
\end{lem}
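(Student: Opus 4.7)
My plan is to replace each identity component in each hidden nonlinearity $\alpha_\ell$ ($1 \le \ell < L$) of a given generalized $\varrho$-network $\Phi = ((T_1,\alpha_1),\dots,(T_L,\alpha_L))$ by $n$ parallel $\varrho$-neurons. The representation $\identity(x) = c + \sum_{i=1}^n a_i\, \varrho(b_i x + c_i)$ tells us exactly how to do this: if position $k$ of $\alpha_\ell$ is the identity, I introduce $n$ new neurons computing $\varrho(b_i\, y_k + c_i)$ (where $y_k$ is the $k$-th output of $T_\ell$), and then recombine them with coefficients $a_i$ plus the constant $c$ when feeding into the next layer.

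Formally, for each hidden layer $\ell$ with identity positions $J_\ell$, I would define an ``expansion'' affine map $E_\ell$ and a ``combination'' affine map $R_\ell$ so that (i) $E_\ell$ leaves each $\varrho$-coordinate unchanged and turns each identity coordinate $y_k$, $k \in J_\ell$, into $n$ new coordinates $b_i y_k + c_i$; (ii) $R_\ell$ leaves each $\varrho$-coordinate unchanged and maps each block of $n$ coordinates corresponding to a former identity position into $c + \sum_i a_i (\cdot)$. Then $R_\ell \circ (\varrho\text{ applied componentwise}) \circ E_\ell$ coincides with $\alpha_\ell$ on the range of $T_\ell$. The strict network $\Phi'$ is then obtained by fusing these maps into the affine layers: $T_1^{\mathrm{new}} := E_1 \circ T_1$, $T_\ell^{\mathrm{new}} := E_\ell \circ T_\ell \circ R_{\ell-1}$ for $2 \le \ell \le L-1$, and $T_L^{\mathrm{new}} := T_L \circ R_{L-1}$, with every hidden nonlinearity now being $\varrho$ componentwise. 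By construction $\Realization(\Phi')=\Realization(\Phi)$, $L(\Phi')=L(\Phi)$, and since each identity position becomes $n$ positions, $N(\Phi') \le n \cdot N(\Phi)$.

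The core step is then the weight count. The linear part of $E_\ell$ has at most one nonzero entry per row and the linear part of $R_\ell$ has at most one nonzero entry per column. Consequently, left-multiplication of $A_\ell$ by the linear part of $E_\ell$ duplicates each nonzero row at most $n$ times, and right-multiplication by the linear part of $R_{\ell-1}$ duplicates each nonzero column at most $n$ times. For the interior layers this yields $\|T_\ell^{\mathrm{new}}\|_{\ell^0} \le n^2 \|T_\ell\|_{\ell^0}$, while the first and last layers incur only a one-sided factor of $n$. Summing gives $W(\Phi') \le n^2 W(\Phi)$.

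The points requiring care—none of them a serious obstacle—are: (a) ensuring that the constant $c$ of the identity representation is absorbed into the bias of the downstream affine map, which is harmless since biases do not enter $W(\cdot)$; (b) treating the boundary layers ($\ell=1$ and $\ell=L$, including the degenerate $L=1$ case) correctly, since they receive only one-sided expansion and thus do not threaten the $n^2$ bound; and (c) observing that $\alpha_L = \identity_{\R^{N_L}}$ by convention both for strict and for generalized networks, so the output layer requires no modification at all.
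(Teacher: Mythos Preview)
Your proof is correct and follows essentially the same construction as the paper: your expansion/recombination maps $E_\ell$, $R_\ell$ are precisely the paper's $P_\ell$, $Q_\ell$, and your weight bound via the row/column sparsity of these maps is exactly the paper's use of $\|P_\ell\|_{\ell^{0,\infty}} \le n$ and $\|Q_\ell\|_{\ell^{0,\infty}_*} \le n$ together with Lemma~\ref{lem:MatrixSparsityForComposition}. The only difference is packaging: the paper derives this lemma as the special case of Proposition~\ref{prop:NetworkCompatibleTopologyClosure} in which one uses the discrete topology (so that the sequential closure is trivial), a framework introduced so that Lemmas~\ref{lem:GeneralizedVSStrict} and~\ref{lem:IdentityRectifierForFree} can be handled by a single argument.
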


\noindent The proof of Lemma~\ref{lem:IdentityRectifierForFree} is in
Appendix~\ref{app:PfGeneralizedVSStrict}.
The next lemma is proved in Appendix~\ref{app:MultNetwork}.

\begin{lem}\label{lem:MultNetwork}
  If  $\varrho: \R \to \R$ can represent all polynomials of degree two with $n$ terms, then:
  \begin{enumerate}[leftmargin=0.6cm]
    \item For $d \in \N_{\geq 2}$ the multiplication function
          $M_{d}: \R^{d} \to \R, x \mapsto \prod_{i=1}^{d} x_{i}$ satisfies
          \[
            M_{d} \in \NNreal^{\varrho,d,1}_{6n(2^{j}-1),2j,(2n+1)(2^{j}-1)-1}
            \quad \text{with}\quad
            j = \lceil \log_{2} d \rceil.
          \]
          In particular, for $d =2$ we have $M_2 \in \SNNreal^{\varrho,d,1}_{6n,2,2n}$.

    \item For  $k \in \N$  the multiplication map
          $m : \R \times \R^k \to \R^k, (x,y) \mapsto x \cdot y$ satisfies
          \(
            m \in \NNreal^{\varrho, 1+k, k}_{6kn, 2,2kn}
          \).
          \qedhere
  \end{enumerate}
\end{lem}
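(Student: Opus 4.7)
The plan is to establish the strict case $d = 2$ by polarization, then build $M_d$ for general $d \geq 3$ as a balanced binary tree of pairwise multipliers, and finally obtain Part (2) as a single Cartesian product of copies of $M_2$. For the base case $d = 2$, the hypothesis supplies $c_0 \in \R$ and $(a_i, b_i, c_i)_{i=1}^n$ such that $t^2 = c_0 + \sum_{i=1}^n a_i \, \varrho(b_i t + c_i)$ for every $t \in \R$. Substituting $t = x + y$ and $t = x - y$ into the polarization identity $xy = \tfrac{1}{4}\bigl[(x+y)^2 - (x-y)^2\bigr]$ cancels the two copies of $c_0$ and yields
\[
xy = \frac{1}{4} \sum_{i=1}^n a_i \bigl[\varrho\bigl(b_i(x+y) + c_i\bigr) - \varrho\bigl(b_i(x-y) + c_i\bigr)\bigr],
\]
which describes a strict 2-layer $\varrho$-network with $2n$ hidden neurons, $4n$ first-layer weights (two per hidden neuron), and $2n$ output weights, proving $M_2 \in \SNNreal^{\varrho, 2, 1}_{6n, 2, 2n}$.

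For $d \geq 3$, set $j = \lceil \log_2 d \rceil$ and let $\pi : \R^d \to \R^{2^j}$ be the affine padding $(x_1, \dots, x_d) \mapsto (x_1, \dots, x_d, 1, \dots, 1)$. For $s = 1, \dots, j$, define $\Phi_s : \R^{2^{j-s+1}} \to \R^{2^{j-s}}$ to compute $2^{j-s}$ pairwise products in parallel, each instantiated by the base-case $M_2$ network, viewed as a network on the larger input space by setting the extraneous input weights to $0$. Applying Lemma~\ref{lem:SummationLemma}-(\ref{enu:Cartesian}) with $\delta = 0$ (all sub-networks share depth $2$) gives $L(\Phi_s) = 2$, $W(\Phi_s) \leq 6n \cdot 2^{j-s}$ and $N(\Phi_s) \leq 2n \cdot 2^{j-s}$. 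Composing $\Phi_j \circ \cdots \circ \Phi_1$ iteratively via Lemma~\ref{lem:NetworkCalculus}-(\ref{enu:Compos}) (which adds depths, weights, and neurons, and contributes $2^{j-s}$ extra intermediate neurons at the $s$-th composition), then absorbing $\pi$ into the first affine layer via Lemma~\ref{lem:NetworkCalculus}-(\ref{enu:PrePostAffine}) (cost-free because $\|\pi\|_{\ell^{0,\infty}_*} \leq 1$), yields the targeted
\[
L = 2j, \qquad W \leq 6n(2^j - 1), \qquad N \leq 2n(2^j - 1) + (2^j - 2) = (2n+1)(2^j - 1) - 1.
\]

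For Part (2), view $m : (x, y) \mapsto (x y_1, \dots, x y_k)$ as the Cartesian product of $k$ copies of the base-case $M_2$ network, all defined on the common input space $\R^{1+k}$, with the $i$-th copy using nonzero input weights only for the coordinates of $x$ and $y_i$. Lemma~\ref{lem:SummationLemma}-(\ref{enu:Cartesian}), again with $\delta = 0$, delivers the desired membership in $\NNreal^{\varrho, 1+k, k}_{6kn, 2, 2kn}$. The main obstacle is the neuron accounting in Part (1): the ``$+ d_1$'' intermediate-neuron term contributed at each of the $j - 1$ composition steps must telescope with the Cartesian-product neuron counts to exactly $(2n+1)(2^j - 1) - 1$; a minor secondary issue is to verify, through the $\|\pi\|_{\ell^{0,\infty}_*}$ computation, that absorbing the padding map is genuinely free in the weight bound.
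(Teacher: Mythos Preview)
Your proposal is correct and follows essentially the same approach as the paper: polarization for $M_2$, a balanced binary tree of pairwise multipliers for general $d$ (with padding to the next power of two absorbed via Lemma~\ref{lem:NetworkCalculus}-(\ref{enu:PrePostAffine})), and a Cartesian product of $k$ copies of $M_2$ for Part~(2). The only organizational difference is that the paper runs Part~(1) as an induction on $j$—building $M_{2^{j+1}}$ from two parallel copies of $M_{2^j}$ composed with $M_2$—while you unroll this into a direct composition of $j$ ``halving'' layers $\Phi_j\circ\cdots\circ\Phi_1$ and sum the contributions; the resulting networks and the weight/neuron counts are identical.
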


\section{Neural network approximation spaces}
\label{sec:ApproximationSpaces}

The overall goal of this paper is to study \emph{approximation spaces} associated to the sequence
of sets $\AppSet_{n}$ of realizations of networks with at most $n$ connections
(resp.~at most $n$ neurons), $n \in \N_{0}$, either for fixed network depth $L \in \N$,
or for unbounded depth $L = \infty$, or even for varying depth $L = \mathscr{L}(n)$.

In this section, we first formally introduce these approximation spaces,
following the theory from \cite[Chapter 7, Section 9]{ConstructiveApproximation},
and then specialize these spaces to the context of neural networks.
The next sections will be devoted to establishing embeddings between classical functions spaces
and neural network approximation spaces, as well as nesting properties between such spaces.

\subsection{Generic tools from approximation theory}
\label{sub:ApproxTheoryTools}


Consider a quasi-Banach
\footnote{See e.g.
 \cite[Section 3]{MR2029742}
 for reminders
on quasi-norms and quasi-Banach spaces.}
space $X$ equipped with the quasi-norm $\|\cdot\|_{X}$, and let $f \in X$.
The \emph{error of best approximation} of $f$ from a nonempty set $\Gamma \subset X$ is
\begin{equation}
  \AppErr (f,\Gamma)_X := \inf_{g \in \Gamma} \| f - g \|_{X} \in [0,\infty) \, .
  \label{eq:ApproximationErrorDefinition}
\end{equation}
In case of $X = \StandardXSpace (\Omega)$ (as in Equation~\eqref{eq:StandardXSpace})
with $\Omega \subseteq \R^{d}$ a set of nonzero measure,
the corresponding approximation error will be denoted by $\AppErr(f,\Gamma)_{p}$.
As in \cite[Chapter 7, Section 9]{ConstructiveApproximation},
we consider an arbitrary family $\AppSet = (\AppSet_{n})_{n \in \N_{0}}$
of subsets $\AppSet_{n} \subset X$ and define for $f \in X$, $\alpha \in (0,\infty)$,
and $q \in (0,\infty]$ the following quantity
(which will turn out to be a quasi-norm under mild assumptions on the family $\Sigma$):
\[
  \| f \|_{\GenApproxSpace}
  := \begin{cases}
      \displaystyle{\left(
                      \sum_{n=1}^\infty
                        [n^\alpha \cdot \AppErr (f,\AppSet_{n-1})_{X}]^q
                        \frac{1}{n}
                    \right)^{1/q} \in [0,\infty]},
      & \text{if } 0 < q < \infty, \\[0.5cm]
      \,\,\, \displaystyle{\sup_{n \geq 1} \,\,
                             [n^{\alpha} \cdot \AppErr (f,\AppSet_{n-1})_{X}] \in [0,\infty]},
      & \text{if } q = \infty.
     \end{cases}
\]
As expected, the associated \emph{approximation class} is simply
\[
  \GenApproxSpace := \big\{ f \in X \, : \, \| f \|_{\GenApproxSpace} < \infty \big\} \, .
\]
For $q = \infty$, this class is precisely the subset of elements $f \in X$
such that $\AppErr(f,\AppSet_{n})_X = \mathcal{O}(n^{-\alpha})$,
and the classes associated to $0<q<\infty$ correspond to subtle variants of this subset.
If we assume that $\Sigma_n \subset \Sigma_{n+1}$ for all $n \in \N_0$,
then the following ``embeddings'' can be derived directly from the definition;
see \cite[Chapter 7, Equation (9.2)]{ConstructiveApproximation}:
\begin{equation}
  \GenApproxSpace[\alpha][q] \hookrightarrow \GenApproxSpace[\beta][s],
  \quad \text{if} \quad \alpha > \beta
  \quad \text{or if} \quad \alpha = \beta \quad \text{and} \quad q \leq s .
  \qedhere
  \label{eq:ApproximationSpaceEmbedding}
\end{equation}
Note that we do not yet know that the approximation classes
$\GenApproxSpace[\alpha][q]$ are (quasi)-Banach spaces.
Therefore, the notation $X_{1}\hookrightarrow X_{2}$---where for $i \in \{1,2\}$
we consider the class $X_{i} := \{x \in X: \|x\|_{X_{i}}<\infty\}$ associated to some
``proto''-quasi-norm $\|\cdot\|_{X_{i}}$---simply means that $X_{1} \subset X_{2}$ and
$\|\cdot\|_{X_{2}} \leq C \cdot \|\cdot\|_{X_{1}}$, even though $\|\cdot\|_{X_{i}}$
might not be proper (quasi)-norms and $X_{i}$ might not be (quasi)-Banach spaces.
When the classes are indeed (quasi)-Banach spaces (see below),
this corresponds to the standard notion of a continuous embedding.

As a direct consequence of the definitions,
we get the following result on the relation between approximation classes using
\emph{different families} of subsets.

\begin{lem}\label{lem:ApproximationSpaceElementaryNesting}
  Let $X$ be a quasi-Banach space,
  and let $\Sigma = (\Sigma_n)_{n \in \N_0}$ and $\Sigma' = (\Sigma_n')_{n \in \N_0}$
  be two families of subsets $\Sigma_n, \Sigma_n' \subset X$ satisfying the following properties:
  \begin{enumerate}
    \item $\Sigma_0 = \{0\} = \Sigma_0'$;

    \item $\Sigma_n \subset \Sigma_{n+1}$ and $\Sigma_n' \subset \Sigma_{n+1}'$
          for all $n \in \N_0$; and

    \item there are $c \in \N$ and $C > 0$ such that
          $\AppErr(f,\Sigma_{c m})_{X} \leq C \cdot \AppErr(f,\Sigma'_{m})_{X}$
          for all $f \in X, m \in \N$.
  \end{enumerate}
  Then $\GenApproxSpaceSet[\Sigma'] \hookrightarrow \GenApproxSpaceSet[\Sigma]$
  holds for arbitrary $q \in (0,\infty]$ and $\alpha > 0$.
  More precisely, there is a constant $K = K(\alpha,q,c,C) > 0$ satisfying
  \[
    \|f\|_{\GenApproxSpaceSet[\Sigma]}
    \leq K \cdot \|f\|_{\GenApproxSpaceSet[\Sigma']}
    \qquad \forall \, f \in \GenApproxSpaceSet[\Sigma'] \, .
    \qedhere
  \]
\end{lem}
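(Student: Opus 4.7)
The plan is to convert the pointwise inequality (3) into a comparison of quasi-norms, using the nestedness in (2) to align the indices and the normalisation $\Sigma_0 = \Sigma_0' = \{0\}$ to handle small indices. The latter gives $E(f,\Sigma_0)_X = E(f,\Sigma_0')_X = \|f\|_X$, and by retaining only the $n = 1$ term in the defining sum (for $q<\infty$) or supremum (for $q=\infty$) one has the trivial bound $\|f\|_X \leq \|f\|_{\GenApproxSpaceSet[\Sigma']}$, which will be used to absorb the head of the sum.

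The key step is to choose, for each $n \geq c+1$, the integer $m(n) := \lfloor (n-1)/c \rfloor \in \N$, which by construction satisfies $c \, m(n) \leq n - 1 < c(m(n)+1)$. Since $\Sigma_{c\,m(n)} \subset \Sigma_{n-1}$ by nestedness, hypothesis (3) yields
\[
  \AppErr(f,\Sigma_{n-1})_X
  \;\leq\; \AppErr(f,\Sigma_{c\,m(n)})_X
  \;\leq\; C \cdot \AppErr(f,\Sigma'_{m(n)})_X ,
\]
while the inequality $n \leq c(m(n)+1) \leq 2c\, m(n)$ gives $n^{\alpha} \leq (2c)^{\alpha}\, m(n)^{\alpha}$. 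For $q=\infty$, one directly concludes that $n^{\alpha} \AppErr(f,\Sigma_{n-1})_X \leq (2c)^{\alpha} C \cdot \|f\|_{\GenApproxSpaceSet[\Sigma']}$ for $n \geq c+1$, and for $n \leq c$ one uses $\AppErr(f,\Sigma_{n-1})_X \leq \|f\|_X$ together with $n^{\alpha} \leq c^{\alpha}$.

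For $0<q<\infty$ I would split $\sum_{n=1}^{\infty}[n^{\alpha}\AppErr(f,\Sigma_{n-1})_X]^{q}/n$ into the finite head $\sum_{n=1}^{c}$, bounded by a constant (depending on $\alpha,q,c$) times $\|f\|_X^{q} \leq \|f\|_{\GenApproxSpaceSet[\Sigma']}^{q}$, and the tail $\sum_{n=c+1}^{\infty}$, which I would reindex by grouping the exactly $c$ consecutive integers $n \in \{cm+1,\dots,cm+c\}$ sharing a common $m = m(n) \geq 1$. The inner sum for fixed $m$ is bounded by $c \cdot C^{q}(2c\,m)^{\alpha q}[\AppErr(f,\Sigma'_{m})_X]^{q}/(cm+1)$, and summing over $m$ and reindexing by $k = m+1$ yields a bound of the form $K'(\alpha,q,c,C) \cdot \sum_{k \geq 2}[k^{\alpha}\AppErr(f,\Sigma'_{k-1})_X]^{q}/k \leq K'(\alpha,q,c,C) \cdot \|f\|_{\GenApproxSpaceSet[\Sigma']}^{q}$. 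There is no real obstacle here: the whole argument is a bookkeeping exercise, and the only care required is tracking the constants produced by $n \asymp c\,m(n)$ and the finite discrepancy $1/n \asymp 1/(cm)$ within each block. This gives the claimed constant $K = K(\alpha,q,c,C)$.
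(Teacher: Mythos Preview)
Your proposal is correct and follows essentially the same route as the paper: the paper also sets $m_n := \lfloor (n-1)/c \rfloor$, splits the sum into a head $1 \leq n \leq c$ (bounded via $\AppErr(f,\Sigma_{n-1})_X \leq \|f\|_X$) and a tail, and groups the tail by the common value $m$ (counting at most $c$ indices per $m$). The only cosmetic differences are that the paper uses the slightly looser bound $n \leq (2c+1)\,m_n$ and, instead of your reindexing $k = m+1$, invokes the monotonicity $\AppErr(f,\Sigma'_m)_X \leq \AppErr(f,\Sigma'_{m-1})_X$ to pass from $\sum_m (m^\alpha \delta_m)^q/m$ to $\|f\|_{\GenApproxSpaceSet[\Sigma']}^q$.
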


\begin{rem*}
  One can alternatively assume that $\AppErr (f, \Sigma_{c m})_X \leq C \cdot \AppErr (f, \Sigma_m ')_X$
  only holds for $m \geq m_0 \in \N$.
  Indeed, if this is satisfied and if we set $c' := m_0 \, c$, then we see for arbitrary
  $m \in \N$ that $m_0 m \geq m_0$, so that
  \[
    \AppErr (f, \Sigma_{c' m})_X
    = \AppErr(f, \Sigma_{c \cdot m_0 \, m})_X
    \leq C \cdot \AppErr(f, \Sigma_{m_0 m} ')_X
    \leq C \cdot \AppErr (f, \Sigma_m ')_X \, .
  \]
  Here, the last step used that $m_0 \, m \geq m$, so that $\Sigma_m ' \subset \Sigma_{m_0 \, m}'$.
\end{rem*}

The proof of Lemma~\ref{lem:ApproximationSpaceElementaryNesting} can be found in
Appendix~\ref{sub:ApproximationSpaceElementaryNestingProof}.

\medskip{}

In \cite[Chapter 7, Section 9]{ConstructiveApproximation}, the authors develop
a general theory regarding approximation classes of this type.
To apply this theory, we merely have to verify that
$\AppSet = (\AppSet_n)_{n \in \N_0}$ satisfies the following list of axioms,
which is identical to \cite[Chapter 7, Equation (5.2)]{ConstructiveApproximation}:
\begin{enumerate}[label={(P\arabic*)}]
  \item \label{enu:GammaContainsZero}
        $\AppSet_0 = \{0\}$;

  \item \label{enu:GammaIncreasing}
        $\AppSet_n \subset \AppSet_{n+1}$ for all $n \in \N_0$;

  \item \label{enu:GammaScaling}
        $a \cdot \AppSet_n = \AppSet_n$ for all $a \in \R \setminus \{0\}$ and $n \in \N_0$;

  \item \label{enu:GammaAdditive}
        There is a fixed constant $c \in \N$ with $\AppSet_n + \AppSet_n \subset \AppSet_{cn}$
        for all $n \in \N_0$;

  \item \label{enu:GammaDense}
        $\AppSet_{\infty} := \bigcup_{j \in \N_0} \AppSet_j$ is dense in $X$;

  \item \label{enu:GammaBestApproximationExists}
        for any $n \in \N_0$, each $f \in X$ has a best approximation from $\AppSet_n$.
\end{enumerate}

As we will show in Theorem~\ref{th:DNNApproxSpaceWellDefined} below,
Properties~\ref{enu:GammaContainsZero}--\ref{enu:GammaDense}
hold in $X = \StandardXSpace (\Omega)$ for an appropriately defined family $\Sigma$
related to neural networks of fixed or varying network depth $L \in \N \cup \{\infty\}$.

Property~\ref{enu:GammaBestApproximationExists}, however, can fail in this setting
even for the simple case of the ReLU activation function;
indeed, a combination of Lemmas~\ref{lem:ApproximationOfIndicatorCube} and
\ref{lem:ExactSquashingReLUPower} below shows that ReLU networks of bounded complexity can approximate
the \emph{discontinuous} function $\Indicator_{[a,b]}$ arbitrarily well.
Yet, since realizations of ReLU networks are always continuous, $\Indicator_{[a,b]}$ is not
implemented exactly by such a network; hence, no best approximation exists.
Fortunately, 
Property~\ref{enu:GammaBestApproximationExists} is not
essential for the theory from \cite{ConstructiveApproximation} to be applicable:
by the arguments given in
\cite[Chapter 7, discussion around Equation (9.2)]{ConstructiveApproximation}
(see also \cite[Proposition~3.8 and Theorem~3.12]{MR2029742})
we get the following properties of the approximation classes
$\GenApproxSpace$ that turn out to be approximation {\em spaces}, i.e., quasi-Banach spaces.

\begin{prop}\label{prop:ApproxSpaceWellDefined}
If Properties~\ref{enu:GammaContainsZero}--\ref{enu:GammaDense} hold,
then the classes $(\GenApproxSpace, \|\cdot\|_{\GenApproxSpace})$ are quasi-Banach \emph{spaces}
satisfying the continuous embeddings~\eqref{eq:ApproximationSpaceEmbedding}
and $\GenApproxSpace \hookrightarrow X$.
\end{prop}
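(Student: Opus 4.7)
The plan is to verify in order: (i) that $\|\cdot\|_{\GenApproxSpace}$ is a quasi-norm on $\GenApproxSpace$ together with the continuous embedding into $X$, (ii) the embeddings in~\eqref{eq:ApproximationSpaceEmbedding}, and (iii) completeness. Property~\ref{enu:GammaDense} is not invoked for this proposition (though it will be essential later for density/characterization results), and~\ref{enu:GammaBestApproximationExists} is not assumed here.

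For (i), the embedding $\GenApproxSpace \hookrightarrow X$ is immediate: by~\ref{enu:GammaContainsZero}, $\AppErr(f,\AppSet_{0})_{X}=\|f\|_{X}$, so the $n=1$ term in the defining (sum or) supremum yields $\|f\|_{X}\leq\|f\|_{\GenApproxSpace}$, which also gives positive definiteness. Homogeneity is immediate from~\ref{enu:GammaScaling}: for $\lambda\neq 0$, $\lambda\AppSet_{n}=\AppSet_{n}$ implies $\AppErr(\lambda f,\AppSet_{n})_{X}=|\lambda|\AppErr(f,\AppSet_{n})_{X}$, and this scaling passes through the $\ell^{q}$-weighted norm. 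The only genuinely technical step is the quasi-triangle inequality. Using~\ref{enu:GammaAdditive}, any near-minimizers $g_{i}\in\AppSet_{n}$ of $\AppErr(f_{i},\AppSet_{n})_{X}$ satisfy $g_{1}+g_{2}\in\AppSet_{cn}$, which gives the pointwise bound
\[
  \AppErr(f_{1}+f_{2},\AppSet_{cn})_{X}
  \leq C_{X}\,\big(\AppErr(f_{1},\AppSet_{n})_{X}+\AppErr(f_{2},\AppSet_{n})_{X}\big),
\]
where $C_{X}$ denotes the quasi-norm constant of $X$. The index shift from $n$ to $cn$ is absorbed into the $\ell^{q}$-weighted sum by a block decomposition $\sum_{m\geq 1}\approx\sum_{j\geq 0}\sum_{m\in[c^{j},c^{j+1})}$, combined with the monotonicity of $m\mapsto\AppErr(f,\AppSet_{m})_{X}$ provided by~\ref{enu:GammaIncreasing}, which controls the resulting geometric series. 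The outcome is a quasi-triangle inequality for $\|\cdot\|_{\GenApproxSpace}$ with a constant depending on $c$, $C_{X}$, $\alpha$ and $q$ (the dependence on $q$ is only nontrivial for $q<1$). This block argument is the main obstacle.

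For (ii), the embeddings in~\eqref{eq:ApproximationSpaceEmbedding} are a direct consequence of~\ref{enu:GammaIncreasing}: the sequence $\AppErr(f,\AppSet_{n})_{X}$ is non-increasing in $n$, and standard manipulations on weighted $\ell^{q}$ sequence norms (analogous to embeddings among Lorentz sequence spaces) then yield $\GenApproxSpace[\alpha][q]\hookrightarrow\GenApproxSpace[\beta][s]$ when $\alpha>\beta$, and when $\alpha=\beta$ with $q\leq s$.

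For (iii), completeness follows a classical template. Let $(f_{k})_{k\in\N}$ be Cauchy in $\GenApproxSpace$. By the embedding of~(i), it is Cauchy in the quasi-Banach space $X$ and hence converges to some $f\in X$. The elementary Lipschitz-type bound $|\AppErr(g,\AppSet_{n})_{X}-\AppErr(h,\AppSet_{n})_{X}|\leq C_{X}\|g-h\|_{X}$---which follows from the quasi-triangle inequality in $X$ applied inside the infimum defining $\AppErr$---shows that for every fixed $n$, $\AppErr(f_{k}-f,\AppSet_{n})_{X}=\lim_{j\to\infty}\AppErr(f_{k}-f_{j},\AppSet_{n})_{X}$. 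Applying Fatou's lemma to the $\ell^{q}$-weighted norm in $n$ then yields
\[
  \|f_{k}-f\|_{\GenApproxSpace}
  \leq \liminf_{j\to\infty}\|f_{k}-f_{j}\|_{\GenApproxSpace},
\]
and the Cauchy condition on $(f_{j})$ drives the right-hand side to $0$ as $k\to\infty$. In particular $f\in\GenApproxSpace$ and $f_{k}\to f$ in the approximation quasi-norm, completing the proof.
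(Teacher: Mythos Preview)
Your argument is essentially correct and follows the classical route (quasi-norm axioms via \ref{enu:GammaContainsZero}--\ref{enu:GammaAdditive}, the embeddings from monotonicity, completeness via the embedding into $X$ and a Fatou step). The paper's own proof does not spell any of this out: it simply cites \cite[Chapter~7, discussion around Equation~(9.2)]{ConstructiveApproximation} for the embeddings and the quasi-norm structure, and \cite[Remark~3.5, Proposition~3.8, Theorem~3.12]{MR2029742} for completeness and the embedding into $X$. So your route is a self-contained rederivation of what those references provide, which is pedagogically useful even if not new.

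One imprecision worth fixing: the ``Lipschitz-type bound'' $|\AppErr(g,\AppSet_n)_X-\AppErr(h,\AppSet_n)_X|\leq C_X\|g-h\|_X$ is generally \emph{false} when $X$ is only quasi-normed with constant $C_X>1$. What the quasi-triangle inequality actually gives is the one-sided estimate $\AppErr(g,\AppSet_n)_X\leq C_X\big(\AppErr(h,\AppSet_n)_X+\|g-h\|_X\big)$, and hence only $\AppErr(f_k-f,\AppSet_n)_X\leq C_X\liminf_{j}\AppErr(f_k-f_j,\AppSet_n)_X$ rather than an exact limit. Fortunately this inequality is all you need before invoking Fatou, so your completeness argument survives once you replace the equality by this $\liminf$ bound (absorbing the extra factor $C_X$ into the constant). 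Your remark that \ref{enu:GammaDense} is not used here is also correct; density only enters later.
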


\begin{rem*}
  Note that $\|\cdot\|_{\GenApproxSpace}$ is in general only a quasi-norm, even if
  $X$ is a Banach space and $q \in [1,\infty]$.
  Only if one additionally knows that all the sets $\Sigma_n$ are vector spaces
  (that is, one can choose $c = 1$ in Property~\ref{enu:GammaAdditive}),
  one knows for sure that $\|\cdot\|_{\GenApproxSpace}$ is a norm.
\end{rem*}

\begin{proof}
Everything except for the completeness and the embedding
$\GenApproxSpace \hookrightarrow X$ is shown in
\cite[Chapter 7, Discussion around Equation (9.2)]{ConstructiveApproximation}.
In \cite[Chapter 7, Discussion around Equation (9.2)]{ConstructiveApproximation}
it was shown that the embedding \eqref{eq:ApproximationSpaceEmbedding} holds.
All other properties claimed in Proposition~\ref{prop:ApproxSpaceWellDefined}
follow by combining Remark~3.5, Proposition~3.8, and Theorem~3.12 in \cite{MR2029742}.
\end{proof}

\subsection{Approximation classes of generalized networks}
\label{sec:AppSpacesMainAssumptions}

We now specialize to the setting of neural networks and consider
$d,k \in \N$, an activation function $\varrho: \R \to \R$, and a non-empty set $\Omega \subseteq \R^d$.

Our goal is to define a family of sets of (realizations of) $\varrho$-networks of ``complexity'' $n \in \N_{0}$.
The complexity will be measured in terms of the number of connections $W \leq n$
or the number of neurons $N \leq n$, possibly with a control on how the depth $L$ evolves with $n$.

\begin{defn}[Depth growth function]\label{def:DepthGrowthFunction}
A  {\em depth growth function} is a non-decreasing function
\[
  \mathscr{L}: \N \to \N \cup \{\infty\}, n \mapsto \mathscr{L}(n) .
  \qedhere
\]
\end{defn}
\begin{defn}[Approximation family, approximation spaces]
\label{def:GammaDefinition}
Given an activation function $\varrho$, a depth growth function $\mathscr{L}$,
a subset $\Omega \subseteq \R^d$, and a quasi-Banach space $X$ whose elements are
(equivalence classes of) functions $f : \Omega \to \R^k$,
we define $\StandardSigmaN[0] = \StandardSigmaW[0] := \{0\}$, and
\begin{align}
   \StandardSigmaW
  &:=
\NNreal_{n,\mathscr{L}(n),\infty}^{\varrho,d,k}(\Omega) \cap X,
  \qquad (n \in \N) \, ,
  \label{eq:WSigmaDefinition}\\
  \StandardSigmaN
  &:=
\NNreal_{\infty,\mathscr{L}(n),n}^{\varrho,d,k}(\Omega) \cap X,
  \qquad (n \in \N) \, .
  \label{eq:NSigmaDefinition}
  \end{align}
To highlight the role of the activation function $\varrho$
and the depth growth function $\mathscr{L}$ in the definition
of the corresponding approximation classes, we introduce the specific notation
\begin{align}
  \WASpace[X] &= \GenApproxSpace[\alpha][q]
  \quad \text{where}\quad
  \Sigma = (\Sigma_n)_{n \in \N_0}, \text{ with }
  \AppSet_{n} := \StandardSigmaW,
  \label{eq:DefWASpace} \\
  \NASpace[X] &= \GenApproxSpace[\alpha][q]
  \quad \text{where}\quad
  \Sigma = (\Sigma_n)_{n \in \N_0}, \text{ with }
  \AppSet_{n} := \StandardSigmaN.
  \label{eq:DefNASpace}
\end{align}
The quantities $\|\cdot\|_{\WASpace[X]}$ and  $\|\cdot\|_{\NASpace[X]}$ are defined similarly.
Notice that the input and output dimensions $d,k$ as well as the set $\Omega$ are
implicitly described by the space $X$.
Finally, if the depth growth function is constant ($\mathscr{L}\equiv L$ for some $L \in \N$),
we write $\WeightClassSymbol_n(X, \varrho, L)$, etc.
\end{defn}

\begin{rem}
  By convention, $\StandardSigmaW[0] = \StandardSigmaN[0] = \{0\}$, while
  $\NNreal_{0,L}^{\varrho,d,k}$ is the set
  of constant functions $f \equiv c$, where $c \in \R^{k}$ is arbitrary (Lemma~\ref{lem:ConstantMaps}),
  and $\NNreal_{\infty,L,0}^{\varrho,d,k}$ is the set of affine functions.
\end{rem}

\begin{rem}\label{rem:InfiniteDepthOrN}
  Lemma~\ref{lem:BoundingLayersAndNeuronsByWeights} shows that
  $\NNreal^{\varrho,d,k}_{W,L} = \NNreal^{\varrho,d,k}_{W,W}$ if $L \geq W \geq 1$;
  hence the approximation family $\StandardSigmaW$ associated to any depth growth function
  $\mathscr{L}$ is also generated by the modified depth growth function
  $\mathscr{L}' (n) := \min \{n, \mathscr{L}(n) \}$,
  which satisfies $\mathscr{L}' (n) \in \{1, \dots, n \}$ for all $n \in \N$.

  In light of Equation~\eqref{eq:LayersBoundedByNeurons},
  a similar observation holds for $\StandardSigmaN$
  with $\mathscr{L}' (n) := \min \{n+1, \mathscr{L}(n) \}$.\\
  It will be convenient, however, to explicitly specify
  unbounded depth as $\mathscr{L} \equiv +\infty$ rather than the equivalent form
  $\mathscr{L}(n) = n$ (resp.~rather than $\mathscr{L}(n) = n+1$).
\end{rem}

We will further discuss the role of the depth growth function in
Section~\ref{sec:RoleOfDepthGeneric}.
Before that, we compare approximation with generalized and strict networks.

\subsection{Approximation with generalized \texorpdfstring{{\em vs}}{vs} strict networks}
\label{sub:StrictNetworks}

In this subsection, we show that \emph{if one only considers the approximation theoretic properties}
of the resulting function classes, then---under extremely
mild assumptions on the activation function $\varrho$---it does not matter
whether we consider strict or generalized networks,
at least on \emph{bounded} domains $\Omega \subset \R^d$.
Here, instead of the approximating sets 
for \emph{generalized} neural networks defined in~\eqref{eq:WSigmaDefinition}-\eqref{eq:NSigmaDefinition}
we wish to consider the corresponding sets for \emph{strict} neural networks,
given by $\StandardSigmaWS[0] := \StandardSigmaNS[0] := \{0\}$, and
\begin{align*}
   \StandardSigmaWS
  &:=
\SNNreal_{n,\mathscr{L}(n),\infty}^{\varrho,d,k}(\Omega)
            \cap X,
  \qquad (n \in \N) , \\
  \StandardSigmaNS
  &:=
\SNNreal_{\infty,\mathscr{L}(n),n}^{\varrho,d,k}(\Omega)
             \cap X,
  \qquad (n \in \N) ,
\end{align*}
and the 
associated approximation classes that we denote by
\begin{align*}
  \SWASpace[X] &= \GenApproxSpace[\alpha][q]
  \quad \text{where} \quad
  \Sigma = (\Sigma_n)_{n \in \N_0}
  \quad \text{with}\quad
  \AppSet_{n} := \StandardSigmaWS \\
  \SNASpace[X] &= \GenApproxSpace[\alpha][q]
  \quad \text{where} \quad
  \Sigma = (\Sigma_n)_{n \in \N_0}
  \quad \text{with}\quad
  \AppSet_{n} := \StandardSigmaNS.
\end{align*}

Since generalized networks are at least as expressive as strict ones,
these approximation classes embed into the corresponding classes for generalized networks,
as we now formalize.

\begin{prop}\label{prop:EmbeddingStrictIntoGeneralized}
Consider $\varrho$ an activation function, $\mathscr{L}$ a depth growth function,
and $X$ a quasi-Banach space of (equivalence classes of) functions from a subset
$\Omega \subseteq \R^{d}$ to $\R^{k}$.
For any $\alpha>0$ and $q \in (0,\infty]$, we have $\|\cdot\|_{\WASpace[X]}  \leq \|\cdot\|_{\SWASpace[X]}$
and $\|\cdot\|_{\NASpace[X]}  \leq \|\cdot\|_{\SNASpace[X]}$; hence
\[
  \SWASpace[X] \hookrightarrow \WASpace[X]
  \quad \text{and} \quad
  \SNASpace[X] \hookrightarrow \NASpace[X].
  \qedhere
\]
\end{prop}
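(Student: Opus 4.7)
The proof will be essentially immediate from the definitions, so the plan is short. The plan is to unwind the definitions and observe that strictness is an extra constraint on generalized networks, so strict approximants form a subfamily of generalized ones and the infimum in the best-approximation error can only go down when the approximating family is enlarged.

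First I would note that, directly from Definition~\ref{defn:NeuralNetworks} and the definition of \emph{strict}, every strict $\varrho$-network is in particular a generalized $\varrho$-network with the same architectural parameters. Hence, for every $W,L,N,d,k$, we have the trivial inclusion
\[
  \SNNreal_{W,L,N}^{\varrho,d,k}(\Omega) \subseteq \NNreal_{W,L,N}^{\varrho,d,k}(\Omega).
\]
Intersecting with $X$ and plugging in $W = n$, $L = \mathscr{L}(n)$, $N = \infty$ (resp.\ $W = \infty$, $L = \mathscr{L}(n)$, $N = n$) gives $\StandardSigmaWS \subseteq \StandardSigmaW$ and $\StandardSigmaNS \subseteq \StandardSigmaN$ for every $n \in \N$, while for $n=0$ both sets equal $\{0\}$ by convention.

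Next I would use the monotonicity of the infimum: for any $f \in X$ and any $n \in \N_0$, since the approximating set on the strict side is a subset of the one on the generalized side,
\[
  \AppErr(f,\StandardSigmaW)_X
  = \inf_{g \in \StandardSigmaW} \|f-g\|_X
  \;\leq\;
  \inf_{g \in \StandardSigmaWS} \|f-g\|_X
  = \AppErr(f,\StandardSigmaWS)_X,
\]
and analogously for $\StandardSigmaN$ versus $\StandardSigmaNS$. Substituting these pointwise inequalities into the defining (weighted $\ell^q$ or supremum) expression for the approximation quasi-norm in Section~\ref{sub:ApproxTheoryTools} immediately yields
\[
  \|f\|_{\WASpace[X]} \leq \|f\|_{\SWASpace[X]}
  \qquad \text{and} \qquad
  \|f\|_{\NASpace[X]} \leq \|f\|_{\SNASpace[X]},
\]
both in the case $0 < q < \infty$ (termwise monotonicity of the sum) and the case $q = \infty$ (termwise monotonicity of the supremum). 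The claimed continuous embeddings $\SWASpace[X] \hookrightarrow \WASpace[X]$ and $\SNASpace[X] \hookrightarrow \NASpace[X]$ follow in the sense defined right after \eqref{eq:ApproximationSpaceEmbedding}, with embedding constant $1$.

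There is essentially no obstacle here: the statement is a soft consequence of the fact that strictness is a restriction, not an extension, of the generalized network model, and of the monotonicity of the best-approximation error under enlargement of the approximating set. The only mild care needed is to handle the $n = 0$ case (where both families equal $\{0\}$ by definition) and to treat the $q = \infty$ case separately when bounding the quasi-norm, but both are routine.
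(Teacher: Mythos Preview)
Your proposal is correct and follows essentially the same approach as the paper: observe the inclusion $\StandardSigmaWS \subset \StandardSigmaW$ (and likewise for neurons), deduce the inequality of approximation errors, and conclude the (quasi-)norm inequality directly from the definition. The paper's proof is just a more compressed version of what you wrote.
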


\begin{proof}
  We give the proof for approximation spaces associated to connection complexity;
  the proof is similar for the case of neuron complexity.
  Obviously $\StandardSigmaWS \subset \StandardSigmaW$
  for all $n \in \N_{0}$, so that the approximation errors satisfy
  \(
    \AppErr \big( f,\StandardSigmaW \big)_X \leq \AppErr \big( f, \StandardSigmaWS \big)_X
  \)
  for all $n \in \N_0$.
  This implies
  \(
    \|\cdot\|_{\WASpace[X]}
    \leq \|\cdot\|_{\SWASpace[X]},
  \)
  whence $\SWASpace[X] \subset \WASpace[X]$.
\end{proof}

Under mild conditions on $\varrho$, the converse holds on bounded domains when approximating in $L_{p}$.
This also holds on unbounded domains for activation functions \emph{that can represent the identity}.

\begin{thm}[Approximation classes of strict {\em vs.} generalized networks]
  \label{th:DNNApproxSpaceWellDefinedStrict}
  Consider $d \in \N$, a measurable set $\Omega \subseteq \R^{d}$
  with nonzero measure, and $\varrho: \R \to \R$ an activation function.
  Assume either that:
  \begin{itemize}[leftmargin=0.6cm]
    \item $\Omega$ is bounded, $\varrho$ is \emph{continuous} and $\varrho$ is differentiable
          at some $x_{0} \in \R$ with $\varrho'(x_{0}) \neq 0$; or that

    \item $\varrho$ can represent the identity $\identity: \R \to \R, x \mapsto x$ with $m$ terms
          for some $m \in \N$.
  \end{itemize}
  Then for any depth growth function $\mathscr{L}$, $k \in \N$, $\alpha  > 0$, $p,q \in (0,\infty]$,
  with $X := \StandardXSpace(\Omega)$ as in Equation~\eqref{eq:StandardXSpace}, we have the identities
  \[
    \SWASpace[X] = \WASpace[X]\quad \text{and}\quad \SNASpace[X] = \NASpace[X]\, ,
  \]
  and there exists $C < \infty$ such that
  \begin{align*}
    & \|\bullet\|_{\WASpace[X]}
      \leq \|\bullet\|_{\SWASpace[X]}
      \leq C \, \|\bullet\|_{\WASpace[X]} \\
    \quad \text{and} \quad
    & \|\bullet\|_{\NASpace[X]}
      \leq \|\bullet\|_{\SNASpace[X]}
      \leq C \, \|\bullet\|_{\NASpace[X]} .
    \qedhere
  \end{align*}
\end{thm}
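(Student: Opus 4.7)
The forward inequalities $\|\bullet\|_{\WASpace[X]} \leq \|\bullet\|_{\SWASpace[X]}$ and $\|\bullet\|_{\NASpace[X]} \leq \|\bullet\|_{\SNASpace[X]}$ are already supplied by Proposition \ref{prop:EmbeddingStrictIntoGeneralized}, so the task is to establish the reverse inequalities. My plan is to prove, under each of the two hypotheses on $\varrho$, approximation-error comparisons of the form $\AppErr(f, \StandardSigmaWS[c_{W} n])_X \leq C \cdot \AppErr(f, \StandardSigmaW)_X$ and the analogue for the neuron family, with constants $c_W, c_N \in \N$ and $C > 0$ that are independent of $n$ and of $f \in X$. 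Once these comparisons are in hand, Lemma \ref{lem:ApproximationSpaceElementaryNesting}---applied with the generalized family in the role of the primed family and the strict family in the role of the unprimed one---immediately yields the embeddings $\WASpace[X] \hookrightarrow \SWASpace[X]$ and $\NASpace[X] \hookrightarrow \SNASpace[X]$ with the desired quasi-norm control.

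The case where $\varrho$ can represent the identity with $m$ terms is the easier one, and crucially it works on arbitrary (possibly unbounded) $\Omega$. Lemma \ref{lem:IdentityRectifierForFree} provides the exact set inclusion $\NNreal_{W,L,N}^{\varrho,d,k} \subset \SNNreal_{m^2 W, L, m N}^{\varrho,d,k}$. Restricting to $\Omega$, intersecting with $X$, and exploiting monotonicity of $\mathscr{L}$ (so that $\mathscr{L}(m^2 n) \geq \mathscr{L}(n)$ and $\mathscr{L}(m n) \geq \mathscr{L}(n)$), I obtain the set inclusions $\StandardSigmaW \subset \StandardSigmaWS[m^{2} n]$ and $\StandardSigmaN \subset \StandardSigmaNS[m n]$, which yield the required error comparisons with $C = 1$, $c_W = m^2$ and $c_N = m$. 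No approximation argument is needed.

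The more delicate case assumes $\Omega$ bounded and $\varrho$ continuous with $\varrho'(x_0) \neq 0$ for some $x_0$. Here Lemma \ref{lem:GeneralizedVSStrict} only provides inclusion into the closure with respect to locally uniform convergence: each generalized realization $g$ of complexity at most $n$ weights (resp.\ at most $n$ neurons) and at most $\mathscr{L}(n)$ layers is a locally uniform limit of strict realizations of complexity at most $4n$ weights (resp.\ at most $2n$ neurons) and the same number of layers. The main obstacle is to upgrade this locally uniform convergence on $\R^d$ to convergence in the $X$-quasi-norm on $\Omega$. This is where boundedness of $\Omega$ enters: $\overline{\Omega}$ is compact, so locally uniform convergence becomes uniform convergence on $\overline{\Omega}$, and continuity of $\varrho$ makes every restriction $g_k|_{\Omega}$ continuous on $\overline{\Omega}$ and hence bounded; such restrictions then genuinely lie in $X$ both when $X = L_p(\Omega;\R^k)$ (boundedness together with finite Lebesgue measure of $\Omega$) and when $X = \StandardXSpace[k][\infty](\Omega)$ (uniform continuity on $\overline{\Omega}$). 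Uniform convergence on $\overline{\Omega}$ then entails $X$-quasi-norm convergence, and a standard triangle-inequality argument (absorbing the quasi-norm constant $K_X$ of $X$ into the constant $C$) produces $\AppErr(f, \StandardSigmaWS[4n])_X \leq K_X \cdot \AppErr(f, \StandardSigmaW)_X$ together with the neuron analogue using $2n$. A final invocation of Lemma \ref{lem:ApproximationSpaceElementaryNesting} completes the proof.
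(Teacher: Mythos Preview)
Your proposal is correct and follows essentially the same route as the paper: use Lemma~\ref{lem:IdentityRectifierForFree} for the identity-representation case to get $\StandardSigmaW \subset \StandardSigmaWS[m^2 n]$ and $\StandardSigmaN \subset \StandardSigmaNS[mn]$, and use Lemma~\ref{lem:GeneralizedVSStrict} together with boundedness of $\Omega$ to upgrade locally uniform convergence to $X$-convergence in the other case, then conclude via Lemma~\ref{lem:ApproximationSpaceElementaryNesting}. The only cosmetic difference is that where you invoke a quasi-triangle inequality and pick up a factor $K_X$, the paper observes directly that $\inf_{\theta\in\Gamma}\|f-\theta\|_X = \inf_{\theta\in\overline{\Gamma}}\|f-\theta\|_X$ by continuity of the quasi-norm, so the error comparison holds with constant~$1$.
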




Before giving the proof, let us clarify the precise choice of (quasi)-norm for the vector-valued spaces
$X := \StandardXSpace(\Omega)$ from Equation~\eqref{eq:StandardXSpace}.
For $f = (f_{1},\ldots,f_{k}): \Omega \to \R^{k}$ and $0 < p < \infty$ it is defined by
\(
  \|f\|_{L_p(\Omega;\R^k)}^{p}
  := \sum_{\ell=1}^{k} \|f_{\ell}\|_{L_{p}(\Omega;\R)}^{p}
  = \int_{\Omega} |f(x)|_{p}^{p} \, dx
\),
where $|u|_{p}^{p} := \sum_{\ell=1}^{k}|u_{\ell}|^{p}$ for each $u \in \R^{k}$.
For $p=\infty$ we use the definition
$\|f\|_{\infty} := \max_{\ell = 1,\dots,k} \|f_{\ell}\|_{L_{\infty}(\Omega;\R)}$.

\begin{proof}
When $\varrho$ can represent the identity with $m$ terms,
we rely on Lemma~\ref{lem:IdentityRectifierForFree} and on the estimate
$\mathscr{L}(n) \leq \mathscr{L}(m^2 n)$
to obtain for any $n \in \N$ that 
\[
  \StandardSigmaW
  =
\NNreal_{n,\mathscr{L}(n),\infty}^{\varrho,d,k}(\Omega)
          \cap X
  \subset
 \SNNreal_{m^{2}n,\mathscr{L}(m^{2}n),\infty}^{\varrho,d,k}
          \cap X
  = \StandardSigmaWS[m^{2}n],
\]
and similarly $\StandardSigmaN[n] \subset \StandardSigmaNS[mn]$, so that
\begin{align*}
  \AppErr \big( f, \StandardSigmaWS[m^{2}n] \big)_X
  & \leq \AppErr \big( f,\StandardSigmaW \big)_X
  \quad \forall n \in \N_{0} \, ,\\
  \AppErr \big( f, \StandardSigmaNS[mn] \big)_X
  & \leq \AppErr \big( f,\StandardSigmaN \big)_X
  \quad \forall n \in \N_{0} \, .
\end{align*}

We now establish similar results for the case where $\Omega$ is bounded,
$\varrho$ is continuous and $\varrho'(x_{0}) \neq 0$ is well defined for some $x_{0} \in \R$.
We rely on Lemma~\ref{lem:GeneralizedVSStrict}.
First, note by continuity of $\varrho$ that any
$f \in \NNreal^{\varrho,d,k} \supset \SNNreal^{\varrho,d,k}$
is a continuous function $f : \R^d \to \R^k$.
Furthermore, since $\Omega$ is bounded, $\overline{\Omega}$ is compact,
so that $f|_{\overline{\Omega}}$ is \emph{uniformly} continuous and bounded.
Clearly, this implies that $f|_{\Omega}$ is uniformly continuous and bounded as well.
Since $X = \StandardXSpace(\Omega)$, this implies
\[
  \StandardSigmaWS 
  =
\SNNreal^{\varrho,d,k}_{n, \mathscr{L}(n),\infty}(\Omega)
     \cap X
  =
\SNNreal^{\varrho,d,k}_{n, \mathscr{L}(n),\infty}(\Omega)
\]
and similarly for $\StandardSigmaNS$.
Since $\Omega \subset \R^d$ is bounded,
locally uniform convergence on $\R^d$ implies convergence in $\StandardXSpace(\Omega)$.
Hence for any $n \in \N_0$, using that $\mathscr{L}(n) \leq \mathscr{L}(4n)$,
Lemma~\ref{lem:GeneralizedVSStrict} yields
\[
  \StandardSigmaW
  \subset
         \overline{
\SNNreal_{4n,\mathscr{L}(4n),\infty}^{\varrho,d,k}(\Omega)
         }^{\StandardXSpace (\Omega)}
  \!\! \subset \overline{\StandardSigmaWS[4n]}^{\StandardXSpace (\Omega)} \, ,
\]
where the closure is taken with respect to the topology induced by
$\| \cdot \|_{\StandardXSpace (\Omega)}$.
Similarly, we have
\[
  \StandardSigmaN
  \subset
       \overline{
\SNNreal_{\infty,\mathscr{L}(2n),2n}^{\varrho,d,k}(\Omega) 
       }^{\StandardXSpace (\Omega)}
  \!\! \subset \overline{\StandardSigmaNS[2n]}^{\StandardXSpace (\Omega)} \, .
\]
Now for an arbitrary subset $\Gamma \subset \StandardXSpace (\Omega)$, observe
by continuity of $\|\cdot\|_{\StandardXSpace(\Omega)}$ that
\[
  \inf_{\theta \in \Gamma} \| f - \theta \|_{\StandardXSpace (\Omega)}
  = \inf_{\theta \in \overline{\Gamma}} \| f - \theta\|_{\StandardXSpace (\Omega)}
  \, ;
\]
that is, if one is only interested in the distance of functions $f$ to the set
$\Gamma$, then switching from $\Gamma$ to its closure $\overline{\Gamma}$
(computed in $\StandardXSpace$) does not change the resulting distance.
Therefore,
\begin{align*}
 \AppErr \big( f, \StandardSigmaWS[4n] \big)_X
 & \leq \AppErr \big( f,\StandardSigmaW \big)_X
  \quad \forall n \in \N_{0} \, ,\\
   \AppErr \big( f, \StandardSigmaNS[2n] \big)_X
  & \leq \AppErr \big( f,\StandardSigmaN \big)_X
  \quad \forall n \in \N_{0} \, .
\end{align*}

In both settings ($\varrho$ can represent the identity, or $\Omega$ is bounded
and $\varrho$ differentiable at $x_0$),
Lemma~\ref{lem:ApproximationSpaceElementaryNesting} shows
$\|\cdot\|_{\SWASpace[X]} \leq C \|\cdot\|_{\WASpace[X]}$
and ${\|\cdot\|_{\SNASpace[X]} \leq C \|\cdot\|_{\NASpace[X]}}$
for some $C \in (0,\infty)$.
The conclusion follows using Proposition~\ref{prop:EmbeddingStrictIntoGeneralized}.
\end{proof}

\subsection{Connectivity \emph{vs.} number of neurons}
\label{sec:weightsvsneurons}

\begin{lem}\label{lem:weightsvsneurons}
Consider $\varrho: \R \to \R$ an activation function, $\mathscr{L}$ a depth growth function,
$d,k \in \N$, $p \in (0,\infty]$ and
a measurable $\Omega \subseteq \R^d$ with nonzero measure.
With $X := \StandardXSpace(\Omega)$, we have for any $\alpha>0$ and $q \in (0,\infty]$
\begin{alignat*}{5}
  && \WASpace[X]
   & \hookrightarrow \NASpace[X]
  && \hookrightarrow \WASpace[X][\varrho][q][\alpha/2],
  \\
  \text{and} \quad
  &&  \SWASpace[X]
   & \hookrightarrow \SNASpace[X]
  && \hookrightarrow \SWASpace[X][\varrho][q][\alpha/2],
\end{alignat*}
and there exists $c > 0$ such that
\begin{alignat*}{5}
  && \|\cdot\|_{\WASpace[X]}
   & \geq \|\cdot\|_{\NASpace[X]}
  && \geq c \, \|\cdot\|_{\WASpace[X][\varrho][q][\alpha/2]}
  \, ,  \\
  \text{and} \quad
  && \|\cdot\|_{\SWASpace[X]}
   & \geq \|\cdot\|_{\SNASpace[X]}
  && \geq c \, \|\cdot\|_{\SWASpace[X][\varrho][q][\alpha/2]} \, .
\end{alignat*}
When $L := \sup_{n} \mathscr{L}(n)=2$ (i.e., for shallow networks)
the exponent $\alpha/2$ can be replaced by $\alpha$;
that is, $\WASpace[X] = \NASpace[X]$ with equivalent norms.
\end{lem}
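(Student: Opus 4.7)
The entire statement is a translation of the complexity comparison~\eqref{eq:WeightsVsNeurons} (and its sharpening for $L(\Phi)=2$ noted just after) into the language of approximation classes, combined with Lemma~\ref{lem:ApproximationSpaceElementaryNesting}. So my overall strategy is: (i) establish two set inclusions between the families $\Sigma_n^W$ and $\Sigma_n^N$ at the level of realizations; (ii) convert those inclusions into inequalities between errors of best approximation; (iii) turn the latter into quasi-norm inequalities. The strict case will go through with the same ingredients because Lemma~\ref{lem:BoundingLayersAndNeuronsByWeights} and the counting bound~\eqref{eq:WeightsBoundedByNeurons} hold verbatim for strict networks.

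First I would prove $\|\cdot\|_{\NASpace[X]}\leq\|\cdot\|_{\WASpace[X]}$. By Lemma~\ref{lem:BoundingLayersAndNeuronsByWeights} one has $\NNreal^{\varrho,d,k}_{n,\mathscr L(n),\infty}=\NNreal^{\varrho,d,k}_{n,\mathscr L(n),n}\subset\NNreal^{\varrho,d,k}_{\infty,\mathscr L(n),n}$, hence $\StandardSigmaW\subset\StandardSigmaN$ for all $n\in\N_0$. Consequently $\AppErr(f,\StandardSigmaN)_X\le\AppErr(f,\StandardSigmaW)_X$ for every $n$, and the desired inequality on quasi-norms is immediate from the definitions~\eqref{eq:DefWASpace}--\eqref{eq:DefNASpace}.

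Next I would prove the embedding $\NASpace[X]\hookrightarrow\WASpace[X][\varrho][q][\alpha/2]$. The bound~\eqref{eq:WeightsBoundedByNeurons} shows that any $\Phi$ with $N(\Phi)\le n$ can be replaced (via Lemma~\ref{lem:BoundingLayersAndNeuronsByWeights}) by a network with the same realization and $W\le n^2+(d+k)n+dk\le C n^2$, with $C=C(d,k)\in\N$. Since $\mathscr L$ is non-decreasing, this gives the key inclusion $\StandardSigmaN\subset\StandardSigmaW[Cn^{2}]$, and hence
\[
  \AppErr\bigl(f,\StandardSigmaW[Cn^{2}]\bigr)_X\le\AppErr\bigl(f,\StandardSigmaN\bigr)_X
  \qquad(n\in\N_0).
\]
For $q=\infty$ the target bound follows by monotonicity of $m\mapsto\AppErr(f,\StandardSigmaW[m])_X$: picking, for each $m$, the largest $n$ with $Cn^2\le m$ gives $n\gtrsim\sqrt m$ and therefore $\AppErr(f,\StandardSigmaW[m-1])_X\lesssim n^{-\alpha}\lesssim m^{-\alpha/2}\,\|f\|_{\NASpace[X][\varrho][\infty][\alpha]}$. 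For $q<\infty$ I would decompose
\[
  \sum_{m=1}^{\infty} m^{q\alpha/2-1}\,[\AppErr(f,\StandardSigmaW[m-1])_X]^{q}
  =\sum_{n=1}^{\infty}\ \sum_{C(n-1)^{2}<m\le Cn^{2}} m^{q\alpha/2-1}\,[\AppErr(f,\StandardSigmaW[m-1])_X]^{q},
\]
use $\AppErr(f,\StandardSigmaW[m-1])_X\le\AppErr(f,\StandardSigmaW[C(n-1)^{2}])_X\le\AppErr(f,\StandardSigmaN[n-1])_X$ on each block, and check that the inner block of weights $\sum_{C(n-1)^{2}<m\le Cn^{2}}m^{q\alpha/2-1}\lesssim n^{q\alpha-1}$ (an elementary two-case calculation according to the sign of $q\alpha/2-1$). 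This gives $\|f\|_{\WASpace[X][\varrho][q][\alpha/2]}\le K\,\|f\|_{\NASpace[X]}$. This block-decomposition step is the one piece that needs a careful (though standard) computation; I regard it as the main technical obstacle.

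Finally, when $L=\sup_{n}\mathscr L(n)=2$ the discussion after~\eqref{eq:WeightsBoundedByNeurons} yields the \emph{linear} bound $W(\Phi)\le(d+k)\,N(\Phi)$ (plus a trivial $O(1)$ correction for the $L=1$ case), so $\StandardSigmaN\subset\StandardSigmaW[c\,n]$ for a suitable $c\in\N$ and all $n\ge 1$. Lemma~\ref{lem:ApproximationSpaceElementaryNesting} (using the Remark that the hypothesis only needs to hold eventually) then gives $\NASpace[X]\hookrightarrow\WASpace[X]$, which combined with the first embedding produces the claimed equality with equivalent norms. For the strict statements, the very same proofs work without modification: Lemma~\ref{lem:BoundingLayersAndNeuronsByWeights} is stated for $\SNNreal$ as well, and~\eqref{eq:WeightsBoundedByNeurons} is a counting inequality that does not alter the activation pattern, so strictness is preserved at every step.
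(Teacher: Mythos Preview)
Your proposal is correct and follows essentially the same route as the paper's proof: the first embedding via Lemma~\ref{lem:BoundingLayersAndNeuronsByWeights}, the second via the quadratic bound from~\eqref{eq:WeightsBoundedByNeurons} followed by a block-decomposition of the $W$-sum into ranges of length $\asymp n$ around $m\asymp n^2$, and the $L=2$ case via the linear bound and Lemma~\ref{lem:ApproximationSpaceElementaryNesting}. The only cosmetic difference is the choice of block boundaries (the paper uses $(n+\gamma)^2<m\le(n+1+\gamma)^2$ with $\gamma=\max\{d,k\}$, you use $C(n-1)^2<m\le Cn^2$), and you are slightly more explicit than the paper about the depth-$1$ correction in the $L=2$ case.
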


\begin{rem*}
  We will see in Lemma~\ref{lem:WeightAndNeuronSpacesDistinct} below that
  $\WASpace[X] \neq \NASpace[X]$ if, for instance, $\varrho = \varrho_r$ is a power of the ReLU,
  if $\Omega$ is bounded, and if $L := \sup_{n \in \N} \mathscr{L}(n)$ satisfies $3 \leq L < \infty$.
  In general, however, one cannot expect the spaces to be always distinct.
  For instance, if $\varrho$ is the activation function constructed
  in \cite[Theorem 4]{PinkusLowerBoundsForMLPApproximation}, if $L \geq 3$ and if $\Omega$ is bounded,
  then both $\WASpace[X_p(\Omega)]$ and $\NASpace[X_p(\Omega)]$ coincide with $X_p(\Omega)$.
\end{rem*}

\begin{proof}
We give the proof for generalized networks.
By Lemma~\ref{lem:BoundingLayersAndNeuronsByWeights} and Equation~\eqref{eq:WeightsVsNeurons},
\[
  \NNreal^{\varrho,d,k}_{n,\mathscr{L}(n),\infty}
  \subset \NNreal^{\varrho,d,k}_{n,\mathscr{L}(n),n}
  \subset \NNreal^{\varrho,d,k}_{\infty,\mathscr{L}(n),n}
  \subset \NNreal^{\varrho,d,k}_{n^{2}+(d+k)n+dk,\mathscr{L}(n),n}
  \subset \NNreal^{\varrho,d,k}_{n^{2}+(d+k)n+dk,\mathscr{L}(n),\infty}
\]
for any $n \in \N$.
Hence, the approximation errors satisfy
\begin{equation}
  \AppErr(f,\StandardSigmaW)_{X}
  \geq \AppErr(f,\StandardSigmaN)_{X}
  \geq \AppErr(f,\StandardSigmaW[n^{2}+(d+k)n+dk])_{X}.
  \label{eq:WeightsVsNeuronsAppErr}
\end{equation}
By the first inequality in~\eqref{eq:WeightsVsNeuronsAppErr},
$\|\cdot\|_{\WASpace[X]} \geq \|\cdot\|_{\NASpace[X]}$ and $\WASpace[X] \subset \NASpace[X]$.

When $L=2$, by the remark below Equation~\eqref{eq:WeightsVsNeurons} we get
$\NNreal^{\varrho,d,k}_{\infty,\mathscr{L}(n),n} \subset \NNreal^{\varrho,d,k}_{(d+k)n,\mathscr{L}(n),\infty}$;
hence $\AppErr(f,\StandardSigmaN)_{X} \geq \AppErr(f,\StandardSigmaW[(d+k)n])_{X}$
so that Lemma~\ref{lem:ApproximationSpaceElementaryNesting} shows
$\WASpace[X][\varrho][q][\alpha] \supset \NASpace[X][\varrho][q][\alpha]$,
with a corresponding (quasi)-norm estimate;
hence, these spaces coincide with equivalent (quasi)-norms.

\smallskip{}

For the general case, observe that $n^{2}+(d+k)n +dk \leq (n+\gamma)^{2}$
with $\gamma := \max \{ d,k \}$.
Let us first consider the case $q < \infty$.
In this case, we note that if $(n+\gamma)^2 + 1 \leq m \leq (n+\gamma+1)^2$,
then $n^2 \leq m \leq (2\gamma+2)^2 \, n^2$, and thus $m^{\alpha q - 1} \lesssim n^{2 \alpha q - 2}$,
where the implied constant only depends on $\alpha, q$, and $\gamma$.
This implies
\[
  \sum_{m=(n+\gamma)^2 + 1}^{(n+\gamma+1)^2} m^{\alpha q - 1}
  \leq C \cdot n^{2 \alpha q - 1}
  \qquad \forall \, n \in \N
\]
where $C = C(\alpha,q,\gamma) < \infty$, since the sum has 
$((n+\gamma)+1)^2 - (n+\gamma)^2 = 2 n+2\gamma + 1 \leq 4n (2\gamma+1)$ many summands.
By the second inequality in~\eqref{eq:WeightsVsNeuronsAppErr} we get for any $n \in \N$
\begin{align*}
  \sum_{m=(n+\gamma)^2+1}^{(n+1+\gamma)^2}
    \left[m^{\alpha} \AppErr(f,\StandardSigmaW[m-1])_{X}\right]^{q} \tfrac{1}{m}
  & \leq \left(
           \sum_{m=(n+\gamma)^2+1}^{(n+1+\gamma)^2}
           m^{\alpha q-1}
         \right)
         \cdot \big[ \AppErr(f,\StandardSigmaW[(n+\gamma)^{2}])_{X} \big]^q \\
  & \leq C \cdot n^{2\alpha q-1}
           \cdot \big[ \AppErr(f,\StandardSigmaW[n^{2}+(d+k)n+dk])_{X} \big]^q \\
  & \leq C \cdot n^{2\alpha q-1}
           \cdot \big[ \AppErr(f,\StandardSigmaN[n])_{X} \big]^q .
\end{align*}
It follows that
\begin{align*}
  \sum_{m \geq 1+(\gamma+1)^{2}}
    \left[m^{\alpha} \AppErr(f,\StandardSigmaW[m-1])_{X}\right]^{q} \tfrac{1}{m}
  &= \sum_{n \in \N}
        \sum_{m=(n+\gamma)^2+1}^{(n+1+\gamma)^2}
          \left[m^{\alpha} \AppErr(f,\StandardSigmaW[m-1])_{X}\right]^{q} \tfrac{1}{m} \\
  &\leq C \sum_{n \in \N }
             n^{2\alpha q-1} \cdot \big[ \AppErr(f,\StandardSigmaN[n])_{X} \big]^q
   \leq C \|f\|_{\NASpace[X][\varrho][q][2\alpha]}^{q} .
\end{align*}
To conclude we use that
\(
  \sum_{m=1}^{(\gamma+1)^{2}}
    \left[m^{\alpha} \AppErr(f,\StandardSigmaW[m-1])_{X}\right]^{q} \tfrac{1}{m}
  \leq C' \|f\|_{X}^{q}
  \leq C'  \|f\|_{\NASpace[X][\varrho][q][2\alpha]}^{q}
\)
with $C' = \sum_{m=1}^{(\gamma+1)^{2}} m^{\alpha q-1}$.
%
%

The proof for $q=\infty$ is similar.
The proof for strict networks follows along similar lines.
\end{proof}

The final result in this subsection shows that the inclusions in Lemma~\ref{lem:weightsvsneurons}
are quite sharp.

\begin{lem}\label{lem:WeightAndNeuronSpacesDistinct}
  For $r \in \N$, define $\varrho_r : \R \to \R, x \mapsto (x_+)^r$.

  Let $\Omega \subset \R^d$ be bounded and measurable with nonempty interior.
  Let $L, L' \in \N_{\geq 2}$, let $r_1, r_2 \in \N$,
  let $p_1,p_2,q_1,q_2 \in (0,\infty]$, and $\alpha, \beta > 0$.
  Then the following hold:
  \begin{enumerate}
    \item If
          \(
            \WASpace[X_{p_1}(\Omega)][\varrho_{r_1}][q_1][\alpha][L]
            \subset
            \NASpace[X_{p_2}(\Omega)][\varrho_{r_2}][q_2][\beta][L']
          \),
          then $L' - 1 \geq \tfrac{\beta}{\alpha} \cdot \lfloor L/2 \rfloor$.
          \smallskip{}

    \item If
          \(
            \NASpace[X_{p_2}(\Omega)][\varrho_{r_2}][q_2][\beta][L']
            \subset
            \WASpace[X_{p_1}(\Omega)][\varrho_{r_1}][q_1][\alpha][L]
          \),
          then $\lfloor L/2 \rfloor \geq \frac{\alpha}{\beta} \cdot (L' - 1)$.
  \end{enumerate}
  In particular, if
  \(
    \WASpace[X_{p_1}(\Omega)][\varrho_{r_1}][q_1][\alpha][L]
    = \NASpace[X_{p_2}(\Omega)][\varrho_{r_2}][q_2][\alpha][L]
  \),
  then $L = 2$.
\end{lem}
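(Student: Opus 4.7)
The plan is to promote each set-theoretic inclusion into a continuous embedding of quasi-Banach spaces via the closed graph theorem, test the resulting norm estimate against a family of depth-$L$ network realizations that saturate the weight-based rate, and derive the required lower bound on $L'-1$ from a sharp counting obstruction on the target side.

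First I would verify that both approximation classes are quasi-Banach \emph{spaces}. On a bounded measurable $\Omega$ with non-empty interior, Properties~\ref{enu:GammaContainsZero}--\ref{enu:GammaDense} hold for both the weight- and neuron-based $\varrho_r$-families (density uses the universal approximation theorem, applicable since $\varrho_r$ is continuous and not a polynomial), so Proposition~\ref{prop:ApproxSpaceWellDefined} yields the quasi-Banach structure. Both spaces embed continuously into $X_{p_i}(\Omega)$, which in turn embed into the F-space of measurable functions on $\Omega$ with the topology of convergence in measure (or of uniform convergence when $p_i = \infty$). Hence the inclusion in part~(1) has closed graph and is continuous by the closed graph theorem for F-spaces: there exists $C > 0$ with
\[
  \|f\|_{\NASpace[X_{p_2}(\Omega)][\varrho_{r_2}][q_2][\beta][L']}
  \;\leq\; C \, \|f\|_{\WASpace[X_{p_1}(\Omega)][\varrho_{r_1}][q_1][\alpha][L]}
  \qquad \forall\, f .
\]
Next I would test this bound on any uniformly bounded $g_n \in \NNreal^{\varrho_{r_1},d,1}_{n,L,\infty}(\Omega)$. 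Since $g_n$ lies in the $n$-th level of the weight-based family, $\AppErr(g_n, \StandardSigmaW[m-1])_{X_{p_1}}$ vanishes for $m > n$ and is bounded by $\|g_n\|_{X_{p_1}}$ otherwise, so summing yields $\|g_n\|_{\WASpace[X_{p_1}(\Omega)][\varrho_{r_1}][q_1][\alpha][L]} \lesssim n^{\alpha} \|g_n\|_{X_{p_1}}$. Boundedness of $\Omega$ makes $\|g_n\|_{X_{p_1}}$ and $\|g_n\|_{X_{p_2}}$ comparable up to constants depending only on $(p_1, p_2, |\Omega|)$. Isolating a single term from the $\ell^{q_2}$-quasi-norm on the left-hand side then gives
\[
  \AppErr\bigl(g_n, \StandardSigmaN[m-1]\bigr)_{X_{p_2}}
  \;\leq\; C' \cdot n^{\alpha} \, m^{-\beta} \, \|g_n\|_{X_{p_2}}
  \qquad \forall\, m \in \N .
\]

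The crux is to specialize $g_n$ to a Yarotsky--Telgarsky sawtooth. After affinely embedding $[0,1]$ as a coordinate segment inside $\Omega$ (possible since $\Omega$ has non-empty interior) and extending trivially in the remaining $d-1$ variables, one can construct, for each $j \in \N$, a function $h_j$ whose restriction to that segment is piecewise linear with $2^j$ monotone pieces and $\|h_j\|_{X_{p_2}} \asymp 1$, realizable as a depth-$L$ strict $\varrho_1$-network with $n \asymp 2^{j/\lfloor L/2 \rfloor}$ weights; this is the sharp counterpart of Telgarsky's depth-efficient sawtooth, using the two-layer squaring trick $\lfloor L/2 \rfloor$ times to extract one factor of $2$ in the piece count per two consumed layers. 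Lemma~\ref{lem:ReLUPowerRepresentsIdentity} transfers the construction to $\varrho_{r_1}$-networks with the same complexity up to constants. A matching piece-counting lower bound (restricting any candidate approximator to the same segment and optimizing the product of its layer widths against a neuron budget $m$) shows that any $\varrho_{r_2}$-network of depth $L'$ with $m$ neurons realizes a piecewise polynomial on the segment with at most $(cm)^{L'-1}$ pieces, so approximating the $2^j$ oscillations of $h_j$ below a fixed fraction of $\|h_j\|_{X_{p_2}}$ forces $m \gtrsim 2^{j/(L'-1)}$. Inserting $n \asymp 2^{j/\lfloor L/2 \rfloor}$ and $m \asymp 2^{j/(L'-1)}$ into the upper bound and letting $j \to \infty$ forces $\alpha/\lfloor L/2 \rfloor \geq \beta/(L'-1)$, which rearranges to part~(1).

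Part~(2) is obtained by the same argument with the roles of weights and neurons (and of $L$ and $L'$) interchanged: the witness is the sawtooth realized by a depth-$L'$ $\varrho_{r_2}$-network with $n \asymp 2^{j/(L'-1)}$ neurons, and the piece-counting lower bound is applied to depth-$L$ $\varrho_{r_1}$-networks with $m$ weights, where the optimal allocation of the weight budget across layers caps the attainable piece count at $m^{\lfloor L/2 \rfloor}$ (this asymmetric bound is precisely what produces the $\lfloor L/2 \rfloor$ exponent on the weight side). The ``In particular'' assertion is then immediate: equality forces both inclusions to hold with $\beta = \alpha$ and $L = L'$; part~(1) gives $L - 1 \geq \lfloor L/2 \rfloor$ (true for every $L \geq 2$) while part~(2) gives $\lfloor L/2 \rfloor \geq L - 1$, whose only integer solution with $L \geq 2$ is $L = 2$. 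The main obstacle in executing this plan is pinning down the sharp counting estimates with the correct $\lfloor L/2 \rfloor$ exponent on the weight side---both the upper bound (the depth-efficient sawtooth, exploiting the two-layer Yarotsky squaring rather than Telgarsky's one-layer triangle) and the lower bound (a constrained optimization of layer widths under $\sum_\ell k_{\ell-1} k_\ell \leq m$)---and then transferring them uniformly from $\varrho_1$ to general $\varrho_{r}$ and from strict to generalized networks, which is routine via Lemmas~\ref{lem:ReLUPowerRepresentsIdentity} and~\ref{lem:BoundingLayersAndNeuronsByWeights}.
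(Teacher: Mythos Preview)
Your proposal is correct and follows essentially the same approach as the paper: upgrade the inclusion to a continuous embedding via the closed graph theorem, then test against the sawtooth functions $\Delta_{j,d}$ (affinely embedded into $\Omega$), whose norms in the two scales obey the sharp asymptotics $2^{j\alpha/\lfloor L/2\rfloor}$ and $2^{j\beta/(L'-1)}$ thanks to the depth-efficient implementation (Lemma~\ref{lem:SawtoothImplementation}) and the piece-counting obstruction (Lemma~\ref{lem:MinGrowthRateNbPiecesSufficient} and Proposition~\ref{prop:SawtoothNormLowerBound}). One minor terminological remark: the $\lfloor L/2\rfloor$ exponent on the weight side arises not from Yarotsky's ``squaring trick'' but from composing hat functions via Lemma~\ref{lem:NetworkCalculus}-(\ref{enu:Compos}), which consumes two layers per composition (including an intermediate identity layer); the matching lower bound is obtained in the paper by an inductive Telgarsky-style argument rather than by optimizing $\prod_\ell k_\ell$ under $\sum_\ell k_{\ell-1}k_\ell \leq m$, though your heuristic points in the right direction.
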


The proof of this result is given in Appendix~\ref{sec:WeightAndNeuronSpacesDistinct}.

\subsection{Role of the depth growth function}
\label{sec:RoleOfDepthGeneric}

In this subsection, we investigate the relation between approximation classes
associated to different depth growth functions.
First we define a comparison rule between depth growth functions.

\begin{defn}[Comparison between depth growth functions]\label{def:GrowthEquivalence}
The depth growth function $\mathscr{L}$ is \emph{dominated}
by the depth growth function $\mathscr{L}'$
(denoted $\mathscr{L} \preceq \mathscr{L}'$ or $\mathscr{L}' \succeq \mathscr{L}$)
if there are $c,n_{0} \in \N$ such that
\begin{equation}
  \forall \, n \geq n_{0}: \quad
  \mathscr{L}(n) \leq \mathscr{L}'(cn) \, .
  \label{eq:GrowthFunctionEquivalenceCond}
\end{equation}
Observe that $\mathscr{L} \leq \mathscr{L}'$ implies $\mathscr{L} \preceq \mathscr{L}'$.

The two depth growth functions are  \emph{equivalent}
(denoted $\mathscr{L} \sim \mathscr{L}'$)
if $\mathscr{L} \preceq \mathscr{L}'$ and $\mathscr{L} \succeq \mathscr{L}'$,
that is to say if there exist $c,n_{0} \in \N$ such that for each $n \geq n_{0}$,
$\mathscr{L}(n) \leq \mathscr{L}'(c n)$
and $\mathscr{L}'(n) \leq \mathscr{L}(c n)$.
This defines an equivalence relation on the set of depth growth functions.
\end{defn}

\begin{lem}\label{lem:RoleGrowthFunctionRate}
Consider two depth growth functions $\mathscr{L}$, $\mathscr{L}'$.
If $\mathscr{L} \preceq \mathscr{L}'$, then for each $\alpha > 0$ and
$q \in (0,\infty]$, there is a constant
$C = C(\mathscr{L},\mathscr{L}',\alpha,q) \in [1,\infty)$ such that:
\begin{alignat*}{5}
  \WASpace[X]
  & \hookrightarrow \WASpace[X][\varrho][q][\alpha][\mathscr{L}']
  && \quad \text{and} \quad
  \|\cdot\|_{\WASpace[X][\varrho][q][\alpha][\mathscr{L}']}
  \leq
  C \cdot \|\cdot\|_{\WASpace[X]} \\
  \NASpace[X]
  & \hookrightarrow \NASpace[X][\varrho][q][\alpha][\mathscr{L}']
  && \quad \text{and} \quad
  \|\cdot\|_{\NASpace[X][\varrho][q][\alpha][\mathscr{L}']}
  \leq
  C \cdot \|\cdot\|_{\NASpace[X]}
  \end{alignat*}
for each activation function $\varrho : \R \to \R$, each (bounded or unbounded) set $\Omega \subset \R^d$,
and each quasi-Banach space $X$ of (equivalence classes of) functions $f: \Omega \to \R^{k}$.

The same holds with $\SWASpace[X]$ (resp. $\SNASpace[X]$) instead of $\WASpace[X]$ (resp. $\NASpace[X]$).

The constant $C$ depends only on the constants $c,n_{0} \in \N$
involved in~\eqref{eq:GrowthFunctionEquivalenceCond} and on $\alpha, q$.
\end{lem}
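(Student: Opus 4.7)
The strategy is to reduce Lemma~\ref{lem:RoleGrowthFunctionRate} to a direct invocation of Lemma~\ref{lem:ApproximationSpaceElementaryNesting} by establishing a pointwise comparison between approximation errors associated with the two depth growth functions.

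Set $\Sigma_n := \StandardSigmaW$ and $\Sigma_n' := \WeightClassSymbol_n(X,\varrho,\mathscr{L}')$. I will verify the hypotheses of Lemma~\ref{lem:ApproximationSpaceElementaryNesting} with $\Sigma$ and $\Sigma'$ interchanged relative to the statement of that lemma (so as to conclude $\WASpace[X] \hookrightarrow \WASpace[X][\varrho][q][\alpha][\mathscr{L}']$). The conditions $\Sigma_0 = \{0\} = \Sigma_0'$ and $\Sigma_n \subset \Sigma_{n+1}$, $\Sigma_n' \subset \Sigma_{n+1}'$ are immediate from Definition~\ref{def:GammaDefinition}: enlarging the weight budget can only enlarge the corresponding class of realizations, and monotonicity of $\mathscr{L}$ (resp.\ $\mathscr{L}'$) ensures that increasing $n$ also loosens the depth constraint.

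The key step is the following set inclusion: for every $n \geq n_0$,
\[
  \Sigma_n
  = \NNreal^{\varrho,d,k}_{n,\mathscr{L}(n),\infty}(\Omega) \cap X
  \subset \NNreal^{\varrho,d,k}_{cn,\mathscr{L}'(cn),\infty}(\Omega) \cap X
  = \Sigma'_{cn},
\]
since the very same network that realizes an element of $\Sigma_n$ has $\leq n \leq cn$ nonzero weights and depth $\leq \mathscr{L}(n) \leq \mathscr{L}'(cn)$ by hypothesis~\eqref{eq:GrowthFunctionEquivalenceCond}. This inclusion immediately yields $\AppErr(f,\Sigma'_{cn})_X \leq \AppErr(f,\Sigma_n)_X$ for all $f \in X$ and $n \geq n_0$. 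Applying Lemma~\ref{lem:ApproximationSpaceElementaryNesting} (together with its Remark, which absorbs the ``$n \geq n_0$'' restriction by replacing $c$ with $c' := n_0 c$ and the comparison constant $C$ with $1$) produces a constant $C = C(\alpha,q,c,n_0)$ for which $\|\cdot\|_{\WASpace[X][\varrho][q][\alpha][\mathscr{L}']} \leq C \cdot \|\cdot\|_{\WASpace[X]}$, which is the desired continuous embedding.

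The three remaining claims are handled by the same argument with only notational changes. For neuron complexity, replace $\WeightClassSymbol$ by $\NeuronClassSymbol$ throughout: the corresponding inclusion $\NNreal^{\varrho,d,k}_{\infty,\mathscr{L}(n),n} \subset \NNreal^{\varrho,d,k}_{\infty,\mathscr{L}'(cn),n}$ follows already for $c=1$, although one still uses $c'=n_0c$ in the reindexing. For strict networks, the identical inclusion holds with $\NNreal$ replaced by $\SNNreal$ since the ``same network'' witnessing $\Sigma_n \subset \Sigma'_{cn}$ is clearly strict whenever the original is. There is no real obstacle in the argument; the only subtlety is bookkeeping around the index shift $n \mapsto cn$ and the $n_0$-threshold, both of which are cleanly handled by Lemma~\ref{lem:ApproximationSpaceElementaryNesting} and its Remark.
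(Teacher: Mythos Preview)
Your proposal is correct and follows essentially the same route as the paper: establish the inclusion $\WeightClassSymbol_n(X,\varrho,\mathscr{L}) \subset \WeightClassSymbol_{cn}(X,\varrho,\mathscr{L}')$ (resp.\ $\NeuronClassSymbol_n \subset \NeuronClassSymbol_{cn}$) for $n \geq n_0$ via \eqref{eq:GrowthFunctionEquivalenceCond}, then invoke Lemma~\ref{lem:ApproximationSpaceElementaryNesting} together with its Remark to absorb the threshold $n_0$. The paper's proof is identical in structure, including the observation that the strict-network case follows by replacing $\NNreal$ with $\SNNreal$ throughout.
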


\begin{proof}
  Let $c, n_0 \in \N$ as in Equation~\eqref{eq:GrowthFunctionEquivalenceCond}.
  For $n \geq n_0$, we then have $\mathscr{L}(n) \leq \mathscr{L}'(c n)$, and hence
  \begin{alignat*}{3}
    \NNreal^{\varrho,d,k}_{n,\mathscr{L}(n),\infty}
    &\subset \NNreal^{\varrho,d,k}_{n, \mathscr{L}'(c n),\infty}
    && \subset \NNreal^{\varrho,d,k}_{c n, \mathscr{L}'(c n),\infty} \, ,\\
    \NNreal^{\varrho,d,k}_{\infty,\mathscr{L}(n),n}
    &\subset \NNreal^{\varrho,d,k}_{\infty, \mathscr{L}'(c n),n}
    && \subset \NNreal^{\varrho,d,k}_{\infty, \mathscr{L}'(c n),cn} \, ,
  \end{alignat*}
  from which we easily get
  \begin{align*}
    \AppErr \big(f, \mathtt{W}_{c n} (X, \varrho, \mathscr{L}') \big)_X
    &\leq \AppErr \big( f, \mathtt{W}_{n} (X, \varrho, \mathscr{L}) \big)_X
    \qquad \forall \, n \geq n_0 \, ,\\
    \AppErr \big(f, \mathtt{N}_{c n} (X, \varrho, \mathscr{L}') \big)_X
    &\leq \AppErr \big( f, \mathtt{N}_{n} (X, \varrho, \mathscr{L}) \big)_X
    \qquad \forall \, n \geq n_0 \, .
  \end{align*}
  Now, Lemma~\ref{lem:ApproximationSpaceElementaryNesting} and the associated remark
  complete the proof.
  Exactly the same proof works for strict networks; one just has to replace
  $\NNreal$ by $\SNNreal$ everywhere.
\end{proof}



As a direct consequence of Lemma~\ref{lem:RoleGrowthFunctionRate}, we see that equivalent
depth growth functions induce the same approximation spaces.

\begin{thm}\label{thm:RoleGrowthFunction}
If $\mathscr{L}, \mathscr{L}'$ are two depth-growth functions satisfying
$\mathscr{L} \sim \mathscr{L}'$, then for any $\alpha > 0$ and $q \in (0,\infty]$,
there is a constant $C \in [1,\infty)$ such that
\begin{alignat*}{5}
  \WASpace[X]
  &= \WASpace[X][\varrho][q][\alpha][\mathscr{L}']
  &&\quad \text{and} \quad
   \tfrac{1}{C} \, \|\cdot\|_{\WASpace[X]}
  && \leq \|\cdot\|_{\WASpace[X][\varrho][q][\alpha][\mathscr{L}']}
  && \leq C \cdot \|\cdot\|_{\WASpace[X]}\\
  \NASpace[X]
  &= \NASpace[X][\varrho][q][\alpha][\mathscr{L}']
  && \quad \text{and} \quad
    \tfrac{1}{C} \, \|\cdot\|_{\NASpace[X]}
  && \leq \|\cdot\|_{\NASpace[X][\varrho][q][\alpha][\mathscr{L}']}
  && \leq C \cdot \|\cdot\|_{\NASpace[X]}
  \end{alignat*}
for each activation function $\varrho$, each $\Omega \subset \R^d$,
and each quasi-Banach space $X$ of (equivalence classes of) functions $f: \Omega \to \R^{k}$.
The same holds with $\SWASpace[X]$ (resp. $\SNASpace[X]$) instead of $\WASpace[X]$ (resp. $\NASpace[X]$).
The constant $C$ depends only on the constants $c,n_{0} \in \N$ in
Definition~\ref{def:GrowthEquivalence} and on $\alpha, q$.
\end{thm}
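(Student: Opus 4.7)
The plan is to deduce Theorem~\ref{thm:RoleGrowthFunction} as an immediate double application of Lemma~\ref{lem:RoleGrowthFunctionRate}. By Definition~\ref{def:GrowthEquivalence}, the assumption $\mathscr{L} \sim \mathscr{L}'$ is precisely the conjunction $\mathscr{L} \preceq \mathscr{L}'$ and $\mathscr{L}' \preceq \mathscr{L}$. Since Lemma~\ref{lem:RoleGrowthFunctionRate} exactly turns the relation $\preceq$ on depth growth functions into a one-sided continuous embedding between the corresponding $\WASpace$, $\NASpace$ (and their strict analogues $\SWASpace$, $\SNASpace$), invoking that lemma in each direction yields both embeddings and hence equality of the spaces with equivalent quasi-norms.

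Concretely, I would first apply Lemma~\ref{lem:RoleGrowthFunctionRate} with the pair $(\mathscr{L},\mathscr{L}')$ to obtain a constant $C_1 = C_1(\mathscr{L},\mathscr{L}',\alpha,q)$ with
\[
  \WASpace[X] \hookrightarrow \WASpace[X][\varrho][q][\alpha][\mathscr{L}'],
  \qquad
  \|\cdot\|_{\WASpace[X][\varrho][q][\alpha][\mathscr{L}']} \leq C_1 \|\cdot\|_{\WASpace[X]},
\]
and the analogous inequality for $\NASpace$, $\SWASpace$, $\SNASpace$. Then I would apply the same lemma with the roles of $\mathscr{L}$ and $\mathscr{L}'$ swapped, which is legal because $\mathscr{L}' \preceq \mathscr{L}$ holds by symmetry of $\sim$. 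This yields a second constant $C_2 = C_2(\mathscr{L}',\mathscr{L},\alpha,q)$ with the reverse embedding and the estimate $\|\cdot\|_{\WASpace[X]} \leq C_2 \|\cdot\|_{\WASpace[X][\varrho][q][\alpha][\mathscr{L}']}$ (and similarly for the other three families).

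Setting $C := \max\{C_1, C_2\} \in [1,\infty)$ gives simultaneously $\tfrac{1}{C}\|\cdot\|_{\WASpace[X]} \leq \|\cdot\|_{\WASpace[X][\varrho][q][\alpha][\mathscr{L}']} \leq C\|\cdot\|_{\WASpace[X]}$, and identically for $\NASpace$, $\SWASpace$, $\SNASpace$. Since the lemma was stated for arbitrary $\varrho$, $\Omega$, and $X$, the result holds in the same generality. The dependence of $C$ reduces to the dependence recorded in Lemma~\ref{lem:RoleGrowthFunctionRate}: $C_1$ depends only on the constants $(c_1,n_{0,1})$ witnessing $\mathscr{L} \preceq \mathscr{L}'$ and on $\alpha, q$; likewise $C_2$ depends only on the constants $(c_2,n_{0,2})$ witnessing $\mathscr{L}' \preceq \mathscr{L}$ and on $\alpha, q$, so $C$ depends only on $(c,n_0,\alpha,q)$ as claimed.

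There is essentially no obstacle here: the whole content lies in Lemma~\ref{lem:RoleGrowthFunctionRate} (which in turn rests on the monotonicity of the sets $\NNreal^{\varrho,d,k}_{\cdot,\cdot,\cdot}$ in each of their three indices together with the elementary nesting Lemma~\ref{lem:ApproximationSpaceElementaryNesting}). The theorem is just the observation that $\preceq$ gives embeddings and $\sim$ upgrades them to bi-continuous identifications.
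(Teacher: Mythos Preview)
Your proposal is correct and matches the paper's approach exactly: the paper presents Theorem~\ref{thm:RoleGrowthFunction} as a direct consequence of Lemma~\ref{lem:RoleGrowthFunctionRate}, and you have simply spelled out that immediate double application (one for each direction of $\preceq$) together with taking $C=\max\{C_1,C_2\}$.
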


Theorem~\ref{thm:RoleGrowthFunction} shows in particular that if
$L := \sup_{n} \mathscr{L}(n) < \infty$, then $\WASpace[X] = \WASpace[X][\varrho][q][\alpha][L]$
with equivalent ``proto-norms'' (and similarly with $\NASpace[X]$ instead of $\WASpace[X]$
or with strict networks instead of generalized ones).
Indeed, it is easy to see that $\mathscr{L} \sim \mathscr{L}'$ if
${\sup_{n} \mathscr{L}(n) = \sup_{n} \mathscr{L}'(n) = L < \infty}$.

%
%

\begin{lem}\label{lem:DepthGrowthLemma}
  Consider $\mathscr{L}$ a depth growth function and $\eps > 0$.
  \begin{enumerate}[leftmargin=0.6cm]
    \item if $\mathscr{L}+\eps \preceq \mathscr{L}$ then $\mathscr{L}+b \sim \mathscr{L}$
          for each $b \geq 0$;

    \item if $e^{\eps}\mathscr{L} \preceq \mathscr{L}$ then $a\mathscr{L}+b \sim \mathscr{L}$
          for each $a \geq 1$, $b \geq 1-a$.
          \qedhere
  \end{enumerate}
\end{lem}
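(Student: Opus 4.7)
The plan is to treat both parts by the same strategy: iterate the given contraction-type hypothesis to absorb an arbitrary perturbation, and note that the reverse comparison is immediate because $\mathscr{L}$ is non-decreasing and $\N$-valued (so $\mathscr{L}(n) \geq 1$). In both cases the direction $\mathscr{L} \preceq \mathscr{L}+b$ (resp.\ $\mathscr{L} \preceq a\mathscr{L}+b$) is the easy one; the work is in the forward direction.

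For part (1), the comparison $\mathscr{L} \preceq \mathscr{L}+b$ is trivial (take $c=1$). For the opposite direction, the hypothesis provides $c, n_0 \in \N$ with $\mathscr{L}(n)+\varepsilon \leq \mathscr{L}(cn)$ for all $n \geq n_0$. I would induct on $k$: assuming $\mathscr{L}(n)+k\varepsilon \leq \mathscr{L}(c^k n)$ holds for every $n \geq n_0$, apply the base inequality at $cn$ (which lies in the range $n \geq n_0$ since $c \geq 1$) to obtain $\mathscr{L}(c^{k+1}n) \geq \mathscr{L}(cn)+k\varepsilon \geq \mathscr{L}(n)+(k+1)\varepsilon$. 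Then, given $b \geq 0$, choose $k := \lceil b/\varepsilon \rceil$, so that $\mathscr{L}(n)+b \leq \mathscr{L}(n)+k\varepsilon \leq \mathscr{L}(c^k n)$, witnessing $\mathscr{L}+b \preceq \mathscr{L}$ with constants $(c^k, n_0)$.

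For part (2), the analogous induction on $e^{\varepsilon}\mathscr{L}(n) \leq \mathscr{L}(cn)$ yields $e^{k\varepsilon}\mathscr{L}(n) \leq \mathscr{L}(c^k n)$ for $n \geq n_0$. Using $\mathscr{L}(n) \geq 1$, I bound $a\mathscr{L}(n)+b \leq (a+\max\{b,0\})\,\mathscr{L}(n)$ and pick $k$ large enough that $e^{k\varepsilon} \geq a+\max\{b,0\}$, which gives $a\mathscr{L}+b \preceq \mathscr{L}$. The reverse direction $\mathscr{L}(n) \leq a\mathscr{L}(n)+b$ holds pointwise: since $a \geq 1$ and $\mathscr{L}(n) \geq 1$, we have $(a-1)\mathscr{L}(n)+b \geq (a-1)+b = a+b-1 \geq 0$ by the assumption $b \geq 1-a$.

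I do not anticipate a genuine obstacle; the lemma reduces to a bookkeeping exercise about iterating a single contraction-type inequality. The two points that merit a short comment are (i) that the iteration step requires $cn \geq n_0$, which is automatic from $c \geq 1$; and (ii) that the case $\mathscr{L}(n) = \infty$ is harmless, since then the hypothesis forces $\mathscr{L}(c^j n) = \infty$ for every $j$ along the chain so all inequalities hold trivially.
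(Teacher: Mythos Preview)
Your proposal is correct and follows essentially the same approach as the paper: iterate the hypothesis to obtain $\mathscr{L}(n)+k\eps \leq \mathscr{L}(c^{k}n)$ (resp.\ $e^{k\eps}\mathscr{L}(n) \leq \mathscr{L}(c^{k}n)$) for all $n \geq n_0$, then absorb the perturbation by choosing $k$ large enough, while the reverse comparison is immediate from $\mathscr{L} \geq 1$. Your presentation is in fact slightly cleaner than the paper's, since you fix the constants $c, n_0$ from the outset and track the power $c^{k}$ explicitly; one tiny wording slip is that in your induction step you actually apply the \emph{induction hypothesis} at $cn$ and the \emph{base inequality} at $n$, not the base inequality at $cn$, but the chain of inequalities you wrote is correct.
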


\begin{proof}
For the first claim, we first show by induction on $k \in \N$
that $\mathscr{L}+k\eps \preceq \mathscr{L}$.
For $k=1$ this holds by assumption.
For the induction step, recall that $\mathscr{L}+k\eps \preceq \mathscr{L}$ simply means
that there are $c, n_{0} \in \N$ such that $\mathscr{L}(n)+k\eps \leq \mathscr{L}(cn)$
for all $n \in \N_{\geq n_{0}}$.
Therefore, if $n \geq n_{0}$ then
$\mathscr{L}(n)+(k+1)\eps \leq \mathscr{L}(c n)+\eps \leq \mathscr{L}(c^{2}n)$
since $n' = cn \geq n \geq n_{0}$.
Now, note that if $\mathscr{L} \leq \mathscr{L}'$, then also $\mathscr{L} \preceq \mathscr{L}'$.
Therefore, given $b \geq 0$ we choose $k \in \N$ such that $b \leq k\eps$ and get
$\mathscr{L} \preceq \mathscr{L} + b \preceq \mathscr{L} + k \eps \preceq \mathscr{L}$,
so that all these depth-growth functions are equivalent.

For the second claim, a similar induction yields $e^{k\eps} \mathscr{L} \preceq \mathscr{L}$
for all $k \in \N$.
Now, given $a \geq 1$ and $b \geq 1-a$, we choose $k \in \N$ such that $a+b_+ \leq e^{k\eps}$,
where $b_+ = \max \{0, b\}$.
There are now two cases:
If $b \geq 0$, then clearly $\mathscr{L} \leq a \mathscr{L} \leq a \mathscr{L} + b$.
If otherwise $b < 0$, then $b \mathscr{L} \leq b$, since $\mathscr{L} \geq 1$, and hence
\(
  \mathscr{L}
  = a \mathscr{L} + (1 - a) \mathscr{L}
  \leq a \mathscr{L} + b \mathscr{L}
  \leq a \mathscr{L} + b
\).
Therefore, we see in both cases that
\(
  \mathscr{L}
  \leq a \mathscr{L} + b_+
  \leq (a + b_+) \mathscr{L}
  \leq e^{k \eps} \, \mathscr{L}
  \preceq \mathscr{L}
\).
\end{proof}

The following two examples discuss elementary properties of poly-logarithmic and polynomial
growth functions, respectively.

\begin{example}\label{ex:polyloggrowth}
Assume there are $q \geq 1$, $\alpha, \beta > 0$ such that
$|\mathscr{L}(n) - \alpha \log^{q} n| \leq \beta$ for all $n \in \N$.

Choosing $c \in \N$ such that $\eps := \alpha \log^{q} c - 2\beta>0$, we have
\[
  \mathscr{L}(n)+\eps
  \leq \alpha \log^{q}n + \beta + \eps
  =    \alpha \log^{q} n + \alpha \log^{q} c -\beta
  \leq \alpha (\log c + \log n)^{q} -\beta
   =   \alpha \log^{q} (cn) -\beta \leq \mathscr{L}(cn)
\]
for all $n \in \N$; hence $\mathscr{L}+\eps \preceq \mathscr{L}$.
Here, we used that $x^q + y^q = \|(x,y)\|_{\ell^q}^q \leq \|(x,y)\|_{\ell^1}^q = (x+y)^q$
for $x,y \geq 0$.

By Lemma~\ref{lem:DepthGrowthLemma} we get $\mathscr{L} \sim \mathscr{L}+b$ for arbitrary $b \geq 0$.
Moreover as $\lfloor \alpha \log^{q}n\rfloor \leq \alpha \log^{q} n \leq \mathscr{L}(n)+\beta$ we
have $\max(1,\lfloor\alpha \log^{q}(\cdot)\rfloor) \preceq \mathscr{L}+\beta \sim \mathscr{L}$.
Similarly $\mathscr{L}(n) \leq \lfloor\alpha \log^{q}n\rfloor + \beta+1$ hence
$\mathscr{L} \sim \max(1,\lfloor \alpha \log^{q} (\cdot) \rfloor)$.
\end{example}

\begin{example}\label{ex:polygrowth}
Assume there are $\gamma > 0$ and $C  \geq 1$ such that $1/C \leq \mathscr{L}(n)/n^{\gamma} \leq C$
for all $n \in \N$.

Choosing any integer $c \geq (2C^2)^{1/\gamma}$ we have $2C^{2}c^{-\gamma} \leq 1$, and hence
\[
  2\mathscr{L}(n)
  \leq 2C n^{\gamma}
  \leq 2Cc^{-\gamma} (cn)^{\gamma}
  \leq 2Cc^{-\gamma} C \mathscr{L}(cn)
  =    2C^{2}c^{-\gamma} \mathscr{L}(cn)
  \leq \mathscr{L}(cn)
\]
for all $n \in \N$; hence $2\mathscr{L} \preceq \mathscr{L}$.
By Lemma~\ref{lem:DepthGrowthLemma} we get $\mathscr{L} \sim a\mathscr{L}+b$ for each $a \geq 1, b \geq 1-a$.
Moreover, we have $\lceil n^{\gamma}\rceil \leq n^{\gamma}+1 \leq C\mathscr{L}(n)+1$
for all $n \in \N$ hence $\lceil (\cdot)^{\gamma}\rceil \preceq C\mathscr{L}+1 \sim \mathscr{L}$.
Similarly $\mathscr{L}(n) \leq C n^{\gamma} \leq C \lceil n^{\gamma} \rceil$,
and thus $\mathscr{L} \sim \lceil (\cdot)^{\gamma}\rceil$.
\end{example}

In the next sections we conduct preliminary investigations on the role of the (finite or infinite)
depth $L$ in terms of the associated approximation spaces for $\varrho_{r}$-networks.
A general understanding of the role of depth growth largely remains an open question.
A very surprising result in this direction was recently obtained by Yarotsky \cite{YarotskySurprise}.

\begin{rem}\label{sec:RestrictionCartesian}
  It is not difficult to show that approximation classes defined on nested sets
  $\Omega' \subset \Omega \subset \R^{d}$ satisfy natural restriction properties.
  More precisely, the map
  \[
    \WASpace[\StandardXSpace(\Omega)] \to \WASpace[\StandardXSpace(\Omega')], f \mapsto f|_{\Omega'}
  \]
  is well-defined and bounded (meaning,
  $\|f|_{\Omega'}\|_{\WASpace[\StandardXSpace(\Omega')]} \leq \|f\|_{\WASpace[\StandardXSpace(\Omega)]}$),
  and the same holds for the spaces $N_q^\alpha$ instead of $W_q^\alpha$.

  Furthermore, the approximation classes of vector-valued functions
  $f: \Omega \to \R^{k}$ are cartesian products of real-valued function classes;
  that is,
  \[
    \WASpace[\StandardXSpace(\Omega ; \R^k)] \to \big( \WASpace[\StandardXSpace(\Omega;\R)] \big)^k,
    f \mapsto (f_1,\dots,f_k)
  \]
  is bijective and
  \(
    \|f\|_{\WASpace[\StandardXSpace(\Omega ; \R^k)]}
    \asymp \sum_{\ell=1}^k \|f_\ell\|_{\WASpace[\StandardXSpace(\Omega;\R)]}
  \).
  Again, the same holds for the spaces $N_q^\alpha$ instead of $W_q^\alpha$.
  For the sake of brevity, we omit the easy proofs.
\end{rem}

\subsection{Approximation classes are approximation spaces}

We now verify that the main axioms needed to apply
Proposition~\ref{prop:ApproxSpaceWellDefined} are satisfied.
Properties~\ref{enu:GammaContainsZero}--\ref{enu:GammaAdditive} hold
without any further assumptions:

\begin{lem}\label{lem:ApproximationSpacesWellDefined}
Let $\varrho : \R \to \R$ be arbitrary, and let $\mathscr{L}$ be a depth growth function.
The sets $\AppSet_{n}$ defined in \eqref{eq:DefWASpace}--\eqref{eq:DefNASpace} satisfy
Properties~\ref{enu:GammaContainsZero}--\ref{enu:GammaAdditive} on Page~\pageref{enu:GammaAdditive},
with $c=2+\min\{d,k\}$ for Property~\ref{enu:GammaAdditive}.
\end{lem}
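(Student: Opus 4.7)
The proof plan is to verify the four axioms one by one, appealing to the network calculus of Section~\ref{sec:NetworkCalculus}.

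Properties \ref{enu:GammaContainsZero} and \ref{enu:GammaIncreasing} are essentially bookkeeping: \ref{enu:GammaContainsZero} holds by the definition $\Sigma_0 = \{0\}$. For \ref{enu:GammaIncreasing}, I would first check that the zero function lies in $\Sigma_1$ (it is a constant map, hence realizable by a network with zero connections and zero hidden neurons by Lemma~\ref{lem:ConstantMaps}), which gives $\Sigma_0 \subset \Sigma_1$. For $n \geq 1$, the inclusion $\Sigma_n \subset \Sigma_{n+1}$ reduces to the monotonicity $\mathtt{NN}^{\varrho,d,k}_{n,\mathscr{L}(n),\infty} \subset \mathtt{NN}^{\varrho,d,k}_{n+1,\mathscr{L}(n+1),\infty}$ (resp.\ with the neuron parameter), which is immediate from the definitions together with the fact that $\mathscr{L}$ is non-decreasing.

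Property \ref{enu:GammaScaling} follows directly from Lemma~\ref{lem:SummationLemma}\eqref{enu:ScalMult}: given $f = \Realization(\Phi) \in \Sigma_n$ and $a \in \R \setminus \{0\}$, the lemma produces a network $\Psi$ with $\Realization(\Psi) = a f$ and the same $L$, $N$, and $W$ as $\Phi$, so $af \in \Sigma_n$. Since $a \neq 0$ we also have $f = a^{-1}(af)$, giving the reverse inclusion $\Sigma_n \subset a \Sigma_n$.

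The main step is \ref{enu:GammaAdditive}. The case $n=0$ is trivial. For $n \geq 1$, let $f_1, f_2 \in \Sigma_n$. The key trick is to invoke Remark~\ref{rem:InfiniteDepthOrN} (a consequence of Lemma~\ref{lem:BoundingLayersAndNeuronsByWeights} and~\eqref{eq:LayersBoundedByNeurons}) to replace $\mathscr{L}$ by the equivalent depth growth function $\mathscr{L}'(n) := \min\{n,\mathscr{L}(n)\}$ (resp.\ $\min\{n+1,\mathscr{L}(n)\}$ in the neuron case), without changing $\Sigma_n$. Thus we may realize $f_i = \Realization(\Phi_i)$ with $L(\Phi_i) \leq n$ (resp.\ $n+1$) and the appropriate complexity bound. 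Applying Lemma~\ref{lem:SummationLemma}\eqref{enu:LinComb} then yields $f_1+f_2 = \Realization(\Psi)$ with $L(\Psi) \leq \mathscr{L}(n) \leq \mathscr{L}(cn)$ and, writing $c_0 := \min\{d,k\}$ and $\delta \leq c_0(\max_i L(\Phi_i) - 1)$, the bounds $W(\Psi) \leq \delta + 2n \leq c_0 n + 2n = (2 + c_0)n$ in the weight case, and $N(\Psi) \leq \delta + 2n \leq c_0 n + 2n = (2+c_0) n$ in the neuron case (using $\max L - \min L \leq n-1$ or $n$ respectively). Since $f_1+f_2 \in X$ because $X$ is a vector space, this places $f_1+f_2$ in $\Sigma_{cn}$ with $c = 2 + \min\{d,k\}$. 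The only subtlety I anticipate is being careful about the case distinction $\mathscr{L}(n) \leq n$ vs $\mathscr{L}(n) \leq n+1$ for the two complexity measures, so that $\delta$ is absorbed into the constant $c$; this is precisely what the reformulation via Remark~\ref{rem:InfiniteDepthOrN} buys us.
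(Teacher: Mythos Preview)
Your proposal is correct and follows essentially the same route as the paper: the paper likewise dispatches \ref{enu:GammaContainsZero}--\ref{enu:GammaScaling} via Lemma~\ref{lem:ConstantMaps}, monotonicity of $\mathscr{L}$, and Lemma~\ref{lem:SummationLemma}\eqref{enu:ScalMult}, and for \ref{enu:GammaAdditive} uses exactly the same reduction (via Lemma~\ref{lem:BoundingLayersAndNeuronsByWeights} and \eqref{eq:LayersBoundedByNeurons}, which is what Remark~\ref{rem:InfiniteDepthOrN} packages) to bound $L'\leq n$ (resp.\ $n+1$) before applying Lemma~\ref{lem:SummationLemma}\eqref{enu:LinComb} and absorbing the $\delta$-term into $c_0 n$. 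One cosmetic point: remember that elements of $\Sigma_n$ are restrictions $f_i = g_i|_\Omega$ with $g_i\in\NNreal^{\varrho,d,k}$, so the network calculus is applied to the $g_i$ rather than the $f_i$ themselves.
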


\begin{proof}
We generically write $\StandardSigma$ to indicate either $\StandardSigmaW$ or $\StandardSigmaN$.

{\bf Property~\ref{enu:GammaContainsZero}.}
We have $\StandardSigma[0] = \{0\}$ by definition.
For later use, let us also verify that $0 \in \StandardSigma$ for $n \in \N$.
Indeed, Lemma~\ref{lem:ConstantMaps} shows
$0 \in \NNreal^{\varrho,d,k}_{0,1,0} \subset \NNreal^{\varrho,d,k}_{n,L,m}$
for all $n,m,L \in \N \cup \{\infty\}$, and hence $0 \in \StandardSigma$ for all $n \in \N$.

\medskip{}

{\bf Property~\ref{enu:GammaIncreasing}.}
The inclusions $\NNreal_{W, L,\infty}^{\varrho,d,k} \subset \NNreal_{W+1, L',\infty}^{\varrho,d,k}$
and $\NNreal_{\infty, L,N}^{\varrho,d,k} \subset \NNreal_{\infty, L',N+1}^{\varrho,d,k}$
for $W,N \in \N_{0}$ and $L, L' \in \N \cup \{\infty\}$ with $L \leq L'$
hold by the very definition of these sets.
As $\mathscr{L}$ is non-decreasing (that is, $\mathscr{L}(n+1) \geq \mathscr{L}(n)$),
we thus get $\StandardSigma \subset \StandardSigma[n+1]$ for all $n \in \N$.
As seen in the proof of Property~\ref{enu:GammaContainsZero}, this also holds for $n = 0$.

\medskip{}

{\bf Property~\ref{enu:GammaScaling}.}
By Lemma~\ref{lem:SummationLemma}-(\ref{enu:ScalMult}), if $f \in \NNreal_{W, L,N}^{\varrho,d,k}$,
then $a \cdot f \in \NNreal_{W, L, N}^{\varrho,d,k}$ for any $a \in \R$.
Therefore, $a \cdot \StandardSigma \subset \StandardSigma$ for each $a \in \R$ and $n \in \N$.
The converse is proved similarly for $a \neq 0$; hence
$a \cdot \StandardSigma = \StandardSigma$ for each $a \in \R \setminus \{0\}$ and $n \in \N$.
For $n = 0$, this holds trivially.

\medskip{}


{\bf Property~\ref{enu:GammaAdditive}.}
The claim is trivial for $n=0$.
For $n \in \N$, let $f_{1},f_{2} \in \StandardSigma$ be arbitrary. 

For the case of $\StandardSigma = \StandardSigmaW$,
let $g_{1},g_{2} \in\NNreal_{n,\mathscr{L}(n),\infty}^{\varrho,d,k}$
such that $f_{i} = g_{i}|_{\Omega}$.
Lemma~\ref{lem:BoundingLayersAndNeuronsByWeights} shows that
$g_{i} \in \NNreal_{n,L',\infty}^{\varrho,d,k}$ with $L' := \min\{\mathscr{L}(n), n\}$.
By Lemma~\ref{lem:SummationLemma}-(\ref{enu:LinComb}), setting $c_{0} := \min \{d,k\}$, and
$W' := 2n + c_{0} \cdot (L'-1) \leq (2+c_{0})n$, we have
\(
  g_{1}+g_{2} \in \NNreal_{W',L'}^{\varrho,d,k}
              \subset  \NNreal_{(2+c_{0})n,\mathscr{L}((2+c_{0})n)}^{\varrho,d,k}
\)
where for the last inclusion we used that $L' \leq \mathscr{L}(n)$,
that $\mathscr{L}$ is non-decreasing, and that $n \leq (2+c_{0})n$.

For the case of $\StandardSigma = \StandardSigmaN$,
consider similarly $g_{1},g_{2} \in\NNreal_{\infty,\mathscr{L}(n),n}^{\varrho,d,k}$
such that $f_{i} = g_{i}|_{\Omega}$.
By \eqref{eq:LayersBoundedByNeurons}, $g_{i} \in \NNreal_{\infty,L',n}^{\varrho,d,k}$
with $L' := \min\{\mathscr{L}(n), n+1\}$.
By Lemma~\ref{lem:SummationLemma}-(\ref{enu:LinComb}) again, setting $c_{0} := \min \{d,k\}$, and
$N' := 2n + c_{0} \cdot (L'-1) \leq (2+c_{0})n$, we get
\(
  g_{1}+g_{2} \in \NNreal_{\infty,L',N'}^{\varrho,d,k}
              \subset \NNreal_{\infty,\mathscr{L}((2+c_{0})n),(2+c_{0})n}^{\varrho,d,k}
\).

By Definitions~\eqref{eq:WSigmaDefinition}--\eqref{eq:NSigmaDefinition},
this shows in all cases that $f_{1} + f_{2} \in \StandardSigma[(2+c_{0})n]$.
\end{proof}

%
We now focus on Property~\ref{enu:GammaDense}, in the function space $X = \StandardXSpace(\Omega)$
with $p \in (0,\infty]$ and $\Omega \subset \R^d$ a measurable set with nonzero measure.
First, as proved in Appendix~\ref{app:C0FunctionsCanBeExtended},
these spaces are indeed complete, and each $f \in X_p^k (\Omega)$
can be extended to an element $\widetilde{f} \in X_p^k (\R^d)$.
\begin{defn}[admissible domain]
For brevity, in the rest of the paper we refer to $\Omega \subseteq \R^{d}$
as an 
{\em admissible domain}
if, and only if, it is Borel-measurable with nonzero measure.
\end{defn}

\begin{lem}\label{lem:C0FunctionsCanBeExtended}
  Consider $\Omega \subseteq \R^{d}$ an admissible domain, $k \in \N$, and $C_{0}(\R^{d};\R^{k})$
  the space of continuous functions $f: \R^{d} \to \R^{k}$ that vanish at infinity.

  For $0<p<\infty$, we have $\StandardXSpace(\Omega) = \{f|_{\Omega}: f \in \StandardXSpace(\R^{d})\}$;
  likewise, $\StandardXSpace[k][\infty](\Omega) = \{f|_{\Omega}: f \in C_{0}(\R^{d};\R^{k})\}$.
  The spaces $\StandardXSpace(\Omega)$ are quasi-Banach spaces.
\end{lem}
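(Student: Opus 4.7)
The plan is to treat the regimes $0<p<\infty$ and $p=\infty$ separately, since the extension mechanisms differ sharply.

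For $0 < p < \infty$, the space $X_p^k(\Omega)$ is just $L_p(\Omega;\R^k)$, and I would simply extend functions by zero: for $f \in L_p(\Omega;\R^k)$, set $\tilde f := f \cdot \Indicator_\Omega$. Borel measurability of $\Omega$ guarantees $\tilde f$ is measurable on $\R^d$, and the $L_p$ (quasi-)norm is preserved; the reverse direction (restriction to $\Omega$) is trivially norm-decreasing. Completeness of $L_p(\Omega;\R^k)$ as a quasi-Banach space is then the classical Riesz--Fischer theorem (adapted to $0 < p < 1$).

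For $p = \infty$ the extension is more delicate, because one must preserve both uniform continuity and vanishing at infinity. My plan is a two-step construction via the one-point compactification $\dot{\R}^d := \R^d \cup \{\infty\}$. Given $f \in X_\infty^k(\Omega)$, I would first extend $f$ uniquely to $\bar f : \overline{\Omega} \to \R^k$ by uniform continuity; the vanishing-at-infinity property transfers verbatim to $\bar f$. Then, setting $\bar f(\infty) := 0$, I would identify $C_0(\R^d;\R^k)$ with continuous maps $\dot{\R}^d \to \R^k$ that vanish at $\infty$, and observe that $A := \overline{\Omega} \cup \{\infty\}$ is a closed subset of the compact metrizable (hence normal) space $\dot{\R}^d$ on which $\bar f$ is continuous---continuity at $\infty$ being exactly the vanishing-at-infinity property. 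Tietze's extension theorem, applied componentwise, then produces a continuous $G: \dot{\R}^d \to \R^k$ agreeing with $\bar f$ on $A$, and its restriction $g := G|_{\R^d}$ automatically lies in $C_0(\R^d;\R^k)$ because $G(\infty) = 0$ and $G$ is continuous on the compact space $\dot{\R}^d$.

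For the converse direction at $p = \infty$---showing that restrictions of $C_0(\R^d;\R^k)$ functions are automatically uniformly continuous on $\Omega$---I would argue directly: given $g \in C_0(\R^d;\R^k)$ and $\varepsilon>0$, exploit smallness of $g$ outside a compact set $K$ together with standard uniform continuity on a slightly larger compact set $K' \supset \{x : \mathrm{dist}(x,K) \leq 1\}$, then split cases according to whether each of two nearby points lies in $K$ or in its complement. Completeness of $X_\infty^k(\Omega)$ as a Banach space under the sup norm reduces to checking that uniform limits of uniformly continuous functions are uniformly continuous, and that uniform limits of functions vanishing at infinity still vanish at infinity---both standard $\varepsilon/2$ arguments. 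The main conceptual obstacle is ensuring, in the $p = \infty$ extension, that the extended function still vanishes at infinity; a naive Whitney-type extension directly in $\R^d$ would require extra bookkeeping to suppress the extension far from $\overline{\Omega}$, whereas the compactification trick converts this decay condition into the clean statement $G(\infty)=0$, which is automatically preserved by any continuous extension to $\dot{\R}^d$.
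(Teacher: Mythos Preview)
Your proposal is correct and follows essentially the same route as the paper: zero-extension for $p<\infty$, and for $p=\infty$ the extension via uniform continuity to $\overline{\Omega}$ followed by the one-point compactification trick and Tietze. The only cosmetic differences are that the paper deduces uniform continuity of $C_0$ functions by approximating with $C_c$ functions rather than your direct splitting argument, and it phrases the compactification step for a general locally compact Hausdorff space rather than invoking metrizability of $\dot{\R}^d$.
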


In light of definitions~\eqref{eq:WSigmaDefinition}--\eqref{eq:NSigmaDefinition}, we have
\[
  \bigcup_{n \in \N_{0}} \StandardSigmaW = \bigcup_{n \in \N_{0}} \StandardSigmaN
  =
\NNreal_{\infty,L,\infty}^{\varrho,d,k}(\Omega) \cap X
  =: \StandardSigma[\infty],
\]
with $L := \sup_{n} \mathscr{L}(n) \in \N \cup \{+\infty\}$.
Properties~\ref{enu:GammaScaling} and~\ref{enu:GammaAdditive} imply that $\StandardSigma[\infty]$
is a linear space. 
We study its density in $X$, dealing first with a few degenerate cases.

\subsubsection{Degenerate cases}

Property~\ref{enu:GammaDense} can fail to hold for certain activation functions:
when $\varrho$ is a polynomial and $\mathscr{L}$ is bounded,
the set $\StandardSigma[\infty]$ only contains polynomials of bounded degree,
hence for nontrivial $\Omega$, $\StandardSigma[\infty]$ is not dense in $X$.
Property~\ref{enu:GammaDense} fails again for networks with a single hidden layer ($L=2$)
and certain domains such as $\Omega = \R^{d}$.
Indeed, the realization of any network in $\NNreal^{\varrho,d,k}_{\infty,2,\infty}$
is a finite linear combination of ridge functions $x \mapsto \varrho(A_{i}x+b_{i})$.
A ridge function is in $L_{p}(\R^{d})$ ($p<\infty$) only if it is zero.
Moreover, one can check that
if a linear combination of ridge functions belongs to $L_{p}(\R^{d})$ ($1 \leq p \leq 2$),
then it vanishes, hence $ \StandardSigma[\infty] = \{0\}$.


%
%
%
%

\subsubsection{Non-degenerate cases}
We now
 show that Property~\ref{enu:GammaDense} holds
under proper assumptions on the activation function $\varrho$,
the depth growth function $\mathscr{L}$, and the domain $\Omega$.
The proof uses the celebrated universal approximation theorem for multilayer feedforward networks
\cite{PinkusUniversalApproximation}.
%
%
In light of the above observations we introduce the following definition:
\begin{defn}\label{defn:NonDegenerateRho}
  An activation function $\varrho : \R \to \R$ is called \emph{non-degenerate} if the following hold:
  \begin{enumerate}[leftmargin=0.6cm]
    \item $\varrho$ is Borel measurable;

    \item $\varrho$ is locally bounded, that is, $\varrho$ is bounded on $[-R,R]$ for each $R > 0$;

    \item there is a closed null-set $A \subset \R$
          such that $\varrho$ is continuous at every $x_0 \in \R \setminus A$;

    \item there does not exist a polynomial $p : \R \to \R$ such that
          $\varrho(x) = p(x)$ for almost all $x \in \R$.
          \qedhere
  \end{enumerate}
\end{defn}

\begin{rem*}
  A \emph{continuous} activation function is non-degenerate if and only if it is not a polynomial.
\end{rem*}

These are precisely the assumptions imposed on the activation function
in \cite{PinkusUniversalApproximation}, where the following version of the universal approximation
theorem is shown:

\begin{thm}[{\cite[Theorem 1]{PinkusUniversalApproximation}}]\label{thm:PinkusUniversalApproximation}
  Let $\varrho : \R \to \R$ be a non-degenerate activation function,
  $K \subset \R^d$ be compact, $\eps > 0$, and $f : \R^d \to \R$ be continuous.
  Then there is $N \in \N$ and suitable $b_j, c_j \in \R$, $w_j \in \R^d$, $1 \leq j \leq N$ such that
  $g : \R^d \to \R, x \mapsto \sum_{j=1}^N c_j \, \varrho(\langle w_j , x \rangle + b_j)$
  satisfies $\| f - g\|_{L_\infty(K)} \leq \eps$.
\end{thm}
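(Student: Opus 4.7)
The plan is to reduce the theorem to the classical Weierstrass approximation theorem by showing that every polynomial on $\R^d$ can be uniformly approximated on $K$ by sums of the form $\sum_j c_j \varrho(\langle w_j, x \rangle + b_j)$. Since polynomials are dense in $C(K)$ for $K \subset \R^d$ compact, this would finish the proof. So I would let $\mathcal{R}(\varrho)$ denote the linear span (in $C(K)$) of functions $x \mapsto \varrho(\langle w, x\rangle + b)$ with $w \in \R^d$, $b \in \R$, and the goal becomes showing that every polynomial lies in the closure $\overline{\mathcal{R}(\varrho)}$.

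The first step is a \emph{smoothing} reduction. Given a smooth, compactly supported $\phi : \R \to \R$, one checks that the 1D mollification $\varrho_\phi := \varrho * \phi$, evaluated as a ridge function $x \mapsto \varrho_\phi(\langle w, x\rangle + b)$, can itself be uniformly approximated on $K$ by elements of $\mathcal{R}(\varrho)$: writing the convolution as a Riemann integral of translates of $\varrho$ and bounding the resulting Riemann sums uniformly using local boundedness of $\varrho$ and the fact that $\varrho$ is continuous off the closed null-set $A$ (so that the integrand is Riemann-integrable). Thus $\overline{\mathcal{R}(\varrho_\phi)} \subset \overline{\mathcal{R}(\varrho)}$ for every admissible $\phi$.

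The second step exploits non-polynomiality. If every mollification $\varrho_\phi$ were a polynomial, then—since one can pick $\phi_\eps$ approaching a delta in an appropriate sense and the space of polynomials of any fixed degree $D$ is closed in $L^1_{\mathrm{loc}}$—one would deduce that $\varrho$ itself equals a polynomial almost everywhere, contradicting non-degeneracy. Hence some $\varrho_\phi$ is smooth and \emph{not} a polynomial, so for every $k \in \N_0$ there exists $b_k \in \R$ with $\varrho_\phi^{(k)}(b_k) \neq 0$ (otherwise $\varrho_\phi$ would be a polynomial of degree $<k$ for all $k$). Now for fixed $k$, the partial derivative
\[
  \partial_{w}^{\alpha} \bigl[\varrho_\phi(\langle w, x\rangle + b)\bigr]\Big|_{w=0, b=b_k}
  = x^{\alpha} \, \varrho_\phi^{(|\alpha|)}(b_k),
  \qquad |\alpha| = k,
\]
can be realized as a locally uniform limit of finite differences in the $w$-variable, each of which is a linear combination of ridge functions $\varrho_\phi(\langle w_j, x \rangle + b_k)$. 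Thus every monomial $x^\alpha$ lies in $\overline{\mathcal{R}(\varrho_\phi)} \subset \overline{\mathcal{R}(\varrho)}$, hence so does every polynomial, and Weierstrass concludes.

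The main obstacle is the second step: carefully ruling out the pathological possibility that \emph{every} smooth mollification of $\varrho$ is a polynomial. One must argue that on any fixed bounded interval $I \subset \R$, the finite-dimensional space of polynomials of degree $\leq D$ is closed in $L^1(I)$, so that if $\varrho_{\phi_n} \to \varrho$ in $L^1_{\mathrm{loc}}$ along a sequence of mollifiers and each $\varrho_{\phi_n}$ is a polynomial, one must bound their degrees uniformly (e.g.\ by testing against smooth cutoffs and using local $L^1$ bounds of $\varrho$) before passing to the limit. A secondary technical point is to ensure the convolution approximation in Step 1 goes through \emph{uniformly on the compact set} $K$ despite discontinuities of $\varrho$ on $A$, which is where local boundedness of $\varrho$ and closedness/null-ness of $A$ enter.
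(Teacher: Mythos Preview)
The paper does not give a proof of this statement; it is quoted from the cited reference (Leshno--Lin--Pinkus--Schocken) and used only as a black box in the subsequent density arguments, notably Theorem~\ref{th:ApproximationSpacesWellDefinedDensity}. Your sketch is essentially the original argument from that source and is correct in outline; the two delicate points you flag are exactly the ones requiring care there. In particular, the uniform degree bound in Step~2 is handled in the original not by direct testing but by a Baire category argument on $C_c^\infty$ with fixed support: the sets $\{\phi : \deg(\varrho*\phi)\le m\}$ are closed and exhaust the space, so one has nonempty interior, and linearity then gives a single bound $m$ valid for all $\phi$.
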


We prove in Appendix~\ref{app:ApproximationSpacesWellDefinedDensity} that Property~\ref{enu:GammaDense}
holds under appropriate assumptions:

\begin{thm}[Density]\label{th:ApproximationSpacesWellDefinedDensity}
  Consider $\varrho: \R \to \R$ a Borel measurable, locally bounded activation function,
  $\mathscr{L}$ a depth growth function, and $p \in (0,\infty]$.
  Set $L := \sup_{n \in \N} \mathscr{L}(n) \in \N \cup \{+\infty\}$.
  \begin{enumerate}[leftmargin=0.6cm]
    \item \label{enu:AppSetInLp}
          Let $\Omega \subset \R^{d}$ be a \emph{bounded} admissible domain, and assume that $L  \geq 2$.
          \begin{enumerate}
            \item \label{enu:AppSetInLp1}
                  For $p  \in (0,\infty)$ we have
                  \(
                    \NNreal_{\infty,\infty,\infty}^{\varrho,d,k}(\Omega)
                    \subset \StandardXSpace(\Omega)
                  \);

            \item \label{enu:AppSetInLp2}
                  For $p = \infty$ the same holds if $\varrho$ is continuous;

            \item \label{enu:AppSetInDense1}
                  For $p \in (0,\infty)$, if $\varrho$ is non-degenerate then
                  $\Sigma_\infty (\StandardXSpace(\Omega), \varrho, \mathscr{L})$
                  is dense in $\StandardXSpace(\Omega)$;

            \item \label{enu:AppSetInDense2}
                  For $p=\infty$, the same holds if $\varrho$ is non-degenerate and continuous.
          \end{enumerate}

        %

    \item \label{enu:AppSetDenseIndicator}
          Assume that the $L_p$-closure of
          $\NNreal_{\infty, L,\infty}^{\varrho,d,1} \cap \StandardXSpace[] (\R^d)$ contains
          a function $g : \R^d \to \R$ such that: 
          \begin{enumerate}
            \item There is a non-increasing function $\mu : [0,\infty) \to [0,\infty)$
                  satisfying $\int_{\R^d} \mu(|x|) \, d x < \infty$
                  and furthermore $|g(x)| \leq \mu(|x|)$ for all $x \in \R^d$.

            \item $\int_{\R^d} g(x) \, d x \neq 0$; note that this integral is well-defined,
                  since $\int_{\R^d} |g(x)| \, d x \leq \int_{\R^d} \mu(|x|) \, d x < \infty$.
          \end{enumerate}
          Then $\Sigma_\infty (\StandardXSpace(\Omega), \varrho, \mathscr{L})$ is dense in
          $\StandardXSpace[k](\Omega)$ for every admissible domain $\Omega \subseteq \R^{d}$
          and every $k \in \N$.\qedhere

  \end{enumerate}
\end{thm}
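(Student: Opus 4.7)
The plan is to reduce everything to the scalar case $k=1$ via the Cartesian-product remark (Remark~\ref{sec:RestrictionCartesian}): both $\StandardXSpace(\Omega)$ and the network class $\NNreal^{\varrho,d,k}_{\infty,\mathscr{L}(n),\infty}(\Omega)$ decompose naturally into $k$ scalar components (the latter via Lemma~\ref{lem:SummationLemma}(\ref{enu:Cartesian})), so it suffices to establish density for $k=1$ and then assemble components.

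For Part~\eqref{enu:AppSetInLp} with $\Omega$ bounded, I would first argue that every realization $\Realization(\Phi)$ is Borel measurable and bounded on bounded sets: this follows by induction on the layers, using Borel measurability and local boundedness of $\varrho$ together with continuity of the affine maps. Hence $\Realization(\Phi)|_\Omega \in L_p(\Omega)$ for $0 < p < \infty$. When $\varrho$ is continuous, $\Realization(\Phi)$ is continuous on $\R^d$, and multiplying by a continuous compactly supported cutoff $\chi \equiv 1$ on $\overline{\Omega}$ produces an element of $C_0(\R^d)$ agreeing with $\Realization(\Phi)$ on $\Omega$, placing the restriction in $\StandardXSpace[1][\infty](\Omega)$. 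For density in (\ref{enu:AppSetInDense1})--(\ref{enu:AppSetInDense2}), the key tool is Pinkus's Theorem~\ref{thm:PinkusUniversalApproximation}: given a target $f$, I first approximate it by a continuous function $\tilde{f}$ defined on the compact set $\overline{\Omega}$ (using density of $C(\overline{\Omega})$ in $L_p(\Omega)$ for $p < \infty$, and directly for $p = \infty$ via Lemma~\ref{lem:C0FunctionsCanBeExtended}). Pinkus then delivers a shallow (depth-$2$) strict $\varrho$-network $g = \sum_{j=1}^N c_j \varrho(\langle w_j, \cdot \rangle + b_j)$ with $\|\tilde{f} - g\|_{L_\infty(\overline{\Omega})} < \eps$, and since $\sup_n \mathscr{L}(n) \geq 2$, such a realization belongs to some $\Sigma_n$ (applying depth synchronization via Lemma~\ref{lem:DeepeningLemma} if $\mathscr{L}(n) > 2$). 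Uniform convergence on bounded $\Omega$ implies $L_p$-convergence for $p < \infty$, closing the argument.

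For Part~\eqref{enu:AppSetDenseIndicator}, the idea is to use $g$ as an approximate identity on $\R^d$. Assume without loss of generality $\int_{\R^d} g = 1$ and set $g_h(x) := h^{-d} g(x/h)$; the decay hypothesis ensures $g_h \in L_1(\R^d)$ uniformly in $h > 0$ (by dominated convergence against the majorant $\mu(|\cdot|)$), and standard approximate-identity estimates give $f_1 \ast g_h \to f_1$ in $\StandardXSpace[1](\R^d)$ as $h \to 0$ for any $f_1 \in C_c(\R^d)$. Given $f \in \StandardXSpace(\Omega)$, I would extend it to $\tilde f \in \StandardXSpace[1](\R^d)$ by Lemma~\ref{lem:C0FunctionsCanBeExtended}, approximate $\tilde f$ by some $f_1 \in C_c(\R^d)$, and then approximate $f_1$ by $f_1 \ast g_h$ for small $h$. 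Next, discretize the convolution by a Riemann sum $R(x) = \sum_{i=1}^{M} c_i \, g_h(x - y_i)$ with $c_i = f_1(y_i) |B_i|$ over a fine partition $\{B_i\}$ of $\supp(f_1)$; uniform continuity of $f_1$ gives $R \to f_1 \ast g_h$ in $L_p(\R^d)$ as the mesh refines. Finally, since $g$ is the $L_p$-limit of a sequence of network realizations $g^{(n)} \in \NNreal^{\varrho,d,1}_{\infty,L,\infty}(\R^d)$, Lemma~\ref{lem:NetworkCalculus}(\ref{enu:PrePostAffine}) shows that each translated-dilated $g^{(n)}_h(\cdot - y_i)$ is again such a realization (with the same depth), and summing $M$ of them via Lemma~\ref{lem:SummationLemma}(\ref{enu:LinComb}) yields an element $R^{(n)} \in \NNreal^{\varrho,d,1}_{\infty,L,\infty}(\R^d)$ with $R^{(n)} \to R$ in $L_p(\R^d)$ as $n \to \infty$ (depth is preserved since $L(\Psi) = \max_i L(\Phi_i) \leq L$). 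Restricting to $\Omega$ and combining the four approximations via the quasi-triangle inequality produces an element of $\Sigma_\infty$ that is $\eps$-close to $f$ in $\StandardXSpace(\Omega)$.

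The main obstacle I anticipate lies in Part~\eqref{enu:AppSetDenseIndicator}: since $g$ is only in the $L_p$-\emph{closure} of the network class, the Riemann sum $R$ cannot itself be a network realization and must be approached through the sequence $g^{(n)}$. This forces the order of limits (first refine the Riemann mesh, then pick large $n$) and requires care with the quasi-triangle-inequality constants when $0 < p < 1$, where errors inflate multiplicatively. A secondary concern is that one must keep the depth of all intermediate network realizations bounded by $L = \sup_n \mathscr{L}(n)$; this is automatic here because affine pre/post-composition (Lemma~\ref{lem:NetworkCalculus}(\ref{enu:PrePostAffine})) does not change the depth, and summation (Lemma~\ref{lem:SummationLemma}(\ref{enu:LinComb})) preserves the maximum depth of the summands. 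A final point requiring attention is the case $p = \infty$, where $\StandardXSpace[1][\infty] = C_0$ and density of $C_c$ together with uniform convergence of $f_1 \ast g_h$ (valid under the decay hypothesis on $g$) are needed in place of the $L_p$ arguments.
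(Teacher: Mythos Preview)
Your treatment of Part~\eqref{enu:AppSetInLp} is essentially the same as the paper's: local boundedness/continuity propagates through the network, and Pinkus's theorem gives density on bounded $\Omega$.

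For Part~\eqref{enu:AppSetDenseIndicator} your constructive route (approximate identity plus Riemann-sum discretization) genuinely differs from the paper's. The paper reduces to density of $\mathcal{V} = \NNreal^{\varrho,d,1}_{\infty,L} \cap X_p(\R^d)$ in $X_p(\R^d)$ and then splits by $p$: for $p \in [1,\infty)$ and $p \in (0,1)$ it invokes two density theorems of Laugesen stating that the span of dyadic dilates and translates of $g$ is dense in $L_p$, and for $p=\infty$ it runs a Hahn--Banach argument (a nonzero functional vanishing on $\overline{\mathcal{V}}$ is represented by a finite measure, which is then shown to annihilate all of $C_0$ via $\varphi(f)=\lim_a \varphi(f\ast g_a)=0$). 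Your approach is more elementary and self-contained for $p\in[1,\infty]$, where Minkowski's integral inequality makes both $f_1\ast g_h \to f_1$ and the Riemann-sum convergence in $L_p$ routine; the paper's route is shorter but outsources the work.

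The gap in your proposal is the case $p\in(0,1)$. The issue is not merely the quasi-triangle constant: $L_p$ for $p<1$ is not locally convex, so a convolution $\int_K f_1(y)\,g_h(\cdot-y)\,dy$---a ``continuous convex combination'' of the $L_p$-elements $g_h(\cdot-y)$---need not lie in (or even near) the closed linear span of those elements in the $L_p$ quasi-norm, and Minkowski's integral inequality is unavailable to control either $\|f_1\ast g_h - f_1\|_p$ or $\|R - f_1\ast g_h\|_p$. Concretely, bounding the Riemann-sum error termwise via $\|\sum_i a_i\|_p^p\le\sum_i\|a_i\|_p^p$ introduces a factor $M^{1-p}$ (with $M$ the number of cells) that need not be offset by the smallness of each term; and the tail estimate for $f_1\ast g_h$ outside $\operatorname{supp} f_1$ relies on integrability of $\mu(|\cdot|)^p$, which is not part of the hypothesis. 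This is precisely why the paper delegates the $p<1$ case to \cite{LaugesenAffineSynthesisQuasiBanach}, whose proof is substantially different from the classical approximate-identity argument.
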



\begin{rem*}
  Claim~(\ref{enu:AppSetDenseIndicator}) applies to any admissible domain, bounded or not.
  Furthermore, it should be noted that the first assumption (the existence of $\mu$)
  is always satisfied if $g$ is bounded and has compact support.
\end{rem*}


\begin{cor}\label{cor:DensityBounded}
  Property~\ref{enu:GammaDense} holds for any \emph{bounded} admissible domain $\Omega \subset \R^d$
  and $p \in (0,\infty]$ as soon as $\sup_{n} \mathscr{L}(n) \geq 2$ and $\varrho$ is continuous
  and not a polynomial.
\end{cor}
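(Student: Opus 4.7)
The plan is to obtain this as an immediate specialization of Theorem~\ref{th:ApproximationSpacesWellDefinedDensity}-(\ref{enu:AppSetInLp}). First I would verify that the hypothesis ``$\varrho$ continuous and not a polynomial'' implies the hypotheses on $\varrho$ appearing in that theorem. Continuity of $\varrho$ on $\R$ yields Borel measurability and local boundedness (since $\varrho$ is bounded on each compact $[-R,R]$). Furthermore, by the remark immediately following Definition~\ref{defn:NonDegenerateRho}, a continuous $\varrho$ is non-degenerate if and only if it is not a polynomial, so $\varrho$ is non-degenerate.

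Next, the assumption $L := \sup_{n \in \N} \mathscr{L}(n) \geq 2$ together with boundedness of $\Omega$ places us exactly in the setting of Theorem~\ref{th:ApproximationSpacesWellDefinedDensity}-(\ref{enu:AppSetInLp}). I would then split on $p$: for $p \in (0,\infty)$ apply part~(\ref{enu:AppSetInDense1}) of that theorem, and for $p = \infty$ apply part~(\ref{enu:AppSetInDense2}) (where continuity of $\varrho$ is explicitly required). In both cases the conclusion is that $\Sigma_\infty (\StandardXSpace(\Omega), \varrho, \mathscr{L})$ is dense in $\StandardXSpace(\Omega)$.

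Finally, it remains to recognize that this density statement is precisely Property~\ref{enu:GammaDense} for the family $(\Sigma_n)_{n \in \N_0}$ defined in~\eqref{eq:WSigmaDefinition}--\eqref{eq:NSigmaDefinition}: indeed, by construction $\bigcup_{n \in \N_0} \Sigma_n = \Sigma_\infty (\StandardXSpace(\Omega), \varrho, \mathscr{L})$ for both the weight-count and neuron-count families, since both exhaust realizations of $\varrho$-networks with at most $L = \sup_n \mathscr{L}(n)$ layers intersected with $X$. No genuine obstacle arises here; the statement is a direct packaging of the density theorem, and the only minor care needed is to correctly quote the case distinction $p < \infty$ versus $p = \infty$ in Theorem~\ref{th:ApproximationSpacesWellDefinedDensity}.
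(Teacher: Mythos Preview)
Your proposal is correct and matches the paper's intent exactly: the corollary is stated without proof in the paper precisely because it is the immediate specialization of Theorem~\ref{th:ApproximationSpacesWellDefinedDensity}-(\ref{enu:AppSetInLp}) that you describe, using the remark after Definition~\ref{defn:NonDegenerateRho} to identify ``continuous and not a polynomial'' with ``continuous and non-degenerate.''
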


\begin{cor}\label{cor:DensityBoundedOrNot}
  Property~\ref{enu:GammaDense} holds for any \emph{(even unbounded)} admissible domain
  $\Omega \subseteq \R^d$ and $p \in (0,\infty]$ as soon as $L := \sup_{n} \mathscr{L}(n) \geq 2$
  and as long as $\varrho$ is continuous and such that $\NNreal_{\infty, L,\infty}^{\varrho,d,1}$
  contains a \emph{compactly supported, bounded, non-negative} function $g \neq 0$.
\end{cor}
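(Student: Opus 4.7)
The plan is to derive this corollary directly from Theorem~\ref{th:ApproximationSpacesWellDefinedDensity}(\ref{enu:AppSetDenseIndicator}). The only work to do is to verify that the hypothesized compactly supported, bounded, non-negative $g \neq 0$ satisfies the two hypotheses of that theorem, namely the existence of a radially non-increasing integrable majorant, and the non-vanishing integral condition. First I would check the ambient space: since $\varrho$ is continuous, $g = \Realization(\Phi)$ for some $\Phi \in \NNsymbol^{\varrho,d,1}_{\infty,L,\infty}$ is continuous, and being both bounded and compactly supported, it belongs to $L_p(\R^d)$ for all $p \in (0,\infty)$ and is uniformly continuous with $g(x) \to 0$ as $|x| \to \infty$, so $g \in X_p^1(\R^d)$ for every $p \in (0,\infty]$. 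Consequently $g \in \NNreal_{\infty,L,\infty}^{\varrho,d,1} \cap X_p^1(\R^d)$ itself and is trivially in the $L_p$-closure of this set.

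Next I would produce the envelope $\mu$. Let $M := \|g\|_{L_\infty}$ and choose $R > 0$ such that $\supp g \subseteq \overline{B_R(0)}$. Then the function $\mu : [0,\infty) \to [0,\infty)$ defined by $\mu(t) := M$ for $t \leq R$ and $\mu(t) := 0$ for $t > R$ is non-increasing, dominates $|g(x)|$ pointwise, and satisfies
\[
\int_{\R^d} \mu(|x|)\, dx = M \cdot \mathrm{vol}(B_R(0)) < \infty,
\]
as required. For the non-vanishing integral, continuity of $g$ together with $g \geq 0$ and $g \not\equiv 0$ yields some $x_0$ with $g(x_0) > 0$ and, by continuity, an open neighbourhood $U$ of $x_0$ on which $g \geq g(x_0)/2$, giving $\int_{\R^d} g(x)\, dx \geq (g(x_0)/2) \cdot \lambda(U) > 0$.

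With both hypotheses verified, Theorem~\ref{th:ApproximationSpacesWellDefinedDensity}(\ref{enu:AppSetDenseIndicator}) asserts that $\Sigma_\infty(X_p^k(\Omega),\varrho,\mathscr{L})$ is dense in $X_p^k(\Omega)$ for every admissible $\Omega \subseteq \R^d$ and every $k \in \N$, which is precisely Property~\ref{enu:GammaDense} in this setting. I do not foresee a genuine obstacle here; the corollary is essentially a repackaging of the theorem, and the only possible subtlety is to ensure that ``$g \neq 0$'' together with non-negativity and continuity rules out the pathological case $\int g = 0$, which is handled by the continuity argument above.
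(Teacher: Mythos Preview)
Your proposal is correct and follows precisely the route the paper intends: the corollary is stated without an explicit proof, but the remark immediately after Theorem~\ref{th:ApproximationSpacesWellDefinedDensity} notes that the existence of the majorant $\mu$ ``is always satisfied if $g$ is bounded and has compact support,'' which together with the obvious positivity of $\int g$ (from continuity, non-negativity, and $g\not\equiv 0$) is exactly the verification you carry out. There is nothing to add.
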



In Section~\ref{sec:AppSpacesReLU}, we show that the assumptions of Corollary~\ref{cor:DensityBoundedOrNot}
indeed hold when $\varrho$ is the ReLU or one of its powers, provided $L \geq 3$
(or $L \geq 2$ in input dimension $d=1$).
This is a consequence of the following lemma, whose proof we defer to
Appendix~\ref{app:ApproximationOfIndicatorCube}.

\begin{lem}\label{lem:ApproximationOfIndicatorCube}
  Consider $\varrho:\R \to \R$ and $W,N,L \in \N$.
  Assume there is $\sigma \in \NNreal^{\varrho,1,1}_{W,L,N}$ such that
  \begin{equation}
    \sigma(x)
    = \begin{cases}
        0, & \text{if } x \leq 0 \\
        1, & \text{if } x \geq 1
      \end{cases}
    \qquad \text{and} \qquad
    0 \leq \sigma(x) \leq 1 \quad \forall \, x \in \R \, .
    \label{eq:ExactSquashing}
  \end{equation}
  Then the following hold:
  \begin{enumerate}[leftmargin=0.6cm]
    \item For $d \in \N$ and $0<\varepsilon<\tfrac{1}{2}$ there is
          $h \in \NNreal^{\varrho,d,1}_{2dW(N+1), 2L - 1,(2d+1)N}$
          with $0 \leq h \leq 1$,  $\supp(h) \subset [0,1]^{d}$, and
          \begin{equation}\label{eq:PointwiseIndicatorApproximation}
            |h(x) - \Indicator_{[0,1]^d} (x)|
            \leq \Indicator_{[0,1]^d \setminus [\varepsilon, 1-\varepsilon]^d} (x)
            \quad \forall \, \, x \in \R^d \, .
          \end{equation}
          For input dimension $d=1$, this holds for some  $h \in \NNreal_{2W,L,2N}^{\varrho,1,1}$.

    \item There is $L' \leq 2L-1$ (resp. $L' \leq L$ for input dimension $d=1$)
          such that for each hyper-rectangle $[a,b] := \prod_{i=1}^d [a_i, b_i]$
          with $d \in \N$ and $-\infty < a_{i} < b_{i} < \infty$,
          each $p \in (0,\infty)$, and each $\varepsilon > 0$, there is a compactly supported,
          nonnegative function $0 \leq g \leq 1$ such that $\supp(g) \subset [a,b]$,
          \[
            \| g - \Indicator_{[a,b]} \|_{L_p (\R^d)} < \varepsilon,
          \]
          and $g = \Realization(\Phi)$ for some
          $\Phi \in \NNsymbol^{\varrho,d,1}_{2dW(N+1), L',(2d+1)N}$
          with $L(\Phi) = L'$.
          For input dimension $d=1$, this holds for some
          $\Phi \in \NNsymbol_{2W,L',2N}^{\varrho,1,1}$ with $L(\Phi) = L'$.
          \qedhere
\end{enumerate}
\end{lem}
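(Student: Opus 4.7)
The plan is a ``bump + product via threshold'' construction, built in two stages (scalar bump, then tensor combination) and finally transported to $[a,b]$ by a diagonal affine rescaling.

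For the scalar case I would set
\[
  h_1(x) := \sigma(x/\varepsilon) - \sigma\bigl((x-1)/\varepsilon + 1\bigr).
\]
A case analysis on the five intervals $x \leq 0$, $0 < x < \varepsilon$, $\varepsilon \leq x \leq 1-\varepsilon$, $1-\varepsilon < x < 1$, $x \geq 1$, using \eqref{eq:ExactSquashing} and $\varepsilon < 1/2$, shows $0 \leq h_1 \leq 1$, $\supp(h_1) \subset [0,1]$, and $h_1 \equiv 1$ on $[\varepsilon, 1-\varepsilon]$. Each summand is $\sigma$ pre-composed with a scalar affine map, so Lemma~\ref{lem:NetworkCalculus}-(\ref{enu:PrePostAffine}) gives each of them complexity $(W,L,N)$; their difference then lies in $\NNreal^{\varrho,1,1}_{2W,L,2N}$ by Lemma~\ref{lem:SummationLemma}-(\ref{enu:LinComb}) (same depths, $c=\min\{1,1\}=1$, so $\delta=0$). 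This already settles both claims for $d=1$ with $L' = L$.

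For $d \geq 2$ I would take
\[
  h(x) := \sigma\!\left(\sum_{i=1}^{d} h_1(x_i) - (d-1)\right).
\]
A short case split yields the required properties: on $[\varepsilon, 1-\varepsilon]^d$ every $h_1(x_i) = 1$, hence $h(x) = \sigma(1) = 1$; if $x \notin [0,1]^d$ some $h_1(x_i) = 0$ and the sum is $\leq d-1$, so the argument of $\sigma$ is $\leq 0$ and $h(x) = 0$; and $0 \leq h \leq 1$ everywhere, which is enough for \eqref{eq:PointwiseIndicatorApproximation} and $\supp(h) \subset [0,1]^d$. To encode $h$ as a $\varrho$-network I would then (a) precompose $h_1$ with each coordinate projection $P_i : \R^d \to \R$ (with $\|P_i\|_{\ell^{0,\infty}_*} = 1$) to obtain $g_i \in \NNreal^{\varrho,d,1}_{2W,L,2N}$ via Lemma~\ref{lem:NetworkCalculus}-(\ref{enu:PrePostAffine}); (b) form the Cartesian product $(g_1,\dots,g_d) \in \NNreal^{\varrho,d,d}_{2dW,L,2dN}$ via Lemma~\ref{lem:SummationLemma}-(\ref{enu:Cartesian}), where all depths agree so $\delta = 0$; (c) post-compose with the affine map $Q : \R^d \to \R$, $y \mapsto \sum_i y_i - (d-1)$, which satisfies $\|Q\|_{\ell^{0,\infty}} = 1$, giving by Lemma~\ref{lem:NetworkCalculus}-(\ref{enu:PrePostAffine}) a network $\Phi_1 \in \NNreal^{\varrho,d,1}_{2dW,L,2dN}$ whose realization is $\sum_i h_1(x_i) - (d-1)$; and (d) compose with $\sigma \in \NNreal^{\varrho,1,1}_{W,L,N}$ via Lemma~\ref{lem:NetworkCalculus}-(\ref{enu:ComposLessDepth}), producing a network of depth $L(\Phi) = 2L-1$ and $N(\Phi) = 2dN + N = (2d+1)N$; since $\sigma$ is non-constant we have $N \geq 1$, so
\[
  W(\Phi) \leq 2dW + \max\{2dN,\, d\}\cdot W \leq 2dW + 2dNW = 2dW(N+1).
\]

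For part~(2), given $[a,b] = \prod_{i=1}^d [a_i,b_i]$ I would rescale by the diagonal affine bijection $\Psi : \R^d \to \R^d$, $\Psi(x)_i := (x_i - a_i)/(b_i - a_i)$, which maps $[a,b]$ onto $[0,1]^d$ and satisfies $\|\Psi\|_{\ell^{0,\infty}_*} = 1$. Setting $g := h \circ \Psi$ (or $g := h_1 \circ \Psi$ when $d=1$) and invoking Lemma~\ref{lem:NetworkCalculus}-(\ref{enu:PrePostAffine}) preserves all the complexity bounds and the exact depth, while $0 \leq g \leq 1$ and $\supp(g) \subset [a,b]$ are immediate from the corresponding properties of $h$; the pointwise estimate \eqref{eq:PointwiseIndicatorApproximation} transported through $\Psi$ yields
\[
  \|g - \Indicator_{[a,b]}\|_{L_p(\R^d)}^p \leq |[a,b]|\cdot\bigl(1 - (1-2\varepsilon)^d\bigr),
\]
which can be made arbitrarily small by choosing the internal $\varepsilon$ of the bump small enough (independently of $L'$). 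The one subtle point is the exact weight bound $2dW(N+1)$: the naive construction, namely building a single network for $y \mapsto \sigma(\sum_i y_i - (d-1))$ by pre-composing $\sigma$ with the summation (which costs a factor $d$ in $\|\cdot\|_{\ell^{0,\infty}_*}$) and then composing with $(g_1,\dots,g_d)$, yields the weaker $2dW(1+dN)$; the right move is to push the summation onto the \emph{output} side of the Cartesian product where it is free, collapsing the intermediate dimension to $1$ before invoking Lemma~\ref{lem:NetworkCalculus}-(\ref{enu:ComposLessDepth}).
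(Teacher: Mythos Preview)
Your proposal is correct and follows essentially the same construction as the paper: a scalar bump $h_1(x)=\sigma(x/\varepsilon)-\sigma(1+(x-1)/\varepsilon)$, then $h(x)=\sigma\bigl(\sum_i h_1(x_i)-(d-1)\bigr)$, with the complexity bounds obtained via the summation/composition lemmas and Part~(2) by a diagonal affine rescaling. The only cosmetic differences are that the paper proves $0\leq h_1\leq 1$ via the observation $\sigma(x-r)\leq\sigma(x)$ for $r\geq 1$ (rather than your five-interval case split), and it sums the $2d$ shifted copies of $\sigma$ directly rather than first forming $h_1$ and then taking a Cartesian product; also note that the exact depth is $2L_0-1$ where $L_0\leq L$ is the actual depth of a network realizing $\sigma$, so strictly $L'=2L_0-1$ rather than $2L-1$ (and $L'=L_0$ for $d=1$), which is still within the claimed bounds.
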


With the elements established so far, we immediately get the following theorem.
\begin{thm}\label{th:DNNApproxSpaceWellDefined}
  Consider $\varrho: \R \to \R$ an activation function, $\mathscr{L}$ a depth growth function,
  $d \in \N$, $p \in (0,\infty]$ and $\Omega \subseteq \R^d$ an admissible domain.
  Set $L := \sup_{n \in \N} \mathscr{L}(n) \in \N \cup \{+\infty\}$.
  Assume that at least one of the following properties holds:
  \begin{enumerate}[leftmargin=0.6cm]
    \item $\varrho$ is continuous and not a polynomial, $L \geq 2$, \emph{and $\Omega$ is bounded};

    \item $\NNreal^{\varrho,d,1}_{\infty,L,\infty} \cap \StandardXSpace[] (\R^d)$
          contains some compactly supported, bounded, non-negative $g \neq 0$.
  \end{enumerate}
  Then for every $k \in \N$, $\alpha  > 0$, $q \in (0,\infty]$,
  and with $X = \StandardXSpace (\Omega)$ as in Equation~\eqref{eq:StandardXSpace}, we have:
  \begin{itemize}[leftmargin=0.6cm]
    \item Properties~\ref{enu:GammaContainsZero}--\ref{enu:GammaDense} are satisfied
          for $\Sigma_{n} = \StandardSigmaW$ (resp.~for $\Sigma_{n} = \StandardSigmaN$);

    \item $\big( \WASpace[X], \|\cdot\|_{\WASpace[X]} \big)$
          and $\big( \NASpace[X], \|\cdot\|_{\NASpace[X]} \big)$ are (quasi)-Banach spaces.
          \qedhere
  \end{itemize}
\end{thm}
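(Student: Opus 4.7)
The plan is to assemble the theorem as a direct consequence of results already proved in this section. First, I would invoke Lemma~\ref{lem:ApproximationSpacesWellDefined}, which gives Properties~\ref{enu:GammaContainsZero}--\ref{enu:GammaAdditive} (with constant $c = 2 + \min\{d, k\}$ in~\ref{enu:GammaAdditive}) for both families $\Sigma_n = \StandardSigmaW$ and $\Sigma_n = \StandardSigmaN$. Crucially, Lemma~\ref{lem:ApproximationSpacesWellDefined} imposes no restriction on $\varrho$, $\mathscr{L}$, $\Omega$, or $p$, so the first four axioms hold automatically under either of the two hypotheses of the theorem.

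The only nontrivial check is Property~\ref{enu:GammaDense} (density of $\bigcup_{n \in \N_0} \Sigma_n$ in $X = \StandardXSpace(\Omega)$), which I would handle by splitting along the two assumptions. Under the first one (continuous non-polynomial $\varrho$, bounded $\Omega$, and $L \geq 2$), Corollary~\ref{cor:DensityBounded} applies verbatim to both families. Under the second one, I would invoke Theorem~\ref{th:ApproximationSpacesWellDefinedDensity}(\ref{enu:AppSetDenseIndicator}). The given function $g \in \NNreal^{\varrho,d,1}_{\infty,L,\infty} \cap \StandardXSpace[](\R^d)$ lies trivially in the $L_p$-closure of that intersection, so only the two technical hypotheses of the theorem remain to be verified: the pointwise majorant is supplied by $\mu(r) := \|g\|_{\infty} \cdot \Indicator_{[0, R]}(r)$ for any $R$ chosen so that $\supp g \subset \{x \in \R^d : |x| \leq R\}$ (this $\mu$ is non-increasing and has finite integral over $\R^d$ since its support is bounded), while $\int_{\R^d} g(x) \, dx > 0$ is immediate from $g \geq 0$ and $g \not\equiv 0$. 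In either case, density follows.

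Once Properties~\ref{enu:GammaContainsZero}--\ref{enu:GammaDense} are in place, Proposition~\ref{prop:ApproxSpaceWellDefined} directly yields that $\WASpace[X]$ and $\NASpace[X]$, equipped with $\|\cdot\|_{\WASpace[X]}$ and $\|\cdot\|_{\NASpace[X]}$, are (quasi)-Banach spaces satisfying the embeddings in~\eqref{eq:ApproximationSpaceEmbedding} together with $\WASpace[X] \hookrightarrow X$ and $\NASpace[X] \hookrightarrow X$. There is no serious obstacle in the proof: it is essentially a packaging step combining the calculus of networks from Section~\ref{sec:NetworkCalculus} (via Lemma~\ref{lem:ApproximationSpacesWellDefined}), the universal-approximation-flavored density result of Theorem~\ref{th:ApproximationSpacesWellDefinedDensity}, and the purely abstract Proposition~\ref{prop:ApproxSpaceWellDefined}. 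The only step requiring any care is the routine verification of the integrability/non-vanishing hypotheses on $g$ in the second case.
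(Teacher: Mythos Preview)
Your proposal is correct and matches the paper's approach: the theorem is stated immediately after the density results with the comment ``With the elements established so far, we immediately get the following theorem,'' and no explicit proof is given. Your write-up simply spells out the intended assembly of Lemma~\ref{lem:ApproximationSpacesWellDefined} (for \ref{enu:GammaContainsZero}--\ref{enu:GammaAdditive}), Corollary~\ref{cor:DensityBounded} or Theorem~\ref{th:ApproximationSpacesWellDefinedDensity}(\ref{enu:AppSetDenseIndicator}) (for \ref{enu:GammaDense}), and Proposition~\ref{prop:ApproxSpaceWellDefined}, including the routine verification of the majorant and nonzero-integral hypotheses for $g$ in the second case.
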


In particular, if $\varrho$ is continuous and satisfies the assumptions
of Lemma~\ref{lem:ApproximationOfIndicatorCube} for some $L \in \N$
and if $\sup_{n \in \N} \mathscr{L}(n) \geq 2 L - 1$
(or $\sup_{n \in \N} \mathscr{L}(n) \geq L$ in case of $d = 1$),
then the conclusions of Theorem~\ref{th:DNNApproxSpaceWellDefined} hold
on \emph{any} admissible domain.




\subsection{Discussion and perspectives}


One could envision defining approximation classes where the sets $\AppSet_{n}$
incorporate additional constraints besides $L \leq \mathscr{L}(n)$.
For the theory to hold, one must however ensure either that:
a) the additional constraints are weak enough to ensure the approximation errors
   (and therefore the approximation spaces) are unchanged---cf.~the discussion of strict
   {\em vs} generalized networks; or, more interestingly, that
b) the constraint gets sufficiently relaxed when $n$ grows,
   to ensure compatibility with the additivity property.

As an example, constraints of potential interest include a lower (resp. upper)
bound on the minimum width $\min_{1 \leq \ell \leq L-1} N_{\ell}$
(resp.~maximum width $\max_{1 \leq \ell \leq L-1} N_{\ell}$),
since they impact the memory needed to compute ``in place'' the output of the network.

While network families with a fixed lower bound on their minimum width do satisfy the additivity
Property~\ref{enu:GammaAdditive}, this is no longer the case of families with a \emph{fixed} upper bound
on their \emph{maximum} width.
Consider now a complexity-dependent upper bound $f(n)$ for the maximum width.
Since ``adding'' two networks of a given width yields one with width at most doubled,
the additivity property will be preserved provided that $2f(n) \leq f(cn)$ for some $c \in \N$
and all $n \in \N$.
This can, e.g., be achieved with $f(n) := \lfloor\alpha n\rfloor$,
with the side effect that for $n<1/\alpha$ the set $\AppSet_{n}$
only contains affine functions.

\section{Approximation spaces of the ReLU and its powers}
\label{sec:AppSpacesReLU}

The choice of activation function
has a decisive influence on the approximation spaces $\WASpace[X]$ and $\NASpace[X]$.
As evidence of this, consider the following result.

\begin{thm}[{\cite[Theorem 4]{PinkusLowerBoundsForMLPApproximation}}]
  \label{th:PinkusLowerBoundsForMLPApproximation}

  There exists an \emph{analytic} squashing function%
  \footnote{A function $\sigma: \R \to \R$ is a {\em squashing function}
            if it is nondecreasing with $\lim_{x \to -\infty}\sigma(x) \to 0$
            and $\lim_{x \to \infty} \sigma (x) \to 1$;
            see \cite[Definition 2.3]{Hornik1989universalApprox}.}
  $\varrho: \R \to \R$ such that: for any $d \in \N$, any continuous function from $\Omega = [0,1]^{d}$
  to $\R$ can be approximated arbitrarily well in the uniform norm by a strict
  $\varrho$-network with $L=3$ layers and $W \leq 21 d^2 + 15d + 3$ connections.
\end{thm}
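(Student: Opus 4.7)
The plan is to combine a superposition-style representation theorem for multivariate continuous functions with a carefully engineered pathological activation function that enjoys a universal univariate approximation property by a \emph{bounded} number of affine translates. These two ingredients feed directly into the standard three-layer architecture.

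First, I would invoke a Kolmogorov/Lorentz-type representation (essentially the one used by Maiorov and Pinkus) to write any continuous $f : [0,1]^d \to \R$ in the form
\[
   f(x_1, \ldots, x_d)
   = \sum_{q=0}^{2d}
     g_q \!\Big( \sum_{p=1}^d \lambda_{p} \, \psi(x_p + \eta q) + q \Big) ,
\]
where $\psi$ and $g_0,\ldots,g_{2d}$ are continuous univariate functions, $\lambda_p, \eta \in \R$ are fixed, and, crucially, only the outer functions $g_q$ depend on $f$. This reduces the task to approximating $\psi$ and the outer $g_q$'s by univariate linear combinations of translates of $\varrho$.

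Second, I would construct an analytic squashing function $\varrho$ enjoying the following key universality property: there exists a fixed integer $m$ such that for every continuous $h$ on a compact interval and every $\varepsilon > 0$ one can find coefficients $\{(a_i,b_i,c_i)\}_{i=1}^m$ with $\sup_{t} |h(t) - \sum_{i=1}^m c_i \, \varrho(a_i t + b_i)| < \varepsilon$. The template is to enumerate a countable dense family $\{h_k\}_{k \in \N} \subset C([0,1])$ (for instance polynomials with rational coefficients), and to encode each $h_k$ into $\varrho$ on a well-separated interval $I_k$ so that a fixed number of translates/dilations of $\varrho|_{I_k}$ reproduces $h_k$ exactly; the squashing behavior at $\pm\infty$ and analyticity are then achieved by gluing the encoded pieces with analytic bumps of exponentially decaying magnitude and by appealing to an analytic interpolation / Carleman-type density theorem to smooth the construction into a single analytic monotone function with the prescribed limits.

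Third, I would assemble the strict three-layer network: the first hidden layer uses about $3d$ neurons to implement affine-plus-$\varrho$ maps producing approximations of each $\lambda_p \, \psi(x_p + \eta q)$; the second hidden layer uses about $6d+3$ neurons to apply the universal univariate property to the $2d+1$ outer functions $g_q$ acting on the linear combinations delivered by the previous layer; the output layer sums these $2d+1$ contributions. Counting nonzero entries in the affine parts yields $d \cdot 3d = 3d^2$ weights from input to the first hidden layer, $3d \cdot (6d+3) = 18 d^2 + 9 d$ from the first to the second hidden layer, and $6d + 3$ from the second hidden layer to the scalar output, giving the announced total $W \leq 21 d^2 + 15 d + 3$.

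The main obstacle is unambiguously the construction of $\varrho$ in the second step: one must simultaneously enforce analyticity, the squashing limits $\varrho(-\infty)=0$, $\varrho(+\infty)=1$ (and typically monotonicity), \emph{and} the universality-with-a-fixed-number-of-translates property. Encoding an entire dense sequence of target functions into a single analytic function without destroying its global monotonicity and boundary behavior is the delicate step; this is where a Borel/Carleman-type analytic interpolation with carefully chosen exponentially small tails is required, and this step is what makes the result genuinely \emph{pathological} rather than constructive in any practically useful sense.
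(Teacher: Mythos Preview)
The paper does not prove this theorem at all; it is quoted as a black box from the cited reference of Maiorov and Pinkus and used only to motivate why pathological activations are uninteresting. Your sketch is an accurate outline of the original Maiorov--Pinkus argument: a Kolmogorov--Lorentz superposition reduces the problem to univariate approximation, a specially engineered analytic sigmoid enjoys a universality property with a \emph{bounded} number of affine translates, and plugging this into the two-hidden-layer architecture with $3d$ and $6d+3$ neurons gives the stated connection bound $3d^2 + 18d^2 + 9d + 6d + 3 = 21d^2 + 15d + 3$. The obstacle you flag---building a single analytic, monotone function with the right limits that simultaneously encodes a dense family of univariate targets---is exactly where the substance of the original proof lies.
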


Consider the pathological activation function $\varrho$ from
Theorem~\ref{th:PinkusLowerBoundsForMLPApproximation} and a depth growth function $\mathscr{L}$
satisfying $L := \sup_{n} \mathscr{L}(n) \geq 2$.
Since $\varrho$ is continuous and not a polynomial,
we can apply Theorem~\ref{th:DNNApproxSpaceWellDefined};
hence $\WASpace[X]$ and $\NASpace[X]$ are well defined quasi-Banach spaces
for each bounded admissible domain $\Omega$, $p \in (0,\infty]$ and $X = \StandardXSpace(\Omega)$.
Yet, if $L \geq 3$ there is $n_{0}$ so that $\mathscr{L}(n) \geq 3$ for $n \geq n_{0}$,
and  the set $\StandardSigma$ is dense in $X$ for any $p \in (0,\infty]$
provided that $n \geq \max \{ n_{0}, 21 d^2 + 15 d + 3 \}$; hence $\AppErr (f,\StandardSigma)_X = 0$
for any $f \in X$ and any such $n$, showing that $\WASpace[X] = \NASpace[X] = X$
with equivalent (quasi)-norms.

The approximation spaces generated by pathological activation functions such as in
Theorem~\ref{th:PinkusLowerBoundsForMLPApproximation} are so degenerate that they are uninteresting
both from a practical perspective
(computing a near best approximation with such an activation function is hopeless)
and from a theoretical perspective (the whole scale of approximation spaces collapses
to $\StandardXSpace (\Omega)$).

Much more interesting is the study of approximation spaces generated by commonly used
activation functions such as the ReLU $\varrho_{1}$ or its powers $\varrho_{r}$, $r \in \N$.
For any admissible domain, generalized and strict $\varrho_{r}$-networks
indeed yield well-defined approximations spaces that coincide.

\begin{thm}[Approximation spaces of generalized and strict $\varrho_{r}$-networks]
  \label{th:ReLUDNNApproxSpaceWellDefined}

  Let $r \in \N$ and define $\varrho_r : \R \to \R, x \mapsto (x_+)^r$, where $x_+ := \max \{0, x\}$.
  Consider $X := \StandardXSpace(\Omega)$ with $p \in (0,\infty]$, $d,k \in \N$
  and $\Omega \subseteq \R^{d}$ an arbitrary admissible domain.
  Let $\mathscr{L}$ be any depth growth function.
  \begin{enumerate}[leftmargin=0.6cm]
    \item  For each $\alpha  > 0,q \in (0,\infty]$, $r \in \N$ we have
           \[
             \SWASpace[X][\varrho_r] = \WASpace[X][\varrho_r]
             \quad \text{and} \quad
             \SNASpace[X][\varrho_r] = \NASpace[X][\varrho_r]
           \]
           and there is $C < \infty$ such that
           \begin{alignat*}{3}
             \|\cdot\|_{\WASpace[X][\varrho_r]}
             &\leq \|\cdot\|_{\SWASpace[X][\varrho_r]}
             &&\leq C \|\cdot\|_{\WASpace[X][\varrho_r]}\, ,\\
             \|\cdot\|_{\NASpace[X][\varrho_r]}
             &\leq \|\cdot\|_{\SNASpace[X][\varrho_r]}
             &&\leq C \|\cdot\|_{\NASpace[X][\varrho_r]} \, .
           \end{alignat*}

    \item If the depth growth function $\mathscr{L}$ satisfies
          \[
            \sup_{n \in \N} \mathscr{L}(n)
            \geq \begin{cases}
                   2, & \text{if}\ \Omega\ \text{is bounded \emph{or}}\ d=1 \\
                   3, & \text{otherwise}
                 \end{cases}
          \]
          then, for each $\alpha  > 0$, $q \in (0,\infty]$, $r \in \N$ and $\varrho := \varrho_{r}$,
          the following hold:
          \begin{itemize}[leftmargin=0.6cm]
            \item Properties~\ref{enu:GammaContainsZero}--\ref{enu:GammaDense} are satisfied
                  for $\Sigma_{n} = \StandardSigmaW$ (resp.~for $\Sigma_{n} = \StandardSigmaN$);

            \item $\big( \WASpace[X], \|\cdot\|_{\WASpace[X]} \big)$
                  and $\big( \NASpace[X], \|\cdot\|_{\NASpace[X]} \big)$ are (quasi)-Banach spaces.
                  \qedhere
          \end{itemize}
  \end{enumerate}
%
%
%
\end{thm}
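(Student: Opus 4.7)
The plan is to reduce the theorem to Theorem~\ref{th:DNNApproxSpaceWellDefinedStrict} for Part~(1) and to Theorem~\ref{th:DNNApproxSpaceWellDefined} for Part~(2), with the only substantive work being the verification of the hypotheses of the latter for $\varrho = \varrho_r$ on unbounded domains.

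For Part~(1), Lemma~\ref{lem:ReLUPowerRepresentsIdentity} shows that $\varrho_r$ can represent the identity with $m := 2r + 2$ terms. This is exactly the second sufficient condition in Theorem~\ref{th:DNNApproxSpaceWellDefinedStrict}, which therefore yields both $\WASpace[X][\varrho_r] = \SWASpace[X][\varrho_r]$ and $\NASpace[X][\varrho_r] = \SNASpace[X][\varrho_r]$, together with the equivalence of the corresponding quasi-norms, on \emph{any} admissible $\Omega$ (bounded or not) and for any $p \in (0,\infty]$.

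For Part~(2), I would distinguish two cases matching the two hypotheses of Theorem~\ref{th:DNNApproxSpaceWellDefined}. If $\Omega$ is bounded, then since $\varrho_r$ is continuous and is not a polynomial (it vanishes on $(-\infty,0]$ but agrees with $x\mapsto x^r$ on $(0,\infty)$), the first hypothesis of Theorem~\ref{th:DNNApproxSpaceWellDefined} is met whenever $\sup_n \mathscr{L}(n) \geq 2$, as assumed. If $\Omega$ is unbounded, I would verify the second hypothesis by exhibiting a compactly supported, bounded, non-negative $g \not\equiv 0$ belonging to $\NNreal_{\infty, L, \infty}^{\varrho_r, d, 1} \cap \StandardXSpace[](\R^d)$ where $L := \sup_n \mathscr{L}(n)$. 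Lemma~\ref{lem:ApproximationOfIndicatorCube}(2) produces precisely such a $g$---an $L^p$-approximant of $\Indicator_{[0,1]^d}$ supported inside $[0,1]^d$---using at most $2 L_\sigma - 1$ layers when $d \geq 2$ and at most $L_\sigma$ layers when $d = 1$, provided one exhibits a univariate squashing function $\sigma \in \NNreal^{\varrho_r, 1, 1}_{W, L_\sigma, N}$ satisfying \eqref{eq:ExactSquashing}. The assumed bound $\sup_n \mathscr{L}(n) \geq 3$ (resp.~$\geq 2$ if $d = 1$) matches this requirement exactly when $L_\sigma = 2$.

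The principal obstacle is therefore the construction of $\sigma$ as a \emph{two-layer} $\varrho_r$-network fulfilling \eqref{eq:ExactSquashing}. For $r = 1$ one may simply take $\sigma(x) = \varrho_1(x) - \varrho_1(x - 1)$. For $r \geq 2$ I would set
\[
  \sigma(x) = \sum_{k=0}^{r} c_k \, \varrho_r(x - a_k),
  \qquad 0 = a_0 < a_1 < \dots < a_r = 1,
\]
and determine the $c_k$ from the $r + 1$ linear constraints that turn the polynomial $\sum_k c_k (x - a_k)^r$ (to which $\sigma$ reduces for $x \geq 1$) into the constant $1$; the associated linear system has a non-singular Vandermonde-type matrix in the distinct nodes $-a_0, \dots, -a_r$, hence is uniquely solvable. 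The vanishing of $\sigma$ on $(-\infty, 0]$ is automatic from $a_k \geq 0$. What remains is the pointwise inequality $0 \leq \sigma \leq 1$ on $[0,1]$, which I would verify by inspecting the piecewise-polynomial pieces of $\sigma$ on each subinterval $[a_{k-1}, a_k]$ (as in the explicit case $r = 2$, $a_1 = 1/2$, where $\sigma$ equals $2x^2$ on $[0, 1/2]$ and $-2x^2 + 4x - 1$ on $[1/2, 1]$, both in $[0,1]$), or more systematically by appealing to the fact that $\sigma$ is a spline of degree $r$ supported after $0$ whose limit at $+\infty$ is $1$. Once such a $\sigma$ is in hand, Theorem~\ref{th:DNNApproxSpaceWellDefined} delivers both Properties~\ref{enu:GammaContainsZero}--\ref{enu:GammaDense} and the (quasi)-Banach structure of $\WASpace[X][\varrho_r]$ and $\NASpace[X][\varrho_r]$, completing Part~(2).
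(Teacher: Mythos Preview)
Your overall structure matches the paper's proof exactly: Part~(1) follows from Lemma~\ref{lem:ReLUPowerRepresentsIdentity} and Theorem~\ref{th:DNNApproxSpaceWellDefinedStrict}, and Part~(2) from Theorem~\ref{th:DNNApproxSpaceWellDefined}, with the unbounded case handled by producing a two-layer squashing function $\sigma$ and feeding it into Lemma~\ref{lem:ApproximationOfIndicatorCube}.

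The gap is in your construction of $\sigma$. Solving the Vandermonde system does produce a $C^{r-1}$ spline with $\sigma|_{(-\infty,0]}\equiv 0$ and $\sigma|_{[1,\infty)}\equiv 1$, but neither of your proposed arguments establishes $0\le\sigma\le 1$ on $(0,1)$ for general $r$. Inspecting pieces works case by case but is not a proof valid for all $r$; and the ``systematic'' claim that a degree-$r$ spline supported after $0$ with limit $1$ at $+\infty$ stays in $[0,1]$ is false without further structure---such a spline can certainly overshoot. The paper closes this gap via Lemma~\ref{lem:ExactSquashingReLUPower}: it sets $\sigma_r(x):=g_{r-1}(rx)/g_{r-1}(r)$ where $g_{r-1}(x)=\int_0^x\beta_+^{(r-1)}(t)\,dt$ is the primitive of the (non-negative) B-spline of degree $r-1$. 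Monotonicity of $g_{r-1}$ then gives $0\le\sigma_r\le1$ for free, and the B-spline decomposition~\eqref{eq:DecompBSpline} shows $\sigma_r\in\SNNreal^{\varrho_r,1,1}_{2(r+1),2,r+1}$. In fact, with equidistant nodes $a_k=k/r$ your linear system has exactly this $\sigma_r$ as its unique solution, so your ansatz is the right object---the B-spline viewpoint is what makes the pointwise bound transparent.
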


\begin{rem}
For a bounded domain or when $d=1$, the second claim holds for any depth growth function
allowing at least one hidden layer.
In the other cases, the restriction to at least two hidden layers is unavoidable
(except for some exotic unbounded domains with vanishing mass at infinity)
as the only realization of a $\varrho_{r}$-network of depth two that belongs to $\StandardXSpace[](\R^{d})$
is the zero network.
\end{rem}

\begin{proof}[Proof of Theorem~\ref{th:ReLUDNNApproxSpaceWellDefined}]
By Lemma~\ref{lem:ReLUPowerRepresentsIdentity}, $\varrho_{r}$ can represent the identity
using $2r + 2$ terms.
By Theorem~\ref{th:DNNApproxSpaceWellDefinedStrict}, this establishes the first claim.
The second claim follows from Theorem~\ref{th:DNNApproxSpaceWellDefined},
once we show that we can apply the latter.
For bounded $\Omega$, this is clear, since $\varrho_r$ is continuous and not a polynomial,
and hence non-degenerate. For general $\Omega$, we relate $\varrho_{r}$ to B-splines
to establish the following lemma (which we prove below).

\begin{lem}\label{lem:ExactSquashingReLUPower}
  For any $r \in \N$ there is $\sigma_{r} \in \SNNreal^{\varrho_{r},1,1}_{2(r+1),2,r+1}$
  satisfying~\eqref{eq:ExactSquashing}.
\end{lem}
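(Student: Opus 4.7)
The plan is to construct $\sigma_r$ explicitly as an antiderivative of the cardinal B-spline of order $r$, exploiting the well-known fact that such an antiderivative is a linear combination of $r+1$ truncated powers of degree $r$, i.e., of $r+1$ evaluations of $\varrho_r$. Concretely, I will define
\[
  S(x) := \frac{1}{r!} \sum_{k=0}^{r} (-1)^k \binom{r}{k} (x-k)_+^r
       = \frac{1}{r!} \sum_{k=0}^{r} (-1)^k \binom{r}{k} \varrho_r(x-k),
\]
and then set $\sigma_r(x) := S(rx)$.

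Next, I will verify the three required properties of $\sigma_r$. Vanishing on $(-\infty,0]$ is immediate since every truncated power $(rx-k)_+^r$ is zero when $rx \leq 0$ (all $k \geq 0$). For $rx \geq r$, i.e., $x \geq 1$, every truncation is inactive, so $\sigma_r(x) = \frac{1}{r!}\sum_{k=0}^r (-1)^k \binom{r}{k}(rx-k)^r$; this evaluates to $1$ by the classical finite-difference identity $\Delta^r[t^r] \equiv r!$ (after reindexing $j=r-k$). Monotonicity, and therefore the bound $0 \leq \sigma_r \leq 1$, follows from the fact that $S$ is the antiderivative of the nonnegative B-spline $B_r \geq 0$ together with the boundary values $S(-\infty)=0$ and $S(+\infty)=1$. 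Alternatively, one can prove $0 \leq S \leq 1$ directly from the formula by differentiating and recognizing $S' = B_r \geq 0$.

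Finally, I will read off the network structure. Writing
\[
  \sigma_r(x) = \sum_{k=0}^{r} a_k \, \varrho_r(r\, x - k),
  \qquad a_k := \tfrac{(-1)^k}{r!}\binom{r}{k},
\]
gives a strict $\varrho_r$-network with $L=2$ layers and $r+1$ hidden neurons: the affine map $T_1\colon\R\to\R^{r+1}$, $x\mapsto (rx, rx-1,\dots, rx-r)^T$ has $r+1$ nonzero entries in its linear part, and the affine map $T_2\colon\R^{r+1}\to\R$, $h\mapsto \sum_k a_k h_k$ has at most $r+1$ nonzero entries (and since $\binom{r}{k}\neq 0$ for $0\leq k\leq r$, exactly $r+1$). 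Hence $W(\Phi) = 2(r+1)$, $N(\Phi) = r+1$, $L(\Phi) = 2$, and the network is strict because all hidden activations equal $\varrho_r$; so $\sigma_r \in \SNNreal^{\varrho_r,1,1}_{2(r+1),2,r+1}$.

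I do not expect any serious obstacle: the only nontrivial ingredient is the finite-difference identity $\sum_{k=0}^r (-1)^k \binom{r}{k}(x-k)^r = r!$, which is standard and can be proved in one line by noting that the left-hand side equals $(-1)^r \Delta^r[t^r](x-r)$ and that $\Delta^r$ maps monic degree-$r$ polynomials to the constant $r!$. The rest is bookkeeping of weights and neurons against Definition~\ref{def:complexity}.
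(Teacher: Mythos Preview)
Your proposal is correct and takes essentially the same approach as the paper: both construct $\sigma_r(x)=\frac{1}{r!}\sum_{k=0}^r(-1)^k\binom{r}{k}\varrho_r(rx-k)$, identify it with (a rescaling of) the antiderivative of the B-spline $\beta_+^{(r-1)}$ to obtain monotonicity and the squashing property, and then read off the strict two-layer network with $r+1$ hidden neurons and $2(r+1)$ weights. The only cosmetic difference is that the paper introduces an explicit normalization by $g_{r-1}(r)=\int_0^r\beta_+^{(r-1)}$ before deriving the formula, whereas you verify directly via the finite-difference identity that this constant equals $1$.
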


\noindent Combined with Lemma~\ref{lem:ApproximationOfIndicatorCube}, we obtain
the existence of a compactly supported, continuous, non-negative function $g \neq 0$ such that
\(
  g \in \NNreal^{\varrho_{r},d,1}_{\infty, 3,\infty} \cap \StandardXSpace[](\R^d)
\)
(respectively
\(
  g \in \NNreal^{\varrho_{r},d,1}_{\infty,2,\infty} \cap \StandardXSpace[](\R)
\)
for input dimension $d=1$).
Hence, Theorem~\ref{th:DNNApproxSpaceWellDefined} is applicable.
%
\end{proof}

\begin{defn}[B-splines]\label{defn:Bsplines}
  For any function $f: \R \to \R$, define $\Delta f: x \mapsto f(x)-f(x-1)$.
  Let $\varrho_{0} := \Indicator_{[0,\infty)}$ denote the Heaviside function,
  and $\beta_{+}^{(0)} := \Indicator_{[0,1)} = \Delta \varrho_{0}$ the B-spline of degree $0$.
  The B-spline of degree $n$ is obtained by convolving $\beta_{+}^{(0)}$ with itself $n+1$ times:
  \[
    \beta_{+}^{(n)}
    := \underbrace{\beta^{(0)}_{+} \star \ldots \star \beta^{(0)}_{+}}_{n+1\ \text{factors}}.
  \]
  For $n \geq 0$, $\beta_{+}^{(n)}$ is non-negative and is zero except for $x \in [0,n+1]$.
  We have $\beta_{+}^{(n)} \in C^{n-1}_{c}(\R)$ for $n \geq 1$.
  Indeed, this follows since $\varrho_n \in C^{n-1}(\R)$, and since it is known
  (see \cite[Equation (10)]{Unser:1999cr}, noting that \cite{Unser:1999cr} uses \emph{centered} B-splines)
  that the B-spline of degree $n$ can be decomposed as
  \begin{equation} \label{eq:DecompBSpline}
    \beta_{+}^{(n)}
    = \frac{\Delta^{n+1} \varrho_{n}}{n!}
    = \frac{1}{n!} \sum_{k=0}^{n+1} \binom{n+1}{k} (-1)^{k} \varrho_{n}(x-k).\qedhere
  \end{equation}
\end{defn}

\begin{proof}[Proof of Lemma~\ref{lem:ExactSquashingReLUPower}]
For $n \geq 0$, $\beta_{+}^{(n)}$ is non-negative and is zero except for $x \in [0,n+1]$.
Its primitive
\[
  g_{n}(x)
  := \int_{0}^{x} \beta_{+}^{(n)} (t) dt
\]
is thus non-decreasing, with $g_{n}(x) = 0$ for $x \leq 0$
and $g_{n}(x) = g_{n}(n+1)$ for $x \geq n+1$.
Since $\beta_{+}^{(n)} \in C^{n-1}_{c}(\R)$ for $n \geq 1$, we have $g_{n} \in C^{n}(\R)$ for $n \geq 1$.
Furthermore, $g_0 \in C^0 (\R)$ since $\beta^{(0)}_+$ is bounded.

For $r \geq 1$, the above facts imply that the function
$\sigma_{r}(x) := g_{r-1}(rx) / g_{r-1}(r)$ belongs to $C^{r-1}(\R)$ and satisfies \eqref{eq:ExactSquashing}.
To conclude, we now prove that $\sigma_{r} \in \SNNreal^{\varrho_{r},1,1}_{2(r+1),2,r+1}$.
For $0 \leq k \leq n+1$ we have
\begin{align*}
  \int_{0}^{x} \varrho_{n}(t-k)dt
  &=  \begin{cases}
        0,                          & \text{if } x \leq k \\
        \int_{k}^{x} (t-k)^{n} dt
        = \int_{0}^{x-k}t^{n}dt
        = \tfrac{(x-k)^{n+1}}{n+1}, & \text{otherwise}
      \end{cases}
  = \frac{\varrho_{n+1}(x-k)}{n+1} \, .
\end{align*}
By~\eqref{eq:DecompBSpline} it follows that
\[
  g_{n}(x)
  = \frac{1}{(n+1)!} \sum_{k=0}^{n+1} \binom{n+1}{k} (-1)^{k} \varrho_{n+1}(x-k),
\]
and hence
\[
  \sigma_{r}(x)
  = \frac{g_{r-1}(rx)}{g_{r-1}(r)}
  = \frac{1}{r!\ g_{r-1}(r)} \sum_{k=0}^{r} \binom{r}{k} (-1)^{k} \varrho_{r}(rx-k) \, .
\]
Setting $\alpha_{1} := \varrho_{r} \otimes \ldots \otimes \varrho_{r}: \R^{r+1} \to \R^{r+1}$
as well as $T_1: \R \to \R^{r+1}, x \mapsto (rx-k)_{k=0}^{r}$ and
\[
  T_{2}: \R^{r+1} \to \R,
         y = (y_{k})_{k=0}^{r} \mapsto \tfrac{1}{r!\ g_{r-1}(r)} \sum_{k=0}^{r}
                                                                   \binom{r}{k} (-1)^{k} y_{k}
\]
and $\Phi := \big( (T_{1},\alpha_{1}),(T_{2},\identity_{\R}) \big)$, it is then easy to check that
$\sigma_{r} = \Realization(\Phi)$.
Obviously $L(\Phi)=2$, $N(\Phi)=r+1$, and $\|T_{i}\|_{\ell^{0}} = r+1$ for $i=1,2$,
hence as $\Phi$ is strict we have $\Phi \in \SNNsymbol^{\varrho_{r},1,1}_{2(r+1),2,r+1}$.
\end{proof}

%
%
%
%


\subsection{Piecewise polynomial activation functions \emph{vs.}~\texorpdfstring{$\varrho_{r}$}{ϱᵣ}}

In this subsection, we show that approximation spaces of $\varrho_{r}$-networks contain
the approximation spaces of continuous piecewise polynomial activation functions,
and match those of (free-knot) spline activation functions.

\begin{defn}\label{defn:PiecewisePolynomial}
  Consider an interval $I \subseteq \R$.
  A function $f: I \to \R$ is \emph{piecewise polynomial} 
  if there are \emph{finitely} many intervals $I_i \subset I$ such that $I = \bigcup_i I_i$
  and $f|_{I_i}$ is a polynomial.
  It is \emph{of degree at most $r \in \N$} when each $f|_{I_{i}}$ is of degree at most $r$,
  and \emph{with at most $n \in \N$ pieces} (or \emph{with at most $n-1 \in \N_{0}$ breakpoints})
  when there are at most $n$ such intervals.
  The set of piecewise polynomials of degree at most $r$ with at most $n$ pieces
  is denoted $\PPoly^{r}_{n}(I)$, and we set
  $\PPoly^{r}(I) := \cup_{n \in \N} \PPoly^{r}_{n}(I)$.

  A function $f \in \Spline^{r}_{n}(I) := \PPoly^{r}_{n}(I) \cap C^{r-1}(I)$
  is called a \emph{free-knot spline} of degree at most $r$ with at most $n$ pieces
  (or at most $n-1$ breakpoints).
  We set $\Spline^{r}(I) := \cup_{n \in \N} \Spline^{r}_n(I)$.
\end{defn}


\begin{thm}\label{thm:ReLUPowersApproxSpaces}
Consider a depth growth function $\mathscr{L}$, an admissible domain
$\Omega \subset \R^d$, and let $X = \StandardXSpace(\Omega)$ with $d,k \in \N$, $p \in (0,\infty]$.
Let $r \in \N$, set $\varrho_r : \R \to \R, x \mapsto (x_+)^r$,
and let $\alpha>0$, $q \in (0,\infty]$.
\begin{enumerate}[leftmargin=0.6cm]
\item \label{it:Nested0}
      If $\varrho:\R \to \R$ is continuous and piecewise polynomial of degree at most $r$ then,
      \begin{equation}\label{eq:UnboundedDomainGeneralNesting}
        \WASpace[X]
        \hookrightarrow
        \begin{cases}
          \WASpace[X][\varrho_r][q][\alpha][\max(\mathscr{L}+1,2)] , & \text{if } d = 1 \\
          \WASpace[X][\varrho_r][q][\alpha][\max(\mathscr{L}+1,3)] , & \text{if } d \geq 2.
        \end{cases}
      \end{equation}
      Moreover
         if $\Omega$ is bounded, or if $r=1$, or if $\mathscr{L}+1 \preceq \mathscr{L}$, then
              we further have
              \begin{equation}\label{eq:Nesting1}
                \WASpace[X] \hookrightarrow \WASpace[X][\varrho_r].
              \end{equation}

        \item \label{it:Nested01} If $\varrho \in \Spline^{r}(\R)$ is not a polynomial and $\Omega$ is bounded, then we have (with equivalent norms)
              \begin{equation}\label{eq:Nesting1Eq}
                \WASpace[X] = \WASpace[X][\varrho_r]. 
              \end{equation}

\item \label{it:Nested1}
      For any $s \in \N$ we have
      \begin{equation}\label{eq:Nesting0}
        \WASpace[X][\varrho_{r^{s}}]
        \hookrightarrow \WASpace[X][\varrho_{r}][q][\alpha][1+s(\mathscr{L}-1)]. 
      \end{equation}
\end{enumerate}
The same results hold with $\NASpace[X][\cdot][q][\alpha][\cdot]$
instead of $\WASpace[X][\cdot][q][\alpha][\cdot]$.
\end{thm}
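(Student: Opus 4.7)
My plan is to prove the three parts by constructing a realization---or, when exact realization fails, a locally-uniform approximation---of the source activation function by bounded-complexity $\varrho_r$-networks, and then transferring this to the approximation spaces via one of Lemma~\ref{lem:RecursiveNNsets}, Lemma~\ref{lem:NestednessBasic}, or Lemma~\ref{lem:Recursivity} together with Lemma~\ref{lem:ApproximationSpaceElementaryNesting}. Part~\ref{it:Nested1} is immediate: since $\varrho_r(y)\geq 0$ for all $y$, the iterate satisfies $\varrho_r\circ\varrho_r(y)=(y_+^r)^r=y_+^{r^2}$, so $\varrho_{r^s}$ is literally the $s$-fold composition $\varrho_r\circ\cdots\circ\varrho_r$; Lemma~\ref{lem:NestednessBasic} then yields $\NNreal^{\varrho_{r^s},d,k}_{W,L,N}\subset\NNreal^{\varrho_r,d,k}_{W+(s-1)N,\,1+s(L-1),\,sN}$, and combined with the bound $N\leq W$ from Lemma~\ref{lem:BoundingLayersAndNeuronsByWeights}, Lemma~\ref{lem:ApproximationSpaceElementaryNesting} (with $c=s$) delivers~\eqref{eq:Nesting0}.

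For part~\ref{it:Nested0}, I will use the classical truncated-power decomposition $\varrho(x)=p(x)+\sum_{i}\sum_{j=1}^{r}c_{i,j}\,\varrho_j(x-b_i)$ of a continuous piecewise polynomial of degree $\leq r$ (with $p$ polynomial of degree $\leq r$ and $b_i$ the breakpoints of $\varrho$). Each piece is then a (limit of) bounded-complexity depth-$2$ $\varrho_r$-networks: $p$ is exactly such a network by Lemma~\ref{lem:ReLUPowerRepresentsIdentity}; the term $\varrho_r(x-b_i)$ is a single $\varrho_r$-neuron; and for $1\leq j<r$ the finite-difference identity
\[
  \varrho_j(y)
  = \lim_{h\to 0}\tfrac{j!}{r!\,h^{r-j}}
      \sum_{k=0}^{r-j}(-1)^k\binom{r-j}{k}\varrho_r(y-kh),
\]
converging uniformly on compacts, realizes $\varrho_j$ as a locally uniform limit of depth-$2$ $\varrho_r$-networks of fixed width. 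Thus $\varrho$ is itself a bounded-complexity depth-$2$ $\varrho_r$-network exactly when $r=1$ or $\varrho\in\Spline^r$, and otherwise a locally uniform limit thereof. Applying Lemma~\ref{lem:RecursiveNNsets} (respectively Lemma~\ref{lem:Recursivity}) with $\ell=2$, and noting that on bounded $\Omega$ locally uniform convergence on $\R^d$ implies $\StandardXSpace$-convergence, Lemma~\ref{lem:ApproximationSpaceElementaryNesting} yields~\eqref{eq:Nesting1} on bounded $\Omega$; the case $\mathscr{L}+1\preceq\mathscr{L}$ will be obtained by first establishing the unbounded-domain bound below and then invoking Lemma~\ref{lem:RoleGrowthFunctionRate}.

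The delicate case is unbounded $\Omega$, where locally uniform convergence does not control the $\StandardXSpace$-norm. My remedy is to multiply the $\varrho_r$-approximant $g_h$ by a compactly supported cut-off $\chi_R\in[0,1]$ equal to $1$ on $[-R,R]^d$ and supported in $[-R-1,R+1]^d$, furnished by Lemma~\ref{lem:ApproximationOfIndicatorCube} applied with the exact squashing function $\sigma_r$ of Lemma~\ref{lem:ExactSquashingReLUPower} (a depth-$2$ $\varrho_r$-network): this makes $\chi_R$ a $\varrho_r$-network of depth $2$ for $d=1$ and of depth $3$ for $d\geq 2$. Implementing the product via Lemma~\ref{lem:MultNetwork} (multiplication as a depth-$2$ $\varrho_r$-network) and the composition via Lemma~\ref{lem:NetworkCalculus}(\ref{enu:ComposLessDepth}) (sharing one layer between the bump, the approximant, and the multiplication) will produce a $\varrho_r$-network of depth at most $\max(\mathscr{L}(n){+}1,2)$ in $d=1$ and $\max(\mathscr{L}(n){+}1,3)$ in $d\geq 2$, with connection and neuron counts only a bounded factor larger than those of $g_h$. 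The $\StandardXSpace$-error is then bounded by the locally uniform error of $g_h$ on $[-R-1,R+1]^d$ plus $\|f\|_{\StandardXSpace(\Omega\setminus[-R,R]^d)}$, both of which vanish as $R\to\infty$ and $h\to 0$, establishing~\eqref{eq:UnboundedDomainGeneralNesting}.

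Finally, part~\ref{it:Nested01} on bounded $\Omega$ combines the forward direction from part~\ref{it:Nested0} (applicable since $\Spline^r\subset$ continuous piecewise polynomial of degree $\leq r$) with a reverse embedding: since $\varrho$ is a non-polynomial spline of degree $r$, its truncated-power decomposition contains some term $c_{i_0}\varrho_r(\cdot-b_{i_0})$ with $c_{i_0}\neq 0$. On any bounded interval avoiding the other breakpoints, the polynomial part is annihilated by the $(r{+}1)$-th forward difference $\Delta_h^{r+1}\varrho$, which by~\eqref{eq:DecompBSpline} produces a rescaled B-spline; further linear combinations and affine pre/post-composition then recover $\varrho_r$ on any fixed bounded subset of $\R$ as a depth-$2$ $\varrho$-network of bounded width. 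Lemma~\ref{lem:RecursiveNNsets} then gives $\WASpace[X][\varrho_r]\hookrightarrow\WASpace[X]$, completing~\eqref{eq:Nesting1Eq}. All $\NASpace[X]$ statements follow by tracking neuron counts rather than connection counts in the same lemmas. The main obstacle will be the unbounded-domain localization in part~\ref{it:Nested0}: absorbing one layer by overlapping the bump, the multiplication, and the final layer of the approximant (through Lemma~\ref{lem:NetworkCalculus}(\ref{enu:ComposLessDepth})) is what gives the tight depth overheads of $2$ and $3$ in~\eqref{eq:UnboundedDomainGeneralNesting} rather than larger values.
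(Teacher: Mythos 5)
Your overall strategy coincides with the paper's: show that $\varrho$ (or $\varrho_j$ for $j<r$) lies in the locally uniform closure of depth--$2$ $\varrho_r$-networks of bounded width, upgrade to $X_p$-convergence on unbounded domains via a cut-off and multiplication network, and invoke Lemma~\ref{lem:ApproximationSpaceElementaryNesting}; Part~\ref{it:Nested1} via $\varrho_{r^s}=\varrho_r\circ\cdots\circ\varrho_r$ and Lemma~\ref{lem:NestednessBasic} is identical. Your route to the key technical ingredient for Part~\ref{it:Nested0} is, however, genuinely different: you decompose $\varrho$ via truncated powers, $\varrho(x)=p(x)+\sum_{i}\sum_{j=1}^{r}c_{i,j}\varrho_j(x-b_i)$, and approximate each $\varrho_j$ ($j<r$) by $(r-j)$-th difference quotients of $\varrho_r$, whereas the paper's Lemma~\ref{lem:PiecewisePolynomialCont} builds $\varrho$ up inductively one breakpoint at a time and approximates $\varrho_j$ by iterated first differences (Corollaries~\ref{cor:NestednessMainIdea}--\ref{cor:NestednessMainIdeaStep2}). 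Your route is arguably cleaner conceptually; both give the needed bounded-width depth--$2$ approximant and, via the truncated-power form, correctly identify the cases $r=1$ and $\varrho\in\Spline^r$ as those where the representation is exact.

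For Part~\ref{it:Nested01}, however, your sketch of the reverse embedding ($\varrho_r$ from a $\varrho$-network) has a genuine gap. You propose taking $\Delta_h^{r+1}\varrho$, observe it produces compactly supported rescaled B-spline bumps near the breakpoints, and then assert that ``further linear combinations and affine pre/post-composition recover $\varrho_r$ on any fixed bounded subset of $\R$ as a depth--$2$ $\varrho$-network of bounded width.'' But a finite linear combination of affinely pre-composed compactly-supported functions is compactly supported, so you cannot exactly reproduce $\varrho_r|_{[-R,R]}$ with a bounded number of terms as $R\to\infty$ unless you simultaneously rescale; and the natural rescaling (stretch the B-spline) requires $h$ to grow with $R$, at which point the shifted copies $\varrho(\cdot - kh)$ straddle several breakpoints of $\varrho$ and the local B-spline picture breaks down. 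You would need to justify that this interference is controllable. The paper's Lemma~\ref{lem:SplineVsReLUPow} instead Taylor-expands around a single breakpoint $t_0$ where the $r$-th derivative jumps, combines $\varrho(t_0+z)$ and $\varrho(t_0-z)$ to cancel the lower-degree terms (so the correction polynomial $p$ has degree $\leq r-1$, not $\leq r$), and then rescales via $f_R(x)=(R/\eps)^r\bigl[\alpha\varrho(\eps x/R+t_0)+\beta\varrho(-\eps x/R+t_0)-p(\eps x/R)\bigr]$, which equals $\varrho_r(x)$ on $[-R,R]$ and is exactly a depth--$2$ $\varrho$-network of width bounded independently of $R$ (the polynomial $p$ is handled by induction on $r$ using $\varrho'\in\Spline^{r-1}$). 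Your argument needs this scaling and cancellation; as written, it is not complete.

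Two smaller points worth making explicit: (i) the cut-off construction you invoke (Lemma~\ref{lem:MultNetwork}) needs $\varrho_r$ to represent degree-$2$ polynomials and hence $r\geq 2$, so it cannot be used for $r=1$ --- which is fine, but you should note that the $r=1$ case of \eqref{eq:Nesting1} on \emph{unbounded} $\Omega$ is covered by the exact (no-closure-needed) representation rather than by the cut-off; (ii) to pass from \eqref{eq:UnboundedDomainGeneralNesting} to \eqref{eq:Nesting1} under $\mathscr{L}+1\preceq\mathscr{L}$ you need not just Lemma~\ref{lem:RoleGrowthFunctionRate} but also Lemma~\ref{lem:DepthGrowthLemma}-(1) to upgrade $\mathscr{L}+1\preceq\mathscr{L}$ to $\max(\mathscr{L}+1,\ell)\preceq\mathscr{L}+\ell+1\sim\mathscr{L}$.
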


Examples~\ref{ex:polyloggrowth} and \ref{ex:polygrowth} provide
important examples of depth growth functions $\mathscr{L}$
with $\mathscr{L}+1 \preceq \mathscr{L}$, so that~\eqref{eq:Nesting1} 
holds on any domain.

\begin{rem}[Nestedness]\label{rem:ReLUrNested}
  For $1 \leq r' \leq r$, the function $\varrho := \varrho_{r'}$ is indeed
  a continuous piecewise polynomial with two pieces of degree at most $r$.
  Theorem~\ref{thm:ReLUPowersApproxSpaces} thus implies that if $\Omega$ is bounded
  or $\mathscr{L} + 1 \preceq \mathscr{L}$, then
  $\WASpace[X][\varrho_{r'}] \hookrightarrow \WASpace[X][\varrho_r]$
  and $\NASpace[X][\varrho_{r'}] \hookrightarrow \NASpace[X][\varrho_r]$.
  We will see in Corollary~\ref{cor:SaturationProp} below that
  if $2 \mathscr{L} \preceq \mathscr{L}$, then these embeddings
  are indeed equalities if $2 \leq r' \leq r$.
\end{rem}

The main idea behind the proof of Theorem~\ref{thm:ReLUPowersApproxSpaces} given below
is to combine Lemma~\ref{lem:RecursiveNNsets} and its consequences with the following results
proved in Appendices~\ref{app:PiecewisePolynomialCont}--\ref{app:SplineVsReLUPow}.

\begin{lem}\label{lem:PiecewisePolynomialCont}
  Consider $\varrho:\R \to \R$ a continuous piecewise polynomial function
  with at most $n \in \N$ pieces of degree at most $r \in \N$.
  With%
  \footnote{Note that $4 = 1 \!\mod 3$ and hence $4^n - 1 = 0 \!\mod 3$, so that $w \in \N$.}
  $w := 2 \cdot (4^{r}-1) / 3$ and $m := 2^{r} - 1$ we have
  \[
    \varrho \in \overline{\NNreal^{\varrho_{r},1,1}_{4(r+1)+(n-1)w,2,2(r+1)+(n-1)m}},
  \]
  where the closure is with respect to the topology of locally uniform convergence.
  For $r=1$ (that is, when $\varrho$ is continuous and piecewise affine with at most $n \in \N$ pieces
  and $\varrho_r = \varrho_1$), we even have
  ${\varrho \in \SNNreal^{\varrho_{r},1,1}_{2(n+1),2,n+1}}$.
\end{lem}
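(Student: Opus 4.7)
The plan is to decompose $\varrho$ into a polynomial ``base'' plus $n-1$ single-breakpoint corrections, realise each component as a depth-$2$ $\varrho_r$-network, and recombine them using Lemma~\ref{lem:SummationLemma}-(\ref{enu:LinComb}).

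\emph{Step 1 (decomposition).} Denote by $t_1 < \cdots < t_{n-1}$ the breakpoints of $\varrho$ and by $p_i$ the polynomial realised on the $i$-th piece (of degree $\le r$). A telescoping argument gives
\[
  \varrho(x) = p_1(x) + \sum_{i=1}^{n-1}(p_{i+1}-p_i)(x)\,\Indicator_{(t_i,\infty)}(x).
\]
Continuity of $\varrho$ forces $(p_{i+1}-p_i)(t_i) = 0$, so $(p_{i+1}-p_i)(x) = \sum_{k=1}^{r} c_{i,k}(x-t_i)^k$, whence
\[
  \varrho(x) = p_1(x) + \sum_{i=1}^{n-1} \sum_{k=1}^{r} c_{i,k}\,\varrho_k(x-t_i).
\]

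\emph{Step 2 (base).} By Lemma~\ref{lem:ReLUPowerRepresentsIdentity}, the polynomial $p_1$ of degree $\le r$ is realised exactly as a strict depth-$2$ $\varrho_r$-network with at most $2(r+1)$ hidden neurons and at most $4(r+1)$ weights (each of the $2r+2$ representing terms contributing one input weight and one output weight).

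\emph{Step 3 (corrections).} For $1\le k \le r$ I approximate $\varrho_k(x-t_i)$ by a depth-$2$ $\varrho_r$-network using the locally uniform identity
\[
  \varrho_{j-1}(y) = \lim_{h\to 0}\tfrac{1}{jh}\bigl[\varrho_j(y+h) - \varrho_j(y)\bigr] \qquad (j \ge 1),
\]
iterated $r-k$ times from the exact one-neuron realisation of $\varrho_r$. At each iteration the current depth-$2$ approximant is duplicated (once shifted by $h$, once not) and recombined linearly; this preserves depth~$2$ and obeys recursive bounds on neurons and weights that, after summing over $k=1,\ldots,r$, are controlled by $m = 2^r - 1$ and $w = 2(4^r-1)/3$ respectively for each individual breakpoint.

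\emph{Step 4 (assembly).} Lemma~\ref{lem:SummationLemma}-(\ref{enu:LinComb}) allows me to combine the base network and the $n-1$ correction networks into a single depth-$2$ $\varrho_r$-network whose aggregate budget is
\[
  \text{weights} \le 4(r+1) + (n-1)w, \qquad \text{neurons} \le 2(r+1) + (n-1)m.
\]
Letting $h \to 0$ simultaneously in the finitely many correction approximants yields locally uniform convergence to $\varrho$, placing $\varrho$ in the claimed closure.

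\emph{Special case $r = 1$.} Each correction $c_i \varrho_1(x - t_i)$ is already a single $\varrho_1$-neuron, and the affine base $p_1(x) = ax + b$ is exactly realised via $x = \varrho_1(x) - \varrho_1(-x)$ using $2$ neurons. Summing gives a strict depth-$2$ $\varrho_1$-network with $n+1$ neurons and $2(n+1)$ weights, yielding the sharper \emph{exact} representation claimed in the lemma.

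\emph{Main obstacle.} The delicate point is Step~3: tracking how duplicating a depth-$2$ sub-network (shifting some biases while copying input and output weights) interacts with the weight budget, and verifying that the iterated recursion terminates at the specific upper bound $w = 2(4^r-1)/3$ rather than the naive doubling bound one would get from a single round of differences. A secondary subtlety is arranging the joint limit $h \to 0$ across the $(n-1) \cdot r$ approximants consistently, handled by choosing a single sequence $h_\ell \downarrow 0$ and a diagonal argument on any fixed compact set.
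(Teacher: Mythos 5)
Your proposal is correct, and your decomposition in Step~1 is the unrolled form of the paper's induction: the paper peels off one breakpoint per inductive step and writes the correction at a single breakpoint as $\sum_{k=1}^r a_k \varrho_k$, which is exactly your inner sum. Where you genuinely diverge is Step~3. The paper feeds each $\varrho_k$ through Corollary~\ref{cor:NestednessMainIdeaStep2} (which rests on Lemma~\ref{lem:Recursivity}/Lemma~\ref{lem:RecursiveNNsets}), obtaining $\varrho_k \in \overline{\NNreal^{\varrho_r,1,1}_{2\cdot 4^{r-k},2,2^{r-k}}}$; the factor $4 = m^2$ per degree step is tuned for substituting $\varrho_r$-blocks at \emph{interior} layers of an arbitrary-depth network, and is slack in the depth-$2$ situation here. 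Your direct iterated-difference scheme, by contrast, doubles both weights and neurons per degree step, yielding $\varrho_k \in \overline{\NNreal^{\varrho_r,1,1}_{2\cdot 2^{r-k},2,2^{r-k}}}$ and a per-breakpoint weight budget of $2(2^r-1)$, which is strictly smaller than $w = 2(4^r-1)/3$ for $r\ge 2$ (equal at $r=1$). That makes your concern in the ``Main obstacle'' paragraph unfounded: the lemma asserts only an \emph{upper} bound, so hitting a number $\le w$ is all that is needed, and the ``naive doubling'' you distrust is in fact correct and sharper. The nested-closure step you worry about is also unproblematic: locally uniform convergence on $\R$ is metrizable, so sequential closure is idempotent, and Lemma~\ref{lem:SummationLemma}-(\ref{enu:LinComb}) passes to closures because taking a fixed linear combination and a fixed pre-composition with an affine shift are continuous for that topology. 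In short: correct, same overall strategy, but a more elementary and slightly tighter handling of the degree-reduction step that bypasses the machinery of Lemma~\ref{lem:Recursivity}.
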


\begin{lem}\label{lem:SplineVsReLUPow}
  Consider $r \in \N$ and $\varrho \in \Spline^{r} (\R)$.
  If $\varrho$ is not a polynomial then $\varrho_r \in \overline{\NNreal^{\varrho,1,1}_{5^{r}r!,2,3^{r}r!}}$,
  where the closure is with respect to locally uniform convergence.
\end{lem}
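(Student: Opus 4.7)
Since $\varrho \in \Spline^{r}(\R)$ is not a polynomial and belongs to $C^{r-1}(\R)$, its piecewise-constant $r$-th derivative $\varrho^{(r)}$ must take at least two distinct values, so $\varrho$ admits a breakpoint $y_{0} \in \R$ at which $\varrho^{(r)}$ jumps by some $c \cdot r! \neq 0$. On any open neighborhood $U \ni y_{0}$ containing no other breakpoint of $\varrho$ this yields the local representation
\[
  \varrho(x) = Q_{-}(x) + c\,(x-y_{0})_{+}^{r} \qquad (x \in U),
\]
where $Q_{-}$ is the polynomial piece of $\varrho$ on the left of $y_{0}$; I write $\mathcal{P}_{r}$ for the $(r+1)$-dimensional space of polynomials of degree at most $r$, so $Q_{-} \in \mathcal{P}_{r}$. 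The right-hand piece is $Q_{+} := Q_{-} + c(\cdot - y_{0})^{r}$, and the jump condition $c \neq 0$ forces $\max(\deg Q_{-}, \deg Q_{+}) = r$. Exchanging the roles of ``left'' and ``right'' below if needed, I assume $\deg Q_{+} = r$.

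The plan is to realize $\varrho_{r}$ \emph{exactly} on any fixed interval $[-M,M]$ by a strict $\varrho$-network of depth $2$ with $r+2$ hidden neurons, and then let $M \to \infty$. Choose $h > 0$ and $\eta > hM$ small enough that the $r+2$ closed intervals $[y_{0}+j\eta - hM,\, y_{0}+j\eta + hM]$, $j = 0, 1, \dots, r+1$, all lie in $U$, with those for $j \geq 1$ lying strictly to the right of $y_{0}$. Set
\[
  \varphi_{0}(x) := \varrho(hx + y_{0}), \qquad
  \varphi_{j}(x) := \varrho(hx + y_{0} + j\eta) \quad (j = 1, \dots, r+1).
\]
On $[-M, M]$ one then has $\varphi_{0}(x) = Q_{-}(hx + y_{0}) + c h^{r} (x_{+})^{r}$, while each $\varphi_{j}$ with $j \geq 1$ equals the pure polynomial $Q_{+}(hx + y_{0} + j\eta) \in \mathcal{P}_{r}$.

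The technical core is to show that $\{\varphi_{1}, \dots, \varphi_{r+1}\}$ spans all of $\mathcal{P}_{r}$. The coefficient of $x^{k}$ in $\varphi_{j}$ is $\tfrac{h^{k}}{k!} Q_{+}^{(k)}(y_{0}+j\eta)$; Taylor-expanding each $Q_{+}^{(k)}$ about $y_{0}$ factors the $(r+1)\times(r+1)$ coefficient matrix, up to nonzero column scalings, as $V \cdot B^{T}$, where $V_{jl} = j^{l}$ is the standard Vandermonde matrix in the distinct nodes $1, 2, \dots, r+1$ and $B_{kl} = \tfrac{Q_{+}^{(k+l)}(y_{0}) \eta^{l}}{l!}$ (zero for $k+l > r$) is upper anti-triangular with anti-diagonal entries proportional to $Q_{+}^{(r)}(y_{0}) \neq 0$. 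Both $V$ and $B$ are nonsingular, so spanning follows. Hence there exist $\gamma_{1}, \dots, \gamma_{r+1} \in \R$ with $\sum_{j=1}^{r+1}\gamma_{j}\varphi_{j}(x) = Q_{-}(hx+y_{0})$ on $\R$, giving
\[
  (x_{+})^{r} \;=\; \frac{1}{c h^{r}} \bigg(\varphi_{0}(x) - \sum_{j=1}^{r+1}\gamma_{j}\varphi_{j}(x)\bigg)
  \qquad \forall\, x \in [-M,M].
\]
The right-hand side realizes $\varrho_{r}|_{[-M, M]}$ by a strict $\varrho$-network of depth $2$ with $r+2$ hidden neurons and at most $2(r+2)$ nonzero weights, both comfortably within $3^{r} r!$ and $5^{r} r!$ for every $r \geq 2$; the case $r = 1$ is handled identically after replacing the two polynomial neurons $\varphi_{1}, \varphi_{2}$ by a single identity hidden neuron (which alone realizes the affine function $Q_{-}(hx+y_{0})$), yielding a generalized depth-$2$ network with $N = 2$, $W = 4$, well inside $(3, 5)$. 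Letting $M \to \infty$ produces the desired locally uniform approximation of $\varrho_{r}$. The main obstacle is the spanning step: without choosing the side of $y_{0}$ where the polynomial piece has full degree $r$, the $\varphi_{j}$'s could collapse into a strict subspace of $\mathcal{P}_{r}$ and the coefficients $\gamma_{j}$ might not exist; the jump condition $c \neq 0$ always permits such a choice, after which the Vandermonde-times-anti-triangular factorization cleanly finishes the argument.
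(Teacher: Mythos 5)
Your proof is correct, but it takes a genuinely different route from the paper's. The paper establishes the local identity $\varrho_r(z) = \alpha\,\varrho(t_0+z) + \beta\,\varrho(t_0-z) - p(z)$ for $|z|\leq\eps$, with $p$ a polynomial of degree $<r$, and then proceeds by induction on $r$: the polynomial $p$ is written as a combination of $\varrho_{r-1}$-terms via Lemma~\ref{lem:ReLUPowerRepresentsIdentity}, those are approximated by $\varrho'$-networks using the induction hypothesis (since $\varrho'\in\Spline^{r-1}$ is also not a polynomial), and $\varrho'$ is in turn approximated by difference quotients of $\varrho$ via Lemma~\ref{lem:DerivativeLocallyUniformConvergence} fed into Lemma~\ref{lem:Recursivity}; unwinding this recursion is exactly what produces the factorially-exponential bounds $5^r r!$ and $3^r r!$ in the statement. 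You instead localize at one breakpoint, pick the side carrying a full degree-$r$ polynomial piece, and use $r+1$ additional scaled/shifted copies of $\varrho$ evaluated entirely on that side, together with a Vandermonde-times-anti-triangular factorization, to show those copies span all of $\mathcal P_r$ and can therefore cancel the extraneous polynomial. This realizes $\varrho_r$ \emph{exactly} on $[-M,M]$ by a single depth-$2$ network with only $r+2$ neurons and at most $2(r+2)$ weights — no induction, no derivative approximation, and polynomial rather than exponential complexity, so you in fact prove a strictly stronger quantitative statement. Your separate handling of $r=1$ (where $2(r+2)=6>5$ would violate the claimed bound, fixed by replacing the two auxiliary $\varrho$-neurons with one identity neuron in a generalized network) is necessary and correct, and the argument that at least one of $Q_-,Q_+$ has degree exactly $r$ (because $Q_+-Q_-=c(\cdot-y_0)^r$ with $c\neq 0$) is the key point that makes the Vandermonde step go through.
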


For bounded $\Omega$, locally uniform convergence on $\R^{d}$ implies
convergence in $X = \StandardXSpace(\Omega)$ for all $p \in (0,\infty]$.
To similarly ``upgrade'' locally uniform convergence to convergence in $X$ on unbounded domains,
we use the following localization lemma which is proved in Appendix~\ref{app:NetworkThatLocalizesAnyFunction}.

\begin{lem}
\label{lem:NetworkThatLocalizesAnyFunction}
  Consider $d,k \in \N$, $r \in \N_{\geq 2}$.
  There is $c = c(d,k,r) \in \N$ such that%
  \footnote{Notice the restriction to $W,N \geq 1$;
            in fact, the result of Lemma~\ref{lem:NetworkThatLocalizesAnyFunction}
            as stated cannot hold for $W=0$ or $N=0$.}
  for any $W,L,N \in \N$, $g \in \NNreal^{\varrho_{r},d,k}_{W,L,N}$, $R \geq 1,\delta > 0$,
  there is $g_{R,\delta} \in \NNreal^{\varrho_{r},d,k}_{cW,\max \{ L+1, 3 \},cN}$,
  such that
  \begin{equation}
    |g_{R,\delta} ( x) - (\Indicator_{[-R,R]^{d}} \cdot g)(x)|
    \leq  2 \cdot |g(x)| \cdot \Indicator_{[-R-\delta,R+\delta]^{d}  \setminus [-R,R]^{d}} (x)
    \qquad \forall \, x \in \R^d \, .
    \label{eq:SpecialCutoffMainEstimateEndToEnd}
  \end{equation}
  For $d=1$ the same holds with  $\max \{ L+1,2 \}$ layers instead of $\max \{ L+1,3 \}$.
\end{lem}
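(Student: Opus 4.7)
The plan is to set $g_{R,\delta}(x) := \psi(x)\, g(x)$ for a cutoff $\psi : \R^d \to [0,1]$ that equals $1$ on $[-R,R]^d$ and vanishes off $[-R-\delta,R+\delta]^d$. With this ansatz the pointwise estimate~\eqref{eq:SpecialCutoffMainEstimateEndToEnd} is immediate by case analysis: on the inner box both sides equal $g(x)$; off the outer box both sides vanish; on the annular transition region $\Indicator_{[-R,R]^d}(x)\, g(x) = 0$ while $|g_{R,\delta}(x)| = \psi(x)\, |g(x)| \leq |g(x)|$, comfortably within the slack factor $2$.

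To realize $\psi$ as a $\varrho_r$-network of bounded depth I use the exact $1$D squashing function $\sigma_r \in \SNNreal^{\varrho_r,1,1}_{2(r+1),2,r+1}$ of Lemma~\ref{lem:ExactSquashingReLUPower}, which satisfies $\sigma_r(t) = 0$ for $t \leq 0$, $\sigma_r(t) = 1$ for $t \geq 1$ and $\sigma_r(t) \in [0,1]$ in between. In one dimension I set
\[
  \psi_1(t) := \sigma_r\!\bigl((t + R + \delta)/\delta\bigr) - \sigma_r\!\bigl((t - R)/\delta\bigr),
\]
a depth-$2$ $\varrho_r$-realization of a bump whose cutoff properties follow from checking the six natural intervals. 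For $d \geq 2$ I let $\chi := 1 - \psi_1 \in [0,1]$ (still depth $2$, equal to $0$ on $[-R, R]$ and to $1$ off $[-R - \delta, R + \delta]$) and set
\[
  \psi(x) := \varrho_r\!\Bigl(1 - \sum_{i=1}^d \chi(x_i)\Bigr),
\]
which, using Lemma~\ref{lem:SummationLemma}-(\ref{enu:Cartesian}) to combine the $d$ parallel copies of $\chi$ and one outer $\varrho_r$-layer, is a generalized $\varrho_r$-network of depth $3$ with $O(dr)$ weights and neurons. A sign analysis of $1 - \sum_i \chi(x_i)$ confirms that $\psi = 1$ on $[-R, R]^d$, $\psi = 0$ off $[-R - \delta, R + \delta]^d$, and $\psi \in [0, 1]$ elsewhere.

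For the product $\psi \cdot g$, Lemma~\ref{lem:ReLUPowerRepresentsIdentity} (with $r \geq 2$) lets $\varrho_r$ exactly represent all polynomials of degree at most $2$, so Lemma~\ref{lem:MultNetwork}-(2) supplies a depth-$2$ $\varrho_r$-network for the scalar-vector map $m : (s, y) \mapsto s y$ on $\R \times \R^k$, with $O(kr)$ weights and neurons. I then combine $\psi$ and $g$ into a single network realizing $x \mapsto (\psi(x), g(x))$ via Lemma~\ref{lem:SummationLemma}-(\ref{enu:Cartesian}) and compose with $m$ via Lemma~\ref{lem:NetworkCalculus}-(\ref{enu:ComposLessDepth}), which saves one layer at the junction. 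The resulting weight and neuron counts are at most $cW$ and $cN$ respectively, with $c = c(d, k, r)$ absorbing the additive $O(dr + kr)$ overhead into the multiplicative constant (using $W, L, N \geq 1$). The delicate point is the depth: the naive composition has depth $\max\{L, L_\psi\} + 1$, which already matches the target $\max\{L+1, 3\}$ (respectively $\max\{L+1, 2\}$ when $d = 1$) in the regime $L \geq L_\psi$. For small $L$ one saves the extra layer either by pre-deepening $g$ via Lemma~\ref{lem:DeepeningLemma} and aggressively merging the terminal affine of the $(\psi, g)$-network with the opening affine of $m$, or by constructing $g_{R,\delta}$ directly as a bounded-degree piecewise polynomial cutoff of the shallow $g$ with breakpoints at $\pm R$ and $\pm (R + \delta)$. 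This depth bookkeeping in the low-$L$ regime is the main technical hurdle of the proof.
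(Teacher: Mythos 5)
Your approach is essentially the same as the paper's (Appendix~\ref{app:NetworkThatLocalizesAnyFunction}): build a box-localized cutoff with the squashing function $\sigma_r$ of Lemma~\ref{lem:ExactSquashingReLUPower}, take the Cartesian product with $g$, and compose with the scalar--vector multiplication network of Lemma~\ref{lem:MultNetwork}-(2) via Lemma~\ref{lem:NetworkCalculus}-(\ref{enu:ComposLessDepth}). The only structural difference is cosmetic: you construct an \emph{exact} $[0,1]$-valued cutoff $\psi$ (identically $1$ on the inner box, identically $0$ off the outer box) by applying $\varrho_r$ itself as the outer nonlinearity, whereas the paper builds an \emph{approximate} indicator $\theta_{R,\delta}$ via Lemma~\ref{lem:ApproximationOfIndicatorCube} and tolerates the factor $2$ in the annulus. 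Both cutoffs end up having depth $3$ for $d\geq 2$ and depth $2$ for $d=1$, so the complexity accounting is the same.

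The genuine gap in your write-up is exactly where you flag it: the low-$L$ regime for $d\geq 2$. As you correctly observe, the naive composition yields depth $\max\{L,L_\psi\}+1 = \max\{L+1,4\}$ when $d\geq 2$, whereas the claimed bound is $\max\{L+1,3\}$. Neither of your two proposed fixes closes this. Pre-deepening $g$ via Lemma~\ref{lem:DeepeningLemma} is counterproductive: it only raises $L(g)$ towards $L_\psi$, and Lemma~\ref{lem:NetworkCalculus}-(\ref{enu:ComposLessDepth}) already performs the maximal affine merge at the junction with the multiplication network, so the result is still $L_\psi + 2 - 1 = 4$. The ``direct piecewise-polynomial cutoff with breakpoints at $\pm R, \pm(R+\delta)$'' only makes sense for $d=1$, where it is not needed; for $d\geq 2$ the transition region is a box annulus, not something carved out by one-dimensional breakpoints, and it is precisely why a third layer enters. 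Since $\varrho_r$-networks with a single hidden layer cannot produce a nontrivial compactly supported function in $\R^d$ with $d\geq 2$, I do not see how to shave that last layer. It is worth noting that the paper's own appendix proof of this lemma ends with the bound $L''' \leq \max\{L+1,4\}$ for $d\geq 2$, so the step you could not reproduce is not present in the paper's argument either; your depth bookkeeping is, if anything, more transparent about where the argument stops short of the stated constant.
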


The following proposition describes how one can ``upgrade'' the locally uniform convergence
to convergence in $\StandardXSpace[] (\Omega)$,
at the cost of slightly increasing the depth of the approximating networks.

\begin{prop}\label{prop:HighReLUPowersApproximateLowPowersUnbounded}
  Consider $\Omega \subset \R^d$ an admissible domain and $X = \StandardXSpace(\Omega)$
  with $d,k \in \N$, $p \in (0,\infty]$.
  Assume $\varrho \in \overline{\NNreal^{\varrho_{r},1,1}_{\infty,2,m}}$ where the closure is
  with respect to locally uniform convergence and $r \in \N_{\geq 2}$, $m \in \N$.
  For any $W,N \in \N_{0} \cup \{\infty\}$, $L \in \N \cup \{\infty\}$ we have,
  with closure in $X$, 
  \[
    \NNreal_{W,L,N}^{\varrho,d, k}(\Omega) \cap X
    \subset \overline
            {
              \NNreal^{\varrho_r, d, k}_{cWm^{2}, \max \{ L+1,3 \},cNm}(\Omega)
             \cap X
            }^{X} \, ,
  \]
  where $c = c(d,k, r) \in \N$
  is as in Lemma~\ref{lem:NetworkThatLocalizesAnyFunction}.
  If $d=1$ the same holds with $\max \{ L+1,2 \}$ layers instead of $\max \{ L+1,3 \}$.
  If $\Omega$ is bounded, or if $\varrho \in \NNreal^{\varrho_{r},1,1}_{\infty,2,m}$ with $r=1$,
  then the same holds with $c=1$ and $L$ layers instead of $\max \{ L+1, 3 \}$
  (resp.~instead of $\max \{ L+1, 2 \}$ when $d=1$).
\end{prop}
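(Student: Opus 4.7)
The plan is to combine Lemma~\ref{lem:Recursivity} (recursive re-expansion of $\varrho$-networks as $\varrho_r$-networks, modulo locally uniform limits) with Lemma~\ref{lem:NetworkThatLocalizesAnyFunction} (a cutoff construction for $\varrho_r$-networks with $r \geq 2$); the latter is needed to upgrade locally uniform convergence on $\R^d$ to norm convergence in $X$ when $\Omega$ is unbounded. Since $\varrho$ is a locally uniform limit of continuous realizations in $\NNreal^{\varrho_r,1,1}_{\infty,2,m}$, it is itself continuous, and Lemma~\ref{lem:Recursivity} with $\ell=2$ yields
\[
  \NNreal^{\varrho,d,k}_{W,L,N}
  \subset \overline{\NNreal^{\varrho_r,d,k}_{Wm^2,\,L,\,Nm}},
\]
the closure being with respect to locally uniform convergence on $\R^d$. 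Two special regimes then follow immediately: if $\Omega$ is bounded, locally uniform convergence on $\R^d$ restricts to uniform---hence $L_p$---convergence on the compact set $\overline{\Omega}$ for every $p \in (0,\infty]$, giving the claim with $c=1$ and $L$ layers; if instead $r = 1$ and $\varrho$ belongs to $\NNreal^{\varrho_1,1,1}_{\infty,2,m}$ \emph{exactly} (no closure), Lemma~\ref{lem:RecursiveNNsets} applies and produces the exact inclusion $\NNreal^{\varrho,d,k}_{W,L,N} \subset \NNreal^{\varrho_1,d,k}_{Wm^2,L,Nm}$, again with $c = 1$.

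For the core case of unbounded $\Omega$ with $r \geq 2$, fix $f = F|_\Omega \in \NNreal^{\varrho,d,k}_{W,L,N}(\Omega) \cap X$ and pick a sequence $g_n \in \NNreal^{\varrho_r,d,k}_{Wm^2,L,Nm}$ with $g_n \to F$ locally uniformly. For each $R \geq 1$ and $\delta \in (0,1]$, apply Lemma~\ref{lem:NetworkThatLocalizesAnyFunction} to each $g_n$ to produce
\[
  g_{n,R,\delta} \in \NNreal^{\varrho_r,d,k}_{cWm^2,\,\max\{L+1,3\},\,cNm}
\]
(with $\max\{L+1,2\}$ layers when $d=1$), which by \eqref{eq:SpecialCutoffMainEstimateEndToEnd} agrees with $g_n$ on $[-R,R]^d$, vanishes off $[-R-\delta,R+\delta]^d$, and is pointwise bounded by $3|g_n|$ on the intermediate annulus. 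Each $g_{n,R,\delta}$ is continuous with compact support, hence $g_{n,R,\delta}|_\Omega \in X$; the remaining task is to choose $R,\delta,n$ so that $\|g_{n,R,\delta}|_\Omega - f\|_X$ is arbitrarily small.

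Given $\varepsilon > 0$, I select the parameters in the order $R \to \delta \to n$. First, using $f \in X$ (dominated convergence for $p < \infty$, vanishing at infinity for $p = \infty$), pick $R$ so that $\|f \cdot \Indicator_{\Omega \setminus [-R,R]^d}\|_X < \varepsilon/3$; via the decomposition $\Omega \setminus [-R,R]^d = (\text{annulus} \cap \Omega) \cup (\Omega \setminus [-R-\delta,R+\delta]^d)$, this controls the exterior region (where $g_{n,R,\delta}=0$) and the $|f|$-part of the annular contribution. Second, since $F$ is bounded on the compact set $[-R-1,R+1]^d$, locally uniform convergence gives $n_0$ with $\|g_n\|_{L_\infty(\text{annulus})} \leq M := \|F\|_{L_\infty([-R-1,R+1]^d)}+1$ for all $n \geq n_0$ and $\delta \in (0,1]$; for $p < \infty$ choose $\delta$ small enough that $3M \cdot \mathrm{vol}(\text{annulus})^{1/p} < \varepsilon/3$, using $\mathrm{vol}(\text{annulus}) = (2R+2\delta)^d - (2R)^d \to 0$ as $\delta \to 0$, while for $p=\infty$ the pointwise estimate $|g_{n,R,\delta}-f| \leq 4|f| + 3\|g_n-F\|_{L_\infty(\text{annulus})}$ on $\Omega \cap \text{annulus}$ makes $\delta$ immaterial and the annular error is directly controlled by the smallness of $\|f\|_{L_\infty(\Omega \setminus [-R,R]^d)}$ and the locally uniform error. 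Third, take $n \geq n_0$ large enough that the central-box contribution $\|g_n-F\|_{L_\infty([-R,R]^d)} \cdot (2R)^{d/p}$ (read as $\|g_n-F\|_{L_\infty([-R,R]^d)}$ when $p=\infty$) is below $\varepsilon/3$. Summing the three contributions yields $\|g_{n,R,\delta}|_\Omega - f\|_X < \varepsilon$. The main obstacle is the annular term for $p < \infty$: the uniform bound $M$ grows with $R$ in general, and $\mathrm{vol}(\text{annulus})$ must be shrunk \emph{after} $R$ is fixed from the tail condition, which forces the strict order $R \to \delta \to n$ and is the reason that a cutoff lemma (rather than just the LU-level Lemma~\ref{lem:Recursivity}) is needed on unbounded domains.
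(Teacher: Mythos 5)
Your skeleton matches the paper's proof: Lemma~\ref{lem:Recursivity} with $\ell=2$ to re-expand $\varrho$-networks as $\varrho_r$-networks modulo locally uniform limits, then Lemma~\ref{lem:NetworkThatLocalizesAnyFunction} to truncate and upgrade to $X$-norm convergence on unbounded $\Omega$, with a three-region split ($[-R,R]^d$, annulus, exterior) and the parameter order $R\to\delta\to n$; the bounded-$\Omega$ and the exact $r=1$ cases are disposed of the same way. However, there is a gap: you invoke Lemma~\ref{lem:NetworkThatLocalizesAnyFunction} for the approximants $g_n\in\NNreal^{\varrho_r,d,k}_{Wm^2,L,Nm}$, but that lemma is stated only for $W,L,N\in\N$, and its footnote explicitly remarks that the conclusion is \emph{false} for $W=0$ or $N=0$. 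Since the statement you are proving ranges over $W,N\in\N_0\cup\{\infty\}$, the degenerate cases $W=0$ (constant realizations, by Lemma~\ref{lem:ConstantMaps}) and $N=0$ (affine realizations) must be disposed of before the cutoff lemma may be used: in both cases $f$ is itself already a $\varrho_r$-network realization of the required complexity, so the inclusion holds with no limiting argument, and only then may one assume $W,N\geq 1$. The fix is one line, but as written your argument does not cover these values.

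A secondary remark on the annular term: your estimate is valid but is not the one the paper uses. You shrink $\delta$ after fixing $R$ so that $M\cdot\mathrm{vol}(\text{annulus})^{1/p}$ becomes small, which forces the $M$-versus-$R$ bookkeeping you flag as the main obstacle. The paper instead fixes $\delta=1$ once and for all and bounds $\||g|\,\Indicator_{\text{annulus}}\|_{L_p}^{\min\{1,p\}}$ by $\||g-f|\,\Indicator_{\text{annulus}}\|_{L_p}^{\min\{1,p\}}+\||f|\,\Indicator_{\text{annulus}}\|_{L_p}^{\min\{1,p\}}$, controlling the first term by choosing $g$ uniformly close to $f$ on the fixed compact box $[-R-1,R+1]^d$ and the second by the same tail choice of $R$ that handles the exterior. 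That removes the free parameter $\delta$ entirely. (Also, the pointwise bound on the annulus from \eqref{eq:SpecialCutoffMainEstimateEndToEnd} is $|g_{n,R,\delta}|\leq 2|g_n|$, not $3|g_n|$; immaterial for the estimate, but worth correcting.)
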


The proof is in  Appendix~\ref{sub:ReLUPowerNestingUnbounded}.
We are now equipped to prove Theorem~\ref{thm:ReLUPowersApproxSpaces}.

\begin{proof}[Proof of Theorem~\ref{thm:ReLUPowersApproxSpaces}]
  We give the proof for $\WASpace[X][\cdot]$;
  minor adaptations yield the results for $\NASpace[X][\cdot]$.

  For Claim~(\ref{it:Nested0}), first note that Lemma~\ref{lem:PiecewisePolynomialCont}
  shows that there is some $m \in \N$ satisfying
  $\varrho \in \overline{\NNreal_{\infty,2,m}^{\varrho_r,1,1}}$,
  where the closure is with respect to locally uniform convergence.
  Define $\ell := 3$ if $d \geq 2$ (resp. $\ell := 2$ if $d=1$)
  and $\widetilde{\mathscr{L}}:= \max \{ \mathscr{L}+1,\ell \}$
  (resp.~$\widetilde{\mathscr{L}}:= \mathscr{L}$ when $\Omega$ is bounded or $r=1$)
  and consider $c \in \N$ as in Proposition~\ref{prop:HighReLUPowersApproximateLowPowersUnbounded}.
  Thus, since $\widetilde{\mathscr{L}}$ is non-decreasing, by
  Proposition~\ref{prop:HighReLUPowersApproximateLowPowersUnbounded} and
  Lemma~\ref{lem:BoundingLayersAndNeuronsByWeights} we have for all $n \in \N$
  \begin{align*}
    \WeightClassSymbol_n(X,\varrho,\mathscr{L})
    =           \NNreal^{\varrho,d,k}_{n,\mathscr{L}(n),\infty}(\Omega) \cap X
    & \subset \overline{
                \NNreal^{\varrho_r,d,k}_{cnm^{2},\widetilde{\mathscr{L}}(n),\infty}(\Omega) \cap X
              }^{X} \\
    & \subset \overline{
                \NNreal^{\varrho_r,d,k}_{cnm^{2},\widetilde{\mathscr{L}}(cnm^{2}),\infty}(\Omega) \cap X
              }^{X}
    =         \overline{\WeightClassSymbol_{cm^{2}n}(X,\varrho_r, \widetilde{\mathscr{L}})}^{X}.
  \end{align*}
Hence, for any $f \in X$ and $n \in \N$
\[
  \AppErr(f,\WeightClassSymbol_n(X,\varrho,\mathscr{L}))_{X}
  \geq \AppErr \big( f,\WeightClassSymbol_{cm^{2}n}(X,\varrho_r,\widetilde{\mathscr{L}})_{X} \big) .
\]
Thus, Lemma~\ref{lem:ApproximationSpaceElementaryNesting}
yields~\eqref{eq:UnboundedDomainGeneralNesting}. 
When $\Omega$ is bounded or $r=1$, as $\widetilde{\mathscr{L}} = \mathscr{L}$,
this yields~\eqref{eq:Nesting1}. 
When $\mathscr{L}+1 \preceq \mathscr{L}$, as
$\widetilde{\mathscr{L}} \leq \max \{ \mathscr{L}+1,\ell \} \leq \mathscr{L}+\ell+1$,
we have $\widetilde{\mathscr{L}} \preceq \mathscr{L}+\ell+1 \preceq \mathscr{L}$
by Lemma~\ref{lem:DepthGrowthLemma}, yielding again~\eqref{eq:Nesting1} 
by Lemma~\ref{lem:RoleGrowthFunctionRate}.

For Claim~(\ref{it:Nested01}),
if $\Omega$ is bounded and $\varrho \in \Spline^{r}(\R)$ is not a polynomial,
combining Lemma~\ref{lem:SplineVsReLUPow} with Lemma~\ref{lem:Recursivity}, we similarly get
the converse to
~\eqref{eq:Nesting1}. 
This establishes~\eqref{eq:Nesting1Eq}.

We now prove Claim~(\ref{it:Nested1}).
Since $\varrho_{r^s} = \varrho_r \circ \cdots \circ \varrho_r$ (where $\varrho$ appears $s$ times),
Lemma~\ref{lem:NestednessBasic} shows that
\(
  \NNreal^{\varrho_{r^{s}},d,k}_{W,L,N} \subset \NNreal^{\varrho_r,d,k}_{W+(s-1)N,1+s(L-1),sN}
\)
for all $W,L,N$.
Combining this with Lemma~\ref{lem:BoundingLayersAndNeuronsByWeights}, we obtain
\[
  \NNreal^{\varrho_{r^{s}},d,k}_{n,\mathscr{L}(n),\infty}
  \subset \NNreal^{\varrho_{r^{s}},d,k}_{n,\mathscr{L}(n),n}
  \subset \NNreal^{\varrho_r,d,k}_{sn,1+s(\mathscr{L}(n)-1),sn}
  \subset \NNreal^{\varrho_r,d,k}_{sn,1+s(\mathscr{L}(sn)-1),\infty}
  \quad \forall \, n \in \N .
\]
Therefore, we get for any $f \in X$ and $n \in \N$
\[
  \AppErr(f,\WeightClassSymbol_n(X,\varrho_{r^{s}},\mathscr{L}))_{X}
  \geq \AppErr \big( f,\WeightClassSymbol_{sn}(X,\varrho_{r},1+s(\mathscr{L}-1))_{X} \big). 
\]
Hence, we can finally apply Lemma~\ref{lem:ApproximationSpaceElementaryNesting} to obtain
\eqref{eq:Nesting0}.
\end{proof}

\begin{rem}
Inspecting the proofs, we see that if $\varrho \in \Spline^{r}$ has exactly one breakpoint
then $\varrho \in \NNreal^{\varrho_{r},1,1}_{w,2,m}$ and $\varrho_{r} \in \NNreal^{\varrho,1,1}_{w,2,m}$
for some $w,m \in \N$.
This is stronger than $\varrho \in \overline{\NNreal^{\varrho_{r},1,1}_{w,2,m}}$
(resp.~than $\varrho_{r} \in \overline{\NNreal^{\varrho,1,1}_{w,2,m}}$)
and implies~\eqref{eq:Nesting1Eq} 
with equivalent norms \emph{even on unbounded domains}.
Examples include the leaky ReLU \cite{Maas:tn},
the parametric ReLU \cite{He:2015:DDR:2919332.2919814},
and the absolute value which is used in scattering transforms \cite{Mallat:2016jr}.

Another spline of degree one is soft-thresholding, $\sigma(x) := x(1-\lambda/|x|)_{+}$,
which appears in Iterative Shrinkage Thresholding Algorithms (ISTA) for $\ell^{1}$ sparse recovery
in the context of linear inverse problems \cite[Chap.~3]{Foucart:2012wp} and has been used
in the Learned ISTA (LISTA) method \cite{Gregor2010}.
As  $\sigma \in \Spline^{1}$, using soft-thresholding as an activation function
on bounded $\Omega$ is exactly as expressive as using the ReLU.
\end{rem}

\subsection{Saturation property of approximation spaces with polynomial depth growth}
For certain depth growth functions, the approximation spaces of $\varrho_{r}$-networks
are independent of the choice of $r \geq 2$.

\begin{cor}\label{cor:SaturationProp}
  With the notations of Theorem~\ref{thm:ReLUPowersApproxSpaces},
  if $2 \mathscr{L} \preceq \mathscr{L}$ then for every $r \in \N_{\geq 2}$ we have
  \begin{align}
    \WASpace[X][\varrho_{1}]
    &\hookrightarrow \WASpace[X][\varrho_{2}]
    =                \WASpace[X][\varrho_{r}] \, ,\label{eq:SaturationPropWSpace} \\
    \NASpace[X][\varrho_{1}]
    & \hookrightarrow \NASpace[X][\varrho_{2}]
    =                 \NASpace[X][\varrho_{r}] \, ,\label{eq:SaturationPropNSpace}
  \end{align}
  where the equality is with equivalent quasi-norms.
\end{cor}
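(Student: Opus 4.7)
The plan is to derive the three claimed inclusions --- namely $\WASpace[X][\varrho_1] \hookrightarrow \WASpace[X][\varrho_2]$, $\WASpace[X][\varrho_2] \hookrightarrow \WASpace[X][\varrho_r]$, and the reverse $\WASpace[X][\varrho_r] \hookrightarrow \WASpace[X][\varrho_2]$ (and likewise for the $N$-spaces) --- by combining parts~(\ref{it:Nested0}) and~(\ref{it:Nested1}) of Theorem~\ref{thm:ReLUPowersApproxSpaces} with the depth-growth-function calculus furnished by Lemma~\ref{lem:DepthGrowthLemma} and Theorem~\ref{thm:RoleGrowthFunction}. I will argue only for the $W$-spaces; the $N$-case is verbatim the same, since every ingredient I use is stated for both.

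First I would observe that the hypothesis $2\mathscr{L} \preceq \mathscr{L}$ implies $\mathscr{L}+1 \preceq \mathscr{L}$: because $\mathscr{L}$ takes values in $\N \cup \{\infty\}$, we have $\mathscr{L}+1 \leq 2\mathscr{L}$, so $\mathscr{L}+1 \preceq 2\mathscr{L} \preceq \mathscr{L}$. This places us in the regime where the strong embedding~\eqref{eq:Nesting1} of Theorem~\ref{thm:ReLUPowersApproxSpaces}(\ref{it:Nested0}) is available on \emph{any} admissible domain. In particular, Remark~\ref{rem:ReLUrNested} immediately yields the two ``easy'' directions $\WASpace[X][\varrho_1] \hookrightarrow \WASpace[X][\varrho_2]$ and $\WASpace[X][\varrho_2] \hookrightarrow \WASpace[X][\varrho_r]$ for every $r \geq 2$.

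The heart of the matter is thus the reverse inclusion $\WASpace[X][\varrho_r] \hookrightarrow \WASpace[X][\varrho_2]$. I would pick $s \in \N$ with $2^s \geq r$. Since $\varrho_r$ is continuous and piecewise polynomial of degree at most $2^s$, Theorem~\ref{thm:ReLUPowersApproxSpaces}(\ref{it:Nested0}) (again via $\mathscr{L}+1 \preceq \mathscr{L}$) supplies $\WASpace[X][\varrho_r] \hookrightarrow \WASpace[X][\varrho_{2^s}]$, and then Theorem~\ref{thm:ReLUPowersApproxSpaces}(\ref{it:Nested1}) with base activation $\varrho_2$ and exponent $2^s$ gives
\[
  \WASpace[X][\varrho_{2^s}] \hookrightarrow \WASpace[X][\varrho_2][q][\alpha][1+s(\mathscr{L}-1)].
\]
To close the loop I would rewrite $1+s(\mathscr{L}-1) = s\mathscr{L}+(1-s)$ and invoke Lemma~\ref{lem:DepthGrowthLemma}(2) with $\eps := \log 2$ (legitimate because $e^{\log 2}\mathscr{L} = 2\mathscr{L} \preceq \mathscr{L}$), $a := s \geq 1$, and $b := 1-s$, noting that the constraint $b \geq 1-a$ holds with equality. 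This yields $1+s(\mathscr{L}-1) \sim \mathscr{L}$, which Theorem~\ref{thm:RoleGrowthFunction} upgrades to the equality $\WASpace[X][\varrho_2][q][\alpha][1+s(\mathscr{L}-1)] = \WASpace[X][\varrho_2]$ with equivalent quasi-norms. Chaining the three embeddings finishes the proof. I foresee no real obstacle; the most delicate point is the boundary case $b = 1-a$ in the application of Lemma~\ref{lem:DepthGrowthLemma}(2), which is already covered by the lemma as stated.
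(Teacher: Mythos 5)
Your proof is correct and follows essentially the same route as the paper: both establish the easy embeddings $\WASpace[X][\varrho_1] \hookrightarrow \WASpace[X][\varrho_2] \hookrightarrow \WASpace[X][\varrho_r]$ via Theorem~\ref{thm:ReLUPowersApproxSpaces}(\ref{it:Nested0}), then close the loop by passing through $\varrho_{2^s}$ and applying \eqref{eq:Nesting0} together with the depth-growth equivalence $1+s(\mathscr{L}-1) \sim \mathscr{L}$. The only cosmetic difference is that the paper fixes $s := r$ (using $r \le 2^r$) and closes a single cycle of four embeddings using only the one-sided bound $1+r(\mathscr{L}-1) \preceq \mathscr{L}$, whereas you choose a minimal $s$ with $2^s \ge r$ and invoke the two-sided equivalence via Theorem~\ref{thm:RoleGrowthFunction} --- both are valid and rely on the same lemmas.
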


\begin{example}
  By Example~\ref{ex:polygrowth}, for polynomially growing depth we do have $2\mathscr{L} \preceq\mathscr{L}$.
  This includes the case $\mathscr{L}(n) = n+1$,
  which gives the same approximation spaces as $\mathscr{L} \equiv \infty$;
  see Remark~\ref{rem:InfiniteDepthOrN}.
\end{example}

In words, approximation spaces of $\varrho_{r}$-networks with appropriate depth growth
have a saturation property: increasing the degree $r$ beyond $r=2$ does not pay off
in terms of the considered function spaces.
Note, however, that the constants in the norm equivalence may still play a qualitative role in practice.

\begin{proof}
We prove~\eqref{eq:SaturationPropWSpace}, the proof of~\eqref{eq:SaturationPropNSpace} is similar.
By Lemma~\ref{lem:DepthGrowthLemma}, since  $2\mathscr{L} \preceq \mathscr{L}$
we have $a \mathscr{L} + b \sim \mathscr{L}$ for all $a \geq 1$, $b \geq 1-a$.
In particular, $\mathscr{L} + 1 \preceq \mathscr{L}$ hence \eqref{eq:Nesting1} holds
with $\varrho = \varrho_{r'}$, $r' \in \N$, $1 \leq r' \leq r$.
Combined with~\eqref{eq:Nesting0} and Lemma~\ref{lem:RoleGrowthFunctionRate},
since $r \leq 2^r$ for $r \in \N$ we see
\[
  \WASpace[X][\varrho_{r}][q][\alpha][\mathscr{L}]
  \hookrightarrow
  \WASpace[X][\varrho_{2^r}][q][\alpha][\mathscr{L}]
  \hookrightarrow
  \WASpace[X][\varrho_2][q][\alpha][1 + r (\mathscr{L} - 1)]
  \hookrightarrow
   \WASpace[X][\varrho_2][q][\alpha][\mathscr{L}]
  \hookrightarrow
   \WASpace[X][\varrho_{r}][q][\alpha][\mathscr{L}]
\]
for all $r \in \N_{\geq 2}$.
In the middle we used that
$1 + r (\mathscr{L} - 1) \preceq 1 + r \mathscr{L} \preceq (1 + r) \mathscr{L} \preceq \mathscr{L}$.
\end{proof}

\subsection{Piecewise polynomial activation functions yield non-trivial approximation spaces}
\label{sub:ApproxSpacesNonTrivial}

In light of the pathological example of Theorem~\ref{th:PinkusLowerBoundsForMLPApproximation},
it is important to check that the approximation spaces $\WASpace$ and $\NASpace$
with $\varrho = \varrho_{r}$, $r \in \N$, are \emph{non-trivial}:
they are proper subspaces of $\StandardXSpace(\Omega)$.
This is what we prove for any continuous and piecewise polynomial activation function $\varrho$.

\begin{thm}\label{thm:ApproximationSpacesNonTrivial}
  Let $\varrho : \R \to \R$ be continuous and piecewise polynomial (with finitely many pieces),
  let $\Omega \subset \R^d$ be measurable with nonempty interior, and let $s > 0$.
  Let $p,q \in (0,\infty]$, $k \in \N$, $\alpha \in (0,\infty)$, and $X = \StandardXSpace(\Omega)$.
  Finally, let $\mathscr{L}$ be a depth-growth function
  satisfying $\sup_{n\in\N} \mathscr{L}(n) \geq 2$.
  Then $\WASpace[X] \subsetneq X$ and $\NASpace[X] \subsetneq X$.
\end{thm}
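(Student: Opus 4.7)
\emph{Reduction.} Since $\varrho$ is continuous and piecewise polynomial of degree at most some $r \in \N$, Theorem~\ref{thm:ReLUPowersApproxSpaces}~(\ref{it:Nested0}) yields continuous embeddings $\WASpace[X] \hookrightarrow \WASpace[X][\varrho_r][q][\alpha][\widetilde{\mathscr{L}}]$ and $\NASpace[X] \hookrightarrow \NASpace[X][\varrho_r][q][\alpha][\widetilde{\mathscr{L}}]$ for a suitable depth growth function $\widetilde{\mathscr{L}}$. A continuous embedding into a proper subspace is itself proper, so it suffices to prove the proper-inclusion statement with $\varrho$ replaced by $\varrho_r$ and $\mathscr{L}$ by $\widetilde{\mathscr{L}}$. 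Fixing a closed cube $Q \subset \Omega$ of nonempty interior (which exists since $\Omega$ has nonempty interior) and using the restriction and Cartesian-product properties of Remark~\ref{sec:RestrictionCartesian}, I further reduce to establishing $\WASpace[X_p(Q)][\varrho_r] \subsetneq X_p(Q)$ and the analogous statement for $\NASpace$.

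\emph{Open mapping.} Suppose toward a contradiction that $\WASpace[X_p(Q)][\varrho_r] = X_p(Q)$ as sets. Both are quasi-Banach spaces by Theorem~\ref{th:ReLUDNNApproxSpaceWellDefined}, and $\WASpace[X_p(Q)][\varrho_r] \hookrightarrow X_p(Q)$ continuously by Proposition~\ref{prop:ApproxSpaceWellDefined}. Quasi-Banach spaces are $F$-spaces, so the open mapping theorem yields equivalence of the underlying quasi-norms. Combined with the embedding~\eqref{eq:ApproximationSpaceEmbedding} applied with $\beta := \alpha/2$ and $s := \infty$, this produces a uniform approximation rate
\[
\AppErr(f, \StandardSigmaW[n-1])_{X_p(Q)}
\;\leq\; C\, n^{-\alpha/2}\, \|f\|_{X_p(Q)}
\qquad \forall\, f \in X_p(Q),\ n \in \N.
\]

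\emph{Compactness and contradiction.} The contradiction comes from showing that $\Sigma_n^{\star} := \StandardSigmaW \cap 2\, B_{X_p(Q)}$ is totally bounded in $X_p(Q)$. By Lemma~\ref{lem:BoundingLayersAndNeuronsByWeights} I may assume that networks in $\StandardSigmaW$ have depth at most $n$ and at most $n$ neurons, so only finitely many architectures are involved. For each, the positive homogeneity $\varrho_r(cx) = c^r \varrho_r(x)$ (for $c \geq 0$) permits a recursive layer-by-layer rescaling of weights and biases that normalizes every hidden-layer weight matrix to unit norm, while the output-layer weights are then controlled by $\|f\|_{L_p(Q)} \leq 2$ because $Q$ is bounded. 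The normalized parameters thus lie in a bounded Euclidean set whose continuous image under the realization map is relatively compact in $X_p(Q)$, giving total boundedness of $\Sigma_n^{\star}$. The uniform rate gives $B_{X_p(Q)} \subset \Sigma_n^{\star} + C n^{-\alpha/2}\, B_{X_p(Q)}$, and a finite $(C n^{-\alpha/2})$-net of $\Sigma_n^{\star}$ then yields a finite $2C n^{-\alpha/2}$-net of $B_{X_p(Q)}$ for every $n$. Choosing $n$ so that $2 C n^{-\alpha/2} < 1$ contradicts Riesz's theorem for the infinite-dimensional space $X_p(Q)$. The same argument applies verbatim to $\NASpace$, using the bound on weights in terms of neurons from~\eqref{eq:WeightsBoundedByNeurons}.

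\emph{Main obstacle.} The delicate step is the compactness of $\Sigma_n^{\star}$: the rescaling must be executed carefully, handling degenerate sub-architectures (e.g.\ networks with a zero-weight layer, which by Lemma~\ref{lem:ConstantMaps} realize only constants) via reduction to lower-dimensional sub-problems, and tracking biases alongside weight matrices throughout the recursion. A secondary technical point is the invocation of the open mapping theorem in the quasi-Banach setting, which relies on quasi-Banach spaces being $F$-spaces.
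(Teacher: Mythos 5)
Your initial reduction steps (reducing to $\varrho_r$ via Theorem~\ref{thm:ReLUPowersApproxSpaces}, restricting to a cube, and invoking the open mapping / closed graph theorem for quasi-Banach spaces to upgrade set equality to a uniform approximation rate) are correct and in fact mirror the opening of the paper's own proof. But the approach then diverges, and the step on which you pin all your hopes — total boundedness of $\Sigma_n^\star := \StandardSigmaW \cap 2B_{X_p(Q)}$ — is \emph{false}, so the Riesz-lemma contradiction never materializes.

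Here is a concrete counterexample. Take $Q = [0,1]$, $r = 1$, $p < \infty$, and for $n \in \N$ consider the ``sharp tent''
\[
  g_n(x) := n^{1/p+1}\Big(\varrho_1(x) - 2\,\varrho_1\big(x - \tfrac{1}{n}\big) + \varrho_1\big(x - \tfrac{2}{n}\big)\Big),
\]
a continuous piecewise affine function supported on $[0, 2/n]$ with peak height $n^{1/p}$. Each $g_n$ is realized by a strict $\varrho_1$-network with $W = 6$ and two layers, so $g_n|_Q \in \StandardSigmaW[W_0]$ for a fixed $W_0$. A direct computation gives $\|g_n\|_{L_p(Q)}^p = 2/(p+1)$, independent of $n$; in particular the $g_n$ lie in a ball of fixed radius. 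Yet for $m \gg n$ the function $g_m$ is supported on the small interval $[0,2/m]$, so on a set of measure $\approx 2/n$ one has $|g_n - g_m| \approx n^{1/p}$, giving $\|g_n - g_m\|_{L_p(Q)}^p \gtrsim 1$ uniformly. Thus $(g_n)_n$ has no Cauchy subsequence, and $\Sigma_{W_0}^\star$ is \emph{not} totally bounded. The obstruction is not degenerate architectures or unbounded biases (which you correctly flag as issues to handle) but a genuine scale-concentration phenomenon: the outer coefficients $c_i = \pm n^{1/p+1}$ blow up while the $L_p$ norm stays fixed. There is no rescaling that normalizes this away, precisely because the ReLU's positive homogeneity is what makes the blow-up possible. (Indeed the paper exploits exactly this non-compactness in Lemma~\ref{lem:ApproximationOfIndicatorCube}: the $L_p$-closure of $\Sigma_n$ contains discontinuous indicator functions, which cannot be realizations.)

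The paper avoids this trap entirely. Rather than a soft compactness argument, it exhibits an explicit family of hard targets — the sawtooth functions $\Delta_{j,d}$ restricted to a cube in $\Omega$ — and proves a quantitative lower bound on the error of approximating $\Delta_j$ by \emph{any} piecewise polynomial with too few pieces (Lemma~\ref{lem:SawtoothHardToApproximate}, built on Telgarsky's crossing-number bounds). Since $\varrho_r$-network realizations of bounded complexity are piecewise polynomial with a controlled number of pieces (Lemma~\ref{lem:MinGrowthRateNbPiecesSufficient}), this forces $\|g_j\|_{\NASpace[\StandardXSpace[](\Omega)][\varrho_r][\infty]} \to \infty$ for a sequence $(g_j)$ with uniformly bounded $X$-norm, contradicting the closed-graph estimate. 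If you want to repair your argument you would need to replace total boundedness by an explicit lower bound of this type; the compactness route cannot be made to work.
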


The proof is given at the end of Appendix~\ref{sec:WeightAndNeuronSpacesDistinct}.


\subsection{ReLU-networks of bounded depth have limited expressiveness}

In this subsection, we show that approximation spaces of ReLU-networks \emph{of bounded depth}
and high approximation rate $\alpha$ are non-trivial in a very explicit sense:
they fail to contain any nonzero function in $C_{c}^{3}(\R^{d})$.
This quite general obstruction to the expressiveness of shallow ReLU-networks, and to the embedding
of ``classical'' function spaces into the approximation spaces of shallow ReLU-networks,
is obtained by translating \cite[Theorem 4.5]{PetersenVoigtlaenderReLU} into the language
of approximation spaces.


\begin{thm}\label{thm:UniversalLowerBound}
  Let $\Omega \subseteq \R^d$ be an \emph{open} admissible domain, $p,q \in (0,\infty]$,
  $X = \StandardXSpace[](\Omega)$, $L \in \N$, and $\alpha > 0$.
  \begin{itemize}[leftmargin=0.6cm]
    \item If  $C_c^3 (\Omega) \cap \WASpace[X][\varrho_{1}][q][\alpha][L] \neq \{0\}$
          then $\lfloor L/2 \rfloor \geq \alpha / 2$;

    \item If  $C_c^3 (\Omega) \cap \NASpace[X][\varrho_{1}][q][\alpha][L] \neq \{0\}$
          then $L - 1 \geq \alpha/2$.
          \qedhere
  \end{itemize}
\end{thm}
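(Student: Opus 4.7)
The strategy is to convert membership in the approximation space into an upper bound on the best-approximation error $E(f, \Sigma_n)_X$, then contradict it against the rate-limiting lower bound of \cite[Theorem 4.5]{PetersenVoigtlaenderReLU}, which says that a non-affine $C^3$ function cannot be approximated by depth-$L$ ReLU networks faster than $n^{-2\lfloor L/2\rfloor}$ in terms of weights (respectively faster than $n^{-2(L-1)}$ in terms of neurons). Matching the two rates will force $\alpha \leq 2\lfloor L/2\rfloor$ (respectively $\alpha \leq 2(L-1)$).

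\textit{Step 1 (the relevant $f$ is non-affine).} Let $0 \neq f \in C_c^3(\Omega) \cap \WASpace[X][\varrho_{1}][q][\alpha][L]$. Because $\Omega$ is open and $\supp f$ is compact in $\Omega$, if $f$ were the restriction of a nonzero affine-linear function to $\Omega$, then $f$ would be nonzero on an unbounded portion of $\Omega$, contradicting compact support. Hence $f$ is not affine.

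\textit{Step 2 (upper bound on the decay from membership).} Using the embedding \eqref{eq:ApproximationSpaceEmbedding} with $\beta := \alpha$, $s := \infty$, $q \leq \infty$, we obtain
\[
  \WASpace[X][\varrho_{1}][q][\alpha][L]
  \hookrightarrow \WASpace[X][\varrho_{1}][\infty][\alpha][L],
\]
so there is $C > 0$ with $E\bigl(f, \StandardSigmaW[n]\bigr)_X \leq C \,(n+1)^{-\alpha}$ for every $n \in \N_0$.

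\textit{Step 3 (lower bound from Petersen--Voigtlaender).} Invoking \cite[Theorem 4.5]{PetersenVoigtlaenderReLU}, there exist $c_f > 0$ and $n_0 \in \N$ such that every ReLU network $\Phi$ with $L(\Phi) \leq L$ and $W(\Phi) \geq n_0$ satisfies
\[
  \|f - \Realization(\Phi)\|_{L_p(\Omega)}
  \geq c_f \cdot W(\Phi)^{-2\lfloor L/2\rfloor},
\]
and similarly $\|f - \Realization(\Phi)\|_{L_p(\Omega)} \geq c_f \cdot N(\Phi)^{-2(L-1)}$ when the complexity is measured in neurons. Taking infima over $\Phi \in \NNsymbol_{n,L,\infty}^{\varrho_{1},d,k}$ (resp.\ over $\Phi \in \NNsymbol_{\infty,L,n}^{\varrho_{1},d,k}$) gives $E(f, \StandardSigmaW[n])_X \geq c_f\, n^{-2\lfloor L/2\rfloor}$ (resp.\ $E(f,\StandardSigmaN[n])_X \geq c_f\, n^{-2(L-1)}$) for $n \geq n_0$. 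Combined with Step~2, we conclude $c_f\, n^{-2\lfloor L/2\rfloor} \leq C\,(n+1)^{-\alpha}$ for all $n \geq n_0$; letting $n \to \infty$ this forces $\alpha \leq 2\lfloor L/2\rfloor$. The $\NASpace$ case is identical.

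\textit{Main obstacle.} The PV lower bound is originally phrased for strict ReLU networks, whereas our approximation classes are built from generalized networks (which could \emph{a priori} be more expressive). The conversion is handled by Lemma~\ref{lem:ReLUPowerRepresentsIdentity} (the ReLU can represent the identity with finitely many terms) combined with Lemma~\ref{lem:IdentityRectifierForFree}, which turns any generalized ReLU network into a strict one with only a constant-factor blow-up in $W$ and $N$; this changes only the constant $c_f$, not the exponent, so the rate survives and the inequality $\alpha \leq 2\lfloor L/2\rfloor$ (resp.\ $\alpha \leq 2(L-1)$) is unaffected. A minor technical point is that PV's bound is $L_p$-local around a point where $f$ is non-affine; since $\Omega$ is open and $f \in C_c^3(\Omega)$ is nonzero, such a point exists in the interior of $\supp f \subset \Omega$, and restricting to a small neighborhood yields the desired local lower bound, which transfers to $\|f-\Realization(\Phi)\|_{L_p(\Omega)}$ by monotonicity of the norm under restriction.
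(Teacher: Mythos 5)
Your high-level plan — bound the best-approximation error from above via membership in $\WASpace[X][\varrho_1][\infty][\alpha][L]$, then contradict it against a rate-limiting lower bound for depth-$L$ ReLU approximation of non-affine $C^3$ functions — is exactly the paper's strategy. The difference is where the lower bound comes from. You invoke \cite[Theorem 4.5]{PetersenVoigtlaenderReLU} directly in the form ``$\|f - \Realization(\Phi)\|_{L_p(\Omega)} \geq c_f \cdot W(\Phi)^{-2\lfloor L/2\rfloor}$'' (and the analogous $N(\Phi)^{-2(L-1)}$ estimate). The paper instead re-derives this lower bound from the more elementary \cite[Proposition C.5]{PetersenVoigtlaenderReLU}, which gives $\|f-g\|_{L_p(\Omega_0)} \geq C_1 P^{-2}$ when $g$ is $P$-piecewise slice affine and $f|_{\Omega_0}$ is not affine on a small ball $\Omega_0$; it then combines this with its own Lemma~\ref{lem:MinGrowthRateNbPiecesSufficient}, which shows that any (generalized) ReLU network with $\leq W$ weights and depth $\leq L$ restricts to a piecewise affine function with $\leq K W^{\lfloor L/2\rfloor}$ pieces on each line (respectively $\leq K N^{L-1}$ pieces with $\leq N$ neurons). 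This self-contained route is what actually delivers the sharp exponents $\lfloor L/2\rfloor$ and $L-1$ \emph{simultaneously for weight and neuron count, for generalized networks}; because Lemma~\ref{lem:MinGrowthRateNbPiecesSufficient} already handles generalized networks, the whole strict-vs-generalized detour you describe as the ``main obstacle'' is not needed. If you cite PV Theorem 4.5 as a black box, you owe the reader a check that it really is stated in this exact form (sharp $\lfloor L/2\rfloor$ exponent, both complexity measures, $L_p$ over arbitrary open $\Omega$); the paper's choice to re-derive strongly suggests it is not.

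Two further points where your write-up falls short of what the proof actually needs. First, the non-affinity argument: you assert that an affine $f$ with compact support ``would be nonzero on an unbounded portion of $\Omega$,'' but $\Omega$ may be bounded, so this is literally false; the correct argument (as in the paper) is that an affine function vanishing on a nonempty open subset of $\R^d$ vanishes identically. Second, and more importantly, PV's Proposition C.5 requires $f$ to be non-affine \emph{on a fixed ball} $\Omega_0 = B_r(x_0) \subset \Omega$, not merely non-affine globally on $\Omega$. The paper secures this by locating a point $x_0 \in \supp f$ with $\hess_f(x_0) \neq 0$ — an argument that uses the compact support of $f$ in an essential way (if the Hessian vanished everywhere, $\nabla f$ would be constant, hence $f$ affine, hence zero by compact support). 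Your statement ``such a point exists in the interior of $\supp f$'' papers over precisely this step and should be made precise.
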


Before we give a proof we immediately highlight a consequence.

\begin{cor}\label{cor:EmbeddingObstruction}
  Let $Y$ be a function space such that $C_c^3 (\Omega) \cap Y \neq \{0\}$ where
  $\Omega \subseteq \R^d$ is an \emph{open} admissible domain.
  For $p \in (0,\infty]$, $X = \StandardXSpace[](\Omega)$, $L \in \N$, $\alpha > 0$
  and $q \in (0,\infty]$ we have
  \begin{itemize}[leftmargin=0.6cm]
    \item If $Y \subset  \WASpace[X][\varrho_{1}][q][\alpha][L]$ then $\lfloor L/2 \rfloor \geq \alpha/2$;
    \item If $Y \subset  \NASpace[X][\varrho_{1}][q][\alpha][L]$ then $L-1 \geq \alpha/2$.
          \qedhere
  \end{itemize}
\end{cor}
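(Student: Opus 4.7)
The corollary is an immediate consequence of Theorem~\ref{thm:UniversalLowerBound}, so the plan is essentially a one-line unpacking of the hypotheses. I would begin by selecting any nonzero $f \in C_c^3(\Omega) \cap Y$, which is possible precisely because the assumption $C_c^3(\Omega) \cap Y \neq \{0\}$ guarantees such an $f$ exists.

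Next, I would observe that under the embedding hypothesis $Y \subset \WASpace[X][\varrho_{1}][q][\alpha][L]$ (respectively $Y \subset \NASpace[X][\varrho_{1}][q][\alpha][L]$), the chosen $f$ automatically belongs to $\WASpace[X][\varrho_{1}][q][\alpha][L]$ (respectively to $\NASpace[X][\varrho_{1}][q][\alpha][L]$). Since $f$ also lies in $C_c^3(\Omega)$ by construction and is nonzero, it witnesses that
\[
  C_c^3(\Omega) \cap \WASpace[X][\varrho_{1}][q][\alpha][L] \neq \{0\},
\]
respectively the analogous nontriviality for the $\NASpace$-space.

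Finally, I would invoke Theorem~\ref{thm:UniversalLowerBound} directly: in the $\WASpace$-case it yields $\lfloor L/2 \rfloor \geq \alpha/2$, and in the $\NASpace$-case it yields $L-1 \geq \alpha/2$, which is exactly the asserted conclusion. There is no genuine obstacle here; the whole content of the corollary is a reformulation of Theorem~\ref{thm:UniversalLowerBound} through the contrapositive of a trivial set-theoretic implication (``$Y$ embeds into $Z$ and $Y$ meets $W$ nontrivially, hence so does $Z$''), so no additional lemmas or estimates are needed.
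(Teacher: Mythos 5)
Your proposal is correct and is exactly the intended argument: the paper states Corollary~\ref{cor:EmbeddingObstruction} as an immediate consequence of Theorem~\ref{thm:UniversalLowerBound} without writing out a separate proof, and your one-line deduction (pick a nonzero $f \in C_c^3(\Omega) \cap Y$, note it lies in the relevant approximation space by the embedding hypothesis, then apply the theorem) is precisely what is implicitly being invoked.
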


\begin{rem*}
  All  ``classical'' function spaces (Sobolev, Besov, or modulation spaces, \ldots)
  include $C_c^\infty (\Omega)$, hence this shows that none of these
  spaces embed into $\WASpace[X][\varrho_{1}][q][\alpha][L]$
  (resp.~into $\NASpace[X][\varrho_{1}][q][\alpha][L]$) for $\alpha > 2L$. 
  In other words,  {\em to achieve embeddings into approximation spaces of ReLU-networks
  with a good approximation rate, one needs depth!}
\end{rem*}

\begin{proof}[Proof of Theorem~\ref{thm:UniversalLowerBound}]
The claimed estimates are trivially satisfied in case of $L = 1$;
hence, we will assume $L \geq 2$ in what follows.

Let $f \in C_c^3 (\Omega)$ be not identically zero.
We derive necessary criteria on $L$ which have to be satisfied if
$f \in \WASpace[X][\varrho_1][q][\alpha][L]$ or $f \in \NASpace[X][\varrho_1][q][\alpha][L]$.
By Equation~\eqref{eq:ApproximationSpaceEmbedding}, we have
$\WASpace[X][\varrho_1][q][\alpha][L] \subset \WASpace[X][\varrho_1][\infty][\alpha][L]$
and the same for $\NASpace[X][\varrho_1][q][\alpha][L]$;
thus, it suffices to consider the case $q = \infty$.

Extending $f$ by zero outside $\Omega$, we can assume $f \in C_c^3(\R^d)$ with $\supp f \subset \Omega$.
We claim that there is $x_0 \in \supp(f) \subset \Omega$ with $\hess_f (x_0) \neq 0$,
where $\hess_f$ denotes the Hessian of $f$.
If this was false, we would have $\hess_f \equiv 0$ on all of
$\R^d$, and hence $\nabla f \equiv v$ for some $v \in \R^d$.
This would imply $f (x) = \langle v, x \rangle + b$ for all $x \in \R^d$, with $b = f(0)$.
However since $f \equiv 0$ on the nonempty open set $\R^d \setminus \supp(f)$,
this would entail $v = 0$, and then $f \equiv 0$, contradicting our choice of $f$.

Now, choose $r > 0$ such that $\Omega_0 := B_r (x_0) \subset \Omega$.
Then $f|_{\Omega_0}$ is \emph{not} an affine-linear function, so that
\cite[Proposition C.5]{PetersenVoigtlaenderReLU} yields a constant $C_1 = C_1(f,p) > 0$ satisfying
\begin{equation}
  \| f - g \|_{L^p (\Omega_0)}
  \geq C_1 \cdot P^{-2}
  \quad \text{for each $P$-piecewise slice affine function $g : \R^d \to \R$}.
  \label{eq:PiecewiseAffineLowerBound}
\end{equation}
Here, a function $g : \R^d \to \R$ is called \emph{$P$-piecewise slice affine} if for arbitrary
$x_0, v \in \R^d$ the function $g_{x_0, v} : \R \to \R, t \mapsto g(x_0 + t v)$ is piecewise
affine-linear with at most $P$ pieces; that is, $g_{x_0, v} \in \PPoly_{P}^1 (\R)$.

Now, Lemma~\ref{lem:MinGrowthRateNbPiecesSufficient} (which will be proved independently)
shows that there is a constant $K = K(L) \in \N$ such that
\[
  \NNreal_{W,L,\infty}^{\varrho_1, 1, 1} \subset \PPoly_{K \cdot W^{\lfloor L/2 \rfloor}}^1 (\R)
  \qquad \text{and} \qquad
  \NNreal_{\infty,L,N}^{\varrho_1, 1, 1} \subset \PPoly_{K \cdot N^{L-1}}^1 (\R)
\]
for all $N \in \N$.
Furthermore, if $g \in \NNreal_{W,L,N}^{\varrho_1,d,1}$, then Lemma~\ref{lem:NetworkCalculus}
shows $g_{x_0, v} \in \NNreal_{W,L,N}^{\varrho_1,1,1}$;
here, we used that the affine map $T : \R \to \R^d , t \mapsto x_0 + t v$ satisfies
$\| T \|_{\ell^{0,\infty}_\ast} \leq 1$.
In combination, we see that each $g \in \NNreal_{W,L,\infty}^{\varrho_1,d,1}$ is
$P$-piecewise slice affine with $P = K \cdot W^{\lfloor L/2 \rfloor}$,
and each $g \in \NNreal_{\infty,L,N}^{\varrho_1,d,1}$ is
$P$-piecewise slice affine with $P = K \cdot N^{L-1}$.

Now, if $f \in \WASpace[X][\varrho_1][\infty][\alpha][L]$, then there is a constant
$C_2 = C_2(f,\alpha,p) > 0$ such that for each $n \in \N$ there is
$g_n \in \NNreal_{n,L,\infty}^{\varrho_1,d,1}$ satisfying
$\| f - g_n \|_{L^p(\Omega_0)} \leq \| f - g_n \|_{X} \leq C_2 \cdot n^{-\alpha}$.
Furthermore, since $g_n$ is $P$-piecewise slice affine with $P = K \cdot n^{\lfloor L/2 \rfloor}$,
Equation~\eqref{eq:PiecewiseAffineLowerBound} shows that
\(
  K^{-2} C_1 \cdot n^{-2 \lfloor L/2 \rfloor} \leq \| f - g_n \|_{L^p(\Omega_0)} \leq C_2 \cdot n^{-\alpha}
\).
Since this holds for all $n \in \N$, we get $\alpha - 2 \lfloor L/2 \rfloor \leq 0$, as claimed.

The proof in case of $f \in \NASpace[X][\varrho_1][q][\alpha][L]$ is almost identical,
and hence omitted.
\end{proof}

Our next result shows that for networks of \emph{fixed} depth, neural networks using the
activation function $\varrho_r$ with $r \geq 2$ are strictly more expressive than ReLU networks---at
least in the regime of very high approximation rates.

\begin{cor}\label{cor:ReLUPowerStrongerForFixedDepth}
  Consider $\Omega \subseteq \R^d$ an open admissible domain, $p \in (0,\infty]$, $X = \StandardXSpace[](\Omega)$, $L \in \N$.
  In case of $d = 1$, assume that $r \geq 4$ and $L \geq 2$, or that $r \in \{2,3\}$ and $L \geq 3$.
  In case of $d > 1$, assume instead that $r \geq 4$ and $L \geq 3$,
  or that $r \in \{2,3\}$ and $L \geq 5$.
  Then the following hold:
  \[
    \alpha > 2 \lfloor L/2 \rfloor
    \Longrightarrow \WASpace[X][\varrho_{r}][q][\alpha][L]
                    \not\hookrightarrow \WASpace[X][\varrho_{1}][q][\alpha][L]
    \quad \text{and} \quad
    \alpha > 2 L
    \Longrightarrow \NASpace[X][\varrho_{r}][q][\alpha][L]
                    \not\hookrightarrow \NASpace[X][\varrho_{1}][q][\alpha][L].
    \qedhere
  \]
\end{cor}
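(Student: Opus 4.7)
The strategy is to convert each non-embedding into an existence statement via the contrapositive of Theorem~\ref{thm:UniversalLowerBound}: the hypothesis $\alpha > 2\lfloor L/2 \rfloor$ forces $C^3_c(\Omega) \cap \WASpace[X][\varrho_1][q][\alpha][L] = \{0\}$, and $\alpha > 2L > 2(L-1)$ likewise forces $C^3_c(\Omega) \cap \NASpace[X][\varrho_1][q][\alpha][L] = \{0\}$. It therefore suffices to exhibit a single nonzero $f \in C^3_c(\Omega)$ that is the exact realization of a fixed $\varrho_r$-network of depth at most $L$: such an $f$ has vanishing approximation error from both families $\WeightClassSymbol_n(X,\varrho_r,L)$ and $\NeuronClassSymbol_n(X,\varrho_r,L)$ once $n$ exceeds the complexity of the implementing network, so $f$ lies in $\WASpace[X][\varrho_r][q][\alpha][L] \cap \NASpace[X][\varrho_r][q][\alpha][L]$ (and $f \in X$ is immediate from compact support inside $\Omega$), while it lies in neither $\varrho_1$-counterpart.

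I would produce $f$ case by case. For $d = 1$ and $r \geq 4$, take $f := \beta_+^{(r)}$, which by~\eqref{eq:DecompBSpline} is a strict $\varrho_r$-network of depth $2$ lying in $C^{r-1}_c(\R) \subset C^3_c(\R)$. For $d = 1$ and $r \in \{2,3\}$, take $f := \beta_+^{(r^2)} \in C^{r^2-1}_c(\R) \subset C^3_c(\R)$, a strict $\varrho_{r^2}$-network of depth $2$; the identity $\varrho_{r^2} = \varrho_r \circ \varrho_r$ combined with Lemma~\ref{lem:NestednessBasic} realizes it as a $\varrho_r$-network of depth $1 + 2(2-1) = 3$. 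For $d > 1$, I would compose the univariate bump $g$ from the preceding step with the radial quadratic $q_{x_0}(x) := c - |x - x_0|^2$, where $x_0 \in \Omega$ and $c > 0$ are chosen so that $\{q_{x_0} \geq 0\}$ is a ball contained in $\Omega$, and set $f := g \circ q_{x_0}$. Since $r \geq 2$, Lemma~\ref{lem:ReLUPowerRepresentsIdentity} writes each coordinate square $(x_i - (x_0)_i)^2$ as a linear combination of $\varrho_r$-terms computed in a single hidden layer, so $q_{x_0}$ is a $\varrho_r$-network of depth $2$; composing with $g$ via Lemma~\ref{lem:NetworkCalculus}-(\ref{enu:ComposLessDepth}) yields a $\varrho_r$-network of depth $2 + L_g - 1$, equal to $3$ when $r \geq 4$ and $4$ when $r \in \{2,3\}$---both bounded by the hypothesized $L$ (the slack between this $4$ and the stated $L \geq 5$ for $r \in \{2,3\}$, $d > 1$, is harmless, since larger $L$ only enlarges the approximating family). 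The function $f$ inherits compact support and $C^3$-regularity from $g$ via the smooth substitution $q_{x_0}$, and a preliminary translation/scaling makes $\supp f \subset \Omega$.

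The only substantive obstacle is the case $r \in \{2,3\}$, where a single $\varrho_r$-layer outputs only a $C^{r-1}$ function and therefore cannot by itself produce a $C^3$ bump; the construction must recover the missing smoothness by composing $\varrho_r$ with itself, and it is precisely this additional composition---formalised by the identity $\varrho_{r^2} = \varrho_r \circ \varrho_r$ together with Lemma~\ref{lem:NestednessBasic}---that drives the required depth up from $L \geq 2$ to $L \geq 3$ in the case $d = 1$ and from $L \geq 3$ to $L \geq 5$ in the case $d > 1$. Everything else is straightforward bookkeeping through the composition lemmas followed by a direct appeal to the contrapositive of Theorem~\ref{thm:UniversalLowerBound}.
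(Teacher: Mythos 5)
Your overall framework is the same as the paper's: apply the contrapositive of Theorem~\ref{thm:UniversalLowerBound} (formalised in the paper as Corollary~\ref{cor:EmbeddingObstruction}) to reduce the problem to exhibiting a nonzero $f \in C_c^3(\Omega)$ that is the exact realization of a $\varrho_r$-network of depth $\leq L$, and then note that such an $f$ lies in $\WASpace[X][\varrho_r][q][\alpha][L] \cap \NASpace[X][\varrho_r][q][\alpha][L]$ but cannot lie in the $\varrho_1$-counterparts. That reduction is correct and is exactly what Lemma~\ref{lem:EmbeddingObstructionR1R2} accomplishes in the paper. Where you genuinely diverge is in the construction of the witness: the paper builds its $C^3_c$ bump from the squashing function $\sigma_{r'}$ of Lemma~\ref{lem:ExactSquashingReLUPower} via the approximate-indicator device of Lemma~\ref{lem:ApproximationOfIndicatorCube}-(2) (a sum of coordinate-wise bumps fed through $\sigma_{r'}$), whereas you take the B-spline $\beta_+^{(r')}$ directly via \eqref{eq:DecompBSpline} for $d=1$, and for $d>1$ compose the univariate bump with a radial quadratic $q_{x_0}$, which is implementable as a depth-2 $\varrho_r$-network directly because $r \geq 2$. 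Your radial composition is more parsimonious: for $r\in\{2,3\}$, $d>1$ it yields depth $2 + 3 - 1 = 4$ rather than the paper's depth $5$ (obtained by re-expanding the depth-$3$ $\varrho_{r^2}$-network into a $\varrho_r$-network via Lemma~\ref{lem:NestednessBasic}), because you avoid passing the multiplication through the $\varrho_{r^2}$-level entirely. Your observation that the slack is harmless is correct---larger $L$ only enlarges the approximating family---but it does mean your argument would in fact support a sharpened hypothesis $L\geq 4$ for that sub-case. The remaining bookkeeping (translation/scaling to fit $\supp f \subset \Omega$, checking $C^3$-regularity through $r-1\geq 3$ or $r^2-1\geq3$, and the depth count through Lemmas~\ref{lem:ReLUPowerRepresentsIdentity}, \ref{lem:NestednessBasic}, \ref{lem:SummationLemma}, \ref{lem:NetworkCalculus}) all checks out.
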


\begin{proof}
  We use Lemma~\ref{lem:EmbeddingObstructionR1R2} below to get
  $\WASpace[X][\varrho_{r}][q][\alpha][L] \cap C^{3}_{c}(\Omega) \neq \{0\}$ and
  $\NASpace[X][\varrho_{r}][q][\alpha][L] \cap C^{3}_{c}(\Omega) \neq \{0\}$,
  and we conclude using Corollary~\ref{cor:EmbeddingObstruction}.
\end{proof}


\begin{lem}\label{lem:EmbeddingObstructionR1R2}
  Consider $d,r,L \in \N$, $\Omega \subset \R^d$ an \emph{open} admissible domain,
  $p \in (0,\infty]$,  $X = \StandardXSpace[](\Omega)$.
  In case of $d = 1$, assume that $r \geq 4$ and $L \geq 2$, or that $r \in \{2,3\}$ and $L \geq 3$.
  In case of $d > 1$, assume instead that $r \geq 4$ and $L \geq 3$,
  or that $r \in \{2,3\}$ and $L \geq 5$.

  Then for each $\alpha > 0$ and $q \in (0,\infty]$, we have
  \(
    \NASpace[X][\varrho_r][q][\alpha][L] \cap C_c^3 (\Omega)
    \neq \{0\}
    \neq \WASpace[X][\varrho_{r}][q][\alpha][L] \cap C^{3}_{c}(\Omega).
  \)
  \qedhere
\end{lem}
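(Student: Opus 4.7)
The plan is to exhibit, in each of the four parameter regimes, a nonzero function $f \in C_c^3(\Omega)$ that is \emph{exactly} the realization of some $\varrho_r$-network with at most $L$ layers and finitely many weights. Once such an $f$ is produced, it lies in $\StandardSigmaW$ for every $n$ larger than its weight count, so $\AppErr(f,\StandardSigmaW)_X = 0$ eventually; consequently $\|f\|_{\WASpace[X][\varrho_r][q][\alpha][L]} < \infty$ for every $\alpha > 0$ and $q \in (0,\infty]$, and the embedding $\WASpace \hookrightarrow \NASpace$ of Lemma~\ref{lem:weightsvsneurons} transfers the result to the neuron-counted class simultaneously. In particular it suffices to build $f$ on the weight side.

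The building block is the B-spline $\beta_+^{(n)}$ of Definition~\ref{defn:Bsplines}: it is compactly supported on $[0,n+1]$, lies in $C^{n-1}(\R)$, is strictly positive in the interior, and by~\eqref{eq:DecompBSpline} is exactly the realization of a two-layer strict $\varrho_n$-network. Since $\varrho_s \circ \varrho_t = \varrho_{st}$ on all of $\R$ (because $\varrho_t \geq 0$), the same decomposition applied with $n = r^2$ unfolds into a three-layer strict $\varrho_r$-network: the first hidden layer computes $(\varrho_r(x-k))_{k=0}^{r^2+1}$, an identity pre-linear map feeds these into a second $\varrho_r$ application yielding $(\varrho_{r^2}(x-k))_k$, and a linear output aggregates them with the coefficients from~\eqref{eq:DecompBSpline}.

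For $d = 1$, I take $f$ to be an affine rescaling of $\beta_+^{(r)}$ when $r \geq 4$ (giving a $C_c^{r-1} \subset C_c^3$ two-layer $\varrho_r$-network) or of $\beta_+^{(r^2)}$ when $r \in \{2,3\}$ (giving a $C_c^{r^2-1} \subset C_c^3$ three-layer $\varrho_r$-network), with the affine parameters chosen so that $\supp f$ sits inside an open subinterval of $\Omega$. For $d > 1$ I pick $x_0 \in \Omega$ and $\varepsilon > 0$ with $\overline{B}(x_0,\varepsilon) \subset \Omega$ and define
\[
  f(x) \;:=\; \beta_+^{(m)}\bigl(m + 1 - c \, |x - x_0|^2\bigr),
  \qquad c := (m+1)/\varepsilon^2,
\]
with $m = r$ when $r \geq 4$ and $m = r^2$ when $r \in \{2,3\}$. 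This $f$ is supported in $\overline{B}(x_0,\varepsilon) \subset \Omega$, attains a positive peak on the sphere $|x - x_0| = \varepsilon/\sqrt{2}$, and lies in $C^{m-1}(\R^d) \subset C^3(\R^d)$ because $\beta_+^{(m)} \in C^{m-1}$ while the inner polynomial is smooth. To realize $f$ as a $\varrho_r$-network, I first express $|x - x_0|^2 = \sum_{i=1}^d (x_i - x_{0,i})^2$ as a two-layer strict $\varrho_r$-network via Lemma~\ref{lem:ReLUPowerRepresentsIdentity} (each one-variable quadratic is a polynomial of degree $\leq r$ expressible with $2r+2$ $\varrho_r$-terms, and the sum over $i$ is absorbed into the output affine), and then compose with the two- or three-layer scalar network $t \mapsto \beta_+^{(m)}(m+1 - ct)$ via Lemma~\ref{lem:NetworkCalculus}\eqref{enu:Compos} or~\eqref{enu:ComposLessDepth}.

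The only delicate point is the layer bookkeeping in this final composition. With $m = r$ (so $r \geq 4$, $L \geq 3$) I must invoke Lemma~\ref{lem:NetworkCalculus}\eqref{enu:ComposLessDepth} to obtain $L_1 + L_2 - 1 = 3$ layers rather than the naive $4$, using that the inner affine $m+1 - ct$ may be absorbed into the first affine of the outer B-spline network at no layer cost; with $m = r^2$ (so $r \in \{2,3\}$, $L \geq 5$) the looser bound $L_1 + L_2 = 5$ from~\eqref{enu:Compos} already fits. Finally, the monotonicity $\WASpace[X][\varrho_r][q][\alpha][L'] \hookrightarrow \WASpace[X][\varrho_r][q][\alpha][L]$ for $L' \leq L$ (an instance of Lemma~\ref{lem:RoleGrowthFunctionRate} applied to constant depth-growth functions) upgrades the witness to any larger $L$ in the claimed range.
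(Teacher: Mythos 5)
Your proof is correct, and it takes a genuinely different route from the paper's. The paper builds the required $C^3_c$ witness from the squashing function $\sigma_{r'}$ of Lemma~\ref{lem:ExactSquashingReLUPower} together with the tensor-product bump construction of Lemma~\ref{lem:ApproximationOfIndicatorCube}-(2), applied with $r' = r$ when $r \geq 4$ and $r' = r^2$ when $r \in \{2,3\}$, and then obtains membership in $C^3$ by noting that $\varrho_{r'} \in C^3$ once $r' \geq 4$. You instead use an explicit radial witness: a rescaled univariate B-spline $\beta_+^{(m)}$ ($m \geq 4$) for $d = 1$, and $\beta_+^{(m)}(m+1-c\,|x-x_0|^2)$ for $d > 1$, realizing the inner squared-norm via the degree-two representation of Lemma~\ref{lem:ReLUPowerRepresentsIdentity} (available since $r \geq 2$) and the outer piece via~\eqref{eq:DecompBSpline} together with $\varrho_{r^2} = \varrho_r \circ \varrho_r$ and Lemma~\ref{lem:NestednessBasic}. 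Both constructions land in the approximation space by the same final step — exact realizability forces the approximation error to vanish for all $n$ beyond the network size, so the quasi-norm is finite — and the passage from $W^\alpha_q$ to $N^\alpha_q$ via Lemma~\ref{lem:weightsvsneurons} is sound. Your radial construction avoids the tensor-product machinery and is arguably more self-contained; it even gives a marginally tighter depth bound ($L \geq 4$ would suffice in the $r \in \{2,3\}$, $d > 1$ case if you invoke Lemma~\ref{lem:NetworkCalculus}-(3) rather than Lemma~\ref{lem:NetworkCalculus}-(2) there), though as stated you use the looser bound, which still fits the hypothesis $L \geq 5$. The one thing worth stating more carefully is the $C^{m-1}$ regularity of the radial witness: since $\beta_+^{(m)} \in C^{m-1}(\R)$ and $x \mapsto m+1-c\,|x-x_0|^2$ is $C^\infty$, the composition is $C^{m-1}(\R^d)$ by the chain rule, and $m-1 \geq 3$ because $m \geq 4$ in every regime — you do say this, and it is correct.
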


\begin{proof}
Since $\Omega$ is an admissible domain, it is non empty.
Being open, $\Omega$ thus contains a hyper-rectangle
$[a,b] := \prod_{i=1}^d [a_i,b_i] \subset \Omega$, where $a_i < b_i$.

For $r' \geq 2$, let $\sigma_{r'} \in \SNNreal^{\varrho_{r'},1,1}_{2(r'+1),2,r'+1}$
be the function constructed in Lemma~\ref{lem:ExactSquashingReLUPower}.
As $\sigma_{r'}$ satisfies~\eqref{eq:ExactSquashing}, the function $g$ built from $\sigma_{r'}$
in Lemma~\ref{lem:ApproximationOfIndicatorCube}-(2) for small enough $\varepsilon$ is nonzero
and satisfies ${\supp(g) \subset [a,b] \subset \Omega}$ and $g \in \NNreal_{\infty,3,\infty}^{\varrho_{r'},d,1}$
(resp.~$g \in \NNreal_{\infty,2,\infty}^{\varrho_{r'},d,1}$ when $d=1$).
Note that if $r' \geq 4$ then $\varrho_{r'} \in C^{3}(\R)$,
hence $g \in C^{3}_{c}(\R^{d}) \setminus \{0\}$.

When $r \geq 4$, set $r':=r$ so that $g \in \NNreal^{\varrho_{r},d,1}_{\infty,3,\infty}$
($g \in \NNreal^{\varrho_{r},d,1}_{\infty,2,\infty}$ when $d=1$).
When $r \in \{2,3\}$ set $r' := r^{2} \geq 4$.
As $\varrho_{r'} = \varrho_{r} \circ \varrho_{r}$,
Lemma~\ref{lem:NestednessBasic} with $s=2$ yields $g \in \NNreal^{\varrho_{r},d,1}_{\infty,5,\infty}$
($g \in \NNreal^{\varrho_{r},d,1}_{\infty,3,\infty}$ for $d=1$).

It is not hard to see that our assumptions regarding $L$ imply in each case for $n$ large enough
that
\(
  g|_{\Omega}
  \in \WeightClassSymbol_{n}(X,\varrho_{r},L)
      \cap \NeuronClassSymbol_{n}(X,\varrho_{r},L)
\),
and hence
\(
  0 \neq g|_{\Omega}
  \in \WASpace[X][\varrho_{r}][q][\alpha][L]
      \cap C_{c}^{3}(\Omega)
      \cap \NASpace[X][\varrho_{r}][q][\alpha][L]
\).
\end{proof}

\section{Direct and inverse estimates with Besov spaces}
\label{sec:directinverse}

In this section we characterize certain embeddings
\begin{itemize}
\item of Besov spaces into $\WASpace[X][\varrho_r]$ and $\NASpace[X][\varrho_r]$;
      these are called {\em direct estimates};

\item of $\WASpace[X][\varrho_r]$ and $\NASpace[X][\varrho_r]$ into Besov spaces;
      these are called {\em inverse estimates}.
\end{itemize}

Since the approximation classes for output dimension $k > 1$ are $k$-fold cartesian products
of the classes for $k=1$ (cf.~Remark~\ref{sec:RestrictionCartesian}),
we focus on scalar output dimension $k=1$.
We will use so-called \emph{Jackson inequalities} and \emph{Bernstein inequalities},
as well as the notion of real interpolation spaces.
These concepts are recalled in Section~\ref{sec:interpolation},
while Besov spaces and some of their properties are briefly recalled in Section~\ref{sec:remindBesov}
before we proceed to our main results.

\subsection{Reminders on interpolation theory}
\label{sec:interpolation}

Given two quasi-normed vector spaces $(Y_J, \|\cdot\|_{Y_J})$ and $(Y_B, \|\cdot\|_{Y_B})$
with $Y_J \hookrightarrow X$ and $Y_B \hookrightarrow X$
for a given quasi-normed linear space $(X, \|\cdot\|_X)$,
we say that \emph{$Y_J$ fulfills a Jackson inequality with exponent $\gamma > 0$}
with respect to the family $\AppSet = (\Sigma_n)_{n \in \N_0}$,
if there is a constant $C_J > 0$ such that
\begin{equation*}
  \label{eq:JacksonInequality}
  \AppErr(f,\AppSet_{n})_{X} \leq C_J \cdot n^{-\gamma} \cdot \| f \|_{Y_J}
  \qquad \forall \, f \in Y_J \text{ and } n \in \N.
  \tag{J}
\end{equation*}
We say that \emph{$Y_B$ fulfills a Bernstein inequality with exponent $\gamma > 0$}
with respect to $\AppSet = (\Sigma_n)_{n \in \N_0}$, if there is a constant $C_B > 0$ such that
\begin{equation*}
    \label{eq:BernsteinInequality}
    \| \varphi \|_{Y_B} \leq C_B \cdot n^\gamma \cdot \| \varphi \|_X
    \qquad \forall \, n \in \N \text{ and } \varphi \in \AppSet_n .
    \tag{B}
\end{equation*}
As shown in the proof of \cite[Chapter 7, Theorem 9.1]{ConstructiveApproximation},
we have the following:

\begin{prop}\label{prop:BernsteinJacksonConsequence}
  Denote by $(X,Y)_{\theta, q}$ the real interpolation space obtained from $X,Y$,
  as defined e.g.\@ in \cite[Chapter 6, Section 7]{ConstructiveApproximation}.
  Then the following hold:
  \begin{itemize}[leftmargin=0.6cm]
    \item If $Y_J \hookrightarrow X$ fulfills the Jackson inequality with exponent $\gamma > 0$,
          then
          \[
            ( X, Y_J )_{\alpha / \gamma, q} \hookrightarrow \GenApproxSpace
            \qquad \forall \,\, 0 < \alpha < \gamma
                           \text{ and } 0 < q \leq \infty.
          \]

    \item If $Y_B \hookrightarrow X$ fulfills the Bernstein inequality with exponent $\gamma>0$,
          then
          \[
            \GenApproxSpace \hookrightarrow (X, Y_B)_{\alpha / \gamma , q}
            \qquad \forall \,\, 0 < \alpha < \gamma
                           \text{ and } 0 < q \leq \infty.
            \qedhere
          \]
  \end{itemize}
\end{prop}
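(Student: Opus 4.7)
My plan is to follow the classical real-interpolation approach from DeVore--Lorentz, built around relating the $K$-functional on the relevant pair to the error of best approximation on a dyadic scale. The single workhorse is the standard equivalence
\[
  \|f\|_{(X, Y)_{\theta, q}}^q \;\asymp\; \sum_{k \in \Z} \bigl[2^{-k\theta} K(f, 2^k; X, Y)\bigr]^q
\]
(with the obvious sup-modification at $q = \infty$), valid for any quasi-Banach pair $Y \hookrightarrow X$ and any $\theta \in (0,1)$, $q \in (0, \infty]$. Setting $\theta := \alpha/\gamma \in (0,1)$ and reindexing via $t_k := 2^{-k\gamma}$ rewrites this as a weighted $\ell^q$-sum over the scales $2^{k\alpha} K(f, 2^{-k\gamma})$, which is the form matching the sum defining $\|f\|_{\GenApproxSpace}$.

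For the Jackson direction, the first step is the pointwise bound
\[
  \AppErr(f, \AppSet_n)_X \;\leq\; C \cdot K(f, n^{-\gamma}; X, Y_J) \qquad \text{for all } n \in \N,\, f \in X,
\]
obtained by writing $f = (f - g) + g$ for arbitrary $g \in Y_J$, applying the quasi-triangle inequality in $X$, using \eqref{eq:JacksonInequality} to bound $\AppErr(g, \AppSet_n)_X \leq C_J n^{-\gamma} \|g\|_{Y_J}$, and taking the infimum over $g$. Inserting this bound into the sum defining $\|f\|_{\GenApproxSpace}$, grouping indices $n \in (2^{k-1}, 2^k]$ into dyadic blocks, and exploiting the monotonicity of $K(f, \cdot)$ gives $\|f\|_{\GenApproxSpace} \leq C' \cdot \|f\|_{(X, Y_J)_{\alpha/\gamma, q}}$, with $C'$ depending only on $\alpha, \gamma, q, C_J$.

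For the Bernstein direction, I fix $f \in \GenApproxSpace$ and choose, for each $k \in \N_0$, a near-best approximation $\varphi_k \in \AppSet_{2^k}$ with $\|f - \varphi_k\|_X \leq 2 \AppErr(f, \AppSet_{2^k})_X$. Setting $\psi_0 := \varphi_0$ and $\psi_k := \varphi_k - \varphi_{k-1}$ for $k \geq 1$, the additivity property~\ref{enu:GammaAdditive} gives $\psi_k \in \AppSet_{c_0 \cdot 2^k}$ for a fixed constant $c_0$, and the quasi-triangle inequality in $X$ yields $\|\psi_k\|_X \leq C \cdot \AppErr(f, \AppSet_{2^{k-1}})_X$. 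Combining with \eqref{eq:BernsteinInequality} gives $\|\psi_k\|_{Y_B} \leq C'' \cdot 2^{k\gamma} \AppErr(f, \AppSet_{2^{k-1}})_X$. At scale $t_K := 2^{-K\gamma}$, using $g := \varphi_K$ as a candidate in the $K$-functional produces
\[
  K(f, t_K; X, Y_B) \;\leq\; C \Bigl(\AppErr(f, \AppSet_{2^K})_X \,+\, t_K \sum_{k=0}^{K} \|\psi_k\|_{Y_B}\Bigr),
\]
so that $2^{K\alpha} K(f, t_K)$ is controlled by a discrete convolution of $\bigl(2^{k\alpha} \AppErr(f, \AppSet_{2^{k-1}})_X\bigr)_{k \geq 0}$ against the geometric kernel $2^{(k-K)(\gamma-\alpha)}$ supported on $k \leq K$, which is summable since $\alpha < \gamma$. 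A discrete Young-type inequality in $\ell^q$ then yields $\|f\|_{(X, Y_B)_{\alpha/\gamma, q}} \leq C''' \|f\|_{\GenApproxSpace}$.

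The main obstacle is the careful bookkeeping in the quasi-norm regime: since $X$ may merely be quasi-Banach and $q < 1$ is allowed, the usual triangle inequality must be replaced throughout by an $r$-triangle inequality $\|\sum a_k\|^r \leq \sum \|a_k\|^r$ for some $r \in (0,1]$, available via the Aoki--Rolewicz theorem; the discrete Young/Hardy inequality used in the Bernstein step must likewise be formulated in the $\ell^q$ quasi-norm setting with constants depending only on $\alpha, \gamma, q$. Once these technical points are handled exactly as in DeVore--Lorentz, Chapter~7, both embeddings follow.
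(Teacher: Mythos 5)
Your proposal is correct and reconstructs exactly the argument that the paper invokes by citation: the paper does not prove Proposition~\ref{prop:BernsteinJacksonConsequence} itself but defers to \cite[Chapter 7, Theorem 9.1]{ConstructiveApproximation}, and your two steps (bounding $\AppErr(f,\AppSet_n)_X$ by $K(f,n^{-\gamma})$ via Jackson, and the telescoping $\psi_k = \varphi_k - \varphi_{k-1}$ with Bernstein plus a discrete Hardy inequality) are precisely the DeVore--Lorentz proof. Your remarks on the Aoki--Rolewicz modifications needed in the quasi-Banach/quasi-normed setting are also apt, since the paper applies the result with $X = L_p$ for $p < 1$.
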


In particular, if the single space $Y = Y_J = Y_B$ satisfies both inequalities
with the same exponent $\gamma$, then $\GenApproxSpace = (X, Y)_{\alpha / \gamma , q}$
for all $0 < \alpha < \gamma$ and $0 < q \leq \infty$.

By \cite[Chapter 7, Theorem 9.3]{ConstructiveApproximation},
if $\AppSet$ satisfies Properties~\ref{enu:GammaContainsZero}--\ref{enu:GammaDense} then
for $0 < \tau \leq \infty$, $0 < \alpha < \infty$ the space $Y :=  A^{\alpha}_{\tau}(X,\Sigma)$
satisfies matching Jackson and Bernstein inequalities with exponent $\gamma:=\alpha$.
The Bernstein inequality reads
\begin{equation}
  \exists \, C = C (\alpha,\tau,X) > 0
    \quad \forall \, n \in \N \text{ and } \varphi \in \AppSet_n :
    \quad
    \| \varphi \|_{A^{\alpha}_{\tau}(X,\Sigma)} \leq C \cdot n^\alpha \cdot \| \varphi \|_X .
    \label{eq:TrivialBernsteinInequality}
\end{equation}

We will also use the following well-known property of (real) interpolation spaces
(see \cite[Chapter 6, Theorem 7.1]{ConstructiveApproximation}):
For quasi-Banach spaces $X_1, X_2$ and $Y_1, Y_2$, assume that
$T : X_1 + X_2 \to Y_1 + Y_2$ is linear and such that $T|_{X_i} : X_i \to Y_i$ is bounded
for $i \in \{1,2\}$.
Then $T|_{(X_1, X_2)_{\theta,q}} : (X_1, X_2)_{\theta,q} \to (Y_1, Y_2)_{\theta,q}$
is well-defined and bounded for all $\theta \in (0,1)$ and $q \in (0,\infty]$.


\subsection{Reminders on Besov spaces}\label{sec:remindBesov}

We refer to \cite[Section 2]{DeVSharp93} for the definition
of the Besov spaces $B^{s}_{\sigma,\tau} (\Omega) := B^s_{\tau}(X_\sigma(\Omega;\R))$
with $\sigma, \tau \in (0,\infty]$, $s \in (0,\infty)$ and with a Lipschitz domain%
\footnote{Here, the term ``domain'' is to be understood as an open connected set.}
$\Omega \subset \R^{d}$
(see \cite[Definition 4.9]{AdamsSobolevSpaces} for the precise definition of these domains).

As shown in \cite[Theorem 7.1]{DeVore:1988fe},
we have for all $p, s \in (0,\infty)$ the embedding
\[
  B^{s}_{\sigma,p}( (0,1)^d ) \hookrightarrow L_p( (0,1)^d ;\R),
  \quad \text{provided} \quad \sigma = (s/d + 1/p)^{-1}.
\]
Combined with the embedding $B^{s}_{p,q}(\Omega) \hookrightarrow B^s_{p,q'}(\Omega)$
for $q \leq q'$ (see \cite[Displayed equation on Page 92]{DeV98})
and because of $\sigma = (s/d + 1/p)^{-1} \leq p$, we see that
\begin{equation}
  B^{s}_{\sigma,\sigma} ( (0,1)^d )
  \hookrightarrow B^{s}_{\sigma,p}( (0,1)^d )
  \hookrightarrow L_p( (0,1)^d ;\R),
  \quad \text{provided} \quad \sigma = (s/d + 1/p)^{-1}.
  \label{eq:BesovEmbedsLp}
\end{equation}

For the special case $\Omega = (0,1) \subset \R$ and each fixed $p \in (0,\infty)$,
the sub-family of Besov spaces $B^{s}_{\sigma,\sigma}( (0,1) )$
with $\sigma = (s/d+1/p)^{-1}$ satisfies
\begin{equation}
  \big( L_p( (0,1);\R), B^s_{\sigma, \sigma}( (0,1) ) \big)_{\theta,q}
  = B^{\theta s}_{q,q}( (0,1) ),
  \quad \text{for}\ 0 < \theta < 1, \ \text{where}\ q= (\theta s + 1/p)^{-1}.
  \label{eq:LpBesovInterpolation}
\end{equation}
This is shown in \cite[Chapter 12, Corollary 8.5]{ConstructiveApproximation}.

Finally, from the definition of Besov spaces given in
\cite[Equation (2.2)]{DeVSharp93} it is clear that
\begin{equation}
  B_{p,q}^\alpha (\Omega) \hookrightarrow B_{p,q}^\beta(\Omega)
  \quad \text{if} \quad p,q \in (0,\infty] \text{ and } 0 < \beta < \alpha .
  \label{eq:BesovEmbedding}
\end{equation}


\subsection{Direct estimates}
\label{sec:direct}

In this subsection, we investigate embeddings of Besov spaces into the approximation spaces
$\WASpace[X][\varrho_{r}]$ 
where $\Omega \subseteq \R^{d}$ is an admissible domain
and $X := \StandardXSpace[](\Omega)$ with $p \in (0,\infty]$.
For technical reasons, we further assume $\Omega$ to be a bounded Lipschitz domain,
such as $\Omega=(0,1)^d$, see \cite[Definition 4.9]{AdamsSobolevSpaces}. 
The main idea is to exploit known direct estimates for Besov spaces
on such domains which give error bounds for the $n$-term approximations
with B-spline based wavelet systems, see \cite{DeVore:1988fe}.

For $t \in \N_0$ and $d \in \N$, the tensor product B-spline is
\(
  \beta_d^{(t)} (x_1,\dots,x_d)
  := \beta_{+}^{(t)}(x_1) \, \beta_{+}^{(t)}(x_2) \, \cdots\beta_{+}^{(t)}(x_d),
\)
where $\beta_+^{(t)}$ is as introduced in Definition~\ref{defn:Bsplines}.
Notice that $\beta_d^{(0)} = \Indicator_{[0,1)^{d}}$.

By Lemma~\ref{lem:ExactSquashingReLUPower} there is
$\sigma_{r} \in \NNreal^{\varrho_{r},1,1}_{2(r+1),2,r+1}$ satisfying~\eqref{eq:ExactSquashing};
hence by Lemma~\ref{lem:ApproximationOfIndicatorCube} there is $L \leq 3$
such that for $\varepsilon > 0$, we can approximate $\beta_d^{(0)}$
with $g_\eps = \Realization(\Phi_\eps)$ with precision
$\|\beta_d^{(0)} - g_\eps\|_{L_p(\R^d)} < \varepsilon$, where $L(\Phi_\eps) = L$ and
\(
  \Phi_\eps
  \in \NNreal^{\varrho_{r},d,1}_{w,3,m}
\),
for suitable $w = w(d,r), m = m(d,r) \in \N$.
Furthermore, if $d = 1$, then Lemma~\ref{lem:ApproximationOfIndicatorCube} shows that the same holds
for some $\Phi_\eps \in \NNreal^{\varrho_r,d,1}_{w,2,m}$.

For approximating $\beta^{(t)}_d$ (with $t \in \N$) instead of $\beta_d^{(0)}$,
we can actually do better.
In fact, we prove in Appendix~\ref{app:MultiBspline} that one can implement $\beta_d^{(t)}$
as a $\varrho_{t}$-network, provided that $t \geq \min \{ d,2 \}$.

\begin{lem}\label{lem:MultiBspline}
  Let $d,t \in \N$ with $t \geq \min \{ d, 2 \}$.
  Then the tensor product B-spline
  \begin{equation}
    \beta_d^{(t)} : \R^d \to \R, \quad
    \beta_d^{(t)} (x) := \beta_{+}^{(t)}(x_1) \beta_{+}^{(t)}(x_2) \cdots \beta_{+}^{(t)} (x_d)
    \label{eq:TensorBSpline}
  \end{equation}
  satisfies $\beta_d^{(t)} \in \NNreal^{\varrho_t,d,1}_{w,L,m}$
  with $L = 2 + 2 \lceil \log_{2}d\rceil$ and
  \[
    \begin{cases}
      w = 28 d(t+1) \text{ and } m = 13d(t+1), & \text{if } d > 1, \\
      w =  2  (t+2) \text{ and } m =     t+2 , & \text{if } d = 1.
    \end{cases}
    \qedhere
  \]
\end{lem}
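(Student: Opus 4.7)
The plan is to build $\beta_d^{(t)}$ as a composition of two $\varrho_t$-subnetworks: a shallow parallel network that evaluates the univariate B-spline at each coordinate, and a multiplication network that multiplies the $d$ scalar outputs. The starting point is identity~\eqref{eq:DecompBSpline}, which rewrites
\[
  \beta_+^{(t)}(x) = \frac{1}{t!}\sum_{k=0}^{t+1} \binom{t+1}{k}(-1)^k \, \varrho_t(x-k)
\]
and thereby exhibits $\beta_+^{(t)}$ as a strict two-layer $\varrho_t$-network with $L=2$, $N=t+2$, and $W=2(t+2)$. For $d=1$ this already delivers the announced bounds, so the remaining work concerns $d\geq 2$.

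For $d\geq 2$, for each $i \in \{1,\dots,d\}$ I would build a two-layer network $\widetilde{\Phi}_i \in \NNsymbol^{\varrho_t,d,1}$ computing $x \mapsto \beta_+^{(t)}(x_i)$ by combining the (sparse) coordinate extraction $x \mapsto (x_i - k)_{k=0}^{t+1}$ (a $(t+2)\times d$ matrix with a single nonzero per row) with the weighted sum from the displayed identity; each $\widetilde{\Phi}_i$ still satisfies $L=2$, $N=t+2$, and $W=2(t+2)$. Since all $\widetilde{\Phi}_i$ share the same depth, Lemma~\ref{lem:SummationLemma}-(\ref{enu:Cartesian}) applies with $\delta=0$ and yields a single network $\Phi_\ast \in \NNsymbol^{\varrho_t,d,d}$ of depth $2$ with $W(\Phi_\ast)\leq 2d(t+2)$ and $N(\Phi_\ast)\leq d(t+2)$, realizing $x\mapsto (\beta_+^{(t)}(x_i))_{i=1}^d$. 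Next, using $t\geq 2$, Lemma~\ref{lem:ReLUPowerRepresentsIdentity} ensures that $\varrho_t$ can represent any polynomial of degree $\leq t$ (in particular degree $2$) with $n:=2t+2$ terms, so Lemma~\ref{lem:MultNetwork}-(1) supplies a multiplication network $M_d \in \NNreal^{\varrho_t,d,1}_{6n(2^j-1),\,2j,\,(2n+1)(2^j-1)-1}$ with $j=\lceil \log_2 d\rceil$. Composing $M_d$ after $\Phi_\ast$ via Lemma~\ref{lem:NetworkCalculus}-(\ref{enu:Compos}) (intermediate dimension $d_1=d$) produces a $\varrho_t$-network realizing $\beta_d^{(t)}$ of depth exactly $2+2\lceil \log_2 d\rceil$, as required.

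The remaining work is routine complexity bookkeeping. Using $2^j - 1 < 2d$ and $t+2 \leq 2(t+1)$, the weight count satisfies
\[
  W \leq 2d(t+2) + 12(t+1)(2^j-1) \leq 4d(t+1) + 24d(t+1) = 28d(t+1),
\]
while the neuron count satisfies
\[
  N \leq d(t+2) + d + (4t+5)(2^j-1) - 1 \leq 3d(t+1) + 10d(t+1) = 13d(t+1),
\]
after absorbing the additive constants into $(t+1)$ using $t \geq 1$. Conceptually the argument is straightforward; the only conceptually delicate point is the appeal to the multiplication network, whose construction intrinsically requires the activation to represent quadratics---this is precisely why the hypothesis $t \geq \min\{d,2\}$ enters the statement, and why the pure ReLU ($t=1$) fails for $d\geq 2$.
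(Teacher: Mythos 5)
Your proof is correct and follows essentially the same route as the paper: reduce to the univariate case via~\eqref{eq:DecompBSpline}, realize each coordinate-slice $x\mapsto\beta_+^{(t)}(x_i)$ with a depth-2 network, stack them via Lemma~\ref{lem:SummationLemma}-(\ref{enu:Cartesian}), feed the result into the multiplication network of Lemma~\ref{lem:MultNetwork}-(1) (enabled by Lemma~\ref{lem:ReLUPowerRepresentsIdentity} with $n=2(t+1)$), and compose via Lemma~\ref{lem:NetworkCalculus}-(\ref{enu:Compos}). The intermediate arithmetic differs slightly (you use $2^j-1<2d$ where the paper uses the marginally tighter $2^j-1\leq 2(d-1)$), but both arrive at the same final bounds $w=28d(t+1)$ and $m=13d(t+1)$.
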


In the following, we will consider $n$-term approximations with respect to the continuous
wavelet-type system generated by $\beta_d^{(t)}$.
Precisely, for $a > 0$ and $b \in \R^d$, define $\beta^{(t)}_{a,b} := \beta_d^{(t)} (a \cdot + b)$.
The continuous wavelet-type system generated by $\beta_d^{(t)}$ is then
$\CalD_d^{t} := \{ \beta^{(t)}_{a,b} \colon a \in (0,\infty), b \in \R^d \}$.
For any $t \in \N_{0}$, we define $\AppSet_{0}(\mathcal{D}_d^t) := \{0\}$,
and the reservoir of all $n$-term expansions from $\mathcal{D}_d^{t}$, $n \in \N$, is given by
\[
  \AppSet_n(\mathcal{D}_d^{t})
  := \bigg\{
       g = \sum_{i=1}^n c_i g_i
       \colon
       c_i \in \R, g_i \in \mathcal{D}_d^t
     \bigg\}.
\]
The following lemma relates $\AppSet_n(\mathcal{D}_d^{t})$ to $\NNreal^{\varrho_{r},d,1}_{cn,L,cn}$
for a suitably chosen constant $c=c(d,r,t) \in \N$.

\begin{lem}\label{lem:WaveletNTermsSubsetNeurons}
  Consider $d \in \N$, $t \in \N_{0}$, $p \in (0,\infty]$, $X = \StandardXSpace[](\R^{d})$.
  \begin{enumerate}[leftmargin=0.6cm]
    \item If $t=0$ and $p<\infty$ then, with $L := \min \{ d+1,3 \}$ and $c = c(d,r) \in \N$,
          we have 
          \begin{equation}\label{eq:SigmaN0}
            \AppSet_{n}(\mathcal{D}_d^0)
            \subset \overline{\NNreal^{\varrho_{r},d,1}_{cn,L,cn} \cap X}^{X}
            \qquad \forall \, n,r \in \N.
          \end{equation}

    \item If $t \geq \min \{ d,2 \}$ then, with $L := 2 + 2 \lceil \log_{2} d \rceil$
          we have for any $p \in (0,\infty]$ that
          \begin{equation}\label{eq:Sigma_m}
            \AppSet_n(\mathcal{D}_d^t) \subset \NNreal^{\varrho_t,d,1}_{cn,L,cn} \cap X
            \qquad \forall \, n \in \N ,
          \end{equation}
          where $c = c(d,t) \in \N$.
          \qedhere
  \end{enumerate}
\end{lem}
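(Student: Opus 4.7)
The plan for both parts is the same three-step scheme: (i) represent or $L_p$-approximate the single tensor-product B-spline $\beta_d^{(t)}$ by a $\varrho_t$- (resp.\@ $\varrho_r$-) network of fixed depth $L$ and complexity depending only on $(d,t)$ (resp.\@ $(d,r)$); (ii) transfer this to an arbitrary atom $\beta^{(t)}_{a,b}(x) = \beta_d^{(t)}(a x + b)$ via Lemma~\ref{lem:NetworkCalculus}-(\ref{enu:PrePostAffine}), noting that the linear part $a I_d$ of $P(x) = ax+b$ has exactly one nonzero per row so that $\|P\|_{\ell^{0,\infty}_\ast} = 1$; (iii) stack $n$ such atoms via Lemma~\ref{lem:SummationLemma}-(\ref{enu:ScalMult})--(\ref{enu:LinComb}), all of equal depth, so that the synchronization cost $\delta$ vanishes and connectivity/neuron counts add purely linearly in $n$.

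For Part (2), Lemma~\ref{lem:MultiBspline} delivers $\beta_d^{(t)} \in \NNreal^{\varrho_t,d,1}_{w_0, L, m_0}$ with $L = 2 + 2 \lceil \log_2 d \rceil$ and constants $w_0, m_0$ depending only on $(d,t)$. Step (ii) then gives $\beta^{(t)}_{a,b} \in \NNreal^{\varrho_t,d,1}_{w_0, L, m_0}$, step (iii) (absorbing each $c_i$ by Lemma~\ref{lem:SummationLemma}-(\ref{enu:ScalMult}) and summing by Lemma~\ref{lem:SummationLemma}-(\ref{enu:LinComb}) with $\delta = 0$) yields $g := \sum_{i=1}^n c_i\, \beta^{(t)}_{a_i, b_i} \in \NNreal^{\varrho_t,d,1}_{n w_0, L, n m_0}$, and setting $c := \max(w_0, m_0)$ proves the claim. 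Membership in $X$ is automatic because $g$ is bounded with compact support, which places it in $L_p(\R^d)$ for every $p \in (0,\infty]$ (and in $C_0$ since $\beta_d^{(t)}$ is continuous for $t \geq 1$).

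For Part (1), the obstruction is that $\beta_d^{(0)} = \Indicator_{[0,1)^d}$ is discontinuous and cannot be realized exactly by the (continuous) realization of any $\varrho_r$-network---this is precisely why the conclusion is an $X$-closure and why $p = \infty$ must be excluded. The remedy is to invoke Lemma~\ref{lem:ExactSquashingReLUPower}, which supplies $\sigma_r \in \SNNreal^{\varrho_r,1,1}_{2(r+1),2,r+1}$ satisfying~\eqref{eq:ExactSquashing}, and then feed $\sigma_r$ into Lemma~\ref{lem:ApproximationOfIndicatorCube}-(2): for every hyper-rectangle $R \subset \R^d$ and every $\varepsilon > 0$ one obtains an approximant $h^\varepsilon_R \in \NNreal^{\varrho_r,d,1}_{w_1, L, m_1}$ with $\|h^\varepsilon_R - \Indicator_R\|_{L_p(\R^d)} < \varepsilon$, where $L = \min(d+1, 3)$ (the lemma allows $L' \leq 2 L(\sigma_r) - 1 = 3$ for $d \geq 2$ and $L' \leq L(\sigma_r) = 2$ for $d = 1$) and $w_1, m_1$ depend only on $(d,r)$. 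For $g = \sum_{i=1}^n c_i \beta^{(0)}_{a_i, b_i}$, each $\beta^{(0)}_{a_i, b_i}$ is a.e.\@ the indicator of a bounded rectangle $R_i$, and $h^\varepsilon := \sum_i c_i h^\varepsilon_{R_i}$ then lies in $\NNreal^{\varrho_r,d,1}_{c n, L, c n} \cap X$ for $c := \max(w_1, m_1)$ by the same calculus lemmas. The quasi-triangle inequality in $L_p(\R^d)$ then yields $\|g - h^\varepsilon\|_{L_p} \lesssim_{n,p,\{c_i\}} \max_i \|\Indicator_{R_i} - h^\varepsilon_{R_i}\|_{L_p} \to 0$ as $\varepsilon \to 0$.

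The main obstacle is not the combinatorics of network composition---which is handled mechanically by Lemmas~\ref{lem:NetworkCalculus} and~\ref{lem:SummationLemma}---but making sure all $n$ summand networks share the \emph{same} depth $L$, because any mismatch triggers the $\delta = \min(d,1) \cdot (L_{\max} - L_{\min})$ overhead in Lemma~\ref{lem:SummationLemma}-(\ref{enu:LinComb}) and would destroy the linear-in-$n$ scaling of the bounds. In Part (2) this is built into Lemma~\ref{lem:MultiBspline}; in Part (1) it follows from the fact that the depth $L'$ produced by Lemma~\ref{lem:ApproximationOfIndicatorCube}-(2) depends only on the fixed $\sigma_r$ and on $d$, not on the rectangle $R_i$ or on $\varepsilon$.
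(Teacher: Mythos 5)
Your proof is correct and follows essentially the same route as the paper's: Part (2) via Lemma~\ref{lem:MultiBspline} plus the affine-composition and summation calculus, Part (1) via Lemmas~\ref{lem:ExactSquashingReLUPower} and \ref{lem:ApproximationOfIndicatorCube}-(2) feeding into the same calculus, with the closure taken at the end. The only cosmetic difference is that in Part (1) you apply Lemma~\ref{lem:ApproximationOfIndicatorCube}-(2) directly to each rectangle $R_i = \supp \beta^{(0)}_{a_i,b_i}$, whereas the paper first produces approximants for $\beta_d^{(0)} = \Indicator_{[0,1)^d}$ alone and then transports them via $\beta^{(0)}_{a,b} = \beta_d^{(0)} \circ P_{a,b}$ using Lemma~\ref{lem:NetworkCalculus}-(\ref{enu:PrePostAffine}); both yield the same constant $c=c(d,r)$ and the same depth $L=\min\{d+1,3\}$, so they are interchangeable.
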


\begin{proof}
  \textbf{Part (1):} For $t = 0$, $r \in \N$, $0 < p < \infty$, we have already noticed
  before Lemma~\ref{lem:MultiBspline} that there exist $w = w(d,r), m = m(d,r) \in \N$ such that
  $\beta_d^{(0)} \in \overline{\NNreal^{\varrho_r,d,1}_{w,L,m} \cap X}^{X}$,
  where $L = \min \{ d+1,3 \}$.
  Since $\beta_{a,b}^{(0)} = \beta_d^{(0)} \circ P_{a,b}$ for the affine map
  $P_{a,b} : \R^d \to \R^d, x \mapsto a x + b$ and since $\|P_{a,b}\|_{\ell^{0,\infty}_*} = 1$,
  Lemma~\ref{lem:SummationLemma}-(\ref{enu:ScalMult})
  and Lemma \ref{lem:NetworkCalculus}-(\ref{enu:PrePostAffine}) yield
  $\beta_{a,b}^{(0)} \in \overline{\NNreal^{\varrho_r,d,1}_{c,L,c} \cap X}^{X}$
  with $c := \max \{ w,m \}$.
  Thus, the claim follows from Parts (1) and (3) of Lemma~\ref{lem:SummationLemma}.

  \textbf{Part (2):}
  For $t \geq \min \{ d,2 \}$, Lemma~\ref{lem:MultiBspline} shows that
  $\beta_{a,b}^{(t)} \in \NNreal^{\varrho_t,d,1}_{c,L,c} \cap X$
  with $L = 2 + 2 \lceil \log_2 d \rceil$ and $c := \max \{ w,m \}$
  where $w = w(d,t)$ and $m = m(d,t)$ are as in Lemma~\ref{lem:MultiBspline}.
  As before, we conclude using Parts (1) and (3) of Lemma~\ref{lem:SummationLemma}.
\end{proof}



\begin{cor}\label{cor:JacksonAbstractReLUPower}
  Consider $d \in \N$, $\Omega \subset \R^{d}$ an admissible domain,
  $p \in (0, \infty]$, $X = \StandardXSpace[](\Omega)$, $\mathscr{L}$ a depth growth function,
  $L := \sup_{n} \mathscr{L}(n) \in \N \cup \{\infty\}$.
  For $t \in \N_{0}$ define
  $\AppSet(\mathcal{D}_d^{t}) := ( \AppSet_{n}(\mathcal{D}_d^t))_{n \in \N_{0}}$.
  \begin{enumerate}
    \item \label{enu:Jackson3}
          If $L \geq \min \{ d+1, 3 \}$ and $p < \infty$, then for any $r \geq 1$
          \[
            A^{\alpha}_{q}(X,\Sigma(\mathcal{D}_d^0)) \hookrightarrow \WASpace[X][\varrho_{r}]
            \qquad \text{for each} \quad \alpha \in (0,\infty),\ q \in (0,\infty].
          \]

    \item \label{enu:JacksonLogD}
          If $L \geq 2 + 2 \lceil \log_{2} d\rceil$ then for any $r \geq \min \{ d,2 \}$, we have
          \[
            A^{\alpha}_{q}(X,\Sigma(\mathcal{D}_d^r)) \hookrightarrow \WASpace[X][\varrho_{r}]
            \qquad \text{for each}\quad \alpha \in (0,\infty),\ q \in (0,\infty].
            \qedhere
          \]
  \end{enumerate}
\end{cor}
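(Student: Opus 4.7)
\textbf{Proof proposal for Corollary~\ref{cor:JacksonAbstractReLUPower}.}
The plan is to present both approximation classes as instances of the generic construction of Section~\ref{sub:ApproxTheoryTools} and then apply the comparison lemma, Lemma~\ref{lem:ApproximationSpaceElementaryNesting} (with the remark that allows the error comparison to hold only for $m \geq m_0$). Concretely, I would take $\Sigma_n' := \Sigma_n(\mathcal{D}_d^t)|_\Omega$ and $\Sigma_n := \WeightClassSymbol_n(X,\varrho_r,\mathscr{L})$ (with $t=0$ in Part~(\ref{enu:Jackson3}) and $t=r$ in Part~(\ref{enu:JacksonLogD})), and check the three hypotheses of that lemma one by one.

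The first two hypotheses are essentially definitional: $\Sigma_0 = \{0\} = \Sigma_0'$ by construction, nestedness of the $n$-term expansions $\Sigma_n'$ is obvious, and nestedness of $\Sigma_n$ is the content of Property~\ref{enu:GammaIncreasing} already verified in Lemma~\ref{lem:ApproximationSpacesWellDefined}. The heart of the argument is the third hypothesis, which I would obtain by invoking Lemma~\ref{lem:WaveletNTermsSubsetNeurons} with the correct $t$: in Part~(\ref{enu:Jackson3}) it supplies (for $p<\infty$) a constant $c=c(d,r)$ and a fixed depth $L^\ast:=\min\{d+1,3\}$ such that
\[
  \Sigma_n(\mathcal{D}_d^0)
  \subset \overline{\NNreal^{\varrho_r,d,1}_{cn,L^\ast,cn}(\Omega)\cap X}^{X},
\]
while in Part~(\ref{enu:JacksonLogD}) it supplies $L^\ast:=2+2\lceil\log_2 d\rceil$ and
$\Sigma_n(\mathcal{D}_d^r)\subset \NNreal^{\varrho_r,d,1}_{cn,L^\ast,cn}(\Omega)\cap X$ directly (no closure needed, since $r\geq\min\{d,2\}$ allows an exact implementation of $\beta_d^{(r)}$). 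In both cases, restriction to $\Omega$ preserves the inclusion because the restriction map $\StandardXSpace[](\R^d)\to \StandardXSpace[](\Omega)$ is continuous.

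With $L^\ast$ in hand, I would use the standing hypothesis $\sup_n \mathscr{L}(n)\geq L^\ast$ together with the monotonicity of $\mathscr{L}$ to pick a threshold $n_0$ such that $\mathscr{L}(n)\geq L^\ast$ for all $n\geq n_0$, and set $m_0:=\lceil n_0/c\rceil$. Then for every $m\geq m_0$,
\[
  \NNreal^{\varrho_r,d,1}_{cm,L^\ast,cm}(\Omega)\cap X
  \subset \NNreal^{\varrho_r,d,1}_{cm,\mathscr{L}(cm),\infty}(\Omega)\cap X
  = \WeightClassSymbol_{cm}(X,\varrho_r,\mathscr{L})
  = \Sigma_{cm}.
\]
Since the best-approximation error is unchanged when one passes to the $X$-closure, combining this with the inclusion from Lemma~\ref{lem:WaveletNTermsSubsetNeurons} gives $\AppErr(f,\Sigma_{cm})_X \leq \AppErr(f,\Sigma_m')_X$ for every $f\in X$ and every $m\geq m_0$. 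This is precisely the weakened form of hypothesis~(3) addressed by the remark after Lemma~\ref{lem:ApproximationSpaceElementaryNesting}, so the lemma delivers the continuous embedding $A^\alpha_q(X,\Sigma(\mathcal{D}_d^t))\hookrightarrow \WASpace[X][\varrho_r]$ for all $\alpha>0$ and $q\in(0,\infty]$, proving both parts.

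The one mildly delicate point, and the source of the restriction $p<\infty$ in Part~(\ref{enu:Jackson3}), is that $\beta_d^{(0)}=\Indicator_{[0,1)^d}$ is discontinuous and so can only be reached in the $X$-closure of $\varrho_r$-networks; but since the quasi-norm is continuous, closure is transparent at the level of approximation errors, so nothing more than the bookkeeping above is required. Apart from this, the argument is entirely a translation of the wavelet inclusion of Lemma~\ref{lem:WaveletNTermsSubsetNeurons} through the depth-budget bookkeeping supplied by $\mathscr{L}$, which is the routine but indispensable step.
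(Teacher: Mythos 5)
Your argument reproduces the paper's own proof essentially verbatim: both use Lemma~\ref{lem:WaveletNTermsSubsetNeurons} to convert $n$-term wavelet expansions into network realizations of depth $L^{\ast}$, pick a threshold $n_0$ with $\mathscr{L}(n)\geq L^{\ast}$ for $n\geq n_0$, observe that passing to the $X$-closure does not change best-approximation errors, and then invoke Lemma~\ref{lem:ApproximationSpaceElementaryNesting} together with the remark that allows the error comparison to hold only eventually. The only (helpful) difference is that you make the restriction to $\Omega$ and the role of the closure in Part~(\ref{enu:Jackson3}) explicit, which the paper leaves implicit.
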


\begin{proof}
  For the proof of Part (1) let $L_0 := \min \{d+1, 3\}$, while $L_0 := 2 + 2 \lceil \log_2 d \rceil$
  for the proof of Part (2).
  Since $L \geq L_0$, there is $n_{0} \in \N$ such that $\mathscr{L}(n) \geq L_0$ for all $n \geq n_{0}$.

  We first start with the proof of Part (2).
  By Lemma~\ref{lem:WaveletNTermsSubsetNeurons}-(2), with $t = r \geq \min \{ d,2 \}$,
  Equation~\eqref{eq:Sigma_m} holds for some $c \in \N$.
  For $n \geq n_{0}/c$ we have $2 + 2 \lceil \log_2 d \rceil = L_0 \leq \mathscr{L}(cn)$, whence
  \[
    \Sigma_n(\mathcal{D}_d^t)
    \subset \overline{\WeightClassSymbol_{cn}(X,\varrho_{r},\mathscr{L})}^{X}.
  \]
  Therefore, we see that 
  \begin{equation}\label{eq:InstanceOpt}
    E(f,\Sigma_n(\mathcal{D}_d^t))_{X}
    \geq E(f,\WeightClassSymbol_{cn}(X,\varrho_{r},\mathscr{L}))_{X}
    \quad \forall \, f \in X \text{ and } n \geq \tfrac{n_0}{c}.
  \end{equation}

  For the proof of Part (1), the same reasoning with~\eqref{eq:SigmaN0}
  instead of \eqref{eq:Sigma_m} yields \eqref{eq:InstanceOpt} with $t=0$ and any $r \in \N$.
  For both parts, we conclude using Lemma~\ref{lem:ApproximationSpaceElementaryNesting}
  and the associated remark.
\end{proof}

\begin{thm}\label{thm:better_direct}
  Let $\Omega \subset \R^{d}$ be a bounded Lipschitz domain of positive measure.
  For $p \in (0,\infty]$, define $X_p (\Omega)$ as in Equation~\eqref{eq:StandardXSpace}.
  Let $\mathscr{L}$ be a depth growth function.

  \begin{enumerate}
    \item Suppose that $d = 1$ and $L := \sup_{n \in \N} \mathscr{L}(n) \geq 2$.
          Then the following holds for each $r \in \N$:
          \begin{equation}
            B^s_{p,q}(\Omega) \hookrightarrow \WASpace[X_{p}(\Omega)][\varrho_r][q][s]
            \quad \forall \, p,q \in (0,\infty] \text{ and } 0 < s < r + \min \{ 1, p^{-1} \} .
            \label{eq:BesovDirectDimension1}
          \end{equation}

    \item Suppose that $d > 1$ and $L := \sup_{n \in \N} \mathscr{L}(n) \geq 3$, and let $r \in \N$.
          Define $r_0 := r$ if $r \geq 2$ and $L \geq 2 + 2 \lceil \log_2 d \rceil$,
          and $r_0 := 0$ otherwise.
          Then
          \begin{equation}
            B^{s d}_{p,q} (\Omega) \hookrightarrow \WASpace[X_{p}(\Omega)][\varrho_r][q][s]
            \quad \forall \, p,q \in (0,\infty] \text{ and } 0 < s < \frac{r_0 + \min \{1, p^{-1} \}}{d} .
            \label{eq:BesovDirectGeneralDimension}
            \qedhere
          \end{equation}
  \end{enumerate}
\end{thm}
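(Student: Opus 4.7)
My plan is to factor the desired embedding through a B-spline $n$-term approximation space by combining Corollary~\ref{cor:JacksonAbstractReLUPower} with a classical Jackson inequality. Set $t := r$ whenever the hypotheses of Corollary~\ref{cor:JacksonAbstractReLUPower}-(\ref{enu:JacksonLogD}) are met---namely for $d = 1$ with $L \geq 2$, or for $d \geq 2$, $r \geq 2$, and $L \geq 2 + 2\lceil \log_2 d \rceil$---and $t := 0$ in the remaining regime, where part (\ref{enu:Jackson3}) applies and automatically forces $p < \infty$ (consistent with the fact that the stated range for $s$ is empty unless $\min\{1, 1/p\} > 0$). With this choice, $t$ matches $r_0$ in the theorem's notation, and the corollary yields
\[
  A^{s}_{q}\bigl(X_p(\Omega),\,\Sigma(\mathcal{D}_d^{t})\bigr)
  \hookrightarrow \WASpace[X_p(\Omega)][\varrho_r][q][s] ,
\]
so the task reduces to proving the Jackson-type embedding $B^{sd}_{p,q}(\Omega) \hookrightarrow A^{s}_{q}\bigl(X_p(\Omega), \Sigma(\mathcal{D}_d^{t})\bigr)$ on the full stated range $0 < s < (t + \min\{1, 1/p\})/d$.

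\textbf{Jackson inequality.} For this embedding, I would first establish the classical direct estimate
\[
  \AppErr\bigl(f,\,\Sigma_n(\mathcal{D}_d^{t})\bigr)_{L_p(\Omega)}
  \leq C \, n^{-\gamma} \, \|f\|_{B^{\gamma d}_{p,p}(\Omega)},
  \qquad 0 < \gamma d < t + \min\{1, 1/p\} .
\]
The derivation is standard: extend $f$ to a function on $\R^d$ via a Rychkov-type bounded Besov extension operator for Lipschitz domains, expand the extension in a compactly-supported biorthogonal B-spline wavelet basis of order $t+1$ (each basis element being, up to rescaling, an element of $\mathcal{D}_d^{t}$), and retain the $n$ largest-in-magnitude coefficients; the resulting error is controlled by a tail in the Besov norm of the extension, itself bounded by $\|f\|_{B^{\gamma d}_{p, p}(\Omega)}$. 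The displayed window is the classical range in which such $n$-term wavelet approximation achieves rate $\gamma$ in $L_p$ when the Besov smoothness index equals the integrability exponent; see \cite[Section 6 and Theorem 5.2]{DeV98} for the univariate case, with the tensor-product multivariate extension following from standard spline wavelet constructions.

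\textbf{Interpolation and conclusion.} The Jackson inequality rewrites as $B^{\gamma d}_{p, p}(\Omega) \hookrightarrow A^\gamma_\infty(X_p(\Omega), \Sigma(\mathcal{D}_d^{t}))$, so Proposition~\ref{prop:BernsteinJacksonConsequence} yields, for every $\theta \in (0, 1)$ and $q \in (0, \infty]$,
\[
  \bigl(X_p(\Omega),\,B^{\gamma d}_{p, p}(\Omega)\bigr)_{\theta, q}
  \hookrightarrow A^{\theta \gamma}_q\bigl(X_p(\Omega), \Sigma(\mathcal{D}_d^{t})\bigr) .
\]
By the standard real-interpolation identity for Besov spaces on bounded Lipschitz domains (Triebel), the left-hand side coincides with $B^{\theta \gamma d}_{p, q}(\Omega)$ up to equivalent quasi-norms; choosing $\theta := s/\gamma$ and letting $\gamma$ approach $(t + \min\{1, 1/p\})/d$ from below gives the required Jackson embedding over the full admissible range of $s$, which composed with the network embedding above proves both \eqref{eq:BesovDirectDimension1} and \eqref{eq:BesovDirectGeneralDimension}. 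The main obstacle will be to pin down a clean reference (or provide a self-contained derivation) for the Jackson inequality with the sharp range $\gamma d < t + \min\{1, 1/p\}$ over the full quasi-Banach regime and for arbitrary bounded Lipschitz domains, as well as ensuring that the Rychkov extension and the real-interpolation identity are correctly handled when $p < 1$ or $q < 1$; once these classical ingredients are in place, the theorem follows by composition.
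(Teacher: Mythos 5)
Your strategy is sound and the two cornerstones you identify (Corollary~\ref{cor:JacksonAbstractReLUPower} for the network side, a spline Jackson estimate plus interpolation for the Besov side) do give the theorem. Your case split matching $t$ with $r_0$ and the use of $p<\infty$ when invoking part~(\ref{enu:Jackson3}) are both handled exactly as needed. However, the route you take differs from the paper's and carries extra technical debt that the paper's argument sidesteps.

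The paper never passes through a Jackson inequality followed by real interpolation. Instead it invokes DeVore--Popov \cite[Theorem 5.1]{DeVore:1988fe}, which characterizes $\|f\|_{B^{\alpha}_{p,q}(\Omega_0)}$ (for $0 < \alpha < \tau - 1 + \min\{1,1/p\}$, all $p,q$) as $\|f\|_{L_p} + \|(s_k^{(\tau)}(f)_p)_k\|_{\ell^\alpha_q}$, where $s_k^{(\tau)}(f)_p$ is the $L_p$-error of approximation by the \emph{linear} dyadic spline space $\Sigma_k^{(\tau)}$. Because $\Sigma_k^{(\tau)}$ sits inside $\Sigma_{(2^k+\tau+1)^d}(\mathcal{D}_d^{\tau-1})$, a short, self-contained re-summation (splitting $n$ into dyadic blocks $\ell_k < n \le \ell_{k+1}$ with $\ell_k := (2^k+\tau+1)^d$) gives $B^{d\alpha}_{p,q}(\Omega_0) \hookrightarrow A^\alpha_q(X_p(\Omega_0), \Sigma(\mathcal{D}_d^{\tau-1}))$ directly, for every $q$ at once and without any interpolation identity. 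The paper then transfers from $\Omega_0$ to $\Omega$ by the known bounded extension operators (Johnen--Scherer for $p\geq1$, \cite[Theorem 6.1]{DeVSharp93} for $p<1$), a dilation, and Remark~\ref{sec:RestrictionCartesian}.

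In contrast, your proposal fixes the secondary index to $p$ (Jackson with $B^{\gamma d}_{p,p}$) and then recovers general $q$ via Proposition~\ref{prop:BernsteinJacksonConsequence} plus the identity $(L_p(\Omega), B^{\gamma d}_{p,p}(\Omega))_{\theta,q} = B^{\theta \gamma d}_{p,q}(\Omega)$. That identity is true, but it is exactly the kind of delicate quasi-Banach/Lipschitz-domain fact you flag as ``the main obstacle,'' and establishing it (e.g., by pushing to $\R^d$ via a Rychkov extension and then citing the $\R^d$ result) is essentially as much work as citing the DeVore--Popov characterization, which packages the Jackson, the Bernstein, and the reiteration into one theorem tailored to the quasi-Banach regime. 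A small cosmetic point: your description of ``retaining the $n$ largest coefficients'' is a nonlinear operation, whereas the Jackson inequality you state with the source space $B^{\gamma d}_{p,p}$ is really the \emph{linear} dyadic-scale estimate; the bound still holds because nonlinear $n$-term approximation is at least as good, but stating it as a linear level-$k$ truncation would match the quasi-norm you use. In summary, your argument is a valid alternative, but the paper's direct re-summation avoids both the Besov interpolation identity and the fixed-$q$ Jackson step, and is the cleaner route in this generality.
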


%

\begin{rem}
  If $\Omega$ is open then each Besov space $B_{p,q}^{s d}(\Omega)$ contains $C_{c}^{3}(\Omega)$.
  Hence, by Corollary~\ref{cor:EmbeddingObstruction},
  the embeddings~\eqref{eq:BesovDirectDimension1} or \eqref{eq:BesovDirectGeneralDimension}
  with $r = 1$ imply that $\lfloor L/2 \rfloor \geq s / 2$.
  This is indeed the case, since these embeddings for $r = 1$ are only established when
  $L \geq 2$ and $0 < d s < 1 + \min \{ p^{-1},1 \} \leq 2$,
  which implies $s / 2 < 1/d \leq 1 \leq \lfloor L/2 \rfloor$.
\end{rem}

\begin{proof}[Proof of Theorem~\ref{thm:better_direct}]
  See Appendix~\ref{sub:BetterDirectProof}.
\end{proof}

\subsection{Limits on possible inverse estimates} 
\label{sub:LimitsOnInverseEstimates}

For networks of finite depth $\mathscr{L} \equiv L < \infty$,
there are limits on possible embeddings of $\WASpace[X][\varrho_1]$
(resp.~of $\NASpace[X][\varrho_1]$) into Besov spaces.

\begin{thm}\label{thm:LimitInverseBesov}
Consider $\Omega = (0,1)^{d}$, $p \in (0,\infty]$, $X = \StandardXSpace[](\Omega)$, $\mathscr{L}$
a depth growth function such that $L := \sup_{n} \mathscr{L}(n) \in \N_{\geq 2} \cup \{\infty\}$
and $r \in \N$.
For $\sigma, \tau, q \in (0,\infty]$ and $\alpha, s \in (0,\infty)$, the following claims hold%
\footnote{with the convention $\lfloor \infty/2\rfloor = \infty-1 = \infty$}:
\begin{enumerate}
  \item \label{enu:LimitInverseBesov1}
        If $\WASpace[X][\varrho_{r}] \hookrightarrow B^{s}_{\sigma,\tau}(\Omega)$ then
        \(
          \alpha \geq \lfloor L/2\rfloor \cdot \min \{ s, 2 \}
        \).

  \item \label{enu:LimitInverseBesov2}
        If $\NASpace[X][\varrho_{r}] \hookrightarrow B^{s}_{\sigma,\tau}(\Omega)$
        then
        \(
          \alpha \geq (L-1) \cdot \min \{ s, 2 \}
        \).
        \qedhere
\end{enumerate}
\end{thm}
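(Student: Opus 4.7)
The proof proceeds by combining a Bernstein-type inequality, deduced from the hypothesized embedding, with explicit constructions of oscillating network realizations that saturate it.

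\textbf{Step 1 (Bernstein-type inequality from the embedding).} Assume $\WASpace[X][\varrho_r] \hookrightarrow B^s_{\sigma,\tau}(\Omega)$. The ``trivial'' Bernstein inequality~\eqref{eq:TrivialBernsteinInequality} applied to the approximation family $(\StandardSigmaW)_{n \in \N_0}$ gives $\|\varphi\|_{\WASpace[X][\varrho_r]} \lesssim n^\alpha \|\varphi\|_X$ for $\varphi \in \StandardSigmaW$. Composing with the embedding yields a constant $C>0$ with
\[
  \|\varphi\|_{B^s_{\sigma,\tau}(\Omega)} \leq C \, n^\alpha \, \|\varphi\|_X \qquad \forall \, n \in \N, \, \varphi \in \StandardSigmaW ,
\]
and an analogous inequality in the neuron setting for Claim~\eqref{enu:LimitInverseBesov2}.

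\textbf{Step 2 (Reduction to $s \leq 2$).} For $s > 2$, the Besov inclusion $B^s_{\sigma,\tau}(\Omega) \hookrightarrow B^2_{\sigma,\tau}(\Omega)$ (a consequence of~\eqref{eq:BesovEmbedding}) combined with the hypothesis yields $\WASpace[X][\varrho_r] \hookrightarrow B^2_{\sigma,\tau}(\Omega)$. Applying the claim for $s = 2$ then gives $\alpha \geq 2 \lfloor L/2 \rfloor = \lfloor L/2 \rfloor \cdot \min\{s,2\}$. The same reduction works for Claim~\eqref{enu:LimitInverseBesov2}, so it suffices to establish both claims for $s \in (0,2]$.

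\textbf{Step 3 (Explicit sawtooth networks).} For each $n \in \N$, I plan to construct $\varphi_n(x) := s_{M_n}(x_1)$, where $s_M : \R \to [0,1]$ is a piecewise linear sawtooth with $M$ equally spaced teeth in $[0,1]$ and amplitude $1$ (extended by $0$). By standard Telgarsky/Yarotsky deep sawtooth constructions (iterated hat-function composition balanced across layers), one obtains realizations with $M_n \asymp n^{\lfloor L/2 \rfloor}$ using at most $n$ weights and depth $L$ for Claim~\eqref{enu:LimitInverseBesov1}, and $M_n \asymp n^{L-1}$ using at most $n$ neurons and depth $L$ for Claim~\eqref{enu:LimitInverseBesov2}. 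These bounds are known to be tight relative to the upper bounds underlying Lemma~\ref{lem:MinGrowthRateNbPiecesSufficient}. Since $s_{M_n}$ is continuous piecewise linear with $M_n$ pieces, Lemma~\ref{lem:PiecewisePolynomialCont} shows it is the realization of a $\varrho_r$-network for any $r \geq 1$, so $\varphi_n$ lies in the relevant $\Sigma_n$ family uniformly in $r$.

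\textbf{Step 4 (Norm estimates).} Clearly $\|\varphi_n\|_X \asymp \|s_{M_n}\|_{L_p((0,1))} \asymp 1$. For the Besov norm, compute the second-order modulus of smoothness: at each of the $M_n$ kinks of $s_{M_n}$ the symmetric second difference $\Delta_h^2 s_{M_n}$ has magnitude $\asymp M_n h$ on a set of measure $\asymp h$ (for $h \leq 1/M_n$). Summing the contributions of the $M_n$ kinks yields $\omega_2(s_{M_n}, h)_\sigma \gtrsim (M_n h)^{1 + 1/\sigma}$ for $h \leq 1/M_n$. Because $\varphi_n$ depends only on $x_1$, a Fubini-type argument gives $\omega_2(\varphi_n, h)_\sigma \gtrsim (M_n h)^{1+1/\sigma}$ on $(0,1)^d$. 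Integrating in the Besov seminorm yields
\[
  \|\varphi_n\|_{B^s_{\sigma,\tau}(\Omega)} \gtrsim M_n^s \asymp n^{\gamma s}, \qquad \text{whenever } s < 1 + 1/\sigma,
\]
with $\gamma = \lfloor L/2 \rfloor$ for Claim~\eqref{enu:LimitInverseBesov1} and $\gamma = L-1$ for Claim~\eqref{enu:LimitInverseBesov2}.

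\textbf{Step 5 (Conclusion for $s \leq 2$).} Split according to whether the sawtooth belongs to the target Besov space. If $s < 1 + 1/\sigma$, Steps~1 and 4 give $n^{\gamma s} \lesssim n^\alpha$ for all $n$, forcing $\alpha \geq \gamma s = \gamma \min\{s,2\}$ since $s \leq 2$. If $s \geq 1 + 1/\sigma$, the same modulus-of-smoothness computation shows that the defining integral for $\|s_{M_n}\|_{B^s_{\sigma,\tau}}$ diverges at $0$, so $\varphi_n \in \WASpace[X][\varrho_r]$ but $\varphi_n \notin B^s_{\sigma,\tau}(\Omega)$; the hypothesized embedding thus fails, and the implication is vacuously true.

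\textbf{Main obstacles.} The delicate points are: (i) producing sawtooth realizations with the tight piece-vs-parameter tradeoff---achieving the $\lfloor L/2 \rfloor$ exponent in the weight-constrained case requires a carefully balanced alternation of wide and narrow layers, not a naive $(L-1)$-fold composition of a single hat function; and (ii) verifying the Besov lower bound in Step~4, in particular tracking the joint dependence on $\sigma$ and $\tau$ in the modulus-of-smoothness computation and justifying the passage from the one-dimensional sawtooth estimate to the $d$-dimensional approximation family via the ``constant-in-$x_2,\dots,x_d$'' construction.
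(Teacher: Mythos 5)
The overall architecture of your proof—deriving a Bernstein-type inequality from the hypothesized embedding, instantiating it on deep sawtooth constructions, and combining with a lower bound on the Besov norm of sawtooths—is exactly the route taken in the paper. The paper's version uses the functions $\Delta_{j,d}$ and relies on Lemma~\ref{lem:SawtoothImplementation}, Corollary~\ref{cor:SawtoothMultidimImplementation} and Lemma~\ref{lem:BesovNormSawTooth}. However, your proposal contains a genuine error that must be fixed, plus some gaps I flag below.

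\textbf{The critical error is in Step~3.} You claim that Lemma~\ref{lem:PiecewisePolynomialCont} shows the piecewise linear sawtooth $s_{M_n}$ ``is the realization of a $\varrho_r$-network for any $r \geq 1$,'' so that $\varphi_n \in \StandardSigmaW$. This is false for $r \geq 2$: the lemma only gives \emph{exact} membership for $r=1$; for $r \geq 2$ it gives membership in the \emph{closure} with respect to locally uniform convergence, and in fact a kinked piecewise linear function can never be the realization of a $\varrho_r$-network with $r \geq 2$, since such realizations are $C^1$. This matters because your Step~1 Bernstein inequality \eqref{eq:TrivialBernsteinInequality} applies only to elements of $\StandardSigmaW$, not to limits of such elements ($\|\cdot\|_{\WASpace[X][\varrho_r]}$ is not a priori lower semicontinuous in $\|\cdot\|_X$). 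The fix—which is what the paper actually does—is to run the Bernstein inequality in $\WASpace[X][\varrho_1]$, where the sawtooth realizations are exact, and then interpolate the hypothesis through the chain $\WASpace[X][\varrho_1] \hookrightarrow \WASpace[X][\varrho_r] \hookrightarrow B^s_{\sigma,\tau}(\Omega)$; the first embedding uses \eqref{eq:Nesting1}, which is available since $\Omega$ is bounded.

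\textbf{Two further issues.} First, your Step~2 reduction to ``apply the claim for $s=2$'' is circular: for $s=2$ the second modulus $\omega_2$ no longer characterizes the $B^2_{\sigma,\tau}$ norm, so your Step~4 argument does not yield the $s=2$ base case. The paper avoids this by proving the lower bound $\|\Delta_{j,d}\|_{B^s_{\sigma,\tau}} \gtrsim 2^{s'j}$ only for $s' < \min\{s,2\}$ (strictly) and letting $s' \uparrow \min\{s,2\}$ at the very end, which also makes the case distinction on $s$ versus $1+1/\sigma$ unnecessary. Second, your Step~4 ``vacuity'' dichotomy has edge cases: at $s = 1 + 1/\sigma$ with $\tau = \infty$ (and with $\sigma = \infty$ you need the $\omega_2 \gtrsim M_n h$ variant, not $(M_n h)^{1+1/\sigma}$) the supremum defining the seminorm is actually finite and $\asymp M_n^s$, so the argument should be run as in the ``finite'' branch; your statement that the norm ``diverges'' for $s \geq 1+1/\sigma$ is not uniformly correct. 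The paper's single uniform bound via the constant lower bound $\omega_2(\Delta_j, t)_p \geq C_p$ for $t \geq 2^{-(j+1)}$ (Lemma~\ref{lem:BesovNormSawTooth}) sidesteps all of this. Finally, your Step~3 description of the ``balanced'' construction achieving the $\lfloor L/2\rfloor$ exponent with $n$ weights is exactly the nontrivial content of Lemma~\ref{lem:SawtoothImplementation}; it is not ``standard'' and needs to be proven (and you also need to account for the depth growth function $\mathscr{L}$, which may only reach a target depth $2\ell \leq L$ for $n$ large enough, as the paper handles by choosing $j_0$).
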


A direct consequence is that for networks of unbounded depth ($L=\infty$),
none of the spaces $\WASpace[X][\varrho_{r}]$, $\NASpace[X][\varrho_r]$ embed into
any Besov space of strictly positive smoothness $s>0$.



\begin{rem}
  For $L=2$, as $\lfloor L/2\rfloor = L-1$ the two inequalities resulting from
  Theorem~\ref{thm:LimitInverseBesov} match.
  This is natural as for $L=2$ we know from Lemma~\ref{lem:weightsvsneurons} that
  $\WASpace[X][\varrho_{r}] = \NASpace[X][\varrho_{r}]$.
  For $L \geq 3$ the inequalities no longer match.
  Each inequality is in fact stronger than what would be achieved by simply combining
  the other one with Lemma~\ref{lem:weightsvsneurons}.
  Note also that in contrast to the direct estimate~\eqref{eq:BesovDirectGeneralDimension}
  of Theorem~\ref{thm:better_direct} where the Besov spaces are of smoothness $s d$,
  here the dimension $d$ does not appear.
  \qedhere


\end{rem}

The proof of Theorem~\ref{thm:LimitInverseBesov} employs a particular family
of oscillating functions that have a long history \cite{Hastad:1tbON80T}
in the analysis of neural networks and of the benefits of depth \cite{telgarsky2016benefits}.

\begin{figure}
  \begin{center}
    \includegraphics[width=0.5\textwidth]{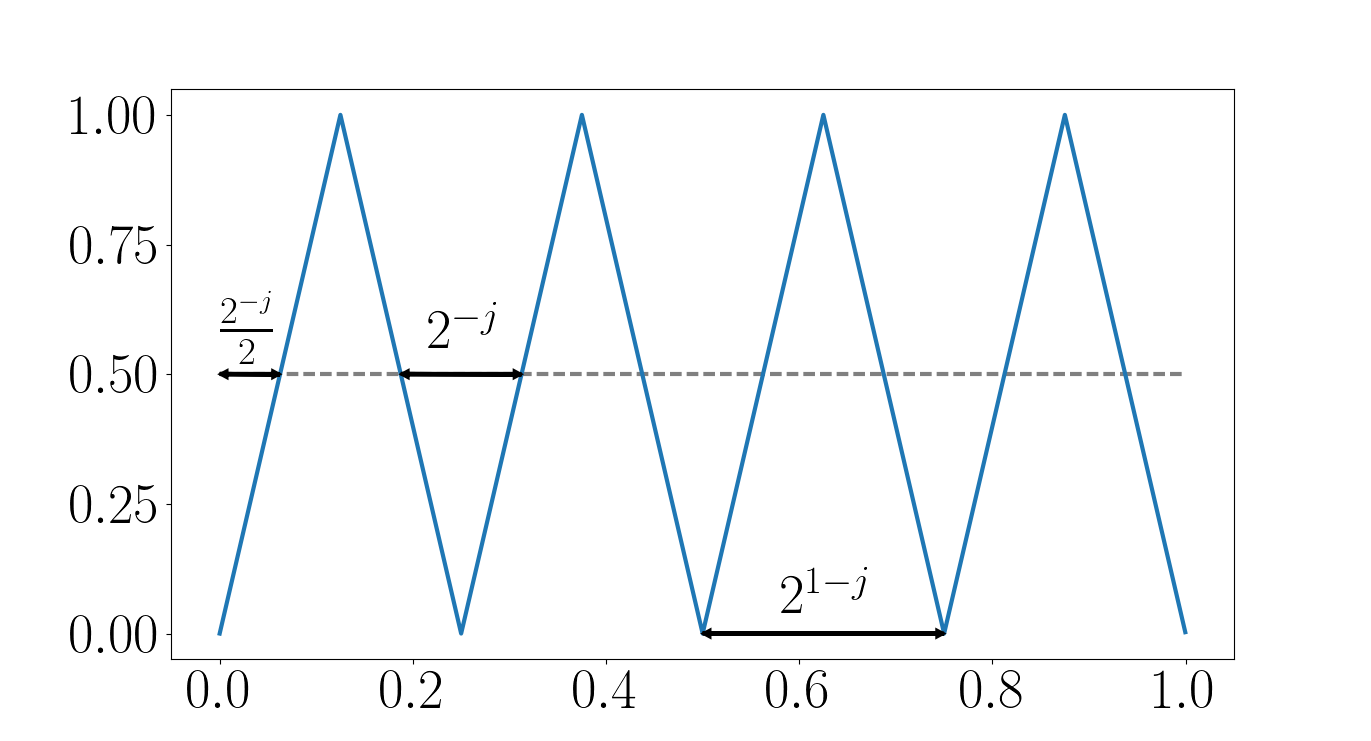}
  \end{center}
  \caption{\label{fig:SawToothPlot}
  A plot of the function $\Delta_j$ (for $j = 3$).}
\end{figure}

\begin{defn}[Sawtooth functions]\label{def:SawtoothFunction}
  Consider $\beta_{+}^{(1)}$ the B-spline of degree one,
  and $\Delta_{1} := \beta_{+}^{(1)}(2\cdot)$ the ``hat'' function supported on $[0,1]$.
  For $j \geq 1$ the univariate ``sawtooth'' function of order $j$,
  \begin{equation}\label{eq:DefSawtooth}
    \Delta_{j} = \sum_{k=0}^{2^{j-1}-1} \Delta_{1}(2^{j-1}\cdot-k),
  \end{equation}
  has support in $[0,1]$ and is made of $2^{j-1}$ triangular ``teeth''
  (see Figure~\ref{fig:SawToothPlot}).
  The \emph{multivariate} sawtooth function $\Delta_{j,d}$ is defined as
  $\Delta_{j,d}(x) := \Delta_{j}(x_{1})$ for $x = (x_{1},\ldots,x_{d}) \in \R^{d}$, $j \in \N$.
\end{defn}

An important property of $\Delta_j$ is that it is a realization of a $\varrho_1$-network
of specific complexity.
The proof of this lemma is in Appendix~\ref{sub:SawtoothImplementation}.

\begin{lem}\label{lem:SawtoothImplementation}
  Let $L \in \N_{\geq 2}$ and define $C_L := 4 \, L + 2^{L-1}$.
  Then
  \[
    \Delta_j \in \NNreal^{\varrho_1, 1, 1}_{\infty, L, C_L \cdot 2^{j / (L-1)}}
    \qquad \text{and} \qquad
    \Delta_j \in \NNreal^{\varrho_1, 1, 1}_{C_L \cdot 2^{j / \lfloor L/2 \rfloor}, L, \infty}
    \qquad \forall \, j \in \N.
    \qedhere
  \]

\end{lem}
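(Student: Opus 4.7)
The central engine is the classical sawtooth identity
\[
  \Delta_{a+b} \;=\; \Delta_{a} \circ \Delta_{b} \qquad \forall\, a,b \in \N,
\]
and in particular $\Delta_j = \Delta_1^{\circ j}$. Given this, I would write $j = m_1 + \cdots + m_k$, realize each $\Delta_{m_i}$ by a single hidden layer, and compose the $k$ shallow blocks using Lemma~\ref{lem:NetworkCalculus}, choosing both $k$ and the composition variant differently for the neuron and for the weight bound.

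A direct calculation gives $\Delta_1(x) = 2\varrho_1(x) - 4\varrho_1(x-\tfrac12) + 2\varrho_1(x-1)$, hence $\Delta_1 \in \SNNreal^{\varrho_1,1,1}_{6,2,3}$. Summing the $2^{m-1}$ rescaled translates appearing in~\eqref{eq:DefSawtooth} then produces a strict depth-$2$ implementation $\Delta_m \in \SNNreal^{\varrho_1,1,1}_{6 \cdot 2^{m-1},\,2,\,3 \cdot 2^{m-1}}$ for every $m \in \N$. The composition identity I would prove by induction on $b$: the base $b=1$ is a direct verification that $\Delta_1 \circ \Delta_b$ has height $1$ and the same zeros as $\Delta_{b+1}$ (each ascending/descending half-tooth of $\Delta_b$ forces a full forward or backward traversal of $\Delta_1$, so the tooth count doubles), and the step uses $\Delta_{a+b+1} = \Delta_1 \circ (\Delta_a \circ \Delta_b) = \Delta_{a+1} \circ \Delta_b$.

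For the neuron bound I would set $k := \min\{L-1,\,j\}$ and split $j$ into $k$ balanced parts $m_i \in \{\lfloor j/k \rfloor,\,\lceil j/k \rceil\}$, then compose the $k$ shallow blocks via the depth-efficient variant Lemma~\ref{lem:NetworkCalculus}-(\ref{enu:ComposLessDepth}), under which each composition adds only $1$ to the depth and the neuron counts simply add. The resulting network has depth at most $k+1 \leq L$ and at most
\[
  3 \sum_{i=1}^{k} 2^{m_i-1}
  \;\leq\; 3k \cdot 2^{\lceil j/k\rceil - 1}
  \;\leq\; 3k \cdot 2^{j/k}
  \;\leq\; 3(L-1) \cdot 2^{j/(L-1)}
  \;\leq\; C_L \cdot 2^{j/(L-1)}
\]
neurons. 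For the weight bound the depth-efficient variant is ruled out because its weight estimate $W(\Psi') \leq W(\Phi_1) + N(\Phi_1)\, W(\Phi_2)$ inflates weights multiplicatively; I would use instead the standard composition Lemma~\ref{lem:NetworkCalculus}-(\ref{enu:Compos}), for which weights add cleanly but depths add as $L_1 + L_2$. Taking $k := \min\{\lfloor L/2\rfloor,\, j\}$ and splitting $j$ the same way, the composed network has depth $2k \leq L$ and at most
\[
  6 \sum_{i=1}^{k} 2^{m_i-1}
  \;\leq\; 6k \cdot 2^{j/k}
  \;\leq\; 6 \lfloor L/2\rfloor \cdot 2^{j/\lfloor L/2\rfloor}
  \;\leq\; C_L \cdot 2^{j/\lfloor L/2\rfloor}
\]
weights.

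The delicate point is precisely this trade-off between the two composition lemmas: the ``$-1$'' in the depth formula of the depth-efficient version comes paired with a multiplicative weight blow-up, which is exactly why the weight bound must carry the weaker exponent $j/\lfloor L/2\rfloor$ instead of $j/(L-1)$. Everything else is routine arithmetic ($3(L-1) < 4L \leq C_L$ and $6 \lfloor L/2 \rfloor \leq 3L \leq 4L \leq C_L$ for all $L \geq 2$), together with the boundary case $j < k$, which simply reduces to $k = j$ with all $m_i = 1$ and makes both estimates only easier.
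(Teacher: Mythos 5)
Your proposal is correct and takes essentially the same route as the paper: both rest on the composition identity $\Delta_{a+b}=\Delta_a\circ\Delta_b$ (stated in the paper as Lemma~\ref{lem:SawtoothIteration} for $b=1$, from which the general case follows by induction exactly as you propose), realize the sawtooth factors as depth-$2$ networks of size $O(2^{m_i})$, and then apply the depth-reducing composition Lemma~\ref{lem:NetworkCalculus}-(\ref{enu:ComposLessDepth}) for the neuron count versus the additive composition Lemma~\ref{lem:NetworkCalculus}-(\ref{enu:Compos}) for the weight count, giving the $j/(L-1)$ versus $j/\lfloor L/2\rfloor$ dichotomy for exactly the reason you isolate. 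The only cosmetic divergence is the decomposition: you balance $j=m_1+\cdots+m_k$ while the paper writes $j=k(L-1)+s$ (resp.\ $j=k\lfloor L/2\rfloor+s$) and composes $L-2$ (resp.\ $\lfloor L/2\rfloor -1$) copies of $\Delta_k$ with a single $\Delta_{k+s}$, and you realize $\Delta_m$ from an explicit ReLU formula for $\Delta_1$ plus summation of translates, where the paper invokes Lemma~\ref{lem:PiecewisePolynomialCont}; both give $O(2^m)$-size shallow blocks and the final constants work out in both cases. One small point you should tighten: your chain $3k\cdot 2^{j/k}\le 3(L-1)\cdot 2^{j/(L-1)}$ is not an instance of monotonicity of $t\mapsto t\,2^{j/t}$ (that function is not monotone); it is an equality when $j\ge L-1$, and when $j<L-1$ it needs the elementary inequality $2u\le 2^u$ for $u\in[0,1]$ (or, more simply, the direct bound $3j<4L\le C_L\le C_L\,2^{j/(L-1)}$), which is what your parenthetical remark about the boundary case $j<k$ is implicitly relying on.
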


\begin{cor}\label{cor:SawtoothMultidimImplementation}
  For $L \in \N_{\geq 2}$, let $C_L$ as in Lemma~\ref{lem:SawtoothImplementation}.
  Then
  \[
    \Delta_{j,d} \in \NNreal^{\varrho_1, d, 1}_{\infty, L, C_L \cdot 2^{j / (L-1)}}
    \qquad \text{and} \qquad
    \Delta_{j,d} \in \NNreal^{\varrho_1, d, 1}_{C_L \cdot 2^{j / \lfloor L/2 \rfloor}, L, \infty}
    \qquad \forall \, j \in \N.
    \qedhere
  \]
\end{cor}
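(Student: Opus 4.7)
The proof will be a short, essentially cosmetic reduction of the multivariate claim to the univariate statement of Lemma~\ref{lem:SawtoothImplementation}.

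By the very definition of $\Delta_{j,d}$, we have the factorisation $\Delta_{j,d} = \Delta_j \circ P$, where $P : \R^d \to \R,\ x \mapsto x_1 = e_1^T x$ is an affine-linear map whose $1 \times d$ matrix has a single nonzero entry. I would invoke Lemma~\ref{lem:NetworkCalculus}-(\ref{enu:PrePostAffine}) with this $P$ and with $Q := \identity_\R$: given any $\Phi \in \NNsymbol^{\varrho_1,1,1}$ realizing $\Delta_j$, the lemma produces $\Psi \in \NNsymbol^{\varrho_1,d,1}$ realizing $\Delta_j \circ P = \Delta_{j,d}$ with $L(\Psi) = L(\Phi)$, $N(\Psi) = N(\Phi)$, and
\[
  W(\Psi) \;\leq\; \|Q\|_{\ell^{0,\infty}} \cdot W(\Phi) \cdot \|P\|_{\ell^{0,\infty}_\ast}
         \;=\; 1 \cdot W(\Phi) \cdot 1 \;=\; W(\Phi),
\]
where I used $\|Q\|_{\ell^{0,\infty}} = 1$ (since $Q$ is the scalar identity) and $\|P\|_{\ell^{0,\infty}_\ast} = 1$ (the single row of the matrix of $P$ has exactly one nonzero entry, as noted in \eqref{eq:DefL0MixedNorms}).

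Consequently, for any $W, N \in \N_0 \cup \{\infty\}$ and $L \in \N \cup \{\infty\}$, the inclusion
\[
  \Delta_j \in \NNreal^{\varrho_1,1,1}_{W,L,N}
  \quad\Longrightarrow\quad
  \Delta_{j,d} \in \NNreal^{\varrho_1,d,1}_{W,L,N}
\]
holds. Applying this implication to each of the two memberships furnished by Lemma~\ref{lem:SawtoothImplementation}, namely $\Delta_j \in \NNreal^{\varrho_1,1,1}_{\infty, L, C_L \cdot 2^{j/(L-1)}}$ and $\Delta_j \in \NNreal^{\varrho_1,1,1}_{C_L \cdot 2^{j/\lfloor L/2 \rfloor}, L, \infty}$, yields the two claimed memberships for $\Delta_{j,d}$.

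There is no real obstacle here; the only thing to double-check is the computation of the mixed $\ell^{0,\infty}$-norms (to confirm the conservation of $W$), which I have just done above. Everything else is a direct appeal to the two cited lemmas.
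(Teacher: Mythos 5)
Your proposal is correct and takes essentially the same approach as the paper: both factor $\Delta_{j,d} = \Delta_j \circ P$ with $P$ the first-coordinate projection, note $\|P\|_{\ell^{0,\infty}_\ast} = 1$, and then apply Lemma~\ref{lem:NetworkCalculus}-(\ref{enu:PrePostAffine}) together with Lemma~\ref{lem:SawtoothImplementation}. You simply spell out the bookkeeping (the $Q = \identity_\R$ choice and the $\ell^{0,\infty}$-norm computation) a bit more explicitly than the paper does.
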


\begin{proof}
  We have $\Delta_{j,d} = \Delta_j \circ T$ for the affine map
  $T : \R^d \to \R, (x_1,\dots,x_d) \mapsto x_1$, which satisfies $\|T\|_{\ell^{0,\infty}_\ast} = 1$.
  Now, the claim is a direct consequence of Lemmas~\ref{lem:SawtoothImplementation}
  and \ref{lem:NetworkCalculus}-(1).
\end{proof}

We further prove in Appendix~\ref{sub:BesovNormSawToothProof} that that the Besov norm of $\Delta_{j,d}$ grows exponentially with $j$:

\begin{lem}\label{lem:BesovNormSawTooth}
  Let $d \in \N$ and $\Omega = (0,1)^d$.
  Let $p,q \in (0,\infty]$ and $s \in (0, \infty)$ be arbitrary.
  Let $s' \in (0,2)$ with $s' \leq s$.
  There is a constant $c = c(d,p,q,s,s') > 0$ such that
  \[
    \|\Delta_{j,d}\|_{B^s_{p,q} (\Omega)} \geq c \cdot 2^{s' j}
    \qquad \forall \, j \in \N.
    \qedhere
  \]
\end{lem}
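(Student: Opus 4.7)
The plan is to bound the Besov seminorm of $\Delta_{j,d}$ from below via the second-order modulus of smoothness, exploiting the fact that $\Delta_j$ is a highly oscillatory piecewise linear function. Since $s \geq s'$, the standard embedding $B^{s}_{p,q}(\Omega)\hookrightarrow B^{s'}_{p,q}(\Omega)$ reduces the problem to showing $\|\Delta_{j,d}\|_{B^{s'}_{p,q}(\Omega)}\geq c\cdot 2^{s'j}$. Because $s'<2$, I will invoke the modulus-of-smoothness characterisation of $B^{s'}_{p,q}(\Omega)$ available from \cite{DeVSharp93}, writing
\[
  |f|^q_{B^{s'}_{p,q}(\Omega)}
  \asymp \int_0^1 \big(t^{-s'}\omega_2(f,t)_{p,\Omega}\big)^q\,\tfrac{dt}{t},
  \qquad
  \omega_2(f,t)_{p,\Omega} := \sup_{|h|\leq t}\|f(\cdot+2h)-2f(\cdot+h)+f(\cdot)\|_{L_p(\Omega_{2h})},
\]
with the obvious supremum replacement when $q=\infty$, where $\Omega_{2h}=\{x\in\Omega\,:\,x,x+h,x+2h\in\Omega\}$. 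Since $\Delta_{j,d}(x)=\Delta_j(x_1)$, choosing the axis-aligned translation $h e_1$ yields by Fubini $\|\Delta^2_{he_1}\Delta_{j,d}\|_{L_p(\Omega_{2he_1})}=\|\Delta^2_h\Delta_j\|_{L_p(0,1-2h)}$, reducing the argument to the one-dimensional sawtooth.

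The heart of the proof is an explicit pointwise lower bound on $|\Delta^2_h\Delta_j(x)|$ for $0<h\leq 2^{-j-1}$. Recall that $\Delta_j$ consists of $2^{j-1}$ congruent triangular teeth of base $2^{1-j}$ and height $1$, with peaks at $t_k=(2k+1)/2^j$ and slopes $\pm 2^j$. For each such peak and any offset $u\in[-h,h]$, the three points $x=t_k-h+u$, $x+h=t_k+u$, $x+2h=t_k+h+u$ lie in the $k$-th tooth, and a short linear computation gives
\[
  |\Delta^2_h\Delta_j(x)| \;=\; 2^{j+1}\bigl(h-|u|\bigr).
\]
Integrating the $p$-th power over $u\in[-h,h]$ contributes $\asymp (2^{j+1})^p h^{p+1}$ per peak; summing over the $\asymp 2^{j-1}$ peaks whose full $u$-interval lies inside $(0,1-2h)$ (at most one peak is lost near the right boundary, which does not affect the order of magnitude) yields
\[
  \|\Delta^2_h\Delta_j\|_{L_p(0,1-2h)} \;\geq\; c_p\cdot 2^{j(1+1/p)}\cdot h^{1+1/p}
  \qquad\bigl(0<h\leq 2^{-j-1}\bigr),
\]
with the analogous supremum bound when $p=\infty$.

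Crucially, evaluating at the critical scale $h_\ast=2^{-j-1}$ collapses the right-hand side to a positive constant $c''=c_p\cdot 2^{-(1+1/p)}$ independent of $j$. By monotonicity of $\omega_2(\Delta_j,\cdot)_{p,\Omega}$ in its second argument, $\omega_2(\Delta_j,t)_{p,\Omega}\geq c''$ for all $t\in[2^{-j-1},1]$. Substituting into the Besov seminorm gives for $q<\infty$
\[
  |\Delta_j|_{B^{s'}_{p,q}(\Omega)}^q
  \;\geq\; (c'')^q\int_{2^{-j-1}}^{1} t^{-s'q-1}\,dt
  \;=\; \tfrac{(c'')^q}{s'q}\bigl(2^{(j+1)s'q}-1\bigr)
  \;\gtrsim\; 2^{js'q},
\]
and the case $q=\infty$ follows by taking a supremum at $t=2^{-j-1}$. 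The claim is thereby proved for all $j$ large enough, and finitely many small $j$ are absorbed into the constant.

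The only genuinely delicate point is technical rather than conceptual: one must ensure the modulus-of-smoothness quasi-norm above is equivalent to the definition of $B^s_{p,q}(\Omega)$ adopted from \cite{DeVSharp93}, uniformly across the full parameter range $p,q\in(0,\infty]$ (including the quasi-Banach regime $p<1$ and the endpoint $p=\infty$). This is classical but should be invoked carefully. The two other bookkeeping matters—the boundary strip excluded by requiring $x+2h\in\Omega$, and the behaviour of $\omega_2$ for $t$ beyond the critical scale—only affect constants.
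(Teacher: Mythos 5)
Your proof is correct and follows the same high-level strategy as the paper: reduce to $B^{s'}_{p,q}$ via the embedding, pass to the second-order modulus of smoothness characterization, use $h e_1$ to reduce to one dimension by Fubini, establish a $j$-uniform lower bound on $\omega_2(\Delta_j,2^{-j-1})_p$, and integrate the modulus over $t\geq 2^{-j-1}$. The only genuine difference is \emph{how} the pointwise/$L_p$ lower bound on the second difference $D_h^2\Delta_j$ is derived. The paper writes $\Delta_1=(T_{1/4}+\identity)^2\widetilde{\Delta_1}$ (with $\widetilde{\Delta_1}=\tfrac12\Delta_1(2\cdot)$), derives $D_{1/4}^2\Delta_1=(T_{-1/2}-2\,\identity+T_{1/2})\widetilde{\Delta_1}$, and propagates this algebraically via the telescoping sum \eqref{eq:SpecialShiftSum} to exhibit $D_{h_j}^2\Delta_{j}$ as an alternating sum of shifted dilates of $\widetilde{\Delta_1}$ with disjoint supports. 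You instead exploit piecewise linearity directly: near the $k$-th peak $t_k=(2k+1)/2^j$ one has $\Delta_j(x)=1-2^j|x-t_k|$, and a one-line computation gives $|D_h^2\Delta_j(t_k-h+u)|=2^{j+1}(h-|u|)$ for $|u|\leq h\leq 2^{-j-1}$; integrating and summing over the $\asymp 2^{j-1}$ disjoint intervals yields $\|D_h^2\Delta_j\|_{L_p}\gtrsim (2^j h)^{1+1/p}$, which at $h=2^{-j-1}$ becomes a $j$-independent constant. Your computation is more elementary and transparent, and it actually delivers the sharp bound for the full range $h\leq 2^{-j-1}$ (the paper only needs, and only proves, the single value $h=2^{-j-1}$). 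The paper's route is slicker but obscures \emph{why} the bound is what it is; yours makes the mechanism visible. Both then conclude identically via monotonicity of $\omega_2$ in its second argument and the integral formula for the Besov (quasi)-seminorm.

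One small remark: the boundary loss you flag near the right endpoint does not actually occur for $h\leq 2^{-j-1}$, since $t_{2^{j-1}-1}+2h=1-2^{-j}+2h\leq 1$; at most a measure-zero set at the exact endpoints is affected, so the parenthetical caveat is stronger than needed (harmless, but worth noting). Also, your concern about the modulus-of-smoothness equivalence for all $p,q\in(0,\infty]$ is well-placed: this is indeed the definition adopted in \cite{DeVSharp93}, and the paper invokes it in precisely the same way.
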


Given this lower bound on the Besov space norm of the sawtooth function $\Delta_{j,d}$,
we can now prove the limitations regarding possible inverse estimates that we announced above.

\begin{proof}[Proof of Theorem~\ref{thm:LimitInverseBesov}]
  We start with the proof for $\WASpace[X][\varrho_r]$.
  Let us fix $\ell \in \N$ with $\ell \leq \lfloor L/2 \rfloor$, and note that $2\ell \leq L$,
  so that there is some $j_0 = j_0 (\ell, \mathscr{L}) \in \N$ such that $\mathscr{L}(2^j) \geq 2\ell$
  for all ${j \geq j_0}$.
  Now, Corollary~\ref{cor:SawtoothMultidimImplementation} (applied with $2\ell$ instead of $L$)
  shows that
  \(
    \Delta_{\ell j, d}
    \in \NNreal^{\varrho_1, d, 1}_{C_{2\ell} 2^{(\ell j)/\ell}, 2\ell, \infty}
    \subset \NNreal^{\varrho_1, d, 1}_{C_{2\ell} 2^{j}, \mathscr{L}(C_{2\ell} 2^{j}), \infty}
  \)
  for all $j \geq j_0$ and a suitable constant $C_{2\ell} \in \N$.
  Therefore, the Bernstein inequality \eqref{eq:TrivialBernsteinInequality} yields a constant
  $C = C(d,\alpha, q, p) > 0$ such that
  \[
    \| \Delta_{\ell j, d} |_{\Omega} \|_{\WASpace[X][\varrho_1]}
    \leq C \cdot \big( C_{2 \ell} \, 2^j \big)^\alpha \cdot \| \Delta_{\ell j, d} |_{\Omega} \|_{X}
    \leq C_{2\ell}^\alpha C \cdot 2^{\alpha j}
    \quad \forall \, j \geq j_0 .
  \]
  Let $s_0 := \min \{ 2, s \}$, let $0 < s' < s_0$ be arbitrary,
  and note as a consequence of Equation~\eqref{eq:Nesting1} that
  \[
    \WASpace[X][\varrho_1]
    \hookrightarrow \WASpace[X][\varrho_r]
    \hookrightarrow B^s_{\sigma,\tau}(\Omega).
  \]
  Here we used that $\Omega$ is bounded, so that Equation~\eqref{eq:Nesting1} is applicable.
  Overall, as a consequence of this embedding and of Lemma~\ref{lem:BesovNormSawTooth},
  we obtain $c = c(d,s',s,\sigma,\tau) > 0$ and
  ${C' = C'(\sigma,\tau,s,p,q,\alpha,\mathscr{L},\Omega) > 0}$ satisfying
  \[
    c \cdot 2^{s' \ell j}
    \leq \| \Delta_{\ell j, d} |_{\Omega} \|_{B^{s}_{\sigma,\tau}(\Omega)}
    \leq C' \cdot \| \Delta_{\ell j, d} |_{\Omega} \|_{\WASpace[X][\varrho_1]}
    \leq C' C_{2\ell}^\alpha C \cdot 2^{\alpha j}
  \]
  for all $j \geq j_0$.
  This implies $s' \cdot \ell \leq \alpha$.
  Since this holds for all $s' \in (0,s_0)$ and all $\ell \in \N$
  with $\ell \leq \lfloor L/2 \rfloor$, we get $\lfloor L/2 \rfloor \cdot s_0 \leq \alpha$,
  as claimed.

  \medskip{}

  Now, we prove the claim for $\NASpace[X][\varrho_r]$.
  In this case, fix $\ell \in \N$ with $\ell+1 \leq L$, and note that there is some
  $j_0 \in \N$ satisfying $\mathscr{L}(2^j) \geq \ell + 1$ for all $j \geq j_0$.
  Now, Corollary~\ref{cor:SawtoothMultidimImplementation} (applied with $\ell+1$ instead of $L$)
  yields a constant $C_{\ell+1} \in \N$ such that
  \(
    \Delta_{\ell j, d}
    \in \NNreal^{\varrho_1, d, 1}_{\infty, \ell+1, C_{\ell+1} 2^{(\ell j)/((\ell+1) - 1)}}
    \subset \NNreal^{\varrho_1, d, 1}_{\infty, \mathscr{L}(C_{\ell+1} 2^{j}), C_{\ell+1} 2^{j}}
  \)
  for all $j \geq j_0$.
  As above, the Bernstein inequality~\eqref{eq:TrivialBernsteinInequality} therefore shows
  \(
    \| \Delta_{\ell j, d} |_{\Omega} \|_{\NASpace[X][\varrho_1]}
    \leq C_{\ell+1}^\alpha C \cdot 2^{\alpha j}
  \)
  for all $j \geq j_0$ and some constant $C = C(d,\alpha,q,p) < \infty$.
  Reasoning as above, we get that
  \[
    c \cdot 2^{s' \ell j}
    \leq \| \Delta_{\ell j, d} |_{\Omega} \|_{B^{s}_{\sigma,\tau}(\Omega)}
    \leq C' \cdot \| \Delta_{\ell j, d} |_{\Omega} \|_{\NASpace[X][\varrho_1]}
    \leq C' C_{\ell+1}^\alpha C \cdot 2^{\alpha j}
  \]
  for all $j \geq j_0$ and $0 < s' < s_0 = \min \{2,s\}$.
  Therefore, $s' \cdot \ell \leq \alpha$.
  Since this holds for all $s' \in (0,s_0)$ and all $\ell \in \N$ with $\ell + 1 \leq L$,
  we get $\alpha \geq s_0 \cdot (L-1)$, as claimed.
\end{proof}

\subsection{Univariate inverse estimates (\texorpdfstring{$d=1$}{d = 1})}

The ``no-go theorem'' (Theorem~\ref{thm:LimitInverseBesov})
holds for $\Omega= (0,1)^{d}$ in any dimension $d \geq 1$, for any $0 < p \leq \infty$.
In this subsection, we show in dimension $d = 1$ that Theorem~\ref{thm:LimitInverseBesov}
is quite sharp.
Precisely, we prove the following:

\begin{thm}\label{thm:ApproxSpaceIntoBesovForBoundedDepth}
Let $X = L_p(\Omega)$ with $\Omega=(0,1)$ and $p \in (0,\infty)$,
let $r \in \N$, and let $\mathscr{L}$ be a depth growth function.
Assume that $L := \sup_{n} \mathscr{L}(n) < \infty$.
Setting $\nu := \lfloor L/2\rfloor$, the following statements hold:
\begin{enumerate}[leftmargin=0.6cm]
  \item 
        For $s \in (0,\infty)$, $\alpha \in (0, \nu s)$ and $q \in (0,\infty]$,
        we have
        \begin{equation*}
          \WASpace[X][\varrho_{r}]
          \hookrightarrow \big(
                            L_p (\Omega;\R), B_{\sigma, \sigma}^s (\Omega)
                          \big)_{\tfrac{\alpha}{s\nu}, q}
          \quad \text{where} \quad \sigma := (s+1/p)^{-1}.
        \end{equation*}

  \item 
        For $\alpha \in (0,\infty)$, we have
        \begin{equation*}
          \WASpace[X][\varrho_{r}] \hookrightarrow B^{\alpha/\nu}_{q,q} (\Omega)
          \quad \text{where} \quad
          q := \left(\alpha/\nu+1/p\right)^{-1}.
        \end{equation*}
\end{enumerate}
The same holds for $\NASpace[X][\varrho_{r}]$ instead of $\WASpace[X][\varrho_r]$ if we set $\nu := L-1$.
\end{thm}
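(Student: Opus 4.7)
The plan is to apply Proposition~\ref{prop:BernsteinJacksonConsequence} (the Bernstein direction) with $Y_B := B^s_{\sigma,\sigma}(\Omega)$ and Bernstein exponent $\gamma := \nu s$, and then invoke the interpolation identity~\eqref{eq:LpBesovInterpolation} to re-express the resulting interpolation space as a Besov space. The crux is thus to establish a Bernstein inequality
\[
  \|\varphi\|_{B^s_{\sigma,\sigma}(\Omega)} \leq C \, n^{\nu s} \, \|\varphi\|_{L_p(\Omega)}
  \qquad \forall \, \varphi \in \StandardSigmaW[n] \ (\text{resp. } \StandardSigmaN[n]),
\]
with $\sigma = (s+1/p)^{-1}$ and $s$ in a suitable range.

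\textbf{Structural step.} I would first upgrade the piecewise-affine bound of Lemma~\ref{lem:MinGrowthRateNbPiecesSufficient} (stated there for $\varrho_1$) to $\varrho_r$: there exists $K = K(r,L) \in \N$ such that
\[
  \NNreal_{W,L,\infty}^{\varrho_r,1,1} \subset \PPoly_{K W^{\lfloor L/2 \rfloor}}^{r^{L-1}}(\R)
  \quad \text{and} \quad
  \NNreal_{\infty,L,N}^{\varrho_r,1,1} \subset \PPoly_{K N^{L-1}}^{r^{L-1}}(\R).
\]
This is proved by induction on $L$, using the elementary fact that the composition of piecewise polynomials with $k$ and $\ell$ pieces of degrees $a, b$ is piecewise polynomial of degree $\leq ab$ with at most $k\bigl(1+a(\ell-1)\bigr)$ pieces (the new breakpoints coming from the at most $a$ preimages, per piece of the inner function, of each breakpoint of the outer function). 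Tracking degrees and piece counts layer-by-layer yields the neuron bound $K N^{L-1}$ directly; the tighter weight bound $K W^{\lfloor L/2 \rfloor}$ requires grouping consecutive layers into pairs, observing that the pieces created within a single pair are controlled by the number of weights assigned to that pair, and applying AM--GM to $\sum_{\text{pairs}} W_{\text{pair}} \leq W$.

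\textbf{Combining with the classical spline Bernstein inequality.} Classical free-knot spline theory yields
\[
  \|\varphi\|_{B^s_{\sigma,\sigma}((0,1))} \leq C(r',s,p) \, n^s \, \|\varphi\|_{L_p((0,1))}
  \qquad \forall \, \varphi \in \PPoly_n^{r'}((0,1)), \quad 0 < s < r' + 1,
\]
with $\sigma = (s+1/p)^{-1}$. Applying this with $r' = r^{L-1}$ and composing with the structural step, each $\varphi \in \StandardSigmaW[n]$ belongs to $\PPoly_{K n^\nu}^{r^{L-1}}((0,1))$ and hence satisfies $\|\varphi\|_{B^s_{\sigma,\sigma}} \leq C' n^{\nu s} \|\varphi\|_{L_p}$. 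This is the desired Bernstein inequality with $\gamma = \nu s$; Proposition~\ref{prop:BernsteinJacksonConsequence} then yields Part~(1) for all $0 < \alpha < \nu s$ and $q \in (0,\infty]$. The $\NASpace[X][\varrho_r]$ case is identical with $\nu = L-1$ and the neuron-based piece count.

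\textbf{Part~(2) by interpolation.} Given $\alpha$ in the admissible range, pick $s$ with $\alpha/\nu < s < r^{L-1}+1$, so that $\theta := \alpha/(\nu s) \in (0,1)$ and $\theta s = \alpha/\nu$. Setting $q := (\alpha/\nu + 1/p)^{-1}$ and invoking Part~(1) together with~\eqref{eq:LpBesovInterpolation} (which identifies $(L_p, B^s_{\sigma,\sigma})_{\theta,q}$ with $B^{\theta s}_{q,q} = B^{\alpha/\nu}_{q,q}$) yields $\WASpace[X][\varrho_r] \hookrightarrow B^{\alpha/\nu}_{q,q}(\Omega)$, as claimed. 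The principal obstacle is the structural step: Theorem~\ref{thm:LimitInverseBesov} shows that the exponents $\lfloor L/2 \rfloor$ and $L-1$ are sharp, so the bounds on the number of pieces must match exactly; obtaining $n^{\lfloor L/2 \rfloor}$ (rather than the weaker $n^{L-1}$) for the weight-based class requires the pairing-and-AM--GM argument sketched above, already the delicate combinatorial core of Lemma~\ref{lem:MinGrowthRateNbPiecesSufficient} in the $\varrho_1$ case.
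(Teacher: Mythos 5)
Your overall strategy---Bernstein inequality via piece-counting (Lemma~\ref{lem:MinGrowthRateNbPiecesSufficient}), Petrushev's free-knot spline Bernstein estimate (Theorem~\ref{thm:FreeKnotSplineBernsteinInequality}), and real interpolation via Proposition~\ref{prop:BernsteinJacksonConsequence} followed by the identity~\eqref{eq:LpBesovInterpolation}---is exactly the route the paper takes. Two remarks. First, Lemma~\ref{lem:MinGrowthRateNbPiecesSufficient} is already stated and proved for general $r \in \N$, so no ``upgrade from $\varrho_1$ to $\varrho_r$'' is required; you appear to have misread the hypothesis.

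Second, and this is the substantive gap: as written, your argument does not deliver the full ranges claimed in the theorem. You apply Petrushev's Bernstein inequality with spline degree $r' = r^{L-1}$, which restricts to $0 < s < r^{L-1}+1$ in Part~(1), and in Part~(2) your choice ``pick $s$ with $\alpha/\nu < s < r^{L-1}+1$'' requires $\alpha < \nu\,(r^{L-1}+1)$. But the theorem asserts the embedding for \emph{all} $s \in (0,\infty)$ in Part~(1) and \emph{all} $\alpha \in (0,\infty)$ in Part~(2). You need an additional step to pass to arbitrary $s$. The paper's fix is: given $s$, choose $r' \in \N$ with $r' \ge r$ and $s < r'+1$; then Lemmas~\ref{lem:ApproxSpaceIntoBesovForBoundedDepth} and~\ref{lem:MinGrowthRateNbPiecesSufficient} give $\WASpace[X][\varrho_{r'}] \hookrightarrow \big(L_p(\Omega), B^s_{\sigma,\sigma}(\Omega)\big)_{\alpha/(s\nu),q}$, and since $\Omega$ is bounded, Theorem~\ref{thm:ReLUPowersApproxSpaces} gives the monotone embedding $\WASpace[X][\varrho_r] \hookrightarrow \WASpace[X][\varrho_{r'}]$, closing the argument. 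Note that within your own framework a simpler closure is also available: since $\PPoly_n^{d} \subset \PPoly_n^{d'}$ for $d' \ge d$, you may apply Petrushev's inequality with degree $d' := \max\{r^{L-1}, \lceil s\rceil\}$ rather than $r^{L-1}$, yielding a Bernstein constant depending on $s$ (through $d'$) but valid for every $s>0$; either route works, but one of them must be supplied.
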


The proof involves a Bernstein inequality for piecewise polynomials
by Petrushev \cite{PetrushevSplineAndRational}, and new bounds on the number of pieces
of piecewise polynomials implemented by $\varrho_{r}$-networks.
Petrushev considers the (nonlinear) set $\tilde{\mathtt{S}}(k,n)$
of all piecewise polynomials on $(0,1)$ of degree at most $r=k-1$ ($k \in \N$)
with at most $n-1$ breakpoints in $[0,1]$.
In the language of Definition~\ref{defn:PiecewisePolynomial},
$\tilde{\mathtt{S}}(k,n) = \PPoly_n^r ( (0,1) )$ is the set of piecewise polynomials
of degree at most $r=k-1 \in \N_{0}$ with at most $n$ pieces on $(0,1)$.


%

By \cite[Chapter 12, Theorem 8.2]{ConstructiveApproximation}
(see \cite[Theorem 2.2]{PetrushevSplineAndRational} for the original proof)
the following Bernstein-type inequality holds for each family
$\Sigma := (\tilde{\mathtt{S}}(k,n))_{n \in \N}$, $k \in \N$:

\begin{thm}[{\cite[Theorem 2.2]{PetrushevSplineAndRational}}]
  \label{thm:FreeKnotSplineBernsteinInequality}

    Let $\Omega = (0,1)$, $p \in (0,\infty)$, $r \in \N_{0}$, and $s \in (0,r+1)$ be arbitrary,
    and set $\sigma := (s + 1/p)^{-1}$.
    Then there is a constant $C < \infty$ such that we have
    \begin{equation*}
      \| f \|_{B_{\sigma, \sigma}^{s}(\Omega)} \leq C \cdot n^s \cdot \|f\|_{L_p(\Omega)}
      \qquad \forall \, n \in \N \text{ and } f \in \PPoly_{n}^{r}(\Omega). 
      \qedhere
    \end{equation*}
\end{thm}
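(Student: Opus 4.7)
The plan is to prove the Bernstein inequality by atomic decomposition into polynomial pieces followed by a Hölder summation. Let $f\in\PPoly_n^r(\Omega)$ have breakpoints $0=x_0<x_1<\cdots<x_n=1$, set $I_i:=[x_{i-1},x_i]$, $h_i:=|I_i|$, and write $f=\sum_{i=1}^n f_i$ with $f_i:=p_i\,\Indicator_{I_i}$, where $p_i$ is a polynomial of degree $\leq r$ on $I_i$. The key is to bound each $\|f_i\|_{B_{\sigma,\sigma}^s(\Omega)}$ by $C\|p_i\|_{L_p(I_i)}$ (with $C$ independent of $i$ and $n$), and then to combine the estimates optimally.

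First I would establish the single-piece estimate by a scaling argument. Rescaling $I_i$ to $[0,1]$ via $y\mapsto x_{i-1}+h_iy$ gives $\|f_i\|_{B_{\sigma,\sigma}^s}=h_i^{1/\sigma-s}\|\tilde p_i\,\Indicator_{[0,1]}\|_{B_{\sigma,\sigma}^s}$, where $\tilde p_i(y):=p_i(x_{i-1}+h_iy)$. Because $s<1/\sigma$ is automatic from $p<\infty$ (since $1/\sigma-s=1/p>0$), and $s<r+1$, the indicator $\Indicator_{[0,1]}$ lies in $B_{\sigma,\sigma}^s$. Combining this with the boundedness of pointwise multiplication by a polynomial of degree $\leq r$ on $B_{\sigma,\sigma}^s$ (which follows from a wavelet/B-spline characterization), and the polynomial Markov–Nikolskii inequality $\|\tilde p_i\|_{L_\infty[0,1]}\leq C(r)\|\tilde p_i\|_{L_p[0,1]}$, one obtains
\[
  \|f_i\|_{B_{\sigma,\sigma}^s}
  \;\leq\; C\,h_i^{1/\sigma-s}\|\tilde p_i\|_{L_\infty[0,1]}
  \;\leq\; C\,h_i^{1/\sigma-s-1/p}\|p_i\|_{L_p(I_i)}
  \;=\; C\,\|p_i\|_{L_p(I_i)},
\]
where the exponent collapses to $0$ precisely because $\sigma=(s+1/p)^{-1}$.

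Next I would aggregate over $i$. For $0<\sigma\leq 1$ the $\sigma$-triangle inequality gives $\|f\|_{B_{\sigma,\sigma}^s}^\sigma\leq \sum_i\|f_i\|_{B_{\sigma,\sigma}^s}^\sigma\leq C\sum_i\|p_i\|_{L_p(I_i)}^\sigma$. For $\sigma>1$ (which forces $s+1/p<1$) the ordinary triangle inequality only yields the suboptimal rate $n^{1-1/p}$, so instead I would invoke a disjoint-support equivalence
\[
  \|f\|_{B_{\sigma,\sigma}^s}^\sigma
  \;\asymp\;
  \sum_i \|f_i\|_{B_{\sigma,\sigma}^s}^\sigma,
\]
valid because the $f_i$ have pairwise (essentially) disjoint supports $I_i$; this equivalence can be extracted from the spline/wavelet atomic decomposition of Besov spaces on the interval. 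In either case, applying Hölder with exponent $p/\sigma>1$ over the $n$ pieces and using that the $I_i$ are disjoint gives
\[
  \sum_i \|p_i\|_{L_p(I_i)}^\sigma
  \;\leq\; n^{\,1-\sigma/p}
  \Bigl(\sum_i \|p_i\|_{L_p(I_i)}^p\Bigr)^{\sigma/p}
  \;=\; n^{\,s\sigma}\,\|f\|_{L_p}^\sigma,
\]
since $1-\sigma/p=s\sigma$ by the defining relation for $\sigma$. Taking $\sigma$-th roots yields the claimed inequality $\|f\|_{B_{\sigma,\sigma}^s}\leq C n^s \|f\|_{L_p}$.

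The main obstacle is the Banach regime $\sigma>1$, where the naive triangle inequality is lossy. This is where the disjoint-support equivalence is essential, and justifying it requires genuine input from the theory of Besov spaces (e.g.\ a B-spline or wavelet characterization in which the quasi-norm is an $\ell^\sigma$-sum of coefficients whose support pattern reflects that of the underlying function). A secondary but more routine subtlety is controlling the multiplier $P\mapsto P\cdot \Indicator_{[0,1]}$ on $B_{\sigma,\sigma}^s$ uniformly over polynomials of degree $\leq r$; this again reduces to a finite-dimensional argument on polynomials combined with the fact that $\Indicator_{[0,1]}$ itself is a fixed element of $B_{\sigma,\sigma}^s$ for the admissible range of $s$. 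Petrushev's original argument packages both issues inside a direct spline-characterization of $B_{\sigma,\sigma}^s$, which is the shortest self-contained route.
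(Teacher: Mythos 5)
The paper does not prove this theorem; it imports it verbatim from Petrushev \cite{PetrushevSplineAndRational} (see also \cite[Chapter 12, Theorem 8.2]{ConstructiveApproximation}), so I compare your argument to that standard proof. Your overall strategy---decompose $f=\sum_i p_i\Indicator_{I_i}$, prove a scale-invariant single-piece bound $\|p_i\Indicator_{I_i}\|_{B^s_{\sigma,\sigma}}\lesssim\|p_i\|_{L_p(I_i)}$ by dilation, finite-dimensionality, and Markov--Nikolskii, aggregate in $\ell^\sigma$, and finish with H\"older across the $n$ pieces---is sound, and your exponent bookkeeping ($1/\sigma-s-1/p=0$ and $1-\sigma/p=s\sigma$) is correct. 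For $\sigma\leq 1$ the $\sigma$-subadditivity indeed closes the argument.

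The genuine gap is the $\ell^\sigma$-aggregation when $\sigma>1$, and the mechanism you propose to fill it would not work as stated. A wavelet or B-spline coefficient $\langle f,\psi_\lambda\rangle=\sum_i\langle f_i,\psi_\lambda\rangle$ at a coarse scale can receive contributions from arbitrarily many of the intervals $I_i$; H\"older on that sum costs a factor $N_\lambda^{\sigma-1}$ with $N_\lambda$ the number of intervals meeting $\supp\psi_\lambda$, and $N_\lambda$ is unbounded when the $I_i$ have very unequal lengths. So the ``disjoint-support equivalence'' does not follow from a wavelet/spline characterization in the way you suggest. The tool that does work---and the one Petrushev and DeVore--Lorentz effectively use---is the modulus-of-smoothness characterization: a finite difference $D_h^{r+1}f(x)$ depends on $f$ only at the $r+2$ points $x, x+h,\dots,x+(r+1)h$, hence at most $r+2$ of the $f_i$ contribute to $D_h^{r+1}f(x)$ for any fixed $(x,h)$. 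H\"older over $\leq r+2$ terms gives $\omega_{r+1}(f)_\sigma(t)^\sigma\leq (r+2)^{\sigma-1}\sum_i\omega_{r+1}(f_i)_\sigma(t)^\sigma$, and integrating $t^{-s\sigma-1}\,dt$ (together with exact $\ell^\sigma$-additivity of $\|\cdot\|_{L_\sigma}^\sigma$ for disjoint supports) yields the needed $\|f\|_{B^s_{\sigma,\sigma}}^\sigma\lesssim \sum_i\|f_i\|_{B^s_{\sigma,\sigma}}^\sigma$ with a constant depending only on $r,\sigma$. This is also where the hypothesis $s<r+1$ is actually used: it lets one compute the $B^s_{\sigma,\sigma}$ quasi-norm with the order-$(r+1)$ modulus, the order annihilating polynomials of degree $\leq r$. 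Once you replace the wavelet lemma by this local finite-difference count, your proof closes; it then amounts to a modular repackaging of Petrushev's argument, which does the same local counting inline (observing that $D_h^{r+1}f$ vanishes except near breakpoints) without ever introducing the atoms $f_i$ explicitly.
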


\begin{rem}
  Theorem~\ref{thm:FreeKnotSplineBernsteinInequality} even holds for \emph{discontinuous}
  piecewise polynomial functions, see \cite[Theorem 2.2]{PetrushevSplineAndRational}.
  Hence, the Besov spaces in Theorem~\ref{thm:ApproxSpaceIntoBesovForBoundedDepth}
  also contain discontinuous functions.
  This is natural, as $\varrho_{r}$-networks with bounded number of connections or neurons
  approximate indicator functions arbitrarily well (though with weight values going to infinity,
  see the proof of Lemma~\ref{lem:ApproximationOfIndicatorCube}).
\end{rem}
When $f$ is a realization of a $\varrho_{r}$-network of depth $L$,
it is piecewise polynomial \cite{telgarsky2016benefits}.
As there are $L-1$ hidden layers, the polynomial pieces are of degree $r^{L-1}$ at most,
hence $f|_{(0,1)} \in \PPoly^{r^{L-1}}_{n} ( (0,1) )$ for large enough $n$.
This motivates the following definition.

\begin{defn}[Number of pieces]\label{def:NbPieces}
  Define $n_{r}(W,L,N)$ to be the optimal bound on the number of polynomial pieces
  for a $\varrho_{r}$-network with $W \in \N_{0}$ connections,
  depth $L \in \N$ and $N \in \N_{0}$ neurons; that is,
  \[
    n_{r}(W,L,N)
    := \min \left\{
              n \in \N
              \quad \colon \quad
              \forall \, g \in \NNreal^{\varrho_{r},1,1}_{W,L,N}
                \,:\, g|_{(0,1)} \in \PPoly_{n}^{r^{L-1}}( (0,1) )
            \right\}.
  \]
  Furthermore, let $n_{r}(W,L,\infty) := \sup_{N \in \N_{0}} n_{r}(W,L,N)$
  and $n_{r}(\infty,L,N) := \sup_{W \in \N_{0}} n_{r}(W,L,N)$.
\end{defn}

\begin{rem}\label{rem:PolynomialPiecesIndependentOfInterval}
  The definition of $n_r (W,L,N)$ is independent of the non-degenerate interval $I \subset \R$
  used for its definition.
  To see this, write $n_r^{(I)} (W,L,N)$ for the analogue of $n_r (W,L,N)$,
  but with $(0,1)$ replaced by a general non-degenerate interval $I \subset \R$.
  First, note that $n_r^{(I)} (W,L,N) \leq n_r^{(J)} (W,L,N)$ if $I \subset J$.

  Next, note for $g \in \NNreal^{\varrho_r,1,1}_{W,L,N}$ and $a \in (0,\infty)$, $b \in \R$
  that $g_{a,b} := g (a \cdot + b) \in \NNreal^{\varrho_r,1,1}_{W,L,N}$ as well
  (see Lemma~\ref{lem:NetworkCalculus}) and that $g|_I \in \PPoly_n^{r^{L-1}} (I)$
  if and only $g_{a,b} |_{a^{-1} (I - b)} \in \PPoly_n^{r^{L-1}} (a^{-1} (I-b))$.
  Therefore, $n_r^{(I)}(W,L,N) = n_r^{(a I + b)}(W,L,N)$ for all $a \in (0,\infty)$ and $b \in \R$.

  Now, if $J \subset \R$ is any non-degenerate interval,
  and if $I \subset \R$ is a \emph{bounded} interval,
  then $a I + b \subset J$ for suitable $a > 0$, $b \in \R$.
  Hence, $n_r^{(I)} = n_r^{(a I + b)} \leq n_r^{(J)}$.
  In particular, this shows $n_r^{(I)} = n_r^{(J)}$ for all \emph{bounded}
  non-degenerate intervals $I,J \subset \R$.

  Finally, if $g \in \NNreal^{\varrho_r,1,1}_{W,L,N}$ is arbitrary,
  then $g \in \PPoly_n^{r^{L-1}}(\R)$ for \emph{some} $n \in \N$.
  Thus, there are $a,b \in \R$, $a < b$ such that $g|_{(-\infty,a+1)}$ and $g|_{(b-1, \infty)}$
  are polynomials of degree at most $r^{L-1}$.
  Let $k := n_r^{((a,b))} (W,L,N) = n_r^{((0,1))} (W,L,N)$,
  so that $g|_{(a,b)} \in \PPoly_k^{r^{L-1}} ( (a,b) )$.
  Clearly, $g \in \PPoly_{k}^{r^{L-1}} (\R)$.
  Hence, $n_r^{(\R)}(W,L,N) \leq k = n_r^{((0,1))}(W,L,N)$.
\end{rem}


We now have the ingredients to establish the first main lemma behind the proof
of Theorem~\ref{thm:ApproxSpaceIntoBesovForBoundedDepth}.

\begin{lem}\label{lem:ApproxSpaceIntoBesovForBoundedDepth}
  Let $X = L_{p}(\Omega)$ with $\Omega = (0,1)$ and $p \in (0,\infty)$.
  Let $r \in \N$ and $\nu \in (0,\infty)$, and let $\mathscr{L}$ be a depth growth function
  such that $L := \sup_{n} \mathscr{L}(n) < \infty$.
  Assume that
  \begin{equation}\label{eq:GrowthRateNbPieces}
    \sup_{W \in \N} W^{-\nu}\ n_{r}(W,L,\infty) < \infty.
  \end{equation}
  \begin{enumerate}[leftmargin=0.6cm]
    \item For $s \in (0,r+1)$, $\alpha \in (0, \nu \cdot s)$, and $q \in (0,\infty]$, we have
          \begin{equation}\label{eq:ApproxSpaceIntoBesovForBoundedDepth1}
            \WASpace[X][\varrho_{r}]
            \hookrightarrow (L_p (\Omega), B_{\sigma, \sigma}^s (\Omega))_{ \frac{\alpha}{s\nu}, q}
            \quad \text{where} \quad
            \sigma := (s+1/p)^{-1}
            \, .
          \end{equation}

    \item For $\alpha \in (0, \nu(r+1))$, we have
          \begin{equation}\label{eq:ApproxSpaceIntoBesovForBoundedDepth2}
            \WASpace[X][\varrho_{r}]
            \hookrightarrow B^{\alpha/\nu}_{q,q} (\Omega)
            \quad \text{where} \quad
            q := \left( \alpha/\nu + 1/p\right)^{-1} .
          \end{equation}
  \end{enumerate}
  The same results hold with $\NASpace[X][\varrho_{r}]$ instead of $\WASpace[X][\varrho_{r}]$
  if we assume instead that
  \begin{equation}
    \sup_{N \in \N} N^{-\nu}\ n_{r}(\infty,L,N) < \infty.
    \label{eq:GrowthRateNbPiecesbis}
    \qedhere
  \end{equation}
\end{lem}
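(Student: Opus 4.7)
The plan is to establish a Bernstein inequality for the Besov space $B^{s}_{\sigma,\sigma}(\Omega)$ relative to the approximation family $\Sigma_n = \mathtt{W}_n(X,\varrho_r,\mathscr{L})$, and then to deduce the two claims by combining Proposition~\ref{prop:BernsteinJacksonConsequence} with the interpolation identity~\eqref{eq:LpBesovInterpolation}. The key quantitative input is the growth assumption~\eqref{eq:GrowthRateNbPieces} on $n_r(W,L,\infty)$, which converts a budget on connectivity into a budget on the number of polynomial pieces; the main (and essentially only) work is making sure Petrushev's Bernstein inequality applies in the admissible parameter range.

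First, I would verify the Bernstein inequality. Let $\varphi \in \Sigma_n$, so $\varphi = g|_\Omega$ for some $g \in \NNreal^{\varrho_r,1,1}_{n,\mathscr{L}(n),\infty} \subset \NNreal^{\varrho_r,1,1}_{n,L,\infty}$, using $\mathscr{L}(n) \leq L$. By Definition~\ref{def:NbPieces} together with Remark~\ref{rem:PolynomialPiecesIndependentOfInterval}, we obtain $\varphi \in \PPoly^{r^{L-1}}_{n_r(n,L,\infty)}(\Omega)$, and assumption~\eqref{eq:GrowthRateNbPieces} provides a constant $C_0$ with $n_r(n,L,\infty) \leq C_0 n^\nu$. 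Since $s < r+1 \leq r^{L-1}+1$, Theorem~\ref{thm:FreeKnotSplineBernsteinInequality} applies with degree $r^{L-1}$ and yields a constant $C_1$ with
\[
\|\varphi\|_{B^{s}_{\sigma,\sigma}(\Omega)} \leq C_1 (C_0 n^\nu)^s \|\varphi\|_{L_p(\Omega)} = C_2 \cdot n^{\nu s} \cdot \|\varphi\|_{L_p(\Omega)},
\]
which is exactly the Bernstein inequality~\eqref{eq:BernsteinInequality} with exponent $\gamma := \nu s$.

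Claim~(1) then follows directly from Proposition~\ref{prop:BernsteinJacksonConsequence}: since $\alpha \in (0,\nu s) = (0,\gamma)$, we obtain
\[
\WASpace[X][\varrho_r] = A_q^\alpha(X,\Sigma) \hookrightarrow \bigl(L_p(\Omega), B^{s}_{\sigma,\sigma}(\Omega)\bigr)_{\alpha/(\nu s),\, q},
\]
which is~\eqref{eq:ApproxSpaceIntoBesovForBoundedDepth1}. For claim~(2), given $\alpha \in (0,\nu(r+1))$ and $q := (\alpha/\nu+1/p)^{-1}$, pick any $s \in (\alpha/\nu,\, r+1)$ (nonempty since $\alpha/\nu < r+1$), and set $\theta := \alpha/(\nu s) \in (0,1)$. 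Claim~(1) gives $\WASpace[X][\varrho_r] \hookrightarrow (L_p(\Omega), B^{s}_{\sigma,\sigma}(\Omega))_{\theta,q}$, and because $\theta s = \alpha/\nu$ and $q = (\theta s + 1/p)^{-1}$, the interpolation identity~\eqref{eq:LpBesovInterpolation} identifies this interpolation space with $B^{\alpha/\nu}_{q,q}(\Omega)$, yielding~\eqref{eq:ApproxSpaceIntoBesovForBoundedDepth2}.

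The arguments for $\NASpace[X][\varrho_r]$ are verbatim identical, with $\mathtt{W}_n$ replaced by $\mathtt{N}_n$ and the bound $n_r(n,L,\infty) \leq C_0 n^\nu$ replaced by $n_r(\infty,L,n) \leq C_0 n^\nu$ coming from~\eqref{eq:GrowthRateNbPiecesbis}. The only subtle point in the whole plan is confirming the degree condition $s < r^{L-1}+1$ in Petrushev's theorem; this is automatic because the lemma restricts $s < r+1$, whereas the realization of a depth-$L$ network with activation $\varrho_r$ is piecewise polynomial of the (potentially much larger) degree $r^{L-1}$, so there is room to spare.
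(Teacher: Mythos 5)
Your proposal is correct and follows essentially the same route as the paper: establish a Bernstein inequality with exponent $\gamma = \nu s$ from Petrushev's theorem and the piece-count bound, feed it into Proposition~\ref{prop:BernsteinJacksonConsequence}, and then specialize via the interpolation identity~\eqref{eq:LpBesovInterpolation}. One small omission: Proposition~\ref{prop:BernsteinJacksonConsequence} requires the embedding $Y_B = B^s_{\sigma,\sigma}(\Omega) \hookrightarrow L_p(\Omega)$ as a hypothesis alongside the Bernstein inequality, and you should cite~\eqref{eq:BesovEmbedsLp} (with $d=1$) to supply it; otherwise everything, including your explicit verification that $s < r+1 \leq r^{L-1}+1$ keeps the Petrushev application in range, is exactly what the paper does.
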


\begin{proof}[Proof of Lemma~\ref{lem:ApproxSpaceIntoBesovForBoundedDepth}]
As
\(
  \NNreal^{\varrho_{r},1,1}_{n,\mathscr{L}(n),\infty}
  \subset \NNreal^{\varrho_{r},1,1}_{n,L,\infty}
\)
for each $n \in \N$, Theorem~\ref{thm:FreeKnotSplineBernsteinInequality}
and Equation~\eqref{eq:GrowthRateNbPieces} yield a constant $C < \infty$ such that
\begin{equation}
  \| f \|_{B_{\sigma, \sigma}^s(\Omega)}
  \leq C \cdot n^{\nu s} \cdot \| f \|_{L_p(\Omega)},
  \quad \text{for all }\ n \in \N \text{ and } f \in \WeightClassSymbol_{n}(X,\varrho_r,\mathscr{L}),
  \label{eq:BernsteinForBoundedDepth}
\end{equation}
where $\sigma := (s+1/p)^{-1} = (s/d+1/p)^{-1}$ (recall $d=1$).
By \eqref{eq:BesovEmbedsLp} we further get that
\(
  Y_{B} := B_{\sigma, \sigma}^s(\Omega) \hookrightarrow L_p(\Omega)
\),
whence \eqref{eq:BernsteinForBoundedDepth} is a valid Bernstein inequality
for $Y_{B}$ with exponent $\gamma := s \cdot \nu > \alpha$.
Proposition~\ref{prop:BernsteinJacksonConsequence} with $\theta := \alpha/\gamma=\alpha/(s\nu)$
and $0 < q \leq \infty$ yields~\eqref{eq:ApproxSpaceIntoBesovForBoundedDepth1}.

When $0 < \alpha < \nu(r+1)$, there is $s \in (0, r+1)$ such that $0 < \alpha < \nu \cdot s$;
hence, \eqref{eq:ApproxSpaceIntoBesovForBoundedDepth1} holds for any $0 <q \leq \infty$.
By \eqref{eq:LpBesovInterpolation}, we see for $\theta := \frac{\alpha}{s \nu} \in (0,1)$
and $q := (\theta s + 1/p)^{-1} = (\alpha/\nu + 1/p)^{-1}$ that the right hand side
of \eqref{eq:ApproxSpaceIntoBesovForBoundedDepth1} is simply
$B_{q, q}^{\theta s} (\Omega) = B_{q,q}^{\alpha/\nu}(\Omega)$.

The proof for $\NASpace[X][\varrho_{r}]$ follows the same steps.
\end{proof}

Theorem~\ref{thm:ApproxSpaceIntoBesovForBoundedDepth} is a corollary of
Lemma~\ref{lem:ApproxSpaceIntoBesovForBoundedDepth} once we establish~\eqref{eq:GrowthRateNbPieces}
(resp.~\eqref{eq:GrowthRateNbPiecesbis}).
The smaller $\nu$ the better, as it yields a larger value for $\alpha/\nu$, hence a smoother
(smaller) Besov space in~\eqref{eq:ApproxSpaceIntoBesovForBoundedDepth2}.

\begin{lem}\label{lem:MinGrowthRateNbPiecesSufficient}
  Consider $L\in \N_{\geq 2}$, $r \in \N$.
  \begin{itemize}[leftmargin=0.6cm]
    \item Property~\eqref{eq:GrowthRateNbPieces} holds if and only if
          $\nu \geq \lfloor L/2\rfloor$;

    \item Property~\eqref{eq:GrowthRateNbPiecesbis} holds if and only if $\nu \geq L-1$.
          \qedhere
  \end{itemize}
\end{lem}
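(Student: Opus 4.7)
The plan is to prove matching upper and lower bounds
\[
  n_r(W,L,\infty) \asymp W^{\lfloor L/2 \rfloor}
  \qquad \text{and} \qquad
  n_r(\infty,L,N) \asymp N^{L-1},
\]
from which both equivalences follow at once (the lower bounds need only be attained along some unbounded subsequence of $W$'s, resp.\ $N$'s, in order to force the sup in \eqref{eq:GrowthRateNbPieces}, resp.\ \eqref{eq:GrowthRateNbPiecesbis}, to be infinite for any strictly smaller value of $\nu$).

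For the upper bounds I would proceed by induction on $L$. For a network $\Phi \in \NNsymbol^{\varrho_r,1,1}_{W,L,N}$ with layer widths $1 = N_0, N_1,\ldots,N_{L-1}, N_L = 1$, let $\pi_\ell$ denote the number of common polynomial pieces of the intermediate function $\R \to \R^{N_\ell}$ after the first $\ell$ affine-and-nonlinear layers. On each piece of $\pi_{\ell-1}$, the pre-activation of any layer-$\ell$ neuron is a polynomial of degree at most $r^{\ell-1}$ and hence has at most $r^{\ell-1}$ zero-crossings; letting $N_\ell^{\mathrm{eff}}$ count those neurons with both a nonzero input and a nonzero output weight, this yields the recursion $\pi_\ell \leq \pi_{\ell-1}(1 + r^{\ell-1} N_\ell^{\mathrm{eff}})$, and therefore $\pi_{L-1} \leq C_{r,L} \prod_{\ell=1}^{L-1} N_\ell^{\mathrm{eff}}$. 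Applying AM-GM to $\sum_\ell N_\ell^{\mathrm{eff}} \leq N$ yields the neuron bound $n_r(\infty,L,N) \leq C\,N^{L-1}$. The connectivity bound uses the sharper observation that $\|T_\ell\|_{\ell^0} \geq \max(N_{\ell-1}^{\mathrm{eff}}, N_\ell^{\mathrm{eff}})$ (the corresponding rows \emph{and} columns of $T_\ell$ must be nonzero): optimizing the product $\prod N_\ell^{\mathrm{eff}}$ over all configurations satisfying these constraints and $\sum_\ell \|T_\ell\|_{\ell^0} \leq W$ is maximized by the ``alternating bottleneck'' configuration $N_1 = N_3 = \ldots \asymp W/L$, $N_2 = N_4 = \ldots = 1$, which achieves exactly the exponent $\lfloor L/2\rfloor$.

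For the lower bounds I would use the sawtooth functions $\Delta_j$ of Definition~\ref{def:SawtoothFunction}. Since $\Delta_j|_{(0,1)}$ consists of $2^j$ alternating affine pieces and no polynomial (of any degree, on any single subinterval) can agree with two consecutive such pieces, we have $\Delta_j|_{(0,1)} \notin \PPoly^{r^{L-1}}_n((0,1))$ for any $n < 2^j$. For $r=1$, Lemma~\ref{lem:SawtoothImplementation} directly gives
\[
  \Delta_j \in \NNreal^{\varrho_1,1,1}_{C_L\,2^{j/\lfloor L/2\rfloor}, L, \infty}
           \cap \NNreal^{\varrho_1,1,1}_{\infty, L, C_L\,2^{j/(L-1)}},
\]
so taking $W := C_L\,2^{j/\lfloor L/2\rfloor}$ (resp.\ $N := C_L\,2^{j/(L-1)}$) and letting $j \to \infty$ forces $n_1(W,L,\infty)/W^{\nu} \to \infty$ for every $\nu < \lfloor L/2\rfloor$ (resp.\ $n_1(\infty,L,N)/N^{\nu} \to \infty$ for every $\nu < L-1$), which is exactly what is needed. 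For $r \geq 2$, I would replace $\Delta_1$ by the ``$\varrho_r$-hat'' $H_r(x) := \sigma_r(2x) - \sigma_r(2x-1)$, built exactly (not merely as a limit) from the squashing function $\sigma_r \in \SNNreal^{\varrho_r,1,1}_{2(r+1),2,r+1}$ of Lemma~\ref{lem:ExactSquashingReLUPower}. Since $H_r$ maps each of $[0,1/2]$ and $[1/2,1]$ monotonically onto $[0,1]$, its $j$-fold composition $H_r^{\circ j}$ oscillates $2^j$ times on $[0,1]$ and hence contains at least $2^j$ polynomial pieces; the depth-efficient shared-computation scheme underlying Lemma~\ref{lem:SawtoothImplementation} carries over with the same exponents, up to constants depending on $r$ and $L$.

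The main obstacle I anticipate is the sharp $\lfloor L/2\rfloor$ exponent in the connectivity upper bound: a naive layer-wise bookkeeping combined with AM-GM on the sum $\sum N_\ell^{\mathrm{eff}}$ only yields the weaker exponent $L-1$. The improvement rests on the fact that each single $\|T_\ell\|_{\ell^0}$ simultaneously lower-bounds \emph{two} consecutive effective widths, so the $L-1$ factors of $\prod N_\ell^{\mathrm{eff}}$ can be paired up and each pair dominated by a single $\|T\|_{\ell^0}$, effectively halving the number of independent degrees of freedom. A secondary technical point, arising only in the $r \geq 2$ setting, is verifying that iterated compositions of the smooth building block $H_r$ genuinely create new polynomial pieces rather than causing accidental cancellations; this follows from the strict monotonicity and surjectivity of $H_r$ on each of its two oscillation halves.
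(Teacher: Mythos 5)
The critical obstacle you flag — getting the exponent $\lfloor L/2\rfloor$ rather than $L-1$ in the connectivity upper bound — is not actually resolved by your ``pairing'' argument, and this is a genuine gap.

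Your recursion $\pi_\ell \leq \pi_{\ell-1}(1 + r^{\ell-1} N_\ell^{\mathrm{eff}})$ tracks the number of \emph{common} polynomial pieces across the layer-$\ell$ output, so it always multiplies the effective widths and yields $\pi_{L-1} \lesssim \prod_{\ell=1}^{L-1} N_\ell^{\mathrm{eff}}$. But the constraints $\|T_\ell\|_{\ell^0} \geq \max(N_{\ell-1}^{\mathrm{eff}}, N_\ell^{\mathrm{eff}})$ and $\sum_\ell \|T_\ell\|_{\ell^0}\leq W$ do \emph{not} force $\prod N_\ell^{\mathrm{eff}} \lesssim W^{\lfloor L/2\rfloor}$: the configuration of equal effective widths $N_\ell^{\mathrm{eff}} = M$ with $\|T_\ell\|_{\ell^0} = M$ (a sparse network with one input per neuron) satisfies all the constraints with $W = LM$ and gives $\prod N_\ell^{\mathrm{eff}} \asymp (W/L)^{L-1}$, which dominates the alternating-bottleneck value $(W/L)^{\lfloor L/2\rfloor}$ for $L \geq 3$. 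The heuristic that ``each pair can be dominated by a single $\|T\|_{\ell^0}$'' would require $N_{\ell-1}^{\mathrm{eff}} N_\ell^{\mathrm{eff}} \leq \|T_\ell\|_{\ell^0}$, which is false — you only have the $\max$, not the product. Of course, the equal-width sparse configuration does \emph{not} actually create $M^{L-1}$ pieces (each output neuron only sees one input, so pieces do not genuinely multiply), but that information is lost once you pass to the common-piece recursion. The paper's Appendix~\ref{sub:NumberOfPiecesSimplified} avoids this by tracking the piece count of each \emph{individual} neuron rather than a common refinement, advancing two layers at a time: going from layer $2t+1$ to $2t+2$ multiplies the per-neuron piece count by its fan-in $M_j$, and going from layer $2t+2$ to $2t+3$ \emph{adds} the per-neuron piece counts via Lemma~\ref{lem:SumOfPolyFunctions}, so the total growth over two layers is $\sum_j M_j = \|T_{2t+2}\|_{\ell^0}$ rather than the product of two widths. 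Collecting only the even-indexed $\|T_{2\ell}\|_{\ell^0}$'s gives $\lfloor L/2\rfloor$ factors.

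Regarding the lower-bound (``only if'') direction: your $r=1$ sawtooth argument is sound but the paper takes a structurally different route that sidesteps your $r\geq 2$ difficulties entirely. Instead of implementing sawteeth by $\varrho_r$-networks, the paper observes that if~\eqref{eq:GrowthRateNbPieces} held with $\nu < \lfloor L/2\rfloor$ then Lemma~\ref{lem:ApproxSpaceIntoBesovForBoundedDepth}-(2) would produce a contradiction with the Besov-embedding obstruction in Theorem~\ref{thm:LimitInverseBesov}. This argument works uniformly for all $r\in\N$, because the sawtooth only ever needs to be implemented as a $\varrho_1$-network and then transferred to $\varrho_r$ via the nesting~\eqref{eq:Nesting1}. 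Your proposed $\varrho_r$-hat $H_r^{\circ j}$ is a plausible direct substitute, but the assertion that ``the depth-efficient shared-computation scheme carries over with the same exponents'' is nontrivial: each composition with $H_r$ multiplies the oscillation count by $2$ but multiplies the polynomial-piece count by up to $2(r+1)$, and the depth-synchronized re-use of intermediate computations in Lemma~\ref{lem:SawtoothImplementation} relies on $\Delta_j\circ\Delta_1 = \Delta_{j+1}$ holding \emph{exactly}, which is a delicate property you would need to re-establish for $H_r$.
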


\begin{proof}
If~\eqref{eq:GrowthRateNbPieces} holds with some exponent $\nu$,
then Lemma~\ref{lem:ApproxSpaceIntoBesovForBoundedDepth}-(2) with $\mathscr{L} \equiv L$,
arbitrary $p \in (0,\infty)$, $\alpha:=\nu$ and $q:=(\alpha/\nu+1/p)^{-1}$
yields $\WASpace[X][\varrho_{r}] \hookrightarrow B^{1}_{q,q}(\Omega)$ with $\Omega := (0,1)$.
If we set $s := 1$, then $\min \{s,2\} = s = 1$.
Hence,
Theorem~\ref{thm:LimitInverseBesov} implies $\nu = \alpha \geq \lfloor L/2\rfloor$.
The same argument shows that if~\eqref{eq:GrowthRateNbPiecesbis} holds with some exponent $\nu$,
then $\nu \geq L-1$.
For the converse results it is clearly sufficient to establish~\eqref{eq:GrowthRateNbPieces}
with $\nu = \lfloor L/2\rfloor$ and~\eqref{eq:GrowthRateNbPiecesbis} with $\nu = L-1$.
The proofs are in Appendix~\ref{sub:NumberOfPiecesSimplified}.
\end{proof}

\begin{proof}[Proof of Theorem~\ref{thm:ApproxSpaceIntoBesovForBoundedDepth}]
  We only prove Part (1) for the spaces $W_q^{\alpha}$.
  The proof for the $N_q^{\alpha}$ spaces and that of Part (2) are similar.

  Let $s \in (0,\infty)$ be arbitrary, and choose $r' \in \N$ such that
  $r \leq r'$ and $s \in (0,r'+1)$.
Combining Lemmas~\ref{lem:ApproxSpaceIntoBesovForBoundedDepth} and
  \ref{lem:MinGrowthRateNbPiecesSufficient}, we get
  \(
    \WASpace[X][\varrho_{r'}]
    \hookrightarrow \big( L_p(\Omega), B^s_{\sigma,\sigma}(\Omega) \big)_{\frac{\alpha}{s \nu},q}
  \).
  Since $\Omega$ is bounded, Theorem~\ref{thm:ReLUPowersApproxSpaces}
  shows that $\WASpace[X][\varrho_{r}] \hookrightarrow \WASpace[X][\varrho_{r'}]$.
  By combining the two embeddings, we get the claim.
\end{proof}

\bibliographystyle{plain}
\bibliography{main.bib}

\begin{thebibliography}{10}

\bibitem{AdamsSobolevSpaces}
R.~A. Adams and J.~J.~F. Fournier.
\newblock {\em Sobolev spaces}, volume 140 of {\em Pure and Applied Mathematics
  (Amsterdam)}.
\newblock Elsevier/Academic Press, Amsterdam, second edition, 2003.

\bibitem{JO17}
J.~Adler and O.~\"{O}ktem.
\newblock Solving ill-posed inverse problems using iterative deep neural
  networks.
\newblock {\em Inverse Problems}, 33:124007, 2017.

\bibitem{AliprantisBorderHitchhiker}
C.~D. Aliprantis and K.~C. Border.
\newblock {\em Infinite dimensional analysis: A hitchhiker's guide}.
\newblock Springer, Berlin, third edition, 2006.

\bibitem{MR2029742}
J.M. Almira and U.~Luther.
\newblock Generalized approximation spaces and applications.
\newblock {\em Math. Nachr.}, 263/264:3--35, 2004.

\bibitem{Barron1993}
A.~R. Barron.
\newblock Universal approximation bounds for superpositions of a sigmoidal
  function.
\newblock {\em IEEE Trans. Inf. Theory}, 39(3):930--945, 1993.

\bibitem{Barron1994}
A.~R. Barron.
\newblock {Approximation and estimation bounds for artificial neural networks}.
\newblock {\em Mach. Learn.}, 14(1):115--133, 1994.

\bibitem{Bartlett:2017ux}
Peter~L Bartlett, Nick Harvey, Christopher Liaw, and Abbas Mehrabian.
\newblock {Nearly-tight VC-dimension and Pseudodimension Bounds for Piecewise
  Linear Neural Networks.}
\newblock {\em J. Mach. Learn. Res.}, 2019.

\bibitem{BBGJJ18}
C.~Beck, S.~Becker, P.~Grohs, N.~Jaafari, and A.~Jentzen.
\newblock Solving stochastic differential equations and kolmogorov equations by
  means of deep learning.
\newblock {\em arXiv preprint arXiv:1806.00421}, 2018.

\bibitem{bolcskei2017optimal}
H.~B{\"o}lcskei, P.~Grohs, G.~Kutyniok, and P.~Petersen.
\newblock Optimal approximation with sparsely connected deep neural networks.
\newblock {\em SIAM J. Math. Data Sci.}, 1:8--45, 2019.

\bibitem{Bubb18}
T.A. Bubba, G.~Kutyniok, M.~Lassas, M.~M\"arz, W.~Samek, S.~Siltanen, and
  V.~Srinivasan.
\newblock Learning the invisible: A hybrid deep learning-shearlet framework for
  limited angle computed tomography.
\newblock {\em Inverse Probl.}, 35(6), 2019.

\bibitem{LaugesenAffineSystemsSpanLebesgueSpaces}
H.-Q. Bui and R.~S. Laugesen.
\newblock Affine systems that span {L}ebesgue spaces.
\newblock {\em J. Fourier Anal. Appl.}, 11(5):533--556, 2005.

\bibitem{CandesDiss}
E.~J. Cand\`{e}s.
\newblock {R}idgelets: {T}heory and {A}pplications, 1998.
\newblock {Ph.D.} thesis, {S}tanford {U}niversity.

\bibitem{ChuXM1994networksforlocApprox}
C.~K. Chui, Xin Li, and H.~N. Mhaskar.
\newblock Neural networks for localized approximation.
\newblock {\em Math. Comp.}, 63(208):607--623, 1994.

\bibitem{cohen2016expressive}
N.~Cohen, O.~Sharir, and A.~Shashua.
\newblock On the expressive power of deep learning: A tensor analysis.
\newblock In {\em Conference on Learning Theory}, pages 698--728, 2016.

\bibitem{cohen2016convolutional}
N.~Cohen and A.~Shashua.
\newblock Convolutional rectifier networks as generalized tensor
  decompositions.
\newblock In {\em International Conference on Machine Learning}, pages
  955--963, 2016.

\bibitem{Cybenko1989}
G.~Cybenko.
\newblock {Approximation by superpositions of a sigmoidal function}.
\newblock {\em Mathematics of Control, Signals and Systems}, 2(4):303--314,
  1989.

\bibitem{DeV98}
R.~A. DeVore.
\newblock {Nonlinear approximation}.
\newblock In {\em Acta numerica}, pages 51--150. Cambridge Univ. Press,
  Cambridge, 1998.

\bibitem{DeVore1997approxfeedforward}
R.~A. DeVore, K.I. Oskolkov, and P.P. Petrushev.
\newblock Approximation by feed-forward neural networks.
\newblock {\em Ann. Numer. Math.}, 4:261--287, 1996.

\bibitem{DeVore:1988fe}
R.~A. DeVore and V.~A. Popov.
\newblock {Interpolation of Besov spaces}.
\newblock {\em Trans. Amer. Math. Soc.}, 305(1):397--414, January 1988.

\bibitem{DeVSharp93}
R.~A. DeVore and R.~C. Sharpley.
\newblock Besov spaces on domains in {${\bf R}^d$}.
\newblock {\em Trans. Amer. Math. Soc.}, 335(2):843--864, 1993.

\bibitem{ConstructiveApproximation}
R.A. DeVore and G.G. Lorentz.
\newblock {\em Constructive approximation}, volume 303 of {\em Grundlehren der
  Mathematischen Wissenschaften [Fundamental Principles of Mathematical
  Sciences]}.
\newblock Springer-Verlag, Berlin, 1993.

\bibitem{Elad17}
M.~Elad.
\newblock Deep, deep trouble. deep learning’s impact on image processing,
  mathematics, and humanity.
\newblock {\em SIAM News}, 2017.

\bibitem{Eldan2016PowerofDepth}
R.~Eldan and O.~Shamir.
\newblock The power of depth for feedforward neural networks.
\newblock In {\em Proceedings of the 29th Conference on Learning Theory, {COLT}
  2016, New York, USA, June 23-26, 2016}, pages 907--940, 2016.

\bibitem{ellacott1994aspects}
SW~Ellacott.
\newblock Aspects of the numerical analysis of neural networks.
\newblock {\em Acta Numer.}, 3:145--202, 1994.

\bibitem{Elstrodt}
J.~Elstrodt.
\newblock {\em Ma\ss - und {I}ntegrationstheorie}.
\newblock Springer Spektrum. Springer Spektrum, Berlin, Heidelberg, eigth
  edition, 2018.

\bibitem{FollandRA}
G.B. Folland.
\newblock {\em {R}eal {A}nalysis: {M}odern {T}echniques and {T}heir
  {A}pplications}.
\newblock Pure and applied mathematics. Wiley, second edition, 1999.

\bibitem{FollandAHA}
Gerald~B. Folland.
\newblock {\em A course in abstract harmonic analysis}.
\newblock Studies in Advanced Mathematics. CRC Press, Boca Raton, FL, 1995.

\bibitem{Foucart:2012wp}
Simon Foucart and Holger Rauhut.
\newblock {\em {A Mathematical Introduction to Compressive Sensing}}.
\newblock Springer, May 2012.

\bibitem{Funahashi1989183}
Ken-Ichi Funahashi.
\newblock {On the approximate realization of continuous mappings by neural
  networks}.
\newblock {\em Neural Networks}, 2(3):183--192, 1989.

\bibitem{Gregor2010}
Karol Gregor and Yann LeCun.
\newblock {Learning Fast Approximations of Sparse Coding}.
\newblock In {\em Proceedings of the 27th Annual International Conference on
  Machine Learning}, pages 399--406, 2010.

\bibitem{Hastad:1tbON80T}
J~T H{\aa}stad.
\newblock {Computational Limitations for Small-Depth Circuits. ACM Doctoral
  Dissertation Award (1986)}, 1987.

\bibitem{ResidualNetworks}
K.~He, X.~Zhang, S.~Ren, and J.~Sun.
\newblock Deep residual learning for image recognition.
\newblock In {\em Proceedings of the IEEE conference on computer vision and
  pattern recognition}, pages 770--778, 2016.

\bibitem{He:2015:DDR:2919332.2919814}
Kaiming He, Xiangyu Zhang, Shaoqing Ren, and Jian Sun.
\newblock Delving deep into rectifiers: Surpassing human-level performance on
  imagenet classification.
\newblock In {\em Proceedings of the 2015 IEEE International Conference on
  Computer Vision (ICCV)}, ICCV '15, pages 1026--1034, Washington, DC, USA,
  2015. IEEE Computer Society.

\bibitem{HoffmanKunzeLinearAlgebra}
K.~Hoffman and R.~Kunze.
\newblock {\em Linear algebra}.
\newblock Second edition. Prentice-Hall, Inc., Englewood Cliffs, N.J., 1971.

\bibitem{Hornik1991251}
K.~Hornik.
\newblock Approximation capabilities of multilayer feedforward networks.
\newblock {\em Neural Networks}, 4(2):251 -- 257, 1991.

\bibitem{Hornik1989universalApprox}
K.~Hornik, M.~Stinchcombe, and H.~White.
\newblock Multilayer feedforward networks are universal approximators.
\newblock {\em Neural Netw.}, 2(5):359--366, 1989.

\bibitem{JohnenSchererEquivalenceKFunctionalModulusContinuity}
H.~Johnen and K.~Scherer.
\newblock On the equivalence of the {$K$}-functional and moduli of continuity
  and some applications.
\newblock In {\em Constructive theory of functions of several variables
  ({P}roc. {C}onf., {M}ath. {R}es. {I}nst., {O}berwolfach, 1976)}, pages
  119--140. Lecture Notes in Math., Vol. 571. Springer, Berlin, 1977.

\bibitem{AlexNet}
A.~Krizhevsky, I.~Sutskever, and G.~E. Hinton.
\newblock Imagenet classification with deep convolutional neural networks.
\newblock In {\em Proceedings of the 25th International Conference on Neural
  Information Processing Systems - Volume 1}, NIPS'12, pages 1097--1105, USA,
  2012. Curran Associates Inc.

\bibitem{LaugesenAffineSynthesisQuasiBanach}
R.~S. Laugesen.
\newblock Affine synthesis onto {$L^p$} when {$0<p\leq 1$}.
\newblock {\em J. Fourier Anal. Appl.}, 14(2):235--266, 2008.

\bibitem{LaxCalculusWithApplications}
P.~D. Lax and M.~S. Terrell.
\newblock {\em Calculus with applications}.
\newblock Undergraduate Texts in Mathematics. Springer, New York, second
  edition, 2014.

\bibitem{lemagoarou:hal-01167948}
Luc Le~Magoarou and Remi Gribonval.
\newblock {Flexible Multi-layer Sparse Approximations of Matrices and
  Applications}.
\newblock {\em IEEE Journal of Selected Topics in Signal Processing}, June
  2016.

\bibitem{LeCunDeepLearningNaturePaper}
Y.~LeCun, Y.~Bengio, and G.~Hinton.
\newblock Deep learning.
\newblock {\em Nature}, 521(7553):436--444, 2015.

\bibitem{PinkusUniversalApproximation}
M.~Leshno, V.~Ya. Lin, A.~Pinkus, and S.~Schocken.
\newblock Multilayer feedforward networks with a nonpolynomial activation
  function can approximate any function.
\newblock {\em Neural Netw.}, 6(6):861--867, 1993.

\bibitem{Maas:tn}
A.~L. Maas, A.~Y. Hannun, and A.~Y. Ng.
\newblock Rectifier nonlinearities improve neural network acoustic models.
\newblock In {\em Proc. ICML}, volume~30, page~3, 2013.

\bibitem{PinkusLowerBoundsForMLPApproximation}
V.~Maiorov and A.~Pinkus.
\newblock Lower bounds for approximation by {MLP} neural networks.
\newblock {\em Neurocomputing}, 25(1):81--91, 1999.

\bibitem{Mallat:2016jr}
St{\'e}phane Mallat.
\newblock {Understanding deep convolutional networks}.
\newblock {\em Phil. Trans. R. Soc. A}, 374(2065):20150203--16, March 2016.

\bibitem{MPWN18}
A.~Mardt, L.~Pasquali, H.~Wu, and F.~No\'{e}.
\newblock Vampnets: Deep learning of molecular kinetics.
\newblock {\em Nature communications}, 9:5, 2018.

\bibitem{McCullochPitts}
W.~S. McCulloch and W.~Pitts.
\newblock A logical calculus of the ideas immanent in nervous activity.
\newblock {\em Bull. Math. Biophys.}, 5(4):115--133, 1943.

\bibitem{Mhaskar93}
H.~N. Mhaskar.
\newblock Approximation properties of a multilayered feedforward artificial
  neural network.
\newblock {\em Adv. Comput. Math.}, 1(1):61--80, 1993.

\bibitem{Mhaskar1996NNapprox}
H.~N. Mhaskar.
\newblock Neural networks for optimal approximation of smooth and analytic
  functions.
\newblock {\em Neural Comput.}, 8(1):164--177, 1996.

\bibitem{Mhaskar2016DeepVSShallow}
H.~N. Mhaskar and T.~Poggio.
\newblock {Deep vs. shallow networks: An approximation theory perspective}.
\newblock {\em Analysis and Applications}, 14(06):829--848, 2016.

\bibitem{Mhaskar1995151}
H.N. Mhaskar and C.A. Micchelli.
\newblock Degree of approximation by neural and translation networks with a
  single hidden layer.
\newblock {\em Adv. Appl. Math.}, 16(2):151--183, 1995.

\bibitem{NguyenThien1999687}
T.~Nguyen-Thien and T.~Tran-Cong.
\newblock Approximation of functions and their derivatives: A neural network
  implementation with applications.
\newblock {\em Appl. Math. Model.}, 23(9):687--704, 1999.

\bibitem{SkipConnections}
Emin Orhan and Xaq Pitkow.
\newblock Skip connections eliminate singularities.
\newblock In {\em 6th International Conference on Learning Representations,
  {ICLR} 2018, Vancouver, BC, Canada, April 30 - May 3, 2018, Conference Track
  Proceedings}. OpenReview.net, 2018.

\bibitem{PetersenVoigtlaenderReLU}
P.~Petersen and F.~Voigtlaender.
\newblock Optimal approximation of piecewise smooth functions using deep {ReLU}
  neural networks.
\newblock {\em Neural Netw.}, 108:296--330, 2018.

\bibitem{PetrushevSplineAndRational}
P.P. Petrushev.
\newblock Direct and converse theorems for spline and rational approximation
  and {B}esov spaces.
\newblock In {\em Function spaces and applications ({L}und, 1986)}, volume 1302
  of {\em Lecture Notes in Math.}, pages 363--377. Springer, Berlin, 1988.

\bibitem{pinkus1999approximation}
A.~Pinkus.
\newblock Approximation theory of the {MLP} model in neural networks.
\newblock {\em Acta Numer.}, 8:143--195, 1999.

\bibitem{Ronneberger:2015gk}
Olaf Ronneberger, Philipp Fischer, and Thomas Brox.
\newblock {U-Net: Convolutional Networks for Biomedical Image Segmentation}.
\newblock Springer International Publishing, Cham, 2015.

\bibitem{RudinFA}
W.~Rudin.
\newblock {\em Functional analysis}.
\newblock International Series in Pure and Applied Mathematics. McGraw-Hill,
  Inc., New York, second edition, 1991.

\bibitem{SchmidtHieber:2017vn}
J.~Schmidt-Hieber.
\newblock {Nonparametric regression using deep neural networks with ReLU
  activation function.}
\newblock {\em arXiv preprint arXiv:1708.06633}, math.ST, 2017.
\newblock To appear as a discussion article in Annals of Statistics.

\bibitem{TACMT17}
K.~T. Schütt, F.~Arbabzadah, S.~Chmiela, K.~R. Müller, and A.~Tkatchenko.
\newblock Quantum-chemical insights from deep tensor neural networks.
\newblock {\em Nature communications}, 8:13890, 2017.

\bibitem{ShaCC2015provableAppDNN}
U.~Shaham, A.~Cloninger, and R.~R. Coifman.
\newblock Provable approximation properties for deep neural networks.
\newblock {\em Appl. Comput. Harmon. Anal.}, 44(3):537--557, May 2018.

\bibitem{TopologyWithApplications}
A.~N. Somashekhar and J.~F. Peters.
\newblock {\em Topology with Applications}.
\newblock World Scientific, 2013.

\bibitem{telgarsky2016benefits}
Matus Telgarsky.
\newblock {Benefits of depth in neural networks.}
\newblock {\em Journal of Machine Learning Research}, 49(June):1517--1539, June
  2016.
\newblock 29th Conference on Learning Theory, COLT 2016 - New York, United
  States.

\bibitem{Unser:1999cr}
M.~A. Unser.
\newblock {Splines: a perfect fit for signal and image processing}.
\newblock {\em IEEE Signal Processing Magazine}, 16(6):22--38, 1999.

\bibitem{VoigtlaenderPhDThesis}
F.~Voigtlaender.
\newblock {\em Embedding Theorems for Decomposition Spaces with Applications to
  Wavelet Coorbit Spaces}.
\newblock PhD thesis, RWTH Aachen University, 2015.
\newblock \url{http://publications.rwth-aachen.de/record/564979}.

\bibitem{WiderOrDeeper}
Zifeng Wu, Chunhua Shen, and Anton van~den Hengel.
\newblock {Wider or Deeper: Revisiting the ResNet Model for Visual
  Recognition}.
\newblock {\em Pattern Recognition}, 90:119--133, June 2019.

\bibitem{yarotsky2017error}
D.~Yarotsky.
\newblock Error bounds for approximations with deep {R}e{L}{U} networks.
\newblock {\em Neural Netw.}, 2017.

\bibitem{YarotskySurprise}
Dmitry Yarotsky.
\newblock {Optimal approximation of continuous functions by very deep ReLU
  networks.}
\newblock {\em Journal of Machine Learning Research}, pages 639--649, 2018.
\newblock COLT 2018.

\end{thebibliography}
\vspace{2cm}
\appendix


\section{Proofs for Section~\ref{sec:NetworkCalculus}}
\label{sec:AppendixTechnical}


For a matrix $A \in \R^{n \times d}$, we write $A^T \in \R^{d \times n}$
for the transpose of $A$.
For $i \in \FirstN{n}$ we write $A_{i,-} \in \R^{1 \times d}$ for the $i$-th
row of $A$, while $A_{{(i)}} \in \R^{(n-1) \times d}$ denotes the matrix
obtained by deleting the $i$-th row of $A$.
We use the same notation $b_{(i)}$ for vectors $b\in\R^{n}\cong\R^{n \times 1}$.
Finally, for $j \in \FirstN{d}$, $A_{[j]} \in \R^{n \times (d-1)}$ denotes
the matrix obtained by removing the $j$-th column of $A$.


\subsection{Proof of Lemma~\ref{lem:BiasWeightsDontMatter}}
\label{app:PfBiasWeightsDontMatter}
Write $N_0 (\Phi) := \InDim(\Phi) + \OutDim(\Phi) + N(\Phi)$ for the total number of
neurons of the network $\Phi$, \emph{including the ``non-hidden'' neurons}.

The proof is by contradiction.
Assume that there is a network $\Phi$ for which the claim fails.
Among all such networks, consider one with minimal value of $N_0(\Phi)$,
i.e., such the claim holds for all networks $\Psi$ with $N_0(\Psi) < N_0(\Phi)$.
Let us write $\Phi = \big( (T_1, \alpha_1), \dots, (T_L, \alpha_L) \big)$
with $T_\ell \, x = A^{(\ell)} x + b^{(\ell)}$,
for certain $A^{(\ell)} \in \R^{N_\ell \times N_{\ell-1}}$
and $b^{(\ell)} \in \R^{N_\ell}$.

Let us first consider the case that
\begin{equation}
    \forall \, \,  \ell \in \FirstN{L} \,\,
        \forall \,\, i \in \FirstN{N_\ell} \, : \,
            A^{(\ell)}_{i, -} \neq 0.
  \label{eq:BiasWeightsDontMatterFirstCase}
\end{equation}
By 
\eqref{eq:BiasWeightsDontMatterFirstCase}, we get
$\|A^{(\ell)}\|_{\ell^0} \geq N_\ell \geq \|b^{(\ell)}\|_{\ell^0}$, so that
\[
    W_0(\Phi)
    = \sum_{\ell = 1}^L (\|A^{(\ell)}\|_{\ell^0} + \|b^{(\ell)}\|_{\ell^0})
    \leq 2 \cdot \sum_{\ell = 1}^L \|A^{(\ell)}\|_{\ell^0}
    = 2 W(\Phi) \leq \OutDim(\Phi) + 2 W(\Phi).
\]
Hence, with $\widetilde{\Phi} = \Phi$, $\Phi$ satisfies the claim of the lemma,
in contradiction to our assumption. 

Thus, there is some $\ell_0 \in\FirstN{L}$ and some $i \in\FirstN{N_{\ell_0}}$
satisfying $A^{(\ell_0)}_{i, -} = 0$.
In other words, there is a neuron that is not connected to the previous layers.
Intuitively, one can ``remove it'' without changing $\Realization(\Phi)$.
This is what we now show formally.

Let us write $\alpha_\ell = \bigotimes_{j=1}^{N_\ell} \varrho_j^{(\ell)}$
for certain $\varrho_j^{(\ell)} \in \{\identity_{\R}, \varrho\}$, and set
$\theta_\ell := \alpha_\ell \circ T_\ell$, so that
$\Realization (\Phi) = \theta_L \circ \cdots \circ \theta_1$.
By our choice of $\ell_0$ and $i$, note
\begin{equation}
    \big( \theta_{\ell_0} (x) \big)_i
    = \varrho_i^{(\ell_0)} \left( (A^{(\ell_0)} x + b^{(\ell_0)})_i \right)
    = \varrho_i^{(\ell_0)}
        \left( \langle A^{(\ell_0)}_{i, -}, x \rangle + b_i^{(\ell_0)} \right)
    = \varrho_i^{(\ell_0)} (b_i^{(\ell_0)}) =: c \in \R,
    \label{eq:VanishingWeightRowConsequence}
\end{equation}
for arbitrary $x \in \R^{N_{\ell_0 - 1}}$.
After these initial observations, we now distinguish four cases:

\medskip{}

\textbf{Case 1 (Neuron on the output layer of size $\OutDim(\Phi) = 1$):}
We have $\ell_0 = L$ and $N_L = 1$,
so that necessarily $i = 1$.
In view of Equation~\eqref{eq:VanishingWeightRowConsequence}, we then have
$\Realization (\Phi) \equiv c$.
Thus, if we choose the affine-linear map $S_1 : \R^{N_0}\to\R^1, x\mapsto c$,
and set $\gamma_1 := \identity_\R$, then the strict $\varrho$-network
$\widetilde{\Phi} := \big( (S_1, \gamma_1) \big)$ satisfies
$\Realization(\, \widetilde{\Phi} \,) \equiv c \equiv \Realization(\Phi)$,
and $L(\, \widetilde{\Phi} \,) = 1 \leq L(\Phi)$,
as well as $W_0(\, \widetilde{\Phi} \,) =1=\OutDim(\Phi) \leq \OutDim(\Phi) +2 W(\Phi)$
and $N(\, \widetilde{\Phi} \,) = 0 \leq N(\Phi)$.
Thus, $\Phi$ satisfies the claim of the lemma, contradicting our assumption.

\medskip{}

\textbf{Case 2 (Neuron on the output layer of size $\OutDim(\Phi)>1$):}
We have $\ell_0 = L$ and $N_L > 1$. Define
\[
  B^{(\ell)} := A^{(\ell)},
  \quad c^{(\ell)} := b^{(\ell)},
  \quad \text{and } \quad \beta_\ell := \alpha_\ell
  \quad \text{for} \quad \ell \in \FirstN{L - 1}.
\]
We then set $B^{(L)} := A^{(L)}_{(i)} \in \R^{(N_L - 1) \times N_{L-1}}$
and $c^{(L)} := b^{(L)}_{(i)} \in \R^{N_{L} - 1}$, as well as
$\beta_L := \identity_{\R^{N_L - 1}}$.

Setting $S_\ell \, x := B^{(\ell)} x+c^{(\ell)}$ for $x \in \R^{N_{\ell - 1}}$,
the network $\Phi_0 := \big( (S_1, \beta_1), \dots, (S_L, \beta_L) \big)$ then satisfies
$\Realization (\Phi_0) (x) = \big( \Realization (\Phi) (x) \big)_{(i)}$ for all $x \in \R^{N_0}$,
and $N_0 (\Phi_0) = N_0 (\Phi) - 1 < N_0 (\Phi)$.
Furthermore, if $\Phi$ is strict, then so is $\Phi_0$.

By the ``minimality'' assumption on $\Phi$, there is thus a network $\widetilde{\Phi}_0$
(which is strict if $\Phi$ is strict)
with $\Realization (\, \widetilde{\Phi} \,_0) = \Realization (\Phi_0)$
and such that $L' := L(\, \widetilde{\Phi} \,_0) \leq L(\Phi_0) = L(\Phi)$,
as well as $N (\, \widetilde{\Phi} \,_0) \leq N (\Phi_0) = N(\Phi)$, and
\[
    W (\, \widetilde{\Phi} \,_0)
    \leq W_0 (\, \widetilde{\Phi} \,_0)
    \leq \OutDim(\Phi_0) + 2 \cdot W(\Phi_0)
    \leq \OutDim(\Phi) - 1 + 2 \cdot W(\Phi).
\]

Let us write $\widetilde{\Phi}_0 = \big( (U_1, \gamma_1), \dots, (U_{L'}, \gamma_{L'}) \big)$,
with affine-linear maps $U_\ell : \R^{M_{\ell - 1}} \to \R^{M_\ell}$, so that
$U_\ell \, x = C^{(\ell)} x + d^{(\ell)}$ for $\ell \in \FirstN{L'}$ and
$x \in \R^{M_{\ell - 1}}$.
Note that $M_{L'} = N_L - 1$, and define
\[
    \widetilde{C}^{(L')}
    := \left(
          \begin{matrix}
              C^{(L')}_{1, -} \\
              \vdots \\
              C^{(L')}_{i-1, -} \\
              0 \\
              C^{(L')}_{i, -} \\
              \vdots \\
              C^{(L')}_{M_{L'}, -}
          \end{matrix}
       \right) \in \R^{N_L \times M_{L'-1}}
   \quad \text{and} \quad
    \widetilde{d}^{(L')}
    := \left(
          \begin{matrix}
              d^{(L')}_1 \\
              \vdots \\
              d^{(L')}_{i-1} \\
              c \\
              d^{(L')}_{i} \\
              \vdots \\
              d^{(L')}_{M_{L'}}
          \end{matrix}
       \right) \in \R^{N_L} ,
\]
as well as $\widetilde{\gamma}_{L'} := \identity_{\R^{N_L}}$, and
$\widetilde{U}_{L'} : \R^{M_{L' - 1}} \to \R^{N_L},
x \mapsto \widetilde{C}^{(L')} x + \widetilde{d}^{(L')}$, and finally
\[
   \widetilde{\Phi}
   := \big(
         (U_1, \gamma_1),
         \dots,
         (U_{L' - 1}, \gamma_{L' - 1}),
         (\widetilde{U}_{L'}, \widetilde{\gamma}_{L'})
      \big).
\]

By virtue of Equation~\eqref{eq:VanishingWeightRowConsequence}, we then
have $\Realization (\, \widetilde{\Phi} \,) = \Realization (\Phi)$,
and if $\Phi$ is strict, then so is $\Phi_0$ and thus also $\widetilde{\Phi}_0$ and $\widetilde{\Phi}$.
Furthermore, we have $L (\, \widetilde{\Phi} \,) = L' \leq L(\Phi)$, and
$N(\, \widetilde{\Phi} \,) = N(\widetilde{\Phi}_0) \leq N(\Phi)$, as well as
$W (\, \widetilde{\Phi} \,) \leq W_0 (\, \widetilde{\Phi} \,)
\leq 1 + W_0 (\, \widetilde{\Phi} \,_0) \leq \OutDim(\Phi) + 2 W(\Phi)$.
Thus, $\Phi$ satisfies the claim of the lemma, contradicting our assumption.

\medskip{}

\textbf{Case 3 (Hidden neuron on layer $\ell_0$ with $N_{\ell_0} = 1$):}
We have $1 \leq \ell_0 < L$ and $N_{\ell_0} = 1$.
In this case, Equation~\eqref{eq:VanishingWeightRowConsequence} implies
$\theta_{\ell_0} \equiv c$, whence
$\Realization(\Phi) = \theta_L \circ \cdots \circ \theta_1 \equiv \widetilde{c}$
for some $\widetilde{c} \in \R^{N_L}$.

Thus, if we choose the affine map $S_1 : \R^{N_0} \to \R^{N_L}, x \mapsto \widetilde{c}$,
then the strict $\varrho$-network $\widetilde{\Phi} = \big( (S_1, \gamma_1) \big)$
satisfies $\Realization(\widetilde{\Phi}) \equiv \widetilde{c} \equiv \Realization(\Phi)$
and $L(\widetilde{\Phi}) = 1 \leq L(\Phi)$, as well as
$W_0 (\widetilde{\Phi}) \leq d_{\mathrm{out}} (\Phi) \leq d_{\mathrm{out}} (\Phi) + 2 \, W(\Phi)$
and $N(\widetilde{\Phi}) = 0 \leq N(\Phi)$.
Thus, $\Phi$ satisfies the claim of the lemma, in contradiction to our choice of $\Phi$.

\medskip{}

\textbf{Case 4 (Hidden neuron on layer $\ell_0$ with $N_{\ell_0} > 1$):}
In this case, we have $1 \leq \ell_0 < L$ and $N_{\ell_0} > 1$.
Define $S_\ell := T_\ell$ and $\beta_\ell := \alpha_\ell$ for
$\ell \in \FirstN{L} \setminus \{\ell_0, \ell_0 + 1\}$, and let us choose
${S_{\ell_0} : \R^{N_{\ell_0 - 1}} \to \R^{N_{\ell_0} - 1}, x \mapsto B^{(\ell_0)} x + c^{(\ell_0)}}$,
where
\[
    B^{(\ell_0)} := A^{(\ell_0)}_{(i)},
    \quad
    c^{(\ell_0)} := b^{(\ell_0)}_{(i)},
    \quad \text{and} \quad
    \beta_{\ell_0}
    := \varrho_1^{(\ell_0)}
       \otimes \cdots
       \otimes \varrho_{i - 1}^{(\ell_0)}
       \otimes \varrho_{i+1}^{(\ell_0)}
       \otimes \cdots
       \otimes \varrho_{N_{\ell_0}}^{(\ell_0)} .
\]
Finally, for $x \in \R^{N_{\ell_0} - 1}$, let
\(
    \iota_c (x)
    :=  \left( x_1,\dots, x_{i-1}, c, x_i,\dots, x_{N_{\ell_0} - 1}  \right)^T
    \in \R^{N_{\ell_0}} \, ,
\)
and set $\beta_{\ell_{0} + 1} := \alpha_{\ell_0 + 1}$, as well as
\[
    S_{\ell_0 + 1} : \R^{N_{\ell_0} - 1} \to \R^{N_{\ell_0 + 1}},
                     x \mapsto A_{[i]}^{(\ell_0 + 1)} \, x
                               + c \cdot A^{(\ell_0 + 1)} e_{i}
                               + b^{(\ell_0 + 1)}
                       =       A^{(\ell_0 + 1)} (\iota_c (x) )
                               + b^{(\ell_0 + 1)},
\]
where $e_i$ is the $i$-th element of the standard basis of $\R^{N_{\ell_0}}$.

Setting $\vartheta_{\ell} := \beta_\ell \circ S_\ell$
and recalling that $\theta_\ell = \alpha_\ell \circ T_\ell$ for $\ell \in\FirstN{L}$,
we then have $\vartheta_{\ell_0} (x) = (\theta_{\ell_0} (x) )_{(i)}$ for all
$x \in \R^{N_{\ell_0 - 1}}$.
By virtue of Equation~\eqref{eq:VanishingWeightRowConsequence}, this implies
$\theta_{\ell_0} (x) = \iota_c ( \vartheta_{\ell_0} (x) )$, so that
\[
    S_{\ell_0 + 1} (\vartheta_{\ell_0} (x) )
    = A^{(\ell_0 + 1)} \big(\iota_c ( \vartheta_{\ell_0 } (x) )\big)
      + b^{(\ell_0 + 1)}
    = A^{(\ell_0 + 1)} (\theta_{\ell_0} (x) ) + b^{(\ell_0 + 1)}
    = T_{\ell_0 + 1} (\theta_{\ell_0} (x) ) .
\]
Recalling that $\beta_{\ell_0 + 1} = \alpha_{\ell_0 + 1}$, we thus see
$\vartheta_{\ell_0 + 1} \circ \vartheta_{\ell_0}
= \theta_{\ell_0 + 1} \circ \theta_{\ell_0}$, which then easily shows
$\Realization (\Phi_0) = \Realization (\Phi)$ for
$\Phi_0 := \big( (S_1, \beta_1),\dots, (S_L, \beta_L) \big)$.
Note that if $\Phi$ is strict, then so is $\Phi_0$.
Furthermore, we have $N_{0}(\Phi_0) = N_{0}(\Phi) - 1 < N_{0}(\Phi)$ so that by ``minimality''
of $\Phi$, there is a network $\widetilde{\Phi}_0$ (which is strict if $\Phi$ is strict)
satisfying $\Realization(\, \widetilde{\Phi}_0\,)=\Realization(\Phi_0)=\Realization(\Phi)$
and furthermore $L(\, \widetilde{\Phi}_0 \,) \leq L(\Phi_0) = L(\Phi)$,
as well as $N(\, \widetilde{\Phi}_0\, ) \leq N(\Phi_0) \leq N(\Phi)$, and finally
\(
    W (\, \widetilde{\Phi}_0 \,)
    \leq W_0 (\, \widetilde{\Phi}_0 \,)
    \leq \OutDim (\Phi_0) + 2 W(\Phi_0)
    \leq \OutDim (\Phi) + 2 W(\Phi).
\)
Thus, the claim holds for $\Phi$, contradicting our assumption.
\hfill$\square$

\subsection{Proof of Lemma~\ref{lem:BoundingLayersAndNeuronsByWeights}}


\label{app:PfBoundingLayersAndNeuronsByWeights}

We begin by showing $\NNreal_{W, L,W}^{\varrho,d,k} \subset \NNreal_{W,W,W}^{\varrho,d,k}$.
Let $f \in \NNreal_{W, L,W}^{\varrho,d,k}$.
By definition there is $\Phi \in \NNsymbol_{W, L,W}^{\varrho,d,k}$ such that
$f = \Realization(\Phi)$.
Note that $W(\Phi) \leq W$, and let us distinguish two cases:
If $L(\Phi) \leq W(\Phi)$ then $L(\Phi) \leq W$, whence in fact
$\Phi \in \NNsymbol_{W, W, W}^{\varrho,d,k}$ and
$f \in \NNreal_{W, W, W}^{\varrho,d,k}$ as claimed.
Otherwise, $W(\Phi) < L(\Phi)$ and by
Corollary~\ref{cor:BoundingConnectionsWithLayers2}
we have $f = \Realization(\Phi) \equiv c$ for some $c \in \R^{k}$.
Therefore, Lemma~\ref{lem:ConstantMaps} shows that
$f \in \NNreal_{0,1,0}^{\varrho,d,k} \subset \NNreal_{W,W,W}^{\varrho,d,k}$,
where the inclusion holds by definition of these sets.

The inclusion $\NNreal_{W,L,W}^{\varrho,d,k} \subset \NNreal_{W,L,\infty}^{\varrho,d,k}$
is trivial.
Similarly, if  $L \geq W$ then trivially
$\NNreal_{W, W, W}^{\varrho,d,k} \subset \NNreal_{W,L,W}^{\varrho,d,k}$.

Thus, it remains to show
$\NNreal_{W,L,\infty}^{\varrho,d,k} \subset \NNreal_{W,L,W}^{\varrho,d,k}$.
%
To prove this, we will show that for each network
\(
  \Phi
  = \big( (T_{1}, \alpha_{1}), \dots, (T_{K}, \alpha_{K}) \big)
  \in \NNsymbol_{W,L,\infty}^{\varrho,d,k}
\)
(so that necessarily $K \leq L$)
with $N (\Phi) > W$, one can find a neural network
$\Phi' \in \NNsymbol_{W,L, \infty}^{\varrho,d,k}$ with
$\Realization (\Phi') = \Realization(\Phi)$, and such that $N(\Phi') < N(\Phi)$.
If $\Phi$ is strict, then we show that $\Phi'$ can also be chosen to be strict.
The desired inclusion can then be obtained by repeating this ``compression''
step until one reaches the point where $N(\Phi') \leq W$.

For each $\ell \in \FirstN{K}$, let $b^{(\ell)} \in \R^{N_\ell}$ and
$A^{(\ell)} \in \R^{N_{\ell} \times N_{\ell-1}}$ be such that
$T_\ell = A^{(\ell)} \bullet + b^{(\ell)}$.
Since $\Phi\in \NNsymbol_{W,L,\infty}^{\varrho,d,k}$, we have $W(\Phi)\leq W$.
In combination with $N(\Phi) > W$, this implies
\[
    \sum_{\ell = 1}^{K-1} N_\ell
    = N(\Phi)
    > W
    \geq W(\Phi)
    = \sum_{\ell = 1}^K \| A^{(\ell)} \|_{\ell_0}
    \geq \sum_{\ell = 1}^{K-1}
           \sum_{i = 1}^{N_\ell}
           \| A^{(\ell)}_{i, -} \|_{\ell^0} \, .
\]
Therefore, $K > 1$, and there must be some $\ell_0 \in \FirstN{K-1}$ and
$i \in \FirstN{N_{\ell_0}}$ with $A^{(\ell_0)}_{i, -} = 0$.
We now distinguish two cases:

\medskip{}

\textbf{Case 1} (Single neuron on layer $\ell_{0}$):
We have $N_{\ell_0} = 1$.
In this case, $A^{(\ell_0)} = 0$ and hence $T_{\ell_0} \equiv b^{(\ell_0)}$.
Therefore, $\Realization (\Phi)$ is constant; say $\Realization (\Phi) \equiv c \in \R^k$.
Choose $S_{1} : \R^d \to \R^k, x \mapsto c$, and $\beta_{1} := \identity_{\R^k}$.
Then $\Realization (\Phi) \equiv c \equiv \Realization (\Phi')$ for the
strict $\varrho$-network
\(
  \Phi '
  := \big( (S_{1},\beta_{1}) \big)
  \in \NNsymbol_{0,1,0}^{\varrho,d,k}
  \subset \NNsymbol_{W,L,\infty}^{\varrho,d,k}
\),
which indeed satisfies $N(\Phi') = 0 \leq W < N(\Phi)$.

\medskip{}

\textbf{Case 2} (Multiple neurons on layer $\ell_{0}$):
We have $N_{\ell_0} > 1$.
Recall that $\ell_0 \in \FirstN{K-1}$, so that $\ell_0 + 1 \in \FirstN{K}$.
Now define $S_{\ell} := T_{\ell}$ and 
$\beta_{\ell} := \alpha_{\ell}$ for
$\ell \in \FirstN{K} \setminus \{\ell_0, \ell_0 + 1\}$.
Further, define
\[
    S_{\ell_0}
    : \R^{N_{\ell_0 - 1}} \to \R^{N_{\ell_0}-1},
    \quad \! \text{with} \! \quad
    (S_{\ell_0} \, x)_{j} := \!
    \begin{cases}
        (T_{\ell_0} \, x)_j ,    & \text{if } j < i, \\
        (T_{\ell_0} \, x)_{j+1}, & \text{if } j \geq i
    \end{cases}
    \quad \! \text{for } j \in \FirstN{N_{\ell_0}-1} .
\]
Using the notation $A_{(i)}, b_{(i)}$ from the beginning of Appendix~\ref{sec:AppendixTechnical},
this means
$S_{\ell_0} \, x=A^{(\ell_0)}_{(i)} x + b^{(\ell_0)}_{(i)} = (T_{\ell_0} \, x)_{(i)}$.

Finally, writing
$\alpha_{\ell} = \varrho_1^{(\ell)} \otimes \cdots \otimes \varrho_{N_\ell}^{(\ell)}$
for $\ell \in \FirstN{K}$, define $\beta_{\ell_0 + 1} := \alpha_{\ell_0 + 1}$, as well as
\[
    \beta_{\ell_0}
    := \varrho_1^{(\ell_0)}
       \otimes \cdots
       \otimes \varrho_{i-1}^{(\ell_0)}
       \otimes \varrho_{i+1}^{(\ell_0)}
       \otimes \cdots
       \otimes \varrho_{N_{\ell_0}}^{(\ell_0)}
       \quad : \quad
       \R^{N_{\ell_0}-1} \to \R^{N_{\ell_0}-1}  \, ,
\]
and
\begin{align*}
    S_{\ell_0 + 1}
    : \R^{N_{\ell_0}-1} \to \R^{N_{\ell_0 + 1}},
    y & \mapsto
      T_{\ell_0 + 1}
      \left(
          y_1, \dots, y_{i-1},
          \varrho_i^{(\ell_0)}(b_i^{(\ell_0)}),
          y_i, \dots, y_{N_{\ell_0}-1}
      \right) \\
      & = A_{[i]}^{(\ell_0 + 1)} y
        + b^{(\ell_0 + 1)}
        + \varrho_i^{(\ell_0)} (b_i^{(\ell_0)}) \cdot A^{(\ell_0 + 1)} \, e_i,
\end{align*}
where $e_i \in \R^{N_{\ell_0}}$ denotes the $i$-th element of the standard
basis, and where $A_{[i]}$ is the matrix obtained from a given matrix $A$
by removing its $i$-th column.


Now, for arbitrary $x \in \R^{N_{\ell_0 - 1}}$, let
$y := S_{\ell_0} \, x \in \R^{N_{\ell_0} - 1}$ and
$z := T_{\ell_0} \, x \in \R^{N_{\ell_0}}$.
Because of $A^{(\ell_0)}_{i, -} = 0$, we then have $z_i = b_i^{(\ell_0)}$.
Further, by definition of $S_{\ell_0}$,
we have $y_{j}=(T_{\ell_0} \, x)_j = z_j$ for $j<i$,
and $y_j =(T_{\ell_0} \, x)_{j+1}=z_{j+1}$ for $j\geq i$.
All in all, this shows
\begin{align*}
  S_{\ell_0 + 1} \big( \beta_{\ell_0} (S_{\ell_0} x) \big)
  & = S_{\ell_0 + 1} (\beta_{\ell_0} (y) ) \\
  & = T_{\ell_0 + 1}
    \left(
        \varrho_1^{(\ell_0)} (y_1),
        \dots,
        \varrho_{i-1}^{(\ell_0)} (y_{i-1}),
        \varrho_i^{(\ell_0)} (b_i^{(\ell_0)}),
        \varrho_{i+1}^{(\ell_0)} (y_i),
        \dots,
        \varrho_{N_{\ell_0}}^{(\ell_0)} (y_{N_{\ell_0} - 1})
    \right) \\
  & = T_{\ell_0 + 1}
    \left(
        \varrho_1^{(\ell_0)} (z_1),
        \dots,
        \varrho_{i-1}^{(\ell_0)} (z_{i-1}),
        \varrho_i^{(\ell_0)} (z_i),
        \varrho_{i+1}^{(\ell_0)} (z_{i+1}),
        \dots,
        \varrho_{N_{\ell_0}}^{(\ell_0)} (z_{N_{\ell_0}})
    \right) \\
  & = T_{\ell_0 + 1} \big( \alpha_{\ell_0} (z) \big)
    = T_{\ell_0 + 1} \big( \alpha_{\ell_0} (T_{\ell_0} x) \big) \, .
\end{align*}
Recall that this holds for all $x \in \R^{N_{\ell_0 - 1}}$. From this,
it is not hard to see $\Realization (\Phi) = \Realization (\Phi')$ for the network
\(
  \Phi '
  := \big( (S_{1}, \beta_{1}), \dots, (S_{K}, \beta_{K}) \big)
  \in \NNsymbol_{\infty, K,\infty}^{\varrho,d,k}
  \subset \NNsymbol_{\infty, L,\infty}^{\varrho,d,k}
\).
Note that $\Phi'$ is a strict network if $\Phi$ is strict.
Finally, directly from the definition of $\Phi'$, we see
$W(\Phi') \leq W(\Phi) \leq W$, so that
$\Phi' \in \NNsymbol_{W, L,\infty}^{\varrho,d,k}$.
Also, $N(\Phi') = N(\Phi) - 1 < N(\Phi)$, as desired.
\hfill$\square$

\ifarxiv
\subsection{Proof of Lemma~\ref{lem:DeepeningLemma}}
\label{app:PfDeepeningLemma}

Write $\Phi = \big( (T_1,\alpha_1),\dots,(T_{L},\alpha_{L}) \big)$ with $L = L(\Phi)$.
If $L_0 = 0$, we can simply choose $\Psi = \Phi$.
Thus, let us assume $L_0 > 0$, and distinguish two cases:

\textbf{Case 1}: If $k \leq d$, so that $c = k$, set
\[
    \Psi
    := \Big(
         (T_1, \alpha_1),
         \dots,
         (T_{L}, \alpha_L),
         \underbrace{(\identity_{\R^k}, \identity_{\R^k}),
         \dots,
         (\identity_{\R^k}, \identity_{\R^k})}_{L_0 \text{ terms}}
       \Big) ,
\]
and note that the affine map $T := \identity_{\R^{k}}$ satisfies $\|T\|_{\ell^{0}} = k=c$,
and hence $W(\Psi) = W(\Phi) + c \, L_0$.
Furthermore, $\Realization (\Psi) = \Realization (\Phi)$,
$L(\Psi) = L(\Phi) + L_0$, and $N(\Psi) = N(\Phi) + c L_0$.
Here we used crucially that the definition of \emph{generalized} neural
networks allows us to use the identity as the activation function for
some neurons.

\textbf{Case 2}: If $d < k$, so that $c = d$, the proof proceeds as in
the previous case, but with
\begin{flalign*}
    &&
    \Psi
    := \Big(
         \underbrace{(\identity_{\R^d}, \identity_{\R^d}),
         \dots,
         (\identity_{\R^d}, \identity_{\R^d})}_{L_0 \text{ terms}} \, ,
         (T_1, \alpha_1),
         \dots,
         (T_{L}, \alpha_{L})
       \Big) .
    && \square
\end{flalign*}
\else
\fi


\ifarxiv
\subsection{Proof of Lemma~\ref{lem:SummationLemma}}
\label{app:PfSummationLemma}


For the proof of the first part,
denoting $\Phi = \big( (T_{1}, \alpha_{1}), \dots, (T_L, \alpha_{L}) \big)$, we set
\(
  \Psi
  := \big( (T_{1}, \alpha_{1}), \dots, (c \cdot T_L, \alpha_{L}) \big)
\).
By Definition~\ref{defn:NeuralNetworks} we have $\alpha_{L} = \identity_{\R^{k}}$,
hence one easily sees $\Realization(\Psi) = c \cdot \Realization(\Phi)$.
If $\Phi$ is strict, then so is $\Psi$.
By construction $\Phi$ and $\Psi$ have the same number of layers and neurons, and $W(\Psi) \leq W(\Phi)$ with equality if $c \neq 0$.

\medskip{}

For the second and third part,
we proceed by induction, using two auxiliary claims.

\begin{lem}\label{lem:AuxConcatenationTwo}
  Let $\Psi_1 \in \NNsymbol^{\varrho,d,k_{1}}$ and $\Psi_2 \in \NNsymbol^{\varrho,d,k_{2}}$.
  There is a network $\Psi \in \NNsymbol^{\varrho,d,k_{1}+k_{2}}$
  with ${L(\Psi) = \max \{ L(\Psi_1), L(\Psi_2) \}}$ such that $\Realization(\Psi) = g$, where
  \(
    g : \R^{d} \to \R^{k_{1}+k_{2}},
        x \mapsto \big( \Realization(\Psi_{1})(x),\Realization(\Psi_{2})(x) \big)
  \).
  Furthermore, setting $c := \min \big\{ d,\max \{ k_{1},k_{2} \} \big\}$,
  $\Psi$ can be chosen to satisfy
  \begin{align*}
    W(\Psi) & \leq W(\Psi_{1}) + W(\Psi_{2}) + c \cdot |L(\Psi_2) - L(\Psi_1)| \\
    N(\Psi) & \leq N(\Psi_1) + N(\Psi_2) + c \cdot |L(\Psi_2) - L(\Psi_1)| \, .\qedhere
  \end{align*}
\end{lem}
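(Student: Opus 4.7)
The plan is a two-step construction: first align the depths of $\Psi_1$ and $\Psi_2$ via the depth-synchronization lemma (Lemma~\ref{lem:DeepeningLemma}), and then splice the two equal-depth networks together by a block-diagonal ``parallel'' construction that runs them on a shared input $x \in \R^d$.

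Assuming without loss of generality that $L(\Psi_1) \leq L(\Psi_2)$ (the opposite case is symmetric), I would apply Lemma~\ref{lem:DeepeningLemma} with $L_0 := L(\Psi_2) - L(\Psi_1)$ to replace $\Psi_1$ by an equivalent network $\Psi_1'$ of depth $L(\Psi_2)$, at the cost of $\min\{d,k_1\}\cdot L_0$ additional weights and additional hidden neurons. Since $\min\{d,k_1\} \leq \min\{d,\max\{k_1,k_2\}\} = c$, this overhead is bounded by $c \cdot |L(\Psi_2) - L(\Psi_1)|$, which is exactly the slack built into the claim.

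For the parallel combination, writing $\Psi_1' = ((T_\ell^{(1)}, \alpha_\ell^{(1)}))_{\ell=1}^L$ and $\Psi_2 = ((T_\ell^{(2)}, \alpha_\ell^{(2)}))_{\ell=1}^L$ with $L := L(\Psi_2)$ and $T_\ell^{(i)} = A_\ell^{(i)} \bullet + b_\ell^{(i)}$, I would define $\Psi = ((S_\ell, \beta_\ell))_{\ell=1}^L$ by taking $S_1$ to vertically stack $A_1^{(1)}, A_1^{(2)}$ (and their biases), since both first layers read the same input $x \in \R^d$, and for $\ell \geq 2$ taking $S_\ell$ to be the block-diagonal affine map with diagonal blocks $A_\ell^{(1)}, A_\ell^{(2)}$ and stacked bias; the activations are $\beta_\ell := \alpha_\ell^{(1)} \otimes \alpha_\ell^{(2)}$, still a valid generalized $\varrho$-activation and automatically equal to $\identity$ at $\ell = L$ since both $\alpha_L^{(i)}$ are. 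A routine induction on $\ell$ then yields $\Realization(\Psi)(x) = (\Realization(\Psi_1)(x), \Realization(\Psi_2)(x))$, and by construction $L(\Psi) = L$.

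The accounting is immediate from the block-diagonal structure: each $S_\ell$ contributes exactly $\|A_\ell^{(1)}\|_{\ell^0} + \|A_\ell^{(2)}\|_{\ell^0}$ nonzero entries, and the hidden widths of $\Psi$ at layer $\ell < L$ are $N_\ell^{(1)} + N_\ell^{(2)}$. Hence $W(\Psi) = W(\Psi_1') + W(\Psi_2)$ and $N(\Psi) = N(\Psi_1') + N(\Psi_2)$, and combining with the synchronization overhead gives the stated bounds. No step is truly hard; the main point to verify carefully is that deepening specifically the shallower network keeps the per-layer cost bounded by the symmetric quantity $c = \min\{d, \max\{k_1,k_2\}\}$---this critically uses the fact that we work with \emph{generalized} networks, which permits free insertion of identity activations both in the depth-synchronization step and in the assembly of the layerwise activation $\beta_\ell$.
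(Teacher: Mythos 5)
Your proof is correct and takes essentially the same route as the paper's: depth-synchronize the shallower network via Lemma~\ref{lem:DeepeningLemma} (using $\min\{d,k_i\} \leq c$), then assemble a block-diagonal parallel network whose first layer stacks the two input maps and whose activations are $\otimes$-products, with the weight/neuron counts adding layerwise. The only cosmetic difference is that the paper treats the degenerate case $L=1$ separately for clarity, whereas your construction covers it uniformly since the ``$\ell \geq 2$'' branch is simply vacuous there.
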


\begin{lem}\label{lem:AuxSummationTwo}
  Let $\Psi_{1}, \Psi_{2} \in \NNsymbol^{\varrho,d,k}$.
  There is  $\Psi \in \NNsymbol^{\varrho,d,k}$ with $L(\Psi) = \max \{ L(\Psi_1), L(\Psi_2) \}$
  such that $\Realization(\Psi) = \Realization(\Psi_{1}) + \Realization(\Psi_{2})$ and,    with $c = \min\{d,k\}$,
  \begin{align*}
    W(\Psi) & \leq W(\Psi_{1}) + W(\Psi_{2}) + c \cdot |L(\Psi_{2})-L(\Psi_{1})| \\
    N(\Psi) & \leq N(\Psi_1) + N(\Psi_2) + c \cdot |L(\Psi_2) - L(\Psi_1)| \, . \qedhere
  \end{align*}
\end{lem}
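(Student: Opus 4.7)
The plan is to reduce the summation assertion to the Cartesian product case (Lemma~\ref{lem:AuxConcatenationTwo}), which has already been handled with exactly the right bounds, and then post-compose with a linear summation map whose cost is controlled by Lemma~\ref{lem:NetworkCalculus}-(\ref{enu:PrePostAffine}). This way, no fresh layer-surgery is required.

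First I would apply Lemma~\ref{lem:AuxConcatenationTwo} to the pair $\Psi_1, \Psi_2 \in \NNsymbol^{\varrho,d,k}$ to obtain a network $\Psi_c \in \NNsymbol^{\varrho,d,2k}$ whose realization is $x \mapsto \big(\Realization(\Psi_1)(x), \Realization(\Psi_2)(x)\big)$. Because $k_1 = k_2 = k$, the constant appearing in Lemma~\ref{lem:AuxConcatenationTwo} specializes to $\min\{d,\max\{k,k\}\} = \min\{d,k\} = c$, so that $L(\Psi_c) = \max\{L(\Psi_1), L(\Psi_2)\}$ and both $W(\Psi_c)$ and $N(\Psi_c)$ are bounded by $W(\Psi_1)+W(\Psi_2) + c \cdot |L(\Psi_2)-L(\Psi_1)|$ (respectively for $N$).

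Then I would post-compose with the affine map $S:\R^{2k}\to\R^k$, $(y_1,y_2)\mapsto y_1+y_2$, whose linear part has matrix $[I_k \mid I_k]$: each column carries exactly one nonzero entry, so $\|S\|_{\ell^{0,\infty}} = 1$. Invoking Lemma~\ref{lem:NetworkCalculus}-(\ref{enu:PrePostAffine}) with $Q := S$ and $P := \identity_{\R^d}$ (so $\|P\|_{\ell^{0,\infty}_\ast} = 1$) yields a network $\Psi \in \NNsymbol^{\varrho,d,k}$ with $\Realization(\Psi) = S \circ \Realization(\Psi_c) = \Realization(\Psi_1) + \Realization(\Psi_2)$, preserving both the depth and the neuron count of $\Psi_c$ and leaving its weight bound untouched. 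This delivers exactly the bounds claimed in the statement.

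The main obstacle here is essentially cosmetic: the entire substance has been delegated to Lemma~\ref{lem:AuxConcatenationTwo}, where the depth-synchronisation via Lemma~\ref{lem:DeepeningLemma} does the actual work, and to the pre/post-composition estimate of Lemma~\ref{lem:NetworkCalculus}-(\ref{enu:PrePostAffine}). The only point worth double-checking is that the specific summation map $S$ really does satisfy $\|S\|_{\ell^{0,\infty}} = 1$ with the convention of~\eqref{eq:DefL0MixedNorms}; this is exactly what prevents the post-composition from inflating $W$, and it is why the constant in front of $|L(\Psi_2)-L(\Psi_1)|$ ends up as $c = \min\{d,k\}$ rather than something larger.
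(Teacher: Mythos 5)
Your proposal is correct, and it takes a slightly different route from the paper. The paper proves Lemma~\ref{lem:AuxConcatenationTwo} and Lemma~\ref{lem:AuxSummationTwo} \emph{simultaneously} by one construction: after depth-synchronizing via Lemma~\ref{lem:DeepeningLemma}, the layers $R_\ell(x,y)=(T_\ell x, S_\ell y)$ are formed, with the sole modification for the summation case that the \emph{final} affine map is taken to be $R_L(x,y)=T_L x + S_L y$ rather than $(T_L x, S_L y)$. You instead treat Lemma~\ref{lem:AuxConcatenationTwo} as a black box, producing the Cartesian-product network $\Psi_c$, and then post-compose with the sum map $S=[I_k\mid I_k]$ via Lemma~\ref{lem:NetworkCalculus}-(\ref{enu:PrePostAffine}). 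Because $\|S\|_{\ell^{0,\infty}}=1$ (each column of $[I_k\mid I_k]$ has one nonzero entry) and $\|P\|_{\ell^{0,\infty}_\ast}=1$ for $P=\identity_{\R^d}$, this step preserves depth, neurons, and the weight budget exactly. Note that your post-composition replaces the last affine map of $\Psi_c$ by $S\circ R_L$, which is precisely $(x,y)\mapsto T_L x + S_L y$ — so you arrive at the same network as the paper, just reached by a modular reduction rather than a direct construction. What your route buys is separation of concerns: all the depth-surgery is concentrated in the concatenation lemma, and the summation follows for free; what the paper's route buys is avoiding an extra appeal to the pre/post-composition estimate. Both yield the stated constant $c=\min\{d,k\}$, since $\min\{d,\max\{k_1,k_2\}\}=\min\{d,k\}$ when $k_1=k_2=k$.
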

\begin{proof}[Proof of Lemmas~\ref{lem:AuxConcatenationTwo} and \ref{lem:AuxSummationTwo}]
  Set $L := \max \{ L(\Psi_1), L(\Psi_2) \}$ and $L_i := L(\Psi_i)$ for $i \in \{1,2\}$.
  By Lemma~\ref{lem:DeepeningLemma} applied to $\Psi_i$ and $L_0 = L - L_i \in \N_0$,
  we get for each $i \in \{1,2\}$ a network ${\Psi_i ' \in \NNsymbol^{\varrho,d,k_{i}}}$ with
  $\Realization(\Psi_i ') = \Realization(\Psi_i)$ and such that $L(\Psi_i ') = L$,
  as well as $W(\Psi_i ') \leq W(\Psi_i) + c (L - L_i)$
  and furthermore $N(\Psi_i') \leq N(\Psi_i) + c (L - L_i)$.
  By choice of $L$, we have $(L - L_1) + (L - L_2) = |L_1 - L_2|$,
  whence $W(\Psi_1 ') + W(\Psi_2 ') \leq W(\Psi_1) + W(\Psi_2) + c \, |L_1 - L_2|$,
  and $N(\Psi_1 ') + N(\Psi_2 ') \leq N(\Psi_1) + N(\Psi_2) + c \, |L_1 - L_2|$.

  \medskip{}

  First we deal with the pathological case $L = 1$.
  In this case, each $\Psi'_i$ is of the form $\Psi'_i = \big( ( T_i, \identity_{\R^k}) \big)$,
  with $T_i : \R^d \to \R^k$ an affine-linear map.
  For proving Lemma~\ref{lem:AuxConcatenationTwo},
  we set $\Psi := \big( (T,\identity_{\R^{k_1+k_2}}) \big)$
  with the affine-linear map $T:\R^{d} \to \R^{k_1+k_2},\ x \mapsto \big( T_1(x),T_2(x) \big)$,
  so that $\Realization(\Psi) = g$.
  For proving Lemma~\ref{lem:AuxSummationTwo}, we set $\Psi := \big( (T, \identity_{\R^k}) \big)$
  with $T = T_1+T_2$, so that
  \(
    \Realization(\Psi)
    = T_1 + T_2
    = \Realization(\Psi'_1) + \Realization(\Psi'_2)
    = \Realization(\Psi_1) + \Realization(\Psi_2)
  \).
  Finally, we see for both cases that $N(\Psi) = 0 = N(\Psi'_1) + N(\Psi'_2)$ and
  \[
    W(\Psi)
    =    \left\| T \right\|_{\ell^0}
    \leq \left\| T_1 \right\|_{\ell^0} + \left\| T_2 \right\|_{\ell^0}
    = W(\Psi'_1) + W(\Psi'_2) \, .
  \]
  This establishes the result for the case $L=1$.

  \medskip{}

  For $L > 1$, write $\Psi_1 ' = \big( (T_1, \alpha_1), \dots, (T_L, \alpha_L) \big)$ and
  $\Psi_2 ' = \big( (S_1, \beta_1), \dots, (S_L, \beta_L) \big)$ with affine-linear maps
  $T_\ell : \R^{N_{\ell - 1}} \to \R^{N_\ell}$ and
  $S_\ell : \R^{M_{\ell-1}} \to \R^{M_\ell}$ for $\ell \in \FirstN{L}$.
  Let us define $\theta_\ell := \alpha_\ell \otimes \beta_\ell$ for $\ell \in \FirstN{L}$---except
  for $\ell = L$ when proving Lemma~\ref{lem:AuxSummationTwo},
  in which case we set $\theta_L := \identity_{\R^k}$.
  Next, set
  \[
    R_1 : \R^d \to \R^{N_1 + M_1}, x \mapsto (T_1 x , S_1 x)
    \quad \text{and} \quad
    R_\ell : \R^{N_{\ell-1} + M_{\ell-1}} \to \R^{N_\ell + M_\ell},
             (x,y) \mapsto (T_\ell \, x , S_\ell \, y)
  \]
  for $2 \leq \ell \leq L$---except if $\ell = L$ when proving Lemma~\ref{lem:AuxSummationTwo}.
  In this latter case, we instead define $R_L$ as
  \(
    {R_L : \R^{N_{L-1} + M_{L-1}} \to \R^{k}, (x,y) \mapsto T_L \, x + S_L \, y}
  \).
  Finally set $\Psi := \big( (R_1, \theta_1), \dots, (R_L, \theta_L) \big)$.

  When proving Lemma~\ref{lem:AuxConcatenationTwo},
  it is straightforward to verify that $\Psi$ satisfies
  \[
      \Realization (\Psi) (x)
    = \big( \Realization (\Psi_1') (x), \Realization(\Psi_2')(x) \big)
    = \big( \Realization (\Psi_1) (x), \Realization(\Psi_2)(x) \big)
    = g(x)
    \qquad \forall \, x \in \R^d \, .
  \]
  Similarly, when proving Lemma~\ref{lem:AuxSummationTwo}, one can easily check that
  \(
    \Realization(\Psi)
    = \Realization(\Psi'_1) + \Realization(\Psi'_2)
    = \Realization(\Psi_1) + \Realization(\Psi_2)
  \).

  Further, for arbitrary $\ell \in \FirstN{L}$, we have
  $\|R_\ell\|_{\ell^0} \leq \|T_\ell\|_{\ell^0}+ \|S_\ell\|_{\ell^0}$
  so that
  \begin{equation*}
    W(\Psi)
    = \sum_{\ell=1}^{L}
        \|R_\ell\|_{\ell^0}
    \leq \sum_{\ell=1}^{L}
           (\|T_\ell\|_{\ell^0}+ \|S_\ell\|_{\ell^0})
    =   W(\Psi_1 ')+W(\Psi'_2) \, .
  \end{equation*}
  Finally, $N(\Psi) = \sum_{\ell=1}^{L-1} (N_{\ell}+M_{\ell}) = N(\Psi'_1)+N(\Psi'_2)$.
  Given the estimates for $W(\Psi_1') + W(\Psi_2')$ and $N(\Psi_1') + N(\Psi_2')$
  stated at the beginning of the proof, this yields the claim.
\end{proof}

Let us now return to the proof of Parts 2 and 3 of Lemma~\ref{lem:SummationLemma}.
Set $f_{i} := \Realization(\Phi_{i})$ and $L_i := L(\Phi_i)$.
We first show that we can without loss of generality assume $L_1 \leq \dots \leq L_n$.
To see this, note that there is a permutation $\sigma \in S_n$ such that if we set
$\Gamma_j := \Phi_{\sigma(j)}$, then $L(\Gamma_1) \leq \dots \leq L(\Gamma_n)$.
Furthermore, $\sum_{j=1}^n \Realization(\Gamma_j) = \sum_{j=1}^n \Realization(\Phi_j)$.
Finally, there is a permutation matrix $P \in \GL(\R^d)$ such that
\[
  P \circ \big( \Realization(\Gamma_1), \dots, \Realization(\Gamma_n) \big)
  = \big( \Realization(\Phi_1), \dots, \Realization(\Phi_n) \big)
  = (f_1, \dots, f_n)
  = g \, .
\]
Since the permutation matrix $P$ has exactly one non-zero entry per row and column,
we have $\|P\|_{\ell^{0,\infty}} = 1$ in the notation of Equation~\eqref{eq:DefL0MixedNorms}.
Therefore, the first part of Lemma~\ref{lem:NetworkCalculus} (which will be proven independently)
shows that $g \in \NNreal^{\varrho,d,K}_{W,L,N}$, provided that
$\big( \Realization(\Gamma_1), \dots, \Realization(\Gamma_n) \big) \in \NNreal^{\varrho,d,K}_{W,L,N}$.
These considerations show that we can assume $L(\Phi_1) \leq \dots \leq L(\Phi_n)$
without loss of generality.

We now prove the following claim by induction on $j \in \FirstN{n}$:
There is $\Theta_{j} \in \NNsymbol^{\varrho,d,K_{j}}$ satisfying
$W(\Theta_{j}) \leq \sum_{i=1}^{j} W(\Phi_{i}) + c \, (L_j - L_1)$,
and
$N(\Theta_{j}) = \sum_{i=1}^{j} N(\Phi_{i}) + c \, (L_j - L_1)$,
as well as ${L(\Theta_{j}) = L_j}$,
and such that $\Realization(\Theta_j) = g_{j} := \sum_{i=1}^{j} f_{i}$
and $K_{j} := k$ for the summation,
respectively such that ${\Realization(\Theta_{j}) = g_{j} := (f_1, \dots, f_j)}$
and $K_{j} := \sum_{i=1}^{j} k_{i}$ for the cartesian product.
Here, $c$ is as in the corresponding claim of Lemma~\ref{lem:SummationLemma}.

Specializing to $j=n$ then yields the conclusion of Lemma~\ref{lem:SummationLemma}.

\medskip{}

We now proceed to the induction.
The claim trivially holds for $j=1$---just take $\Theta_1 = \Phi_1$.
Assuming that the claim holds for some $j \in \FirstN{n-1}$, we define $\Psi_{1} := \Theta_{j}$
and $\Psi_{2} := \Phi_{j+1}$.
Note that $L(\Psi_1) = L(\Theta_j) = L_j \leq L_{j+1} = L(\Psi_2)$.
For the summation, by Lemma~\ref{lem:AuxSummationTwo} there is a network
$\Psi \in \NNsymbol^{\varrho,d,k}$ with $L(\Psi) = L_{j+1}$ and
\(
  \Realization(\Psi)
  =  \Realization(\Psi_{1}) + \Realization(\Psi_{2})
  = \Realization(\Theta_{j}) +  \Realization(\Phi_{j+1})
  = g_{j}+ f_{j+1}
  = g_{j+1}
\),
and such that
\[
  W(\Psi)
  \leq W(\Psi_{1}) + W(\Psi_{2}) + c' \cdot |L(\Psi_2) - L(\Psi_1)|
  \leq W(\Theta_{j}) + W(\Phi_{j+1}) + c' \cdot (L_{j+1} - L_j)
\]
and likewise
$N(\Psi) \leq N(\Theta_j) + N(\Phi_{j+1}) + c' \cdot (L_{j+1} - L_j)$, where $c' = \min\{d,k\} = c$.
For the cartesian product, Lemma~\ref{lem:AuxConcatenationTwo} yields a network
$\Psi \in \NNsymbol^{\varrho,d,K_{j}+k_{j+1}} = \NNsymbol^{\varrho,d,K_{j+1}}$ satisfying
\[
  \Realization(\Psi)
  = \big( \Realization(\Psi_{1}),\Realization(\Psi_{2}) \big)
  = \big( \Realization(\Theta_{j}), \Realization(\Phi_{j+1}) \big)
  = g_{j+1}
\]
and such that, setting
$c' := \min \big\{ d, \max \{ K_{j}, k_{j+1} \} \big\} \leq \min\{d,K-1\} = c$,
we have
\[
  W(\Psi)
  \leq W(\Psi_{1})+W(\Psi_{2}) + c' \cdot | L(\Psi_{2}) - L(\Psi_{1}) |
  = W(\Theta_{j}) + W(\Phi_{j+1}) + c' \cdot (L_{j+1} - L_j)
\]
and $N(\Psi) \leq N(\Theta_j) + N(\Phi_{j+1}) + c' \cdot (L_{j+1} - L_j)$.

With $\Theta_{j+1} := \Psi$ we get $\Realization(\Theta_{j+1}) = g_{j+1}$,
$L(\Theta_{j+1}) = L_{j+1}$ and, by the induction hypothesis,
\[
  W(\Theta_{j+1})
    \leq \sum_{i=1}^{j} W(\Phi_{i})
         + c \, (L_j - L_1)
         + W(\Phi_{j+1})
         + c \, (L_{j+1} - L_j)
    =    \sum_{i=1}^{j+1} W(\Phi_{i}) + c \, (L_{j+1} - L_1) \, .
\]
Similarly,
\(
  N(\Theta_{j+1})
  \leq \sum_{i=1}^{j+1}
         N(\Phi_{i})
       + c \cdot (L_{j+1} - L_1)
\).
This completes the induction and the proof.
\hfill$\square$

\else
\fi

\subsection{Proof of Lemma~\ref{lem:NetworkCalculus}}
\label{app:PfNetworkCalculus}

\ifarxiv
We prove each part of the lemma individually.
\else
We only formally prove Part~\eqref{enu:PrePostAffine}.
For Parts~\eqref{enu:Compos} and~\eqref{enu:ComposLessDepth}, the underlying proof ideas
are illustrated in Figure~\ref{fig:calculus} (bottom);
we refer to \cite[Appendix A]{gribonval:hal-02117139} for the formal proof.

\textbf{Part~\eqref{enu:PrePostAffine}:}
\fi
\ifarxiv
\medskip{}

\noindent
\textbf{Part~\eqref{enu:Compos}:}
Let
\(
  \Phi_1
  = \big( (T_1, \alpha_1), \dots, (T_{L_1}, \alpha_{L_1}) \big)
  \in \NNsymbol^{\varrho, d, d_1}
\)
and
\({
  \Phi_2
  = \big( (S_1, \beta_1), \dots, (S_{L_2}, \beta_{L_2}) \big)
  \in \NNsymbol^{\varrho, d_1, d_2}
}\)
Define
\[
  \Psi
  :=  \big(
        (T_1, \alpha_1),
        \dots,
        (T_{L_1}, \alpha_{L_1}),
        (S_1, \beta_1),
        \dots,
        (S_{L_2}, \beta_{L_2})
      \big) \, .
\]
We emphasize that $\Psi$ is indeed a \emph{generalized} $\varrho$-network,
since all $T_\ell$ and all $S_\ell$ are affine-linear (with ``fitting'' dimensions),
and since all $\alpha_\ell$ and all $\beta_\ell$ are $\otimes$-products of $\varrho$
and $\identity_{\R}$, with $\beta_{L_2} = \identity_{\R^{d_2}}$.
Furthermore, we clearly have $L(\Psi) = L_1 + L_2 = L(\Phi_1) + L(\Phi_2)$, and
\[
  W(\Psi)
  =    \sum_{\ell=1}^{L_1}
         \|T_\ell\|_{\ell^0}
       + \sum_{\ell'=1}^{L_2}
           \|S_{\ell'}\|_{\ell^0}
  =    W(\Phi_1) + W(\Phi_2).
\]
Clearly, $N(\Psi) = N(\Phi_1) + d_1 + N(\Phi_2)$.
Finally, the property $\Realization(\Psi) = \Realization(\Phi_2) \circ \Realization(\Phi_1)$ is a
direct consequence of the definition of the realization of neural networks.

\medskip{}

\noindent
\textbf{Part~\eqref{enu:PrePostAffine}:}
\else
\fi
Let $\Phi = \big( (T_{1},\alpha_{1}), \dots, (T_L,\alpha_{L}) \big) \in \NNsymbol^{\varrho,d,k}$.
We give the proof for $Q \circ \Realization(\Phi)$,
since the proof for $\Realization(\Phi) \circ P$ is similar but simpler;
the general statement in the lemma then follows from the identity
\(
  Q \circ \Realization(\Phi) \circ P
  = (Q \circ \Realization(\Phi)) \circ P
  = \Realization(\Psi_1) \circ P
\).

We first treat the special case $\|Q\|_{\ell^{0,\infty}}=0$
which implies $\|Q\|_{\ell^{0}} = 0$, and hence
$Q \circ \Realization(\Phi) \equiv c$ for some $c \in \R^{k_1}$.
Choose $N_0,\dots,N_L$ such that $T_\ell : \R^{N_{\ell - 1}} \to \R^{N_\ell}$
for $\ell \in \FirstN{L}$, and define $S_\ell : \R^{N_{\ell - 1}} \to \R^{N_\ell}, x \mapsto 0$
for $\ell \in \FirstN{L-1}$ and $S_L : \R^{N_{L-1}} \to \R^{k_1}, x \mapsto c$.
It is then not hard to see that the network $\Psi := \big( (S_1,\alpha_1),\dots,(S_L,\alpha_L) \big)$
satisfies $L(\Psi) = L(\Phi)$ and $N(\Psi) = N(\Phi)$, as well as
$W(\Psi) = 0$ and $\Realization(\Psi) \equiv c = Q \circ \Realization(\Phi)$.

We now consider the case $\|Q\|_{\ell^{0,\infty}} \geq 1$.
Define $U_{\ell} := T_{\ell}$ for $\ell \in \FirstN{L-1}$
and ${U_{L} := Q \circ T_{L}}$.
By Definition~\ref{defn:NeuralNetworks} we have $\alpha_{L} = \identity_{\R^{k}}$, whence
\({
  \Psi
  := \big( (U_{1},\alpha_{1}), \dots, (U_{L-1}, \alpha_{L-1}), (U_L, \identity_{\R^{k_{1}}}) \big)
     \in \NNsymbol_{\infty,L,N(\Phi)}^{\varrho,d,k_1}
}\)
satisfies $\Realization(\Psi) = Q \circ  \Realization(\Phi)$.
To control $W(\Psi)$, we use the following lemma.
The proof is slightly deferred.

\begin{lem}\label{lem:MatrixSparsityForComposition}
Let $p,q,r \in \N$ be arbitrary.
\begin{enumerate}
  \item For arbitrary affine-linear maps $T : \R^p \to \R^q$ and $S : \R^q \to \R^r$, we have
        \[
          \| S \circ T \|_{\ell^0}
          \leq \| S \|_{\ell^{0,\infty}} \cdot \| T \|_{\ell^0}
          \quad \text{and} \quad
          \| S \circ T \|_{\ell^0}
          \leq \| S \|_{\ell^0} \cdot \| T \|_{\ell^{0,\infty}_{\ast}} \, .
        \]

  \item For affine-linear maps $T_1, \dots, T_n$, we have
        $\|T_1 \otimes \cdots \otimes T_n\|_{\ell^0} \leq \sum_{i=1}^n \|T_i\|_{\ell^0}$,
        as well as
        \[
          \quad \quad
          \| T_1 \otimes \cdots \otimes T_n \|_{\ell^{0,\infty}}
          \leq \max_{i \in \FirstN{n}} \| T_i \|_{\ell^{0,\infty}}
          \quad \text{and} \quad
          \| T_1 \otimes \cdots \otimes T_n \|_{\ell^{0,\infty}_{\ast}}
          \leq \max_{i \in \FirstN{n}} \| T_i \|_{\ell^{0,\infty}_{\ast}} \, .
          \qedhere
        \]
\end{enumerate}
\end{lem}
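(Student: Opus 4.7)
The plan is to reduce both parts to elementary counting arguments on the matrix representations of the affine maps, using the convention from Equation~\eqref{eq:DefL0MixedNorms} that $\|\cdot\|_{\ell^0}$, $\|\cdot\|_{\ell^{0,\infty}}$ and $\|\cdot\|_{\ell^{0,\infty}_\ast}$ applied to an affine map depend only on its linear part.

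For Part~(1), I would write $T = A\bullet + b$ and $S = B\bullet + c$, so that $S \circ T = (BA)\bullet + (Bb + c)$ and hence $\|S \circ T\|_{\ell^0} = \|BA\|_{\ell^0}$, the additive constant playing no role. To obtain the first inequality, I would decompose column-wise, $\|BA\|_{\ell^0} = \sum_j \|(BA)\, e_j\|_{\ell^0}$, and observe that the $j$-th column $B A e_j$ is a linear combination of at most $\|A e_j\|_{\ell^0}$ columns of $B$, each of support size at most $\|B\|_{\ell^{0,\infty}}$; summing over $j$ yields $\|BA\|_{\ell^0} \leq \|B\|_{\ell^{0,\infty}} \cdot \|A\|_{\ell^0}$. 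The second inequality follows from the symmetric row-wise decomposition $\|BA\|_{\ell^0} = \sum_i \|e_i^T(BA)\|_{\ell^0}$, writing each row $e_i^T B A$ as a linear combination of at most $\|e_i^T B\|_{\ell^0}$ rows of $A$, each with at most $\|A\|_{\ell^{0,\infty}_\ast}$ nonzero entries.

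For Part~(2), the block-diagonal structure of the tensor product does all the work. Writing $T_i = A_i \bullet + b_i$, the map $T := T_1 \otimes \cdots \otimes T_n$ has linear part equal to the block-diagonal matrix $\diag(A_1,\dots,A_n)$ and bias $(b_1,\dots,b_n)$. Since the nonzero entries of $\diag(A_1,\dots,A_n)$ are exactly the disjoint union of the nonzero entries of the blocks $A_i$, the identity $\|T\|_{\ell^0} = \sum_{i=1}^n \|A_i\|_{\ell^0}$ is immediate. Moreover, every column (respectively every row) of $\diag(A_1,\dots,A_n)$ lies entirely inside a single block and coincides, up to zero padding, with a column (respectively a row) of some $A_i$; hence its cardinality is bounded by $\max_i \|A_i\|_{\ell^{0,\infty}}$ (respectively $\max_i \|A_i\|_{\ell^{0,\infty}_\ast}$). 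There is no real obstacle here: the lemma is pure linear-algebra bookkeeping, and the only point requiring attention is to consistently ignore the bias vectors, which by definition contribute nothing to any of the three $\ell^0$-type norms involved.
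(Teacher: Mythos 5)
Your proof is correct and follows essentially the same approach as the paper's: bound each column (resp.\ row) of $BA$ by noting $\|B(Ae_j)\|_{\ell^0}\leq\|Ae_j\|_{\ell^0}\cdot\|B\|_{\ell^{0,\infty}}$, sum over columns, and for Part~(2) read everything off the block-diagonal structure of the linear part of the $\otimes$-product. The paper merely phrases the key step as a stand-alone vector inequality $\|Ax\|_{\ell^0}\leq\|x\|_{\ell^0}\cdot\|A\|_{\ell^{0,\infty}}$ and leaves the rest to the reader, but the underlying argument is identical to yours.
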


Let us continue with the proof from above.
By definition,
\(
  \|U_{\ell}\|_{\ell^{0}}
  = \|T_{\ell}\|_{\ell^{0}}
  \leq \| Q \|_{\ell^{0,\infty}} \cdot \|T_{\ell}\|_{\ell^{0}}
\)
for $\ell \in \FirstN{L-1}$.
By Lemma~\ref{lem:MatrixSparsityForComposition} we also have
$\|U_{L}\|_{\ell^{0}} \leq \| Q \|_{\ell^{0,\infty}} \cdot \|T_{L}\|_{\ell^{0}}$, and hence
\[
    W(\Psi)
    = \sum_{\ell=1}^L
        \|U_\ell\|_{\ell^0}
    \leq \| Q \|_{\ell^{0,\infty}} \, \sum_{\ell=1}^L \|T_\ell\|_{\ell^0}
    =    \| Q \|_{\ell^{0,\infty}} \cdot W(\Phi).
\]
Finally, if $\Phi$ is strict, then $\Psi$ is strict as well;
thus, the claim also holds with $\SNNreal$ instead of $\NNreal$.
\ifarxiv
\medskip{}

\noindent
\textbf{Part~(\ref{enu:ComposLessDepth}):}
Let
\(
  \Phi_1
  = \big( (T_1, \alpha_1), \dots, (T_L, \alpha_L) \big)
  \in \NNsymbol^{\varrho, d, d_1}
\)
and
\(
  \Phi_2
  = \big( (S_1, \beta_1), \dots, (S_K, \beta_K) \big)
  \in \NNsymbol^{\varrho, d_1, d_2}
\).

We distinguish two cases:
First, if $L = 1$, then $\Realization(\Phi_1) = T_1$.
Since $T_1 : \R^d \to \R^{d_1}$, this implies $\|T_1\|_{\ell^{0,\infty}_\ast} \leq d$.
Thus, Part~(\ref{enu:PrePostAffine}) shows that
\[
  \Realization(\Phi_2) \circ \Realization(\Phi_1)
  = \Realization(\Phi_2) \circ T_1
  \in \NNreal_{d \cdot W(\Phi_2), K, N(\Phi_2)}^{\varrho,d,d_2}
  \subset \NNreal_{W(\Phi_1) + N \cdot W(\Phi_2), L + K - 1, N(\Phi_1) + N(\Phi_2)}^{\varrho,d,d_2},
\]
where $N := \max \{ N(\Phi_1), d\}$.

Let us now assume that $L > 1$.
In this case, define
\[
  \Psi := \big(
            (T_1, \alpha_1),
            \dots,
            (T_{L-1},\alpha_{L-1}),
            (S_1 \circ T_L, \beta_1),
            (S_2, \beta_2)
            \dots,
            (S_K,\beta_K)
          \big) \, .
\]
It is not hard to see that $N(\Psi) \leq N(\Phi_1) + N(\Phi_2)$ and---because of
$\alpha_L = \identity_{\R^{d_1}}$---that
\[
  \Realization(\Psi)
  = (\beta_K \circ S_K)
    \circ \cdots
    \circ (\beta_1 \circ S_1)
    \circ (\alpha_L \circ T_L)
    \circ \cdots
    \circ (\alpha_1 \circ T_1)
  = \Realization(\Phi_2) \circ \Realization(\Phi_1).
\]
Note $T_\ell : \R^{M_{\ell - 1}} \to \R^{M_\ell}$ for certain $M_0,\dots,M_L \in \N$.
Since $L > 1$, we have $M_{L - 1} \leq N(\Phi_1) \leq N$.
Furthermore, since $T_L : \R^{M_{L-1}} \to \R^{M_L}$,
we get $\|T_L\|_{\ell^{0,\infty}_\ast} \leq M_{L-1} \leq N$ directly from the definition.
Thus, Lemma~\ref{lem:MatrixSparsityForComposition} shows
\(
  \|S_1 \circ T_L\|_{\ell^0}
  \leq \|S_1\|_{\ell^0} \cdot \|T_L\|_{\ell^{0,\infty}_\ast}
  \leq N \cdot \|S_1\|_{\ell^0}
\).
Therefore, and since $N \geq 1$, we see that
\[
  W(\Psi)
  = \sum_{\ell=1}^{L-1}
      \|T_\ell\|_{\ell^0}
    + \|S_1 \circ T_L\|_{\ell^0}
    + \sum_{\ell=2}^{K}
        \|S_\ell\|_{\ell^0}
  \leq W(\Phi_1)
       + N \cdot \|S_1\|_{\ell^0}
       + N \cdot \sum_{\ell=2}^{K}
                   \|S_\ell\|_{\ell^0}
  =    W(\Phi_1) + N \cdot W(\Phi_2).
\]
Finally, note that if $\Phi_1,\Phi_2$ are strict networks, then so is $\Psi$.
\else
\fi
\hfill$\square$



\begin{proof}[Proof of Lemma~\ref{lem:MatrixSparsityForComposition}]
\label{app:PfTechnicalA}

The stated estimates follow directly from the definitions by direct computations
and are thus left to the reader.
For instance, the main observation for proving that
$\| B A \|_{\ell^0} \leq \| B \|_{\ell^{0,\infty}} \cdot \| A \|_{\ell^0}$
is that
\[
  \| A x \|_{\ell^0}
  = \left\| \smash{\sum_{i=1}^p}\vphantom{\sum} x_i \cdot A e_i \right\|_{\ell^0}
  \leq \sum_{i \, : \, x_i \neq 0} \| A e_i \|_{\ell^0}
  \leq \| x \|_{\ell^0} \cdot \| A \|_{\ell^{0,\infty}}
  \quad \text{for} \quad A \in \R^{q \times p} \text{ and } x \in \R^p.
  \qedhere
\]
\end{proof}

\subsection{Proof of Lemma~\ref{lem:RecursiveNNsets}}
\label{app:RecursiveNNsets}

We start with an auxiliary lemma.

\begin{lem}\label{lem:TensorActivation}
Consider two activation functions $\varrho,\sigma$ such that $\sigma = \Realization(\Psi_{\sigma})$
for some
\(
  \Psi_{\sigma} \in \NNsymbol^{\varrho,1,1}_{w,\ell,m}
\)
with $L(\Psi_{\sigma}) = \ell \in \N$, $w \in \N_{0}$, $m \in \N$.
Furthermore, assume that $\sigma \not\equiv \mathrm{const}$.

Then, for any $d \in \N$ and $\alpha_{i} \in \{\identity_{\R},\sigma\}$,
$1 \leq i \leq d$ we have $\alpha_{1} \otimes \cdots \otimes \alpha_{d} = \Realization(\Phi)$
for some network
\[
  \Phi
  = \big( (U_1, \gamma_1), \ldots, (U_{\ell}, \gamma_{\ell}) \big)
  \in \NNsymbol^{\varrho,d,d}_{dw,\ell,dm}
\]
satisfying $\|U_1\|_{\ell^{0,\infty}} \leq m$,
$\|U_1\|_{\ell^{0,\infty}_{\ast}} \leq 1$, $\|U_{\ell}\|_{\ell^{0,\infty}} \leq 1$,
and $\|U_{\ell}\|_{\ell^{0,\infty}_{\ast}} \leq m$.

If $\Psi_\sigma$ is a strict network and $\alpha_i = \sigma$ for all $i$,
then $\Phi$ can be chosen to be a strict network.
\end{lem}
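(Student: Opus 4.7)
The plan is to construct $\Phi$ by running $d$ parallel "strands" in a block-diagonal fashion, one per input coordinate, so that strand $i$ computes $\alpha_i(x_i)$. Write $\Psi_\sigma = \big( (T_1,\beta_1),\dots,(T_\ell,\beta_\ell) \big)$ with $T_j : \R^{N_{j-1}} \to \R^{N_j}$, $N_0 = N_\ell = 1$, and $\beta_\ell = \identity_\R$. First I would record two elementary observations that drive the whole argument. Since $\sigma \not\equiv \mathrm{const}$, Corollary~\ref{cor:BoundingConnectionsWithLayers2} applied to $\Psi_\sigma$ forces $w \geq W(\Psi_\sigma) \geq L(\Psi_\sigma) = \ell$. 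Also, since $N_j \geq 1$ for $1 \leq j \leq \ell-1$, we have $m \geq N(\Psi_\sigma) \geq \ell - 1$. These two inequalities are exactly what is needed to fit the "identity strands" into the weight budget $w$ and the neuron budget $m$.

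Next I would make the block-diagonal construction precise. For each $i \in \FirstN{d}$ and each $j \in \FirstN{\ell}$, define per-strand data as follows: if $\alpha_i = \sigma$, take $M_j^{(i)} := N_j$, $T_j^{(i)} := T_j$, $\beta_j^{(i)} := \beta_j$; if $\alpha_i = \identity_\R$, take $M_j^{(i)} := 1$, $T_j^{(i)} := \identity_\R$, and $\beta_j^{(i)} := \identity_\R$ for $1 \leq j < \ell$ (we also set $\beta_\ell^{(i)} := \identity_\R$). Now set
\[
  U_j : \R^{\sum_i M_{j-1}^{(i)}} \to \R^{\sum_i M_j^{(i)}},
  \qquad
  U_j := T_j^{(1)} \otimes \cdots \otimes T_j^{(d)},
  \qquad
  \gamma_j := \beta_j^{(1)} \otimes \cdots \otimes \beta_j^{(d)},
\]
and $\Phi := \big( (U_1,\gamma_1),\dots,(U_\ell,\gamma_\ell) \big)$. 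Since $M_0^{(i)} = M_\ell^{(i)} = 1$, we have $U_1 : \R^d \to \R^{\sum_i M_1^{(i)}}$ and $U_\ell : \R^{\sum_i M_{\ell-1}^{(i)}} \to \R^d$, and $\gamma_\ell = \identity_{\R^d}$, so $\Phi$ is a valid generalized $\varrho$-network. A direct layer-by-layer computation (using the block-diagonal form of each $U_j$ and the tensor-product form of each $\gamma_j$) shows $\Realization(\Phi) = \alpha_1 \otimes \cdots \otimes \alpha_d$.

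The complexity and sparsity bounds then follow by bookkeeping. Using Lemma~\ref{lem:MatrixSparsityForComposition}-(2) to turn tensor products into sums of $\ell^0$ norms, each strand contributes $W(\Psi_\sigma) \leq w$ weights if $\alpha_i = \sigma$ and $\ell \leq w$ weights if $\alpha_i = \identity_\R$, hence $W(\Phi) \leq dw$; similarly $N(\Phi) = \sum_{j=1}^{\ell-1}\sum_i M_j^{(i)} \leq \sum_i \max\{N(\Psi_\sigma), \ell-1\} \leq dm$. For the sparsity constraints on $U_1$ and $U_\ell$, the block-diagonal structure makes them immediate: any column of $U_1$ is a column of a single $T_1^{(i)}$, which (being a vector in $\R^{M_1^{(i)}}$ with $M_1^{(i)} \leq m$) gives $\|U_1\|_{\ell^{0,\infty}} \leq m$; any row of $U_1$ corresponds to one hidden neuron wired into one input coordinate only, giving $\|U_1\|_{\ell^{0,\infty}_\ast} \leq 1$; the argument for $U_\ell$ is symmetric, exchanging rows and columns.

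Finally, for the strict case, I would observe that when all $\alpha_i = \sigma$ and $\Psi_\sigma$ is strict, we have $\beta_j = \varrho \otimes \cdots \otimes \varrho$ for each $j < \ell$, so $\gamma_j = \varrho \otimes \cdots \otimes \varrho$ as well, and $\gamma_\ell = \identity_{\R^d}$; hence $\Phi$ is strict. There is no serious obstacle: the only slightly delicate point is the budget for the identity strands, which is why the preliminary observation $w \geq \ell$, $m \geq \ell - 1$ — ensured by the non-constancy of $\sigma$ together with Corollary~\ref{cor:BoundingConnectionsWithLayers2} — is essential; everything else is routine block-diagonal bookkeeping.
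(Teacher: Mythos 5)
Your proof is correct and follows essentially the same approach as the paper's: both build the network as a block-diagonal tensor product of $d$ one-dimensional strands (the $\sigma$-strand reusing $\Psi_\sigma$, the identity-strand a chain of $\ell$ scalar identities), both use Corollary~\ref{cor:BoundingConnectionsWithLayers2} together with $\sigma \not\equiv \mathrm{const}$ to get the key budget bounds $w \geq \ell$ and $m \geq \ell-1$ for the identity strands, and both deduce the $\ell^{0,\infty}$ sparsity bounds on $U_1$ and $U_\ell$ from the block structure. The only difference is presentational: the paper formalizes the construction by induction on $d$ (tensoring one strand at a time), whereas you write out the $d$-fold block-diagonal product directly.
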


\begin{proof}[Proof of Lemma~\ref{lem:TensorActivation}]
First we show that any $\alpha \in \{\identity_{\R},\sigma\}$
satisfies $\alpha = \Realization(\Psi_{\alpha})$ for some network
\[
  \Psi_{\alpha}
  = \big(
      (U_{1}^{\alpha}, \gamma_{1}^{\alpha}),
      \ldots,
      (U_{\ell}^{\alpha}, \gamma_{\ell}^{\alpha})
    \big)
  \in \NNsymbol^{\varrho,1,1}_{w,\ell,m}
\]
with $\|U_1^{\alpha}\|_{\ell^{0,\infty}} \leq m$,
$\|U_1^{\alpha}\|_{\ell^{0,\infty}_{\ast}} \leq 1$,
$\|U_{\ell}^{\alpha}\|_{\ell^{0,\infty}} \leq 1$
and $\|U_{\ell}^{\alpha}\|_{\ell^{0,\infty}_{\ast}} \leq m$.

For $\alpha = \sigma$ we have $\alpha = \Realization(\Psi_{\sigma})$
where $\Psi_\sigma$ is of the form
\(
  \Psi_{\sigma}
  = \big( (T_{1}, \beta_{1}), \ldots, (T_{\ell}, \beta_{\ell}) \big)
  \in \NNsymbol^{\varrho,1,1}_{w,\ell,m}
\).
For $\alpha = \identity_{\R}$, observe that $\alpha = \Realization(\Psi_{\identity_{\R}})$ with
\[
  \Psi_{\identity_{\R}}
  := \big( (T_1', \identity_{\R}), \ldots, (T_\ell ', \identity_{\R}) \big)
  := \big( (\identity_{\R}, \identity_{\R}), \ldots, (\identity_{\R}, \identity_{\R}) \big),
\]
where it is easy to see that $N(\Psi_{\identity_{\R}}) = \ell - 1 \leq m$
and $W(\Psi_{\identity_{\R}}) = \ell \leq w$.
Indeed, Equation~\eqref{eq:LayersBoundedByNeurons} shows that
$\ell = L(\Psi_\sigma) \leq 1 + N(\Psi_\sigma) \leq 1 + m$.
On the other hand, since $\sigma \not\equiv \mathrm{const}$,
Corollary~\ref{cor:BoundingConnectionsWithLayers2} shows that
$\ell = L(\Psi_\sigma) \leq W(\Psi_\sigma) \leq w$.

Denoting by $N_{i}$ the number of neurons in the $i$-th layer of $\Psi_{\sigma}$
(where layer $0$ is the input layer, and layer $\ell$ the output layer),
we get because of $\Psi_{\sigma} \in \NNsymbol^{\varrho,1,1}_{w,\ell,m}$ that $N_{i} \leq m$
for $1 \leq i \leq L-1$.
Furthermore, since $T_{1}: \R \to \R^{N_{1}}$,
we have $\|T_{1}\|_{\ell^{0,\infty}} \leq N_{1} \leq m$ and $\|T_{1}\|_{\ell^{0,\infty}_{\ast}} \leq 1$.
Similarly, as $T_{\ell}: \R^{N_{\ell-1}} \to \R$ we have $\|T_{\ell}\|_{\ell^{0,\infty}} \leq 1$
and $\|T_{\ell}\|_{\ell^{0,\infty}_{\ast}} \leq m$.
The same bounds trivially hold for $T'_{1}$ and $T'_{\ell}$.

We now prove the claim of the lemma by induction on $d$.
The result is trivial for $d=1$ using $\Phi = \Psi_{\alpha_{1}}$.
Assuming it is true for $d \in \N$, we prove it for $d+1$.

Define $\alpha = \alpha_{1} \otimes \cdots \otimes \alpha_{d}$ and
$\overline{\alpha} = \alpha_{1} \otimes \cdots \otimes \alpha_{d+1} = \alpha \otimes \alpha_{d+1}$.
By induction, there are networks
\(
  \Psi_{1}
  = \big( (V_{1},\lambda_{1}),\ldots,(V_{\ell},\lambda_{\ell}) \big)
  \in \NNsymbol^{\varrho,d,d}_{dw,\ell,dm}
\)
and
\(
  \Psi_{2}
  = \big( (W_{1},\mu_{1}),\ldots,(W_{\ell},\mu_{\ell}) \big)
  \in \NNsymbol^{\varrho,1,1}_{w,\ell,m}
\)
such that $\Realization(\Psi_1)=\alpha$ and $\Realization(\Psi_2)=\alpha_{d+1}$
and such that $\|V_1\|_{\ell^{0,\infty}} \leq m$, $\|V_1\|_{\ell^{0,\infty}_\ast} \leq 1$,
$\|V_\ell\|_{\ell^{0,\infty}} \leq 1$, and $\|V_\ell\|_{\ell^{0,\infty}_\ast} \leq m$,
and likewise for $W_1$ instead of $V_1$ and $W_\ell$ instead of $V_\ell$.

Define $U_{i} := V_{i} \otimes W_{i}$ and
$\gamma_{i}:= \lambda_{i} \otimes \mu_{i}$ for $1 \leq i \leq \ell$,
and $\Phi := \big( (U_{1},\gamma_{1}),\ldots,(U_{\ell},\gamma_{\ell}) \big)$.
One can check that $\Realization(\Phi) = \overline{\alpha}$.
Moreover, Lemma~\ref{lem:MatrixSparsityForComposition} shows that
$\|U_{i}\|_{\ell^0} = \|V_{i}\|_{\ell^0}+\|W_{i}\|_{\ell^0}$ for $1 \leq i \leq \ell$,
whence $W(\Phi) = W(\Psi_1) + W(\Psi_2) \leq dw+d = (d+1)w$
and similarly $N(\Phi) = N(\Psi_{1}) + N(\Psi_{2}) \leq (d+1)m$.
Finally, Lemma~\ref{lem:MatrixSparsityForComposition} shows that
\begin{align*}
  & \|U_{1}\|_{\ell^{0,\infty}}
    \leq \max
           \big\{
             \|V_{1}\|_{\ell^{0,\infty}},
             \|W_{1}\|_{\ell^{0,\infty}}
           \big\}
    \leq m,
  \quad
  && \|U_{1}\|_{\ell^{0,\infty}_{\ast}}
  \leq \max
         \big\{
           \|V_{1}\|_{\ell^{0,\infty}_{\ast}},
           \|W_{1}\|_{\ell^{0,\infty}_{\ast}}
         \big\}
  \leq 1, \\
  & \|U_{\ell}\|_{\ell^{0,\infty}}
    \leq \max
           \big\{
             \|V_{\ell}\|_{\ell^{0,\infty}},
             \|W_{\ell}\|_{\ell^{0,\infty}}
           \big\}
    \leq 1,
  \quad
  && \|U_{\ell}\|_{\ell^{0,\infty}_{\ast}}
  \leq \max
         \big\{
           \|V_{\ell}\|_{\ell^{0,\infty}_{\ast}},
           \|W_{\ell}\|_{\ell^{0,\infty}_{\ast}}
         \big\}
  \leq m \, .
\end{align*}
Clearly, if $\Psi_\sigma$ is strict, and if $\alpha_i = \sigma$ for all $i$,
then the same induction shows that $\Phi$ can be chosen to be a strict network.
\end{proof}

\begin{proof}[Proof of Lemma~\ref{lem:RecursiveNNsets}]
For the first statement with $\ell=2$ consider $f = \Realization(\Psi)$ for some
\[
  \Psi
  = \big( (S_{1}, \alpha_{1}), \ldots, (S_{K-1}, \alpha_{K-1}), (S_K, \identity_{\R^{k}}) \big)
  \in \NNsymbol^{\sigma,d,k}_{W,L,N}.
\]
In case of $K = 1$, we trivially have $\Psi \in \NNsymbol_{W,L,N}^{\varrho,d,k}$,
so that we can assume $K \geq 2$ in the following.

Denoting by $N_{i}$ the number of neurons at the $i$-th layer of $\Psi$,
Lemma~\ref{lem:TensorActivation} yields for each ${i \in \FirstN{K-1}}$
a network
\(
  \Phi_{i}
  =   \big( (U_{1}^{i}, \gamma_{i}), (U_{2}^{i}, \identity_{\R^{N_{i}}}) \big)
  \in \NNsymbol^{\varrho,N_{i},N_{i}}_{N_{i}w,2,N_{i}m}
\)
satisfying $\alpha_i = \Realization(\Phi_{i})$ and
$\gamma_{i}: \R^{N(\Phi_{i})} \to \R^{N(\Phi_{i})}$ with $N(\Phi_{i}) \leq N_{i}m$
and finally $\|U_{1}^{i}\|_{\ell^{0,\infty}} \leq m$
and $\|U_{2}^{i}\|_{\ell^{0,\infty}_{\ast}} \leq m$.
With $T_{1} := U_{1}^{1} \circ S_{1}$, $T_{K} := S_{K} \circ U_{2}^{K-1}$,
$T_{i} := U_{1}^{i} \circ S_{i} \circ U_{2}^{i-1}$ for $2 \leq i \leq K-1$ and
\[
  \Phi
  := \big( (T_1, \gamma_1), \ldots, (T_{K-1},\gamma_{K-1}), (T_K, \identity_{\R^k}) \big) \, ,
\]
one can check that $f = \Realization(\Phi)$.

By Lemma~\ref{lem:MatrixSparsityForComposition},
\(
  \|T_{i}\|_{\ell^{0}}
  \leq \|U_{1}^{i}\|_{\ell^{0,\infty}}
       \|S_{i}\|_{\ell^{0}}
       \|U_{2}^{i-1}\|_{\ell^{0,\infty}_{\ast}}
  \leq m^{2}\|S_{i}\|_{\ell^{0}}
\)
for $2 \leq i \leq K-1$, and the same overall bound also holds for $i \in \{1,K\}$.
As a result we get $L(\Phi) = K \leq L$ as well as
\begin{align*}
  \frac{W(\Phi)}{m^2}
  =    \sum_{i=1}^{K} \frac{\|T_{i}\|_{\ell^{0}}}{m^2}
  \leq \sum_{i=1}^{K} \|S_{i}\|_{\ell^{0}}
  =    W(\Psi)
  \leq W
  \quad \text{and} \quad
  \frac{N(\Phi)}{m}
  =    \sum_{i=1}^{K-1} \frac{N(\Phi_i)}{m}
  \leq \sum_{i=1}^{K-1} N_{i}
  =    N(\Psi)
  \leq N .
\end{align*}

\medskip{}

For the second statement, we prove by induction on $L \in \N$
that $\NNreal_{W,L,N}^{\sigma,d,k} \subset \NNreal^{\varrho,d,k}_{mW + Nw , 1 + (L-1)\ell, N(1+m)}$.

For $L = 1$, it is easy to see $\NNreal_{W,1,N}^{\sigma,d,k} = \NNreal^{\varrho,d,k}_{W,1,N}$,
simply because on the last (and for $L=1$ only) layer, the activation function is always given
by $\identity_{\R^k}$.
Thus, the claim follows from the trivial inclusion
$\NNreal_{W,1,N}^{\varrho,d,k} \subset \NNreal^{\varrho,d,k}_{mW + Nw , 1, N(1+m)}$,
since $m \geq 1$.


Now, assuming the claim holds true for $L$, we prove it for $L+1$.
Consider $f \in \NNreal^{\sigma,d,k}_{W,L+1,N}$.
In case of ${f \in \NNreal^{\sigma,d,k}_{W,L,N}}$,
we get
\(
  f \in \NNreal^{\varrho,d,k}_{mW + Nw , 1 + (L-1)\ell, N(1+m)}
    \subset \NNreal^{\varrho,d,k}_{mW + Nw , 1 + ( (L+1)-1)\ell, N(1+m)}
\)
by the induction hypothesis.
In the remaining case where $f \notin \NNreal^{\sigma,d,k}_{W,L,N}$, there is a network
$\Psi \in \NNsymbol^{\sigma,d,k}_{W,L+1,N}$ of the form
\(
  \Psi
  =   \big( (S_{1},\alpha_{1}),\ldots,(S_L,\alpha_L),(S_{L+1},\identity_{\R^{k}}) \big)
\)
such that $f = \Realization(\Psi)$.
Observe that $S_{L+1}: \R^{\overline{k}} \to \R^{k}$ with $\overline{k} := N_L$
the number of neurons of the last hidden layer.
Defining
\(
  \Psi_{1}
  := \big(
       (S_{1}, \alpha_1),
       \ldots,
       (S_{L-1}, \alpha_{L-1}),
       (S_L, \identity_{\R^{\overline{k}}})
     \big),
\)
we have $\Psi_{1} \in \NNsymbol^{\sigma,d,\overline{k}}_{\overline{W},L,\overline{N}}$
where $\overline{W} := W(\Psi_{1})$ and $\overline{N} := N(\Psi_{1})$ satisfy
\[
  \overline{W} + \|S_{L+1}\|_{\ell^{0}}
  \leq W(\Psi)
  \leq W
  \quad \text{and} \quad
  \overline{N} + \overline{k}
  \leq N(\Psi)
  \leq N .
\]
Define $g := \Realization(\Psi_1)$, so that $f = S_{L+1} \circ \alpha_L \circ g$.
We now exhibit a $\varrho$-network $\Phi$ (instead of the $\sigma$-network $\Psi$)
of controlled complexity such that $f = \Realization(\Phi)$.
As $g := \Realization(\Psi_{1}) \in \NNreal^{\sigma,d,\overline{k}}_{\overline{W},L,\overline{N}}$,
the induction hypothesis shows that $g = \Realization(\Phi_{1})$ for some network
\[
  \Phi_{1}
  = \big(
      (T_{1}, \beta_{1}),
      \ldots,
      (T_{K-1}, \beta_{K-1})
      (T_{K}, \identity_{\R^{\overline{k}}})
    \big)
  \in \NNsymbol^{\varrho,d,\overline{k}}_{m \overline{W} + \overline{N} w, 1 + (L-1)\ell, \overline{N}(1+m)} \, .
\]
Moreover, Lemma~\ref{lem:TensorActivation} shows that $\alpha_L = \Realization(\Phi_{2})$
for a network
\[
  \Phi_{2}
  =   \big(
        (U_{1}, \gamma_{1}),
        \ldots,
        (U_{\ell-1}, \gamma_{\ell-1}),
        (U_{\ell}, \identity_{\R^{\overline{k}}})
      \big)
  \in \NNsymbol^{\varrho,\overline{k},\overline{k}}_{\overline{k}w,\ell,\overline{k}m}
\]
with $\|U_\ell\|_{\ell^{0,\infty}_\ast} \leq m$.
By construction, we have $f = S_{L+1} \circ \alpha_{L} \circ g = \Realization(\Phi)$ for the network
\[
  \Phi
  := \big(
       (T_1, \beta_1),
       \dots,
       (T_{K-1}, \beta_{K-1}),
       (T_K, \identity_{\R^{\overline{k}}}),
       (U_1, \gamma_1),
       \dots,
       (U_{\ell-1}, \gamma_{\ell-1}),
       (S_{L+1} \circ U_\ell, \identity_{\R^k})
     \big).
\]
To conclude, we observe that
\(
  L(\Phi) = K + \ell \leq 1 + (L-1)\ell + \ell = 1 + \big( (L+1) - 1 \big) \ell
\),
as well as
\begin{align*}
  W(\Phi)
  & = W(\Phi_1)
      + \big( W(\Phi_2) - \|U_\ell\|_{\ell^0} \big)
      + \|S_{L+1} \circ U_\ell\|_{\ell^0} \\
  ({\scriptstyle{\text{Lemma}~\ref{lem:MatrixSparsityForComposition}}})
  & \leq m \overline{W}
         + \overline{N} w
         + W(\Phi_2)
         + \|S_{L+1}\|_{\ell^0} \cdot \|U_\ell\|_{\ell^{0,\infty}_\ast} \\
  & \leq m \overline{W} + \overline{N} w + \overline{k} w + m \cdot \|S_{L+1}\|_{\ell^0}
    \leq m W + N w.
\end{align*}
Finally, we also have
\(
  N(\Phi)
  = N(\Phi_1) + \overline{k} + N(\Phi_2)
  \leq \overline{N} (1 + m) + \overline{k} + \overline{k} \cdot m
  =    (\overline{N} + \overline{k}) (1 + m)
  \leq N (1+m)
\).
\end{proof}

\subsection{Proof of Lemma~\ref{lem:NestednessBasic}}
\label{app:NestednessBasic}

Let
\(
  \Psi
  = \big( (S_{1},\alpha_{1}),\ldots, (S_{K-1}, \alpha_{K-1}), (S_K,\identity_{\R^{k}}) \big)
  \in \NNsymbol^{\sigma,d,k}_{W,L,N}
\)
be arbitrary and ${g = \Realization(\Psi)}$.
We prove that there is some $\Phi \in \NNsymbol^{\varrho,d,k}_{W+(s-1)N,1+s(L-1),sN}$ such that
$g = \Realization(\Phi)$.
This is easy to see if $s=1$ or $K=1$; hence we now assume $K \geq 2$ and $s \geq 2$.
Denoting by $N_{\ell}$ the number of neurons at the $\ell$-th layer of $\Psi$,
for $1 \leq \ell \leq K-1$ we have
$\alpha_{\ell} = \alpha_{\ell}^{(1)} \otimes \ldots \otimes \alpha_{\ell}^{(N_{\ell})}$
where $\alpha_{\ell}^{(i)} \in \{\identity_{\R},\sigma\}$.
For $1 \leq \ell \leq L-1$, $1 \leq j \leq K_{\ell}$, $1 \leq i \leq s$, define
\[
  \beta_{s(\ell-1)+i}^{(j)}
  := \begin{cases}
       \varrho,        & \text{if } \alpha_{\ell}^{(j)} = \sigma, \\
       \identity_{\R}, & \text{otherwise}
     \end{cases}
\]
and let $\beta_{s(\ell-1)+i}
         := \beta_{s(\ell-1)+i}^{(1)} \otimes \ldots \otimes \beta_{s(\ell-1)+i}^{(N_{\ell})}$.
Define also $T_{s(\ell-1)+1} := S_{\ell}: \R^{N_{\ell-1}} \to \R^{N_{\ell}}$
and $T_{s(\ell-1)+i} := \identity_{\R^{N_{\ell}}}$ for $2 \leq i \leq s$.
It is painless to check that
\begin{align*}
  \alpha_{\ell} \circ S_{\ell}
  &=  \beta_{s(\ell-1)+s}
      \circ T_{s(\ell-1)+s}
      \circ \cdots
      \circ \beta_{s(\ell-1)+2}
      \circ T_{s(\ell-1)+2}
      \circ \beta_{s(\ell-1)+1}
      \circ T_{s(\ell-1)+1} \\
  &=  \beta_{s\ell}
      \circ T_{s\ell}
      \circ \cdots
      \circ \beta_{s(\ell-1)+1}
      \circ T_{s(\ell-1)+1} \, ,
\end{align*}
and hence
\[
  g
  = S_{K} \circ \alpha_{K-1} \circ S_{K-1} \circ \cdots \circ \alpha_{1} \circ S_{1}
  = S_{K} \circ \beta_{s(K-1)} \circ T_{s(K-1)} \circ \cdots \circ \beta_{1} \circ T_{1} \, .
\]
That is to say, $g = \Realization(\Phi)$ with
\[
  \Phi
  := \big( (T_{1},\beta_{1}),\ldots,(T_{s(K-1)},\beta_{s(K-1)}),(S_{K},\identity_{\R^{k}}) \big)
  \in \NNsymbol^{\varrho,d,k}_{W',1+s(K-1),sN}
  \subset \NNsymbol^{\varrho,d,k}_{W', 1 + s(L-1), sN} ,
\]
where we compute
\begin{align*}
  W'
  &:=  \| S_{K} \|_{\ell^{0}}
       + \sum_{j =1}^{s(K-1)}
           \| T_{j} \|_{\ell^{0}}
  = \|S_{K}\|_{\ell^{0}}
    + \sum_{\ell=1}^{K-1}
        \sum_{i=1}^{s}
          \|
            T_{s(\ell-1)+i}
          \|_{\ell^{0}} \\
  &=  \| S_{K} \|_{\ell^{0}}
      + \sum_{\ell=1}^{K-1}
          \Big(
            \| T_{s(\ell-1)+1} \|_{\ell^{0}}
            + \sum_{i=2}^{s}
                \| T_{s(\ell-1)+i} \|_{\ell^{0}}
          \Big) \\
  &=  \| S_{K} \|_{\ell^{0}}
      + \sum_{\ell=1}^{K-1}
          \left(
            \|S_\ell\|_{\ell^{0}}
            + (s-1) N_{\ell}
          \right)
   =  \sum_{\ell=1}^{K}
        \|S_\ell\|_{\ell^{0}}
      + (s-1) \sum_{\ell=1}^{K-1}
                N_{\ell} \\
  & =  W(\Psi) + (s-1) N(\Psi) \leq W+(s-1)N \, .
\end{align*}
We conclude as claimed that $\Phi \in \NNsymbol^{\varrho,d,k}_{W+(s-1)N,1+s(L-1),sN}$.
Finally, if $\Psi$ is strict, then so is $\Phi$.
\hfill$\square$

\subsection{Proof of Lemma~\ref{lem:Recursivity}}
\label{app:Recursivity}

For $f \in \NNreal^{\sigma,d,k}_{W,L,N}$ there is
\(
  \Phi
  = \big(
      (S_{1},\alpha_{1}),\ldots,(S_{L'},\alpha_{L'})
    \big)
  \in \NNsymbol^{\sigma,d,k}_{W,L',N}
\)
with $L(\Phi) = L' \leq L$ and such that $f = \Realization(\Phi)$.
Replace each occurrence of the activation function $\sigma$ by $\sigma_{h}$
in the nonlinearities $\alpha_{j}$ to define a $\sigma_{h}$-network
$\Phi_{h}
 := \big( (S_{1},\alpha_{1}^{(h)}),\ldots,(S_{L'},\alpha_{L'}^{(h)}) \big)
 \in  \NNsymbol^{\sigma_{h},d,k}_{W,L',N}$ and its realization
$f_{h} := \Realization(\Phi_{h})\in \NNreal^{\sigma_{h},d,k}_{W,L',N}$.
Since $\sigma$ is continuous and $\sigma_{h} \to \sigma$
locally uniformly on $\R$ as $h \to 0$, we get by Lemma~\ref{lem:LocallyUniformComposition}
(which is proved independently below) that $f_{h} \to f$ locally uniformly on $\R^{d}$.
To conclude for $\ell=2$ observe that $\sigma_{h} = \Realization(\Psi_{h})$
with $\Psi_{h} \in \NNsymbol^{\varrho,1,1}_{w,\ell,m}$ and $L(\Psi_{h}) = \ell$,
whence Lemma~\ref{lem:RecursiveNNsets} yields
\[
  f_{h} \in \NNreal^{\sigma_{h},d,k}_{W,L',N}
  \subset \NNreal^{\varrho,d,k}_{Wm^{2},L',Nm}
  \subset \NNreal^{\varrho,d,k}_{Wm^{2},L,Nm} \, .
\]
For arbitrary $\ell$ we similarly conclude that
\begin{flalign*}
  && f_{h} \in \NNreal^{\sigma_{h},d,k}_{W,L',N}
  \subset \NNreal^{\varrho,d,k}_{W+Nw,1+(L'-1)(\ell+1),N(2+m)}
  \subset \NNreal^{\varrho,d,k}_{W+Nw,1+(L-1)(\ell+1),N(2+m)} \, .
  && \square
\end{flalign*}

\subsection{Proof of Lemmas~\ref{lem:GeneralizedVSStrict} and \ref{lem:IdentityRectifierForFree}}
\label{app:PfGeneralizedVSStrict}

In this section, we provide a unified proof for Lemmas~\ref{lem:GeneralizedVSStrict}
and \ref{lem:IdentityRectifierForFree}.
To be able to handle both claims simultaneously, the following concept will be important.

\begin{defn}\label{def:NetworkCompatibleTopology}
  For each $d,k \in \N$, let us fix a subset $\mathcal{G}_{d,k} \subset \{ f : \R^d \to \R^k \}$
  and a topology $\CalT_{d,k}$ on the space of all functions $f : \R^d \to \R^k$.
  Let $\CalG := (\CalG_{d,k})_{d,k \in \N}$ and $\CalT := (\CalT_{d,k})_{d,k \in \N}$.
  The tuple $(\CalG,\CalT)$ is called a \emph{network compatible topology family}
  if it satisfies the following:
  \begin{enumerate}
    \item We have $\{ T : \R^d \to \R^k \,\mid\, T \text{ affine-linear} \} \subset \CalG_{d,k}$
          for all $d,k \in \N$.

    \item If $p \in \N$ and for each $i \in \FirstN{p}$, we are given a sequence
          $(f_i^{(n)})_{n \in \N_0}$ of functions $f_i^{(n)} : \R \to \R$
          satisfying $f_i^{(0)} \in \CalG_{1,1}$ and
          $f_i^{(n)} \xrightarrow[n\to\infty]{\CalT_{1,1}} f_i^{(0)}$, then
          \(
            f_1^{(n)} \otimes \cdots \otimes f_p^{(n)}
            \xrightarrow[n\to\infty]{\CalT_{p,p}}
            f_1^{(0)} \otimes \cdots \otimes f_p^{(0)}
          \)
          and $f_1^{(0)} \otimes \cdots \otimes f_p^{(0)} \in \CalG_{p,p}$.

    \item If $f_n : \R^d \to \R^k$ and $g_n : \R^k \to \R^\ell$ for all $n \in \N_0$
          and if $f_0 \in \CalG_{d,k}$ and $g_0 \in \CalG_{k,\ell}$ as well as
          $f_n \xrightarrow[n\to\infty]{\CalT_{d,k}} f_0$
          and $g_n \xrightarrow[n\to\infty]{\CalT_{k,\ell}} g_0$,
          then $g_0 \circ f_0 \in \CalG_{d,\ell}$ and
          $g_n \circ f_n \xrightarrow[n\to\infty]{\CalT_{d,\ell}} g_0 \circ f_0$.
          \qedhere
  \end{enumerate}
\end{defn}

\begin{rem*}
  Roughly speaking, the above definition introduces certain topologies $\CalT_{d,k}$
  and certain sets of ``good functions'' $\CalG_{d,k}$ such that---for limit functions
  that are ``good''---convergence in the topology is compatible
  with taking $\otimes$-products and with composition.

  By induction, it is easy to see that if $p \in \N$ and if for each $i \in \FirstN{p}$
  we are given a sequence $(f_i^{(n)})_{n \in \N}$ with $f_i^{(n)} : \R^{d_{i-1}} \to \R^{d_i}$
  and $f_i^{(0)} \in \CalG_{d_{i-1}, d_i}$ as well as
  $f_i^{(n)} \xrightarrow[n\to\infty]{\CalT_{d_{i-1},d_i}} f_i^{(0)}$, then also
  $f_p^{(0)} \circ \cdots \circ f_1^{(0)} \in \CalG_{d_0, d_p}$, as well as
  \(
    f_p^{(n)} \circ \cdots \circ f_1^{(0)}
    \xrightarrow[n\to\infty]{\CalT_{d_0,d_p}} f_p^{(0)} \circ \cdots \circ f_1^{(0)}
  \).
  Indeed, the base case of the induction is contained in Definition~\ref{def:NetworkCompatibleTopology}.
  Now, assuming that the claim holds for $p \in \N$, we prove it for $p+1$.
  To this end, let $F_1^{(n)} := f_p^{(n)} \circ \cdots \circ f_1^{(n)}$
  and $F_2^{(n)} := f_{p+1}^{(n)}$.
  By induction, we know $F_1^{(0)} \in \CalG_{d_0, d_p}$ and
  $F_1^{(n)} \xrightarrow[n\to\infty]{\CalT_{d_0,d_p}} F_1^{(0)}$.
  Since also $F_2^{(0)} = f_{p+1}^{(0)} \in \CalG_{d_p, d_{p+1}}$,
  Definition~\ref{def:NetworkCompatibleTopology} implies
  $F_2^{(0)} \circ F_1^{(0)} \in \CalG_{d_0, d_{p+1}}$ and
  $F_2^{(n)} \circ F_1^{(n)} \xrightarrow[n\to\infty]{\CalT_{d_0, d_{p+1}}} F_2^{(0)} \circ F_1^{(0)}$,
  which is precisely the claim for $p+1$ instead of $p$.
\end{rem*}

We now have the following important result:

\begin{prop}\label{prop:NetworkCompatibleTopologyClosure}
  Let $\varrho : \R \to \R$, and let $(\CalG, \CalT)$ be a network compatible topology family
  satisfying the following
  \begin{itemize}
    \item $\varrho \in \CalG_{1,1}$;

    \item There is some $n \in \N$ such that for each $m \in \N$ there are affine-linear maps
          $E_m : \R \to \R^n$ and $D_m : \R^n \to \R$ such that
          $F_m := D_m \circ (\varrho \otimes \cdots \otimes \varrho) \circ E_m : \R \to \R$
          satisfies $F_m \xrightarrow[m\to\infty]{\CalT_{1,1}} \identity_{\R}$.
  \end{itemize}
  Then we have for arbitrary $d,k \in \N$, $W,N \in \N_0 \cup \{\infty\}$
  and $L \in \N \cup \{\infty\}$ the inclusion
  \[
    \NNreal^{\varrho,d,k}_{W,L,N}
    \subset \overline{\SNNreal^{\varrho,d,k}_{n^2 W, L, n N}} ,
  \]
  where the closure is a sequential closure which is taken with respect to the topology $\CalT_{d,k}$.
\end{prop}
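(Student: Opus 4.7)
The strategy is to fix $\Phi = \big( (T_1, \alpha_1), \ldots, (T_L, \alpha_L) \big) \in \NNsymbol^{\varrho,d,k}_{W,L,N}$ with $\alpha_\ell = \bigotimes_{j=1}^{N_\ell} \varrho_j^{(\ell)}$, $\varrho_j^{(\ell)} \in \{\identity_{\R}, \varrho\}$, and, for each $m \in \N$, to construct a strict $\varrho$-network $\Phi_m \in \SNNsymbol^{\varrho,d,k}_{n^2 W, L, nN}$ obtained from $\Phi$ by replacing every $\identity_{\R}$-activated hidden neuron by the surrogate $F_m = D_m \circ (\varrho \otimes \cdots \otimes \varrho) \circ E_m$, and then to invoke the topology axioms to conclude $\Realization(\Phi_m) \xrightarrow[m \to \infty]{\CalT_{d,k}} \Realization(\Phi)$. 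The point of the construction is that, because $F_m$ only adds one extra ``affine--activation--affine'' sandwich around a single layer of $\varrho$'s, the outer affine maps $E_m, D_m$ can be absorbed into the pre- and post-affine layers $T_\ell$ of the original network, which keeps the depth at $L$.

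Concretely, for each $1 \leq \ell < L$ I would introduce the approximating activation
\[
  \tilde\alpha_\ell^{(m)} := \bigotimes_{j=1}^{N_\ell} \beta_j^{(\ell,m)},
  \qquad
  \beta_j^{(\ell,m)} := \begin{cases} \varrho, & \text{if } \varrho_j^{(\ell)} = \varrho, \\ F_m, & \text{if } \varrho_j^{(\ell)} = \identity_{\R}, \end{cases}
\]
and factor $\tilde\alpha_\ell^{(m)} = \mathcal{D}_\ell^{(m)} \circ \big( \varrho \otimes \cdots \otimes \varrho \big) \circ \mathcal{E}_\ell^{(m)}$ by contributing one neuron per index $j$ with $\varrho_j^{(\ell)} = \varrho$ (identity maps on both sides) and a block of $n$ neurons carrying $\varrho(E_m(z_j))$ with $D_m$ reading them out per index $j$ with $\varrho_j^{(\ell)} = \identity_{\R}$. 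This yields intermediate widths $M_\ell \leq n N_\ell$. Setting $S_1^{(m)} := \mathcal{E}_1^{(m)} \circ T_1$, $S_\ell^{(m)} := \mathcal{E}_\ell^{(m)} \circ T_\ell \circ \mathcal{D}_{\ell-1}^{(m)}$ for $2 \leq \ell < L$, and $S_L^{(m)} := T_L \circ \mathcal{D}_{L-1}^{(m)}$, the network $\Phi_m := \big( (S_1^{(m)}, \varrho \otimes \cdots \otimes \varrho), \ldots, (S_{L-1}^{(m)}, \varrho \otimes \cdots \otimes \varrho), (S_L^{(m)}, \identity_{\R^k}) \big)$ is strict, of depth $L$, with at most $nN$ hidden neurons, and satisfies $\Realization(\Phi_m) = \alpha_L \circ T_L \circ \tilde\alpha_{L-1}^{(m)} \circ T_{L-1} \circ \cdots \circ \tilde\alpha_1^{(m)} \circ T_1$. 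Since each input to $\mathcal{E}_\ell^{(m)}$ reaches at most $n$ outputs and each output of $\mathcal{D}_\ell^{(m)}$ depends on at most $n$ inputs, one has $\|\mathcal{E}_\ell^{(m)}\|_{\ell^{0,\infty}} \leq n$ and $\|\mathcal{D}_\ell^{(m)}\|_{\ell^{0,\infty}_{\ast}} \leq n$; Lemma~\ref{lem:MatrixSparsityForComposition} then yields $\|S_\ell^{(m)}\|_{\ell^0} \leq n^2 \|T_\ell\|_{\ell^0}$ for every $\ell$, whence $W(\Phi_m) \leq n^2 W$.

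For the convergence step, $\varrho \in \CalG_{1,1}$ by hypothesis and $\identity_{\R} \in \CalG_{1,1}$ by Property~1, so Property~2 applied to the factors $\beta_j^{(\ell,m)} \to \varrho_j^{(\ell)}$ gives $\tilde\alpha_\ell^{(m)} \to \alpha_\ell$ in $\CalT_{N_\ell, N_\ell}$ with $\alpha_\ell \in \CalG_{N_\ell, N_\ell}$. A short induction on $\ell$ using Property~3, with constant sequences on the affine factors, shows that every partial composition $\alpha_\ell \circ T_\ell \circ \cdots \circ \alpha_1 \circ T_1$ lies in $\CalG$, and repeated application of Property~3 then propagates the convergence $\tilde\alpha_\ell^{(m)} \to \alpha_\ell$ through the whole chain, yielding $\Realization(\Phi_m) \to \Realization(\Phi)$ in $\CalT_{d,k}$. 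The main obstacle I anticipate is combinatorial bookkeeping: arranging the expansion--contraction decomposition of each $\tilde\alpha_\ell^{(m)}$ so that \emph{both} the depth bound $L(\Phi_m) = L$ \emph{and} the sparsity bound $W(\Phi_m) \leq n^2 W$ survive the absorption of $\mathcal{E}_\ell^{(m)}$ and $\mathcal{D}_{\ell-1}^{(m)}$ into the adjacent $T_\ell$'s; this is precisely what forces the factor $n^2$, which is the product of the two $\ell^{0,\infty}$-type norms bracketing each $T_\ell$.
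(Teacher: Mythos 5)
Your proposal matches the paper's own proof essentially step for step: you replace each $\identity_{\R}$-activated hidden neuron by the surrogate $F_m = D_m \circ (\varrho\otimes\cdots\otimes\varrho)\circ E_m$, factor the resulting nonlinearity as an expansion--$\varrho$-block--contraction, absorb the expansion/contraction maps into the adjacent affine layers, bound the weights via the $\ell^{0,\infty}$ and $\ell^{0,\infty}_\ast$ norms using Lemma~\ref{lem:MatrixSparsityForComposition}, and propagate the convergence through the composition with the axioms of the network compatible topology family (your $\mathcal{E}_\ell^{(m)},\mathcal{D}_\ell^{(m)}$ are the paper's $P_\ell^{(m)},Q_\ell^{(m)}$). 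The only omission is the trivial edge case $L(\Phi)=1$, where your indexing for $S_1^{(m)}$ breaks down; the paper treats this separately by observing that a depth-$1$ network is already strict.
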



\begin{rem*}
  Before we give the proof of Proposition~\ref{prop:NetworkCompatibleTopologyClosure},
  we explain a convention that will be used in the proof.
  Precisely, in the definition of $W(\Phi)$, we always assume that the affine-linear maps
  $T_\ell$ are of the form $T_\ell : \R^{N_{\ell - 1}} \to \R^{N_\ell}$.
  Clearly, the expressivity of networks will not change if instead of the
  spaces $\R^{N_1},\dots, \R^{N_{L - 1}}$, one uses finite-dimensional
  vector spaces $V_1, \dots, V_{L-1}$ with $\dim V_i = N_i$.
  The only nontrivial question is the interpretation of $\|T_\ell\|_{\ell^0}$
  for an affine-linear map $T_\ell : V_{\ell - 1} \to V_\ell$, since for the
  case of $\R^{N_{\ell}}$, we chose the standard basis for obtaining the
  matrix representation of $T_\ell$, while for general vector spaces $V_\ell$,
  there is no such canonical choice of basis.
  Yet, in the proof below, we will consider the case
  $V_\ell = \R^{n_1} \times \cdots \times \R^{n_{m}}$.
  In this case, there is a canonical way of identifying $V_\ell$ with
  $\R^{N_\ell}$ for $N_\ell = \sum_{j=1}^m n_j$, and there is also
  a canonical choice of ``standard basis'' in the space $V_\ell$.
  We will use this convention in the proof below to simplify the notation.
\end{rem*}

\begin{proof}[Proof of Proposition~\ref{prop:NetworkCompatibleTopologyClosure}]
Let $\Phi \in \NNsymbol_{W,L,N}^{\varrho,d,k}$.
We will construct a sequence $(\Phi_m)_{m \in \N} \subset \SNNsymbol_{n^2 W, L, nN}^{\varrho,d,k}$
satisfying $\Realization(\Phi_m) \xrightarrow[m\to\infty]{\CalT_{d,k}} \Realization(\Phi)$.
To this end, note that $\Phi  = \big( (T_1, \alpha_1), \dots, (T_K, \alpha_K) \big)$
for some $K \leq L$ and that there are $N_0, \dots, N_K \in \N$
(with $N_0 = d$ and $N_K = k$) such that $T_\ell : \R^{N_{\ell - 1}} \to \R^{N_\ell}$
is affine-linear for each $\ell \in \FirstN{K}$.

Let us first consider the special case $K = 1$.
By definition of a neural network, we have $\alpha_K = \identity_{\R^k}$,
so that $\Phi$ is already a \emph{strict} $\varrho$-network.
Therefore, we can choose
\(
  \Phi_m
  :=      \Phi \in \SNNsymbol_{W,L,N}^{\varrho,d,k}
  \subset \SNNsymbol_{n^2 W, L, nN}^{\varrho,d,k}
\)
for all $m \in \N$.

From now on we assume $K \geq 2$.
For brevity, set $\varrho_1 := \varrho$ and $\varrho_2 := \identity_{\R}$,
as well as $D(1) := 1$ and $D(2) := n$, and furthermore
\begin{alignat*}{3}
  & E_1^{(m)} \! := \! \identity_{\R} : \R\to \R^{D(1)}
  \quad \text{and} \quad
  & E_2^{(m)} \! := \! E_m : \R \to \R^{D(2)} \, , \\
  \text{as well as} \quad
  & D_1^{(m)} \! := \! \identity_{\R} : \R^{D(1)} \to \R
  \quad \text{and} \quad
  & D_2^{(m)} \! := \! D_m : \R^{D(2)} \to \R \, .
\end{alignat*}
By definition of a generalized $\varrho$-network, for each
$\ell \in \FirstN{K}$ there are
$\iota_1^{(\ell)}, \dots, \iota_{N_\ell}^{(\ell)} \in \{1,2\}$ with
$\alpha_\ell= \varrho_{\iota_1^{(\ell)}} \otimes \cdots \otimes
\varrho_{\iota_{N_\ell}^{(\ell)}}$, and with $\iota_j^{(K)} = 2$ for all
$j \in \FirstN{N_K}$. 
Now, define $V_0 := \R^d =\R^{N_{0}}$, $V_K := \R^k = \R^{N_K}$, and
\[
  V_\ell
  :=    \R^{D(\iota_1^{(\ell)})} \times \cdots \times \R^{D(\iota_{N_\ell}^{(\ell)})}
  \cong \R^{\sum_{i=1}^{N_{\ell}} D(\iota_{i}^{(\ell)})}
  \quad \text{for} \quad 1 \leq \ell \leq K-1.
\]
Since we eventually want to obtain strict networks $\Phi_m$, furthermore set
\[
  \beta^{(1)} := \varrho : \R^{D(1)} \to \R^{D(1)}
  \qquad \text{and} \qquad
  \beta^{(2)} := \varrho \otimes \cdots \otimes \varrho : \R^{D(2)} \to \R^{D(2)} .
\]
Using these maps, finally define 
$\beta_K := \identity_{\R^k}$, as well as
\[
  \beta_\ell
  := \beta^{(\iota_1^{(\ell)})}
     \otimes \cdots \otimes
     \beta^{(\iota_{N_\ell}^{(\ell)})} : V_{\ell} \to V_{\ell}
  \quad \text{for} \quad 1 \leq \ell \leq K-1 \, .
\]
Finally, for $\ell \in \FirstN{K}$ and $m \in \N$, define affine-linear maps
\begin{align*}
   P_\ell^{(m)}
   := E_{\iota_1^{(\ell)}}^{(m)}
    \otimes \cdots \otimes
    E_{\iota_{N_\ell}^{(\ell)}}^{(m)}
  : \R^{N_{\ell}} \to V_\ell 
  \qquad \text{and} \qquad
  Q_\ell^{(m)}
   := D_{\iota_1^{(\ell)}}^{(m)}
      \otimes \cdots \otimes
      D_{\iota_{N_\ell}^{(\ell)}}^{(m)}
   : V_\ell \to \R^{N_\ell} \, .
\end{align*}

The crucial observation is that by assumption regarding the maps $D_m, E_m$, we have
\begin{equation}
  \begin{split}
    & D_2^{(m)} \circ \beta^{(2)} \circ E_2^{(m)}
      = F_m
      \xrightarrow[m \to \infty]{\CalT_{1,1}} \identity_{\R}
      = \varrho_2 , \\
    \text{and} \quad
    & D_1^{(m)} \circ \beta^{(1)} \circ E_1^{(m)}
      = \identity_{\R} \circ \varrho \circ \identity_{\R}
      = \varrho
      = \varrho_1 \, .
  \end{split}
  \label{eq:StrictGeneralizedBoundedDomainMainIdentity}
\end{equation}

Finally, for the construction of the strict networks $\Phi_m$, we define for $m \in \N$
\begin{alignat*}{6}
  && S_1^{(m)}    & := P_1^{(m)} \circ T_1
       & & :  \R^d = \R^{N_{0}} = V_0 && \to V_1, \\
  && S_K^{(m)}    & := T_K \circ  Q_{K-1}^{(m)}
       & & : V_{K-1} && \to \R^{N_K} = \R^k = V_K , \\
  \text{and} \quad
  && S_\ell^{(m)} & := P_\ell^{(m)} \circ T_\ell \circ Q_{\ell-1}^{(m)}
         & &  :  V_{\ell-1} && \to V_\ell
         \qquad \qquad \qquad \qquad \qquad
         \text{for } 2 \leq \ell \leq K - 1 \, ,
\end{alignat*}
and then set $\Phi_m := \big( (S_1^{(m)}, \beta_1), \dots, (S_K^{(m)}, \beta_K) \big)$.
Because of $D(\iota_{i^{(\ell)}}) \in \{1,n\}$, we obtain
\[
  N(\Phi_m)
  = \sum_{\ell = 1}^{K-1} \dim V_\ell
  = \sum_{\ell=1}^{K-1} \sum_{i=1}^{N_\ell} D(\iota_i^{(\ell)})
  \leq \sum_{\ell=1}^{K-1} n N_\ell
  =    n N(\Phi) \leq n N \, .
\]
Furthermore, by the second part of Lemma~\ref{lem:MatrixSparsityForComposition}
and in view of the product structure of $P_\ell^{(m)}$, we have
\[
  \| P_\ell^{(m)} \|_{\ell^{0,\infty}}
  \leq \max \big\{
                \|E^{(m)}_1\|_{\ell^{0,\infty}},
                \|E^{(m)}_2\|_{\ell^{0,\infty}}
            \big\}
  \leq \max \{ D(1), D(2) \}
  \leq n ,
\]
for arbitrary $\ell \in \FirstN{K}$, simply because $E^{(m)}_j : \R \to \R^{D(j)}$ for $j \in \{1,2\}$.
Likewise,
\[
  \| Q_{\ell}^{(m)} \|_{\ell^{0,\infty}_\ast}
  \leq \max \big\{
                  \| D^{(m)}_1 \|_{\ell^{0,\infty}_{\ast}},
                  \| D^{(m)}_2 \|_{\ell^{0,\infty}_{\ast}}
            \big\}
  \leq \max \{ D(1), D(2) \}
  \leq n ,
\]
because $D_j^{(m)} : \R^{D(j)} \to \R$ for $j \in \{1,2\}$.
By the first part of Lemma~\ref{lem:MatrixSparsityForComposition},
we thus see for $2 \leq \ell \leq K - 1$ that
\[
  \| S_\ell^{(m)} \|_{\ell^0}
  \leq \| P_\ell^{(m)}\|_{\ell^{0,\infty}}
       \cdot \| T_\ell \|_{\ell^0}
       \cdot \| Q_{\ell - 1}^{(m)} \|_{\ell^{0,\infty}_\ast}
  \leq n^2 \cdot \| T_\ell \|_{\ell^0} \, .
\]
Similar arguments yield
$\|S_1^{(m)}\|_{\ell^0} \leq n \cdot \|T_1\|_{\ell^0} \leq n^2 \cdot \|T_1\|_{\ell^0}$
and $\|S_K^{(m)}\|_{\ell^0} \leq n \cdot \|T_K\|_{\ell^0} \leq n^2 \cdot \|T_K\|_{\ell^0}$.
All in all, this implies $W(\Phi_m) \leq n^2 \cdot W(\Phi) \leq n^2 W$, as desired.

Now, since $\varrho_1 = \varrho \in \CalG_{1,1}$ by the assumptions of the current proposition,
since $\varrho_2 = \identity_{\R} \in \CalG_{1,1}$ as an affine-linear map,
and since $(\CalG,\CalT)$ is a network compatible topology family,
we see for all $1 \leq \ell \leq K - 1$ that
\(
  \alpha_\ell
  = \varrho_{\iota_1^{(\ell)}}
    \otimes \cdots \otimes
    \varrho_{\iota_{N_\ell}^{(\ell)}} \in \CalG_{N_\ell,N_\ell}
\)
and furthermore that
\begin{equation}
  \begin{split}
    Q_\ell^{(m)} \circ \beta_\ell \circ P_\ell^{(m)}
    & = \left(
          D_{\iota_1^{(\ell)}}^{(m)}
          \circ \beta^{(\iota_1^{(\ell)})}
          \circ E_{\iota_1^{(\ell)}}^{(m)}
      \right)
      \otimes \cdots \otimes
      \left(
          D_{\iota_{N_\ell}^{(\ell)}}^{(m)}
          \circ \beta^{(\iota_{N_\ell}^{(\ell)})}
          \circ E_{\iota_{N_\ell}^{(\ell)}}^{(m)}
      \right) \\
    ({\scriptstyle{\text{Eq.}~\eqref{eq:StrictGeneralizedBoundedDomainMainIdentity}
                   \text{ and compatibility of } (\CalG,\CalT) \text{ with $\otimes$}}})
    & \xrightarrow[m\to\infty]{\CalT_{N_\ell, N_\ell}}
          \varrho_{\iota_1^{(\ell)}}
          \otimes \cdots \otimes
          \varrho_{\iota_{N_\ell}^{(\ell)}}
    =   \alpha_\ell \, .
  \end{split}
  \label{eq:ApproximateIdentityRepresentation}
\end{equation}

Finally, since $\beta_K = \identity_{\R^k} = \alpha_K \in \CalG_{k, k}$,
and since $(\CalG,\CalT)$ is a network compatible topology family
and thus compatible with compositions (as long as the ``factors'' of the limit are ``good'',
which is satisfied here, since $\alpha_\ell \in \CalG_{N_\ell, N_\ell}$ as we just
saw and since $T_\ell \in \CalG_{N_{\ell - 1}, N_\ell}$ as an affine-linear map), we see that
\begin{align*}
   \Realization (\Phi_m)
  & = \beta_K \circ S_K^{(m)} \circ \cdots \circ \beta_1 \circ S_1^{(m)} \\
  & = \alpha_K \circ T_K
      \circ (Q_{K-1}^{(m)} \circ \beta_{K-1} \circ P_{K-1}^{(m)})
      \circ T_{K-1}
      \circ \cdots
      \circ (Q_1^{(m)} \circ \beta_1 \circ P_1^{(m)})
      \circ T_1 \\
  ({\scriptstyle{\text{Eq.}~\eqref{eq:ApproximateIdentityRepresentation}}})
  & \xrightarrow[m\to\infty]{\CalT_{d,k}}
        \alpha_K \circ T_K
        \circ \alpha_{K-1} \circ T_{K-1}
        \circ \cdots
        \circ \alpha_1 \circ T_1
    =   \Realization (\Phi) \, ,
\end{align*}
and hence $\Realization(\Phi) \in \overline{\SNNreal^{\varrho,d,k}_{n^2 W, L, n N}}$.
\end{proof}

Now, we use Proposition~\ref{prop:NetworkCompatibleTopologyClosure} to prove
Lemma~\ref{lem:IdentityRectifierForFree}.

\begin{proof}[Proof of Lemma~\ref{lem:IdentityRectifierForFree}]
  For $d,k \in \N$, let $\CalG_{d,k} := \{ f : \R^d \to \R^k \}$, and let
  $\CalT_{d,k} = 2^{\CalG_{d,k}}$ be the discrete topology on the set $\{ f : \R^d \to \R^k \}$.
  This means that every set is open, so that the only convergent sequences are those
  that are eventually constant.
  It is easy to see that $(\CalG,\CalT)$ is a network compatible topology family and
  $\varrho \in \CalG_{1,1}$.

  Finally, by assumption of Lemma~\ref{lem:IdentityRectifierForFree}, there are
  $a_i, b_i, c_i \in \R$ for $i \in \FirstN{n}$ and some $c \in \R$ such that
  $x = c + \sum_{i=1}^n a_i \, \varrho(b_i \, x + c_i)$ for all $x \in \R$.
  If we define $E_m : \R \to \R^n, x \mapsto (b_1 \, x + c_1, \dots, b_n \, x + c_n)$
  and $D_m : \R^n \to \R, y \mapsto c + \sum_{i=1}^n a_i \, y_i$, then $E_m, D_m$ are affine-linear,
  and $\identity_{\R} = D_m \circ (\varrho \otimes \cdots \otimes \varrho) \circ E_m$ for all
  $m \in \N$.
  Thus, all assumptions of Proposition~\ref{prop:NetworkCompatibleTopologyClosure} are satisfied,
  so that this proposition implies
  \(
    \NNreal^{\varrho,d,k}_{W,L,N}
    \subset \overline{\SNNreal_{n^2 W, L, nN}^{\varrho,d,k}}
    =       \SNNreal_{n^2 W, L, nN}^{\varrho,d,k}
  \)
  for all $d,k \in \N$, $W,N \in \N_0 \cup \{\infty\}$ and $L \in \N \cup \{\infty\}$.
  Here, we used that the (sequential) closure of a set $M$ with respect to the discrete topology
  is simply the set $M$ itself.
\end{proof}

Finally, we will use Proposition~\ref{prop:NetworkCompatibleTopologyClosure}
to provide a proof of Lemma~\ref{lem:GeneralizedVSStrict}.
To this end, the following lemma is essential.

\begin{lem}\label{lem:LocallyUniformComposition}
  Let $(f_n)_{n \in \N_0}$ and $(g_n)_{n \in \N_0}$ be sequences of functions
  $f_n : \R^d \to \R^k$ and $g_n : \R^k \to \R^\ell$.
  Assume that $f_0, g_0$ are continuous and that $f_n \xrightarrow[n\to\infty]{} f_0$
  and $g_n \xrightarrow[n\to\infty]{} g_0$ with locally uniform convergence.
  Then $g_0 \circ f_0$ is continuous, and
  $g_n \circ f_n \xrightarrow[n\to\infty]{} g_0 \circ f_0$ with locally uniform convergence.
\end{lem}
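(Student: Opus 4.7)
The plan is to prove the two conclusions separately: continuity of $g_0 \circ f_0$, which is immediate since composition of continuous maps is continuous; and locally uniform convergence, which requires a more careful argument based on the standard $\varepsilon/2$ splitting. I would fix an arbitrary compact set $K \subset \R^d$ and prove that $g_n \circ f_n \to g_0 \circ f_0$ uniformly on $K$, using the triangle inequality
\[
  |g_n(f_n(x)) - g_0(f_0(x))|
  \leq |g_n(f_n(x)) - g_0(f_n(x))|
     + |g_0(f_n(x)) - g_0(f_0(x))|
  \qquad \forall\, x \in K.
\]

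The first key step is to construct a single compact set $K' \subset \R^k$ that contains $f_n(K)$ for all sufficiently large $n$. Since $f_0$ is continuous, $f_0(K)$ is compact; since $f_n \to f_0$ uniformly on $K$, there exists $N_0 \in \N$ such that $\|f_n(x) - f_0(x)\| \leq 1$ for all $x \in K$ and $n \geq N_0$. Then $K' := \{ y \in \R^k : \mathrm{dist}(y, f_0(K)) \leq 1 \}$ is closed and bounded, hence compact, and contains $f_n(K)$ for all $n \geq N_0$, as well as $f_0(K)$ itself.

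With $K'$ in hand, the first term is controlled by uniform convergence of $g_n \to g_0$ on the compact set $K'$: for any $\varepsilon > 0$ and $n$ large enough, $\sup_{y \in K'} |g_n(y) - g_0(y)| < \varepsilon/2$, so the first summand is uniformly bounded by $\varepsilon/2$ on $K$. The second term is controlled by uniform continuity of $g_0$ on $K'$ (valid because $g_0$ is continuous on $\R^k$ and $K'$ is compact): choose $\delta > 0$ such that $y, y' \in K'$ and $\|y - y'\| < \delta$ imply $|g_0(y) - g_0(y')| < \varepsilon/2$, then pick $n$ large enough that $\|f_n(x) - f_0(x)\| < \delta$ uniformly on $K$, noting that both $f_n(x)$ and $f_0(x)$ belong to $K'$ for all large $n$. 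Combining the two bounds yields $\sup_{x \in K} |g_n(f_n(x)) - g_0(f_0(x))| \leq \varepsilon$ for all sufficiently large $n$, which is precisely uniform convergence on $K$.

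The only subtle point is the construction of the uniform compact set $K'$ that eventually contains all images $f_n(K)$ — everything else is a standard application of the triangle inequality together with uniform continuity of $g_0$ on compacta. The argument requires no additional structural assumptions beyond the continuity of the limit functions.
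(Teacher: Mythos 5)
Your proposal is correct and follows essentially the same strategy as the paper: the paper's (sketched) proof also hinges on assembling a single bounded set $K'$ containing $f_0(K)$ and all $f_n(K)$, exploiting the uniform continuity of $g_0$ on (the closure of) $K'$ together with uniform convergence of $g_n \to g_0$ there. The only cosmetic difference is that you take $K'$ to be the closed $1$-neighbourhood of $f_0(K)$ (which is directly compact and absorbs $f_n(K)$ for $n \geq N_0$), whereas the paper uses the union $f_0(K) \cup \bigcup_n f_n(K)$; both work, and your triangle-inequality write-up fills in precisely the "details left to the reader."
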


\begin{proof}
  Locally uniform convergence on $\R^d$ is equivalent to uniform convergence on bounded sets.
  Furthermore, the continuous function $f_0$ is bounded on each bounded set $K \subset \R^d$;
  by uniform convergence, this implies that
  $K' := \{ f(x) \colon x \in K \} \cup \{ f_n (x) \colon n \in \N \text{ and } x \in K \} \subset \R^k$
  is bounded as well.
  Hence, the continuous function $g_0$ is \emph{uniformly} continuous on $K'$.
  From these observations, the claim follows easily; the details are left to the reader.
\end{proof}

Given this auxiliary result, we can now prove Lemma~\ref{lem:GeneralizedVSStrict}.

\begin{proof}[Proof of Lemma~\ref{lem:GeneralizedVSStrict}]
  For $d,k \in \N$, define $\CalG_{d,k} := \{ f : \R^d \to \R^k \,\mid\, f \text{ continuous} \}$,
  and let $\CalT_{d,k}$ denote the topology of locally uniform convergence on
  $\{ f : \R^d \to \R^k \}$.
  We claim that $(\CalG,\CalT)$ is a network compatible topology family.
  Indeed, the first condition in Definition~\ref{def:NetworkCompatibleTopology} is trivial,
  and the third condition holds thanks to Lemma~\ref{lem:LocallyUniformComposition}.
  Finally, it is not hard to see that if $f_i^{(n)} : \R \to \R$ satisfy
  $f_i^{(n)} \to f_i^{(0)}$ locally uniformly for all $i \in \FirstN{p}$, then
  \(
    f_1^{(n)} \otimes \cdots \otimes f_p^{(n)}
    \xrightarrow[n\to\infty]{} f_1^{(0)} \otimes \cdots \otimes f_p^{(0)}
  \)
  locally uniformly.
  This proves the second condition in Definition~\ref{def:NetworkCompatibleTopology}.

  We want to apply Proposition~\ref{prop:NetworkCompatibleTopologyClosure} with $n = 2$.
  We have $\varrho \in \CalG_{1,1}$, since $\varrho$ is continuous by the assumptions
  of Lemma~\ref{lem:GeneralizedVSStrict}.
  Thus, it remains to construct sequences $(E_m)_{m \in \N}, (D_m)_{m \in \N}$
  of affine-linear maps $E_m : \R \to \R^2$ and $D_m : \R^2 \to \R$ such that
  $D_m \circ (\varrho \otimes \varrho) \circ E_m \to \identity_{\R}$ with
  locally uniform convergence.
  Once these are constructed, Proposition~\ref{prop:NetworkCompatibleTopologyClosure} shows that
  $\NNreal^{\varrho,d,k}_{W,L,N} \subset \overline{\SNNreal^{\varrho,d,k}_{4W, L, 2N}}$,
  where the closure is with respect to locally uniform convergence.
  This is precisely what is claimed in Lemma~\ref{lem:GeneralizedVSStrict}.

  To construct $E_m, D_m$, let us set $a := \varrho' (x_0) \neq 0$.
  By definition of the derivative, for arbitrary $m \in \N$ and
  $\varepsilon_m := |a|/m$, there is some $\delta_m > 0$ satisfying
  \begin{equation}
    \left|
        \big( \varrho (x_0 + h) - \varrho (x_0) \big) / h - a
    \right|
    \leq \varepsilon_m = |a| / m 
    \qquad \forall \, \, h \in \R \text{ with } 0 < |h| \leq \delta_m \, .
    \label{eq:DifferentiationDefinition}
  \end{equation}
  Now, define affine-linear maps
  \[
      E_m : \R\to\R^2, x \mapsto \left(
                                    x_0 + m^{-1/2} \cdot \delta_m \cdot x  \,
                                    , \, x_0
                                   \right)^T 
      \quad \text{and} \quad
      D_m : \R^2\to\R,
            (y_1, y_2) \mapsto \sqrt{m} \cdot (y_1 - y_2) / (a \cdot \delta_m) \, ,
  \]
  and set $F_m := D_m \circ (\varrho \otimes \varrho) \circ E_m$.

  Finally, let $x \in \R$ be arbitrary with $0 < |x| \leq \sqrt{m}$, and set
  $h := \delta_m \cdot x / \sqrt{m}$, so that $0 < |h| \leq \delta_m$.
  By multiplying Equation~\eqref{eq:DifferentiationDefinition} with $|h|/|a|$, we then get
  \begin{align*}
      & \left|
            a^{-1} \cdot \big( \varrho (x_0 + h) - \varrho(x_0) \big) - h
        \right|
        \leq \frac{|h|}{m} \\
      ({\scriptstyle{\text{multiply by } \sqrt{m} / \delta_m}})
      \Longrightarrow
      & \left|
          \frac{\sqrt{m}}{a \cdot \delta_m}
          \left(
              \varrho \left( x_0 + \frac{\delta_m \cdot x}{\sqrt{m}} \right)
              - \varrho (x_0)
          \right)
          - x
        \right|
        \leq \frac{|h|}{\delta_m \cdot \sqrt{m}}
        =    \frac{|x|}{m}
        \leq \frac{1}{\sqrt{m}} \, ,
  \end{align*}
  where the last step used that $|x| \leq \sqrt{m}$.
  This estimate is trivially valid for $x = 0$.
  Put differently, we have thus shown $|F_m (x) - x|\leq 1/\sqrt{m}$
  for all $x \in \R$ with $|x| \leq \sqrt{m}$.
  That is, $F_m \xrightarrow[m\to\infty]{} \identity_{\R}$ with locally uniform convergence.
\end{proof}

\subsection{Proof of Lemma~\ref{lem:ReLUPowerRepresentsIdentity}}
\label{app:ReLUPowerRepresentsIdentity}

We will need the following lemma that will also be used elsewhere.
\begin{lem}\label{lem:MonomialShiftSpan}
  For $f : \R \to \R$ and $a \in \R$,
  let $T_a f : \R \to \R, x \mapsto T_a f (x) = f(x-a)$.
  Furthermore, for $n \in \N_0$, let $X^n : \R \to \R, x \mapsto x^n$ and
  $V_n := \mathrm{span} \{T_a X^n \, \colon \, a \in \R\}$, with the convention $X^0 \equiv 1$.

  We have $V_n = \R_{\deg \leq n}[x]$, that is, $V_n$ is the space of all polynomials of degree at most $n$.
\end{lem}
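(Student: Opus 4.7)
The plan is to establish the equality by proving both inclusions, with the reverse inclusion reduced to a dimension count.

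The inclusion $V_n \subset \R_{\deg \leq n}[x]$ is immediate from the binomial theorem: for each $a \in \R$,
\[
  T_a X^n (x) = (x-a)^n = \sum_{k=0}^n \binom{n}{k} (-a)^{n-k} x^k
\]
is a polynomial of degree at most $n$, and $V_n$ is the linear span of such functions, hence a subspace of $\R_{\deg \leq n}[x]$.

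For the reverse inclusion, since $\dim \R_{\deg \leq n}[x] = n+1$, it suffices to exhibit $n+1$ linearly independent elements of $V_n$. I would pick any $n+1$ pairwise distinct reals $a_0,\ldots,a_n$ and claim that the shifted monomials $(x-a_0)^n,\ldots,(x-a_n)^n$ are linearly independent in $\R_{\deg \leq n}[x]$. By the binomial expansion above, the matrix $M \in \R^{(n+1)\times(n+1)}$ of coefficients of these polynomials in the basis $\{1,x,\ldots,x^n\}$ has entries $M_{k,i} = \binom{n}{k}(-a_i)^{n-k}$. Pulling the factor $\binom{n}{k}$ out of row $k$ and the sign $(-1)^{n-k}$ out as well, the remaining determinant is (up to row reordering $k \mapsto n-k$) the Vandermonde determinant in the $a_i$, which equals $\prod_{0 \leq i < j \leq n}(a_j - a_i)$. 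This product is nonzero precisely because the $a_i$ are pairwise distinct, so $\det M \neq 0$ and the shifted monomials are linearly independent.

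Combining the two inclusions with the dimension count yields $V_n = \R_{\deg \leq n}[x]$. There is no serious obstacle here: the argument is elementary linear algebra, and the only mild subtlety is recognizing the Vandermonde structure hidden in $M$ after factoring the binomial coefficients and signs out of the rows.
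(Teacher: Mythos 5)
Your proof is correct and takes essentially the same approach as the paper: both reduce the reverse inclusion to a dimension count and establish linear independence of $n+1$ shifted monomials via the binomial theorem and the nonvanishing of a Vandermonde determinant. The only cosmetic difference is that the paper argues by supposing a vanishing linear combination and showing the resulting Vandermonde system forces the coefficients to vanish, whereas you directly exhibit the coefficient matrix and factor it into a nonsingular Vandermonde matrix.
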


\begin{proof}
  Clearly, $V_n \subset \R_{\deg \leq n} [x] =: V$, where $\dim V = n+1$.
  Therefore, it suffices to show that $V_n$ contains $n+1$ linearly independent elements.
  In fact, we show that whenever $a_1,\dots,a_{n+1} \in \R$ are pairwise distinct,
  then the family $(T_{a_i} X^n)_{i=1,\dots,n+1} \subset V_n$ is linearly independent.

  To see this, suppose that $\theta_1,\dots,\theta_{n+1} \in \R$ are such that
  $0 \equiv \sum_{i=1}^{n+1} \theta_i \, T_{a_i} X^n$.
  A direct computation using the binomial theorem shows that this implies
  \(
    0 \equiv \sum_{\ell=0}^n
             \big[
               \binom{n}{\ell} (-1)^\ell X^{n-\ell}
               \sum_{i=1}^{n+1}
                 \theta_i a_i^\ell
             \big]
  \).
  By comparing the coefficients of $X^t$, this leads to
  $0 = \big( \sum_{i=1}^{n+1} a_i^\ell \, \theta_i \big)_{\ell=0,\dots,n} = A^T \theta$,
  where $\theta = (\theta_1,\dots,\theta_{n+1}) \in \R^n$, and where the \emph{Vandermonde matrix}
  $A := (a_i^j)_{i=1,\dots,n+1, j=0,\dots,n} \in \R^{(n+1) \times (n+1)}$ is invertible;
  see \cite[Equation (4-15)]{HoffmanKunzeLinearAlgebra}.
  Hence, $\theta = 0$, showing that $(T_{a_i} X^n)_{i=1,\dots,n+1}$ is a linearly independent family.
\end{proof}


\begin{proof}[Proof of Lemma~\ref{lem:ReLUPowerRepresentsIdentity}]
  First, note
  \begin{equation}
    \varrho_r (x) + (-1)^r \, \varrho_r (-x)
    = \begin{cases}
        \varrho_r (x) = (x_+)^r = x^r ,                                  & \text{if } x \geq 0 \, \\
        (-1)^r \varrho_r (-x) = (-1)^r [(-x)_+]^r = (-1)^r (-x)^r = x^r, & \text{if } x < 0 \, .
      \end{cases}
    \label{eq:ReLUPowerCanRepresentPower}
  \end{equation}
  Next, Lemma~\ref{lem:MonomialShiftSpan} shows that
  $V_r = \R_{\deg \leq r}[x]$ has dimension $r+1$.
  Thus, given any polynomial ${f \in \R_{\deg \leq r}[x]}$, there are $a_1, \dots, a_{r+1} \in \R$
  and $b_1, \dots, b_{r+1} \in \R$ such that for all $x \in \R$
  \[
    f(x)
     = \sum_{\ell = 1}^{r+1}
          a_\ell \cdot (T_{b_\ell} X^r)(x)
     \stackrel{ \eqref{eq:ReLUPowerCanRepresentPower}}{=}
     \sum_{\ell = 1}^{r+1}
          a_\ell \cdot [\varrho_r (x - b_\ell) + (-1)^r \varrho_r \big( - (x - b_\ell) \big)].
     \qedhere
  \]
\end{proof}

\subsection{Proof of Lemma~\ref{lem:MultNetwork}}
\label{app:MultNetwork}

For Part (1), define $w_{j} := 6n(2^{j}-1)$ and $m_{j} := (2n+1)(2^j-1)-1$.
We will prove below by induction on $j \in \N$ that
$M_{2^{j}} \in \NNreal^{\varrho,2^{j},1}_{w_j,2j,m_j}$.
Let us see first that this implies the result.
For arbitrary $d \in \N_{\geq 2}$ and $j = \lceil \log_{2} d \rceil$ it is not hard to see that
\[
  P: \R^{d} \to \R^{2^{j}}, x \mapsto (x,1_{2^{j}-d}) = (x,0_{2^{j}-d}) + (0_{d},1_{2^{j}-d})
\]
is affine-linear with $\|P\|_{\ell^{0,\infty}_\ast}=1$
(cf.~Equation~\eqref{eq:DefL0MixedNorms}) and that $M_{d} = M_{2^{j}} \circ P$.
Using Lemma~\ref{lem:NetworkCalculus}-(\ref{enu:PrePostAffine}) we get
$M_{d} \in \NNreal^{\varrho,2^{j},1}_{w_j,2j,m_j}$ as claimed.

We now proceed to the induction.
As a preliminary, note that by assumption there are $a \in \R$,
$\alpha_1, \dots, \alpha_n \in \R$ and $\beta_1, \dots, \beta_n \in \R$ such that
for all $x \in \R$
\[
  x^2  = a +  \sum_{\ell = 1}^{n} \beta_\ell \, \varrho (x - \alpha_\ell).
\]
Put differently, the affine-linear maps
$T_1 : \R \to \R^{n}, x \mapsto (x-\alpha_{\ell})_{\ell=1}^{n}$ and
${T_2 : \R^{n} \to \R, y \mapsto a + \sum_{\ell=1}^{n} \beta_{\ell} \, y_{\ell}}$
satisfy $x^2 = T_2 \circ (\varrho \otimes \cdots \otimes \varrho) \circ T_1 (x)$ for all $x \in \R$,
where the $\otimes$-product has $n$ factors.
Since ${x \cdot y = \tfrac{1}{4} \big( (x+y)^2 - (x-y)^2 \big)}$ for all $x,y \in \R$,
if we define the maps ${T_0 : \R^2 \to \R^2, (x,y) \mapsto (x + y, x-y)}$ and
$T_3 : \R^2 \to \R, (u,v) \mapsto \frac{1}{4} (u - v)$, then for all $x,y \in \R$
\vspace{-0.3cm}
\[
  x \cdot y
  = \tfrac{1}{4} \cdot \big( (x+y)^2 - (x-y)^2 \big)
  = \big(
      S_2
      \circ \overbracket{(\varrho \otimes \cdots \otimes \varrho)}^{2n \text{ factors}}
      \circ S_1
    \big) (x,y).
\]
where 
${S_1 := (T_1 \otimes T_1) \circ T_0 : \R^2 \to \R^{2n}}$ and
$S_2 := T_3 \circ (T_2 \otimes T_2) : \R^{2n} \to \R$.
As $\|S_1\|_{\ell^{0}} \leq 4n$ and $\|S_2\|_{\ell^{0}} \leq 2n$
we obtain $M_{2} = \Realization(\Phi_{1})$ where
\(
  \Phi_{1}
  = \big(
      (S_{1}, \varrho \otimes \cdots \otimes \varrho),(S_{2},\identity)
    \big)
  \in \NNsymbol_{6n, 2, 2n}^{\varrho, 2, 1}.
\)
This establishes our induction hypothesis for $j=1$:
$M_{2} \in \SNNreal_{6n, 2, 2n}^{\varrho, 2, 1} \subset \NNreal_{w_j, 2^j, m_j}^{\varrho, 2, 1}$
for $j = 1$.

We proceed to the actual induction step.
Define the affine maps $U_1, U_2 : \R^{2^{j+1}} \to \R^{2^{j}}$ by
\[
  U_1(x) := (x_{1}, \ldots, x_{2^{j}}) =: \overline{x}
  \quad \text{and} \quad
  U_2(x) := (x_{2^{j}+1}, \ldots, x_{2^{j+1}}) =: x'
  \quad \text{for} \quad x \in \R^{2^{j+1}}.
\]
With these definitions, observe that
\(
  M_{2^{j+1}}(x)
  = M_{2^{j}}(\overline{x}) M_{2^{j}}(x')
  = M_{2} \big( M_{2^{j}}(U_{1}(x)),M_{2^{j}}(U_{2}(x)) \big)
\).

By the induction hypothesis there is a network
\(
  \Phi_{j}
  = \big( (V_{1},\alpha_{1}), \ldots, (V_{L},\identity) \big)
  \in \NNsymbol_{w_{j}, 2j, m_{j}}^{\varrho, 2^{j}, 1}
\)
with $L(\Phi_{j}) = L \leq 2j$ such that $M_{2^{j}} = \Realization(\Phi_{j})$.
Since $\|U_{i}\|_{\ell^{0,\infty}_\ast}=1$, the second part
of Lemma~\ref{lem:MatrixSparsityForComposition} shows
$\|V_1 \circ U_i\|_{\ell^0} \leq \|V_{1}\|_{\ell^{0}}$,
whence $M_{2^{j}} \circ U_i = \Realization(\Psi_{i})$, where
\(
  \Psi_{i}
  = \big(
      (V_{1} \circ U_{i}, \alpha_{1}), (V_{2}, \alpha_{2}), \ldots, (V_{L},\identity)
    \big)
\)
satisfies $W(\Psi_{i}) \leq W(\Phi_{j})$, $N(\Psi_{i}) \leq N(\Phi_{j})$, $L(\Psi_{i}) = L$,
and $\Psi_{i} \in \NNsymbol^{\varrho,2^{j},1}_{w_{j},2j,m_{j}}$.
\ifarxiv Thus, Lemma~\ref{lem:AuxConcatenationTwo} shows that
\else Thus, a sharpened version of Lemma~\ref{lem:SummationLemma}
(cf.~\cite[Appendix A, Lemma~A.1]{gribonval:hal-02117139}) shows that
\fi
\(
  f
  :=  (M_{2^{j}} \circ U_1, M_{2^{j}} \circ U_2)
  \in \NNreal_{2w_{j},2j,2m_{j}}^{\varrho,2^{j+1},2}
\).
Since $M_{2} \in \NNreal^{\varrho,2,1}_{6n,2,2n}$,
Lemma~\ref{lem:NetworkCalculus}-(\ref{enu:Compos}) shows that
$M_{2^{j+1}} = M_{2} \circ f \in \NNreal^{\varrho,2^{j+1},1}_{2w_{j}+6n,2j+2,2m_{j}+2n+2}$.

To conclude the proof of Part~(1), note that
$2w_{j}+6n = 12 n(2^{j}-1) + 6n  = 6 n(2^{j+1}-1) = w_{j+1}$
and $2m_{j}+2n+2 = 2(2n+1)(2^{j}-1)+2n = (2n+1) (2^{j+1}-2)+2n+1-1 = m_{j+1}$.

\medskip{}

To prove Part (2), we recall from Part (1) that
$M_{2} : \R^2 \to \R, (x,y) \mapsto x \cdot y$ satisfies $M_{2} = \Realization(\Psi)$
with $\Psi \in \SNNsymbol_{6n, 2, 2n}^{\varrho, 2, 1}$ and $L(\Psi) = 2$.
Next, let $P^{(i)} : \R \times \R^k \to \R \times \R, (x, y) \mapsto (x, y_i)$
for each $i \in \FirstN{k}$, and note that $P^{(i)}$ is linear with
$\|P^{(i)}\|_{\ell^{0,\infty}} = 1 = \|P^{(i)}\|_{\ell^{0,\infty}_\ast}$.
Lemma~\ref{lem:NetworkCalculus}-(\ref{enu:PrePostAffine}) shows that
$M_{2} \circ P^{(i)} = \Realization(\Psi_{i})$ where
$\Psi_{i} \in \SNNsymbol^{\varrho,1+k,1}_{6n,2,2n}$ and $L(\Psi_{i}) = L(\Psi)=2$.
To conclude, observe ${(M_{2} \circ P^{(i)}) (x,y) = x \cdot y_i = [m(x,y)]_i}$
for 
${m : \R \times \R^k \to \R^k, (x,y) \mapsto x \cdot y}$.
Therefore, Lemma~\ref{lem:SummationLemma}-(\ref{enu:Cartesian}) shows that
$m = (M_{2} \circ P^{(1)}, \dots, M_{2} \circ P^{(k)}) \in \NNreal^{\varrho, 1+k, k}_{6kn, 2,2kn}$,
as desired.
\hfill$\square$



\section{Proofs for Section~\ref{sec:ApproximationSpaces}}
\label{app:ApproximationSpaces}

\subsection{Proof of Lemma~\ref{lem:ApproximationSpaceElementaryNesting}}
\label{sub:ApproximationSpaceElementaryNestingProof}

Let $f \in \GenApproxSpaceSet[\Sigma']$.
For the sake of brevity, set
$\eps_n := \AppErr(f, \Sigma_n)_X$ and $\delta_n := \AppErr(f, \Sigma_n')_{X}$
for $n \in \N_0$.
First, observe that $\eps_n \leq \|f\|_X = \delta_0$ for all $n \in \N_0$.
Furthermore, we have by assumption that $\eps_{cm} \leq \delta_m$ for all $m \in \N$.
Now, setting $m_n := \lfloor \frac{n - 1}{c} \rfloor \in \N$ for $n \in \N_{\geq c + 1}$, note that
$n - 1 \geq c \, m_n$, and hence $\eps_{n-1} \leq \eps_{c \, m_n} \leq C \cdot \delta_{m_n}$.
Therefore, we see
\[
  \eps_{n-1} \leq \delta_0 \text{ if } 1 \leq n \leq c
  \qquad \text{and} \qquad
  \eps_{n-1} \leq C \cdot \delta_{m_n} \text{ if } n \geq c+1.
\]
Next, note for $n \in \N_{\geq c + 1}$ that $m_n \geq 1$ and $m_n \geq \frac{n - 1}{c} - 1$, whence
$n \leq c \, m_n + c + 1 \leq (2 c + 1) m_n$.
Therefore, $n^\alpha \leq (2c + 1)^\alpha m_n^\alpha$.
Likewise, since $m_n \leq n$, we have $n^{-1} \leq m_n^{-1}$ for all $n \in \N_{\geq c + 1}$.

There are now two cases.
First, if $q < \infty$, and if we set $K := K(\alpha,q,c) := \sum_{n=1}^c n^{\alpha q - 1}$, then
\begin{align*}
  \|f\|_{\GenApproxSpaceSet[\Sigma]}^q
    = \sum_{n = 1}^\infty
      [n^\alpha \, \eps_{n - 1}]^q \, \frac{1}{n}
  & \leq \delta_0^q \cdot \sum_{n=1}^c n^{\alpha q - 1}
         + C^q \sum_{n=c+1}^\infty
                 (n^\alpha \, \delta_{m_n})^q \frac{1}{n} \\
  & \leq K \, \delta_0^q
        + C^q \, (2c+1)^{\alpha q} \sum_{n=c+1}^\infty
                                     (m_n^\alpha \, \delta_{m_n})^q \frac{1}{m_n} \, .
\end{align*}
Further, for $n \in \N_{\geq c + 1}$ satisfying $m_n = m$ for some $m \in \N$,
we have $m \leq \frac{n-1}{c} < m+1$,
which easily implies $|\{ n \in \N_{\geq c + 1} \colon m_n = m\}|
                      \leq |\{ n \in \N \colon c m + 1 \leq n < c m + c + 1 \}| = c$.
Thus,
\begin{align*}
  \sum_{n=c+1}^\infty
    (m_n^\alpha \, \delta_{m_n})^q \frac{1}{m_n}
  & = \sum_{m = 1}^\infty
        (m^\alpha \, \delta_m)^q
        \cdot \frac{1}{m}
        \cdot |\{ n \in \N_{\geq c+1} \colon m_n = m \}| \\
  & \leq c \sum_{m=1}^\infty (m^\alpha \, \delta_m)^q \frac{1}{m}
    \leq c \sum_{m=1}^\infty (m^\alpha \, \delta_{m-1})^q \frac{1}{m}
    =    c \, \|f\|_{\GenApproxSpaceSet[\Sigma']}^q \, .
\end{align*}
Overall, we thus see for $q < \infty$ that
\[
  \|f\|_{\GenApproxSpaceSet[\Sigma]}^q
  \leq (K + C^q (2c+1)^{\alpha q} c) \cdot \|f\|_{\GenApproxSpaceSet[\Sigma']}^q
  <    \infty \, ,
\]
where the constant $K + C^q (2c+1)^{\alpha q} c$ only depends on $\alpha,q,c,C$.

The adaptations for the (easier) case $q = \infty$ are left to the reader.
\hfill$\square$

\subsection{Proof of Lemma~\ref{lem:C0FunctionsCanBeExtended}}
\label{app:C0FunctionsCanBeExtended}

For $p \in (0,\infty)$, the claim is clear, since it is well-known that $L_{p}(\Omega;\R^{k})$
is complete, and since one can extend each $g \in \StandardXSpace(\Omega) = L_p (\Omega;\R^k)$
by zero to a function $f \in L^p(\Omega;\R^k)$ satisfying $g = f|_{\Omega}$.

Now, we consider the case $p = \infty$.
We first prove completeness of $\StandardXSpace[k][\infty](\Omega)$.
Let $(f_n)_{n \in \N} \subset \StandardXSpace[k][\infty](\Omega)$ be a Cauchy sequence.
It is well-known that there is a continuous function $f : \Omega \to \R^k$ such that
$f_n \to f$ uniformly.
In fact (see for instance \cite[Theorem 12.8]{TopologyWithApplications}), $f$ is uniformly continuous.
It remains to show that $f$ vanishes at infinity.
Let $\eps > 0$ be arbitrary, and choose $n \in \N$ such that $\|f - f_n\|_{\sup} \leq \frac{\eps}{2}$.
Since $f_n$ vanishes at $\infty$, there is $R > 0$ such that $|f_n(x)| \leq \frac{\eps}{2}$
for $x \in \Omega$ with $|x| \geq R$.
Therefore, $|f(x)| \leq \eps$ for such $x$, proving that $f \in \StandardXSpace[k][\infty](\Omega)$,
while $\|f - f_n\|_{\StandardXSpace[k][\infty](\Omega)} \to 0$ follows from the uniform convergence
$f_n \to f$.

\medskip{}

Finally, we prove that
\(
  \StandardXSpace[k][\infty](\Omega) = \{f|_{\Omega}: f \in C_{0}(\R^{d};\R^{k})\}
\).
By considering components it is enough to prove that
$\{f|_{\Omega}: f \in C_{0}(\R^{d})\} = \StandardXSpace[][\infty](\Omega)$.
To see that
$\{f|_{\Omega}: f \in C_{0}(\R^{d})\} \subset \StandardXSpace[][\infty](\Omega)$,
simply note that%
\footnote{For instance, \cite[Proposition 4.35]{FollandRA} shows that each function in
$C_0(\R^d)$ is a uniform limit of continuous, compactly supported functions,
\cite[Proposition (2.6)]{FollandAHA} shows that such functions are uniformly continuous,
while \cite[Theorem 12.8]{TopologyWithApplications} shows that the uniform continuity
is preserved by the uniform limit.}
if $f \in C_0 (\R^d)$, then $f$ is not only continuous, but in fact \emph{uniformly} continuous.
Therefore, $f|_{\Omega}$ is also uniformly continuous (and vanishes at infinity),
whence $f|_{\Omega} \in \StandardXSpace[][\infty](\Omega)$.


For proving
$\StandardXSpace[][\infty](\Omega) \subset \{f|_{\Omega}: f \in C_{0}(\R^{d})\}$,
we will use the notion of the \emph{one-point compactification}
$Z_\infty := \{\infty\} \cup Z$ of a locally compact Hausdorff space $Z$
(where we assume that $\infty \notin Z$); see \cite[Proposition~4.36]{FollandRA}.
The topology on $Z_\infty$ is given by
\(
  \mathcal{T}_Z := \{ U \colon U \subset Z \text{ open} \}
                   \cup \{ Z_\infty \setminus K \colon K \subset Z \text{ compact} \}
\).
Then, $(Z_\infty,\mathcal{T}_Z)$ is a compact Hausdorff space and
the topology induced on $Z$ as a subspace of $Z_\infty$ coincides with the original topolog on $Z$;
see \cite[Proposition~4.36]{FollandRA}.
Furthermore, if $A \subset Z$ is \emph{closed}, then a direct verification shows that
the relative topology on $A_\infty$ as a subset of $Z_\infty$ coincides with the topology
$\mathcal{T}_A$.

Now, let $g \in \StandardXSpace[][\infty](\Omega)$.
Since $g$ is uniformly continuous, it follows (see \cite[Lemma 3.11]{AliprantisBorderHitchhiker})
that there is a uniformly continuous function $\widetilde{g} : A \to \R$
satisfying $g = \widetilde{g}|_{\Omega}$, with $A := \overline{\Omega} \subset \R^{d}$
the closure of $\Omega$ in $\R^d$.

Since $g \in C_0(\Omega)$, it is not hard to see that $\widetilde{g} \in C_0(A)$.
Hence, \cite[Proposition~4.36]{FollandRA} shows that the function
$G : A_\infty \to \R$ defined by $G(x) = \widetilde{g}(x)$ for $x \in A$ and $G(\infty) = 0$
is continuous.
Since $A_\infty \subset (\R^d)_\infty$ is compact, the Tietze extension theorem
(see \cite[Theorem 4.34]{FollandRA}) shows that there is a continuous extension
$H : (\R^d)_\infty \to \R$ of $G$.
Again by \cite[Proposition~4.36]{FollandRA}, this implies that $f := H|_{\R^d} \in C_0(\R^d)$.
By construction, we have $g = f|_{\Omega}$.
\hfill$\square$

\subsection{Proof of Theorem~\ref{th:ApproximationSpacesWellDefinedDensity}}
\label{app:ApproximationSpacesWellDefinedDensity}

\subsubsection{Proof of Claims~\ref{enu:AppSetInLp1}-\ref{enu:AppSetInLp2}}

We use the following lemma.

\begin{lem}\label{lem:LocBoundedEtc}
  Let $\mathcal{C}$ be one of the following classes of functions:
  \begin{itemize}
    \item locally bounded functions;

    \item Borel-measurable functions;

    \item continuous functions;

    \item Lipschitz continuous functions;

    \item locally Lipschitz continuous functions.
  \end{itemize}
  If the activation function $\varrho$ belongs to $\mathcal{C}$,
  then any $f \in \NNreal^{\varrho,d,k}$ also belongs to $\mathcal{C}$.
\end{lem}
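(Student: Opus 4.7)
The plan is to reduce the claim to three closure properties of each class $\mathcal{C}$ in the list, and then read off the conclusion from the definition of $\Realization(\Phi)$. Specifically, writing $\Phi = \big( (T_1,\alpha_1),\dots,(T_L,\alpha_L) \big)$, one has
\[
  \Realization(\Phi)
  = \alpha_L \circ T_L \circ \cdots \circ \alpha_1 \circ T_1,
\]
so it suffices to show that for each class $\mathcal{C}$ in the list:
(a) every affine-linear map $T : \R^p \to \R^q$ belongs to $\mathcal{C}$;
(b) if $\theta_j : \R \to \R$ lies in $\mathcal{C}$ for $j \in \FirstN{n}$, then $\bigotimes_{j=1}^n \theta_j : \R^n \to \R^n$ lies in $\mathcal{C}$;
(c) $\mathcal{C}$ is closed under composition (with matching dimensions).

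Once (a)--(c) are established, the claim follows by an induction on the number of layers: each $T_\ell$ lies in $\mathcal{C}$ by (a); the identity $\identity_\R$ trivially lies in $\mathcal{C}$, hence by (b) each $\alpha_\ell = \bigotimes_j \varrho_j^{(\ell)}$ with $\varrho_j^{(\ell)}\in\{\identity_\R,\varrho\}$ lies in $\mathcal{C}$ whenever $\varrho\in\mathcal{C}$; and then (c) closes the composition.

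Next I would verify (a)--(c) for each class. Properties (a) and (b) are immediate in every case: affine maps are Lipschitz (hence locally Lipschitz, continuous, Borel-measurable, and locally bounded), and the tensor product of scalar functions $\theta_j$ inherits the property componentwise, since $(\bigotimes_j \theta_j)(x) = (\theta_1(x_1),\dots,\theta_n(x_n))$---for the Lipschitz and locally Lipschitz cases one uses that a Lipschitz constant for the product is controlled by $\max_j L_j$ (or $\sum_j L_j$, depending on the chosen norm), and for local Lipschitz continuity one checks a Lipschitz estimate on each compact box $\prod_j K_j \subset \R^n$. Property (c) is the standard composition closure of each class: composition of Borel-measurable maps is Borel-measurable since preimages of Borel sets are Borel; composition of continuous (resp.\ Lipschitz) maps is continuous (resp.\ Lipschitz, with constant $L_g L_f$); and for the locally bounded and locally Lipschitz cases one uses that $f$ maps any bounded (resp.\ compact) set $K$ into a bounded (resp.\ compact) set $f(K)$, on which the corresponding hypothesis for $g$ can be invoked.

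The only mildly delicate point is the locally Lipschitz / locally bounded case, because one must argue that a precompact set is sent to a precompact set under $f$ before applying a local hypothesis on $g$. This is the step where I expect the most (mild) care: for locally bounded functions one uses that $f$ is bounded on compact sets and that affine maps send bounded sets to bounded sets; for locally Lipschitz functions one uses the same with an additional Lipschitz estimate. Once this is handled, the proof reduces to a short chain of standard facts, and the induction outlined above completes the argument.
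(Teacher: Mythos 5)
Your proposal follows essentially the same route as the paper: decompose the claim into (a) affine maps lie in $\mathcal{C}$, (b) $\mathcal{C}$ is closed under $\otimes$-products, and (c) $\mathcal{C}$ is closed under composition, then read off the result from the layered form of $\Realization(\Phi)$, with the only delicate step being the observation that for the locally bounded and locally Lipschitz cases, "locally" on $\R^d$ is equivalent to "on every bounded set" (so bounded sets map to bounded sets, allowing the local hypothesis on the outer function to be invoked). One small slip worth flagging: for a merely locally bounded $f$ (not continuous), you can only conclude that $f(K)$ is bounded for bounded $K$, not that $f(K)$ is compact, but this is all you need.
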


\begin{proof}
  First, note that each affine-linear map $T : \R^d \to \R^k$
  belongs to \emph{all} of the mentioned classes.
  Furthermore, note that since $\R^d$ is locally compact, a function $f : \R^d \to \R^k$ is
  locally bounded [locally Lipschitz] if and only if $f$ is bounded [Lipschitz continuous]
  on each \emph{bounded} set.
  From this, it easily follows that each class $\mathcal{C}$ is closed under composition.
  Finally, it is not hard to see that if $f_1, \dots, f_n : \R \to \R$
  all belong to the class $\mathcal{C}$, then so does
  $f_1 \otimes \cdots \otimes f_n : \R^n \to \R^n$.

  Combining these facts with the definition of the realization of a neural network,
  we get the claim.
\end{proof}

As $\varrho$ is locally bounded and Borel measurable, by Lemma~\ref{lem:LocBoundedEtc}
each $g \in \NNreal^{\varrho,d,k}$ is locally bounded and measurable.
As $\Omega$ is bounded, we get $g|_{\Omega} \in L_{p}(\Omega;\R^{k})$ for all $p \in (0,\infty]$,
and hence $g \in \StandardXSpace(\Omega)$ if $p < \infty$. This establishes claim~\ref{enu:AppSetInLp1}.
Finally, if $p = \infty$, then by our additional assumption that $\varrho$ is continuous,
$g$ is continuous by Lemma~\ref{lem:LocBoundedEtc}.
On the compact set $\overline{\Omega}$, $g$ is thus uniformly continuous and bounded,
so that $g|_{\Omega}$ is uniformly continuous and bounded as well, that is,
$g|_{\Omega} \in \StandardXSpace[k][\infty] (\Omega)$.
This establishes claim~\ref{enu:AppSetInLp2}.%
\hfill$\square$

\subsubsection{Proof of claims~\ref{enu:AppSetInDense1}-\ref{enu:AppSetInDense2}}


We first consider the case $p < \infty$.
Let $f \in \StandardXSpace (\Omega) = L_p (\Omega; \R^k)$ and $\eps > 0$.
For each $i \in \FirstN{k}$, extend the $i$-th component function $f_i$ by zero to a function
$g_i \in L_p(\R^d)$.
As is well-known (see for instance \cite[Chapter VI, Theorem 2.31]{Elstrodt}),
$C_c^\infty (\R^d)$ is dense in $L_p(\R^d)$, so that we find $h_i \in C_c^\infty (\R^d)$
satisfying $\| g_i - h_i \|_{L_p} < \eps$.
Choose $R > 0$ satisfying $\supp(h_i) \subset [-R,R]^d$ and ${\Omega \subset [-R,R]^d}$.
By the universal approximation theorem (Theorem~\ref{thm:PinkusUniversalApproximation}), we can find
$\gamma_i \in \NNreal^{\varrho,d,1}_{\infty,2,\infty} \subset \NNreal^{\varrho,d,1}_{\infty,L,\infty}$
satisfying $\| h_i - \gamma_i \|_{L_\infty ([-R,R]^d)} \leq \eps / (4R)^{d/p}$.
Note that the inclusion
$\NNreal^{\varrho,d,1}_{\infty,2,\infty} \subset \NNreal^{\varrho,d,1}_{\infty,L,\infty}$
used above is (only) true since we are considering \emph{generalized} neural networks,
and since $L \geq 2$.

Using the elementary estimate $(a + b)^p \leq (2 \max \{a, b \})^p \leq 2^p (a^p + b^p)$, we see
\[
  |\gamma_i (x) - g_i (x)|^p
  \leq \big( |\gamma_i(x) - h_i(x)| + |h_i(x) - g_i(x)| \big)^p
  \leq 2^p \Big( \frac{\eps^p}{(4R)^{d}} + |h_i(x) - g_i(x)|^p  \Big)
  \quad \forall \, x \in [-R,R]^d \, ,
\]
which easily implies
$\|\gamma_i - g_i\|_{L_p ([-R,R]^d)}^p
 \leq 2^p (\eps^p + \|h_i - g_i\|_{L_p([-R,R]^d)}^p)
 \leq 2^{1 + p} \eps^p$.

Lemma~\ref{lem:SummationLemma} shows that
$\gamma := (\gamma_1, \dots, \gamma_k) \in \NNreal^{\varrho,d,k}_{\infty,L,\infty}$,
whence $\gamma|_{\Omega} \in \Sigma_\infty (\StandardXSpace (\Omega), \varrho, \mathscr{L})$
by claims~\ref{enu:AppSetInLp1}-\ref{enu:AppSetInLp2}
of Theorem~\ref{th:ApproximationSpacesWellDefinedDensity}.
Finally, since $g_i|_{\Omega} = f_i$, we have
\[
  \| f - \gamma|_{\Omega} \|_{L_p (\Omega)}^p
  \leq \sum_{i=1}^k \| g_i - \gamma_i \|_{L_p ([-R,R]^d)}^p
  \leq 2^{1 + p} k \cdot \eps^p \, .
\]
Since $\eps > 0$ was arbitrary, this proves the desired density.

\medskip{}

Now, we consider the case $p = \infty$.
Let $f \in \StandardXSpace[k][\infty] (\Omega)$.
Lemma~\ref{lem:C0FunctionsCanBeExtended} shows that there is a continuous function
$g : \R^d \to \R^k$ such that $f = g|_{\Omega}$.
Since $L \geq 2$, we can apply the universal approximation theorem
(Theorem~\ref{thm:PinkusUniversalApproximation}) to each of the component functions $g_i$
of $g = (g_1,\dots,g_k)$ to obtain functions
$\gamma_i \in \NNreal^{\varrho,d,1}_{\infty,2,\infty} \subset \NNreal^{\varrho,d,1}_{\infty,L,\infty}$
satisfying $\|g_i - \gamma_i \|_{L_\infty ([-R,R]^d)} \leq \eps$,
where we chose $R > 0$ so large that $\Omega \subset [-R,R]^d$.
Lemma~\ref{lem:SummationLemma} shows that
$\gamma := (\gamma_1, \dots, \gamma_k) \in \NNreal^{\varrho,d,k}_{\infty,L,\infty}$,
whence $\gamma|_{\Omega} \in \Sigma_\infty (\StandardXSpace (\Omega), \varrho, \mathscr{L})$
by claims~\ref{enu:AppSetInLp1}-\ref{enu:AppSetInLp2}
of Theorem~\ref{th:ApproximationSpacesWellDefinedDensity}, since $\varrho$ is continuous.
Finally, since $g_i |_{\Omega} = f_i$, we have
\[
  \sup_{x \in \Omega} \| f(x) - \gamma(x) \|_{\ell^\infty}
  \leq \sup_{x \in [-R,R]^d} \,\,
         \max_{i \in \FirstN{k}}
           | g_i(x) - \gamma_i(x)|
  \leq \eps \, .
\]
Since $\eps > 0$ was arbitrary, this proves the desired density.
\hfill$\square$

\subsubsection{Proof of Claim (\ref{enu:AppSetDenseIndicator})}
\label{sub:DensityClaim}

Set $\mathcal{V} := \NNreal^{\varrho,d,1}_{\infty,L} \cap \StandardXSpace[](\R^d)$.
Lemma~\ref{lem:SummationLemma} easily shows that $\mathcal{V}$ is a vector space.
Furthermore, Lemma~\ref{lem:NetworkCalculus} shows that if $f \in \mathcal{V}$, $A \in \GL(\R^d)$,
and $b \in \R^d$, then $f (A \bullet + b) \in \mathcal{V}$ as well.
Clearly, all these properties also hold for $\overline{\mathcal{V}}$
instead of $\mathcal{V}$, where the closure is taken in $X_p(\R^d)$.

It suffices to show that $\mathcal{V}$ is dense in $\StandardXSpace[](\R^d)$.
Indeed, suppose for the moment that this is true.
Let $f \in \StandardXSpace(\Omega)$ be arbitrary.
By applying Lemma~\ref{lem:C0FunctionsCanBeExtended} to each of the component functions $f_i$ of $f$,
we see for each $i \in \FirstN{k}$ that there is a function $F_i \in \StandardXSpace[](\R^d)$
such that $f_i = F_i |_{\Omega}$.
Now, let $\eps > 0$ be arbitrary, and set $p_0 := \min \{1,p\}$.
Since $\mathcal{V}$ is dense in $\StandardXSpace[](\R^d)$,
there is for each $i \in \{1,\dots,k\}$ a function $G_i \in \mathcal{V}$ such that
$\|G_i - F_i\|_{L_p}^{p_0} \leq \eps^{p_0} / k$.
Lemma~\ref{lem:SummationLemma} shows
\(
  g := (G_1|_{\Omega},\dots,G_k|_{\Omega})
  \in \NNreal^{\varrho,d,k}_{\infty,L}(\Omega) \cap \StandardXSpace (\Omega)
  = \Sigma_\infty (\StandardXSpace(\Omega), \varrho, \mathscr{L})
\),
and it is not hard to see that
\(
  \| f - g \|_{\StandardXSpace(\Omega)}^{p_0}
  \leq \sum_{i=1}^k
         \| F_i - G_i\|_{\StandardXSpace(\Omega)}^{p_0}
  \leq \eps^{p_0}
\),
and hence $\| f - g\|_{\StandardXSpace(\Omega)} \leq \eps$.
As $\eps > 0$ and $g \in \StandardXSpace(\Omega)$ were arbitrary,
this proves that $\Sigma_\infty (\StandardXSpace(\Omega), \varrho, \mathscr{L})$
is dense in $\StandardXSpace(\Omega)$, as desired.

\medskip{}

It remains to show that $\mathcal{V} \subset \StandardXSpace[](\R^d)$ is dense.
To prove this, we distinguish three cases:

\medskip{}

\noindent
\textbf{Case 1 ($p \in [1,\infty)$):}
First, the existence of the ``radially decreasing $L_1$-majorant'' $\mu$ for $g$,
\cite[Lemma A.2]{LaugesenAffineSystemsSpanLebesgueSpaces} shows that
$P|g| \in L_\infty (\R^d) \subset L_p^{\mathrm{loc}}(\R^d)$, where
$P|g|$ is a certain \emph{periodization} of $|g|$ whose precise definition is immaterial for us.
Since $g \in L_p (\R^d)$ and $P|g| \in L_p^{\mathrm{loc}}(\R^d)$,
and $\int_{\R^d} g(x) \, dx \neq 0$,
\cite[Corollary 1]{LaugesenAffineSystemsSpanLebesgueSpaces} implies that
$\mathcal{V}_0 := \mathrm{span}\{ g_{j,k} \colon j \in \N, k \in \Z^d \}$
is dense in $L_p(\R^d)$, where $g_{j,k}(x) = 2^{jd/p} \cdot g(2^j x - k)$.
As a consequence of the properties of the space $\mathcal{V}$ that we mentioned above,
and since $g \in \overline{\mathcal{V}}$, we have $\mathcal{V}_0 \subset \overline{\mathcal{V}}$.
Hence, $\mathcal{V} \subset L_p (\R^d)$ is dense,
and we have $L_p(\R^d) = \StandardXSpace[](\R^d)$ since $p < \infty$.

\medskip{}

\noindent
\textbf{Case 2 ($p \in (0,1)$):}
Since $g \in L_1(\R^d) \cap L_p(\R^d)$ with $\int_{\R^d} g(x) \, d x \neq 0$,
\cite[Theorem 4 and Proposition 5(a)]{LaugesenAffineSynthesisQuasiBanach}
show that $\mathcal{V}_0 \subset L_p(\R^d)$ is dense, where the space
$\mathcal{V}_0$ is defined precisely as for $p \in [1,\infty)$.
The rest of the proof is as for $p \in [1,\infty)$.

\medskip{}

\noindent
\textbf{Case 3 ($p = \infty$):}
Note $\StandardXSpace[](\R^d) = C_0(\R^d)$.
Let us assume towards a contradiction that $\mathcal{V}$ is not dense in $C_0(\R^d)$.
By the Hahn-Banach theorem (see for instance \cite[Theorem 5.8]{FollandRA}),
there is a bounded linear functional $\varphi \in (C_0(\R^d))^\ast$
such that $\varphi \not\equiv 0$, but $\varphi \equiv 0$ on $\overline{\mathcal{V}}$.

By the Riesz representation theorem for $C_0$ (see \cite[Theorem 7.17]{FollandRA}),
there is a finite real-valued Borel-measure $\mu$ on $\R^d$
such that $\varphi(f) = \int_{\R^d} f(x) \, d \mu(x)$ for all $f \in C_0(\R^d)$.
Thanks to the Jordan decomposition theorem (see \cite[Theorem 3.4]{FollandRA}),
there are finite positive Borel measures $\mu_+$ and $\mu_-$ such that $\mu = \mu_+ - \mu_-$.

Let $f \in C_0 (\R^d)$ be arbitrary.
For $a > 0$, define $g_a : \R^d \to \R, x \mapsto a^d \, g(a x)$,
and note $T_x g_a \in \overline{\mathcal{V}}$ (and hence $\varphi(T_x g_a) = 0$)
for all $x \in \R^d$, where $T_x g_a (y) = g_a (y-x)$.
By Fubini's theorem and the change of variables $y = -z$, we get
\begin{equation}
  \begin{split}
    \int_{\R^d}
      (f \ast g_a)(x)
    \, d \mu(x)
    & = \int_{\R^d}
          \int_{\R^d}
            f(z) \, g_a (x-z)
          \, d z
        \, d \mu (x) \\
    & = \int_{\R^d}
          f(-y)
          \int_{\R^d}
            g_a (y + x)
          \, d \mu (x)
        \, d y
      = \int_{\R^d}
          f(-y) \, \varphi( T_{-y} \, g_a )
        \, d y
      = 0
  \end{split}
  \label{eq:DensityInBanachLpMainIdentityC0Case}
\end{equation}
for all $a \geq 1$.
Here, Fubini's theorem was applied to each of the integrals
$\int (f \ast g_a)(x) \, d \mu_{\pm} (x)$, which is justified since
\begin{align*}
  \int \int |f(z) \, g_a (x-z)| \, d z \, d \mu_{\pm} (x)
  & \leq \mu_{\pm}(\R^d) \, \|f\|_{L_{\infty}} \, \|T_{z} g_a\|_{L_{1}}
  = \mu_{\pm}(\R^d) \, \|f\|_{L_{\infty}} \, \|g_a\|_{L_1} < \infty \, .
\end{align*}
Now, since $f \in C_0(\R^d)$ is bounded and uniformly continuous,
\cite[Theorem 8.14]{FollandRA} shows $f \ast g_a \to f$ uniformly as $a \to \infty$.
Therefore, \eqref{eq:DensityInBanachLpMainIdentityC0Case} implies
\(
  \varphi(f)
  = \int_{\R^d} f(x) \, d \mu (x)
  = \lim_{a \to \infty} \int_{\R^d} (f \ast g_a) (x) \, d \mu (x)
  = 0
\),
since $\mu$ is a finite measure.
This implies $\varphi \equiv 0$ on $C_0 (\R^d)$, which is the desired contradiction.
\hfill$\square$

\subsection{Proof of Lemma~\ref{lem:ApproximationOfIndicatorCube}}
\label{app:ApproximationOfIndicatorCube}



  \textbf{Part (1):}
  Define
  \[
    t : \R \to \R,
        x \mapsto \sigma \big( x/\eps \big)-\sigma \big( 1+(x-1)/\eps \big) \, .
  \]
  A straightforward calculation using the properties of $\sigma$ shows that
  \begin{equation}
    t(x) =
    \begin{cases}
      0,                         & \text{if } x \in \R \setminus [0,1], \\
      1,                         & \text{if } x \in [\varepsilon, 1-\varepsilon].
    \end{cases}
    \label{eq:tProperty}
  \end{equation}
  We claim that $0 \leq t \leq 1$.
  To see this, first note that if $r \geq 1$, then $\sigma(x - r) \leq \sigma(x)$ for all $x \in \R$.
  Indeed, if $x \leq r$, then $\sigma(x - r) = 0 \leq \sigma(x)$;
  otherwise, if $x > r$, then $x \geq 1$, and hence $\sigma(x - r) \leq 1 = \sigma(x)$.
  Since $r := \frac{1}{\eps} - 1 \geq 1$, we thus see that
  $t(x) = \sigma(\frac{x}{\eps}) - \sigma(\frac{x}{\eps} - r) \geq 0$ for all $x \in \R$.
  Finally, we trivially have $t(x) \leq \sigma(\frac{x}{\eps}) \leq 1$ for all $x \in \R$.

  Now, if we define
  \[
    g_0 : \R^d \to \R,
          x \mapsto  \sigma \left( 1+ \sum_{i=1}^d t (x_i)-d \right),
  \]
  we see $0 \leq g_0 \leq 1$.
  Furthermore, for $x \in [\varepsilon, 1-\varepsilon]^d$, we have
  $t(x_i) = 1$ for all $i \in \FirstN{d}$, whence $g_0(x) = 1$.
  Likewise, if $x \notin [0,1]^d$, then $t (x_i) = 0$ for at least one $i \in \FirstN{d}$.
  Since $0 \leq t (x_i) \leq 1$ for all $i$, this implies $\sum_{i=1}^d t (x_i) - d \leq -1$,
  and thus $g_0(x) = 0$.
  All in all, and because of $0 \leq g_0 \leq 1$,
  these considerations imply that $\supp(g_0) \subset [0,1]^{d}$ and
  \begin{equation}
    |g_0(x) - \Indicator_{[0,1]^d} (x)|
    \leq \Indicator_{[0,1]^d \setminus [\varepsilon, 1-\varepsilon]^d} (x)
    \quad \forall \, \, x \in \R^d \, .
    \label{eq:PointwiseIndicatorApproximationInProof}
  \end{equation}
  Now, for proving the general case of Part (1), let $h := g_0$,
  while $h := t$ in case of $d = 1$.
  As a consequence of Equations~\eqref{eq:PointwiseIndicatorApproximationInProof}
  and \eqref{eq:tProperty} and of $0 \leq t \leq 1$, we then see that
  Condition~\eqref{eq:PointwiseIndicatorApproximation} is satisfied in both cases.
  Thus, all that needs to be shown is that $h = g_0 \in \NNreal^{\varrho,d,1}_{2dW(N+1), 2L-1, (2d+1)N}$
  or that $h = t \in \NNreal^{\varrho,1,1}_{2W,L,2N}$ in case of $d = 1$.
  We will verify both of these properties in the proof of Part (2) of the lemma.

  \medskip{}

  \textbf{Part (2):}
  We first establish the claim for the special case $[a,b]= [0,1]^{d}$.
  With $\lambda$ denoting the $d$-dimensional Lebesgue measure,
  and with $h$ as constructed in Part (1), we deduce from \eqref{eq:PointwiseIndicatorApproximation}
  that
  \[
    \| h - \Indicator_{[0,1]^d} \|_{L_p}^{p}
    \leq \lambda ([0,1]^d \setminus [\varepsilon, 1-\varepsilon]^d)
    =    [1 - (1 - 2\varepsilon)^d] \, .
  \]
  Since the right-hand side vanishes as $\varepsilon \to 0$, this proves the
  claim for the special case $[a,b] = [0,1]^d$, once we show $h = \Realization (\Phi)$
  for $\Phi$ with appropriately many layers, neurons, and nonzero weights.

  By assumption on $\sigma$, there is $L_0 \leq L$ such that $\sigma = \Realization(\Phi_\sigma)$
  for some $\Phi_\sigma \in \NNsymbol^{\varrho,1,1}_{W,L_0,N}$ with $L(\Phi_\sigma) = L_0$.
  For $i \in \FirstN{d}$ set $f_{i, 1} : \R^d \to \R, x \mapsto \sigma(\frac{x_i}{\eps})$
  and $f_{i, 2} : \R^d \to \R, x \mapsto -\sigma(1 + \frac{x_i - 1}{\eps})$.
  By Lemma~\ref{lem:NetworkCalculus}-(\ref{enu:PrePostAffine}) there exist
  $\Psi_{i,1}, \Psi_{i,2} \in \NNsymbol^{\varrho,d,1}_{W,L_0,N}$
  with $L(\Psi_{i,1}) = L(\Psi_{i,2}) = L_0$ for any $i \in \FirstN{d}$
  such that $f_{i,1} = \Realization(\Psi_{i,1})$ and $f_{i,2} = \Realization(\Psi_{i,2})$.
  Lemma~\ref{lem:SummationLemma}-(\ref{enu:LinComb}) then shows that
  \[
    F :
    \R^d \to \R,
    x \mapsto \sum_{i = 1}^d t(x_i)
            = \sum_{i=1}^d f_{i,1}(x) + \sum_{i=1}^d f_{i,2}(x)
  \]
  satisfies $F = \Realization(\Phi_F)$ for some $\Phi_F \in \NNsymbol^{\varrho,d,1}_{2dW,L_0,2dN}$
  with $L(\Phi_F) = L_0$.
  Hence, Lemma~\ref{lem:NetworkCalculus}-(\ref{enu:PrePostAffine}) shows that
  $G : \R^d \to \R, x \mapsto 1 + \sum_{i=1}^d t(x_i) - d$ satisfies
  $G = \Realization(\Phi_G)$ for some $\Phi_G \in \NNsymbol^{\varrho,d,1}_{2dW,L_0,2dN}$
  with $L(\Phi_G) = L_0$.

  In case of $d = 1$, set $L' := L_0$ and recall that $h = t = F$,
  where we saw above that $F = \Realization(\Phi_F)$ and
  $\Phi_F \in \NNsymbol^{\varrho,1,1}_{2W,L_0,2N}$ with $L(\Phi_F) = L_0$.
  For general $d \in \N$ set $L' := 2 L_0 - 1$ and recall that $h = g_0 = \sigma \circ G$.
  Hence, Lemma~\ref{lem:NetworkCalculus}-(\ref{enu:ComposLessDepth}) shows
  $h = \Realization(\Phi_h)$ for some $\Phi_h \in \NNsymbol^{\varrho,d,1}$
  with $L(\Phi_h) = L'$, $N(\Phi_h) \leq (2d+1)N$ and
  $W(\Phi_h) \leq 2d W + \max \{ 2 d N, d \} W \leq 2 d W (N+1)$.

  \medskip{}

  It remains to transfer the result from $[0,1]^d$ to the general case $[a,b]$.
  To this end, define the invertible affine-linear map
  \[
      T_0 : \R^d \to \R^d,
            x \mapsto \left(
                          (b_i - a_i)^{-1} \cdot (x_i - a_i)
                      \right)_{i \in \FirstN{d}} \, .
  \]
  A direct calculation shows
  $\Indicator_{[0,1]^d} \circ T_0 = \Indicator_{T_0^{-1} [0,1]^d} = \Indicator_{[a,b]}$.
  Since $\|T_{0}\|_{\ell^{0,\infty}_{\ast}} =1$, the first part of Lemma~\ref{lem:NetworkCalculus}
  shows that
  $g := h \circ T_0 = \Realization(\Phi)$ for some
  $\Phi \in \NNsymbol^{\varrho,d,1}_{2dW (N+1), 2 L_0 - 1, (2d+1) N}$ with $L(\Phi) = 2 L_0 - 1 = L'$
  (resp.~$g := h \circ T_0 = \Realization(\Phi)$ for some
  $\Phi \in \NNsymbol^{\varrho,1,1}_{2W, L_0, 2N}$ with $L(\Phi) = L_0 = L'$ in case of $d = 1$)
  with $h$ as above.
  Moreover, by an application of the change-of-variables-formula, we get
  \begin{align*}
    \| g - \Indicator_{[a,b]} \|_{L_p}
    & = \| h \circ T_0 - \Indicator_{[0,1]^d} \circ T_0 \|_{L_p} \\
    & = \big| \det \diag \big( (b_i - a_i)^{-1} \big)_{i \in \FirstN{d}} \big|^{-1/p}
        \cdot \| g - \Indicator_{[0,1]^d} \|_{L_p} 
      = \| g - \Indicator_{[0,1]^d} \|_{L_p}
        \cdot \prod_{i=1}^d (b_i - a_i)^{1/p} \, .
  \end{align*}
  As seen above, the first factor can be made arbitrarily small
  by choosing $\eps \in (0, \frac{1}{2})$ suitably.
  Since the second factor is constant, this proves the claim.
\hfill$\square$

\section{Proofs for Section~\ref{sec:AppSpacesReLU}}

\subsection{Proof of Lemma~\ref{lem:PiecewisePolynomialCont}}
\label{app:PiecewisePolynomialCont}

We begin with three auxiliary results.

\begin{lem}\label{lem:DerivativeLocallyUniformConvergence}
  Let $f : \R \to \R$ be \emph{continuously} differentiable.
  Define $f_h : \R \to \R, x \mapsto h^{-1} \cdot (f(x+h) - f(x))$
  for $h \in \R \setminus \{0\}$.
  Then $f_h \to f$ as $h \to 0$ with locally uniform convergence on $\R$.
\end{lem}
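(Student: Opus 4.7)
The statement as displayed reads $f_h \to f$, but given the definition of $f_h$ as a difference quotient of $f$, this is clearly a typo for $f_h \to f'$; I will outline a proof of the latter.

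The plan is to rewrite the difference quotient via the fundamental theorem of calculus as a mean of $f'$ over a short interval, and then to reduce locally uniform convergence to the uniform continuity of $f'$ on compact sets. Concretely, since $f$ is $C^1$, for any $x \in \R$ and $h \neq 0$ the fundamental theorem of calculus together with a linear change of variables gives
\[
  f_h(x) \;=\; \frac{f(x+h)-f(x)}{h} \;=\; \frac{1}{h}\int_0^h f'(x+s)\,ds \;=\; \int_0^1 f'(x+th)\,dt.
\]
Subtracting the trivial identity $f'(x) = \int_0^1 f'(x)\,dt$ and taking absolute values yields the pointwise bound
\[
  |f_h(x) - f'(x)| \;\leq\; \sup_{t \in [0,1]} |f'(x+th) - f'(x)|.
\]

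Given this bound, I would fix an arbitrary compact set $K \subset \R$ and some $\eps > 0$, pick $R > 0$ with $K \subset [-R,R]$, and observe that the compact set $[-R-1,R+1]$ contains $x+th$ for every $x \in K$, $t \in [0,1]$, and $|h| \leq 1$. Since $f'$ is continuous on $\R$, it is uniformly continuous on $[-R-1,R+1]$, so there exists $\delta \in (0,1)$ such that $|f'(y)-f'(z)| < \eps$ whenever $y,z \in [-R-1,R+1]$ with $|y-z| < \delta$. For any $h$ with $0 < |h| < \delta$ and any $x \in K$ and $t \in [0,1]$, we have $x+th, x \in [-R-1,R+1]$ with $|(x+th)-x| = t|h| < \delta$, so the pointwise bound gives $|f_h(x) - f'(x)| < \eps$. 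Taking the supremum over $x \in K$ shows $\sup_{x \in K}|f_h(x) - f'(x)| \leq \eps$ for all $0 < |h| < \delta$, which is exactly locally uniform convergence.

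There is no real obstacle here; the only step requiring slight care is the integral representation of the difference quotient, which needs $f \in C^1$ (so that $f'$ is Riemann integrable and FTC applies), and the observation that a single $\delta$ works uniformly over $x \in K$ because we uniformized $f'$ on the enlarged compact set $[-R-1,R+1]$ rather than on $K$ itself.
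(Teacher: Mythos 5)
Your proof is correct, and you are also right that the statement contains a typo and should read $f_h \to f'$. The paper's own proof is just a sketch invoking the mean value theorem (writing $f_h(x) = f'(\xi)$ for some $\xi$ between $x$ and $x+h$) rather than your integral representation via the fundamental theorem of calculus, but both reduce in the same way to the locally uniform continuity of $f'$, so the approaches are essentially the same.
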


\begin{proof}
  This is an easy consequence of the mean-value theorem, using that $f'$ is locally uniformly
  continuous.
  For more details, we refer to \cite[Theorem 4.14]{LaxCalculusWithApplications}.
\end{proof}

Since $\varrho_{r+1}$ is continuously differentiable with $\varrho_{r+1} ' = \varrho_r$,
the preceding lemma immediately implies the following result.

\begin{cor}\label{cor:NestednessMainIdea}
  For $r \in \N$, $h > 0$,
  \(
    \sigma_h : \R \to \R,
                x \mapsto (r+1)^{-1}
                          \cdot h^{-1}
                          \cdot \big(
                                  \varrho_{r+1} (x + h)
                                  - \varrho_{r+1} (x)
                                \big)
  \)
  we have $\sigma_{h} = \Realization(\Psi_{h})$ where $\Psi_{h} \in \SNNsymbol_{4,2,2}^{\varrho_{r+1},1,1}$,
  $L(\Psi_{h}) = 2$, and \(\lim_{h \to 0}\sigma_h =  \varrho_{r}\)
  with locally uniform convergence on $\R$.
\end{cor}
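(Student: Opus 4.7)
\medskip

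The plan is to handle the two assertions separately: first realizing $\sigma_h$ as a strict two-layer $\varrho_{r+1}$-network with the claimed complexity, and then deducing the locally uniform convergence from Lemma~\ref{lem:DerivativeLocallyUniformConvergence}.

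For the realization, I would write $\sigma_h$ as an explicit shallow computation. Define the affine maps $T_1 : \R \to \R^2$, $x \mapsto (x+h,\, x)^T$ and $T_2 : \R^2 \to \R$, $(y_1,y_2) \mapsto \bigl((r+1)h\bigr)^{-1}(y_1 - y_2)$, and set $\alpha_1 := \varrho_{r+1} \otimes \varrho_{r+1}$. Then $\Psi_h := \big( (T_1,\alpha_1),(T_2,\identity_\R) \big)$ is a strict $\varrho_{r+1}$-network, and unwinding the definition gives
\[
  \Realization(\Psi_h)(x)
  = \tfrac{1}{(r+1)h}\bigl(\varrho_{r+1}(x+h) - \varrho_{r+1}(x)\bigr)
  = \sigma_h(x).
\]
The bookkeeping is immediate: $L(\Psi_h)=2$, the single hidden layer has $2$ neurons, and the matrix parts of $T_1,T_2$ each have exactly $2$ nonzero entries, giving $W(\Psi_h)=4$. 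Hence $\Psi_h \in \SNNsymbol^{\varrho_{r+1},1,1}_{4,2,2}$.

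For the convergence, the key observation is that $\varrho_{r+1}$ is \emph{continuously} differentiable for every $r \in \N$ (i.e.\ $r \geq 1$), with $\varrho_{r+1}'(x) = (r+1)\,\varrho_r(x)$ for all $x \in \R$ — this is trivial on $\R \setminus \{0\}$ and, because $r \geq 1$, both one-sided derivatives at $0$ agree and equal $0 = (r+1)\varrho_r(0)$. Since $\varrho_r$ is continuous (again using $r \geq 1$), Lemma~\ref{lem:DerivativeLocallyUniformConvergence} applied to $f := \varrho_{r+1}$ yields $f_h \to \varrho_{r+1}' = (r+1)\varrho_r$ locally uniformly on $\R$; dividing by $(r+1)$ gives $\sigma_h = (r+1)^{-1} f_h \to \varrho_r$ locally uniformly, as required.

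The proof is entirely routine — both the complexity count and the convergence reduce to direct unwindings once the right auxiliary lemma is invoked. If there is any subtle point it is only the check that $r \in \N$ (meaning $r \geq 1$, by the convention fixed in the introduction) is enough to guarantee the $C^1$-regularity of $\varrho_{r+1}$ globally across $x = 0$, which is what allows Lemma~\ref{lem:DerivativeLocallyUniformConvergence} to apply without modification.
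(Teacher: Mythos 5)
Your proof is correct and follows the same route as the paper: realize $\sigma_h$ as an explicit two-layer strict network and invoke Lemma~\ref{lem:DerivativeLocallyUniformConvergence} for the convergence. In fact you state the derivative more carefully than the paper's one-line justification (which writes $\varrho_{r+1}' = \varrho_r$, dropping the factor $r+1$ that the $(r+1)^{-1}$ normalization of $\sigma_h$ is there to cancel); your version $\varrho_{r+1}' = (r+1)\varrho_r$ is the precise statement, and your observation that $r \geq 1$ is what makes $\varrho_{r+1}$ globally $C^1$ across $x=0$ is exactly the right check.
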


We need one more auxiliary result for the proof of Lemma~\ref{lem:PiecewisePolynomialCont}.

\begin{cor}\label{cor:NestednessMainIdeaStep2}
  For any $d,k,r \in \N$, $j \in \N_{0}$, $W,N \in \N_{0}$, $L \in \N$ we have
  \begin{equation}\label{eq:NestedNNFamiliesReLUPowers}
  \NNreal^{\varrho_{r},d,k}_{W,L,N} \subset \overline{\NNreal^{\varrho_{r+j},d,k}_{4^{j}W,L,2^{j}N}}
  \end{equation}
  where closure is with respect to locally uniform convergence on $\R^d$.
\end{cor}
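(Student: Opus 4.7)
The plan is to proceed by induction on $j \in \N_0$, with the base case $j = 0$ being trivial since $\NNreal^{\varrho_r,d,k}_{W,L,N} \subset \overline{\NNreal^{\varrho_r,d,k}_{W,L,N}}$.

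For the inductive step, the key observation is that Corollary~\ref{cor:NestednessMainIdea} has exactly the right form to feed into Lemma~\ref{lem:Recursivity}. Specifically, Corollary~\ref{cor:NestednessMainIdea} exhibits $\varrho_r$ as a locally uniform limit $\sigma_h \to \varrho_r$ where $\sigma_h = \Realization(\Psi_h)$ with $\Psi_h \in \SNNsymbol^{\varrho_{r+1},1,1}_{4,2,2}$ and $L(\Psi_h) = 2$. Hence I can apply Lemma~\ref{lem:Recursivity} with $\sigma := \varrho_r$, the ambient activation $\varrho := \varrho_{r+1}$, and parameters $w = 4$, $m = 2$, $\ell = 2$. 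Since $\ell = 2$, the lemma yields
\[
  \NNreal^{\varrho_r,d,k}_{W,L,N}
  \subset \overline{\NNreal^{\varrho_{r+1},d,k}_{W m^{2}, L, N m}}
  = \overline{\NNreal^{\varrho_{r+1},d,k}_{4W, L, 2N}},
\]
which is precisely the case $j = 1$ of \eqref{eq:NestedNNFamiliesReLUPowers}.

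For the passage from $j$ to $j+1$, I would apply the $j = 1$ case to the activation $\varrho_{r+j}$ (with unchanged $d,k,L$ and with weight/neuron budgets $4^j W$ and $2^j N$), obtaining
\[
  \NNreal^{\varrho_{r+j},d,k}_{4^j W, L, 2^j N}
  \subset \overline{\NNreal^{\varrho_{r+j+1},d,k}_{4^{j+1} W, L, 2^{j+1} N}}.
\]
Combined with the inductive hypothesis, this gives the desired inclusion at stage $j+1$ after one chases closures: if $A \subset \overline{B}$ and $B \subset \overline{C}$, then $A \subset \overline{C}$, because in the topology of locally uniform convergence on $\R^d$ (which is induced by a metric, via exhaustion by compact sets) the (sequential) closure operator is idempotent and monotone, so $\overline{B} \subset \overline{\overline{C}} = \overline{C}$.

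The main obstacle is really just the bookkeeping of closures: one must check that the ``closure of a closure'' collapses in the right way for locally uniform convergence. This is painless because the relevant topology is metrizable (compact-open topology on $C(\R^d,\R^k)$), so sequential closure coincides with topological closure and is idempotent. Once that is noted, the rest is just the doubly geometric bookkeeping of the constants $4^j$ and $2^j$, which matches the multiplicative increments $m^2 = 4$ and $m = 2$ supplied by Lemma~\ref{lem:Recursivity} at each step.
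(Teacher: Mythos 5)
Your proof is correct and follows essentially the same route as the paper: base case $j=0$ is trivial, the case $j=1$ is obtained by feeding Corollary~\ref{cor:NestednessMainIdea} into Lemma~\ref{lem:Recursivity} (with $\sigma=\varrho_r$, $\varrho=\varrho_{r+1}$, $w=4$, $\ell=2$, $m=2$), and the inductive step iterates the $j=1$ case while chasing closures using idempotence of the (metrizable) locally-uniform-convergence closure. The only difference is cosmetic: you make the idempotence-of-closure step explicit, which the paper leaves implicit.
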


\begin{proof}
We prove by induction on $\delta$ that the result holds for any $0 \leq j \leq \delta$.
This is trivial for $\delta=0$.
By Corollary~\ref{cor:NestednessMainIdea} we can apply
Lemma~\ref{lem:Recursivity} to $\varrho := \varrho_{r+1}$ and $\sigma := \varrho_{r}$
(which is continuous) with $w = 4$, $\ell=2$, $m=2$.
This yields for any $W,N \in \N_{0}$, $L \in \N$ that
\(
  \NNreal^{\varrho_{r},d,k}_{W,L,N}
  \subset \overline{\NNreal^{\varrho_{r+1},d,k}_{4W,L,2N}},
\)
which shows that our induction hypothesis is valid for $\delta = 1$.
Assume now that the hypothesis holds for some $\delta \in \N$,
and consider $W,N \in \N_{0}$, $r,L \in \N$, $0 \leq j \leq \delta+1$.
If $j \leq \delta$ then the induction hypothesis yields~\eqref{eq:NestedNNFamiliesReLUPowers},
so there only remains to check the case $j = \delta+1$.
By the induction hypothesis, for $r' = r+\delta$, $W' = 4^{\delta}W$, $N' = 2^{\delta}N$, $j=1$ we have
\(
  \NNreal^{\varrho_{r+\delta},d,k}_{4^{\delta}W,L,2^{\delta}N}
  \subset \overline{\NNreal^{\varrho_{r+\delta+1},d,k}_{4^{\delta+1}W,L,2^{\delta+1}N}}.
\)
Finally,
\(
 \NNreal^{\varrho_{r},d,k}_{W,L,N}
 \subset \overline{\NNreal^{\varrho_{r+\delta},d,k}_{4^{\delta}W,L,2^{\delta}N}}
 \subset \overline{\NNreal^{\varrho_{r+\delta+1},d,k}_{4^{\delta+1}W,L,2^{\delta+}N}}
\)
by the induction hypothesis for $j=\delta$.
\end{proof}

\begin{proof}[Proof of Lemma~\ref{lem:PiecewisePolynomialCont}] 
The proof is by induction on $n$.
For $n=1$, $\varrho$ is a polynomial of degree at most $r$.
By Lemma~\ref{lem:ReLUPowerRepresentsIdentity}, $\varrho_{r}$ can represent any such polynomial
with $2(r+1)$ terms, whence $\varrho \in \NNreal^{\varrho_{r},1,1}_{4(r+1),2,2(r+1)}$.
When $r=1$, $\varrho$ is an affine function; hence there are $a,b \in \R$
such that $\varrho(x) = b+ax = b+ a\varrho_{1}(x)-a\varrho_{1}(-x)$ for all $x$, showing that
$\varrho \in \SNNreal^{\varrho_{1},1,1}_{4,2,2} = \SNNreal^{\varrho_1,1,1}_{2(n+1),2,n+1}$.

Assuming the result true for $n \in \N$, we prove it for $n+1$.
Consider $\varrho$ made of $n+1$ polynomial pieces:
$\R$ is the disjoint union of $n+1$ intervals $I_{i}$, $0 \leq i \leq n$ and there are polynomials
$p_{i}$ such that $\varrho(x) =  p_{i}(x)$ on the interval $I_{i}$ for $0 \leq i \leq n$.
Without loss of generality order the intervals by increasing ``position''
and define $\bar{\varrho}(x) = \varrho(x)$ for $x \in \cup_{i=0}^{n-1} I_{i} = \R \setminus I_{n}$,
and $\bar{\varrho}(x) = p_{n-1}(x)$ on $I_{n}$.
It is not hard to see that $\bar{\varrho}$ is continuous and made of $n$ polynomial pieces,
the last one being $p_{n-1}(x)$ on $I_{n-1} \cup I_{n}$.
Observe that $\varrho(x) = \bar{\varrho}(x) + f(x - t_{n})$
where $\{t_{n}\} = \overline{I_{n-1}} \cap \overline{I_{n}}$ is the breakpoint between the intervals
$I_{n-1}$ and $I_{n}$, and
\[
  f(x) := \varrho(x + t_n) - \bar{\varrho}(x + t_n)
  = \begin{cases}
      0                                         & \text{for}\ x < 0    \\
      p_{n}(x + t_n)-p_{n-1}(x + t_n) & \text{for}\ x \geq 0.
    \end{cases}
\]
Note that $q(x) := p_{n}(x + t_n) - p_{n-1}(x + t_n)$ satisfies $q(0) = f(0) = 0$,
since $\varrho$ is continuous.
Because $q$ is a polynomial of degree at most $r$, there are $a_i \in \R$ such that
$q(x) = \sum_{i=1}^{r} a_i \, x^i$.
This shows that $f = \sum_{i=1}^{r} a_i \varrho_i$.
In case of $r = 1$, this shows that $f \in \SNNreal^{\varrho_1,1,1}_{2,2,1}$.
For $r \geq 2$, since $\varrho_i \in \NNreal^{\varrho_i,1,1}_{2,2,1}$,
Corollary~\ref{cor:NestednessMainIdeaStep2} yields
\(
  \varrho_i \in \overline{\NNreal^{\varrho_r,1,1}_{2 \cdot 4^{r-i},2,2^{r-i}}}
\),
where the closure is with respect to the topology of locally uniform convergence.
Observing that $2\sum_{i=1}^{r}4^{r-i} = 2 \cdot (4^{r}-1)/3 = w$
and $\sum_{i=1}^{r}2^{r-i} = 2^{r}-1 = m$, Lemma~\ref{lem:SummationLemma}-(\ref{enu:LinComb})
implies that%
\footnote{This implicitly uses that $\varrho_i$ is \emph{not} affine-linear, so that
\(
  \varrho_i
  \in \overline{\NNreal^{\varrho_r,1,1}_{2 \cdot 4^{r-i},2,2^{r-i}}
      \setminus \NNreal^{\varrho_r,1,1}_{\infty,1,\infty}}
\).}
$f \in \overline{\NNreal^{\varrho_{r},1,1}_{w,2,m}}$.
Since $P: \R \to \R, x \mapsto x+t_n$ is affine with
$\|P\|_{\ell^{0,\infty}} = \|P\|_{\ell^{0,\infty}_{\ast}}=1$, by the induction hypothesis,
Lemma~\ref{lem:NetworkCalculus}-(\ref{enu:PrePostAffine})
and Lemma~\ref{lem:SummationLemma}-(\ref{enu:LinComb}) again, we get
\[
  \varrho(\bullet)
  = \bar{\varrho}(\bullet) + f(\cdot - t_n)
  \in \overline{\NNreal^{\varrho_{r},1,1}_{4(r+1)+(n-1)w+w,2,2(r+1)+(n-1)m+m}}
  =  \overline{\NNreal^{\varrho_{r},1,1}_{4(r+1)+(n+1-1)w,2,2(r+1)+(n+1-1)m}}.
\]
For $r=1$, it is not hard to see
\(
  \varrho
  \in \SNNreal^{\varrho_1,1,1}_{2(n+1)+2,2,n+1+1}
  =   \SNNreal^{\varrho_1,1,1}_{2((n+1)+1),2,(n+1)+1}
\).
\end{proof}

\subsection{Proof of Lemma~\ref{lem:SplineVsReLUPow}}
\label{app:SplineVsReLUPow}


First we show that if $s \in \N$ and if $\varrho \in \Spline^s$ is not a polynomial, then
there are $\alpha,\beta,t_{0} \in \R$, $\varepsilon>0$ and $p$ a polynomial of degree at most $s-1$
such that
\begin{equation}\label{eq:PiecewiseAffineStep1}
  \varrho_{s}(z)
  = \alpha \varrho(t_{0}+z) + \beta \varrho(t_{0}-z) -p(z)
  \quad \forall z \in [-\varepsilon,+\varepsilon].
\end{equation}
Consider any $t_{0} \in \R$.
Since $\varrho \in \Spline^{s}$, there are $\varepsilon > 0$ and two polynomials $p_{-},p_{+}$
of degree at most $s$, with matching $s-1$ first derivatives at $t_{0}$, such that
\[
  \varrho(x)
  = \begin{cases}
      p_{+}(x)& \text{for}\ x \in [t_{0},t_{0}+\varepsilon]\\
      p_{-}(x)& \text{for}\ x \in [t_{0}-\varepsilon, t_{0}].
    \end{cases}
\]
Since $\varrho$ is not a polynomial, there is $t_{0}$ such that the $s$-th derivatives
of $p_{\pm}$ at $t_{0}$ do not match, i.e.
$a_{-} := p^{(s)}_{-}(t_{0})/s! \neq p^{(s)}_{+}(t_{0})/s! =: a_{+}$.
A Taylor expansion yields
\[
  \varrho(t_0+z)
  = \begin{cases}
      q(z)+ a_{+} z^s& \text{for}\ z \in [0,\varepsilon]\\
      q(z)+ a_{-} z^s& \text{for}\ z \in [-\varepsilon,0],
    \end{cases}
\]
where $q(z) := \sum_{n=0}^{s-1}  p_{\pm}^{(n)}(t_{0})z^{n}/n!$ is a polynomial of degree at most $s-1$.
As a result, for $|z| \leq \varepsilon$
\[
  a_{+} \cdot [\varrho(t_{0}+z) - q(z)] - (-1)^{s} \, a_{-} \cdot [\varrho(t_{0}-z) - q(-z)]
  = \begin{cases}
      (a_{+}^{2}-a_{-}^{2}) \cdot z^s & \text{for}\ z \in [0,\varepsilon] \\
      0                               & \text{for}\ z \in [-\varepsilon,0]
    \end{cases}
  = (a_{+}^{2}-a_{-}^{2}) \cdot \varrho_s(z).
\]
Since $a_{+} \neq a_{-}$, setting $\alpha := a_{+}/(a_{+}^{2}-a_{-}^{2})$
and $\beta := (-1)^{s+1} a_{-}/(a_{+}^{2}-a_{-}^{2})$,
as well as $p(x) := \alpha q(z)+\beta q(-z)$ we get as claimed
\(
  \varrho_s(z) = \alpha \varrho(z+t_{0}) + \beta \varrho(-z+t_{0}) -p(z)
\)
for every $|z| \leq \varepsilon$.

\medskip{}

Now consider $r \in \N$. Given $R > 0$ we now set
\[
  f_{R}(x) := (\tfrac{R}{\varepsilon})^{r}
              \left[
                \alpha\varrho(\varepsilon x/R+t_{0})
                +\beta \varrho(-\varepsilon x/R+t_{0})
                - p(\eps x / R)
              \right]
\]
with $\alpha,\beta,t_{0},\varepsilon,p$ from~\eqref{eq:PiecewiseAffineStep1}.
Observe that
\(
  \varrho_r(x) = (R/\varepsilon)^{r} \varrho_r(\varepsilon x/R) = f_{R}(x)
\)
for all $x \in [-R,R]$, so that $f_{R}$ converges locally uniformly to $\varrho_{r}$ on $\R$.

We show by induction on $r \in \N$ that $f_{R} \in \NNreal^{\varrho,1,1}_{w,2,m}$
where $w = w(r),m = m(r) \in \N$ only depend on $r$.
For $r=1$, this trivially holds as the polynomial $p$ in~\eqref{eq:PiecewiseAffineStep1} is a constant;
hence $f_{R} \in \NNreal^{\varrho,1,1}_{4,2,2}$.

Assuming the result true for some $r \in \N$ we now prove it for $r+1$.
Consider $\varrho \in \Spline^{r+1}$ that is not a polynomial.
The polynomial $p$ in~\eqref{eq:PiecewiseAffineStep1} with $s=r+1$ is of degree at most $r$;
hence by Lemma~\ref{lem:ReLUPowerRepresentsIdentity} there are $c,a_{i},b_{i},c_{i} \in \R$
such that
\(
  p(x) = c+ \sum_{i=1}^{r+1} a_{i} \, \varrho_r(b_{i} x + c_{i})
\)
for all $x \in \R$.
Now, observe that since $\varrho \in \Spline^{r+1}$ is not a polynomial,
its derivative satisfies $\varrho' \in \Spline^{r}$ and is not a polynomial either.
The induction hypothesis yields $\varrho_r \in \overline{\NNreal^{\varrho',1,1}_{w,2,m}}$
for $w=w(r),m=m(r) \in \N$.
It is not hard to check that this implies $p \in \overline{\NNreal^{\varrho',1,1}_{2(r+1)w,2,(r+1)m}}$.
Finally, as $\varrho'(x)$ is the locally uniform limit of $(\varrho(x+h)-\varrho(x))/h$ as $h \to 0$
(see Lemma~\ref{lem:DerivativeLocallyUniformConvergence}),
we obtain $p \in \overline{\NNreal^{\varrho,1,1}_{4(r+1)w,2,2(r+1)m}}$
thanks to Lemma~\ref{lem:Recursivity}.
Combined with the definition of $f_{R}$ we obtain
$f_{R} \in \overline{\NNreal^{\varrho,1,1}_{4(r+1)w+4,2,2(r+1)m+2}}$.

Finally we quantify $w, m$: First of all, note that $w(1) = 4 \leq 5$ and $m(1) = 2 \leq 3$;
furthermore, $w(r+1) \leq 4(r+1) w(r)+4 \leq 5(r+1) w(r)$
and $m(r+1) \leq 2(r+1)m+2 \leq 3(r+1)m$.
An induction therefore yields $w(r) \leq 5^r r!$ and $m(r) \leq 3^r r!$.
\hfill$\square$

\subsection{Proof of Lemma~\ref{lem:NetworkThatLocalizesAnyFunction}}
\label{app:NetworkThatLocalizesAnyFunction}

\textbf{Step 1:}
In this step, we
construct $\theta_{R,\delta} \in \NNreal^{\varrho_{r},d,1}_{w,\ell,m}$ satisfying
\begin{equation}
  |\theta_{R,\delta} (x) - \Indicator_{[-R,R]^{d}} (x)|
  \leq 2 \cdot \Indicator_{[-R-\delta,R+\delta]^{d} \setminus [-R,R]^{d}}
  \qquad \forall \, x \in \R^d \, ,
  \label{eq:ApproximateBallCutoffAccuracy}
\end{equation}
with $\ell=3$ (resp. $\ell=2$ if $d=1$) and with $w,m$ only depending on $d$ and $r$.

The affine map
\(
  P: \R^{d}\to \R^{d},
     x = (x_{i})_{i=1}^{d} \mapsto \left(\tfrac{x_{i}}{2(R+\delta)}+\tfrac{1}{2}\right)_{i=1}^{d}
\)
satisfies $\|P\|_{\ell^{0,\infty}} = \|P\|_{\ell^{0,\infty}_\ast} = 1$.
For $x \in \R^d$, we have $x \in [-R-\delta,R+\delta]^{d}$ if and only if $P(x) \in [0,1]^{d}$,
and $x \in [-R,R]^{d}$ if and only if $P(x) \in [\varepsilon,1-\varepsilon]^{d}$,
where $\varepsilon := \tfrac{2\delta}{2(R+\delta)}$;
thus, $\Indicator_{[-R,R]^d} (P^{-1} x) = \Indicator_{[\eps, 1-\eps]^d}(x)$
for all $x \in \R^d$.

Next, by combining Lemmas~\ref{lem:ExactSquashingReLUPower} and \ref{lem:ApproximationOfIndicatorCube}
(see in particular Equation~\eqref{eq:PointwiseIndicatorApproximation}),
we obtain $f \in \NNreal^{\varrho_{r},d,1}_{w,\ell,m}$
(with the above mentioned properties for $w,\ell,m$ and $m \geq d$) such that
\(
    |f(x)-\Indicator_{[0,1]^{d}} (x)|
    \leq  \Indicator_{[0,1]^{d} \setminus [\varepsilon,1-\varepsilon]^{d}}
\)
for all $x \in \R^d$.
Therefore, the function $\theta_{R,\delta} := f \circ P$ satisfies
\begin{align*}
  |\theta_{R,\delta} (x) - \Indicator_{[-R,R]^d}(x)|
  & = |f(P x) - \Indicator_{[-R,R]^d} (P^{-1} P x)| \\
  & \leq |f (P x) - \Indicator_{[0,1]^d}(P x)|
         + |\Indicator_{[0,1]^d} (P x) - \Indicator_{[\eps,1-\eps]^d} (P x)| \\
  & \leq 2 \cdot \Indicator_{[-R-\delta, R+\delta]^d \setminus [-R,R]^d} (x)
\end{align*}
for all $x \in \R^{d}$.
Finally, by Lemma~\ref{lem:NetworkCalculus}-(\ref{enu:PrePostAffine}), we have
$\theta_{R,\delta} \in \NNreal^{\varrho_{r},d,1}_{w,\ell,m}$.

\textbf{Step 2:}
Consider $g \in \NNreal^{\varrho_{r},d,k}_{W,L,N}$
and define $g_{R,\delta} (x) := \theta_{R,\delta} (x) \cdot g(x)$ for all $x \in \R^d$.
The desired estimate \eqref{eq:SpecialCutoffMainEstimateEndToEnd} is an easy consequence
of \eqref{eq:ApproximateBallCutoffAccuracy}.
It only remains to show that one can implement $g_{R,\delta}$
with a $\varrho_{r}$-network of controlled complexity.

Since we assume $W \geq 1$ we can use  Lemma~\ref{lem:BoundingLayersAndNeuronsByWeights};
combining it with Equation~\eqref{eq:LayersBoundedByNeurons}
we get $g \in \NNreal^{\varrho_{r},d,k}_{W,L',N'}$ with $L' = \min \{ L,W,N+1 \}$
and $N' = \min \{ N,W \}$.
Lemma~\ref{lem:SummationLemma}-(\ref{enu:Cartesian}) yields
$(\theta_{R,\delta},g) \in \NNreal^{\varrho_r,d,k+1}_{w', L'', m'}$
with $L'' = \max \{ L',\ell \}$ as well as
$w' = W + w + \min \{ d,k \} \cdot (L''-1)$ and
$m' = N' + m + \min \{ d,k \} \cdot (L''-1)$.
Since $L''-1 = \max \{ L'-1,\ell-1 \} \leq \max \{ W-1,\ell-1 \} \leq W+\ell-2$
and $N' \leq W$, we get
\begin{align*}
  w' &\leq W + w + \min \{ d,k \} \cdot (W+\ell-2) = W \cdot (1+\min \{ d,k \}) +c_{1} \\
  m' &\leq W + m + \min \{ d,k \} \cdot (W+\ell-2) = W \cdot (1+\min \{ d,k \}) +c_{2}.
\end{align*}
where $c_{1},c_{2}$ only depend on $d,k,r$.

As $r \geq 2$, Lemma~\ref{lem:ReLUPowerRepresentsIdentity} shows that $\varrho_{r}$
can represent any polynomial of degree two with ${n=2(r+1)}$ terms.
Thus, Lemma~\ref{lem:MultNetwork} shows that the multiplication map
$m : \R \times \R^k \to \R^k, (x,y) \mapsto x \cdot y$ satisfies
$m \in \NNreal^{\varrho_r, 1+k, k}_{12k(r+1), 2,4k(r+1)}$.
Finally, Lemma~\ref{lem:NetworkCalculus}-(\ref{enu:ComposLessDepth}) proves that
$g_{R,\delta} =  m \circ (\theta_{R,\delta},g) \in \NNreal^{\varrho_r,d,k}_{w'',L''',m''}$,
where ${L''' = L''+1}$ and $m'' = m'+ 4k(r+1) = N' + m + \min \{ d,k \} \cdot (L''-1) + 4k(r+1)$
as well as ${w'' = w' + \max \{ m',d \} \cdot 12 k(r+1)}$.

As $L'' = \max \{ L',\ell \} \leq \max \{ L,\ell \}$
we have $L''' \leq \max \{ L+1,4 \}$
(respectively $L''' \leq \max \{ L+1,3 \}$ if $d=1$).
Furthermore, since $m' \geq m \geq d$ we have $\max \{ m',d \} = m'$.
Because of $W \geq 1$, we thus see that
\[
  w'' =    w'+m' \cdot 12 k(r+1)
      \leq W \cdot (1 + \min \{ d,k \}) \cdot (1 + 12k(r+1)) + c_{3}
      \leq c_{4}W
\]
where $c_{3},c_{4}$ only depend on $d,k,r$.
Finally, $L''-1 = \max \{ L'-1,\ell-1 \} \leq \max \{ N,\ell-1 \} \leq N + \ell - 1$.
Since $N' \leq N$, we get $m'' \leq N \cdot (1+\min \{ d,k \}) + c_{5} \leq c_{6} N$
where again $c_{5},c_{6}$ only depend on $d,k,r$.
To conclude, we set $c := \max \{ c_{4},c_{6} \}$.
\hfill$\square$

\subsection{Proof of Proposition~\ref{prop:HighReLUPowersApproximateLowPowersUnbounded}}
\label{sub:ReLUPowerNestingUnbounded}

When $r=1$ and $\varrho \in \NNreal^{\varrho_{r},1,1}_{\infty,2,m}$ the result follows from
Lemma~\ref{lem:RecursiveNNsets}.

Now, consider $f \in \NNreal_{W,L,N}^{\varrho, d, k}$ such that $f|_{\Omega} \in X$.
Since $\varrho \in \overline{\NNreal^{\varrho_{r},1,1}_{\infty,2,m}}$,
Lemma~\ref{lem:Recursivity} shows that
\begin{equation}\label{eq:Tmp0}
  \NNreal_{W, L,N}^{\varrho, d, k} \subset \overline{\NNreal_{Wm^{2}, L,Nm}^{\varrho_r, d, k}},
  \quad \text{ with closure in the topology of locally uniform convergence on } \R^d.
\end{equation}
For bounded $\Omega$, locally uniform convergence implies convergence in $X = \StandardXSpace(\Omega)$
for all $p \in (0,\infty]$ hence the result.

For unbounded $\Omega$ we need to work a bit harder.
First we deal with the degenerate case where $W=0$ or $N=0$.
If $W=0$ then by Lemma~\ref{lem:ConstantMaps} $f$ is a constant map;
hence $f \in \NNreal^{\varrho_r,d,k}_{0,1,0}$.
If $N=0$ then $f$ is affine-linear with $\|f\|_{\ell^{0}} \leq W$;
hence $f\in \NNreal^{\varrho_r,d,k}_{W,1,0}$.
In both cases the result trivially holds.

From now on we assume that $W,N \geq 1$.
Consider $\eps > 0$.
By the dominated convergence theorem (in case of $p < \infty$) or our special choice
of $\StandardXSpace[k][\infty](\Omega)$ (cf.~Equation~\eqref{eq:StandardXSpace})
(in case of $p = \infty$) we see that there is some $R \geq 1$ such that
\[
  \|f - f \cdot \Indicator_{[-R,R]^{d}}\|_{L_p(\Omega; \R^k)}
  \leq \eps'
  :=   \frac{\eps}{8^{1 / \min\{1,p\}}}.
\]
Denoting by $\lambda(\cdot)$ the Lebesgue measure, \eqref{eq:Tmp0} implies that there is
$g \in \NNreal_{Wm^{2}, L,Nm}^{\varrho_r, d, k}$ such that
\[
  \|f - g\|_{L_\infty ([-R-1,R+1]^{d};\R^{k})}
  \leq \eps' \big/ \, \big[ \lambda([-R-1,R+1]^{d}) \big]^{1/p} .
\]
Consider $c = c(d,k,r)$, $\ell = \min \{ d+1, 3 \}$, $L' = \max \{ L+1,\ell \}$ and the function
$g_{R,1} \in \NNreal^{\varrho_r, d, k}_{cWm^{2},L',cNm}$
from Lemma~\ref{lem:NetworkThatLocalizesAnyFunction}.
By~\eqref{eq:SpecialCutoffMainEstimateEndToEnd} and the fact that $\|\cdot\|_{L_p}^{\min \{1,p\}}$
is subadditive, we see
\begin{align*}
      \|f - g_{R,1}\|_{L_p (\Omega;\R^k)}^{\min \{1,p\}}
    & \leq \| f - f \cdot \Indicator_{[-R,R]^d} \|_{L_p (\Omega; \R^k)}^{\min \{1,p\}}
           + \| (f - g) \Indicator_{[-R,R]^d} \|_{L_p (\Omega; \R^k)}^{\min \{1,p\}} \\
    &      \qquad
           + \|
               g \cdot \Indicator_{[-R,R]^d}(x) - g_{R,1}
             \|_{L_p (\Omega;\R^k)}^{\min \{1,p\}} \\
    & \leq \frac{\eps^{\min\{1,p\}}}{8}
           + \left(
               \|f - g\|_{L_\infty([-R-1,R+1]^d;\R^{k})} \cdot [ \lambda([-R,R]^d) ]^{1/p}
             \right)^{\min \{1,p\}} \\
    &      \qquad
           + \left(
               \|\ 2 \cdot |g| \cdot \Indicator_{[-R-1,R+1]^d \setminus [-R,R]^d}\|_{L_p (\Omega)}
             \right)^{\min \{1,p\}} \, ,
             \\
    & \leq \frac{\eps^{\min\{1,p\}}}{2}
           + \left(
               \|\  2 \cdot |g| \cdot \Indicator_{[-R-1,R+1]^d \setminus [-R,R]^d}\|_{L_p (\Omega)}
             \right)^{\min \{1,p\}} \, .
\end{align*}
To estimate the final term, note that
\begin{align*}
    & \Big(
         \|\ |g| \cdot \Indicator_{[-R-1,R+1]^d \setminus [-R,R]^d} \|_{L_p (\Omega)}
      \Big)^{\min \{1,p\}} \\
    & \leq \Big(
             \|\ |g-f| \cdot \Indicator_{[-R-1,R+1]^d \setminus [-R,R]^d}\|_{L_p (\Omega)}
           \Big)^{\min\{1,p\}}
           + \Big(
               \|\ |f| \cdot \Indicator_{[-R-1,R+1]^d \setminus [-R,R]^d} \|_{L_p (\Omega)}
             \Big)^{\min \{1,p\}} \\
    & \leq \Big(
             \| f - g \|_{L_\infty ([-R-1,R+1]^d;\R^{k})} \cdot [\lambda ([-R-1,R+1]^d)]^{1/p}
           \Big)^{\min \{1,p\}}
           + \Big(
               \| f - f \cdot \Indicator_{[-R,R]^d}\|_{L_p (\Omega; \R^k)}
             \Big)^{\min\{1,p\}} \\
    & \leq \frac{\eps^{\min \{1,p\}}}{8}
           + \frac{\eps^{\min \{1,p\}}}{8} \, .
\end{align*}
Because of $2^{\min\{1,p\}} \leq 2$, this implies
\(
  \Big(
     \|\ 2 \cdot |g| \cdot \Indicator_{[-R-1,R+1]^d \setminus [-R,R]^d} \|_{L_p (\Omega)}
  \Big)^{\min \{1,p\}}
  \leq \tfrac{\eps^{\min \{1,p\}}}{2}
\).
Overall, we thus see that $\| f - g_{R,1} \|_{L_p (\Omega; \R^k)} \leq \eps < \infty$.
Because of $f|_{\Omega} \in X$, this implies in particular that $g_{R,1}|_{\Omega} \in X$.
Since $\eps > 0$ was arbitrary, we get as desired that
\(
  f|_{\Omega} \in \overline{\NNreal^{\varrho_r, d, k}_{cWm^{2}, L', cNm} \cap X}^{X}
\),
where the closure is taken in $X$.
\hfill$\square$


\section{Proofs for Section~\ref{sec:directinverse}}

\subsection{Proof of Lemma~\ref{lem:MultiBspline}}
\label{app:MultiBspline}

In light of~\eqref{eq:DecompBSpline} we have
$\beta_{+}^{(t)} \in \NNreal^{\varrho_t,1,1}_{2(t+2),2,t+2}$.
This yields the result for $d=1$, including when $t=1$.

For $d \geq 2$ and $t \geq \min \{d,2\} = 2$,
define $f_j:\R^d\rightarrow \R$ by $f_j := \beta^{(t)}_+ \circ \pi_j$
with $\pi_j:\R^d\rightarrow \R, x \mapsto x_j$, $j=1,\ldots,d$.
By Lemma~\ref{lem:NetworkCalculus}--(\ref{enu:PrePostAffine}) together with the fact
that $\|\pi_j\|_{\ell^{0,\infty}_*} = 1$ we get $f_j \in \NNreal^{\varrho_t,d,1}_{2(t+2),2,t+2}$.
Form the vector function $f := (f_1,f_2,\ldots,f_d)$.
Using Lemma~\ref{lem:SummationLemma}-(\ref{enu:Cartesian}),
we deduce $f\in \NNreal^{\varrho_t,d,d}_{2d(t+2),2,d(t+2)}$.

As $t \geq 2$, by Lemma~\ref{lem:ReLUPowerRepresentsIdentity}, $\varrho_t$ can represent
any polynomial of degree two with $n = 2(t+1)$ terms.
Hence, for $d \geq 2$, by Lemma~\ref{lem:MultNetwork} the multiplication function
$M_d : \R^d \to \R, (x_1,\dots,x_d) \mapsto x_1 \cdots x_d$ satisfies
$M_{d} \in \NNreal^{\varrho_t,d,1}_{4n(2^{j}-1),2j,(2n+1)(2^{j}-1)-1}$
with $j := \lceil \log_{2} d \rceil$.
By definition, $2^{j-1} < d \leq 2^{j}$, hence $2^{j}-1 \leq 2(d-1)$ and
$6 n (2^{j}-1) \leq  12 n(d-1) = 24(t+1)(d-1)$, as well as
\[
  (2n+1)(2^{j}-1)-1 \leq (4n+2)(d-1)-1 = (8t+10)(d-1)-1 ,
\]
so that $M_{d}  \in \NNreal^{\varrho_t,d,1}_{24(t+1)(d-1),2j,(8t+10)(d-1)-1}$.
As $\beta_d^{(t)} = M_{d} \circ f$, by Lemma~\ref{lem:NetworkCalculus}--(\ref{enu:Compos}) we get
\[
  \beta_d^{(t)} \in \NNreal^{\varrho_t,d,1}_{2d(t+2)+24(t+1)(d-1),2j+2,d(t+2)+(8t+10)(d-1)-1+d}.
\]
To conclude, we observe that
\begin{flalign*}
  && 2 d(t + 2) + 24(t+1)(d-1)
  & \leq d(2t+4+24t+24) = d(26t+28) \leq 28d(t+1) \\
  && d(t+2)+(8t+10)(d-1)-1+d & \leq d(t+2+8t+10+1) = d(9t+13) \leq 13d(t+1).
  && \square
\end{flalign*}

\subsection{Proof of Theorem~\ref{thm:better_direct}}
\label{sub:BetterDirectProof}

We divide the proof into three steps.\smallskip{}

\noindent
\textbf{Step 1 (Recalling results from \cite{DeVore:1988fe}):}
Using the tensor B-splines $\beta_d^{(t)}$ introduced in Equation~\eqref{eq:TensorBSpline},
define $N := N^{(\tau)} := \beta_d^{(\tau-1)}$ for $\tau \in \N$, and note that this coincides
with the definition of $N$ in \cite[Equation (4.1)]{DeVore:1988fe}.
Next, as in \cite[Equations (4.2) and (4.3)]{DeVore:1988fe},
for $k \in \N_0$ and $j \in \Z^d$,
define $N_k^{(\tau)} (x) := N^{(\tau)} (2^k x)$
and $N_{j,k}^{(\tau)} (x) := N^{(\tau)} (2^k x - j)$.
Furthermore, let $\Omega_0 := (- \tfrac{1}{2}, \tfrac{1}{2})^d$ denote the unit cube, and set
\[
  \Lambda^{(\tau)}(k) := \big\{ j \in \Z^d \colon N_{j,k}^{(\tau)}|_{\Omega_0} \not\equiv 0 \big\}
  \quad \text{and} \quad
  \Sigma_k^{(\tau)} := \mathrm{span} \{ N_{j,k}^{(\tau)} \colon j \in \Lambda^{(\tau)}(k) \big\},
\]
and finally $s_k^{(\tau)} (f)_p := \inf_{g \in \Sigma_k^{(\tau)}} \|f - g\|_{L_p}$
for $f \in X_p (\Omega_0)$ and $k \in \N_0$.
Setting $\lambda^{(\tau,p)} := \tau - 1 + \min \{ 1, p^{-1} \}$,
\cite[Theorem 5.1]{DeVore:1988fe} shows
\begin{equation}
  \|f\|_{B_{p,q}^\alpha (\Omega_0)}
  \asymp \|f\|_{L_p} + \big\| \big( s_k^{(\tau)} (f)_p \big)_{k \in \N_0} \big\|_{\ell_q^\alpha}
  \quad \forall \, p,q \in (0,\infty],
                   \alpha \in (0,\lambda^{(\tau,p)}),
                   \text{ and } f \in B_{p,q}^\alpha (\Omega_0).
  \label{eq:BesovSpacesAreBSplineApproximationSpaces}
\end{equation}
Here, $\| (c_k)_{k \in \N_0} \|_{\ell_q^\alpha} = \|(2^{\alpha k} \, c_k)_{k \in \N_0}\|_{\ell^q}$;
see \cite[Equation (5.1)]{DeVore:1988fe}.

\medskip{}

\noindent
\textbf{Step 2 (Proving the embedding
$B_{p,q}^{d \alpha} (\Omega_0) \hookrightarrow A_q^\alpha (X_p(\Omega_0), \Sigma(\CalD_d^{\tau-1}))$):}
Define $\Sigma(\CalD_d^t) := (\Sigma_n (\CalD_d^t))_{n \in \N_0}$.
In this step, we show that
$B_{p,q}^{d \alpha} (\Omega_0) \hookrightarrow A_q^\alpha (X_p (\Omega_0), \Sigma(\CalD_d^{\tau-1}))$
for any $\tau \in \N$ and all $p,q \in (0,\infty]$ and $\alpha > 0$
with $0 < d \alpha < \lambda^{(\tau,p)}$.

To this end, we first show that if we choose $X = X_{p}(\Omega_0)$,
then the family $\Sigma(\CalD_d^{\tau-1})$ satisfies
the properties \ref{enu:GammaContainsZero}--\ref{enu:GammaDense}.
To see this, we first have to show $\Sigma_n(\CalD_d^{\tau-1}) \subset X_p(\Omega_0)$.
For $p < \infty$, this is trivial, since $N^{(\tau)} = \beta_d^{(\tau-1)}$ is bounded and measurable.
For $p = \infty$ this holds as well, since if $\tau \geq 2$, then $N^{(\tau)} = \beta_d^{(\tau-1)}$
is continuous; finally, the case $\tau = 1$ cannot occur for $p = \infty$, since this would imply
\[0 < d \alpha < \lambda^{(\tau,p)} = \tau - 1 + \min \{ 1, p^{-1} \} = 0.\]
Next, Properties~\ref{enu:GammaContainsZero}--\ref{enu:GammaAdditive} are trivially satisfied.
Finally, the density of $\bigcup_{n=0}^\infty \Sigma_n (\CalD_{d}^{\tau-1})$ in $X_p(\Omega_0)$
is well-known for $\tau = 1$, since then $\beta_0^{(\tau-1)} = \Indicator_{[0,1)^d}$ and $p < \infty$.
For $\tau \geq 2$, the density follows with the same arguments that were used for the case
$p = \infty$ in Section~\ref{sub:DensityClaim}.

Next, note that $\supp N^{(\tau)} \subset [0,\tau]^d$ and thus
$\supp N^{(\tau)}_{j,k} \subset 2^{-k} (j + [0,\tau]^d)$.
Therefore, if $j \in \Lambda^{(\tau)}(k)$, then $\emptyset \neq \Omega_0 \cap \supp N_{j,k}$,
so that there is some $x \in \Omega_0 \cap 2^{-k}(j + [0,\tau]^d)$.
This implies ${j \in \Z^d \cap [-2^{k-1} - \tau, 2^{k-1}]^d}$,
and thus $|\Lambda^{(\tau)}(k)| \leq (2^k + \tau + 1)^d$.
Directly by definition of $\Sigma_n(\CalD_d^t)$ and $\Sigma_k^{(\tau)}$, this implies
\begin{equation}
  \Sigma_k^{(\tau)} \subset \Sigma_{(2^k + \tau + 1)^d} (\CalD_d^{\tau-1})
  \qquad \forall \, k \in \N_0 .
  \label{eq:SplineMainInclusion}
\end{equation}

Next, since we are assuming $0 < \alpha d < \lambda^{(\tau,p)}$,
Equation~\eqref{eq:BesovSpacesAreBSplineApproximationSpaces} yields a constant
$C_1 = C_1 (p,q,\alpha,\tau,d) > 0$ such that
\(
  \|f\|_{L_p} + \big\| \big(s_k^{(\tau)} (f)_p \big)_{k \in \N_0} \big\|_{\ell_q^{d \alpha}}
  \leq C_1 \cdot \|f\|_{B_{p,q}^{d \alpha}(\Omega_0)}
\)
for all $f \in B_{p,q}^{d \alpha}(\Omega_0)$.
Therefore, we see for $f \in B_{p,q}^{d \alpha}(\Omega_0)$ and $q < \infty$ that
\begin{align*}
  \|f\|_{A_q^\alpha (X_p (\Omega_0), \Sigma(\CalD_d^{\tau-1}))}^q
  & = \sum_{n=1}^{\infty}
        n^{-1} \cdot [n^\alpha \cdot E(f, \Sigma_{n-1}(\CalD_d^{\tau-1}))_{L_p(\Omega_0)}]^q \\
  & \leq \|f\|_{L_p}^q \sum_{n=1}^{(\tau+2)^d} n^{\alpha q - 1}
         + \sum_{k=0}^{\infty}
             \sum_{n = (2^k + \tau + 1)^d + 1}^{(2^{k+1} + \tau + 1)^d}
               n^{\alpha q - 1}
               [E (f, \Sigma_{n-1}(\CalD_d^{\tau-1}))_{L_p(\Omega_0)}]^q \\
  & \overset{(\ast)}{\leq} C_2 \cdot \|f\|_{L_p}^q
                           + C_4 \sum_{k=0}^\infty
                                   2^{kd} 2^{dk (\alpha q - 1)} [s_k^{(\tau)}(f)_p]^q \\
  & \leq (C_2 + C_4) \cdot \big(
                             \|f\|_{L_p}
                             + \big\|
                                 \big( s_k^{(\tau)} (f)_p \big)_{k \in \N_0}
                               \big\|_{\ell_q^{d\alpha}}
                           \big)^q
    \leq C_1^q \cdot (C_2 + C_4) \cdot \|f\|_{B_{p,q}^{d\alpha}(\Omega_0)}^q .
\end{align*}
At the step marked with $(\ast)$, we used that Equation~\eqref{eq:SplineMainInclusion} yields
\(
  \Sigma_{n-1}(\CalD_d^{\tau-1})
  \supset \Sigma_{(2^k+\tau+1)^d} (\CalD_d^{\tau-1})
  \supset \Sigma_k^{(\tau)}
\)
for all $n \geq 1 + (2^k + \tau + 1)^d$, and furthermore that if
$1 + (2^k + \tau + 1)^d \leq n \leq (2^{k+1} + \tau + 1)^d$, then
$2^{dk} \leq n \leq (\tau+3)^d \cdot 2^{dk}$,
so that $n^{\alpha q - 1} \leq C_3 2^{dk (\alpha q - 1)}$
for some constant $C_3 = C_3(d,\tau,\alpha,q)$, and finally that
\(
  \sum_{n= (2^k + \tau + 1)^d + 1}^{(2^{k+1} + \tau + 1)^d} 1
  \leq (2^{k+1} + \tau + 1)^d
  \leq (\tau+3)^d \cdot 2^{dk}
\).

For $q = \infty$, the proof is similar.
Setting $\ell_k := (2^k + \tau + 1)^d$ for brevity, we see with similar estimates as above that
\begin{align*}
  & \|f\|_{A_\infty^\alpha (X_p(\Omega_0), \Sigma(\CalD_d^{\tau-1}))} \\
  & = \max \Big\{
             \max_{0 \leq n \leq (\tau+2)^d}
               n^\alpha \, E(f, \Sigma_{n-1}(\CalD_d^{\tau-1}))_{L_p(\Omega_0)},
             \quad
             \sup_{k \in \N_0}
               \max_{\ell_k + 1 \leq n \leq \ell_{k+1}}
                 n^\alpha \, E(f, \Sigma_{n-1}(\CalD_d^{\tau-1}))_{L_p (\Omega_0)}
           \Big\} \\
  & \leq \max \Big\{
                (\tau+2)^{\alpha d} \, \|f\|_{L_p(\Omega_0)},
                \quad
                \sup_{k \in \N_0}
                  (\tau+3)^{\alpha d} \, 2^{\alpha d k} s_k^{(\tau)}(f)_p
              \Big\} \\
  & \leq (\tau+3)^{\alpha d} \big(
                            \|f\|_{L_p(\Omega_0)}
                            + \| (s_k^{(\tau)}(f)_p)_{k \in \N_0} \|_{\ell_q^{d \alpha}}
                           \big)
  \leq C_1 \, (\tau+3)^{\alpha d} \, \|f\|_{B_{p,\infty}^{d \alpha}(\Omega_0)}.
\end{align*}
Overall, we have shown
$B_{p,q}^{d \alpha}(\Omega_0) \hookrightarrow A_q^\alpha (X_p (\Omega_0), \Sigma(\CalD_d^{\tau-1}))$
for $\tau \in \N$, $p,q \in (0,\infty]$ and $0 < \alpha d < \lambda^{(\tau,p)}$.

\medskip{}

\noindent
\textbf{Step 3 (Proving the embeddings~\eqref{eq:BesovDirectDimension1} and \eqref{eq:BesovDirectGeneralDimension}):}
In case of $d = 1$, let us set $r_0 := r$,
while $r_0$ is as in the statement of the theorem for $d > 1$.
Since $\Omega$ is bounded and $\Omega_0 = (-\tfrac{1}{2}, \tfrac{1}{2})^d$,
there is some $R > 0$ such that $\Omega \subset R \cdot \Omega_0$.
Let us fix $p,q \in (0,\infty]$ and $s > 0$ such that $d s < r_0 + \min \{1, p^{-1} \}$.

Since $\Omega$ and $R \cdot \Omega_0$ are bounded Lipschitz domains,
there exists a (not necessarily linear) \emph{extension operator}
$\mathcal{E} : B^{d s}_{p,q} (\Omega) \to B^{d s}_{p,q} (R \Omega_0)$
with the properties $(\mathcal{E} f)|_{\Omega} = f$ and
$\|\mathcal{E} f\|_{B^{d s}_{p,q}(R \Omega_0)} \leq C \cdot \|f\|_{B^{d s}_{p,q}(\Omega)}$
for all $f \in B^{d s}_{p,q}(\Omega)$.
Indeed, for $p \in [1,\infty]$ this follows from
\cite[Section 4, Corollary 1]{JohnenSchererEquivalenceKFunctionalModulusContinuity},
since this corollary yields an extension operator
$\mathcal{E} : X_p (\Omega) \to X_p (R \Omega_0)$ with the additional property
that the $j$-th modulus of continuity $\omega_j$ satisfies
$\omega_j (t, \mathcal{E} f)_{R \Omega_0} \leq M_j \cdot \omega_j (t, f)_{\Omega}$
for all $j \in \N$, all $f \in X_p(\Omega)$, and all $t \in [0,1]$.
In view of the definition of the Besov spaces
(see in particular \cite[Chapter 2, Theorem 10.1]{ConstructiveApproximation}),
this easily implies the result.
Finally, in case of $p \in (0,1)$, the existence of the extension operator follows
from \cite[Theorem 6.1]{DeVSharp93}.
In addition to the existence of the extension operator, we will also need that the dilation
operator $D_1 : B^{d s}_{p,q}(R \Omega_0) \to B^{d s}_{p,q} (\Omega_0), f \mapsto f(R \bullet)$
is well-defined and bounded, say $\|D_1\| \leq C_1$; this follows directly from the definition
of the Besov spaces.

We first prove Equation~\eqref{eq:BesovDirectDimension1}, that is, we consider the case $d = 1$.
To this end, define $\tau := r + 1 \in \N$, let $f \in B^{s}_{p,q}(\Omega)$ be arbitrary,
and set $f_1 := D_1 (\mathcal{E} f) \in B^{s}_{p,q}(\Omega_0)$.
By applying Step~2 with $\alpha = s$ (and noting that
$0 < d \alpha = s < r + \min\{1,p^{-1}\} = \lambda^{(\tau,p)}$),
we get $f_1 \in A_q^s (X_p(\Omega_0), \Sigma(\CalD_d^{r}))$, with
$\|f_1\|_{A_q^s (X_p(\Omega_0), \Sigma(\CalD_d^{r}))} \leq C C_1 C_2 \cdot \|f\|_{B^{d s}_{p,q}(\Omega)}$,
where the constant $C_2$ is provided by Step~2.

Next, we note that $L := \sup_{n \in \N} \mathscr{L}(n) \geq 2 = 2 + 2 \lceil \log_2 d \rceil$
and $r \geq 1 = \min \{d,2\}$, so that Corollary~\ref{cor:JacksonAbstractReLUPower}-(2) shows
\(
  f_1
  \in A_q^s (X_p(\Omega_0), \Sigma(\CalD_d^r))
  \hookrightarrow \WASpace[X_p(\Omega_0)][\varrho_r][q][s]
\).
But it is an easy consequence of Lemma~\ref{lem:NetworkCalculus}-(1) that the dilation operator
\(
  D_2 : \WASpace[X_p(\Omega_0)][\varrho_r][q][s] \to \WASpace[X_p(R \Omega_0)][\varrho_r][q][s],
        g \mapsto g (\bullet / R)
\)
is well-defined and bounded.
Hence, we see that $D_2 f_1 \in \WASpace[X_p(R \Omega_0)][\varrho_r][q][s]$
with $\|D_2 f_1\|_{\WASpace[X_p(R \Omega_0)][\varrho_r][q][s]} \lesssim \|f\|_{B^{d s}_{p,q}(\Omega)}$.
Now, note $D_2 f_1(x) = f_1 (x/R) = \mathcal{E} f (x) = f(x)$ for all $x \in \Omega \subset R \Omega_0$,
and hence $f = (D_2 f_1)|_{\Omega}$.
Thus, Remark~\ref{sec:RestrictionCartesian} implies that
$f \in \WASpace[X_p(\Omega)][\varrho_r][q][s]$
with $\|f\|_{\WASpace[X_p(\Omega)][\varrho_r][q][s]} \lesssim \|f\|_{B^{d s}_{p,q}(\Omega)}$,
as claimed.

\medskip{}

Now, we prove Equation~\eqref{eq:BesovDirectGeneralDimension}.
To this end, define $\tau := r_0 + 1 \in \N$, let $f \in B^{s d}_{p,q}(\Omega)$ be arbitrary,
and set $f_1 := D_1 (\mathcal{E} f) \in B^{d s}_{p,q}(\Omega_0)$.
Applying Step~2 with $\alpha = s$
(noting ${0 < d \alpha = d s < r_0 + \min\{1, p^{-1}\} = \lambda^{(\tau,p)}}$),
we get $f_1 \in A_q^s (X_p (\Omega_0), \Sigma(\CalD_d^{r_0}))$, with
\(
  \|f_1\|_{A_q^s (X_p (\Omega_0), \Sigma(\CalD_d^{r_0}))}
  \leq C C_1 C_2 \cdot \|f\|_{B^{d s}_{p,q}(\Omega)}
\),
where the constant $C_2$ is provided by Step~2.

Next, we claim that
$A_q^s (X_p (\Omega_0), \Sigma(\CalD_d^{r_0})) \hookrightarrow \WASpace[X_p(\Omega_0)][\varrho_r][q][s]$.
Indeed, if $r \geq 2$ and $L \geq 2 + 2 \lceil \log_2 d \rceil$, then this follows from
Corollary~\ref{cor:JacksonAbstractReLUPower}--(2). 
Otherwise, we have $r_0 = 0$ and $L \geq 3 \geq \min \{d+1, 3\}$,
so that the claim follows from Corollary~\ref{cor:JacksonAbstractReLUPower}--(1);
here, we note that $p < \infty$, since we would otherwise get the contradiction
$0 < \alpha d < r_0 + \min \{1, p^{-1} \} = 0$.
Therefore, $f_1 \in \WASpace[X_p(\Omega_0)][\varrho_r][q][s]$
with $\|f_1\|_{\WASpace[X_p(\Omega_0)][\varrho_r][q][s]} \lesssim \|f\|_{B^{d s}_{p,q}(\Omega)}$.
The rest of the argument is exactly as in the case $d = 1$.
\hfill$\square$

\subsection{Proof of Lemma~\ref{lem:SawtoothImplementation}}
\label{sub:SawtoothImplementation}

Lemma~\ref{lem:SawtoothImplementation} shows that deeper networks can implement the sawtooth
function $\Delta_j$ using less connections/neurons than more shallow networks.
The reason for this is indicated by the following lemma.

\begin{lem}\label{lem:SawtoothIteration}
  For arbitrary $j \in \N$, we have $\Delta_j \circ \Delta_1 = \Delta_{j+1}$.
\end{lem}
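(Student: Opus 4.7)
The plan is to reduce the identity to two structural properties of the sawtooth family: a self-similar decomposition of $\Delta_{j+1}$ on the halves $[0,1/2]$ and $[1/2,1]$, and the symmetry of each $\Delta_j$ about $1/2$. Combined with the explicit piecewise description of the tent map $\Delta_1$, the identity then falls out by a two-case computation.

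First I would record the explicit piecewise form of $\Delta_1$: one has $\Delta_1(x)=2x$ on $[0,1/2]$, $\Delta_1(x)=2-2x$ on $[1/2,1]$, and $\Delta_1(x)=0$ for $x\notin[0,1]$. In particular $\Delta_1$ maps $[0,1/2]$ affinely onto $[0,1]$ and maps $[1/2,1]$ affinely onto $[0,1]$ in reverse.

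Next I would establish the self-similar decomposition
\[
\Delta_{j+1}(x)=
\begin{cases}
\Delta_j(2x), & x\in[0,1/2],\\
\Delta_j(2x-1), & x\in[1/2,1],\\
0, & x\notin[0,1].
\end{cases}
\]
This follows directly from \eqref{eq:DefSawtooth}: for $x\in[0,1/2]$, only the summands with $k\in\{0,\dots,2^{j-1}-1\}$ can contribute (the others vanish because $2^{j}x-k\le 0$), and these yield precisely $\Delta_j(2x)$ after identifying $2^{j-1}\cdot(2x)=2^{j}x$. For $x\in[1/2,1]$ one reindexes the remaining summands via $k'=k-2^{j-1}$ and recognizes $2^{j}x-2^{j-1}-k'=2^{j-1}(2x-1)-k'$, giving $\Delta_j(2x-1)$.

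Then I would verify the symmetry $\Delta_j(t)=\Delta_j(1-t)$ for $t\in[0,1]$. Expanding the left-hand side via \eqref{eq:DefSawtooth}, the substitution $k'=2^{j-1}-1-k$ rewrites each summand as $\Delta_1\bigl(1-(2^{j-1}t-k')\bigr)$, and $\Delta_1$ is itself symmetric about $1/2$, so we recover $\Delta_j(t)$.

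Finally I would assemble the identity by cases. For $x\in[0,1/2]$, $\Delta_1(x)=2x$, so $\Delta_j(\Delta_1(x))=\Delta_j(2x)=\Delta_{j+1}(x)$ by the decomposition. For $x\in[1/2,1]$, $\Delta_1(x)=2-2x$, so $\Delta_j(\Delta_1(x))=\Delta_j(2-2x)=\Delta_j(2x-1)=\Delta_{j+1}(x)$, where the middle equality uses the symmetry step with $t=2x-1$. For $x\notin[0,1]$, $\Delta_1(x)=0$ and both $\Delta_{j+1}(x)$ and $\Delta_j(0)$ vanish. None of these steps is a serious obstacle; the only thing requiring care is the bookkeeping for the two reindexings, especially checking that the boundary cases $x\in\{0,1/2,1\}$ match (both sides equal zero there), which is routine.
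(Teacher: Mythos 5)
Your proof is correct and follows essentially the same strategy as the paper's: compute directly on $[0,1/2]$ by identifying that only the first $2^{j-1}$ summands of $\Delta_{j+1}$ contribute there, and handle $[1/2,1]$ by invoking the symmetry $\Delta_j(t)=\Delta_j(1-t)$. The only organizational difference is that you factor out the self-similar decomposition of $\Delta_{j+1}$ as a standalone step, whereas the paper folds the same sum manipulation directly into Case 1 and then pulls the Case 2 conclusion back to Case 1 by applying symmetry to both $\Delta_1$ and $\Delta_{j+1}$.
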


\begin{proof}
  It suffices to verify the identity on $[0,1]$, since if $x \in \R \setminus [0,1]$,
  then $\Delta_1 (x) = 0 = \Delta_{j+1} (x)$, so that
  $\Delta_j(\Delta_1(x)) = \Delta_j (0) = 0 = \Delta_{j+1} (x)$.
  We now distinguish two cases for $x \in [0,1]$.

  \smallskip{}

  \emph{Case 1:} $x \in [0,\tfrac{1}{2}]$.
  This implies $\Delta_1 (x) = 2x$, and hence
  (recall the definition of $\Delta_{j}$ in Equation~\eqref{eq:DefSawtooth})
  \[
    \Delta_j (\Delta_1(x))
    = \sum_{k=0}^{2^{j-1} - 1}
        \Delta_1 \big( 2^{j-1} 2x - k \big)
    = \sum_{k=0}^{2^{j - 1} - 1}
        \Delta_1(2^{(j+1) - 1} x - k)
    = \Delta_{j+1}(x).
  \]
  In the last equality we used that $2^j x - k \leq 2^{j-1} - k \leq 0$
  for $k \geq 2^{j-1}$, so that $\Delta_1 (2^j x - k) = 0$ for those $k$.

  \medskip{}

  \emph{Case 2:} $x \in (\tfrac{1}{2}, 1]$.
  Observe that $\Delta_j (x) = \Delta_j (1-x)$ for all $x \in \R$ and $j \in \N$.
  Since $x' := 1-x \in [0,1/2]$, this identity and Case 1 yield
  \(
    \Delta_j \circ \Delta_1 (x)
    = \Delta_j \circ \Delta_1 (1-x)
    = \Delta_{j+1}(1-x)
    = \Delta_{j+1}(x)
  \).
\end{proof}

Using Lemma~\ref{lem:SawtoothIteration}, we can now provide the proof
of Lemma~\ref{lem:SawtoothImplementation}.

\begin{proof}[Proof of Lemma~\ref{lem:SawtoothImplementation}]
  \emph{Part (1):}
  Write $j = k (L-1) + s$ for suitable $k \in \N_0$ and $0 \leq s \leq L - 2$.
  Note that this implies $k \leq j / (L-1)$.
  Thanks to Lemma~\ref{lem:SawtoothIteration}, we have
  $\Delta_j = \Delta_{k+s} \circ \Delta_k \circ \cdots \circ \Delta_k$, where $\Delta_k$
  occurs $L-2$ times.
  Furthermore, since $\Delta_k : \R \to \R$ is affine with $2 + 2^k$ pieces
  (see Figure~\ref{fig:SawToothPlot}, and note that we consider $\Delta_k$ as a function
  on all of $\R$, not just on $[0,1]$), Lemma~\ref{lem:PiecewisePolynomialCont} shows that
  $\Delta_k \in \NNreal^{\varrho_1,1,1}_{\infty,2,3+2^k}$.
  By the same reasoning, we get $\Delta_{k+s} \in \NNreal_{\infty,2,3+2^{k+s}}^{\varrho_1,1,1}$.
  Now, a repeated application of Lemma~\ref{lem:NetworkCalculus}-(\ref{enu:ComposLessDepth})
  shows that
  \[
    \Delta_j
    = \Delta_{k+s} \circ \Delta_k \circ \cdots \circ \Delta_k
    \in \NNreal_{\infty,L,(L-2)(3 + 2^k) + 3 + 2^{k+s}}^{\varrho_1,1,1} .
  \]
  Finally, $\Delta_j \in \NNreal^{\varrho_1,1,1}_{\infty,L,C_L \cdot 2^{j/(L-1)}}$
  with $C_L := 4 \, L + 2^{L-1}$ since 
  \[
    (L-2)(3+2^k)+3+2^{k+s}
    = 3(L-1) + (L-2+2^{s})2^{k}
    \leq (4L-5 + 2^{L-2}) 2^k \leq (4L+2^{L-1}) 2^{j/(L-1)}
    = C_{L} \cdot 2^{j/(L-1)}.
  \]

  \medskip{}

  \emph{Part (2):}
  Set $\kappa := \lfloor L/2\rfloor$ and write $j = k \kappa + s$ for $k \in \N_0$
  and $0 \leq s \leq \kappa - 1$.
  Note that $k \leq j / \kappa = j / \lfloor L/2 \rfloor$.
  As above, $\Delta_j = \Delta_{k+s} \circ \Delta_k \circ \cdots \circ \Delta_k$, where $\Delta_k$
  occurs $\kappa - 1$ times, and since $\Delta_k : \R \to \R$ is affine with $2 + 2^k$ pieces,
  using Lemma~\ref{lem:PiecewisePolynomialCont} again shows that
  $\Delta_k \in \NNreal^{\varrho_1,1,1}_{6+2^{k+1},2,\infty}$,
  and
  $\Delta_{k+s} \in \NNreal_{6+2^{k+s+1},2,\infty}^{\varrho_1,1,1}$.
  Now, a repeated application of Lemma~\ref{lem:NetworkCalculus}-(\ref{enu:Compos}) shows that
  \[
    \Delta_j
    = \Delta_{k+s} \circ \Delta_k \circ \cdots \circ \Delta_k
    \in \NNreal_{6+2^{k+s+1} + (\kappa - 1)(6 + 2^{k+1}),
                 2 + 2 \cdot (\kappa-1),
                 \infty}^{\varrho_1,1,1} .
  \]
  Finally,
  $\Delta_k \in \NNreal^{\varrho_1,1,1}_{C_{L} 2^{j/\lfloor L/2\rfloor},\lfloor L/2\rfloor,\infty}$,
  as $2+2(\kappa-1) = 2\kappa \leq L$, $s+1 \leq \kappa \leq L/2 \leq L-1$
  (since $L \geq 2$) and
  \[
    6+2^{k+s+1} + (\kappa - 1)(6 + 2^{k+1})
    = 6\kappa + (2^{s+1}+2)2^{k}
    \leq (3L+2^{L-1}+2) 2^{j / \lfloor L/2 \rfloor}
    \leq C_L \cdot 2^{j / \lfloor L/2 \rfloor}.
    \qedhere
  \]
\end{proof}

\subsection{Proof of Lemma~\ref{lem:BesovNormSawTooth}}
\label{sub:BesovNormSawToothProof}

For $h \in \R^d$, we define the translation operator $T_h$ by $(T_h f)(x) = f(x - h)$
for $f : \R^d \to \R$.
With this, the $h$-difference operator of order $k$ is given by $D_h^k = (D_h)^k$,
where $D_h := (T_{-h} - \identity)$.
For later use, we note for $a > 0$ that $D_h [f(a \bullet)](x) = (D_{a h}f)(a x)$,
as can be verified by a direct calculation.
By induction, this implies $D_h^k [f(a \bullet)] = (D_{a h}^k f)(a \bullet)$ for all $k \in \N$.
Furthermore, $T_x D_h^k = D_h^k T_x$ for all $x,h \in \R^d$ and $k \in \N$.

A direct computation shows
\[
  \Delta_1
  = \widetilde{\Delta_1}
    + 2 \widetilde{\Delta_1}(\bullet - \tfrac{1}{4})
    + \widetilde{\Delta_1} (\bullet - \tfrac{1}{2})
  = (T_{1/4} + \identity)^2 \widetilde{\Delta_1}
  \qquad \text{where} \qquad
  \widetilde{\Delta_1} := \frac{1}{2} \Delta_1 (2 \bullet).
\]
Next, note that $(T_{-1/4} - \identity) (T_{1/4} + \identity) = T_{-1/4} - T_{1/4}$ and hence, since $T_{-1/4}$ and $T_{1/4}$ commute,
\begin{equation}
  D_{1/4}^2 \Delta_1
  = (T_{-1/4} - \identity)^2 (T_{1/4} + \identity)^2 \widetilde{\Delta_1}
  = (T_{-1/4} - T_{1/4})^2 \widetilde{\Delta_1}
  = (T_{-1/2} - 2 \, \identity + T_{1/2}) \widetilde{\Delta_1}.
  \label{eq:OneToothSecondDifferenceComputation}
\end{equation}
Moreover by induction on $\ell \in \N_{0}$, we see that
\begin{equation}
    \sum_{k=0}^{\ell}
        T_k (T_{-\frac{1}{2}} - 2 \, \identity + T_{\frac{1}{2}})
       = T_{-\frac{1}{2}}
        + T_{\frac{2\ell + 1}{2}}
        + 2 \sum_{i=0}^{2\ell}
              (-1)^{i-1} \, T_{\frac{i}{2}}.
  \label{eq:SpecialShiftSum}
\end{equation}
Define $h_j := 2^{-(j+1)}$, so that $2^{j-1} h_j = 1/4$.
Since $\Delta_j = \sum_{k=0}^{2^{j-1} - 1} (T_k \Delta_1)(2^{j-1} \bullet)$
(cf.~Equation~\eqref{eq:DefSawtooth}),
Equations~\eqref{eq:OneToothSecondDifferenceComputation}
and \eqref{eq:SpecialShiftSum} and the properties from the beginning of the proof yield
for $x \in \R$ that
\begin{equation}
  \begin{split}
    (D_{h_j}^2 \Delta_j)(x)
    & = \sum_{k=0}^{2^{j-1}-1}
          [D_{2^{j-1} h_j}^2 (T_k \Delta_1)](2^{j-1} x)
      = \bigg[
          \sum_{k=0}^{2^{j-1} - 1}
            T_k (D_{1/4}^2 \Delta_1)
        \bigg]
        (2^{j-1} x) \\
    & = (T_{-\frac{1}{2}} \widetilde{\Delta_1})(2^{j-1} x)
        + (T_{\frac{2^j - 1}{2}} \widetilde{\Delta_1})(2^{j-1} x)
        + 2 \sum_{i=0}^{2^j - 2} (-1)^{i-1} (T_{\frac{i}{2}} \widetilde{\Delta_1}) (2^{j-1} x).
  \end{split}
  \label{eq:SecondDifferenceSawtoothComputed}
\end{equation}
Recall for $g \in X_p (\Omega)$ that the $r$-th modulus of continuity of $g$ is given by
\[
  \omega_r (g)_p (t)
  := \sup_{h \in \R^d, |h| \leq t}
       \| D_h^r g\|_{X_p (\Omega_{r,h})}
  \quad \text{where} \quad
  \Omega_{r,h} := \{ x \in \Omega \colon x + u h \in \Omega \text{ for each } u \in [0,r] \}.
\]
Let $e_1 = (1,0,\dots,0) \in \R^d$.
For $h = h_j \, e_1$, we have $\Omega_{2,h} \supset (0,\frac{1}{2}) \times (0,1)^{d-1}$
since $\Omega = (0,1)^{d}$.
Next, because of ${\supp \, \widetilde{\Delta_1} = [0, \tfrac{1}{2}]}$, the
family $(T_{i/2} \widetilde{\Delta_1})_{i \in \Z}$ has pairwise disjoint supports
(up to null-sets), and
\[
  \supp \big[ (T_{\frac{i}{2}} \widetilde{\Delta_1})(2^{j-1} \bullet) \big]
  = 2^{-j} (i + [0,1])
  \subset \big[ 0, \tfrac{1}{2} \big]
  \qquad \text{for} \quad 0 \leq i \leq 2^{j-1} - 1.
\]
Combining these observations with the fact that
\(
  (T_{\frac{i}{2}} \widetilde{\Delta_1})(2^{j-1} \bullet)
  = \widetilde{\Delta_1}(2^{j-1} \bullet - i/2)
  = \Delta_1(2^j \bullet-i)/2
\),
Equation~\eqref{eq:SecondDifferenceSawtoothComputed} yields for $p < \infty$ that
\begin{align*}
  \| D_{h_j \, e_1}^2 \Delta_{j,d} \|_{L_p (\Omega_{2, h_j e_1})}^p
  & \geq \sum_{i = 0}^{2^{j-1} - 1}
           2^p \| (T_{\frac{i}{2}} \widetilde{\Delta_1}) (2^{j-1} \bullet) \|_{L_p (2^{-j} (i + [0,1]))}^p
    = \sum_{i = 0}^{2^{j-1} - 1}
           \| \Delta_1 (2^{j} \bullet - i) \|_{L_p (2^{-j} (i + [0,1]))}^p \\
  & =    \sum_{i = 0}^{2^{j-1} - 1}
           2^{-j} \| \Delta_1 \|_{L_p ([0,1])}^p
    = \frac{\|\Delta_1\|_{L_p}^p}{2},
\end{align*}
and hence $\|D_{h_j e_1}^2 \Delta_{j,d}\|_{L_p(\Omega_{2,h_je_1})} \geq C_p$,
where $C_p := 2^{-1/p} \, \|\Delta_1\|_{L_p}$ for $p < \infty$. Since $\Omega_{2, h_j e_1} \subset \Omega = (0,1)^d$ has at most measure 1, we have $\|\cdot\|_{L_1(\Omega_{2, h_j e_1})} \leq \|\cdot\|_{L_\infty (\Omega_{2, h_j e_1})}$, hence the same holds for $p=\infty$ with $C_\infty := C_1$. By definition, this implies $\omega_2 (\Delta_{j,d})_p (t) \geq C_p$ for $t \geq |h_je_1| = 2^{-(j+1)}$.

Overall, we get by definition of the Besov quasi-norms in case of $q < \infty$ that
\[
  \|\Delta_{j,d}\|_{B^{s'}_{p,q}(\Omega)}^q
  \geq \int_0^\infty [t^{-{s'}} \omega_2 (\Delta_{j,d})_p (t)]^q \frac{dt}{t}
  \geq C_p^q \cdot \int_{2^{-(j+1)}}^\infty t^{-{s'} q - 1} \, dt
  =    \frac{C_p^q}{{s'} q} \cdot 2^{{s'} q (j+1)},
\]
and hence $\|\Delta_{j,d}\|_{B^{s'}_{p,q}(\Omega)} \geq \frac{C_p}{({s'} q)^{1/q}} \, 2^{{s'} ( j+1)}$
for all $j \in \N$.
In case of $q = \infty$, we see similarly that
\[
  \|\Delta_{j,d}\|_{B^{s'}_{p,q}(\Omega)}
  \geq \sup_{t \in (0,\infty)}
         t^{-{s'}} \, \omega_2 (\Delta_{j,d})_p (t)
  \geq C_p \cdot (2^{-(j+1)})^{-{s'}}
  =    C_p \cdot 2^{{s'} (j+1)}
\]
for all $j \in \N$.
In both cases, we used that ${s'} < 2$ to ensure that we can use
the modulus of continuity of order $2$ to compute the Besov quasi-norm.
Finally, note because of ${s'} \leq s$ that $B^s_{p,q}(\Omega) \hookrightarrow B^{s'}_{p,q}(\Omega)$;
see Equation~\eqref{eq:BesovEmbedding}.
This easily implies the claim.
\hfill$\square$


\subsection{Proof of Lemma~\ref{lem:MinGrowthRateNbPiecesSufficient}}
\label{sub:NumberOfPiecesSimplified}

In this section, we prove Lemma~\ref{lem:MinGrowthRateNbPiecesSufficient},
based on results of Telgarsky \cite{telgarsky2016benefits}.

Telgarsky makes extensive use of two special classes of functions:
First, a function $\sigma : \R \to \R$ is called \textbf{$(t,\beta)$-poly}
(where $t \in \N$ and $\beta \in \N_0$) if there is a partition of $\R$
into $t$ intervals $I_1,\dots,I_t$ such that $\sigma|_{I_j}$ is a polynomial
of degree at most $\beta$ for each $j \in \FirstN{t}$.
In the language of Definition~\ref{defn:PiecewisePolynomial},
these are precisely those functions which belong to $\PPoly_{t}^{\beta}(\R)$.
The second class of functions which is important are the
\textbf{$(t,\alpha,\beta)$-semi-algebraic functions} $f : \R^k \to \R$
(where $t \in \N$ and $\alpha,\beta \in \N_0$).
The definition of this class (see \cite[Definition 2.1]{telgarsky2016benefits}) is somewhat technical.
Luckily, we don't need the definition, all we need to know is the following result:

\begin{lem}\label{lem:MakingSemiAlgebraicFunctions}
  (see \cite[Lemma 2.3-(1)]{telgarsky2016benefits})
  If $\sigma : \R \to \R$ is $(t,\beta)$-poly and $q : \R^d \to \R$ is a (multivariate)
  polynomial of degree at most $\alpha \in \N_0$, then $\sigma \circ q$ is
  $(t,\alpha,\alpha \beta)$-semi-algebraic.
\end{lem}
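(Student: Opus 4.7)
The statement is cited from \cite{telgarsky2016benefits}, so the expected proof is a short direct unpacking of the two definitions; I would include the verification for completeness in three quick steps.

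First, I would record the structural data supplied by the hypothesis. Since $\sigma$ is $(t,\beta)$-poly there exist breakpoints $-\infty = z_0 \leq z_1 \leq \cdots \leq z_{t-1} \leq z_t = +\infty$ partitioning $\R$ into $t$ intervals $I_1,\ldots,I_t$, together with polynomials $p_1,\ldots,p_t:\R\to\R$ of degree at most $\beta$ such that $\sigma|_{I_j}=p_j|_{I_j}$ for each $j\in\FirstN{t}$.

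Next, I would build the claimed semi-algebraic decomposition of $\R^d$ by pulling this partition back through $q$. Setting $R_j:=q^{-1}(I_j)$, the sets $R_j$ are pairwise disjoint and cover $\R^d$, and each $R_j$ is cut out by at most two polynomial inequalities in $x$ of the form $q(x)\geq z_{j-1}$ and $q(x)\leq z_j$ (with strict or non-strict sign depending on whether the corresponding endpoint belongs to $I_j$), i.e.\ by polynomials in $x$ of degree at most $\alpha=\deg q$. This is exactly the ``defining polynomials of degree $\leq\alpha$'' data required by the definition of a $(t,\alpha,\cdot)$-semi-algebraic function.

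Finally, I would identify the polynomial branch on each piece. For $x\in R_j$ we have $q(x)\in I_j$ and hence $(\sigma\circ q)(x)=p_j(q(x))$, which is a polynomial in $x\in\R^d$ of degree at most $\deg p_j\cdot\deg q\leq\alpha\beta$. Combined with the partition constructed in the previous step, this is precisely the definition of a $(t,\alpha,\alpha\beta)$-semi-algebraic function, so the lemma follows. The only non-automatic point is matching the precise convention of \cite{telgarsky2016benefits} for how breakpoints $z_i$ are assigned to one of the two adjacent regions (open vs.\ closed endpoints), which is a trivial bookkeeping matter and does not affect the stated complexity parameters $(t,\alpha,\alpha\beta)$.
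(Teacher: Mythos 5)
Your proposal is correct and is essentially the same argument Telgarsky gives: pull back the $t-1$ breakpoints of $\sigma$ through $q$ to obtain the cutting polynomials $q(\cdot)-z_i$ of degree at most $\alpha$, and on each resulting cell compose the corresponding branch polynomial $p_j$ with $q$ to obtain a polynomial of degree at most $\alpha\beta$. Note, however, that the paper itself supplies no proof for this lemma—it simply cites \cite[Lemma 2.3-(1)]{telgarsky2016benefits} and even declines to state the definition of $(t,\alpha,\beta)$-semi-algebraic functions—so the appropriate comparison is with Telgarsky's proof, which your unpacking faithfully reproduces.
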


In most of our proofs, we will mainly be interested in knowing that a function $\sigma : \R \to \R$
is $(t,\alpha)$-poly for certain $t,\alpha$.
The following lemma gives a sufficient condition for this to be the case.

\begin{lem}\label{lem:SemiAlgebraicAndPolyIsPoly}
  (see \cite[Lemma 3.6]{telgarsky2016benefits})
  If $f : \R^k \to \R$ is $(s,\alpha,\beta)$-semi-algebraic and if $g_1,\dots,g_k : \R \to \R$
  are $(t,\gamma)$-poly, then the function $f \circ (g_1,\dots,g_k) : \R \to \R$ is
  $\big( s t (1 + \alpha \gamma) \cdot k , \beta \gamma \big)$-poly.
\end{lem}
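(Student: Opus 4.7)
The proposal is to reproduce the counting argument from Telgarsky \cite[Lemma 3.6]{telgarsky2016benefits}. Since the excerpt itself uses the notion of $(s,\alpha,\beta)$-semi-algebraic only through the black-box interface provided by Lemmas \ref{lem:MakingSemiAlgebraicFunctions} and \ref{lem:SemiAlgebraicAndPolyIsPoly}, the plan is first to recall (from Telgarsky) the concrete definition that makes the argument run: a function $f:\R^{k}\to\R$ is $(s,\alpha,\beta)$-semi-algebraic if there exist at most $s$ pairwise disjoint subsets $R_{1},\dots,R_{s}$ of $\R^{k}$ whose union is $\R^{k}$, each $R_{j}$ defined by a finite conjunction of polynomial inequalities of degree at most $\alpha$, and such that $f|_{R_{j}}$ coincides with a polynomial of degree at most $\beta$. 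The goal is then to count, for $h:=f\circ(g_{1},\dots,g_{k})$, the number of intervals of a partition of $\R$ on each of which $h$ is a single polynomial in the scalar variable.

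The first step is to form the common refinement $\mathcal{P}_{0}$ of the $k$ partitions of $\R$ associated with $g_{1},\dots,g_{k}$. Each $g_{i}$ is $(t,\gamma)$-poly, so its partition has at most $t$ intervals and thus at most $t-1$ breakpoints; taking the union of breakpoints yields at most $k(t-1)+1\le kt$ intervals, and on each $I\in\mathcal{P}_{0}$ every $g_{i}|_{I}$ is a polynomial of degree at most $\gamma$. Consequently, on such an $I$, the vector-valued map $G:=(g_{1},\dots,g_{k})|_{I}$ has polynomial components of degree $\le\gamma$.

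The second step is the pullback count. Fix $I\in\mathcal{P}_{0}$. Each semi-algebraic cell $R_{j}$ of $f$ is cut out by polynomial inequalities $p_{j,\ell}\gtrless 0$ with $\deg p_{j,\ell}\le\alpha$; the pullback $p_{j,\ell}\circ G$ is a univariate polynomial on $I$ of degree at most $\alpha\gamma$, hence has at most $\alpha\gamma$ sign changes, partitioning $I$ into at most $\alpha\gamma+1$ sub-intervals of constant sign. Summing over the defining inequalities of all $s$ cells, the preimage partition $G^{-1}\{R_{1},\dots,R_{s}\}$ refines $I$ into at most $s(1+\alpha\gamma)$ sub-intervals; on each sub-interval $J$, $G(J)$ lies in a single $R_{j}$, and therefore $h|_{J}=q_{j}\circ G|_{J}$ is a polynomial in the scalar variable of degree at most $\beta\gamma$. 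Concatenating over the $kt$ intervals of $\mathcal{P}_{0}$ gives the claimed $(st(1+\alpha\gamma)k,\beta\gamma)$-poly structure of $h$.

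The main obstacle I anticipate is purely bookkeeping: the precise constant $s(1+\alpha\gamma)$ rests on the convention, adopted by Telgarsky, that each cell $R_{j}$ is described by a \emph{single} polynomial inequality (or, equivalently, that the $s$ in the definition already accounts for the total number of defining inequalities); if one instead allowed several inequalities per cell, one would have to quote Warren's or Milnor's bound on the number of connected components of the complement of a real algebraic variety to avoid an extra blow-up. Since the lemma is used only to chain Lemma \ref{lem:MakingSemiAlgebraicFunctions} with itself, the cleanest presentation is to adopt Telgarsky's single-inequality-per-cell convention explicitly at the outset, carry out the two counting steps above, and then verify the final exponent $\beta\gamma$ by the trivial observation that polynomial degrees multiply under composition.
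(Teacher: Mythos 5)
The paper does not actually prove this lemma; it states it as a black-box citation to \cite[Lemma~3.6]{telgarsky2016benefits} with no argument given. So the comparison is between your reconstruction and Telgarsky's original proof, and your reconstruction captures the right structure: form the common refinement of the $k$ partitions associated with the $g_{i}$'s (at most $k(t-1)+1 \leq kt$ intervals), pull back the defining polynomials through $G := (g_1,\dots,g_k)$ on each interval to get univariate polynomials of degree at most $\alpha\gamma$, count roots to bound the number of sign-constant sub-intervals, and observe that on each such sub-interval $h = f\circ G$ coincides with a polynomial of degree at most $\beta\gamma$.

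The one thing worth pinning down is the definition you recall, which is not quite Telgarsky's. In Telgarsky's Definition~2.1 a function $f : \R^{k} \to \R$ is $(s,\alpha,\beta)$-semi-algebraic if there exist $s$ polynomials $q_{1},\dots,q_{s}$ of degree at most $\alpha$ and, for each sign vector $U \in \{-1,0,+1\}^{s}$, a polynomial $p_{U}$ of degree at most $\beta$ such that $f(x) = p_{U}(x)$ on the region where $\big(\mathrm{sgn}\,q_{1}(x),\dots,\mathrm{sgn}\,q_{s}(x)\big) = U$. Thus $s$ is the number of \emph{defining polynomials}, not the number of cells (there can be up to $3^{s}$ cells). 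This is precisely the ``single inequality per unit of $s$'' convention you anticipated you might need, so the worry you raise at the end is already resolved by the definition itself, and no appeal to Warren's or Milnor's bounds is necessary. On each interval $I$ of the common refinement, the $s$ pullbacks $q_{i}\circ G$ contribute in total at most $s\alpha\gamma$ breakpoints, giving at most $s\alpha\gamma + 1 \leq s(1 + \alpha\gamma)$ sign-constant sub-intervals, and the final tally $kt\cdot s(1+\alpha\gamma)$ with degree $\beta\gamma$ is as claimed. Your count is therefore correct, just slightly non-tight.
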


For proving Lemma~\ref{lem:MinGrowthRateNbPiecesSufficient}, we begin with the easier case
where we count neurons instead of weights.

\begin{proof}[Proof of the second part of Lemma~\ref{lem:MinGrowthRateNbPiecesSufficient}]
  We want to show that for any depth $L \in \N_{\geq 2}$ and degree $r \in \N$ there is a constant
  $\Lambda_{L,r} \in \N$ such that each function $f \in \NNreal^{\varrho_r, 1, 1}_{\infty,L,N}$
  is $(\Lambda_{L,r}N^{L-1},r^{L-1})$-poly.
  To show this, let $\Phi \in \NNsymbol^{\varrho_r,1,1}_{\infty,L,N}$ with $f = \Realization(\Phi)$,
  say $\Phi = \big( (T_1,\alpha_1), \dots, (T_K, \alpha_K) \big)$, where necessarily $K \leq L$,
  and where each $T_\ell : \R^{N_{\ell - 1}} \to \R^{N_\ell}$ is affine-linear.

  For $\ell \in \FirstN{K}$ and $j \in \FirstN{N_\ell}$, we let $f_j^{(\ell)} : \R \to \R$
  denote the output of neuron $j$ in the $\ell$-th layer.
  Formally, let $f_j^{(1)} : \R \to \R, x \mapsto \big( \alpha_1 (T_1 \, x) \big)_j$, and inductively
  \begin{equation}\label{eq:TmpNeuronOutput}
    f_j^{(\ell+1)}
    : \R \to \R,
    x \mapsto \Big[
                \alpha_{\ell + 1}
                \Big( T_{\ell+1} \big( f_k^{(\ell)}(x) \big)_{k \in \FirstN{N_\ell}} \Big)
              \Big]_j
    \quad \text{ for } \quad 1 \leq \ell \leq L-1 \text{ and } 1 \leq j \leq N_{\ell + 1}.
  \end{equation}
  We prove below by induction on $\ell \in \FirstN{K}$ that there is a constant $C_{\ell,r} \in \N$
  which only depends on $\ell,r$ and such that $f_j^{(\ell)}$ is
  $\big(C_{\ell,r} \prod_{t=0}^{\ell-1} N_t, r^{\gamma(\ell)}\big)$-poly,
  where $\gamma(\ell) := \min\{\ell, L-1\}$.
  Once this is shown, we see that $f = \Realization(\Phi) = f_1^{(K)}$ is
  $\big(C_{K,r} \prod_{t=0}^{K-1} N_t, r^{L-1}\big)$-poly.
  Then, because of $N_0 = 1$, we see that
  \[
    C_{K,r} \prod_{t=0}^{K-1} N_t
    \leq \Lambda_{L,r} \prod_{t=1}^{K-1} N_t
    \leq \Lambda_{L,r} \prod_{t=1}^{K-1} N(\Phi)
    \leq \Lambda_{L,r} \cdot [N(\Phi)]^{K-1}
    \leq \Lambda_{L,r} \cdot N^{L-1} ,
  \]
  where $\Lambda_{L,r} := \max_{1 \leq K \leq L} C_{K,r}$.
  Therefore, $f$ is indeed $(\Lambda_{L,r} \, N^{L-1}, r^{L-1})$-poly.

  \medskip{}

  \noindent
  \emph{Start of induction ($\ell = 1$):}
  Note that $L \geq 2$, so that $\gamma(\ell) = \ell = 1$.
  We have $T_1 x = a x + b$ for certain $a,b \in \R^{N_1}$ and
  $\alpha_1 = \varrho^{(1)} \otimes \cdots \otimes \varrho^{(N_1)}$
  for certain $\varrho^{(j)} \in \{\identity_\R, \varrho_r\}$.
  Thus, $\varrho^{(j)}$ is $(2,r)$-poly, and thus $(2,1,r)$-semi-algebraic
  according to Lemma~\ref{lem:MakingSemiAlgebraicFunctions}.
  Therefore, Lemma~\ref{lem:SemiAlgebraicAndPolyIsPoly} shows
  because of $f_j^{(1)}(x) = \varrho^{(j)} (b_j + a_j x)$ that $f_j^{(1)}$ is
  $(2(1+1), r)$-poly, for any $j \in \FirstN{N_1}$.
  Because of $N_0 = 1$, the claim holds for $C_{1,r} := 4$.

  \medskip{}

  \noindent
  \emph{Induction step ($\ell \to \ell + 1$):}
  Suppose that $\ell \in \FirstN{K-1}$ is such that the claim holds.
  Note that $\ell \leq K-1 \leq L-1$, so that $\gamma(\ell) = \ell$.

  We have $T_{\ell + 1} \, y = A \, y + b$ for certain $A \in \R^{N_{\ell + 1} \times N_\ell}$
  and $b \in \R^{N_{\ell + 1}}$, and
  $\alpha_{\ell + 1} = \varrho^{(1)} \otimes \cdots \otimes \varrho^{(N_{\ell + 1})}$
  for certain $\varrho^{(j)} \in \{\identity_\R, \varrho_r\}$, where $\varrho^{(j)} = \identity_\R$
  for all $j \in \FirstN{N_{\ell + 1}}$ in case of $\ell = K - 1$.
  Hence, $\varrho^{(j)}$ is $(2,r)$-poly, and even $(2,1)$-poly in case of $\ell = K-1$.
  Moreover, each of the polynomials
  \({
    p_{j,\ell}
    : \R^{N_\ell} \to \R,
    y \mapsto (A \, y + b)_j = b_j + \sum_{t=1}^{N_\ell} A_{j,t} \, y_t
  }\)
  is of degree at most $1$, hence by Lemma~\ref{lem:MakingSemiAlgebraicFunctions},
  $\varrho^{(j)} \circ p_{j,\ell}$ is $(2,1,r)$-semi-algebraic,
  and even $(2,1,1)$-semi-algebraic in case of $\ell = K-1$.

  Each function $f_t^{(\ell)}$ is $(C_{\ell,r} \prod_{t=0}^{\ell-1} N_t, r^{\ell})$-poly
  by the induction hypothesis.
  By Lemma~\ref{lem:SemiAlgebraicAndPolyIsPoly}, since
  \[
    f_j^{(\ell + 1)} (x)
    = \varrho^{(j)}
      \Big(
        \big[ A \, \big( f_t^{(\ell)} (x) \big)_{t \in \FirstN{N_\ell}} + b \big]_j
      \Big)
    = (\varrho^{(j)} \circ p_{j,\ell}) \big( f_1^{(\ell)} (x), \dots, f_{N_\ell}^{(\ell)} (x) \big),
  \]
  it follows that $f_j^{(\ell+1)}$ is $(P, r^{\ell+1})$-poly
  [respectively, $(P,r^{\ell})$-poly if $\ell = K-1$], where
  \[
    P \leq 2 C_{\ell,r} (1 + r^{\ell}) \cdot N_\ell \cdot \prod_{t=0}^{\ell-1} N_t
      =:   C_{\ell + 1, r} \cdot \prod_{t=0}^{(\ell+1) - 1} N_t .
  \]
  Finally, note in case of $\ell < K-1$ that $\ell + 1 \leq K - 1 \leq L-1$, and hence
  $\gamma(\ell+1) = \ell+1$, while in case of
  $\ell = K-1$ we have $\ell \leq \min \{\ell+1, L-1\} = \gamma(\ell+1)$ .
  Therefore, each $f_j^{(\ell+1)}$ is
  $(C_{\ell+1,r} \cdot \prod_{t=0}^{(\ell+1) - 1} N_t, r^{\gamma(\ell+1)})$-poly.
  This completes the induction, and thus the proof.
\end{proof}

The proof of the first part of Lemma~\ref{lem:MinGrowthRateNbPiecesSufficient}
uses the same basic arguments as in the preceding proof, but in a more careful way.
In particular, we will also need the following elementary lemma.

\begin{lem}\label{lem:SumOfPolyFunctions}
  Let $k \in \N$, and for each $i \in \FirstN{k}$ let $f_i : \R \to \R$ be $(t_i,\alpha)$-poly
  and continuous.
  Then the function $\sum_{i=1}^k f_i$ is $(t,\alpha)$-poly, where
  $t = 1 - k + \sum_{i=1}^k t_i$.
\end{lem}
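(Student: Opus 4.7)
The plan is to handle the case $k=2$ first by a direct refinement argument, and then to bootstrap to general $k$ by a one-line induction. For the base case, I would unfold the hypothesis that $f_1$ is $(t_1,\alpha)$-poly and $f_2$ is $(t_2,\alpha)$-poly: there exist partitions $\R = \bigsqcup_{j=1}^{t_1} I_j$ and $\R = \bigsqcup_{j=1}^{t_2} J_j$ on which $f_1, f_2$ respectively agree with polynomials of degree at most $\alpha$. The key observation is that such a partition of $\R$ into $t$ intervals has at most $t-1$ interior endpoints, so the union of all endpoints arising from the two partitions contains at most $(t_1-1)+(t_2-1)=t_1+t_2-2$ distinct real numbers. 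Ordering these endpoints produces a common refinement of both partitions consisting of at most $t_1+t_2-1$ intervals, and on each such interval both $f_1$ and $f_2$ are polynomials of degree at most $\alpha$, hence so is $f_1+f_2$. This proves the case $k=2$ with the bound $t_1+t_2-1$.

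For the induction step from $k$ to $k+1$, assume that $S_k:=\sum_{i=1}^k f_i$ is $(1-k+\sum_{i=1}^k t_i,\alpha)$-poly. Since sums of polynomials of degree at most $\alpha$ remain of degree at most $\alpha$, applying the case $k=2$ to $S_k$ and $f_{k+1}$ yields that $S_{k+1}=S_k+f_{k+1}$ is $(t',\alpha)$-poly with
\[
  t' \leq \Big(1-k+\sum_{i=1}^k t_i\Big) + t_{k+1} - 1 = 1-(k+1)+\sum_{i=1}^{k+1} t_i,
\]
which is exactly the desired bound. This closes the induction.

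The only potentially delicate point, and it is minor, is to make sure the counting of intervals in the common refinement is robust to the conventions implicit in the definition of $(t,\alpha)$-poly, in particular whether degenerate or empty intervals are allowed: the bound $t_1+t_2-1$ holds regardless because one can always realize any admissible partition by at most $t$ consecutive non-degenerate intervals plus, if necessary, padding which does not change $f$ on $\R$. I do not expect the continuity hypothesis to play any role in the argument — the common refinement works pointwise — so it enters only as an unused convenience inherited from Telgarsky's setup.
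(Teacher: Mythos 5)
Your proof follows the same core idea as the paper's: pool the interior breakpoints of all the partitions, order them, and observe that on each resulting interval every $f_i$ restricts to a polynomial of degree at most $\alpha$. You package this as $k=2$ plus induction whereas the paper does it for all $k$ in one step, but that difference is cosmetic and the counting $t = 1 - k + \sum_i t_i$ comes out the same either way.

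The one place you go astray is the closing remark that "I do not expect the continuity hypothesis to play any role in the argument — the common refinement works pointwise." It does play a role, and the paper flags it explicitly ("Here, we used the continuity of $f_i$ to ensure that we can use closed intervals"). A partition of $\R$ into intervals is determined not just by a finite set of boundary \emph{values} but also by, at each boundary value, which side the point belongs to. If $f_1$ is one polynomial on $(-\infty,0)$ and another on $[0,\infty)$, while $f_2$ is one polynomial on $(-\infty,0]$ and another on $(0,\infty)$, the two partitions share the single endpoint $0$, yet "ordering the endpoints" into two half-lines does \emph{not} refine both of them: on whichever two-interval partition you pick, one of $f_1,f_2$ fails to be a polynomial on a piece. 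The genuine set-theoretic common refinement $\{I\cap J\}$ here has three pieces $(-\infty,0),\{0\},(0,\infty)$, which still meets the bound $t_1+t_2-1=3$ if one counts Dedekind-style cuts rather than boundary values — so the lemma does remain true without continuity, but your endpoint-counting proof as written would not deliver it. Continuity is exactly what rescues your version: when $f_i$ is continuous, the left and right polynomials at any breakpoint agree at that point, so $f_i$ restricted to any \emph{closed} interval $[a_j,a_{j+1}]$ built from the pooled breakpoints is a single polynomial regardless of which side each breakpoint "belonged to" in the individual partitions. So the hypothesis is doing real work in your argument; you should not advertise it as dead weight.
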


\begin{proof}
  For each $i \in \FirstN{k}$, there are ``breakpoints''
  \(
    b_0^{(i)} := - \infty
    < b_1^{(i)}
    < \cdots
    < b_{t_i - 1}^{(i)}
    < \infty =: b_{t_i}^{(i)}
  \)
  such that $f_i |_{\R \cap [b_j^{(i)}, b_{j+1}^{(i)}]}$ is a polynomial of degree at most $\alpha$
  for each $0 \leq j \leq t_{i} - 1$.
  Here, we used the continuity of $f_i$ to ensure that we can use closed intervals.

  Now, let $M := \bigcup_{i=1}^k \{b_1^{(i)}, \ldots, b_{t_i - 1}^{(i)}\}$.
  We have $|M| \leq \sum_{i=1}^k (t_i - 1) = t - 1$, with $t$ as in the statement of the lemma.
  Thus, $M = \{b_1,\dots,b_s\}$ for some $0 \leq s \leq t - 1$, where
  \(
    b_0 := - \infty < b_1 < \cdots < b_s < \infty =: b_{s+1}.
  \)
  It is easy to see that $F := \sum_{i=1}^k f_i$ is such that $F|_{\R \cap [b_j, b_{j+1}]}$
  is a polynomial of degree at most $\alpha$ for each $0 \leq j \leq s$.
  Thus, $F$ is $(s+1,\alpha)$-poly and therefore also $(t,\alpha)$-poly.
\end{proof}

\begin{proof}[Proof of the first part of Lemma~\ref{lem:MinGrowthRateNbPiecesSufficient}]
  Let us first consider an arbitrary network $\Phi \in \NNsymbol^{\varrho_r, 1, 1}_{W,L,\infty}$
  satisfying $L(\Phi) = L$. Let $L_0 := \lfloor L/2 \rfloor \in \N_0$.
  We claim that
  \begin{equation}
    \Realization(\Phi)
    \text{ is }
    \big(\max \{1, \Lambda_{L,r} \, W^{L_0} \}, r^{L-1}\big)\text{-poly}
    \text{ where }
    \Lambda_{L,r} \in \N
    \text{ only depends on } L,r.
    \label{eq:NumberOfLinearPiecesWeightBoundAlmostDone}
  \end{equation}
  In case of $L = 1$, this is trivial, since then $\Realization(\Phi) : \R \to \R$ is affine-linear.
  Thus, we will assume $L \geq 2$ in what follows.
  Note that this entails $L_0 \geq 1$.

  Let $\Phi = \big( (T_1,\alpha_1), \dots, (T_L, \alpha_L) \big)$,
  where $T_\ell : \R^{N_{\ell - 1}} \to \R^{N_\ell}$ is affine-linear.
  We first consider the special case that $\|T_\ell\|_{\ell^0} = 0$ for some $\ell \in \FirstN{L}$.
  In this case, Lemma~\ref{lem:BoundingConnectionsWithLayers} shows that
  $\Realization(\Phi) \equiv c$ for some $c \in \R$.
  This trivially implies that $\Realization(\Phi)$ is
  $(\max \{1, \Lambda_{L,r} \, W^{L_0} \}, r^{L-1})$-poly.
  Thus, we can assume in the following that $\|T_\ell\|_{\ell^0} \neq 0$
  for all $\ell \in \FirstN{L}$.
  As in the proof of the first part of Lemma~\ref{lem:MinGrowthRateNbPiecesSufficient},
  we define $f_j^{(\ell)} : \R \to \R$ to be the function
  computed by neuron $j \in \FirstN{N_\ell}$ in layer $\ell \in \FirstN{L}$,
  cf.~Equation~\eqref{eq:TmpNeuronOutput}.
  \smallskip{}

  {\bf Step 1.} We let $L_1 := \lfloor \tfrac{L - 1}{2} \rfloor \in \N_0$,
  and we show by induction on $t \in \{0,1,\dots, L_1\}$ that
  \begin{equation}
    f_j^{(2 t + 1)}
    \text{ is }
    \Big( C_{t,r} \prod_{\ell=1}^t \|T_{2\ell}\|_{\ell^0}, r^{\gamma(t)} \Big)\text{-poly}
    \quad \forall \, t \in \{0,1,\dots,L_1\} \text{ and } j \in \FirstN{N_{2t + 1}} ,
    \label{eq:NumberOfLinearPiecesWeightBoundInductiveClaim}
  \end{equation}
  where $\gamma (t) := \min\{L-1, 2 t + 1\}$ and where the constant $C_{t,r} \in \N$
  only depends on $t,r$.
  Here, we use the convention that the empty product satisfies
  $\prod_{\ell=1}^0 \|T_{2\ell}\|_{\ell^0} = 1$.

  \medskip{}

  \noindent
  \emph{Induction start ($t=0$):}
  We have $T_1 x = a x + b$ for certain $a,b \in \R^{N_1}$ and
  $\alpha_1 = \varrho^{(1)} \otimes \cdots \otimes \varrho^{(N_1)}$ for certain
  $\varrho^{(j)} \in \{\identity_\R, \varrho_r\}$.
  In any case, $\varrho^{(j)}$ is $(2,r)$-poly, and hence $(2,1,r)$-semi-algebraic
  by Lemma~\ref{lem:MakingSemiAlgebraicFunctions}.
  Now, note
  \(
    f_j^{(2t+1)}(x)
    = f_j^{(1)}(x)
    = \varrho^{(j)} \big( (T_1 x)_j \big)
    = \varrho^{(j)} (a_j x + b_j)
  \),
  so that Lemma~\ref{lem:SemiAlgebraicAndPolyIsPoly} shows that
  $f_j^{(2t+1)}$ is $(2 (1 + 1), r)$-poly.
  Thus, Equation~\eqref{eq:NumberOfLinearPiecesWeightBoundInductiveClaim} holds for $t = 0$
  if we choose $C_{0,r} := 4$.
  Here, we used that $L \geq 2$ and $t=0$, so that $L-1 \geq 2t + 1$
  and hence $\gamma(t) = 2 t + 1 = 1$.

  \medskip{}

  \noindent
  \emph{Induction step $(t \to t+1)$:}
  Let $t \in \N_0$ such that $t+1 \leq \tfrac{L-1}{2}$ and such that
  Equation~\eqref{eq:NumberOfLinearPiecesWeightBoundInductiveClaim} holds for $t$.
  We have $T_{2t + 2} \bullet = A \bullet + b$ for certain $A \in \R^{N_{2t+2} \times N_{2t+1}}$
  and $b \in \R^{N_{2t+2}}$, and furthermore
  $\alpha_{2t+2} = \varrho^{(1)} \otimes \cdots \otimes \varrho^{(N_{2t+2})}$
  for certain $\varrho^{(j)} \in \{ \identity_\R, \varrho_r \}$.
  Recall from Appendix~\ref{sec:AppendixTechnical} that $A_{j,-} \in \R^{1 \times N_{2t+1}}$
  denotes the $j$-th row of $A$.
  For $j \in \FirstN{N_{2t+2}}$, we claim that
  \begin{equation}
    \begin{cases}
      f_j^{(2t+2)} \equiv \varrho^{(j)}(b_j),
      & \text{if } A_{j,-} = 0, \\[0.1cm]
      f_j^{(2t+2)}
      \text{ is }
      (C_{t,r}' \cdot M_j \cdot \prod_{\ell=1}^t \|T_{2\ell}\|_{\ell^0}, r^{2t+2})\text{-poly},
      & \text{if } A_{j,-} \neq 0,
    \end{cases}
    \label{eq:NumberOfLinearPiecesWeightBoundIngredient1}
  \end{equation}
  where $M_j := \|A_{j,-}\|_{\ell^0}$, and where the constant $C_{t,r}' \in \N$ only depends on $t,r$.


  The first case where $A_{j,-}=0$ is trivial.
  For proving the second case where $A_{j,-} \neq 0$, let us define
  $\Omega_j := \{ i \in \FirstN{N_{2t+1}} \colon A_{j,i} \neq 0 \}$, say
  $\Omega_j = \{ i_1, \dots, i_{M_j} \}$ with (necessarily) pairwise distinct $i_1,\dots,i_{M_j}$.
  By introducing the polynomial
  ${p_{j,t} : \R^{M_j} \to \R, y \mapsto b_j + \sum_{m=1}^{M_j} A_{j,i_m} y_m}$,
  we can then write
  \[
    f_j^{(2t+2)}(x)
    = \varrho^{(j)}
      \Big(
        b_j + A_{j,-} \big( f_k^{(2t+1)} (x) \big)_{k \in \FirstN{N_{2t+1}}}
      \Big)
    = (\varrho^{(j)} \circ p_{j,t}) \big( f_{i_1}^{(2t+1)} (x), \dots, f_{i_{M_j}}^{(2t+1)} (x) \big) .
  \]
  Since $\varrho^{(j)}$ is $(2,r)$-poly and $p_{j,t}$ is a polynomial of degree at most $1$,
  Lemma~\ref{lem:MakingSemiAlgebraicFunctions} shows that $\varrho^{(j)} \circ p_{j,t}$
  is $(2,1,r)$-semi-algebraic.
  Furthermore, by the induction hypothesis we know that each function $f_{i_m}^{(2t+1)}$
  is $(C_{t,r} \prod_{\ell=1}^t \|T_{2\ell}\|_{\ell^0}, r^{2t+1})$-poly,
  where we used that $\gamma(t)=2t+1$ since $t+1 \leq (L-1)/2$.
  Therefore---in view of the preceding displayed equation---Lemma~\ref{lem:SemiAlgebraicAndPolyIsPoly}
  shows that the function $f_j^{(2t+2)}$ is indeed
  $(C_{t,r}' \cdot M_j \cdot \prod_{\ell=1}^t \|T_{2\ell}\|_{\ell^0}, r^{2t+2})$-poly,
  where $C_{t,r}' := 2 C_{t,r} \cdot (1 + r^{2t+1})$.

  We now estimate the number of polynomial pieces of the function $f_i^{(2t+3)}$
  for $i \in \FirstN{N_{2t+3}}$.
  To this end, let $B \in \R^{N_{2t+3} \times N_{2t+2}}$ and $c \in \R^{N_{2t+3}}$ such that
  $T_{2t+3} = B \bullet + c$, and choose $\sigma^{(i)} \in \{\identity_\R, \varrho_r\}$
  such that $\alpha_{2t+3} = \sigma^{(1)} \otimes \cdots \otimes \sigma^{(N_{2t+3})}$.
  For $i \in \{1,\dots,N_{2t+3}\}$, let us define
  \[
    G_{i,t}
    : \R \to \R,
    x \mapsto \sum_{j \in \FirstN{N_{2t+2}} \text{ such that } A_{j,-} \neq 0}
                B_{i,j} \, f_j^{(2t+2)} (x) .
  \]
  In view of Equation~\eqref{eq:NumberOfLinearPiecesWeightBoundIngredient1},
  Lemma~\ref{lem:SumOfPolyFunctions} shows that $G_{i,t}$ is $(P,r^{2t+2})$-poly, where
  \begin{align*}
    P
    & \leq 1
           - |\{ j \in \FirstN{N_{2t+2}} \colon A_{j,-} \neq 0 \}|
           + C_{t,r}'
             \cdot \Big( \prod_{\ell=1}^t \|T_{2\ell}\|_{\ell^0} \Big)
             \sum_{j \in \FirstN{N_{2t+2}} \text{ such that } A_{j,-} \neq 0}
               M_j \\
    & \leq C_{t,r}' \cdot \Big( \prod_{\ell=1}^t \|T_{2\ell}\|_{\ell^0} \Big) \cdot \|A\|_{\ell^0}
    =    C_{t,r}' \cdot \prod_{\ell=1}^{t+1} \|T_{2\ell}\|_{\ell^0}
  \end{align*}
  Here, we used that $\| T_{2t+2} \|_{\ell^0} \neq 0$ and hence $A \neq 0$, so that
  $|\{ j \in \FirstN{N_{2t+2}} \colon A_{j,-} \neq 0\}| \geq 1$.

  Next, note because of Equation~\eqref{eq:NumberOfLinearPiecesWeightBoundIngredient1}
  and by definition of $G_{i,t}$ that there is some $\theta_{i,t} \in \R$ satisfying
  \[
    f_i^{(2t+3)}(x)
    = \sigma^{(i)} \Big( c_i + \sum_{j=1}^{N_{2t+2}} B_{i,j} \, f_j^{(2t+2)} (x) \Big)
    = \sigma^{(i)} \big( \theta_{i,t} + G_{i,t}(x) \big)
    \quad \forall \, x \in \R .
  \]
  Now there are two cases: If $2t + 3 > L-1$, then $2t+3 = L$, since $t+1 \leq \tfrac{L-1}{2}$.
  Therefore, $\sigma^{(i)} = \identity_\R$, so that we see that
  $f_i^{(2t+3)} = \theta_{i,t} + G_{i,t}$ is
  $(C_{t,r}' \cdot \prod_{\ell=1}^{t+1} \|T_{2\ell}\|_{\ell^0}, r^{2t+2})$-poly,
  where $2t+2 = L-1 = \gamma(t+1)$.

If $2t + 3 \leq L-1$, then $\gamma(t+1) = 2t + 3$.
  Furthermore, each $\sigma^{(i)}$ is $(2,r)$-poly and hence $(2,1,r)$-semi-algebraic
  by Lemma~\ref{lem:MakingSemiAlgebraicFunctions}.
  In view of the preceding displayed equation, and since $G_{i,t}$ is
  ${(C_{t,r}' \cdot \prod_{\ell=1}^{t+1} \|T_{2\ell}\|_{\ell^0}, r^{2t+2})}$-poly,
  Lemma~\ref{lem:SemiAlgebraicAndPolyIsPoly} shows that $f_i^{(2t+3)}$ is
  $\big( 2 (1 + r^{2t+2}) C_{t,r}' \cdot \prod_{\ell=1}^{t+1} \|T_{2\ell}\|_{\ell^0}, r^{2t+3} \big)$-poly.

In each case, with $C_{t+1,r} := 2 (1 + r^{2t+2}) C_{t,r}'$, we see that
  Equation~\eqref{eq:NumberOfLinearPiecesWeightBoundInductiveClaim} holds for $t+1$ instead of $t$.

  \medskip{}

  {\bf Step 2.}   We now complete the proof of
  Equation~\eqref{eq:NumberOfLinearPiecesWeightBoundAlmostDone},
  by distinguishing whether $L$ is odd or even.

  \smallskip{}

  \emph{If $L$ is odd:} In this case $L_1 = \lfloor \tfrac{L-1}{2} \rfloor = \tfrac{L-1}{2}$,
  so that we can use Equation~\eqref{eq:NumberOfLinearPiecesWeightBoundInductiveClaim}
  for the choice $t = \tfrac{L-1}{2}$ to see that $\Realization(\Phi) = f_1^{(L)} = f_1^{(2t+1)}$
  is $(P, r^{L-1})$-poly, where
  \[
    P \leq C_{t,r} \prod_{\ell=1}^t \|T_{2\ell}\|_{\ell^0}
      \leq C_{t,r} \prod_{\ell=1}^{(L-1)/2} \!\!\! W(\Phi)
      \leq C_{t,r} \cdot [W(\Phi)]^{(L-1)/2}
      \leq C_{t,r} \cdot W^{\lfloor L/2 \rfloor}.
  \]

  \emph{If $L$ is even:}
  In this case, set $t := \tfrac{L}{2} - 1 \in \{ 0,1,\dots,L_1 \}$,
  and note $2t+1 = L-1 =\gamma(t)$.
  Hence, with $A \in \R^{1 \times N_{L-1}}$ and $b \in \R$ such that $T_L = A \bullet + b$, we have
  \[
    \Realization(\Phi)
    = T_L \big( f_k^{(2t+1)} (x) \big)_{k \in \FirstN{N_{L-1}}}
    = b + \sum_{k \in \FirstN{N_{L-1}} \text{ such that } A_{1,k} \neq 0}
            A_{1,k} \, f_k^{(2t+1)}(x) .
  \]
  Therefore, thanks to Equation~\eqref{eq:NumberOfLinearPiecesWeightBoundInductiveClaim},
  Lemma~\ref{lem:SumOfPolyFunctions} shows that $\Realization(\Phi)$ is $(P,r^{2t+1})$-poly, where
  \begin{align*}
    P & \leq 1
             - |\{ k \in \FirstN{N_{L-1}} \colon A_{1,k} \neq 0 \}|
             + C_{t,r} \sum_{\substack{k \in \FirstN{N_{L-1}} \\ \text{such that } A_{1,k} \neq 0}}
                         \prod_{\ell=1}^t \|T_{2\ell}\|_{\ell^0} \\
      & \leq C_{t,r}
             \cdot \|A\|_{\ell^0}
             \cdot \prod_{\ell=1}^{\frac{L}{2} - 1} \|T_{2\ell}\|_{\ell^0}
        =    C_{t,r} \prod_{\ell=1}^{L/2} \|T_{2\ell}\|_{\ell^0} \\
      & \leq C_{t,r} \cdot [W(\Phi)]^{L/2}
        =    C_{t,r} \cdot [W(\Phi)]^{\lfloor L/2 \rfloor}
        \leq C_{t,r} \cdot W^{\lfloor L/2 \rfloor} .
  \end{align*}
  In the second inequality we used
  $|\{ k \in \FirstN{N_{L-1}} \colon A_{1,k} \neq 0\}| = \|A\|_{\ell^{0}} = \|T_L\|_{\ell^0} \geq 1$.
  We have thus established Equation~\eqref{eq:NumberOfLinearPiecesWeightBoundAlmostDone}
  in all cases.

  \medskip{}

{\bf Step 3.}  It remains to prove the actual claim.
  Let $f \in \NNreal^{\varrho_r,1,1}_{W,L,\infty}$ be arbitrary,
  whence $f = \Realization(\Phi)$ for some $\Phi \in \NNreal^{\varrho_r,1,1}_{W,K,\infty}$
  with $L(\Phi) = K$ for some $K \in \N_{\leq L}$.
  In view of Equation~\eqref{eq:NumberOfLinearPiecesWeightBoundAlmostDone}, this implies that
  $f = \Realization(\Phi)$ is $(\max\{1, \Lambda_{K,r} \, W^{\lfloor K/2 \rfloor}\}, r^{K-1})$-poly.
  If we set $\Theta_{L,r} := \max_{1 \leq K \leq L} \Lambda_{K,r}$, then this easily implies that
  $f$ is $(\max\{1, \Theta_{L,r} \, W^{\lfloor L/2 \rfloor}\}, r^{L-1})$-poly, as desired.
\end{proof}


\section{The spaces \texorpdfstring{$\WASpace[\StandardXSpace[]][\varrho_r][q][\alpha][L]$}{W(Xₚᵏ, ϱᵣ, L)}
and \texorpdfstring{$\NASpace[\StandardXSpace[]][\varrho_r][q][\alpha][L]$}{N(Xₚᵏ, ϱᵣ, L)} are distinct}
\label{sec:WeightAndNeuronSpacesDistinct}

In this section, we show that for a \emph{fixed} depth $L \geq 3$ and $\Omega = (0,1)^d$
the approximation spaces defined in terms of the number of weights and in terms of the number
of neurons are distinct; that is, we show
\begin{equation}
  \WASpace[\StandardXSpace[](\Omega)][\varrho_r][q][\alpha][L]
  \neq \NASpace[\StandardXSpace[](\Omega)][\varrho_r][q][\alpha][L].
  \label{eq:WeightSpaceDistinctFromNeuronSpace}
\end{equation}
The proof is based on several results by Telgarsky \cite{telgarsky2016benefits},
which we first collect.
The first essential concept is the notion of the \emph{crossing number} of a function.

\begin{defn}\label{def:CrossingNumber}
  For any piecewise polynomial function $f : \R \to \R$ with finitely many pieces,
  define $\widetilde{f} : \R \to \{0,1\}, x \mapsto \Indicator_{f(x) \geq 1/2}$.
  Thanks to our assumption on $f$, the sets $\widetilde{f}^{-1} (\{0\}) \subset \R$
  and $\widetilde{f}^{-1} (\{1\}) \subset \R$ are finite unions of (possibly degenerate)
  intervals.
  For $i \in \{0,1\}$, denote by $I_f^{(i)} \subset 2^{\R}$ the set of connected components
  of $\widetilde{f}^{-1} (\{i\})$.
  Finally, set $I_f := I_f^{(0)} \cup I_f^{(1)}$ and define the \emph{crossing number}
  $\Cr (f)$ of $f$ as $\Cr (f) := |I_f| \in \N$.
\end{defn}

The following result gives a bound on the crossing number of $f$,
based on bounds on the complexity of $f$.
Here, we again use the notion of $(t,\beta)$--poly functions as introduced at the beginning of
Appendix~\ref{sub:NumberOfPiecesSimplified}.

\begin{lem}\label{lem:CrossingNumberBound}(\cite[Lemma 3.3]{telgarsky2016benefits})
  If $f : \R \to \R$ is $(t,\alpha)$--poly, then $\Cr (f) \leq t (1 + \alpha)$.
\end{lem}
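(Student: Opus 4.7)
The plan is to prove the bound by analyzing the structure of $\widetilde{f}^{-1}(\{0\})$ and $\widetilde{f}^{-1}(\{1\})$ on each piece of $f$, and then summing the contributions.

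First, since $f$ is $(t,\alpha)$-poly, I would fix a partition of $\R$ into $t$ intervals $J_1,\dots,J_t$ on which $f$ restricts to polynomials $p_1,\dots,p_t$ of degree at most $\alpha$. I would then study the set $A_j := \{x \in J_j : p_j(x) \geq 1/2 \} = \widetilde{f}^{-1}(\{1\}) \cap J_j$ on each piece separately. If $p_j \equiv 1/2$ on $J_j$, then $A_j = J_j$ and $J_j \setminus A_j = \emptyset$, contributing $1$ connected component of $\widetilde{f}^{-1}(\{0\}) \cup \widetilde{f}^{-1}(\{1\})$ restricted to $J_j$. Otherwise, the polynomial $p_j - 1/2$ has at most $\alpha$ real roots in $J_j$; these roots partition $J_j$ into at most $\alpha+1$ subintervals on each of which $p_j - 1/2$ has constant (strict) sign, with the possible exception of isolated zeros that lie in $A_j$ but contribute no new component. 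In any case, the restrictions $A_j$ and $J_j \setminus A_j$ together consist of at most $\alpha+1$ connected components of $J_j$.

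Next, I would use this local bound to bound $\Cr(f)$ globally. Writing $I_f$ as in Definition~\ref{def:CrossingNumber}, every element of $I_f$ is a connected component of $\widetilde{f}^{-1}(\{0\})$ or of $\widetilde{f}^{-1}(\{1\})$, and each such component $C$ meets at least one of the intervals $J_j$. By the previous paragraph, for each $j$ the number of components of $I_f$ that meet $J_j$ but are not ``inherited'' from $J_{j-1}$ (i.e.\ do not extend backwards across the boundary between $J_{j-1}$ and $J_j$) is at most $\alpha+1$. Summing over $j = 1,\dots,t$, each component of $I_f$ is counted exactly once (at the leftmost $J_j$ it meets), which yields $\Cr(f) = |I_f| \leq t(\alpha+1)$.

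The routine but mildly delicate step is the bookkeeping in the last paragraph: a component of $\widetilde{f}^{-1}(\{i\})$ may straddle several $J_j$'s (when adjacent polynomials $p_j, p_{j+1}$ both exceed $1/2$ at the shared breakpoint), so one must avoid double-counting. The cleanest way I expect to implement this is to observe that merging across a boundary can only \emph{reduce} the total component count, so the naive sum $\sum_{j=1}^{t}(\alpha+1) = t(\alpha+1)$ of local component counts is already an upper bound for $\Cr(f)$. This is the main (and only) obstacle; the rest is elementary real algebra (the root count for polynomials of degree $\leq \alpha$).
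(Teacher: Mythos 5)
Your local bound (that $A_j$ and $J_j \setminus A_j$ together have at most $\alpha + 1$ connected components) is correct, but your argument does not establish it. You assert that the isolated zeros of $p_j - \tfrac{1}{2}$ lying in $A_j$ ``contribute no new component''; this is false whenever $p_j - \tfrac{1}{2} < 0$ on both sides of such a zero $r$: in that case $\{r\}$ \emph{is} a separate connected component of $A_j$, sandwiched between two components of $J_j \setminus A_j$. Carrying your counting out literally, $k \leq \alpha$ roots all of this ``touch from below'' type would produce $k$ isolated $A_j$-points plus $k+1$ negative open subintervals, i.e. $2k+1$ components in $J_j$ --- which exceeds $\alpha+1$ if one only knows $k \leq \alpha$.

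The missing ingredient is to count roots with multiplicity. Every boundary between adjacent components of $A_j$ and $J_j \setminus A_j$ sits at a root of $p_j - \tfrac{1}{2}$: an odd-multiplicity (sign-changing) root accounts for exactly one such boundary, while a touch-from-below root accounts for two boundaries but has multiplicity at least two; a touch-from-above root accounts for none. Hence the number of boundaries is at most $\deg(p_j - \tfrac{1}{2}) \leq \alpha$, and the number of components within $J_j$ is at most $\alpha + 1$. With this repair your global bookkeeping (attributing each component of $I_f$ to the leftmost $J_j$ it meets, so that the merging across breakpoints can only help) is sound and gives $\Cr(f) \leq t(1+\alpha)$. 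Note the paper itself does not prove this lemma but cites it directly to Telgarsky.
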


Finally, we will need the following result which tells us that if $\Cr (f) \gg \Cr (g)$,
then the functions $\widetilde{f},\widetilde{g}$ introduced in Definition~\ref{def:CrossingNumber}
differ on a large number of intervals $I \in I_f$.

\begin{lem}\label{lem:TelgarskyFundamental}(\cite[Lemma 3.1]{telgarsky2016benefits})
  Let $f : \R \to \R$ and $g : \R \to \R$ be piecewise polynomial with finitely many pieces.
  Then
  \[
    \frac{1}{\Cr (f)} \cdot
    \Big|
      \big\{
        I \in I_f
        \colon
        \forall x \in I : \widetilde{f}(x) \neq \widetilde{g} (x)
      \big\}
    \Big|
    \geq \frac{1}{2} \Big( 1 - 2 \frac{\Cr(g)}{\Cr (f)} \Big).
    \qedhere
  \]
\end{lem}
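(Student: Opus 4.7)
The plan is to prove the lemma via a pairing argument on the intervals of $I_f$. Enumerate the intervals of $I_f$ from left to right as $I_1 < I_2 < \cdots < I_n$ with $n := \Cr(f)$. Because $I_f$ is the set of connected components of $\widetilde{f}^{-1}(\{0\}) \cup \widetilde{f}^{-1}(\{1\})$, the value $a_k := \widetilde{f}|_{I_k} \in \{0,1\}$ is well-defined, and $a_k \neq a_{k+1}$ for all $1 \leq k \leq n-1$ (two adjacent intervals sharing an $\widetilde{f}$-value would merge into a single connected component). Form the disjoint consecutive pairs $P_k := (I_{2k-1}, I_{2k})$ for $1 \leq k \leq \lfloor n/2 \rfloor$, and call $P_k$ \emph{good} if $\widetilde{g}$ is constant on both $I_{2k-1}$ and $I_{2k}$ with the \emph{same} constant value, and \emph{bad} otherwise.

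The key observation is that each good pair contributes exactly one element to the set $D := \{I \in I_f \colon \forall x \in I,\ \widetilde{f}(x) \neq \widetilde{g}(x)\}$: indeed, the values $a_{2k-1}$ and $a_{2k}$ exhaust $\{0,1\}$ by the alternation property, while the common value of $\widetilde{g}$ on the pair is a single element of $\{0,1\}$, so it agrees with $\widetilde{f}$ on exactly one of the two intervals and disagrees throughout the other. To bound the number of bad pairs, observe that $\widetilde{g}$ admits exactly $\Cr(g) - 1$ change points (points where the $\{0,1\}$-valued function $\widetilde{g}$ flips). A pair $P_k$ is bad precisely when either (i) $\widetilde{g}$ is non-constant on one of $I_{2k-1}, I_{2k}$, forcing a change point of $\widetilde{g}$ strictly inside that interval, or (ii) $\widetilde{g}$ is constant on both intervals but with different values, forcing a change point of $\widetilde{g}$ exactly at the common boundary between them. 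Because the closed unions $I_{2k-1} \cup I_{2k}$ are pairwise disjoint across different $k$, each bad pair is charged a distinct change point of $\widetilde{g}$, so the number of bad pairs is at most $\Cr(g) - 1$.

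Combining the two estimates yields $|D| \geq \lfloor n/2 \rfloor - (\Cr(g) - 1) \geq n/2 - \Cr(g)$, which upon dividing by $n = \Cr(f)$ is exactly the asserted bound $\tfrac{1}{2}(1 - 2\Cr(g)/\Cr(f))$. The main subtlety is the counting step for bad pairs: one must identify a distinct change point of $\widetilde{g}$ for each bad pair, and this is what dictates the choice of pairing disjoint \emph{consecutive} pairs $(I_{2k-1}, I_{2k})$ rather than overlapping sliding windows, since the pairwise disjointness of $I_{2k-1} \cup I_{2k}$ for different $k$ is what prevents two bad pairs from being charged to the same change point. I do not foresee any further obstacle; the rest is purely bookkeeping around how change points of $\widetilde{g}$ are located relative to the interval boundaries of $I_f$.
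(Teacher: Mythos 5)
The paper states this result as a citation of Telgarsky's Lemma 3.1 and does not reproduce a proof, so there is nothing internal to the paper to compare against; your pairing-and-pigeonhole argument is a correct self-contained proof and matches Telgarsky's original strategy. Two small points of wording are worth tightening: (a) in case (i) the change point you charge lies \emph{within} $I_m$ (one can take $c=\sup\{x:\widetilde g\equiv\widetilde g(x_0)\text{ on }[x_0,x]\}$ for $x_0<x_1$ in $I_m$ with differing $\widetilde g$-values), but it need not be strictly interior to $I_m$---it may be an endpoint that belongs to $I_m$; and (b) the \emph{closures} $\overline{I_{2k-1}\cup I_{2k}}$ are not pairwise disjoint, since consecutive ones share a boundary point. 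The correct statement is that the sets $I_{2k-1}\cup I_{2k}$ themselves (as subsets of the partition of $\R$ by $I_f$) are pairwise disjoint, and in both cases (i) and (ii) the charged change point lies in $I_{2k-1}\cup I_{2k}$ rather than merely in its closure; hence the charging is injective and the bound on bad pairs by $\Cr(g)-1$ holds as you claim.
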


The first step to proving Equation~\eqref{eq:WeightSpaceDistinctFromNeuronSpace}
will be the following estimate:


\begin{lem}\label{lem:SawtoothHardToApproximate}
  Let $p \in (0,\infty]$.
  There is a constant $C_p > 0$ such that the error of best approximation
  (cf.~Equation~\eqref{eq:ApproximationErrorDefinition}) of the
  ``sawtooth function'' $\Delta_j$ (cf.~Equation~\eqref{eq:DefSawtooth})
  by piecewise polynomials satisfies
  \[
    \AppErr (\Delta_j , \PPoly_N^\alpha)_{L_p ( (0,1) )}
    \geq C_p
    \qquad \forall \, j,\alpha \in \N,\quad \forall \, 1 \leq N \leq \frac{2^{j} + 1}{4(1+\alpha)}.
    \qedhere
  \]
\end{lem}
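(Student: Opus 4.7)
The plan is to combine the crossing-number machinery of Telgarsky (Lemmas~\ref{lem:CrossingNumberBound} and \ref{lem:TelgarskyFundamental}) with explicit geometric information about the sawtooth $\Delta_j$. First I would compute $\Cr(\Delta_j)$: the function $\Delta_j$ consists of $2^{j-1}$ triangular teeth of width $2^{-(j-1)}$ on $[0,1]$ and vanishes outside. A direct inspection shows that $\{\Delta_j \geq 1/2\}$ is a disjoint union of $2^{j-1}$ intervals of length $2^{-j}$ (one around each tooth peak), while its complement decomposes into $2^{j-1}-1$ bounded intervals of length $2^{-j}$ between consecutive teeth plus two unbounded components, giving $\Cr(\Delta_j) = 2^j + 1$. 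For any $g \in \PPoly_N^\alpha$, Lemma~\ref{lem:CrossingNumberBound} yields $\Cr(g) \leq N(1+\alpha)$; the hypothesis $N \leq (2^j+1)/[4(1+\alpha)]$ then gives $\Cr(g) \leq \Cr(\Delta_j)/4$, so Lemma~\ref{lem:TelgarskyFundamental} produces a family $\CalB \subset I_{\Delta_j}$ of ``bad'' intervals (on each of which $\widetilde{\Delta_j}\neq \widetilde g$ everywhere) with $|\CalB| \geq \Cr(\Delta_j)/4 \geq 2^{j-2}$.

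The crucial step is converting each bad interval into a pointwise $L_p$ lower bound. Exploiting that every tooth of $\Delta_j$ is linear with slope $\pm 2^j$, on each bad interval $I$ I would carve out a sub-interval $J_I \subset I \cap (0,1)$ of length $\geq 2^{-j-2}$ on which $|\Delta_j - g| \geq 1/4$ almost everywhere: for a bad ``1-interval'' (where $\Delta_j \geq 1/2$ and $g < 1/2$) take $J_I$ as the neighborhood of the tooth peak where $\Delta_j \geq 3/4$; for a bad ``0-interval'' (where $\Delta_j < 1/2$ and $g \geq 1/2$) take $J_I$ as the neighborhood of the valley where $\Delta_j \leq 1/4$. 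In either case, $g$ is on the wrong side of the threshold $1/2$ throughout $I$ and $\Delta_j$ is separated from $1/2$ by at least $1/4$ on $J_I$, forcing $|\Delta_j - g| \geq 1/4$ there. Since distinct bad intervals are disjoint subsets of $\R$, so are the $J_I$, and for $p \in (0,\infty)$ one then has
\[
  \|\Delta_j - g\|_{L_p((0,1))}^p
  \;\geq\; \sum_{I \in \CalB} \int_{J_I} (1/4)^p\,dx
  \;\geq\; \tfrac{2^j+1}{4}\cdot (1/4)^p \cdot 2^{-j-2}
  \;\geq\; 2^{-(2p+4)},
\]
yielding $C_p := 2^{-2-4/p}$; for $p=\infty$ a single bad interval already gives $\|\Delta_j - g\|_{L_\infty((0,1))} \geq 1/4 =: C_\infty$. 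Taking the infimum over $g \in \PPoly_N^\alpha$ finishes the proof.

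The main obstacle is Step~3: Lemma~\ref{lem:TelgarskyFundamental} gives no pointwise information about $g$, only the sign of $g - 1/2$ on each bad interval, and a priori $|\Delta_j - g|$ could collapse to zero near the endpoints of $I$. Overcoming this requires the explicit piecewise-linear geometry of $\Delta_j$ (slopes $\pm 2^j$) to locate sub-intervals of length comparable to $|I|$ where $\Delta_j$ is bounded away from $1/2$. The uniform bound then arises from a clean cancellation: the combinatorial factor $2^j$ in the number of bad intervals exactly compensates the geometric factor $2^{-j}$ in the length of each $J_I$, producing a constant that depends on $p$ alone.
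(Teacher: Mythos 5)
Your proposal is correct and follows essentially the same route as the paper: both rely on Lemma~\ref{lem:CrossingNumberBound} to bound $\Cr(g) \leq N(1+\alpha) \leq \Cr(\Delta_j)/4$, then invoke Telgarsky's comparison lemma (Lemma~\ref{lem:TelgarskyFundamental}) to produce a set of $\geq 2^{j-2}$ ``bad'' intervals from $I_{\Delta_j}$ on which $\widetilde{\Delta_j} \neq \widetilde{g}$, and finally exploit the explicit geometry of $\Delta_j$ on those intervals. The only difference is in how the final $L_p$ estimate is executed: the paper bounds $\int_{I \cap [0,1]} |\Delta_j - \tfrac12|\,dx \geq 2^{-j-3}$ for each $I \in I_{\Delta_j}$ (packaged into Lemma~\ref{lem:SawtoothCrossingNumber}) and then reduces $L_p$ to $L_1$ via the elementary inequalities $\|\cdot\|_{L_p} \geq \|\cdot\|_{L_1}$ for $p \geq 1$ and $t^p \geq t$ for $t \leq 1$, $p \leq 1$; you instead carve out a sub-interval $J_I$ of length $\gtrsim 2^{-j}$ on which $|\Delta_j - \tfrac12| \geq \tfrac14$ and estimate the $L_p$ norm directly, which handles all $p \in (0,\infty]$ in one stroke. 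Both strategies hinge on the same cancellation $2^j \cdot 2^{-j}$, and yours is a perfectly valid (and arguably more self-contained) variant of the same argument.
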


For proving this lower bound, we first need to determine the crossing number of $\Delta_j$.

\begin{lem}\label{lem:SawtoothCrossingNumber}
  Let $j \in \N$ and $\Delta_j : \R \to \R$ as in Equation~\eqref{eq:DefSawtooth}.
  We have $\Cr(\Delta_j) = 1 + 2^j$ and
  \[
    \int_{I \cap [0,1]}
      \big| \Delta_j (x) - \frac{1}{2} \big|
    \, d x
    \geq 2^{-j-3}
    \qquad \forall \, I \in I_{\Delta_j}.
    \qedhere
  \]
\end{lem}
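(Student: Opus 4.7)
The plan is to exploit the explicit structure of $\Delta_j$: it consists of $2^{j-1}$ identical triangular teeth on $[0,1]$, each of width $2^{1-j}$, rising linearly from $0$ to $1$ and back to $0$. Because each tooth attains the value $1/2$ exactly once on its ascending half and once on its descending half, the level set $\{\Delta_j \geq 1/2\}$ restricted to the $k$-th tooth is the closed middle-half interval $J_k := [(4k+1)/2^{j+1},(4k+3)/2^{j+1}]$ of length $2^{-j}$; and $\Delta_j \equiv 0 < 1/2$ outside $[0,1]$. Hence $\widetilde{\Delta_j}^{-1}(\{1\})$ has exactly $2^{j-1}$ connected components (the $J_k$'s), while $\widetilde{\Delta_j}^{-1}(\{0\})$ has $2^{j-1}+1$ components: a left unbounded tail $(-\infty, 1/2^{j+1})$, the $2^{j-1}-1$ gaps $((4k+3)/2^{j+1}, (4k+5)/2^{j+1})$ between consecutive $J_k$'s, and a right unbounded tail. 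Summing gives $\Cr(\Delta_j) = 2^{j-1} + (2^{j-1}+1) = 1 + 2^j$.

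For the integral bound, I would examine the four geometric types of component separately. On any $J_k$, the function $|\Delta_j - 1/2|$ is a triangular bump of height $1/2$ over a base of length $2^{-j}$, integrating to $2^{-j-2}$. On any inner gap between two consecutive $J_k$'s, $|\Delta_j - 1/2| = 1/2 - \Delta_j$ is again a triangular bump of height $1/2$ and base length $2^{-j}$ (peaking at the junction of two adjacent teeth, where $\Delta_j = 0$), contributing $2^{-j-2}$. Finally, the two unbounded tail components intersected with $[0,1]$ each give an interval of length $2^{-j-1}$ on which $|\Delta_j - 1/2|$ is a linear ramp from $1/2$ to $0$, integrating to exactly $2^{-j-3}$. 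All four contributions are at least $2^{-j-3}$, with the minimum attained by the two boundary intervals.

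The whole argument is elementary geometry of triangular regions, and no step is a serious obstacle. The only point requiring a bit of care is the identification of the connected components: one must remember that $\widetilde{\Delta_j}$ is defined on all of $\R$, so the leftmost and rightmost components of the zero set are unbounded and contribute the sharp constant $2^{-j-3}$ only after intersection with $[0,1]$ — this is what fixes the constant in the bound and explains why it is the best possible of the form $c \cdot 2^{-j}$.
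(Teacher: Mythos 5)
Your proof is correct and is exactly the elementary geometric computation the paper alludes to but omits as "tedious but straightforward," referring the reader to the picture of $\Delta_j$. The identification of the level-set components (the $2^{j-1}$ closed middle-half intervals, the $2^{j-1}-1$ inner gaps, and the two unbounded tails), the triangle-area computations giving $2^{-j-2}$ for the first two types and $2^{-j-3}$ for the truncated tails, and the resulting crossing count $1+2^j$ all check out with no gaps.
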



\begin{proof}
The formal proof is omitted as it involves tedious but straightforward computations;
graphically, the claimed properties are straightforward consequences of Figure~\ref{fig:SawToothPlot}.
\end{proof}


\begin{proof}[Proof of Lemma~\ref{lem:SawtoothHardToApproximate}]
  Let $j,\alpha \in \N$ and let $N \in \N$ with $N \leq \frac{2^{j} + 1}{4(1+\alpha)}$
  and $f \in \PPoly_N^\alpha$ be arbitrary.
  Lemma~\ref{lem:CrossingNumberBound}
  shows $\Cr(f) \leq N(1 + \alpha) \leq \frac{2^{j} + 1}{4}$,
  so that Lemma~\ref{lem:SawtoothCrossingNumber} implies
  ${\theta := 1 - 2 \frac{\Cr(f)}{\Cr(\Delta_j)} = 1 - 2 \frac{\Cr(f)}{1 + 2^j} \geq \tfrac{1}{2}}$.
  Now, recall the notation of Definition~\ref{def:CrossingNumber}, and set
  \[
    G :=
    \big\{
      I \in I_{\Delta_j}
      \,\big|\,
      \forall \, x \in I : \widetilde{\Delta_j}(x) \neq \widetilde{f}(x)
    \big\}.
  \]
  By Lemma~\ref{lem:TelgarskyFundamental},
  $\frac{1}{\Cr(\Delta_j)} |G| \geq \frac{\theta}{2} \geq \frac{1}{4}$,
  which means $|G| \geq \frac{1 + 2^j}{4} \geq 2^{j-2}$,
  since we have $\Cr(\Delta_j) = 1 + 2^j$.

  For arbitrary $I \in G$, we have $\widetilde{\Delta_j}(x) \neq \widetilde{f}(x)$ for all $x \in I$,
  so that either $f(x) < \tfrac{1}{2} \leq \Delta_j (x)$ or $\Delta_j(x) < \tfrac{1}{2} \leq f(x)$.
  In both cases, we get $|\Delta_j (x) - f(x)| \geq |\Delta_j(x) - \tfrac{1}{2}|$.
  Furthermore, recall that $0 \leq \Delta_j \leq 1$,
  so that $|\Delta_j(x) - \tfrac{1}{2}| \leq \tfrac{1}{2} \leq 1$.
  Because of $\|\Delta_j - f\|_{L_p ( (0,1) )} \geq \|\Delta_j - f\|_{L_1 ( (0,1) )}$ for $p \geq 1$,
  it is sufficient to prove the result for $0 < p \leq 1$.
  For this range of $p$, we see that
  \[
    \big| \Delta_j(x) - \tfrac{1}{2} \big|
    = \big| \Delta_j (x) - \tfrac{1}{2} \big|^{1 - p}
      \cdot \big| \Delta_j (x) - \tfrac{1}{2} \big|^{p}
    \leq \big| \Delta_j (x) - \tfrac{1}{2} \big|^{p} .
  \]
  Overall, we get
  $|\Delta_j(x) - f(x)|^p \geq |\Delta_j(x) - \tfrac{1}{2}|^p \geq |\Delta_j(x) - \tfrac{1}{2}|$
  for all $x \in I$ and $I \in G$.
  Thus, 
  \begin{align*}
    \int_{[0,1]}
      |\Delta_j(x) - f(x)|^p
    \, d x
    & \geq \sum_{I \in G}
             \int_{I \cap [0,1]}
               | \Delta_j(x) - f(x)|^p
             \, dx
     \geq \sum_{I \in G}
             \int_{I \cap [0,1]}
               \Big| \Delta_j(x) - \frac{1}{2} \Big|
             \, d x \\
    ({\scriptstyle{\text{Lemma}~\ref{lem:SawtoothCrossingNumber}}})
    & \geq 
           \sum_{I \in G} 2^{-j-3}
      =    |G| \cdot 2^{-j-3}
      \geq 2^{j-2} \cdot 2^{-j-3}
      =    2^{-5}.
  \end{align*}
  This implies $\|\Delta_j - f\|_{L_p ( (0,1) )} \geq 2^{-5/p} =: C_p$.
\end{proof}


As a consequence of the lower bound in Lemma~\ref{lem:SawtoothHardToApproximate},
we can now prove lower bounds for the neural network approximation space norms of
the multivariate sawtooth function $\Delta_{j,d}$ (cf.~Definition~\ref{def:SawtoothFunction})

\begin{prop}\label{prop:SawtoothNormLowerBound}
Consider $\Omega = [0,1]^{d}$, $r \in \N$, $L \in \N_{\geq 2}$, $\alpha \in (0, \infty)$, $p,q \in (0,\infty]$.
There is a constant ${C = C(d,r,L,\alpha,p,q) > 0}$ such that
\[
  \|\Delta_{j,d}\|_{\WASpace[\StandardXSpace[](\Omega)][\varrho_r][q][\alpha][L]}
  \geq C \cdot 2^{\alpha j / \lfloor L / 2 \rfloor}
  \quad \text{and} \quad
  \|\Delta_{j,d}\|_{\NASpace[\StandardXSpace[](\Omega)][\varrho_r][q][\alpha][L]}
  \geq C \cdot 2^{\alpha j / (L-1)},\qquad \forall j \in \N. \qedhere
\]
\end{prop}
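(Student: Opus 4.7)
The plan is to reduce the multivariate lower bound to the univariate estimate of Lemma~\ref{lem:SawtoothHardToApproximate} by slicing. Since $\Delta_{j,d}(x_1,\ldots,x_d) = \Delta_j(x_1)$ depends only on one coordinate, any would-be approximant to $\Delta_{j,d}$ must, for (almost) every fixed $y \in [0,1]^{d-1}$, approximate the univariate sawtooth $\Delta_j$ along the slice $x_1 \mapsto (x_1,y)$. The remainder of the argument is then to verify that these slices are univariate piecewise polynomials whose complexity is controlled by Lemma~\ref{lem:MinGrowthRateNbPiecesSufficient}, and to translate a pointwise lower bound on $E(\Delta_{j,d},\Sigma_{n-1})_{L_p}$ into a lower bound on the quasi-norm.

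First I would fix an arbitrary $g \in \NNreal^{\varrho_r,d,1}_{W,L,\infty}$, and for each $y \in [0,1]^{d-1}$ consider the affine map $P_y : \R \to \R^d,\ x_1 \mapsto (x_1,y_2,\ldots,y_d)$, which satisfies $\|P_y\|_{\ell^{0,\infty}_\ast} = 1$. By Lemma~\ref{lem:NetworkCalculus}-(\ref{enu:PrePostAffine}), the slice $h_y := g \circ P_y$ lies in $\NNreal^{\varrho_r,1,1}_{W,L,\infty}$. By Definition~\ref{def:NbPieces} together with the first part of Lemma~\ref{lem:MinGrowthRateNbPiecesSufficient}, there is a constant $K = K(L,r)$ such that $h_y|_{(0,1)} \in \PPoly^{r^{L-1}}_{K W^{\lfloor L/2 \rfloor}}$. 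Applying Lemma~\ref{lem:SawtoothHardToApproximate} with $\alpha = r^{L-1}$, we find that as soon as $K W^{\lfloor L/2 \rfloor} \leq (2^j+1)/(4(1+r^{L-1}))$, every such slice satisfies $\|h_y - \Delta_j\|_{L_p((0,1))} \geq C_p$.

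Next, I would pass from the slice bound back to the ambient norm. For $p < \infty$, Fubini gives
\[
  \|g - \Delta_{j,d}\|_{L_p(\Omega)}^p
  = \int_{[0,1]^{d-1}} \|h_y - \Delta_j\|_{L_p((0,1))}^p \, dy
  \geq C_p^p,
\]
while for $p = \infty$ the estimate $\|g - \Delta_{j,d}\|_{L_\infty(\Omega)} \geq \|h_y - \Delta_j\|_{L_\infty((0,1))}$ yields the same conclusion. Taking the infimum over $g \in \Sigma_n^{\mathtt{W}} := \mathtt{W}_n(X_p(\Omega),\varrho_r,L)$, this shows the existence of a constant $c = c(r,L) > 0$ such that
\[
  E\big(\Delta_{j,d},\, \mathtt{W}_n(X_p(\Omega),\varrho_r,L)\big)_{L_p(\Omega)} \geq C_p
  \qquad \text{whenever } 1 \leq n \leq c \cdot 2^{j / \lfloor L/2\rfloor}.
\]

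Finally, I would convert this plateau estimate into the stated quasi-norm lower bound. Setting $N_j := \lfloor c \cdot 2^{j/\lfloor L/2 \rfloor}\rfloor$, for $q = \infty$ the bound is immediate from $\|\Delta_{j,d}\|_{W_\infty^\alpha} \geq N_j^\alpha \cdot C_p$, and for $q < \infty$ it follows from
\[
  \|\Delta_{j,d}\|_{\WASpace[\StandardXSpace[](\Omega)][\varrho_r][q][\alpha][L]}^q
  \geq \sum_{n=2}^{N_j+1} n^{\alpha q - 1} C_p^q
  \gtrsim N_j^{\alpha q},
\]
which is a routine calculation (finitely many initial $j$ can be absorbed in the final constant). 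The argument for $\NASpace[\StandardXSpace[](\Omega)][\varrho_r][q][\alpha][L]$ proceeds identically, except that Lemma~\ref{lem:NetworkCalculus}-(\ref{enu:PrePostAffine}) now transports a network with $N$ neurons into a univariate network with $N$ neurons, and the second part of Lemma~\ref{lem:MinGrowthRateNbPiecesSufficient} furnishes the bound $n_r(\infty,L,N) \leq K' N^{L-1}$, replacing $\lfloor L/2 \rfloor$ by $L-1$ everywhere. I do not anticipate a genuine obstacle: all the hard combinatorial and approximation-theoretic content is already packaged in Lemmas~\ref{lem:MinGrowthRateNbPiecesSufficient} and \ref{lem:SawtoothHardToApproximate}; the only mild care is in the $p = \infty$ case of the Fubini step and in ensuring that the small-$j$ regime does not spoil the asymptotic constant.
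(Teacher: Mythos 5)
Your proposal is correct and follows essentially the same route as the paper's proof: slicing via Lemma~\ref{lem:NetworkCalculus}-(\ref{enu:PrePostAffine}), bounding the number of pieces via Lemma~\ref{lem:MinGrowthRateNbPiecesSufficient}, invoking Lemma~\ref{lem:SawtoothHardToApproximate}, and passing to the multivariate bound via Fubini. The only (inconsequential) deviation is that the paper reduces to the case $q = \infty$ using the embedding \eqref{eq:ApproximationSpaceEmbedding} before estimating the quasi-norm, whereas you treat both $q = \infty$ and $q < \infty$ by a direct computation; both are fine.
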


\begin{proof}
  According to Lemma~\ref{lem:MinGrowthRateNbPiecesSufficient}, there is a constant
  $C_1 = C_1 (r,L) \in \N$ such that
  \begin{equation}
    \NNreal^{\varrho_r, 1, 1}_{W,L,\infty}
    \subset \PPoly_{C_1 \cdot W^{\lfloor L/2 \rfloor}}^{\beta}
    \quad \text{and} \quad
    \NNreal^{\varrho_r,1,1}_{\infty,L,N}
    \subset \PPoly_{C_1 \cdot N^{L-1}}^{\beta}
    \quad \text{where} \quad \beta := r^{L-1}.
    \label{eq:NetworksArePiecewisePolynomial}
  \end{equation}
  We first prove the estimate regarding
  $\|\Delta_{j,d}\|_{\WASpace[\StandardXSpace[](\Omega)][\varrho_r][q][\alpha][L]}$.
  To this end, note that there is a constant $C_2 = C_2(L,\beta,C_1) = C_2(L,r) > 0$ such that
  \(
    \big( \tfrac{2^{j+1}}{4 C_1 (1 + \beta)} \big)^{1/\lfloor L/2 \rfloor}
    = C_2 \cdot 2^{(j+1) / \lfloor L/2 \rfloor}
  \).
  Now, let $W \in \N_0$ with $W \leq C_2 \cdot 2^{(j+1) / \lfloor L/2 \rfloor}$
  and $F \in \NNreal^{\varrho_{r},d,1}_{W,L,\infty}$ be arbitrary.
  Define $F_{x'} : \R \to \R, t \mapsto F ((t, x'))$
  for $x' \in [0,1]^{d-1}$.
  According to Lemma~\ref{lem:NetworkCalculus}-(1) and Equation~\eqref{eq:NetworksArePiecewisePolynomial},
  we have
  \(
    F_{x'} \in \NNreal_{W,L,\infty}^{\varrho_r,1,1}
           \subset \PPoly_{C_1 \cdot W^{\lfloor L/2 \rfloor}}^{\beta}.
  \)
  Since
  \(
    C_1 \cdot W^{\lfloor L/2 \rfloor}
    \leq C_1 \cdot \tfrac{2^{j+1}}{4 C_1 (1+\beta)}
    = \tfrac{2^{j+1}}{4(1+\beta)},
  \)
  Lemma~\ref{lem:SawtoothHardToApproximate} yields a constant $C_3 = C_3 (p) > 0$ such that
  $C_3 \leq \|\Delta_j - F_{x'}\|_{L_p ( (0,1) )}$.
  For $p < \infty$, Fubini's theorem shows that
  \begin{equation*}
    \begin{split}
      \| \Delta_{j,d} - F \|_{L_p(\Omega)}^p
      & \geq \int_{[0,1]^{d-1}}
               \int_{[0,1]}
                 \Big|
                   \Delta_{j}(x_{1})
                   - F((x_1, x'))
                 \Big|^p
               \, d x_1
             \, d x' \\
      & = \int_{[0,1]^{d-1}}
              \| \Delta_j - F_{x'} \|_{L_p ( (0,1) )}^p
            \, d x'
        \geq C_3^p \cdot \int_{[0,1]^{d-1}} \, d x'
        =    C_3^p.
    \end{split}
  \end{equation*}
  Therefore,
  \begin{equation}
    \AppErr (\Delta_{j,d}, \NNreal^{\varrho_r, d, 1}_{W,L,\infty})_{L_p (\Omega)}
    \geq C_3 > 0
    \quad \forall \, W \in \N_0 \text{ satisfying } W \leq C_2 \cdot 2^{(j+1) / \lfloor L / 2 \rfloor} .
    \label{eq:SawtoothApproximationErrorLowerBound}
  \end{equation}
  Since $\|\bullet\|_{L^\infty (\Omega)} \geq \|\bullet\|_{L^1(\Omega)}$,
  this also holds for $p = \infty$.
  In light of the embedding \eqref{eq:ApproximationSpaceEmbedding}, it is sufficient to lower bound
  $\|\Delta_{j,d}\|_{\WASpace[\StandardXSpace[](\Omega)][\varrho_r][q][\alpha][L]}$
  when $q = \infty$.
  In this case, we have
  \begin{align*}
    \|\Delta_{j,d}\|_{\WASpace[\StandardXSpace[]][\varrho_r][\infty][\alpha][L]}
    & = \max \Big\{
               \|\Delta_{j,d}\|_{L_p} \,\,, \quad
               \sup_{W \in \N}
                 \big[
                   (1 + W)^\alpha \cdot
                   \AppErr(\Delta_{j,d}, \NNreal_{W,L,\infty}^{\varrho_r, d, 1})
                 \big]
             \Big\} \\
    ({\scriptstyle{\text{Equation}~\eqref{eq:SawtoothApproximationErrorLowerBound}}})
    & 
      \geq
      \begin{cases}
        \|\Delta_{j,d}\|_{L_p} \geq C_3 ,
        & \text{if } C_2 \cdot 2^{(j+1) / \lfloor L/2 \rfloor} < 1 \\
        C_3 \cdot (1 + \lfloor C_2 \cdot 2^{(j+1) / \lfloor L/2 \rfloor} \rfloor)^\alpha,
        & \text{if } C_2 \cdot 2^{(j+1) / \lfloor L/2 \rfloor} \geq 1
      \end{cases} \\
    & \geq C_3 \, C_2^\alpha \cdot 2^{j \alpha / \lfloor L/2 \rfloor} ,
  \end{align*}
  as desired.
  This completes the proof of the lower bound of
  $\|\Delta_{j,d}\|_{\WASpace[\StandardXSpace[]][\varrho_r][q][\alpha][L]}$.

  \medskip{}

  The lower bound for
  $\|\Delta_{j,d}\|_{\NASpace[\StandardXSpace[]][\varrho_r][q][\alpha][L]}$
  can be derived similarly.
  First, in the same way that we proved Equation~\eqref{eq:SawtoothApproximationErrorLowerBound},
  one can show that
  \[
    \AppErr (\Delta_{j,d}, \NNreal^{\varrho_r, d, 1}_{\infty,L,N})_{L_p (\Omega)}
    \geq C_3 > 0
    \quad \forall \, N \in \N_0 \text{ satisfying } N \leq C'_2 \cdot 2^{(j+1) / (L-1)} ,
  \]
  for a suitable constant $C'_2 = C'_2 (L,r) > 0$.
  The remainder of the argument is then almost identical to that for estimating
  $\|\Delta_{j,d}\|_{\WASpace[\StandardXSpace[](\Omega)][\varrho_r][q][\alpha][L]}$,
  and is thus omitted.
\end{proof}

As our final preparation for showing that the spaces
$\WASpace[\StandardXSpace[](\Omega)][\varrho_r][q][\alpha][L]$ and
$\NASpace[\StandardXSpace[](\Omega)][\varrho_r][q][\alpha][L]$ are distinct for $L \geq 3$
(Lemma~\ref{lem:WeightAndNeuronSpacesDistinct}), we will show that the lower bound
derived in Proposition~\ref{prop:SawtoothNormLowerBound} is sharp
and extends to arbitrary measurable $\Omega$ with nonempty interior.

\begin{thm}\label{thm:SawtoothNormAsymptotic}
 Let $p,q \in (0,\infty]$, $\alpha > 0$, $r \in \N$, $L \in \N_{\geq 2}$,
 and let $\Omega \subset \R^d$ be a bounded admissible domain with non-empty interior.
 Consider  $y \in \R^d$ and $s > 0$ satisfying $y + [0,s]^d \subset \Omega$
 and define
  \[
    \Delta_j^{(y,s)} : \R^d \to [0,1], x \mapsto \Delta_{j,d}\Big(\frac{x-y}{s}\Big)
    \quad \text{for } j \in \N.
  \]
  Then there are $C_1, C_2 > 0$ such that for all $j \in \N$ the function $\Delta_j^{(y,s)}$
  satisfies
  \begin{eqnarray*}
    C_1 \cdot 2^{j \alpha / \lfloor L / 2 \rfloor}
    & \leq & \|\Delta_j^{(y,s)}\|_{\WASpace[\StandardXSpace[](\Omega)][\varrho_r][q][\alpha][L]}
    \leq C_2 \cdot 2^{j \alpha / \lfloor L / 2 \rfloor}\\
 \text{and}\quad
    C_1 \cdot 2^{j \alpha / (L-1)}
    & \leq & \|\Delta_j^{(y,s)}\|_{\NASpace[\StandardXSpace[](\Omega)][\varrho_r][q][\alpha][L]}
    \leq C_2 \cdot 2^{j \alpha / (L-1)}.
    \qedhere
  \end{eqnarray*}
\end{thm}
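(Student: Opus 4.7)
The proof splits into matching upper and lower bounds, both obtained by transferring Proposition~\ref{prop:SawtoothNormLowerBound} and Corollary~\ref{cor:SawtoothMultidimImplementation} from the unit cube $[0,1]^d$ to $\Omega$ via the affine change of variables $\Phi:[0,1]^d \to y+[0,s]^d$, $u \mapsto y+su$, which satisfies $\Delta_j^{(y,s)}\circ \Phi = \Delta_{j,d}$. The crucial observation is that $\Phi$ and $\Phi^{-1}$ both have diagonal linear part, hence $\|\Phi\|_{\ell^{0,\infty}_*} = \|\Phi^{-1}\|_{\ell^{0,\infty}_*} = 1$; Lemma~\ref{lem:NetworkCalculus}-(\ref{enu:PrePostAffine}) then shows that pre-composition with $\Phi$ (resp.\ $\Phi^{-1}$) maps the class $\NNreal^{\varrho_r,d,1}_{W,L,N}$ into itself \emph{without} changing $W$, $L$, or $N$. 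Combined with the elementary identity $\|f\circ \Phi\|_{L_p([0,1]^d)} = s^{-d/p}\|f\|_{L_p(y+[0,s]^d)}$ (with the obvious modification for $p=\infty$), the pull-back $A:f\mapsto f\circ \Phi$ is an isomorphism $\WASpace[\StandardXSpace[](y+[0,s]^d)][\varrho_r][q][\alpha][L] \cong \WASpace[\StandardXSpace[]([0,1]^d)][\varrho_r][q][\alpha][L]$ (and similarly for $N^\alpha_q$) with equivalence constants depending only on $s,p,d$.

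For the upper bound, I first treat $r=1$. Corollary~\ref{cor:SawtoothMultidimImplementation} gives $\Delta_{j,d}\in \NNreal^{\varrho_1,d,1}_{W_j,L,\infty}$ with $W_j := \lceil C_L\cdot 2^{j/\lfloor L/2\rfloor}\rceil$. Pre-composing with the affine map $x\mapsto (x-y)/s$ (again of unit $\|\cdot\|_{\ell^{0,\infty}_*}$) yields $\Delta_j^{(y,s)}\in \NNreal^{\varrho_1,d,1}_{W_j,L,\infty}$, so $\AppErr(\Delta_j^{(y,s)},\WeightClassSymbol_n(\StandardXSpace[](\Omega),\varrho_1,L))_{\StandardXSpace[](\Omega)} = 0$ for all $n\geq W_j$. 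For smaller $n$, the trivial estimate $\AppErr(\cdot,\cdot) \leq \|\Delta_j^{(y,s)}\|_{\StandardXSpace[](\Omega)} \leq |\Omega|^{1/p}$ (resp.\ $\leq 1$ for $p=\infty$) suffices, and summing/taking-sup in the definition of the quasi-norm yields $\|\Delta_j^{(y,s)}\|_{\WASpace[\StandardXSpace[](\Omega)][\varrho_1][q][\alpha][L]} \lesssim 2^{j\alpha/\lfloor L/2\rfloor}$. For $r\geq 2$, the embedding~\eqref{eq:Nesting1} of Theorem~\ref{thm:ReLUPowersApproxSpaces}, applicable since $\Omega$ is bounded, transfers the estimate to $\varrho_r$. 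The analogue for $\NASpace[\StandardXSpace[](\Omega)][\varrho_r][q][\alpha][L]$ uses the second complexity estimate in Corollary~\ref{cor:SawtoothMultidimImplementation}, replacing $\lfloor L/2\rfloor$ by $L-1$.

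For the lower bound, the restriction embedding in Remark~\ref{sec:RestrictionCartesian} yields
\[
  \|\Delta_j^{(y,s)}|_{y+[0,s]^d}\|_{\WASpace[\StandardXSpace[](y+[0,s]^d)][\varrho_r][q][\alpha][L]} \leq \|\Delta_j^{(y,s)}\|_{\WASpace[\StandardXSpace[](\Omega)][\varrho_r][q][\alpha][L]}.
\]
Applying the pull-back isomorphism $A$ from the first paragraph transports the left-hand side to a constant multiple of $\|\Delta_{j,d}\|_{\WASpace[\StandardXSpace[]([0,1]^d)][\varrho_r][q][\alpha][L]}$, and Proposition~\ref{prop:SawtoothNormLowerBound} then delivers the final lower bound $\geq C\cdot 2^{j\alpha/\lfloor L/2\rfloor}$. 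The $\NASpace$-case is identical, using the second statement of Proposition~\ref{prop:SawtoothNormLowerBound}.

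The only point requiring care---and therefore the main ``obstacle''---is the clean verification that $A$ intertwines the approximation families simultaneously at the level of $L_p$-norms and of network complexity classes. Once this intertwining is established, both bounds reduce to direct invocations of the previously proven results (Corollary~\ref{cor:SawtoothMultidimImplementation}, Proposition~\ref{prop:SawtoothNormLowerBound}, Remark~\ref{sec:RestrictionCartesian}, and the embedding~\eqref{eq:Nesting1} of Theorem~\ref{thm:ReLUPowersApproxSpaces}), with no further computation beyond routine quasi-norm bookkeeping.
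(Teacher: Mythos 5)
Your proposal is correct and follows essentially the same route as the paper's proof: both the upper and lower bounds rest on the affine pull-back $x\mapsto(x-y)/s$ preserving network complexity (via Lemma~\ref{lem:NetworkCalculus}-(\ref{enu:PrePostAffine}) and $\|\cdot\|_{\ell^{0,\infty}_\ast}=1$), combined with Corollary~\ref{cor:SawtoothMultidimImplementation} and Proposition~\ref{prop:SawtoothNormLowerBound}, with the reduction to $r=1$ by~\eqref{eq:Nesting1}. The only cosmetic differences are that you re-derive the Bernstein inequality~\eqref{eq:TrivialBernsteinInequality} from scratch in the upper bound where the paper just invokes it, and you package the change of variables as an explicit isomorphism of approximation spaces plus the restriction embedding from Remark~\ref{sec:RestrictionCartesian}, where the paper carries out the corresponding inequality chain inline.
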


%

\begin{proof}
  For the upper bound, since $\Omega$ is bounded, Theorem~\ref{thm:ReLUPowersApproxSpaces}
  (Equation~\eqref{eq:Nesting1}, which also holds for $N_q^\alpha$ instead of $W_q^\alpha$)
  shows that it suffices to prove the claim for $r = 1$.
  Since $T_{y,s} : \R^d \to \R^{d}, x \mapsto s^{-1} (x - y)$
  satisfies $\|T_{y,s}\|_{\ell^{0,\infty}_\ast} = 1$,
  a combination of Lemmas~\ref{lem:SawtoothImplementation} and
  \ref{lem:NetworkCalculus}-(\ref{enu:PrePostAffine}) shows that there is a constant $C_L > 0$
  satisfying
 \ifarxiv
  \[
    \Delta_j^{(y,s)} \in \NNreal^{\varrho_1,
                                  d,
                                  1}_{\infty,
                                      L,
                                      \lfloor C_L \cdot 2^{j / (L-1)} \rfloor}
    \qquad \text{and} \qquad
    \Delta_j^{(y,s)} \in \NNreal^{\varrho_1,
                                  d,
                                  1}_{\lfloor C_L \cdot 2^{j / \lfloor L/2 \rfloor} \rfloor,
                                      L,
                                      \infty}
    \qquad \forall \, j \in \N .
  \]
  \else
  $\Delta_j^{(y,s)} \in \NNreal^{\varrho_1,
                                  d,
                                  1}_{\infty,
                                      L,
                                      \lfloor C_L \cdot 2^{j / (L-1)} \rfloor}$
and
    $\Delta_j^{(y,s)} \in \NNreal^{\varrho_1,
                                  d,
                                  1}_{\lfloor C_L \cdot 2^{j / \lfloor L/2 \rfloor} \rfloor,
                                      L,
                                      \infty}$
for all $j \in \N$.
  \fi
  Furthermore, $\Delta_j^{(y,s)} \in X_p (\Omega)$ since $\Omega$ is bounded and $\Delta_j^{(y,s)}$
  is bounded and continuous.
  Thus, the Bernstein inequality \eqref{eq:TrivialBernsteinInequality} yields a constant
  $K_1 > 0$ such that
  \[
    \|\Delta_j^{(y,s)}\|_{\NASpace[\StandardXSpace[](\Omega)][\varrho_1][q][\alpha][L]}
    \leq K_1 \cdot \lfloor C_L \cdot 2^{j / (L-1)} \rfloor^\alpha
    \leq K_1 C_{L}^\alpha \cdot 2^{j \alpha / (L-1)}
  \]
  for all $j \in \N$; similarly, we get a constant $K_2 > 0$ such that
  \[
    \|\Delta_j^{(y,s)}\|_{\WASpace[\StandardXSpace[](\Omega)][\varrho_1][q][\alpha][L]}
    \leq K_2 \cdot \lfloor C_L \cdot 2^{j / \lfloor L/2 \rfloor } \rfloor^\alpha
    \leq K_2 C_{L}^\alpha \cdot 2^{j \alpha / \lfloor L/2 \rfloor}
  \]
  for all $j \in \N$.
  Considering $C_2 := \max \{ K_1,K_2 \} \cdot C_L^\alpha$ establishes the desired upper bound.

  For the lower bound, consider arbitrary $W,N \in \N_{0}$, $F \in \NNreal^{\varrho_{r},d,1}_{W,L,N}$,
  and observe that by Lemma~\ref{lem:NetworkCalculus}-(\ref{enu:PrePostAffine})
  we have $F' := F \circ T_{y,s}^{-1}  \in \NNreal^{\varrho_{r},d,1}_{W,L,N}$.
  In view of Proposition~\ref{prop:SawtoothNormLowerBound},
  the lower bound follows from the inequality
  \[
    \|\Delta_{j}^{(y,s)} \!-\! F\|_{L_{p}(\Omega)}
    \!\geq\! \|\Delta_{j}^{(y,s)} \!- F\|_{L_{p}(y+[0,s]^{d})}
    \!=\!    \|\Delta_{j,d} \circ T_{y,s} -F' \circ T_{y,s}\|_{L_{p}(y+[0,s]^{d})}
    \!=\!    s^{d/p} \, \|\Delta_{j,d} - F'\|_{L_{p}([0,1]^{d})}.
    \qedhere
  \]
\end{proof}

We can now prove Lemma~\ref{lem:WeightAndNeuronSpacesDistinct}.

\begin{proof}[Proof of Lemma~\ref{lem:WeightAndNeuronSpacesDistinct}]
  \textbf{Ad (1)} If
  \(
    \WASpace[X_{p_1}(\Omega)][\varrho_{r_1}][q_1][\alpha][L]
    \subset
    \NASpace[X_{p_2}(\Omega)][\varrho_{r_2}][q_2][\beta][L']
  \),
  then the linear map
  \[
    \iota : \WASpace[X_{p_1}(\Omega)][\varrho_{r_1}][q_1][\alpha][L]
            \to \NASpace[X_{p_2}(\Omega)][\varrho_{r_2}][q_2][\beta][L'],
            f \mapsto f
  \]
  is well-defined.
  Furthermore, this map has a closed graph.
  Indeed, if $f_n \to f$ in $\WASpace[X_{p_1}(\Omega)][\varrho_{r_1}][q_1][\alpha][L]$
  and $f_n = \iota f_n \to g$ in $\NASpace[X_{p_2}(\Omega)][\varrho_{r_2}][q_2][\beta][L']$,
  then the embeddings
  \(
    \WASpace[X_{p_1}(\Omega)][\varrho_{r_1}][q_1][\alpha][L]
    \hookrightarrow X_{p_1}(\Omega) \hookrightarrow L_{p_1}(\Omega)
  \)
  and
  \(
    \NASpace[X_{p_2}(\Omega)][\varrho_{r_2}][q_2][\beta][L']
    \hookrightarrow X_{p_2}(\Omega) \hookrightarrow L_{p_2}(\Omega)
  \)
  (see Proposition~\ref{prop:ApproxSpaceWellDefined} and Theorem~\ref{thm:ReLUPowersApproxSpaces})
  imply that $f_n \to f$ in $L_{p_1}$ and $f_n \to g$ in $L_{p_2}$.
  But $L_p$-convergence implies convergence in measure, so that we get $f = g$.

  Now, the closed graph theorem (which applies to $F$-spaces (see \cite[Theorem 2.15]{RudinFA}),
  hence to quasi-Banach spaces, since these are $F$-spaces
  (see \cite[Remark after Lemma 2.1.5]{VoigtlaenderPhDThesis})) shows that $\iota$ is continuous.
  Here, we used that the approximation classes
  $\WASpace[X_{p_1}(\Omega)][\varrho_{r_1}][q_1][\alpha][L]$
  and $\NASpace[X_{p_2}(\Omega)][\varrho_{r_2}][q_2][\beta][L']$
  are quasi-Banach spaces; this is proved independently in
  Theorem~\ref{th:DNNApproxSpaceWellDefined}.

  Since $\Omega$ has nonempty interior, there are $y \in \R^d$ and $s > 0$
  such that $y + [0,s]^d \subset \Omega$.
  The continuity of $\iota$, combined with Theorem~\ref{thm:SawtoothNormAsymptotic},
  implies for the functions $\Delta_j^{(y,s)}$ from Theorem~\ref{thm:SawtoothNormAsymptotic}
  for all $j \in \N$ that
  \[
    2^{j \beta / (L' - 1)}
    \lesssim \|\Delta_j^{(y,s)}\|_{\NASpace[X_{p_2}(\Omega)][\varrho_{r_2}][q_2][\beta][L']}
    \lesssim \|\Delta_j^{(y,s)}\|_{\WASpace[X_{p_1}(\Omega)][\varrho_{r_1}][q_1][\alpha][L]}
    \lesssim 2^{j\alpha / \lfloor L/2 \rfloor},
  \]
  where the implicit constants are independent of $j$.
  Hence, $\beta / (L' - 1) \leq \alpha / \lfloor L/2 \rfloor$;
  that is, $L' - 1 \geq \tfrac{\beta}{\alpha} \cdot \lfloor L/2 \rfloor$.

  \medskip{}

  \textbf{Ad (2)} Exactly as in the argument above, we get for all $j \in \N$ that
  \[
    2^{j\alpha / \lfloor L/2 \rfloor}
    \lesssim \|\Delta_j^{(y,s)}\|_{\WASpace[X_{p_1}(\Omega)][\varrho_{r_1}][q_1][\alpha][L]}
    \lesssim \|\Delta_j^{(y,s)}\|_{\NASpace[X_{p_2}(\Omega)][\varrho_{r_2}][q_2][\beta][L']}
    \lesssim 2^{j \beta / (L' - 1)}
  \]
  with implied constants independent of $j$.
  Hence, $\alpha / \lfloor L/2 \rfloor \leq \beta / (L' - 1)$;
  that is, $\lfloor L/2 \rfloor \geq \frac{\alpha}{\beta} \cdot (L' - 1)$.

  \medskip{}

  \textbf{Proof of the ``in particular'' part:}
  If
  \(
    \WASpace[X_{p_1}(\Omega)][\varrho_{r_1}][q_1][\alpha][L]
    = \NASpace[X_{p_2}(\Omega)][\varrho_{r_2}][q_2][\alpha][L]
  \),
  then Parts (1) and (2) show (because of $\alpha = \beta$) that
  $L - 1 = \lfloor L /2 \rfloor$.
  Since $L \in \N_{\geq 2}$, this is only possible for $L = 2$.
\end{proof}

As a further consequence of Lemma~\ref{lem:SawtoothHardToApproximate}, we can now prove
the non-triviality of the neural network approximation spaces,
as formalized in Theorem~\ref{thm:ApproximationSpacesNonTrivial}.

\begin{proof}[Proof of Theorem~\ref{thm:ApproximationSpacesNonTrivial}]
  In view of the embedding $\WASpace \hookrightarrow \NASpace$
  (see Lemma~\ref{lem:weightsvsneurons}), it suffices to prove the claim for $\NASpace$.
  Furthermore, it is enough to consider the case $q = \infty$,
  since Equation~\eqref{eq:ApproximationSpaceEmbedding} shows that
  $\NASpace \hookrightarrow \NASpace[\StandardXSpace(\Omega)][\varrho][\infty]$.
  Next, in view of Remark~\ref{sec:RestrictionCartesian},
  it suffices to consider the case $k=1$.
  Finally, thanks to Theorem~\ref{thm:ReLUPowersApproxSpaces},
  it is enough to prove the claim for the special case $\varrho = \varrho_r$
  (for fixed but arbitrary $r \in \N$).

  Since $\Omega$ has nonempty interior, there are $y \in \R^d$ and $s > 0$ such that
  $y + [0,s]^d \subset \Omega$.
  Let us fix $\varphi \in C_c (\R^d)$ satisfying $0 \leq \varphi \leq 1$
  and $\varphi |_{y + [0,s]^d} \equiv 1$.
  With $\Delta_{j}^{(y,s)}$ as in Theorem~\ref{thm:SawtoothNormAsymptotic}, define for $j \in \N$
  \[
    g_j : \R^d \to \R,
          x \mapsto \Delta_j^{(y,s)} (x) \cdot \varphi(x) .
  \]
  Note that $g_j \in C_c (\R^d)$, and hence $g_j |_{\Omega} \in X$.
  Furthermore, since $0 \leq \Delta_j^{(y,s)} \leq 1$, it is easy to see
  $\|g_j|_{\Omega}\|_{X} \leq \|g_j\|_{L_p(\R^d)} \leq \|\varphi\|_{L_p(\R^d)} =: C$
  for all $j \in \N$.

  By Theorem~\ref{th:ReLUDNNApproxSpaceWellDefined} and Proposition~\ref{prop:ApproxSpaceWellDefined},
  we know that $\NASpace[\StandardXSpace[](\Omega)][\varrho_r][\infty]$ is a well-defined
  quasi-Banach space satisfying
  $\NASpace[\StandardXSpace[](\Omega)][\varrho_r][\infty] \hookrightarrow \StandardXSpace[](\Omega)$.
  Let us assume towards a contradiction that the claim of Theorem~\ref{thm:ApproximationSpacesNonTrivial}
  fails; this means $\NASpace[\StandardXSpace[](\Omega)][\varrho_r][\infty] = \StandardXSpace[](\Omega)$.
  Using the same ``closed graph theorem arguments'' as in the proof of
  Lemma~\ref{lem:WeightAndNeuronSpacesDistinct}, we see that this implies
  \(
    \|f\|_{\NASpace[\StandardXSpace[](\Omega)][\varrho_r][\infty]}
    \leq C' \cdot \|f\|_{\StandardXSpace[](\Omega)}
  \)
  for all $f \in \StandardXSpace[](\Omega)$ and a fixed constant $C' > 0$.
  In particular, this implies
  $\|g_j|_{\Omega}\|_{\NASpace[\StandardXSpace[](\Omega)][\varrho_r][\infty]} \leq C'C$
  for all $j \in \N$.
  In the remainder of the proof, we will show that
  $\|g_j|_{\Omega}\|_{\NASpace[\StandardXSpace[](\Omega)][\varrho_r][\infty]} \to \infty$
  as $j \to \infty$, which then provides the desired contradiction.

  To prove $\|g_j|_{\Omega}\|_{\NASpace[\StandardXSpace[](\Omega)][\varrho_r][\infty]} \to \infty$,
  choose $N_0 \in \N$ satisfying $\mathscr{L}(N_0) \geq 2$,
  and let $N \in \N_{\geq N_0}$ and
  $f \in \NNreal^{\varrho_r,d,1}_{\infty,\mathscr{L}(N),N}$ be arbitrary.
  Reasoning as in the proof of Theorem~\ref{thm:SawtoothNormAsymptotic},
  since $\varphi \equiv 1$ on $y + [0,s]^{d}$, we see that
  if we set $T_{y,s} : \R^d \to \R^d, x \mapsto s^{-1} (y-x)$, then
  \[
    \|g_{j} - f\|_{L_{p}(\Omega)}
    \geq \|g_{j} - f\|_{L_{p}(y+[0,s]^{d})}
    = \|\Delta^{(y,s)}_{j} - f\|_{L_{p}(y+[0,s]^{d})}
    = s^{d/p} \cdot \|\Delta_{j,d} - f \circ T^{-1}_{y,s}\|_{L_{p}([0,1]^{d})}.
  \]
  Now, given any $x' \in \R^{d-1}$, let us set
  \(
    f_{x'} : \R \to \R, t \mapsto (f \circ T^{-1}_{y,s}) ((t, x'))
  \).
  As a consequence of Lemma~\ref{lem:NetworkCalculus}-(1), we see
  $f_{x'} \in \NNreal^{\varrho_r,1,1}_{\infty,\mathscr{L}(N), N}$.
  According to Part~2 of Lemma~\ref{lem:MinGrowthRateNbPiecesSufficient},
  there is a constant $K_N \in \N$ such that $f_{x'} \in \PPoly_{K_N}^{r^{\mathscr{L}(N) - 1}}$
  Hence, Lemma~\ref{lem:SawtoothHardToApproximate} yields a constant $C_2 = C_2(p) > 0$
  such that $\|\Delta_j - f_{x'}\|_{L_p ( (0,1) )} \geq C_2$ as soon as
  $2^{j} + 1 \geq 4 \, K_N \cdot (1 + r^{\mathscr{L}(N) - 1}) =: K_N '$.
  Because of $2^{j} + 1 \geq j$, this is satisfied if $j \geq K_N '$.
  In case of $p < \infty$, Fubini's theorem shows
  \begin{align*}
    \|\Delta_{j,d} - f \circ T^{-1}_{y,s}\|_{L_p([0,1]^{d})}^p
    & \geq \int_{[0,1]^{d-1}}
             \int_{[0,1]}
               \Big|
                 \Delta_j (t)
                 - f_{x'}(t)
               \Big|^p
             \, d t
           \, d x'
      =    \int_{[0,1]^{d-1}}
             \|\Delta_j - f_{x'}\|_{L_p ( (0,1) )}^p
           \, d x'
      \geq C_2^p,
  \end{align*}
  whence
  \(
    \|g_{j}-f\|_{L_{p}(\Omega)}
    \geq s^{d/p} \|\Delta_{j,d} - f \circ T^{-1}_{y,s}\|_{L_p ([0,1]^{d})}
    \geq C_2 \cdot s^{d/p}
  \).
  For $p = \infty$, the same estimate remains true because
  $\|\bullet\|_{L_p([0,1]^d)} \leq \|\bullet\|_{L_\infty ([0,1]^d)}$.
  Since $f \in \NNreal^{\varrho_r,d,1}_{\infty,\mathscr{L}(N),N}$ was arbitrary, we have shown
  \[
    \AppErr(g_j, \NNreal^{\varrho_r,d,1}_{\infty,\mathscr{L}(N),N})_{L_p(\Omega)}
    \geq C_2 \cdot s^{d/p} =: C_3
    \qquad \forall \, N \in \N_{\geq N_0} \text{ and } j \geq K_N '.
  \]
  Directly from the definition of the norm
  $\|g_j|_{\Omega}\|_{\NASpace[\StandardXSpace[](\Omega)][\varrho_r][\infty]}$,
  this implies that for arbitrary $N \in \N_{\geq N_0}$
  \[
    \|g_j|_{\Omega}\|_{\NASpace[\StandardXSpace[](\Omega)][\varrho_r][\infty]}
    \geq (1 + N)^\alpha
         \cdot \AppErr(g_j, \NNreal^{\varrho_r,d,1}_{\infty,\mathscr{L}(N),N})_{L_p(\Omega)}
    \geq C_3 \cdot (1 + N)^\alpha
    \quad \forall \, j \geq K_N '.
  \]
  This proves $\|g_j|_{\Omega}\|_{\NASpace[\StandardXSpace[](\Omega)][\varrho_r][\infty]} \to \infty$
  as $j \to \infty$, and thus completes the proof.
\end{proof}

\end{document}